\newcommand*{\math@version@bold}{bold}
\DeclareMathOperator\DD{
  \textrm{%
    \usefont{T2A}{cmr}{\ifx\math@version\math@version@bold bx\else m\fi}{n}%
    \CYRD
  }%
}
\newcommand{\DDa}{\DD_{a}}
\newcommand{\DDc}{\DD_{c}}
\colorlet{myblue}{black}
\colorlet{gold}{yellow!90!black!70!red}
\colorlet{darkgreen}{green!70!black}
\colorlet{lightred}{red!30!white}
\colorlet{lightblue}{blue!30!white}
\tikzset{>=latex}
\newcommand\mystretch{\baselinestretch}
\newcommand{\myfixwrapfig}{\textcolor{white}{~}\vspace{-\mystretch\baselineskip\relax}}
\def\co{\colon\thinspace\relax}
\newtheorem{theorem}{Theorem}[section]
\newtheorem*{theorem*}{Theorem}
\newtheorem{lemma}[theorem]{Lemma}
\newtheorem{convention}[theorem]{Convention}
\newtheorem{conjecture}[theorem]{Conjecture}
\newtheorem{corollary}[theorem]{Corollary}
\newtheorem{proposition}[theorem]{Proposition}
\theoremstyle{definition}
\newtheorem{definition}[theorem]{Definition}
\newtheorem{question}[theorem]{Question}
\newtheorem{example}[theorem]{Example}
\newtheorem{observation}[theorem]{Observation}
\newtheorem{remark}[theorem]{Remark}
\newcommand{\vc}[1]{\vcenter{\hbox{#1}}}%
\newcommand{\mypic}[3]{%
	\newcommand{#3}{%
		\vc{%
			\includegraphics[page=#2]%
				{PSTricks/KhCurves-pics-#1-pics.pdf}%
			}%
	}%
}%
\newcommand{\ob}{\operatorname{ob}}
\DeclareMathOperator{\Mor}{Mor}
\DeclareMathOperator{\End}{End}
\DeclareMathOperator{\Mat}{Mat} 
\DeclareMathOperator{\Cx}{Cx} 
\DeclareMathOperator{\TypeD}{TypeD} 
\DeclareMathOperator{\Cxpre}{Cx^\text{\normalfont pre}}
\DeclareMathOperator{\Com}{Com}
\DeclareMathOperator{\loops}{\normalfont C}
\DeclareMathOperator{\Kob}{Kob}
\DeclareMathOperator{\Cob}{Cob}
\DeclareMathOperator{\BNTwistingCobCob}{\prescript{}{\Cob_{/l}}{\tau}^{\Cob_{/l}}}
\DeclareMathOperator{\BNTwistingTildeCobCob}{\prescript{}{\Cob_{/l}}{\widetilde{\tau}}^{\Cob_{/l}}}
\DeclareMathOperator{\BNTwistingBCob}{{}_{\BNAlgH}{\tau}^{\Cob_{/l}}}
\DeclareMathOperator{\BNTwisting}{\prescript{}{\BNAlgH}{\tau}^{\BNAlgH}}
\newcommand{\dual}[1]{\overline{#1}}
\newcommand{\revpre}{\operatorname{rev}^{pre}}
\newcommand{\rev}{\operatorname{rev}}
\newcommand{\emb}{\operatorname{emb}}
\newcommand{\embpre}{\operatorname{emb}^{pre}}
\newcommand{\QGrad}[1]{{\textcolor{violet}{#1}}}
\newcommand{\DeltaGrad}[1]{{\textcolor{darkgreen}{#1}}}
\newcommand{\HomGrad}[1]{{\textcolor{black}{#1}}}
\newcommand{\GGdqh}[4]{\prescript{\QGrad{#3}}{}{#1}^\DeltaGrad{#2}_\HomGrad{#4}}
\newcommand{\GGzqh}[4]{\prescript{\QGrad{#3}}{}{#1}_\HomGrad{#4}}
\newcommand{\GGdzh}[4]{#1^\DeltaGrad{#2}_\HomGrad{#4}}
\DeclareMathOperator{\lk}{lk}
\DeclareMathOperator{\HFT}{HFT}
\newcommand{\CFTminus}{\operatorname{CFT}^-}
\newcommand{\KhT}[1]{[\![ #1 ]\!]} 
\newcommand{\KhTl}[1]{[\![ #1 ]\!]_{/l}} 
\renewcommand{\L}[1]{L\!\left[ #1 \right]} 
\DeclareMathOperator{\Kh}{Kh}
\DeclareMathOperator{\CKh}{CKh}
\DeclareMathOperator{\CBN}{CBN}
\DeclareMathOperator{\BN}{BN}
\DeclareMathOperator{\fCBN}{{\it f}CBN}
\newcommand{\Khr}{\widetilde{\Kh}}
\newcommand{\CKhr}{\widetilde{\CKh}}
\newcommand{\CBNr}{\widetilde{\CBN}}
\newcommand{\BNr}{\widetilde{\BN}}
\newcommand{\fCBNr}{{\it f}\widetilde{\CBN}}
\newcommand{\Arc}{\BNr}
\newcommand{\BNra}{\BNr_{a}}
\newcommand{\BNrc}{\BNr_{c}}
\newcommand{\Eight}[1][]{L_{#1}}
\newcommand{\mirror}{\operatorname{m}} 
\newcommand{\bimoduleG}{\mathbf{G}} 
\newcommand{\refx}{\operatorname{ref}_x} 
\newcommand{\refy}{\operatorname{ref}_y} 
\newcommand{\rotz}{\operatorname{rot}_z} 
\newcommand{\refi}{\operatorname{ref}_i} 
\newcommand{\mutx}{\operatorname{mut}_x} 
\newcommand{\muty}{\operatorname{mut}_y} 
\newcommand{\mutz}{\operatorname{mut}_z} 
\newcommand{\muti}{\operatorname{mut}_i} 
\DeclareMathOperator{\HFK}{\widehat{HFK}}
\DeclareMathOperator{\HFhat}{\widehat{HF}}
\newcommand{\HF}{\operatorname{HF}}
\newcommand{\CF}{\operatorname{CF}}
\newcommand{\CFplus}{\operatorname{CF}^{+}}
\newcommand{\CFtimes}{\operatorname{CF}^{\times}}
\DeclareMathOperator{\Homology}{H_\ast}
\DeclareMathOperator{\HomologyZero}{H_0}
\DeclareMathOperator{\BNAlgH}{\mathcal{B}} 
\DeclareMathOperator{\gr}{gr}
\newcommand{\op}[1]{{#1}^{\text{op}}}
\DeclareMathOperator{\id}{id} 
\DeclareMathOperator{\Mod}{Mod}
\DeclareMathOperator{\Sym}{Sym}
\DeclareMathOperator{\GL}{GL}
\DeclareMathOperator{\im}{im}
\DeclareMathOperator{\rk}{rk}
\DeclareMathOperator{\coker}{coker}
\newcommand{\fs}{\operatorname{c}}
\renewcommand{\k}{\mathbf{k}}
\newcommand{\ii}{\mathbf{i}}
\newcommand{\F}{\mathbb{F}}
\newcommand{\fieldTwoElements}{\mathbb{F}}
\newcommand{\fieldTwoIsNotZero}{{{\mathbb{K}}}}
\newcommand{\field}{{\mathbf{k}}}
\newcommand{\FourPuncturedSphere}{S^2_{4,\ast}}
\newcommand{\PFPS}{(S^2\smallsetminus \mathrm{4~points})}
\newcommand{\RepVarSphere}{R(S^2,\mathrm{4~points})}
\newcommand{\object}{X}
\newcommand{\Z}{\mathbb{Z}}
\newcommand{\Q}{\mathbb{Q}}
\newcommand{\ignoreme}[1]{} 
\newcommand{\RxH}{{\mathcal{R}}}
\newcommand{\Rxy}{{\mathscr{R}}}
\newcommand{\RzT}{{\mathbf{R}}}
\newcommand{\bt}{\boxtimes}
\newcommand{\wrFuk}{\mathcal{W F}}
\newcommand{\Rcomm}{R} 
\newcommand{\gen}{\singleB}
\newcommand{\Lk}{\mathcal{L}} 
\newcommand{\Knot}{\mathcal{K}} 
\newcommand{\Diag}{\mathcal{D}} 
\newcommand{\x}{x} 
\newcommand{\y}{y} 
\newcommand{\xben}{\text{x}} 
\newcommand{\yben}{\text{y}} 
\newcommand{\xgen}{\mathbf{x}} 
\newcommand{\ygen}{\mathbf{y}} 
\newcommand{\DDinf}{{\DD_\infty}}
\newcommand{\Linf}{{L_\infty}}
\newcommand{\DDinfinf}{{\DD^\infty_\infty}}
\newcommand{\Linfinf}{{L{\substack{\infty \\ \infty}}}}
\newcommand{\SaddleBC}{S_{\smallDotC}}
\newcommand{\SaddleCB}{S_{\smallDotB}}
\newcommand{\DotcobB}{D_{\smallDotB}}
\newcommand{\DotcobC}{D_{\smallDotC}}
\begin{document}
\title{Immersed curves in Khovanov homology}

\author{Artem Kotelskiy}
\address{Department of Mathematics \\ Indiana University}
\email{artofkot@iu.edu}

\author{Liam Watson}
\address{Department of Mathematics \\ University of British Columbia}
\email{liam@math.ubc.ca}
\thanks{AK is supported by an AMS-Simons travel grant. LW is supported by an NSERC discovery/accelerator grant and was partially supported by funding from the Simons Foundation and the Centre de Recherches Math\'ematiques, through the Simons-CRM scholar-in-residence program.}

\author{Claudius Zibrowius}
\address{Department of Mathematics \\ University of British Columbia}
\email{claudius.zibrowius@posteo.net}

\begin{abstract}
We give a geometric interpretation of Bar-Natan's universal invariant for the class of tangles in the 3-ball with four ends: we associate with such 4-ended tangles $T$ multicurves $\BNr(T)$, that is, collections of immersed curves with local systems in the 4-punctured sphere. These multicurves are tangle invariants up to homotopy of the underlying curves and equivalence of the local systems. They satisfy a gluing theorem which recovers the reduced Bar-Natan homology of links in terms of wrapped Lagrangian Floer theory. Furthermore, we use $\BNr(T)$ to define two immersed curve invariants $\Khr(T)$ and $\Kh(T)$, which satisfy similar gluing theorems that recover reduced and unreduced Khovanov homology of links, respectively.  As a first application, we prove that Conway mutation preserves reduced Bar-Natan homology over the field with two elements and Rasmussen's $s$-invariant over any field. As a second application, we give a geometric interpretation of Rozansky's categorification of the two-stranded Jones--Wenzl projector. This allows us to define a module structure on reduced Bar-Natan and Khovanov homologies of infinitely twisted knots, generalizing a result by Benheddi. 
\end{abstract}

\maketitle
\renewcommand{\contentsname}{Contents}
\setcounter{tocdepth}{1}
\tableofcontents

\section{Introduction}\label{sec:intro}

\subsection{Bar-Natan's invariant for 4-ended tangles}

In~\cite{BarNatanKhT}, Bar-Natan defined an invariant 
$\KhTl{T}$ of tangles \(T\) in the 3-ball, which generalizes Khovanov homology of links in the 3-sphere~\cite{Khovanov}. 
Like Khovanov's link invariant, \(\KhTl{T}\) takes the form of a bigraded chain complex which is well-defined up to chain homotopy equivalence. 
However, while the link invariant is a chain complex over a field or a ring, \(\KhTl{T}\) is a chain complex over a certain cobordism category \(\Cob_{/l}\) which depends on the number of endpoints of \(T\). 
This category plays a central role in the construction of \(\KhTl{T}\). 
Its objects are crossingless tangles and its morphisms are cobordisms between those tangles. 
This feature makes \(\KhTl{T}\) amenable to cut-and-paste arguments, which
Bar-Natan exploited to effectively compute Khovanov homology~\cite{BN_fast}. However, this also means that for studying \(\KhTl{T}\), it is essential to obtain a good understanding of, 
first, the category \(\Cob_{/l}\) itself, and, second, the category of chain complexes over \(\Cob_{/l}\). 
This paper addresses these two problems for tangles with four ends. (For tangles with an arbitrary number of ends, we make  progress on the first problem. However, the second remains very much open.) For 4-ended tangles, the first question is effectively answered by the following result:

\begin{theorem}\label{thm_from_intro:BN_algebra=B}
The full subcategory \( \End_{/l}(\Ni\oplus\No) \) of \( \Cob_{/l} \) generated by the two objects \( \Ni \) and \( \No \) is isomorphic to the path algebra \( \BNAlgH \) of the quiver
\[ 
\begin{tikzcd}[row sep=2cm, column sep=1.5cm]
\DotB
\arrow[leftarrow,in=145, out=-145,looseness=5]{rl}[description]{D}
\arrow[leftarrow,bend left]{r}[description]{S}
&
\DotC
\arrow[leftarrow,bend left]{l}[description]{S}
\arrow[leftarrow,in=35, out=-35,looseness=5]{rl}[description]{D}
\end{tikzcd} 
 \]  
modulo the relations \( DS=0=SD \). 
\end{theorem}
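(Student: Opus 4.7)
My approach is to construct an explicit ring homomorphism
\[ \Phi \co \BNAlgH \longrightarrow \End_{/l}(\Ni \oplus \No) \]
and prove that it is an isomorphism. On generators, $\Phi$ sends the two idempotents at the vertices of the quiver to the identity cylinders over $\Ni$ and $\No$, each loop $D$ to a cylinder decorated with a single dot, and each arrow $S$ to the elementary undotted saddle cobordism in the appropriate direction.

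The first step is to check that $\Phi$ respects the relations $DS = 0 = SD$. Geometrically, the image of $DS$ (or $SD$) is an elementary saddle carrying a single dot on its underlying surface. Applying Bar-Natan's neck-cutting relation on a small meridional circle adjacent to the saddle point expresses this as a sum of two cobordisms, each of which contains a closed sphere with a controlled number of dots; the sphere/$S$-relations built into the definition of $\Cob_{/l}$ then force each summand to vanish.

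The second step is surjectivity: I would put any cobordism into a Morse-theoretic handle decomposition (births, deaths, saddles) with dots, and then use the neck-cutting, sphere, and torus relations of $\Cob_{/l}$ to trade genus and matched pairs of births/deaths for dots. What remains is a linear combination of images of paths in the quiver. For injectivity, I would use the quantum grading to separate monomials of different degrees, and then distinguish the finitely many remaining same-degree pairs by pairing with a concrete cup or cap cobordism and reading off the resulting closed cobordism in the TQFT attached to $\Cob_{/l}$.

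The main obstacle is ensuring that the dot-placement ambiguity collapses correctly: the identity cylinder over $\Ni$ (and analogously over $\No$) has two connected components, so a priori placing a dot on the first versus the second component yields distinct endomorphisms. Identifying these with a single loop $D$ in the quiver, and simultaneously verifying that a product $S \cdot S$ of two saddles reduces to the appropriate multiple of $D$, is a local neck-cutting computation that crucially depends on the specific relations defining $\Cob_{/l}$. This local verification is the crux of the whole identification.
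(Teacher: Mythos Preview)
Your overall architecture---define a homomorphism on generators, check relations, prove bijectivity---is the same as the paper's, but the proposal has a genuine gap at exactly the point you flag as the main obstacle, and your proposed resolution of that obstacle is incorrect. The identity cobordism on \(\Ni\) (or \(\No\)) has two components, and in \(\Cob_{/l}\) the two dot-placements are genuinely different morphisms (they differ by a sign, cf.\ the \(4Tu\)-relation); so you cannot ``identify these with a single loop \(D\)'' without additional input. The paper resolves this by first fixing a \emph{distinguished tangle end} \(*\), declaring the component containing \(*\) to be special, and then \emph{defining} a dot on a component \(c\) to mean ``tube from \(c\) to the special component, plus \(H\) times the undecorated cobordism'', where \(H\) is the formal variable corresponding to \((-1)\) times a handle on the special component. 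With this definition a dot on the special component is identically zero. The dot cobordism \(D\) is then unambiguously the identity with a dot on the \emph{non}-special component.

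This is also what makes \(DS=0=SD\) hold, and your neck-cutting argument does not: the saddle cobordism is a disc, so it admits no compressing disc and neck-cutting simply does not apply. The correct argument is that composing \(D\) with a saddle merges the two components into one, so the dot now sits on the unique component---which contains \(*\)---and hence the result is zero by the definition above. Finally, your expectation that \(S\cdot S\) ``reduces to the appropriate multiple of \(D\)'' is wrong: \(SS\) and \(D\) are linearly independent in degree \(q^{-2}\), and in fact \(H=D-SS\). The paper's injectivity/surjectivity argument then reduces to the statement that each morphism space is free over \(R[H]\) on the simple (genus-zero, at-most-one-dot-per-component) cobordisms, which they cite from Naot; your Morse-plus-TQFT sketch could in principle recover this, but not before the basepoint issue is fixed.
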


The key to this result is a new basis for the morphism spaces of \(\Cob_{/l}\). This basis depends on a choice of distinguished tangle end, and so does the isomorphism between  \( \End_{/l}(\Ni\oplus\No) \) and \(\BNAlgH\)  in the theorem. 
An important feature of our basis is that one can easily understand the effect of choosing different distinguished tangle ends. As we will see below, this turns out to be crucial for understanding the effect of Conway mutation on Khovanov homology.
Moreover, this basis can be defined for tangles with arbitrarily many tangle ends, which goes a long way towards answering the first problem in general. Though Theorem~\ref{thm_from_intro:BN_algebra=B} only describes a full subcategory of \(\Cob_{/l}\) for 4-ended tangles, it is sufficient for studying the second problem, namely, understanding chain complexes over \(\Cob_{/l}\). 
By a procedure Bar-Natan calls \emph{delooping}, any chain complex over \( \Cob_{/l} \) is isomorphic to one whose underlying objects are crossingless tangles without closed components. 
As a result, when \( T \) is a 4-ended tangle, \( \KhTl{T} \) is isomorphic to a chain complex over \( \End_{/l}(\Ni\oplus\No) \); this chain complex is invariant up to chain homotopy. 
\begin{definition}
Let \( \DD(T) \) be the complex over the algebra $\BNAlgH$ which corresponds to \( \KhTl{T} \) under the isomorphism from Theorem~\ref{thm_from_intro:BN_algebra=B}.
\end{definition}
By construction, the chain homotopy type of \( \DD(T) \)  is a tangle invariant. 
However, it generally depends on the isomorphism between \( \End_{/l}(\Ni\oplus\No) \) and \( \BNAlgH \), or equivalently, on a choice of a distinguished tangle end. 
As such we view $ \DD(T)$
as an invariant of \emph{pointed} 4-ended tangles, that is, 4-ended tangles together with a choice of a distinguished tangle end. In tangle diagrams, we mark this tangle end by an asterisk, like so~\( \CrossingLDot \).  

\begin{example}\label{exa:intro:threetwist:DD}
	The chain complexes over $\BNAlgH$ corresponding to the trivial tangles are generated by a single element each and the differentials of these complexes vanish:
	\[
	\DD(\Lo)=[\DotB]
	\quad\qquad\quad
	\DD(\Li)=[\DotC]
	\]
	To give a slightly more interesting example: 
	\[ 
	\DD\Big(\ThreeTwistTangle\Big)=
	\Big[
	\begin{tikzcd}
	\DotC
	\arrow{r}{S}
	&
	\DotB
	\arrow{r}{D}
	&
	\DotB
	\arrow{r}{S^2}
	&
	\DotB
	\end{tikzcd}
	\Big]
	\]
	Note that the generators in all three examples carry an appropriate bigrading; see Example~\ref{exa:BNntwisttangles}.
\end{example}

\begin{wrapfigure}{r}{0.31\textwidth}
	\centering
	$\QuiverOnSurfaceDualWithQuiver$
	\vspace*{-10pt}
	\caption{The surface~$\FourPuncturedSphere$}\label{fig:quiverFuk:surface:dual}
\end{wrapfigure}

\subsection{Classification results}\label{subsec:intro:classification}
Bar-Natan's invariant \( \KhTl{T} \) is defined over integer coefficients and so is \( \DD(T) \). In this paper, we do not address the problem of understanding chain complexes over \(\Cob_{/l}\), now reduced to the problem of understanding chain complexes over \(\BNAlgH\), over integer coefficients. However, passing to field coefficients $\field$, we  give a complete classification of bigraded chain complexes over \(\BNAlgH\) and morphisms between them. 

Denote by $\FourPuncturedSphere$ the 4-punctured sphere in Figure~\ref{fig:quiverFuk:surface:dual}, together with the choices of a special puncture $\ast$, two grey parameterizing arcs separating the other three punctures, and an embedding of the quiver from Theorem~\ref{thm_from_intro:BN_algebra=B}. We also label the boundary chords on the special puncture by $\textcolor{darkgreen}{S}$ and $\textcolor{darkgreen}{D}$, according to how $\FourPuncturedSphere$ deformation retracts onto the quiver.

\begin{definition}\label{def:intro:multicurves}
	An immersed curve with local system on the 4-punctured sphere \(\FourPuncturedSphere\) is a pair \( (\gamma,X) \), where either
	\begin{enumerate}[label=(\roman*)]
		\item \label{enu:intro:multicurves1}  \(\gamma\) is an immersion of an oriented circle into \(\FourPuncturedSphere\) representing a non-trivial primitive element of its fundamental group, and \( X\in\GL_n(\field) \) for some positive integer \(n\); or
		\item \label{enu:intro:multicurves2} \(\gamma\) is an immersion of an unoriented interval into \(\FourPuncturedSphere\), which maps the two endpoints to the non-special punctures, and which defines a non-trivial element of the fundamental group of \(\FourPuncturedSphere\) relative to the three non-special punctures and \( X=\id\in\GL_n(\field) \) for some positive integer \(n\).
	\end{enumerate}
	In case~\ref{enu:intro:multicurves1}, we call \( (\gamma, X) \) \textbf{compact}, in case~\ref{enu:intro:multicurves2}, we call it \textbf{non-compact}. 
	We call \(\gamma\) the underlying curve and \( X \) its local system. 
	We consider curves up to homotopy of the underlying immersed curves and up to matrix similarity of their local systems.
	Geometrically, we may think of local systems in terms of $n$-dimensional vector bundles over circles or intervals up to isomorphism. 
	A \textbf{multicurve} is a finite set of such curves with the property that all
	underlying curves are pairwise non-homotopic as unoriented curves.
\end{definition}

\begin{theorem}[Classification Theorem]\label{thm_from_intro:classification}
Given any bigraded chain complex \(N\) over \(\BNAlgH\) with coefficients in a field \(\field\), we can define a (bigraded) multicurve \(\L{N}\) on the 4-punctured sphere \(\FourPuncturedSphere\) such that if \(N'\) is another such chain complex, \(\L{N}\) and \(\L{N'}\) agree iff \(N\) and \(N'\) are chain homotopy equivalent. Moreover, the homology of the morphism space of two chain complexes over \(\BNAlgH\) is equal to the wrapped Lagrangian Floer homology of the corresponding multicurves:
$$\Homology (\Mor(N,N'))\cong \HF (\L{N}, \L{N'})$$
\end{theorem}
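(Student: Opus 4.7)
The plan is to realise bigraded chain complexes over \(\BNAlgH\) as a combinatorial model for objects in a (wrapped) Fukaya-type category of the 4-punctured sphere \(\FourPuncturedSphere\): since \(\BNAlgH\) is morally the endomorphism algebra of the two grey parameterizing arcs, a chain complex over \(\BNAlgH\) is a kind of iterated mapping cone of these arcs. The proof splits into three parts: (i) construction of \(\L{N}\) by a reduction procedure, (ii) a classification result showing the reduced form is unique up to the allowed equivalences, and (iii) identification of \(\Mor\) with wrapped Floer homology.

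For (i), I would first place each idempotent generator of \(N\) at the appropriate intersection point of the two parameterizing arcs with a small dual graph of the quiver embedded in \(\FourPuncturedSphere\). Each component of the differential is a sum of algebra paths, and I would draw each such path as an arc in a tubular neighbourhood of the quiver, producing a decorated graph (a \emph{preloop}) supported near the quiver. Next I would develop a calculus of moves on preloops --- handle slides and cancellation of acyclic summands (Gaussian elimination of identity components of the differential) --- all preserving chain homotopy type. The target of this reduction is a preloop whose arcs assemble into a finite union of immersed curves with local systems, which one declares to be \(\L{N}\). The bigrading is tracked throughout, so \(\L{N}\) inherits a bigrading as well.

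For (ii), I would exploit that \(\BNAlgH\) is a gentle algebra: at each vertex there are at most two incoming and two outgoing arrows, and the relations \(DS=0=SD\) pair them appropriately. By the Butler--Ringel classification of indecomposables over gentle algebras, extended to the homotopy category of chain complexes, every object decomposes essentially uniquely as a sum of \emph{string} and \emph{band} complexes, with a choice of indecomposable local system on each band. Strings correspond bijectively to homotopy classes of non-compact immersed arcs on \(\FourPuncturedSphere\) as in Definition~\ref{def:intro:multicurves}, while bands with local systems correspond to primitive immersed loops with local systems. Krull--Schmidt for the homotopy category then upgrades this to a bijection between chain homotopy classes of complexes and multicurves, which together with (i) establishes the first half of the theorem. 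I expect the main obstacle here to be matching the admissible strings/bands (dictated by the gentle relations) with the topological conditions on curves --- in particular, that every non-compact arc has its endpoints at non-special punctures and that every loop is primitive --- which requires a careful analysis of paths passing near the special puncture \(\ast\), where the \(D\) loops play a distinguished role.

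For (iii), after reducing \(N\) and \(N'\) to canonical preloop form, \(\Mor(N,N')\) has an explicit basis indexed by algebra paths between the generators of \(N\) and of \(N'\). Each such basis element corresponds either to an intersection point of the underlying curves of \(\L{N}\) and \(\L{N'}\) (after isotoping to minimal position) or to a wrapping chord around one of the punctures; the latter encode the powers of the \(D\) loops that reflect wrapping past the special puncture \(\ast\). One then has to check that Maslov-index-one immersed bigons on \(\FourPuncturedSphere\) contributing to the wrapped Floer differential are in bijection with the algebra relations used in the morphism differential. This final step is where the bulk of the technical work lies, paralleling arguments of Hanselman--Rasmussen--Watson in bordered Heegaard Floer theory and of Haiden--Katzarkov--Kontsevich for Fukaya categories of surfaces; careful bookkeeping with local systems on bands is needed to recover the homology on both sides at the level of twisted coefficients.
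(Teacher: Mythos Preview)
Your outline for parts (i) and (iii) tracks the paper closely: the paper indeed builds a geometric avatar of a complex (called a \emph{precurve}) by placing generators on the parameterizing arcs and drawing differential components as arcs in faces, then simplifies via a calculus of moves (their moves (M1)--(M3), essentially handle slides and basis changes, together with Gaussian elimination) until only immersed curves with local systems remain. The algorithm is the arrow-pushing procedure of Hanselman--Rasmussen--Watson, adapted to this surface. For (iii), the paper likewise puts the two curves into a standard \emph{pairing position}, matches upper intersection points with identity-component morphisms and lower intersection points with a basis of $\Homology(\Mor^+)$, and checks that bigons compute the connecting map; this is the content of their Theorem~5.17.

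The substantive divergence is in (ii). The paper does \emph{not} invoke a gentle-algebra classification of indecomposables. Instead, the logical order is reversed: existence of the curve form is established first (Theorem~5.11), then the pairing theorem (iii) is proved, and only \emph{then} is uniqueness (Theorem~5.21) deduced from the pairing formula by probing with test curves and comparing growth rates of the two summands in the explicit formula for $\HF$ of parallel curves. Your route via string/band classification is viable, but two caveats: first, Butler--Ringel classifies \emph{modules}, not complexes up to homotopy, so you really need the derived-category analogue (Bekkert--Merklen, or more directly Haiden--Katzarkov--Kontsevich, which the paper cites as closely related); second, $\BNAlgH$ is infinite-dimensional (all powers $D^n$ survive), so the finite-dimensional gentle theory does not apply verbatim and one must work in the graded setting of HKK. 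The paper's approach trades this representation-theoretic input for a direct geometric argument, at the cost of needing the pairing theorem before uniqueness; your approach would make (ii) independent of (iii) but imports a heavier external result.
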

We prove Theorem~\ref{thm_from_intro:classification} for a general class of parameterized surfaces and their corresponding algebras, which may be of independent interest.

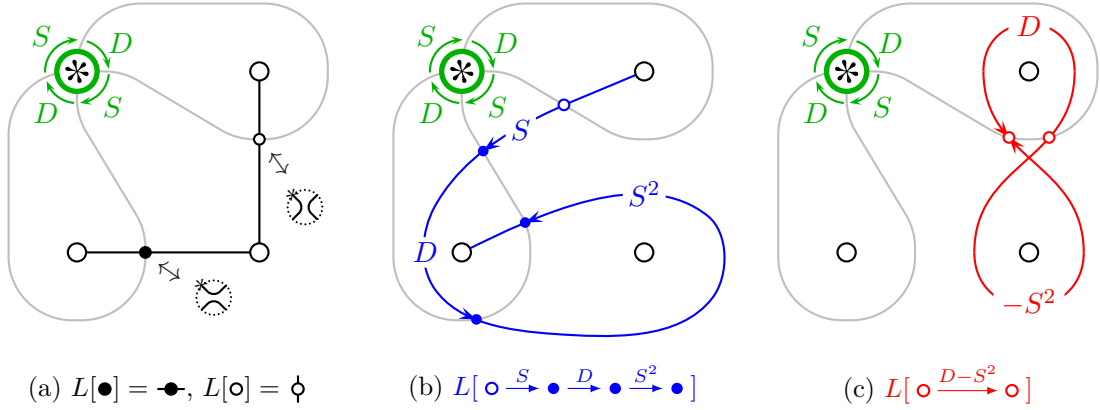
\begin{figure}[t]
	\centering
	\begin{subfigure}[t]{0.31\textwidth}
		\centering
		$\QuiverOnSurfaceDual$
		\vspace{4pt}
		\caption{$\L{\DotB}=\DotBarc$, $\L{\DotC}=\DotCarc$}\label{fig:exa:classification:curves:trivial}
	\end{subfigure}
	\begin{subfigure}[t]{0.31\textwidth}
		\centering
		$\PairingTrefoilArcINTRO$
		\caption{$\textcolor{blue}{
				L[
				\protect\begin{tikzcd}[nodes={inner sep=2pt}, column sep=13pt,ampersand replacement = \&]
				\protect\DotCblue
				\protect\arrow{r}{S}
				\protect\&
				\protect\DotBblue
				\protect\arrow{r}{D}
				\protect\&
				\protect\DotBblue
				\protect\arrow{r}{S^2}
				\protect\&
				\protect\DotBblue
				\protect\end{tikzcd}
				]}$}\label{fig:exa:classification:curves:arc}
	\end{subfigure}
	\begin{subfigure}[t]{0.31\textwidth}
		\centering
		$\PairingTrefoilLoopINTRO$
		\caption{$\textcolor{red}{
				L[
				\protect\begin{tikzcd}[nodes={inner sep=2pt}, column sep=23pt,ampersand replacement = \&]
				\protect\DotCred
				\protect\arrow{r}{D-S^2}
				\protect\&
				\protect\DotCred
				\protect\end{tikzcd}
				]}$}\label{fig:exa:classification:curves:loop}
		\bigskip
	\end{subfigure}
	\caption{The geometric interpretation of some chain complexes over the algebra $\BNAlgH$ illustrating the first part of Theorem~\ref{thm_from_intro:classification}
	}\label{fig:exa:classification:curves}
\end{figure}

\begin{example}\label{exa:intro:complexes_vs_curves}
	Under the correspondence from the first part of the Classification Theorem, the immersed curves associated with the chain complexes $\DD(\Lo)=[\DotB]$ and $\DD(\Li)=[\DotC]$ from Example~\ref{exa:intro:threetwist:DD} are the two non-compact curves  $\DotBarc$ and $\DotCarc$ in Figure~\ref{fig:exa:classification:curves:trivial}, intersecting the parameterizing arcs in the points $\DotB$ and $\DotC$, respectively. From the viewpoint of symplectic geometry, the Classification Theorem stems from the fact that the wrapped Fukaya subcategory $\wrFuk (\DotBarc, \DotCarc)$  of the 4-punctured sphere \(\FourPuncturedSphere\) is isomorphic to the algebra $\BNAlgH$ \cite[Theorem~7.6]{Bocklandt}. This, in turn, implies that the category of twisted complexes $\text{Tw} \wrFuk(\DotBarc,\DotCarc)$ is isomorphic to the category of chain complexes over $\BNAlgH$.
	
	More generally, the generators of a chain complex $N$ corresponding to some immersed curve $\L{N}$ with the 1-dimensional trivial local system are equal to the intersection points of $\L{N}$ with the two parameterizing arcs. Similarly, we can read off the differential of $N$ from $\L{N}$; see Figures~\ref{fig:exa:classification:curves:arc} and~\ref{fig:exa:classification:curves:loop}. Under the deformation retraction of $\FourPuncturedSphere$ onto the embedded quiver, each segment of the immersed curve $\L{N}$ is mapped to a path in the quiver. This path determines some power of $S$ or $D$ (an element of $\BNAlgH$), which defines the corresponding component of the differential.
	Thus, the blue non-compact curve in Figure~\ref{fig:exa:classification:curves:arc} represents the third chain complex from Example~\ref{exa:intro:threetwist:DD}.
	If the 1-dimensional local system of an immersed curve is non-trivial, one of the differentials in the corresponding chain complex is multiplied by that local system. So for example, the chain complex
	\begin{equation*}
	\Big[
	\begin{tikzcd}
	\DotC
	\arrow{r}{D-S^2}
	&
	\DotC
	\end{tikzcd}
	\Big]
	=
	\Big[
	\begin{tikzcd}
	\DotC
	\arrow[bend left]{r}{D}
	\arrow[bend right,swap]{r}{-S^2}
	&
	\DotC
	\end{tikzcd}
	\Big]
	\end{equation*}
	corresponds to the red compact curve in Figure~\ref{fig:exa:classification:curves:loop} carrying the local system $(-1)$.
	For immersed curves with general $n$-dimensional local systems, a similar method applies, except that each intersection point corresponds to $n$ generators.
\end{example}

\begin{example}\label{exa:intro_classification_part_II}
	Let us illustrate the second part of the Classification Theorem, where we equate the homology of the morphism space with the wrapped Lagrangian Floer homology.
	The dimension of the latter is usually equal to the minimal intersection number between the two immersed curves times the dimensions of the local systems.
	For example, the minimal intersection number between the red and the blue curve in Figure~\ref{fig:exa:classification:looparc} is three. So if the curves carry 1-dimensional local systems, as in Figures~\ref{fig:exa:classification:curves:arc} and~\ref{fig:exa:classification:curves:loop}, the homology of the morphism space between the two corresponding complexes is 3-dimensional: 
	\[
  	\Homology\left(
  		\Mor\left(
  		\textcolor{red}{[
  			\protect\begin{tikzcd}[nodes={inner sep=2pt}, column sep=23pt,ampersand replacement = \&]
  			\DotCred
  			\arrow{r}{D-S^2}
  			\&
  			\DotCred
  			\end{tikzcd}
  			]}
  			, 
			\textcolor{blue}{[
		  	\begin{tikzcd}[nodes={inner sep=2pt}, column sep=13pt,ampersand replacement = \&]
		  	\DotCblue
		  	\arrow{r}{S}
		  	\&
		  	\DotBblue
		  	\arrow{r}{D}
		  	\&
		  	\DotBblue
		  	\arrow{r}{S^2}
		  	\&
		  	\DotBblue
		  	\end{tikzcd}
		  	]}
    	\right)
    \right)
    =
    \field^3
    \]
	When computing the homology of two complexes which correspond to non-compact curves, we need to calculate this minimal intersection number after wrapping the first curve infinitely many times around the punctures at their ends. This is illustrated in Figure~\ref{fig:exa:classification:arcarc}, where we  compute the homology of the morphism space between $\DD(\Li)$ and $\DD(\ThreeTwistTangle)$:
	\[
	\Homology\left(
		\Mor\left(
			\textcolor{red}{[\DotCred]}
			,
			\textcolor{blue}{[
			\begin{tikzcd}[nodes={inner sep=2pt}, column sep=13pt,ampersand replacement = \&]
			\DotCblue
			\arrow{r}{S}
			\&
			\DotBblue
			\arrow{r}{D}
			\&
			\DotBblue
			\arrow{r}{S^2}
			\&
			\DotBblue
			\end{tikzcd}
			]}
		\right)
	\right) 
	\cong 
	\left[ \bigoplus^{\infty}_{i=1} \field \right]
	\oplus
	\field
	\]
	Finally, if two curves are parallel, the dimension of their wrapped Lagrangian Floer homology is the minimal intersection number plus a correction term which is determined by the local systems; see Theorem~\ref{thm:PairingFormula}. 
\end{example}

\begin{figure}[t]
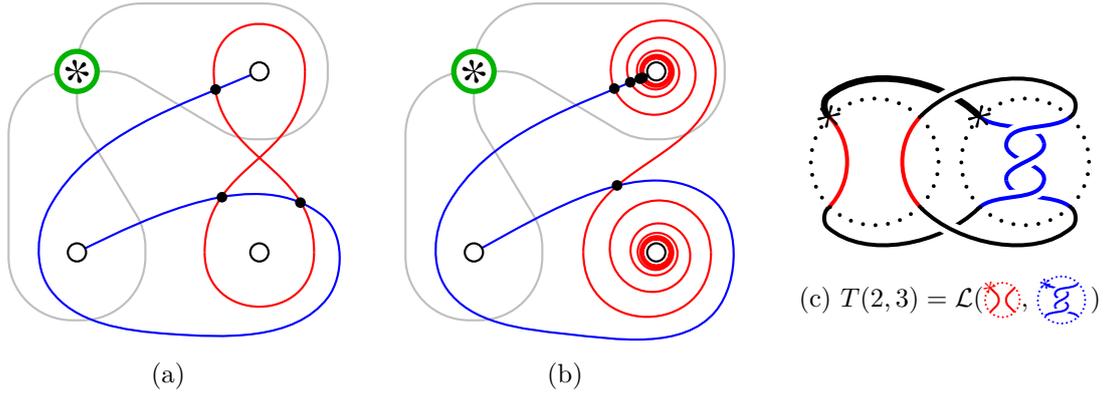

	\centering
	\begin{subfigure}[t]{0.32\textwidth}
		\centering
		$\PairingTrefoilLoopArcINTRO$
		\caption{}\label{fig:exa:classification:looparc}
	\end{subfigure}
	\begin{subfigure}[t]{0.32\textwidth}
		\centering
		$\PairingTrefoilArcArcINTRO$
		\caption{}\label{fig:exa:classification:arcarc}
		\bigskip
	\end{subfigure}
	\begin{subfigure}[t]{0.3\textwidth}
		\centering
		$\PairingStandardDiagramTrefoilUnoriented$
		\caption{$T(2,3)=\Lk(\protect\LiRed,\protect\ThreeTwistTangleBlue)$}
		\label{fig:intro:trefoil}
		\bigskip
	\end{subfigure}
	\caption{
		Figures~(a) and~(b) show the geometric interpretation of some morphism spaces from Example~\ref{exa:intro_classification_part_II} which illustrates the second part of Theorem~\ref{thm_from_intro:classification}. By Theorem~\ref{thm:intro:pairing}, these morphism spaces compute 
		the reduced Khovanov and Bar-Natan homologies of the trefoil knot, respectively, using the tangle splitting from Figure~(c). 
	}\label{fig:exa:classification}
\end{figure}
\subsection{Immersed curves for 4-ended tangles and gluing results}

By combining Theorems~\ref{thm_from_intro:BN_algebra=B} and \ref{thm_from_intro:classification}, we obtain the following three immersed curve invariants.

\begin{definition}	
Given a pointed 4-ended tangle \( T \), we associate three invariants 
\[
\begin{tikzcd}[nodes={inner sep=2pt}, column sep=25pt, row sep=0pt]
\BNr(T;\field)
=
L\Big[\DD(T;\field)
\Big]
\\
\Khr(T;\field)
=
L\Big[\DD(T;\field)
\arrow{r}{H \cdot  \id}
&
\DD(T;\field)\Big]
\\
\Kh(T;\field)
=
L\Big[\DD(T;\field)
\arrow{r}{H \cdot  \id}
&
\DD(T;\field) 
&
\DD(T;\field) 
\arrow{r}{H \cdot  \id} 
\arrow[swap]{l}{2 \cdot  \id}
&
\DD(T;\field)\Big]
\end{tikzcd}
\]
where \( \DD(T;\field)\coloneqq\DD(T)\otimes \field \). The latter two chain complexes are mapping cones of the maps indicated by the arrows, and the element \( H \) is equal to \( D-S^2 \in \BNAlgH \). 
\end{definition}

The notation in this definition is justified by the Geometric Pairing Theorem. For this, let \(\Lk=\Lk(T_1,T_2)\) be the link obtained by gluing two pointed 4-ended tangles \( T_1 \) and \( T_2 \) together as in Figure~\ref{fig:tanglepairingII}; see also Figure~\ref{fig:intro:trefoil}. 
Let \(\BNr(\Lk;\field)\), \(\Khr(\Lk;\field)\), and \(\Kh(\Lk;\field)\) denote the reduced Bar-Natan homology, the reduced Khovanov homology, and the unreduced Khovanov homology of \(\Lk\) over the field $\field$. In the first two cases, the reduction point of \(\Lk\) is equal to the identified distinguished tangle ends \(\ast\) of \(T_1\) and \(T_2\).

\begin{theorem}[Geometric Pairing Theorem]\label{thm:intro:pairing}
Up to appropriate grading shifts, there are bigraded isomorphisms
\begin{align*}
\BNr(\Lk;\field)&\cong\HF (\BNr(\mirror T_1;\field),\BNr(T_2;\field)) \\
\Khr(\Lk;\field)&\cong\HF (\Khr(\mirror T_1;\field),\BNr(T_2;\field)) \\
\Kh(\Lk;\field)&\cong\HF (\Kh(\mirror T_1;\field),\BNr(T_2;\field))
\end{align*}
where \( \mirror T_1 \) denotes the mirror of the tangle \(T_1\). The first is an isomorphism of \( \field[H] \)-modules, and the second and third are isomorphisms of \( \field \)-vector spaces.
\end{theorem}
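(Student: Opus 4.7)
My plan is to deduce all three isomorphisms from the Classification Theorem applied to the morphism complex between $\DD(\mirror T_1)$ and $\DD(T_2)$, once an algebraic pairing result at the chain level is established. The three invariants $\BNr$, $\Khr$, $\Kh$ are built as iterated mapping cones from the common building block $\DD$, so once the $\BNr$-case is understood, the other two should follow by formal adjunction arguments that move the cone operations between the two sides of the pairing.

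The first step is to prove a chain-level algebraic pairing: for any two pointed 4-ended tangles $T_1, T_2$, there is a chain homotopy equivalence
\[
\BNr(\Lk(T_1,T_2);\field)\simeq \Mor_{\BNAlgH}\bigl(\DD(\mirror T_1;\field),\DD(T_2;\field)\bigr)
\]
up to a grading shift. This is essentially functoriality of Bar-Natan's construction: gluing two tangles along their common boundary corresponds to composing cobordisms in $\Cob_{/l}$, and Theorem~\ref{thm_from_intro:BN_algebra=B} identifies the relevant full subcategory with $\BNAlgH$. The mirror on the first factor is dictated by the fact that one side of the gluing must be "turned around", which at the categorical level translates into the contravariance of the first entry of $\Mor$. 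Once this is in place, the second part of Theorem~\ref{thm_from_intro:classification} immediately gives the first isomorphism
\[
\BNr(\Lk;\field)\cong \HF\bigl(\BNr(\mirror T_1;\field),\BNr(T_2;\field)\bigr),
\]
and the $\field[H]$-module structure on both sides is induced by the $H$-action on $\DD$.

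For the reduced Khovanov case, I would use that $\Khr(\Lk;\field)$ is the mapping cone of $H\cdot\id$ on $\BNr(\Lk;\field)$, so by the chain-level pairing of Step~1,
\[
\Khr(\Lk;\field)\simeq \mathrm{Cone}\bigl(H\colon\Mor(\DD(\mirror T_1),\DD(T_2))\to \Mor(\DD(\mirror T_1),\DD(T_2))\bigr).
\]
The adjunction $\Mor(\mathrm{Cone}(H\colon A\to A),B)\simeq \mathrm{Cone}(H\colon \Mor(A,B)\to \Mor(A,B))$ (up to shift) lets me transfer this cone to the first argument, producing $\Mor(\Khr(\mirror T_1;\field),\DD(T_2;\field))$; applying the Classification Theorem once more yields the desired $\HF$ identification. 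The unreduced case proceeds identically: the four-term complex defining $\Kh(T_1)$ is an iterated mapping cone assembling $H\cdot\id$ and $2\cdot\id$, and the same adjunction argument moves this iterated cone from the link side to the $T_1$ side. In characteristic two the $2\cdot\id$ map vanishes and $\Kh$ becomes a direct sum of two shifted copies of $\Khr$, matching the well-known relationship between reduced and unreduced Khovanov homology with $\F_2$-coefficients.

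The main obstacle is Step~1: while the algebraic pairing is natural from a categorical perspective, verifying it requires careful bookkeeping. One must check compatibility of the isomorphism $\End_{/l}(\Ni\oplus\No)\cong \BNAlgH$ with the gluing operation and with the chosen distinguished tangle end, identify the geometric mirror operation $T_1\mapsto \mirror T_1$ with the appropriate duality on $\BNAlgH$-complexes, and track all gradings and reduction-basepoint data through the composition. The remaining formal manipulations, namely the adjunctions between mapping cones and morphism complexes, are routine homological algebra.
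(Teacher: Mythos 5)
Your proposal follows the same route as the paper: the chain-level algebraic pairing you outline in Step~1 is precisely Proposition~\ref{prop:alg_pairing_kh}, which combined with Theorem~\ref{thm:PairingMorLagrangianFH} (the morphism-space part of the Classification Theorem) yields the $\BNr$ isomorphism, and the mapping-cone adjunction you describe is exactly the mechanism the paper uses in the proof of Theorem~\ref{thm:pairing} (specifically pairings~(P5) and~(P6)) to transfer the cones of $H\cdot\id$ and $2\cdot\id$ from the link invariant to the first tangle factor, including the characteristic-$2$ case where $\Kh$ splits as two shifted copies of $\Khr$. Your identification of the grading and mirror bookkeeping in Step~1 as the main technical burden is also accurate.
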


The reduced Bar-Natan homology appears to be a natural object to consider; in Proposition~\ref{prop:alg_pairing_kh} we explain a close connection between $\BNr(\Lk;\field)$ and Bar-Natan's universal bracket. Furthermore, Theorem~\ref{theo:BNr_is_a_link_invariant} proves that  $\BNr(\Lk;\field)$ does not depend on the reduction basepoint and, in fact, carries a $\field[H_1,\ldots,H_\ell]$-module structure, where $\ell$ is the number of link components. This bears close resemblance to the structure on link Floer homology; see \cite{OS_Link_Floer}. 

\begin{example}
	Following Example~\ref{exa:intro:complexes_vs_curves}, 
	$\BNr(\Lo;\field)$ and $\BNr(\Li;\field)$ are the two non-compact curves $\DotBarc$ and $\DotCarc$ from Figure~\ref{fig:exa:classification:curves:trivial}, and the blue curve from Figure~\ref{fig:exa:classification:curves:arc} is equal to $\BNr\Big(\ThreeTwistTangle;\field\Big).$
	Therefore, according to the Geometric Pairing Theorem, the wrapped Lagrangian Floer homology between the two curves in Figure~\ref{fig:exa:classification:arcarc} computes the reduced Bar-Natan homology of the trefoil knot $\BNr(T(2,3))=\field[H]\oplus\field$, using the splitting from Figure~\ref{fig:intro:trefoil}. Here, the $H$-action on the generators corresponds to wrapping around the non-special puncture. 
	The figure-8 from Figure~\ref{fig:exa:classification:curves:loop} is equal to $\Khr(\Li)$, so the reduced Khovanov homology of the trefoil knot $\Khr(T(2,3))=\field^3$ is generated by the intersection points in Figure~\ref{fig:exa:classification:looparc}. 
\end{example}

For more examples of immersed curve invariants of tangles and illustrations of the Pairing Theorem, see Sections~\ref{sec:FigureEight} and~\ref{sec:Pairing}, respectively. 
In general, our invariants turn out to be almost as easy to compute as Bar-Natan's universal tangle invariant \( \KhTl{T} \); see the python module~\cite{KhT.py} written by Gurkeerat Chhina and the third author. Note that the curves defined above may not be the same for different choices of \( \field \), just like the dimension of the Khovanov homology of a fixed link may vary depending on the field of coefficients. We discuss some examples of this phenomenon in Section~\ref{subsec:field_dependence}.

\subsection{Naturality under the mapping class group action}\label{subsec:intro:naturality} 

Before moving on to applications of our tangle invariants, let us pause for a moment and recapitulate their construction. This is summarized in Figure~\ref{fig:summary:construction} in the case of the (most general) tangle invariant $\BNr(T)$. Broadly speaking, we associate with a 1-dimensional topological object in a 3-dimensional space an algebraic invariant, which we then reinterpret via a classification theorem in terms of a 1-dimensional object in a 2-dimensional space. This raises the obvious question, whether there exists a more intrinsic relationship between the two topological objects at the two ends of the construction. 

\begin{figure}[t]
	\centering
	\[ 
	\begin{tikzcd}[row sep=2cm, column sep=2.5cm]
	\substack{\text{pointed 4-ended}\\\text{tangle $T$}}
	\arrow[mapsto]{r}{\text{\protect\cite{BarNatanKhT}}}
	&
	\KhTl{T}
	\arrow[leftrightarrow]{r}{\substack{\End_{/l}(\smallNi\oplus\smallNo)\cong\BNAlgH\\\text{(Theorem~\ref{thm_from_intro:BN_algebra=B})}}}[swap]{\text{over any coefficients}}
	&
	\DD(T)
	\arrow[leftrightarrow]{r}{\substack{\text{Classification Theorem}\smallskip\\\text{(Theorem~\ref{thm_from_intro:classification})}}}[swap]{\text{over fields $\field$ only}}
	&
	\BNr(T;\field)
	\end{tikzcd} 
	\]  
	\caption{Summary of the construction of $\BNr(T;\field)$}
	\label{fig:summary:construction}
\end{figure}

The definition of tangles that Bar-Natan uses for the construction of $\KhTl{T}$ is purely combinatorial: tangles are tangle diagrams up to the usual Reidemeister moves. This is also the definition that we primarily use in this paper. However, if we want to interpret tangles topologically, we have some more flexibility. The following two definitions of 4-ended tangles are relevant:

\begin{definition}\label{def:4-ended_tangle}
	An {\bf unparameterized} pointed 4-ended tangle \( T \) in a 3-ball~\( D^3 \) is an embedding of two arcs and a finite (possibly empty) collection of circles
	\[ T\co\left(I \sqcup I \sqcup \bigsqcup S^1,\partial I \sqcup \partial I\right)\hookrightarrow \left(D^3,\partial D^3\right)\]
	considered up to isotopy, together with a choice of a distinguished tangle end 
	\(\ast\in \partial T\coloneqq T(\partial I \sqcup \partial I).\)
	A {\bf parameterized} pointed 4-ended tangle \( T \) consists of the data above together with a parameterization given by an oriented circle \( S^1 \) on the boundary \( \partial D^3 \) containing the four ends \(\partial T\). This is illustrated in Figure~\ref{fig:intro:CircleParametrization}. 
\end{definition}

Bar-Natan's definition of tangles is equivalent to the notion of parameterized tangles. To see this, note that the parameterization specifies an embedded disc in $D^3$, which is unique up to isotopy. The orientation of the parameterizing circle induces an orientation of the embedded disc. The image of a generic projection onto this disc determines a tangle diagram which is unique up to a sequence of Reidemeister moves.

We now turn to the topological objects at the other end of our construction. By definition, $\FourPuncturedSphere$ carries a parameterization by two arcs. However, the notion of immersed curves with local systems on $\FourPuncturedSphere$ does not require a choice of such a parameterization.
Also, note that up to this point, we have treated $\FourPuncturedSphere$ as an abstract surface. Given our topological definition of a 4-ended tangle $T$, there is already a 4-punctured sphere in our construction, namely the boundary of the 3-ball which surrounds the tangle minus the four tangle ends. 
Moreover, both this 4-punctured sphere, which we will refer to as the {tangle boundary}, and $\FourPuncturedSphere$ come with a distinguished puncture (the point carrying the mark $\ast$). In the construction of the immersed curve invariants, both of these punctures play important roles, the first in the correspondence between \(\KhTl{T}\) and \(\DD(T)\) and second in the correspondence between \(\DD(T)\) and $\BNr(T)$. This apparent similarity suggests that $\FourPuncturedSphere$ should be identified with the tangle boundary; Figure~\ref{fig:intro:NotionsOfTangles} illustrates how we do this. While we can always make such an identification, the question is whether our tangle invariants are natural with respect to this identification. In other words:

\begin{question}
	Are \(\BNr(T;\field)\), \(\Khr(T;\field)\) and \(\Kh(T;\field)\) invariants of \emph{unparameterized} pointed 4-ended tangles \(T\)?
\end{question}

We expect the answer to be positive for all fields $\field$. However, we only show this for the field of two elements, which we will denote throughout this paper by \(\fieldTwoElements\). We prove: 
\begin{theorem}
	Let \( T \) be a pointed 4-ended tangle and \( \rho \) an element of the mapping class group of the 4-punctured sphere which fixes the special end of \( T \). Then  
	\[ 
	\BNr(\rho(T);\fieldTwoElements)=\rho(\BNr(T;\fieldTwoElements)),
	~
	\Khr(\rho(T);\fieldTwoElements)=\rho(\Khr(T;\fieldTwoElements))
	~ 
	\text{and}
	~
	\Kh(\rho(T);\fieldTwoElements)=\rho(\Kh(T;\fieldTwoElements)).
	\]
\end{theorem}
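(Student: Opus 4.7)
The plan is to reduce the statement to naturality under a generating set of $\mathrm{MCG}(\FourPuncturedSphere,\ast)$, translate those generators into algebraic operations on $\DD(T)$ via the correspondence summarised in Figure~\ref{fig:summary:construction}, and then verify geometric naturality curve-by-curve using the Classification Theorem.

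First I would choose generators of $\mathrm{MCG}(\FourPuncturedSphere,\ast)$: the two half-twists $\rho_1,\rho_2$ braiding the pairs of non-special punctures separated by the two parameterising arcs. Topologically, performing $\rho_i$ on the tangle boundary amounts to stacking a single elementary crossing onto $T$ at the corresponding pair of tangle ends, producing the tangle $\rho_i(T)$. Functoriality of Bar-Natan's invariant under stacking, together with Theorem~\ref{thm_from_intro:BN_algebra=B}, then realises $\DD(\rho_i(T))$ as a tensor product $\DD(T)\otimes_{\BNAlgH}\mathbf{M}_i$, for an explicit $\BNAlgH$-bimodule $\mathbf{M}_i$ that can be computed once and for all from the Bar-Natan complex of the elementary crossing tangle.

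Next I would show that the Classification Theorem intertwines the algebraic operation $-\otimes_{\BNAlgH}\mathbf{M}_i$ with the geometric Dehn twist $\rho_i$ acting on multicurves. The key point is that both operations are \emph{local}: tensoring with $\mathbf{M}_i$ only changes $\DD(T)$ near the parameterising arc separating the two punctures braided by $\rho_i$, while $\rho_i$ itself is supported in a neighbourhood of those punctures. Combining this locality with the fact that every multicurve is assembled from the basic non-compact arcs $\DotBarc,\DotCarc$ and primitive compact loops (compare Figures~\ref{fig:exa:classification:curves:trivial} and~\ref{fig:exa:classification:curves:loop}) reduces the intertwining to a finite computation: tensor each generating complex with $\mathbf{M}_i$, extract the resulting multicurve via Theorem~\ref{thm_from_intro:classification}, and compare with the image of the generating curve under the Dehn twist $\rho_i$.

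The main obstacle, and the reason for restricting to $\field=\fieldTwoElements$, is sign control. The twist bimodules $\mathbf{M}_i$ generically introduce signs into the local systems of the compact curves produced by the Classification Theorem, and matrix similarity of local systems is not flexible enough to absorb such signs over a general field; moreover, the identification of twisted complexes in the wrapped Fukaya subcategory with chain complexes over $\BNAlgH$ is itself only pinned down up to signs. Over $\fieldTwoElements$ all of these signs vanish, and the combinatorial check on the generating family of multicurves goes through cleanly. Finally, the statements for $\Khr(T;\fieldTwoElements)$ and $\Kh(T;\fieldTwoElements)$ follow formally from the statement for $\BNr(T;\fieldTwoElements)$, because they are mapping cones built out of $\DD(T;\fieldTwoElements)$ using the central element $H=D-S^2\in\BNAlgH$, and multiplication by $H$ commutes (up to homotopy) with the twist bimodule action on either side.
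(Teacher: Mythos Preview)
Your overall strategy matches the paper's: reduce to half-twist generators, realise each as box-tensoring with an explicit type~AD bimodule over $\BNAlgH$ (this is Proposition~\ref{prop:AddingASingleCrossing}), verify geometric compatibility, and then deduce the $\Khr$ and $\Kh$ statements from the $\BNr$ statement by checking that the twist bimodule commutes with the mapping-cone bimodule $[\mathbb{I}\xrightarrow{(-\vert H^n)}\mathbb{I}]$, exactly as you outline.

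The step that is not right as stated is your verification of geometric compatibility. You propose a ``finite computation'' on the curves $\DotBarc,\DotCarc$ together with ``primitive compact loops'', but there are infinitely many homotopy classes of primitive compact loops, and multicurves are not assembled from smaller multicurves in a way that the bimodule action respects. What the paper actually does is decompose an arbitrary curve, already put in standard form via the Classification Theorem, into elementary \emph{curve segments}, one per arrow of the underlying type~D structure. There are only finitely many arrow \emph{types} (labels $H^n\cdot D$, $H^n\cdot SS$, $H^n\cdot S$ in each idempotent, with a small number of subcases recording the adjacent arrow); the paper tensors each type with the twist bimodule, simplifies via cancellation and clean-up, and matches the result against the corresponding geometric segment read off with respect to the Dehn-twisted arc system (Figures~\ref{fig:MCGbimodule} and~\ref{fig:MCGactionArcsPuzzlePieces}). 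The higher actions in the bimodule are precisely what make this delicate: they combine adjacent arrows, so the case analysis must track short chains of arrows, not just individual generators. Local systems are handled at the end by pushing them onto a segment whose type is unaffected by the simplifications. Your locality intuition is correct, but the unit of locality is a curve segment, not a generating curve.
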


\begin{figure}[t]
	\centering
	\begin{subfigure}[t]{0.32\textwidth}
		\centering
		$\ATangleInThreeBall$
		\caption{}\label{fig:intro:CircleParametrization}
	\end{subfigure}
	\begin{subfigure}[t]{0.32\textwidth}
		\centering
		$\ATangleInThreeBallArcParametrization$
		\caption{}\label{fig:intro:ArcParametrization}
	\end{subfigure}
	\begin{subfigure}[t]{0.32\textwidth}
		\centering
		$\ATangleInThreeBallWithBNrCurves$
		\caption{}\label{fig:intro:NoParametrization}
	\end{subfigure}
		\caption{An illustration of the identification of the tangle boundary with the 4-punctured sphere $\FourPuncturedSphere$ for the $(2,-3)$-pretzel tangle $T_{2,-3}$. 
		(a) shows the tangle with the usual parameterization of the tangle boundary by a circle. 
		In (b), the tangle boundary has been identified with $\FourPuncturedSphere$. 
		Finally, (c) shows the curve $\BNr(T_{2,-3};\fieldTwoElements)$ on the tangle boundary without any parametrization.
		}
	\label{fig:intro:NotionsOfTangles}
\end{figure}

\subsection{Conway mutation}

The fact that our immersed curve invariants satisfy a pairing theorem makes them ideal for studying how Khovanov and Bar-Natan homology of links
behave under tangle replacements such as Conway mutation. 
Our particular choice of basis for the morphism spaces of $\Cob_{/l}$, which underpins the isomorphism from Theorem~\ref{thm_from_intro:BN_algebra=B}, is also key to the following result:

\begin{theorem}\label{thm:intro:mutation:BNrTangles}
	Let \(T\) and \(T'\) be two pointed 4-ended tangles which are obtained from one another by Conway mutation. Then the underlying immersed curves of \(\BNr(T;\field)\) and \(\BNr(T';\field)\) agree, and so do their local systems up to multiplication by \(\pm1\). 
\end{theorem}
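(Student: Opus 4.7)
The plan is to realise Conway mutation as a specific geometric operation on the pointed tangle and then track how this operation propagates through the construction $T \mapsto \DD(T) \mapsto \BNr(T;\field)$. Concretely, mutation of a pointed 4-ended tangle $T$ is implemented by a self-diffeomorphism of the 3-ball that restricts on the tangle boundary to an element $\rho$ of the mapping class group of $\FourPuncturedSphere$ which fixes the distinguished puncture $\ast$ and permutes the remaining ones; three mutation axes need to be handled. Our goal is to show that $\DD(\rho(T))$ is chain homotopy equivalent, as a complex over $\BNAlgH$, to the result of applying a simple algebra automorphism $\rho^{*}$ to $\DD(T)$, and that $\rho^{*}$ is invisible at the level of multicurves apart from possible sign changes in local systems.

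The first step is to compute the action of $\rho$ on the algebra $\BNAlgH$. Here we exploit the feature emphasised in the text: the basis for the morphism spaces of $\Cob_{/l}$ underpinning Theorem~\ref{thm_from_intro:BN_algebra=B} depends only on the choice of distinguished tangle end, and since $\rho$ fixes $\ast$ we can use the \emph{same} basis to describe morphism spaces before and after mutation. A direct inspection of how $\pi$-rotation about an axis through $\ast$ transforms the basis cobordisms shows that $\rho$ either fixes or swaps the idempotents $\Ni, \No$, and that on the generators $S$ and $D$ it acts by scaling by signs. In particular, $\rho^{*}$ is an algebra automorphism preserving the quiver structure.

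The second step is functoriality: because Bar-Natan's invariant is built locally from cobordisms and mutation is implemented by an ambient rotation that preserves crossingless resolutions up to the identification of their tangle boundary with $\FourPuncturedSphere$, one obtains a canonical chain homotopy equivalence $\DD(\rho(T)) \simeq \rho^{*}(\DD(T))$ of complexes over $\BNAlgH$. The third step invokes Theorem~\ref{thm_from_intro:classification}. The multicurve $\L{N}$ associated with a complex $N$ is determined by the words in $S$ and $D$ read off from the differential; replacing $S$ by $\pm S$ and $D$ by $\pm D$ preserves the powers of these generators, so the underlying curve segments traced out on $\FourPuncturedSphere$ are unchanged up to homotopy. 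For non-compact curves the local system is the identity and hence unaffected; for a compact curve, one extra sign may appear in the monodromy per traversal of the generator whose coefficient is rescaled, which gives exactly the $\pm 1$ ambiguity claimed.

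The main obstacle will be keeping the sign bookkeeping under control. Over $\fieldTwoElements$ the issue is trivial and the argument reduces to the naturality statement already used to establish mutation invariance there; but for a general field $\field$ one must verify that the signs introduced by $\rho$ on the cobordism basis really do assemble into the single global sign twist of the local system predicted by the classification, and do not depend on auxiliary choices such as the mutation axis or the homotopy class of $\rho$ as a self-diffeomorphism fixing $\ast$. Once this sign analysis is carried out, the three mutation axes are handled uniformly by combining this invariance statement with the fact that the non-special punctures are interchangeable at the level of unoriented underlying curves.
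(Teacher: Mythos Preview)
Your proposal rests on a geometric premise that is false: none of the three Conway mutation axes passes through a tangle end, so the $180^\circ$ rotation implementing $\muti$ does \emph{not} restrict to a mapping class of $\FourPuncturedSphere$ fixing the distinguished puncture~$\ast$. With the four tangle ends at the corners of a square, the mutation axes are the horizontal, vertical, and perpendicular axes through the centre; each of the resulting involutions permutes all four punctures nontrivially. This undermines your first step: you cannot use ``the same basis'' before and after mutation, because the basis of simple cobordisms depends on which tangle end is special, and that end has moved. It also makes your final paragraph misleading: the paper's mutation argument is not a special case of the mapping-class-group naturality of Section~\ref{sec:MCGaction}; the two arguments are independent, and the latter is only proved over~$\fieldTwoElements$.

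What the paper actually does (Theorems~\ref{thm:mutation:DD} and~\ref{thm:mutation:BNr}) is exploit Observation~\ref{obs:mutation}, which records how the isomorphisms $\omega_i\co\BNAlgH\to\End_{/l}(\Ni\oplus\No)$ for different basepoints~$i$ compare: they agree on~$S$ and differ on~$D$ only by a sign. The ambient rotation~$\rotz$ (resp.\ reflection $\refx$, $\refy$) gives $\KhTl{\muti(T)}=\rotz(\KhTl{T})$ on the nose, exactly as your second step anticipates. But to descend to~$\BNAlgH$ one must re-interpret the rotated complex via~$\omega_1$, while the rotation has carried the position-$1$ basis to the position-$j$ basis for some $j\neq 1$. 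Observation~\ref{obs:mutation} then yields $\rotz\circ\omega_1=\omega_1\circ\varphi_z$, where $\varphi_z$ is the explicit algebra automorphism sending $D\mapsto\pm D$, $S\mapsto S$ and fixing both idempotents. Hence $\DD(\muti(T))=\muti(\DD(T))$ over any commutative ring, and the effect on curves and local systems follows at once. Your third step---reading off the curve once the sign automorphism is known---is correct; the gap lies entirely in how that automorphism is obtained.
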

In particular, this implies:

\begin{theorem}\label{thm:intro:mutation:BNrLinks}
	Reduced Bar-Natan homology is mutation invariant over the field \(\fieldTwoElements\).
\end{theorem}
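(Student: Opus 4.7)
The plan is to deduce the link-level statement directly from its tangle-level counterpart, Theorem~\ref{thm:intro:mutation:BNrTangles}, by way of the Geometric Pairing Theorem~\ref{thm:intro:pairing}. A Conway mutation of a link $\Lk$ is obtained by cutting $\Lk$ along an embedded 4-punctured sphere~$S$, applying a $180^\circ$ rotation to the 3-ball on one side of $S$, and regluing. Accordingly, there is a decomposition $\Lk=\Lk(T_1,T_2)$ into 4-ended tangles such that the mutant has the form $\Lk'=\Lk(T_1,T_2')$, where $T_2'$ is the Conway mutant of $T_2$. Choosing one of the four intersection points $\Lk\cap S$ as the distinguished tangle end and transporting it under the mutation equips $T_2$ and $T_2'$ with the structure of pointed 4-ended tangles related as in Theorem~\ref{thm:intro:mutation:BNrTangles}.

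Applying the Pairing Theorem over $\fieldTwoElements$ to both decompositions yields bigraded isomorphisms
\[
\BNr(\Lk;\fieldTwoElements)\cong\HF\bigl(\BNr(\mirror T_1;\fieldTwoElements),\BNr(T_2;\fieldTwoElements)\bigr)
\quad\text{and}\quad
\BNr(\Lk';\fieldTwoElements)\cong\HF\bigl(\BNr(\mirror T_1;\fieldTwoElements),\BNr(T_2';\fieldTwoElements)\bigr)
\]
of $\fieldTwoElements[H]$-modules, up to the appropriate grading shifts. By Theorem~\ref{thm:intro:mutation:BNrTangles}, the multicurves $\BNr(T_2;\fieldTwoElements)$ and $\BNr(T_2';\fieldTwoElements)$ have identical underlying immersed curves on $\FourPuncturedSphere$, with local systems agreeing up to multiplication by $\pm 1$. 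In characteristic two this sign is trivial, so $\BNr(T_2;\fieldTwoElements)=\BNr(T_2';\fieldTwoElements)$ as multicurves with local systems. The two wrapped Floer homology groups on the right-hand side therefore coincide, yielding the desired isomorphism $\BNr(\Lk;\fieldTwoElements)\cong\BNr(\Lk';\fieldTwoElements)$.

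The main obstacle in this argument lies not here but at the tangle level, in the proof of Theorem~\ref{thm:intro:mutation:BNrTangles}; once that is in hand, the link-level consequence is essentially formal. A minor bookkeeping point is that a Conway mutation can move the identified distinguished tangle end, so the two applications of the Pairing Theorem may require reduction basepoints placed at different punctures of $S$ on $\Lk$ and $\Lk'$. This is harmless thanks to Theorem~\ref{theo:BNr_is_a_link_invariant}, which ensures that $\BNr(\Lk;\fieldTwoElements)$ is independent of the choice of reduction basepoint, so the two modules compared above may be freely identified with the reduced Bar-Natan homology at any chosen basepoint of the respective link.
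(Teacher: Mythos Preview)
Your proof is correct and follows essentially the same route as the paper: the body version of this result (Theorem~\ref{thm:mutation:BNr}) reduces the link-level statement to the tangle-level one and then invokes the Pairing Theorem~\ref{thm:pairing}, exactly as you do. Your additional care about basepoints via Theorem~\ref{theo:BNr_is_a_link_invariant} is not strictly needed in the paper's setup---there the pointed mutant $\muti(T)$ keeps the distinguished end at the same boundary position, so the reduction basepoint on $\Lk$ and $\Lk'$ sits at the same puncture of the Conway sphere---but it does no harm.
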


Some partial progress in the direction of Theorem~\ref{thm:intro:mutation:BNrLinks} was made by Guo \cite{Guo}. An analogous result for reduced Khovanov homology \(\Khr(\Lk;\fieldTwoElements)\) was proved by Bloom \cite{Bloom} and  Wehrli \cite{wehrli2010mutation}; note that Theorem \ref{thm:intro:mutation:BNrTangles} re-establishes their result via different methods. In the general case of arbitrary coefficients, Theorem~\ref{thm:intro:mutation:BNrLinks} is false. The first counterexample was provided by Wehrli~\cite{WehrliCounterexample}. Theorem~\ref{thm:intro:mutation:BNrTangles} now provides a very satisfying answer why this is the case; see Example~\ref{exa:mutation:counterexample}. However, it is still unknown if \emph{component preserving} mutation leaves the invariants unchanged. In the case of reduced Khovanov homology with rational coefficients, Lambert-Cole has verified this for certain infinite families of knots \cite{LambertCole}. In the same paper, he also provided a strategy of proving mutation invariance for Khovanov-Floer theories over \( \fieldTwoElements \), which was recently implemented by Saltz \cite{saltz2018mutation}.

The curves \( \Khr(T) \) and \( \Kh(T) \) are compact, but \( \BNr(T) \) is not. In fact, \( \BNr(T) \) contains \( 2^{n} \) non-compact components, where \(n\) is the number of closed components of \( T \). In particular, if \( T \) does not contain any closed component, then \( \BNr(T) \) contains exactly one non-compact curve and this component contains the information about Rasmussen's \( s \)-invariant~\cite{Rasmussen_slice_genus}. Since non-compact components do not have interesting local systems, this implies: 
\begin{theorem}
Rasmussen's \(s\)-invariant of knots is mutation invariant over any field \( \field \).
\end{theorem}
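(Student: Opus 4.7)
The plan is to extract Rasmussen's $s$-invariant directly from the unique non-compact component of $\BNr(T;\field)$ associated with a knot, and then to apply Theorem \ref{thm:intro:mutation:BNrTangles} to this component, where it interacts trivially with the $\pm 1$ ambiguity in local systems.

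First I would set the stage: let $K = \Lk(T_1, T_2)$ be a knot with Conway mutant $K' = \Lk(T_1', T_2)$, where the pointed 4-ended tangles $T_1$ and $T_1'$ are related by mutation. Since $K$ is a knot, a closed component of $T_1$ or $T_2$ would produce an unlinked circle component of $K$, a contradiction; hence both $T_1$ and $T_2$ (and therefore also $T_1'$ and their mirrors) have no closed components. By the observation immediately preceding the statement, each of $\BNr(T_1;\field)$, $\BNr(T_1';\field)$, and $\BNr(T_2;\field)$ contains exactly one non-compact curve, and each such curve carries the trivial local system.

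Next I would invoke Theorem \ref{thm:intro:mutation:BNrTangles} applied to the mirrors $\mirror T_1$ and $\mirror T_1'$, which are themselves related by Conway mutation. This yields that their underlying multicurves coincide and that their local systems agree up to multiplication by $\pm 1$. In particular, the unique non-compact components, carrying the trivial local system, are literally equal as curves in $\FourPuncturedSphere$; call this common non-compact curve $\overline{\gamma}$. Denote by $\delta$ the non-compact component of $\BNr(T_2;\field)$.

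Finally, by the Geometric Pairing Theorem there is a bigraded isomorphism of $\field[H]$-modules
\[
\BNr(K;\field)\cong\HF\bigl(\BNr(\mirror T_1;\field),\,\BNr(T_2;\field)\bigr),
\]
and analogously for $K'$. The $H$-action on the right-hand side corresponds to infinite wrapping around the non-special punctures, so any summand of the pairing that involves at least one compact Lagrangian component is finite-dimensional over $\field$ and is hence $\field[H]$-torsion. Consequently the free $\field[H]$-part of $\BNr(K;\field)$ is canonically isomorphic, as a bigraded $\field[H]$-module, to the wrapped Floer homology $\HF(\overline{\gamma},\delta)$; the same computation delivers the free $\field[H]$-part of $\BNr(K';\field)$. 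Since $s(K;\field)$ is, by definition, determined by the quantum grading of the generator of this free summand, this forces $s(K;\field) = s(K';\field)$.

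The main obstacle is making the final paragraph quantitatively precise: one must pin down the exact grading shift in the Geometric Pairing Theorem, match the free $\field[H]$-summand with the $s$-invariant generator under the standard normalization, and verify that the $\pm 1$ sign ambiguity on local systems of compact components really does only affect torsion in $\BNr(K;\field)$. Once this bookkeeping is in place, the theorem follows formally from the combination of Theorem \ref{thm:intro:mutation:BNrTangles} and the Geometric Pairing Theorem.
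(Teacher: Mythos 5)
Your proposal is correct and takes essentially the same route as the paper: both decompose the pairing into arc-type and compact summands, observe that the $\field[H]$-tower can only live in the arc-arc piece since the remaining summands are finite-dimensional and hence torsion, note that the non-compact (arc) components carry trivial local systems and are therefore unaffected by the $\pm1$ ambiguity from Theorem~\ref{thm:intro:mutation:BNrTangles}, and conclude via the identification of $s^\field$ with the quantum grading of the tower generator (Proposition~\ref{prop:s-inv-as-grading}). The paper phrases this algebraically in terms of $\Mor(\DDa(\cdot),\DDa(\cdot))$ via Proposition~\ref{prop:alg_pairing_kh}, while you phrase it geometrically via $\HF$, but the content is the same.
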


\subsection{Infinitely twisted knots}


Rozansky gives a definition of Khovanov homology of a link inside $S^2 \times S^1$ \cite{Kh_Roz_HH}. In this setting, a link is considered as a circular closure of a tangle $T$ and its Khovanov homology is defined using the Hochschild homology of a bimodule he associates with $T$. Rozansky also studies a certain universal projective resolution complex $\mathbf{P}_n$, which he approximates by complexes of torus braids with a large number of twists. This gives rise to a practical means of computing Khovanov homology in $S^2 \times S^1$ in terms of the limit of the Khovanov homologies of the ‘‘torus braid closures'' of $T$ within $S^3$.
In related work,  Rozansky  proves that \( \mathbf{P}_n \) categorifies the $n$-stranded Jones-Wenzl projector $P_n$ \cite{Rozansky2014}.

\begin{figure}[t]
\centering
\begin{subfigure}[b]{0.38\textwidth}
	\centering
	$\ExampleInfinitelyTwistedKnot$
	\caption{\( \Knot_\infty=\Lk(T_{-\infty},T_{2,-3}) \).}
	\label{fig:Kinf_intro}
\end{subfigure}
\begin{subfigure}[b]{0.3\textwidth}
	\centering
	\includegraphics[scale=0.45]{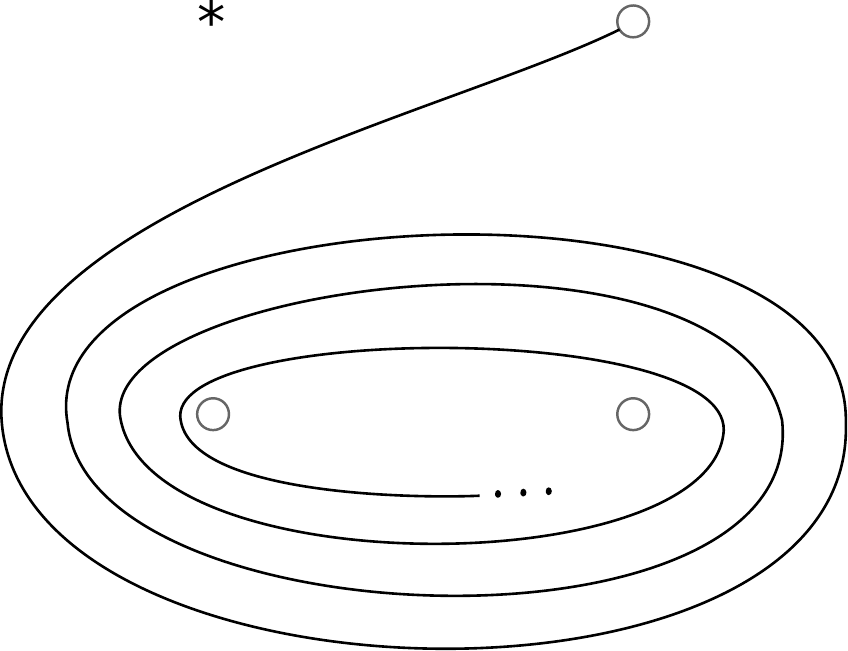}
	\caption{\( \Linf \)}
	\label{fig:inf_curve_1_intro}
\end{subfigure}
\begin{subfigure}[b]{0.3\textwidth}
	\centering
	\includegraphics[scale=0.45]{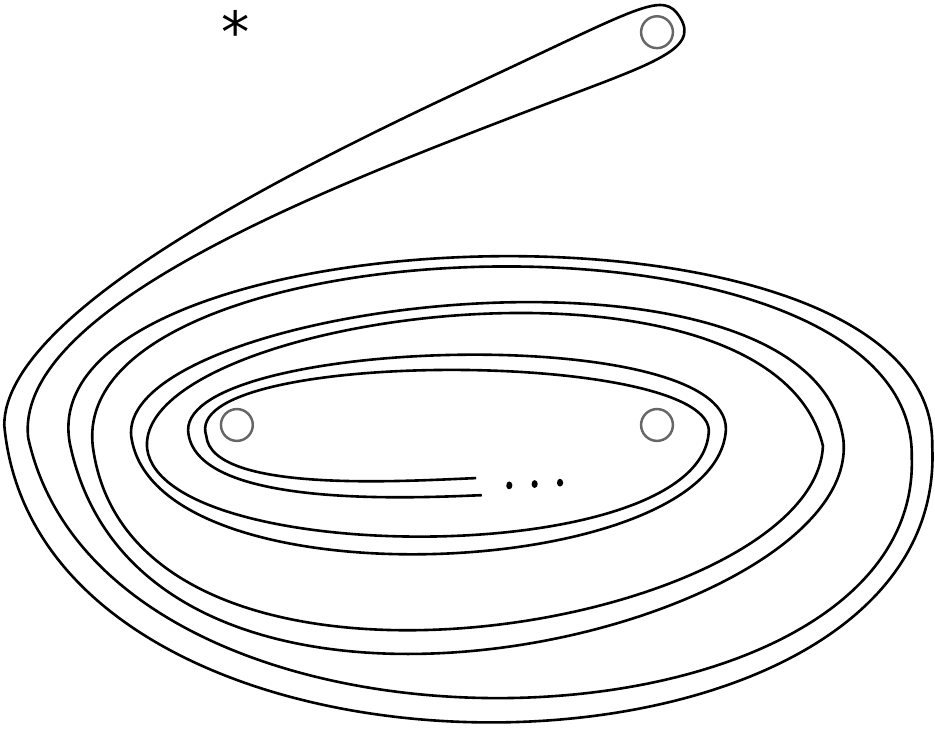}
	\caption{\( \Linfinf \)}
	\label{fig:inf_curve_2_intro}
\end{subfigure}
\caption{}
\label{fig:inf_curves_intro}
\end{figure}

Specializing to the case of 4-ended tangles---or in the terminology of \cite{Kh_Roz_HH}, $(2,2)$-tangles---let us consider the infinitely twisted tangle $T_{-\infty}$ in Figure~\ref{fig:Kinf_intro}. As a consequence of our geometric setup, we can now interpret \( \mathbf{P}_2= \KhTl{T_{-\infty}} \) as the non-compact curve \( \Linf \) in Figure~\ref{fig:inf_curve_1_intro}. 
We also consider the curve \( \Linfinf \) from Figure~\ref{fig:inf_curve_2_intro}, which corresponds to the mapping cone \(
\begin{tikzcd}[column sep=20pt]
\KhTl{T_{-\infty}}
\arrow{r}{H\cdot \id}
&
\KhTl{T_{-\infty}}
\end{tikzcd} 
\).
With these curves $\Linf$ and $\Linfinf$ in hand, we then define invariants of \( (-\infty,2) \)-twisted knots \( \Knot_\infty \). These are knots that are obtained as the closures of 4-ended tangles with $T_{-\infty}$ as illustrated in Figure~\ref{fig:Kinf_intro}. They are considered up to isotopy and up to moving twists \( \CrossingL \) and \( \CrossingR \) from the infinite part \( T_{-\infty} \) to the smooth part \( T \).
Following ideas from \cite{Kh_Roz_HH,EVZ}, we define
\[ \BNr(\Knot_\infty)=\HF (\Linf, \BNr(T)) \quad\text{and}\quad \Khr(\Knot_\infty)=\HF (\Linfinf,\BNr(T)) \] 
These two objects carry natural module structures over the rings \( \RxH=\fieldTwoElements[\xben,H]/(\xben H) \) and \( \RzT = \fieldTwoElements\langle z,T \rangle/ (z^2,T^2) \), which are subalgebras of the endomorphism spaces \( \HF(\Linf,\Linf)\) and \(\HF(\Linfinf,\Linfinf) \), respectively. Since the module structures are most easily seen on the algebraic level, we also define the above wrapped Lagrangian Floer homologies algebraically.

\begin{theorem} 
The modules \( \BNr(\Knot_\infty)_\RxH \) and \( \Khr(\Knot_\infty)_\RzT \) are invariants of infinitely twisted knots \( \Knot_\infty \).
\end{theorem}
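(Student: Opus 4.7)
The plan is to reduce the invariance statement to the naturality of $\BNr$ under the mapping class group action from Section~\ref{subsec:intro:naturality}, together with the $\tau$-invariance of the reference curves $\Linf$ and $\Linfinf$, where $\tau$ denotes the Dehn twist along a curve separating the two punctures around which $T_{-\infty}$ twists from the other two. The equivalence relation on the set of $(-\infty,2)$-twisted knots has two sources: isotopy of the finite part $T$, and moving a twist from $T_{-\infty}$ into $T$ (or vice versa). The first is handled immediately by the tangle invariance of $\BNr(T;\fieldTwoElements)$ established earlier in the paper. The second replaces $T$ by $\tau^{\pm 1}(T)$, while on the other side the infinite braid acquires one extra twist that is absorbed into infinity, so that $T_{-\infty}$ is unchanged up to isotopy of the infinite braid.

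By the naturality theorem, $\BNr(\tau^{\pm 1}(T);\fieldTwoElements)=\tau^{\pm 1}(\BNr(T;\fieldTwoElements))$ as a multicurve on $\FourPuncturedSphere$. Substituting into the defining formula, I obtain
\[
\HF\bigl(\Linf,\BNr(\tau(T);\fieldTwoElements)\bigr)
=\HF\bigl(\Linf,\tau(\BNr(T;\fieldTwoElements))\bigr)
\cong\HF\bigl(\tau^{-1}(\Linf),\BNr(T;\fieldTwoElements)\bigr),
\]
and analogously for $\Linfinf$ in place of $\Linf$. It therefore suffices to prove $\tau^{\pm 1}(\Linf)\simeq\Linf$ and $\tau^{\pm 1}(\Linfinf)\simeq\Linfinf$ in the sense of Definition~\ref{def:intro:multicurves}. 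Geometrically, $\Linf$ is a non-compact arc that wraps infinitely often around one of the two punctures in the support of $\tau$, so applying $\tau$ adds one extra wrapping at the tail of the spiral, producing a curve freely homotopic to $\Linf$ relative to the non-special punctures. The same argument covers $\Linfinf$ once we verify that its twisted-complex structure (two copies of $\Linf$ joined by $H\cdot\id$) is $\tau$-equivariant; via the Classification Theorem~\ref{thm_from_intro:classification}, this reduces to the fact that $\tau$ acts as the identity on the class $H\in\HF(\Linf,\Linf)$, which holds because $H$ is represented by a small loop around a puncture fixed by $\tau$.

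To upgrade these isomorphisms of vector spaces to isomorphisms of $\RxH$- and $\RzT$-modules, I will show that the induced action of $\tau$ on $\HF(\Linf,\Linf)$ and $\HF(\Linfinf,\Linfinf)$ fixes the distinguished generators $\xben,H$ and $z,T$, respectively. Under the Classification Theorem, each of these generators is represented by an explicit short bigon or loop on $\FourPuncturedSphere$ that is either disjoint from or manifestly equivariant under the support of $\tau$, so the action on the subalgebras $\RxH$ and $\RzT$ is trivial. The main obstacle I foresee is making the equivalence $\tau(\Linfinf)\simeq\Linfinf$ rigorous at the level of local systems, since $\Linfinf$ carries a non-trivial length-two Jordan block coming from the mapping cone on $H\cdot\id$; I plan to handle this by working with the twisted-complex description and invoking uniqueness up to homotopy equivalence, and then applying the Classification Theorem to conclude that equivalent complexes yield the same multicurve.
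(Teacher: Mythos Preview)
Your underlying idea—reduce invariance under moving a crossing into or out of $T_{-\infty}$ to the twist-invariance of the reference object, combined with naturality of the tangle invariant under Dehn twists—is exactly the paper's strategy. But the paper executes it algebraically, and for good reason. The invariants $\BNr(\Knot_\infty)$ and $\Khr(\Knot_\infty)$ are \emph{defined} (Definition~\ref{def:BNr_Khr_inf_invariants}) as $\Homology(\Mor_{\fs}(\DDinf,\DD(T)))$ and $\Homology(\Mor_{\fs}(\DDinfinf,\DD(T)))$, not as wrapped Floer homologies; the identification with $\HF(\Linf,\BNr(T))$ is only heuristic. The Classification Theorem~\ref{thm_from_intro:classification} and the pairing Theorem~\ref{thm:PairingMorLagrangianFH} are stated and proved for \emph{finite}-dimensional complexes, whereas $\DDinf,\DDinfinf\in\Mod_\infty^{\BNAlgH}$ are infinite. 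Your appeals to the Classification Theorem to interpret $\Linf,\Linfinf$ as honest multicurves, and to transport homotopies of infinite curves to homotopy equivalences of complexes, are therefore not justified by the results available. The paper instead checks the \emph{equality} $\DDinf\bt{}_{\BNAlgH}\tau^{\BNAlgH}=\DDinf$ directly for the half-Dehn-twist bimodule $\tau$, and then uses that $-\bt\tau$ is a quasi-auto-equivalence of $\Mod_\infty^{\BNAlgH}$ to conclude
\[
\Mor_{\fs}(\DDinf,\DD(T))\simeq\Mor_{\fs}(\DDinf\bt\tau,\DD(T)\bt\tau)\simeq\Mor_{\fs}(\DDinf,\DD(T)\bt\tau),
\]
avoiding any appeal to infinite curves.

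Two smaller points. First, the relevant move is the half Dehn twist (braid generator) corresponding to adding a single crossing at the top or bottom of $T$, not the full Dehn twist along a separating curve; the paper's $\tau$ is the bimodule of Proposition~\ref{prop:AddingASingleCrossing} (with idempotents swapped). Second, your description of $\Linfinf$ as carrying a ``non-trivial length-two Jordan block'' is off: $\DDinfinf$ is indeed the cone on $H\cdot\id$, but after simplification it is a single connected infinite curve (Figure~\ref{fig:inf_curve_2_intro}), not two parallel copies with a rank-two local system, so there is no local-system obstruction—only the infinite-dimensionality issue above. For the module structure, the paper is again explicitly algebraic: it writes down a commutative square relating composition in $\Mor$ before and after $-\bt\tau$, and then verifies by direct computation that the endomorphisms $\xben,H$ (respectively $z,T$) satisfy $\xben\bt\id=\xben$, $H\bt\id=H$ on $\DDinf\bt\tau=\DDinf$ (and analogously for $\DDinfinf$). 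Your bigon argument for this step is the geometric shadow of that computation, but again presupposes a Floer-theoretic framework for $\Linf,\Linfinf$ that the paper has not set up.
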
 
The algebra  \( \Rxy=\fieldTwoElements[\xben,\yben]/(\yben^2-\xben^3) \), which arises as a subalgebra of $\RzT$, appears in the work of Benheddi \cite{Benheddi}. In fact, for the infinitely twisted torus knot \( T(2,-\infty)=\Lk(T_{-\infty},\Ni) \), \( \Khr(T(2,-\infty))_\Rxy\) is equal to \(\Rxy_\Rxy \), which recovers Benheddi's result; see Example~\ref{ex:Uinf_invariants}. 

The invariant \(\BNr(\Knot_\infty)_\RxH \) can be easily computed from \(\DD(T)\) thanks to the following result:
\begin{theorem}\label{thm_intro:lift_to_D_str}
For any infinitely twisted knot \( \Knot_{\infty}=\Lk(T_{-\infty},T) \),
\[ 
\BNr(\Knot_{\infty})_\RxH \simeq \DD(T)^{\BNAlgH} \bt {}_{\BNAlgH}\bimoduleG^\RxH \bt \RxH
 \]
where the bimodule \( {}_{\BNAlgH}\bimoduleG^\RxH \) is equal to 
\[
\begin{tikzcd}
    \DotC\gen
    \arrow[in=-30,out=30,looseness=4]{rl}{\substack{(S,D,S | \xben) \\ (S,D,SS,D,S | \xben^2) \\ (S,D,SS,D,SS,D,S|\xben^3) \\ \cdots}}
    \arrow[rl, in=-150,out=150,looseness=4,"(D^k  | H^k)" left]
\end{tikzcd}
 \]
\end{theorem}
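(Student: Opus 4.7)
The plan is to realize the non-compact curve $\Linf$ as an $A_\infty$-DA-bimodule ${}_{\BNAlgH}\bimoduleG^\RxH$ and to deduce the stated equivalence from a general algebraic pairing principle. By definition, $\BNr(\Knot_\infty)_\RxH = \HF(\Linf, \BNr(T))$, and by the Classification Theorem (Theorem~\ref{thm_from_intro:classification}) this is computed by the homology of a morphism complex $\Mor(N_\infty, \DD(T))$, where $N_\infty$ is an algebraic representative of $\Linf$; the additional $\RxH$-module structure, which comes from the subalgebra $\RxH \subset \HF(\Linf,\Linf)$, must be tracked throughout.

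The first task is to construct $\bimoduleG$ algebraically from the geometry of $\Linf$. Its underlying module will have a single generator sitting in the idempotent $\DotC$, reflecting the fact that after a minimal position representative is chosen, $\Linf$ meets the parameterizing arcs of $\FourPuncturedSphere$ in exactly one point, which lies on the arc associated with $\Ni$. The $A_\infty$-operations with $\BNAlgH$-inputs on the left and $\RxH$-outputs on the right will be read off from how $\Linf$ wraps around the punctures of $\FourPuncturedSphere$: each wrap around the $\xben$-bearing puncture produces a bigon contributing an operation of the form $(S, D, S, D, \ldots, D, S \,|\, \xben^k)$, while wraps in the $H$-direction contribute $(D^k \,|\, H^k)$. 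A short check using the relations $DS = 0 = SD$ in $\BNAlgH$ then verifies the $A_\infty$-bimodule axioms.

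The second task is to identify this $\bimoduleG$ with the genuine algebraic representative of $\Linf$ carrying its natural right $\RxH$-action. One clean approach is to realize $\bimoduleG$ as the $n \to \infty$ limit of the type D structures $\DD(T_{-n})$ associated with the $(-n)$-twist tangles $T_{-n}$ approximating $T_{-\infty}$, with the $\RxH$-action arising from the chain maps implementing further twisting. Alternatively, a curve-level extension of Theorem~\ref{thm_from_intro:classification} matches the operations of $\bimoduleG$ directly to bigon counts in $\FourPuncturedSphere$ involving $\Linf$ and its wrapped copies. Once this identification is in place, the box tensor $\DD(T)^{\BNAlgH} \bt {}_{\BNAlgH}\bimoduleG^\RxH$ is a chain-level model for $\Mor(\Linf, \DD(T))$ as a type D structure over $\RxH$, which the final $\bt \RxH$ converts into an $\RxH$-module; passing to homology then yields $\BNr(\Knot_\infty)_\RxH$.

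The main obstacle is the combinatorial verification that the explicit (infinite) list of operations in the statement exhausts all contributions: since $\Linf$ wraps infinitely around the non-special punctures, countably many higher $A_\infty$-terms appear, and one must rule out any additional unforeseen operations. In practice this is most transparently handled by lifting to the universal cover of $\FourPuncturedSphere$ and enumerating the immersed polygons between the wrapped copies of $\Linf$ and the parameterizing arcs; the relative simplicity of $\Linf$ should make this enumeration tractable. Once this bookkeeping is complete, the theorem follows.
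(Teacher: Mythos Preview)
Your proposal takes a genuinely different route from the paper and has some gaps worth flagging.

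First, a definitional issue: in the paper $\BNr(\Knot_\infty)_\RxH$ is \emph{defined} algebraically as $\Homology(\Mor_{\fs}(\DDinf,\DD(T)))$ with the $\RxH$-action coming from precomposition (Definitions~\ref{def:BNr_Khr_inf_invariants} and~\ref{def:module_structure}); the identification with $\HF(\Linf,\BNr(T))$ is only heuristic, since the Classification Theorem~\ref{thm_from_intro:classification} is proved for \emph{finite} type~D structures and does not directly cover the infinite object $\DDinf$. So your opening sentence ``By definition, $\BNr(\Knot_\infty)_\RxH = \HF(\Linf,\BNr(T))$'' is not quite right, and the appeal to the Classification Theorem to pass to a morphism complex is not justified as stated.

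Second, the paper's argument is entirely algebraic and avoids any geometric bigon enumeration. It proceeds by (i) promoting $\DDinf$ to a type~AD bimodule ${}_{\RxH}\DDinf^{\BNAlgH}$, (ii) invoking the general duality $\Mor_{\fs}({}_{\RxH}\DDinf^{\BNAlgH},\DD(T)^{\BNAlgH}) \simeq \DD(T)^{\BNAlgH}\bt{}_{\BNAlgH}\BNAlgH_{\BNAlgH}\bt{}^{\BNAlgH}\dual{\DDinf}_{\RxH}$ from \cite{LOT-mor}, and then (iii) reducing to show the bimodule equivalence ${}_{\BNAlgH}\bimoduleG^\RxH\bt\RxH \simeq {}_{\BNAlgH}\BNAlgH_{\BNAlgH}\bt{}^{\BNAlgH}\dual{\DDinf}_{\RxH}$. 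The latter is checked by writing out both sides explicitly (Figures~\ref{fig:left_bimodule} and~\ref{fig:right_bimodule}), applying a homological perturbation lemma to the right-hand side, and enumerating the surviving zigzags. This is a finite, concrete computation in bordered algebra; no limits, no universal covers, no polygon counts.

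Your geometric approach---reading $\bimoduleG$ off the wrapping of $\Linf$, or realizing it as a limit of $\DD(T_{-n})$---is morally correct and perhaps more conceptual, but as written it is a sketch rather than a proof. In particular, the limit construction would require showing that the limiting process is compatible with the box tensor and with the $\RxH$-action, and the universal-cover enumeration would need a version of the pairing theorem for the infinite curve $\Linf$. Neither is supplied. The paper's route, by contrast, sidesteps all of this by working directly with the algebraic definition and standard bordered machinery.
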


An infinitely twisted knot \( \Knot_\infty=\Lk(T_{-\infty},T) \) can be viewed as a sutured tangle \( T_s=T \) with a horizontal equatorial suture \( s \). In the presence of this suture, it is natural to consider \( T_s \) up to adding twists \( \CrossingR\) and \(\CrossingL \) on the top and on the bottom. As a result of the discussion above, we obtain invariants of sutured tangles \( \BNr(T_s)_\RxH \) and \( \Khr(T_s)_\Rxy \). As a vector space, the invariant  \( \Khr(mT_s) \) recovers \(
\displaystyle\mathop{\Kh}_{\longleftarrow}(T_s)\),
 as studied  by the second author \cite{Kappa}. In that setting, there is a finite dimensional graded vector space \( \varkappa(T_s) \) that is used as an invariant of the knot manifold arising as the double branched cover of the tangle, where the suture lifts to a distinguished (equivariant pair of) meridians. It turns out that \( \varkappa(T_s) \) agrees with the \( \xben \)-torsion of \( \Khr(mT_s)_\Rxy \); we calculate this for the \( (-2,3) \)-pretzel tangle in Example~\ref{exa:Pairing:TorusKnots}.

Since an infinitely twisted knot \( \Knot_\infty=\Lk(T_{-\infty},T) \) can also be viewed as a link obtained by a circular closure of $T$ inside $S^1\times S^2$, we expect a connection between our construction and the one in \cite{Kh_Roz_HH}. Our algebra $\RzT$ (respectively, $\RxH$) should arise as Hochschild homology of some version of the Khovanov arc algebra, with appropriate changes to account for the fact that we work with \emph{reduced} Khovanov homology $\Khr(\Knot_\infty)$ (respectively, \emph{deformed reduced} Bar-Natan homology $\BNr(\Knot_\infty)$),  whereas \cite{Kh_Roz_HH} describes the limit of the \emph{unreduced} Khovanov homology.  Our results also indicate that the invariants of links in $S^1\times S^2$ \cite{Kh_Roz_HH} and  $\#_k S^1\times S^2$ \cite{MMSW} should have a rich module structure.

\subsection{Relationship to other Khovanov type invariants of tangles}
The Jones polynomial, as defined by Kauffman's state sum formula, comes equipped with the ability to calculate in terms of local pictures, namely, by defining the appropriate skein module associated with a tangle. 
Bar-Natan's categorical framework achieves the same for Khovanov homology. However, his invariant $\KhTl{T}$ is only one of a number of Khovanov-type invariants associated with a tangle. The interaction between these different invariants is interesting in its own right. For instance, Khovanov constructed invariants of two-sided tangles that take the form of bimodules over certain arc algebras \cite{Khovanov2}, which appear again as endomorphism algebras in Bar-Natan's category \cite{Manion-thesis}. In another direction, Roberts proposed a framework for a bordered theory for Khovanov homology \cite{Roberts-A,Roberts-D}, which borrows nomenclature from and is defined in analogy with Lipshitz, Ozsváth, and Thurston's bordered Heegaard Floer theory~\cite{LOT-main}. Roberts defined candidates for type~D structures, type~A structures, and a box tensor product between them, which recovers the Khovanov homology of a knot from its decomposition into two tangles. Manion has reinterpreted Khovanov's algebraic framework for two-sided tangles in the bordered language and related it to Robert's bordered theories \cite{Manion-bordered}. 
Our algebra $\BNAlgH$ can be defined using the same bordered techniques and we expect that the corresponding bordered type~D structure agrees with our invariant $\DD(T)$; see Remarks~\ref{rmk:Kh_arc_algebra} and~\ref{rmk:bordered_constr_of_D_structure}. 

\subsection{Relationship to other immersed curve invariants}\label{subsec:intro:relationship_other_invariants}

%
%
%
%

In recent years, several other homology theories have been developed in low-dimensional topology. These include Heegaard Floer homology $\HFhat(Y)$ for closed 3-manifolds due to Ozsváth and Szabó~\cite{OSHF3mfds}, knot Floer homology \(\HFK(\Knot)\) discovered, independently, by the same authors and by Rasmussen~\cite{OS-hfk-0,Rasm_hfk}, and Kronheimer and Mrowka's knot homology \( I^\natural (\Knot) \) from instanton Floer theory~\cite{K-M_KHI}. The similarity between these invariants and Khovanov homology has been the subject of intense research for the past two decades. The first established relationship between Heegaard Floer and Khovanov theories was a spectral sequence from Khovanov homology $\Kh(\mirror\Knot)$ of (the mirror of) a knot $\Knot$ to the Heegaard Floer homology \(\HFhat(\Sigma(\Knot))\) of its double branched cover \cite{OS-spectral}. Subsequently, a spectral sequence from $\Khr(\mirror\Knot)$ to \( I^\natural (\Knot) \) was used to show that Khovanov homology detects the unknot~\cite{K-M_unknot_detector}. Previously, the existence of a spectral sequence from $\Khr(\mirror\Knot)$ to \(\HFK(\Knot) \) had been conjectured by Rasmussen~\cite{Rasm_comparison}; this conjecture has been resolved by Dowlin \cite{Dowlin}. Very recently, Kronheimer and Mrowka constructed a spectral sequence from reduced Bar-Natan homology \( \BNr(\Knot;\fieldTwoElements) \) to a  deformation of \( I^\natural (\Knot) \) \cite{KM_spectral_seq_from_BN}.  


By construction, instanton Floer theory counts solutions of certain PDEs and it is therefore inherently geometric. 
The same is true, at least morally, for Heegaard Floer theory. 
In the context of the Atiyah-Floer conjecture, these theories are expected to be isomorphic to the Lagrangian Floer homology of the corresponding local invariants for manifolds with boundary (and tangles embedded therein). So, in particular, these local invariants are expected be objects in the Fukaya categories of some appropriate moduli spaces. 
In the context of instanton knot Floer homology, for 4-ended tangles, this leads to representation theoretic constructions of  immersed curves \( R_\pi(T) \) and \( R^\natural_\pi(T) \) on the pillowcase due to Hedden, Herald and Kirk \cite{HHK1, HHK2}. The first author extended this construction algebraically~\cite{KotPillowcase}. 
In Heegaard Floer homology, Auroux \cite{Auroux1,Auroux2} and Lekili and Perutz \cite{PerutzLekili} suggested that the correct moduli space for the invariant of a compact oriented 3-manifold $M$
with $\partial M=\Sigma_g$ and a basepoint $z\in\Sigma_g$ should be $\Sym^g(\Sigma_g \smallsetminus z)$.
In particular, Auroux showed that bordered Heegaard Floer invariants  \cite{LOT-main} define objects in the partially wrapped Fukaya category of $\Sym^g(\Sigma_g \smallsetminus z)$ and that the pairing in
bordered Heegaard Floer homology is equivalent to taking the homology of morphism spaces; see also~\cite{LOT-mor}.
Conceptually, this construction is very useful, but for computations, one still has to resort to  algebraic machinery. 
The immersed curve invariant $\HFhat(M)$ due to Hanselman, Rasmussen and the second author addresses this issue in the case $g=1$, that is, for 3-manifolds with torus boundary~\cite{HRW}. There is also an immersed curve invariant $\HFT(T)$ of 4-ended tangles due to the third author~\cite{pqMod}, which achieves the same for link Floer homology using Zarev's bordered sutured Heegaard Floer homology~\cite{ZarevThesis}. 

The tangle invariants $\BNr(T)$, $\Khr(T)$, and $\Kh(T)$ fit into this growing family of immersed curve invariants. All of these invariants are objects in the (wrapped) Fukaya category of simple surfaces: the once-punctured torus (identified with the boundary of a 3-manifold) or the 4-punctured sphere (identified as a tangle boundary).  The multicurves \( R_\pi(T) \) and \( R^\natural_\pi(T) \) live on the pillowcase, a 2-dimensional sphere with four marked points, which, in this context, should be viewed as the space of traceless $SU(2)$-representations of the fundamental group of the 4-punctured sphere. As we show in Section~\ref{sec:pillowcase}, the pillowcase minus the four marked points can be naturally identified with the tangle boundary after choosing a basepoint. Thus, we may regard \( R_\pi(T) \) and \( R^\natural_\pi(T) \) as multicurves on the tangle boundary as well.

The geometric reinterpretation of both $\HFhat(M)$ and $\HFT(T)$ relies on classification results that are very similar to our Classification Theorem~\ref{thm_from_intro:classification}. In fact, the first part of this theorem, as well as the corresponding result in~\cite{pqMod}, ultimately appeal to the same simplification algorithm that was developed for $\HFhat(M)$ in~\cite{HRW}. This is closely related to a classification result of Haiden, Katzarkov and Kontsevich~\cite[Theorem~4.3]{HKK}, which says that twisted complexes over certain quiver algebras are classified by objects in the topological Fukaya category of corresponding marked surfaces. There is an algebraic appeal to representations of nets used to encode twisted complexes; by contrast our setup encodes the relevant algebraic structures geometrically. 
For the second part of Theorem~\ref{thm_from_intro:classification} (the classification of morphisms), we adapt the algebraic and geometric arguments developed for $\HFT(T)$ in~\cite{pqMod}. 

It is interesting to compare the local immersed curve invariants in simple examples in light of the aforementioned spectral sequences between their corresponding global counterparts. For instance, the double branched cover of the $\frac{p}{q}$-rational tangle $T_{p/q}$ is a (parametrized) solid torus, and its $\HFhat$-invariant is an embedded curve of slope $\frac{p}{q}$. Furthermore, after identifying the pillowcase with $\FourPuncturedSphere$, \( R_\pi(T_{p/q}) \) coincides with the arc \(\BNr(T_{p/q})\) and \( R^\natural_\pi(T_{p/q})\) agrees with the figure-8 curve \( \Khr(T_{p/q}; \fieldTwoElements)\). Finally, $\HFT(T_{p/q})$ is the embedded curve obtained by resolving the single point of self-intersection of $\Khr(T_{p/q})$. Even for non-rational tangles, such as the $(2,-3)$-pretzel tangle $T_{2,-3}$, there are remarkable similarities; see Section~\ref{sec:FigureEight}. One might expect the extra differentials on reduced Bar-Natan and Khovanov homology that induce the spectral sequences to Heegaard Floer and instanton Floer homology to admit a geometric interpretation in terms of smoothing certain points of self-intersection of the multicurves $\BNr(T)$ and $\Khr(T)$, though it seems hard to predict what form the general statement in each case might take. 
For instance, it is impossible to obtain $\HFT(T_{2,-3})$ by resolving self-intersection points of $\Khr(T_{2,-3})$, at least if $\Khr(T_{2,-3})$ is in minimal position; compare \cite[Section 7]{HRW-prop}. 
Also, \(\BNr(T;\field)\) depends on the field $\field$, whereas \(R_\pi(T)\) does not (though it may depend on the perturbation \(\pi\)). 
Nonetheless, the mere fact that the similarities between Khovanov, Heegaard Floer, and instanton Floer theories extend to a local level suggests that our immersed curve invariants \(\BNr(T)\), \(\Khr(T)\) and \(\Kh(T)\) will be able to offer new geometric insight.

\subsection{Origins of our immersed curve invariants}
This paper grew out of an attempt to simplify the construction of yet another immersed curve invariant, which was defined by Hedden, Herald, Hogancamp, and Kirk in~\cite{HHHK}. 
They showed that a certain quotient of Bar-Natan's cobordism category over \(\fieldTwoElements\) embeds into the Fukaya category of the pillowcase; see Remark~\ref{obs:HZero}. This allowed them to associate with any 4-ended tangle \(T\) a twisted complex \(L_T\) over this Fukaya category; the first author observed that \(L_T\) can be viewed as a multicurve on the pillowcase ~\cite{Artem-Notes}. 
Based on our computations, we expect that the curve \(L_T\) agrees with the curve \( \Khr(T;\fieldTwoElements)\).
Hedden, Herald, Hogancamp, and Kirk also obtained a pairing result for $L_T$. In terms of immersed curves, it says that
\[ \HF(L_T, \DotCarc)\cong \Khr( \Lk(T, \Li); \fieldTwoElements),\] where \( \DotCarc \) is the arc from Figure~\ref{fig:exa:classification:curves:trivial}.
Our key steps towards generalizing these results are the calculation of Bar-Natan's full universal cobordism category \(\Cob_{/l}\) (Theorem~\ref{thm_from_intro:BN_algebra=B}), together with the corresponding classification theorem (Theorem~\ref{thm_from_intro:classification}).


The question about the origins of our immersed curve invariants might also be asked on a more fundamental level. The combinatorial definition of Khovanov homology obfuscates what the right geometric framework should be. Nevertheless, Seidel and Smith discovered a symplectic version of Khovanov homology~\cite{Seidel-Smith}, which is now known to agree with Khovanov’s original theory by the work of Abouzaid and Smith~\cite{Abo_Smith_0, Abo_Smith}. Manolescu endowed this theory with a bigrading~\cite{Man_hilbert_schemes}, based on Bigelow's homological definition of the Jones polynomial \cite{Bigelow}.
Also, there is Witten's proposal to obtain Khovanov homology via gauge theory \cite{Witten_Kh} and an algebro-geometric interpretation of Khovanov homology due to Cautis and Kamnitzer \cite{Cau_Kam}. 
It is not clear if our immersed curves can be understood in terms of any of these theories. 
Interestingly, however, figure-8 curves seem to be a recurring feature. In our construction, they appear as the invariants \( \Khr(T) \) of rational tangles; see Example~\ref{exa:fig_eights_rational_tangles}.
They also appear in work of Bigelow \cite[Figure~2]{Bigelow} and Manolescu \cite[Figure~6]{Man_hilbert_schemes}. Finally, Droz and Wagner use grid diagrams to interpret the generators of the Khovanov chain complex in terms of intersection points of figure-8 curves and arcs; see \cite[Figure~2]{Droz_Wagner}. Their pictures look intriguingly similar to our pairing pictures. In particular, in the special case of 2-bridge knots, the intersection picture we get (see Figure~\ref{fig:exa:Pairing:Trefoil:Resolutions:EightArc}, for example) coincides with the Bigelow generators for reduced Khovanov homology~\cite[Section~7.5]{Man_hilbert_schemes}. 

\subsection*{Acknowledgements} This project has benefitted from inspiring conversations with a number of people, including: Antonio Alfieri, Sabin Cautis, Nate Dowlin, Jonathan Hanselman, Paul Kirk, Robert Lipshitz, Jake Rasmussen, Zoltán Szabó, and Ben Williams.

\section{Algebraic preliminaries}\label{sec:AlgStructFromGDCats}

We begin by fixing conventions for the algebraic structures appearing in this paper. We will generalize some tools from \cite[Section~1]{pqMod} for simplifying these algebraic structures over any commutative unital ring \( \Rcomm \), not just the field \( \fieldTwoElements \) with 2 elements. 

\subsection{Complexes over differential bigraded categories	}

	Most algebraic structures in this paper carry a bigrading. Specifically, we have a homological grading \( h \) and a quantum grading \( q \), both of which are integer gradings. Often it is convenient to combine these two gradings to a third grading: the \( \delta \)-grading is defined by 
	\( \delta=\tfrac{1}{2}q-h \)
	and takes values in \( \tfrac{1}{2}\mathbb{Z} \). A bigraded chain complex is a chain complex whose underlying chain module carries a bigrading and whose differential increases the homological grading by 1 and preserves quantum grading. Strictly speaking, such an object should be called a \emph{co}chain complex and the homological grading a \emph{co}homological grading. However, following the generally accepted terminology in Khovanov theory, we will omit the prefix ``co'' everywhere. Note that the \( \delta \)-grading decreases along the differential by 1. 

\begin{definition}\label{def:dgcat}
Let \( \Com \) be the category of free bigraded chain complexes over \( \Rcomm \) and (bigrading preserving) chain maps between them. A \textbf{differential bigraded category} \( \mathcal{C} \) is an enriched category over \( \Com \). \end{definition}To spell this out, the hom-objects are free bigraded \( \Rcomm \)-modules
\[ \Mor(A,B)=\bigoplus_{h,q\in\mathbb{Z}}\Mor(A,B;h,q)\]
endowed with a differential \[ 
\partial_{h,q}\co\Mor(A,B;h,q)
\rightarrow
\Mor(A,B;h+1,q)
 \]
 that is an \( \Rcomm \)-module homomorphisms usually denoted just by \( \partial \). This satisfies \( \partial^2=0 \) (which we will sometimes refer to as the $d^2$-relation) and the Leibniz rule 
\[ 
\partial(g\circ f)
=
\partial(g)\circ f+(-1)^{h(g)}\cdot g\circ\partial(f)
 \]
where \( h(g) \) denotes the homological grading of \( g \) and \( \circ \) denotes the composition in \( \mathcal{C} \), which is associative, unital, and respects the bigrading. Our sign convention here agrees with the one for differential graded categories from~\cite[Section~2.2]{Keller}. Note that the identity morphisms have vanishing bigrading
and lie in the kernel of~\( \partial \).

\begin{definition}\label{def:CatOfMatrices}
Given a differential bigraded category~\( \mathcal{C} \), we define another differential bigraded category \( \Mat(\mathcal{C}) \) as follows. Its objects are formal direct sums
\[ \bigoplus_{i\in I}h^{h_i}q^{q_i}\object_i, \]
where \( I \) is some finite index set and \( h^{h_i}q^{q_i}\object_i \) denotes the object \( \object_i\in\ob(\mathcal{C}) \) formally shifted by \( h_i \) in homological grading and \( q_i \) in quantum grading. 
The morphisms spaces are given by
\begin{equation*}\label{eq:morphism_spaces_in_Mat}
\Mor\bigg(\bigoplus_{i\in I}h^{h_i}q^{q_i}\object_i,\bigoplus_{j\in J}h^{h_j}q^{q_j}\object_j\bigg)\coloneqq \bigoplus_{(j,i)\in J\times I}h^{h_j-h_i}q^{q_j-q_i}\Mor(\object_i,\object_j)
\end{equation*}
as \( \Rcomm \)-modules. Composition in \( \Mat(\mathcal{C}) \) is the usual matrix multiplication, and the differential is given by
\[ \partial\Big(\big(f_{ji}\co h^{h_i}q^{q_i}\object_i\rightarrow h^{h_j}q^{q_j}\object_j\big)_{j,i}\Big)\coloneqq\big((-1)^{h_j}\cdot \partial(f_{ji})\co h^{h_i}q^{q_i}\object_i\rightarrow h^{h_j}q^{q_j}\object_j\big)_{j,i} \]
It is elementary to check that \( \Mat(\mathcal{C}) \) is a well-defined differential bigraded category.
\end{definition}


\begin{remark}
\label{rmk:gradings_of_morphisms}
	The notation \( h^{n}q^{m}\object \) for an object \( \object \) 
	in homological grading \( n \) and quantum grading \( m \) corresponds to Bar-Natan's notation \( \object[n]\{m\} \). To specify the \( \delta \)-grading of an object \( \object \), we write \( \delta^d\object \). As a compact alternative notation, we will sometimes use \( \GGdqh{X}{d}{m}{n} \) instead of \( h^{\HomGrad{n}} q^{\QGrad{m}}\delta^{\DeltaGrad{d}}X \). Often, one of the three gradings will be omitted in an attempt to declutter our notation. 
	The following is a concise summary of our grading conventions for \( \gr\in\{q,\delta, h\} \):
	\[ \gr\left(
	\begin{tikzcd}  
	x
	\arrow{r}{a}
	&
	y
	\end{tikzcd}
	\right)=\gr(y)-\gr(x)+\gr(a). \]
	This describes how to compute the grading of a basic morphism represented by a single arrow, labelled by some \( a\in \Mor(x,y) \), between two objects \( x \) and \( y \) in \( \mathcal{C} \) that are shifted into gradings \( \gr(x) \) and \( \gr(y) \), respectively, and
	 agrees with the conventions in \cite[Definition~1.4]{pqMod} and \cite[Section~6]{BarNatanKhT}.
\end{remark}

Given a differential bigraded category \( \mathcal{C} \), we define an auxiliary category \( \Cxpre(\mathcal{C}) \), \textbf{the category of pre-complexes}, which is an enriched category over the category of bigraded \( \Rcomm \)-modules and bigrading preserving morphisms between them. The objects are pairs \( (\object,d) \), where \( \object \) is an object of \( \mathcal{C} \) and \( d\in \Mor(\object,\object;1,0) \) is an endomorphism of \( \object \) which preserves quantum grading and increases homological grading by 1. 
As bigraded \( \Rcomm \)-modules, the hom-objects agree with those in~\( \mathcal{C} \): 
\(\Mor((\object,d),(\object',d'))=\Mor(\object,\object')\). We can define endomorphisms \( D \) on them by setting 
\begin{equation*}
 D(f)\coloneqq d'\circ f-(-1)^{h(f)} \cdot f \circ d+\partial(f)
 \end{equation*}
(again, our sign convention agrees with the one in~\cite[Section~2.2]{Keller}).
We would like \( D \) to be a differential so that \( \Cxpre(\mathcal{C}) \) is a differential bigraded category. 
It is easy to check that \( D \) satisfies the Leibniz rule. So \( D \) is a differential iff
\begin{align*}
    D^2(f)
    =
    &~(d'^2+\partial(d'))\circ f
-f\circ (d^2+\partial(d))
\end{align*}
vanishes. This is the case for the full subcategory \( \Cx(\mathcal{C}) \) of \( \Cxpre(\mathcal{C}) \) consisting of those objects \( (\object,d) \) for which
\begin{equation} 
d^2+\partial(d) \tag{\( \ast \)}\label{eqn:d2term}
\end{equation}
vanishes. 
\begin{definition}\label{def:CatOfComplexes}
We call \( \Cx(\mathcal{C}) \)  \textbf{the category of complexes over the differential bigraded category \( \mathcal{C} \)}.  By construction, \( \Cx(\mathcal{C}) \) is a differential bigraded category. 
\end{definition}

Note that we could also restrict to the full subcategory of those objects \( (\object,d) \) for which (\ref{eqn:d2term}) is equal to some central algebra element; see \cite[Section~1]{pqMod} for details.
	We can associate a directed graph with a category, where objects correspond to vertices and arrows to  morphisms. In the same way, we can also think of objects in \( \Mat(\mathcal{C}) \) and \( \Cx(\Mat(\mathcal{C})) \) as graphs.


\begin{example}\label{exa:HighBrowDefChainCxs}
	Let \( \mathcal{C} \) be the category with a single object \( \singleB \) in bigrading \( h^0q^0 \), \( \Mor(\singleB,\singleB)=\Rcomm \) and vanishing differential. Then, objects in \( \Mod^{\Rcomm}\coloneqq\Cx(\Mat(\mathcal{C})) \) correspond exactly to free bigraded chain complexes over \( \Rcomm \) together with a choice of an \( \Rcomm \)-linear basis. This can be seen by identifying the object \( \singleB \) with \( \Rcomm \), considered as an \( \Rcomm \)-module. The morphism spaces, however, consist of all \( \Rcomm \)-linear maps and not just chain maps. Moreover, by definition, the morphism spaces are themselves bigraded chain complexes over \( \Rcomm \). To recover the usual notions of chain maps between chain complexes, we need to pass to the underlying ordinary category of \( \Mod^{\Rcomm} \).
\end{example}

\begin{definition}\label{def:UnderlyingOrdinaryCat}\cite[Definition~3.4.5]{cathtpy}
	Given an enriched category \( \mathcal{C} \) over some monoidal category \( \mathcal{V} \), the \textbf{underlying ordinary category} \( \mathcal{C}_0 \) of \( \mathcal{C} \) has the same objects as \( \mathcal{C} \) and its hom-sets are defined by 
	\( \mathcal{C}_0(A,B)\coloneqq \Mor_\mathcal{V}(1_\mathcal{V},\mathcal{C}(A,B)). \)
\end{definition}
\begin{example}\label{exa:HighBrow_chain_map}
	Let \( \mathcal{C} \) be a differential bigraded category. The unit in \( \Com \) is the complex \( 0\rightarrow \Rcomm \rightarrow0 \), supported in bigrading \( h^0q^0 \), and the morphisms in \( \Com \) are (bigrading preserving) chain maps. Hence, the hom-sets of \( \mathcal{C}_0 \) become cycles in the bigrading \( h^0q^0 \): 
	\[ 
	\mathcal{C}_0(A,B)
	=
	\ker\big(\partial_{0,0}:\Mor(A,B;0,0)\rightarrow \Mor(A,B;1,0)\big)
	 \]
	We call elements of these hom-sets \textbf{chain maps}. If \( \mathcal{C} \) is the differential bigraded category \( \Mod^{\Rcomm} \) from Example~\ref{exa:HighBrowDefChainCxs}, this agrees with the usual notions of chain maps between chain complexes. In other words, \( \Com = (\Mod^{\Rcomm})_0 \). One reason for passing to the underlying category is that, if we do not, two objects could be isomorphic through morphisms that shift the bigrading. 
\end{example}

\begin{example}\label{exa:HighBrow_chain_homotopy}	
	Let \( \mathcal{C} \) be a differential bigraded category. Consider the enriched category \( \Homology(\mathcal{C}) \) over the category of bigraded \( \Rcomm \)-modules and bigrading preserving morphisms between them, obtained from \( \mathcal{C} \) by replacing the hom-objects by their homologies with respect to the differential \( \partial \). By passing to the underlying ordinary category, we pick out the morphisms in \( \Homology(\mathcal{C}) \) whose bigrading vanishes. Therefore, we denote this category by \( \HomologyZero(\mathcal{C}) \).
	The hom-sets in \( \HomologyZero(\mathcal{C}) \) are quotients of those in \( \mathcal{C}_0 \): \( \HomologyZero(\mathcal{C})(A,B)=\ker(\partial_{0,0})/\im(\partial_{-1,0}). \) 
	In the case \( \mathcal{C}=\Mod^{\Rcomm} \), elements of these hom-sets represent homotopy classes of chain maps. 
	Therefore, if \( \mathcal{C} \) is the category of complexes over any differential bigraded category, we call two chain maps \textbf{homotopic} if they define the same element in \( \HomologyZero(\mathcal{C}) \)  and we say that two objects in \( \mathcal{C} \) are \textbf{homotopy equivalent} (or \textbf{chain homotopic}) if they are isomorphic as objects in \( \HomologyZero(\mathcal{C}) \). Finally, two objects in \( \mathcal{C} \) are \textbf{isomorphic} if they are isomorphic as objects in \( \mathcal{C}_0 \).
\end{example}

In the following we will use a notion of differential bigraded unital algebra \( \mathcal{A} \) over a ring of idempotents \( \oplus^N_{j=1}\Rcomm=\mathcal{I}\subseteq\mathcal{A} \) taken from \cite[Definition~2.1.1]{LOT-bim}, with a caveat that we require all \( \mathcal{I} \)-modules to be free \( \Rcomm \)-modules, and the gradings, multiplication and differential satisfy the same conditions as in Definition~\ref{def:dgcat}. We will call these algebras \emph{differential bigraded \( \mathcal{I} \)-algebras} for short. Also, we will frequently use notation $(-).\iota, \ \iota\in  \mathcal{I}$; this means that $(-)$ is either an  \( \mathcal{I} \)-module, in which case $(-).\iota \coloneqq(-)\cdot\iota$, or $(-)$ is an element of some  \( \mathcal{I} \)-module, in which case $(-).\iota$ indicates that $(-)\cdot\iota=(-)$ and all other idempotents from \( \mathcal{I} \) annihilate $(-)$.

\begin{observation}\label{obs:algebras_vs_categories}
We can regard any differential bigraded \( \mathcal{I} \)-algebra as a differential bigraded category as follows. The objects are given by a fixed \( \Rcomm \)-basis \( \{\iota_j\}^N_{j=1} \) of idempotents of \( \mathcal{I} \), and we define \( \Mor(\iota_k,\iota_m)\coloneqq \iota_m.\mathcal{A}.\iota_k \) as the morphism space between the basis elements \( \iota_k \) and \( \iota_m \), with the obvious composition and differential. Analogously, any  differential bigraded category gives rise to a differential bigraded \( \mathcal{I} \)-algebra. In this paper we will sometimes move freely between these two points of view on the same algebraic structure, without changing notation.
\end{observation}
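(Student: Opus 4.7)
The plan is to verify the two directions of the correspondence and then check that they are mutually inverse, at each step making sure that all of the structure (bigrading, composition, unit, and differential) translates correctly.

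First I would start from a differential bigraded $\mathcal{I}$-algebra $\mathcal{A}$. Using the fixed basis of idempotents $\{\iota_j\}_{j=1}^N$, I define a category $\mathcal{C}(\mathcal{A})$ whose object set is $\{\iota_j\}$ and whose hom-object $\Mor(\iota_k,\iota_m)$ is the sub-$\Rcomm$-module $\iota_m \cdot \mathcal{A}\cdot \iota_k$; it inherits a bigrading from $\mathcal{A}$. Composition is the restriction of multiplication in $\mathcal{A}$, which makes sense because $(\iota_n \cdot \mathcal{A}\cdot \iota_m)\cdot (\iota_m\cdot \mathcal{A}\cdot \iota_k)\subseteq \iota_n\cdot \mathcal{A}\cdot \iota_k$, and associativity is inherited from $\mathcal{A}$. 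The unit in $\Mor(\iota_k,\iota_k)$ is $\iota_k$ itself, which is the identity under the algebra multiplication restricted to $\iota_k \cdot \mathcal{A}\cdot \iota_k$. The differential $\partial$ on $\mathcal{A}$ preserves idempotent decompositions since $\partial(\iota_j)=0$ (a consequence of $\iota_j^2=\iota_j$ and the Leibniz rule over a field/ring of characteristic compatible with the splitting, more simply: idempotents in a differential algebra are cycles because $\partial(\iota_j)=\partial(\iota_j^2)=2\iota_j \partial(\iota_j)$ together with the idempotent relation forces $\partial(\iota_j)=0$; one may also simply require this as part of the data). Hence $\partial$ restricts to $\Mor(\iota_k,\iota_m)$, and Leibniz on $\mathcal{C}(\mathcal{A})$ is a restriction of Leibniz on $\mathcal{A}$.

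Conversely, from a differential bigraded category $\mathcal{C}$ with a distinguished (finite) object set $\{x_j\}_{j=1}^N$, I would build $\mathcal{A}(\mathcal{C}) \coloneqq \bigoplus_{k,m}\Mor(x_k,x_m)$ and let $\mathcal{I}\subseteq \mathcal{A}(\mathcal{C})$ be the subring generated by the identity morphisms $\id_{x_j}$, which form a complete set of orthogonal idempotents by the unitality of composition. Multiplication in $\mathcal{A}(\mathcal{C})$ is defined to agree with composition $\circ$ on matching pairs of objects and to be zero otherwise; associativity and unitality descend from the corresponding properties in $\mathcal{C}$. The bigrading on $\mathcal{A}(\mathcal{C})$ is the direct sum of the bigradings on the morphism spaces, and the differential on $\mathcal{A}(\mathcal{C})$ is defined component-wise from the differentials on the hom-objects. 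Leibniz, $d^2=0$, and compatibility with the bigrading all follow term-by-term from $\mathcal{C}$.

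Next I would check that these two constructions are mutually inverse up to the natural identifications: applying the algebra construction to $\mathcal{C}(\mathcal{A})$ returns $\bigoplus_{k,m}\iota_m\cdot\mathcal{A}\cdot\iota_k=\mathcal{A}$, where the last equality uses that $\{\iota_j\}$ is a complete basis of idempotents so that $\mathcal{A}=\bigoplus_{k,m}\iota_m\cdot\mathcal{A}\cdot\iota_k$; conversely, applying $\mathcal{C}$ to $\mathcal{A}(\mathcal{C})$ recovers $\mathcal{C}$ on the nose once we identify $\iota_j$ with $x_j$. The main subtlety, which is the step I would be most careful about, is the verification that every idempotent $\iota_j$ is automatically a cycle for $\partial$, so that the restriction of $\partial$ to each $\iota_m\cdot\mathcal{A}\cdot\iota_k$ is well-defined; this is standard but deserves to be spelled out, since it is what guarantees that the differential on $\mathcal{A}$ respects the decomposition indexed by pairs of objects. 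Once that is in place, the rest is a routine matter of matching definitions.
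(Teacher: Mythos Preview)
The paper does not give a proof of this observation at all; it is stated as a remark recording a standard identification, with no argument beyond the sentence you see. Your write-up is therefore more detailed than anything the paper provides, and your overall strategy (build the category from the idempotent decomposition, build the algebra as the direct sum of hom-spaces, check the two are inverse) is exactly the intended content of the observation.

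One point to tighten: your argument that $\partial(\iota_j)=0$ via Leibniz does not go through as written. In a noncommutative dg-algebra, $\partial(\iota_j)=\partial(\iota_j^2)=\iota_j\,\partial(\iota_j)+\partial(\iota_j)\,\iota_j$ only yields $\iota_j\,\partial(\iota_j)\,\iota_j=0$ and $(1-\iota_j)\,\partial(\iota_j)\,(1-\iota_j)=0$, which does not force $\partial(\iota_j)=0$. Your parenthetical fallback is the correct resolution here: in the paper's setup (following \cite[Definition~2.1.1]{LOT-bim} and the convention after Definition~\ref{def:dgcat} that identity morphisms lie in $\ker\partial$), the vanishing of the differential on the idempotents is part of the data, not a consequence. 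With that in place, the rest of your verification is routine and correct.
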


\begin{example}\label{exa:HighBrowDefTypeDoverI}
	Considering a differential bigraded \( \mathcal{I} \)-algebra \( \mathcal A \) as a differential bigraded category, 
	we call \( \Cx(\Mat(\mathcal{A})) \) \textbf{the category of complexes over the differential bigraded \( \mathcal{I} \)-algebra \( \mathcal{A} \)}.
Here is point of view for working with \( \Cx(\Mat(\mathcal{A})) \) in practice. Given some generators \( \{x_i.\iota_{k_i}\} \) of a complex \( (X,d) \),  write: 
	\[ d\vert_{x_i.\iota_{k_i}\rightarrow x_j.\iota_{k_j}}=d_{ji}\in\iota_{k_j}.\mathcal{A}.\iota_{k_i} \]
	Then the \( d^2\)-relation ensures that 
	\begin{equation*}
	\sum_{j}d_{kj}d_{ji}+(-1)^{h(x_k)}\partial(d_{ki})=0
	\end{equation*}
	for any \( i,k \). Moreover, if \( f\co (X,d)\rightarrow (X',d') \) is a morphism  whose homological grading vanishes (or is even), then \( D(f)=d'f-fd+\partial(f) \). In particular, if the differential \( \partial \) on \( \mathcal{A} \) vanishes, chain homomorphisms in the underlying ordinary category satisfy \( d'f=fd \), namely, the usual relation on chain maps. If the homological grading of \( f\co (X,d)\rightarrow (X',d') \) is odd, then \( D(f)=d'f+fd+\partial(f) \). 
	Again, if the differential \( \partial \) on \( \mathcal{A} \) vanishes, we recover the usual definition of chain homotopies.
\end{example}

\begin{convention}
Whenever a differential bigraded \( \mathcal I \)-algebra \( \mathcal A \) has only one idempotent (for example, \( \mathcal A=\Rcomm \) or the polynomial algebra \( \mathcal A=\Rcomm[H] \)), we will denote this idempotent by \( \gen \). 
\end{convention}

\begin{definition}
	Let  \( \mathcal{A} \) be a differential bigraded \( \mathcal{I} \)-algebra. A \textbf{type~D structure} is a pair \( (X,d) \), where \( X \) is a bigraded right \( \mathcal{I} \)-module, and 
	\( 
	d\co X \rightarrow X\otimes_{\mathcal{I}} \mathcal{A}
	 \)
	is a right \( \mathcal{I} \)-module homomorphism which increases the homological grading by 1, preserves the quantum grading, and satisfies
	\[ (\id_{X}\otimes\mu_1)\circ d+(\id_{X}\otimes \mu_2)\circ(d\otimes \id_\mathcal{A})\circ d=0 \]
	where \( \mu_1 \) and \( \mu_2 \) are the differential and the multiplication on \( \mathcal{A} \), respectively. If \( (X,d) \) and \( (X',d') \) are two type~D structures, a morphism \( f\co (X,d)\rightarrow (X',d') \) is a right \( \mathcal{I} \)-module homomorphism
	\( 
	f\co X \rightarrow X'\otimes_{\mathcal{I}} \mathcal{A}
	 \). The composition \( f\circ g \) of two morphisms \( g\co (X,d)\rightarrow (X',d') \) and \( f\co (X',d')\rightarrow (X'',d'') \) is defined by
	\[ (\id_{X''}\otimes \mu_2)\circ(f\otimes \id_\mathcal{A})\circ g. \]
	The space of all morphisms between two type~D structures is endowed with a differential
	\[ D(f)\coloneqq (\id_{X'}\otimes\mu_1)\circ f+(\id_{X'}\otimes \mu_2)\circ(d'\otimes \id_\mathcal{A})\circ f- (-1)^{h(f)}(\id_{X'}\otimes \mu_2)\circ(f\otimes \id_\mathcal{A})\circ d, \]
	turning the category of type~D structures over \( \mathcal A \) into a differential bigraded category, denoted by \( \TypeD(\mathcal A) \). As usual, the underlying ordinary category, consisting of type~D structures and bigrading preserving morphisms in the kernel of \( D \), is denoted by \( \TypeD(\mathcal A)_0 \).
\end{definition}

Generalizing Example~\ref{exa:HighBrowDefChainCxs} (where \( \Rcomm=\mathcal{I}=\mathcal{A} \)), complexes over differential bigraded \( \mathcal I \)-algebra \( \mathcal{A} \) and type~D structures over \( \mathcal{A} \) are essentially equivalent algebraic structures, except complexes come equipped with a choice of basis. (This is similar to the relationship between algebras and categories, which is explained in Observation~\ref{obs:algebras_vs_categories}.)

\begin{proposition}\label{prop:complexes_vs_Dstr}
	The underlying category \( \Cx(\Mat(\mathcal A))_0 \) of complexes over a differential bigraded \( \mathcal{I} \)-algebra \( \mathcal A \) is equivalent to the category \( \TypeD(\mathcal A)_0 \) of right type~D structures over \( \mathcal A \) and chain maps between them. Moreover, two chain maps between complexes are chain homotopic iff their corresponding morphisms of type~D structures are.
\end{proposition}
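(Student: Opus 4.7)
The plan is to construct a functor $\Phi\co \Cx(\Mat(\mathcal{A}))_0 \to \TypeD(\mathcal{A})_0$ together with a quasi-inverse, and then check that this equivalence is compatible with the bigraded differentials $D$ on morphism spaces (so that it descends to an equivalence after passing to $\HomologyZero$, yielding the claim about chain homotopies). The whole argument is essentially a bookkeeping translation between two notational systems for the same algebraic data.

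On objects, $\Phi$ sends $(\bigoplus_i h^{h_i}q^{q_i} \iota_{k_i}, d) \in \Cx(\Mat(\mathcal{A}))$ to the type~D structure $(Y, d^Y)$, where $Y = \bigoplus_i h^{h_i}q^{q_i} (\iota_{k_i}.\mathcal{I})$ is the free bigraded right $\mathcal{I}$-module with distinguished homogeneous basis $\{y_i\}$, and $d^Y(y_i) = \sum_j y_j \otimes d_{ji}$, using the matrix entries $d_{ji} \in \iota_{k_j}.\mathcal{A}.\iota_{k_i}$ of $d$. The defining relation \eqref{eqn:d2term} that $d^2 + \partial(d) = 0$ translates, entry by entry, into the type~D structure equation $(\id \otimes \mu_1)\circ d^Y + (\id \otimes \mu_2)\circ(d^Y \otimes \id)\circ d^Y = 0$; this is exactly the computation sketched at the end of Example~\ref{exa:HighBrowDefTypeDoverI}. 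On morphisms, $\Phi$ sends a bigraded matrix $f = (f_{ji})$ representing a morphism $(X,d) \to (X',d')$ to the right $\mathcal{I}$-linear map $\Phi(f)\co Y \to Y' \otimes_{\mathcal{I}} \mathcal{A}$ given by $\Phi(f)(y_i) = \sum_j y'_j \otimes f_{ji}$. Matrix composition in $\Mat(\mathcal{A})$ reproduces the formula $(\id \otimes \mu_2) \circ (f \otimes \id) \circ g$ defining composition of type~D morphisms.

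The functor $\Phi$ is essentially surjective: any right type~D structure over $\mathcal{A}$ is supported on a free bigraded $\mathcal{I}$-module, and any choice of a homogeneous $\mathcal{I}$-basis $\{y_i\}$ (with $y_i.\iota_{k_i}=y_i$ of bidegree $(h_i, q_i)$) allows us to unpack $d^Y$ into a matrix satisfying \eqref{eqn:d2term}. It is fully faithful because the entries $f_{ji}$ of a matrix morphism are precisely the coefficients of $\Phi(f)(y_i)$ with respect to the basis $\{y'_j\}$. The final step is to verify that the bigraded differential on morphism spaces matches on both sides --- that is, that the sign in the matrix differential of Definition~\ref{def:CatOfMatrices}, combined with the sign in $D(f) = d' \circ f - (-1)^{h(f)} f \circ d + \partial(f)$, assembles into the differential $D$ on $\Mor(\TypeD(\mathcal{A}))$. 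Once this is verified, passing to the underlying ordinary categories gives the equivalence $\Cx(\Mat(\mathcal{A}))_0 \simeq \TypeD(\mathcal{A})_0$, and passing further to $\HomologyZero$ (Example~\ref{exa:HighBrow_chain_homotopy}) yields the chain homotopy statement. The only real obstacle is this careful sign-tracking together with the systematic handling of idempotents and grading shifts; no substantive new calculation is needed.
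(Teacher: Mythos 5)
Your proposal is correct and follows essentially the same route as the paper's sketch: translate matrices to type~D differentials via $d^Y(y_i)=\sum_j y_j\otimes d_{ji}$, check that the $d^2$-relation and the Koszul-sign conventions match on both sides, and obtain essential surjectivity by choosing an idempotent-homogeneous $\Rcomm$-basis of the underlying $\mathcal{I}$-module. The only organizational difference is that you frame the argument as an equivalence of enriched (differential bigraded) categories and then extract both the $(-)_0$-level equivalence and the $\HomologyZero$-level statement as corollaries, whereas the paper first sets up the underlying-ordinary-category functor and treats the homotopy compatibility as a separate check; the substance is the same.
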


\begin{proof}[Sketch of proof]
	For the first part, it suffices to define an essentially surjective, fully faithful functor from \( \Cx(\Mat(\mathcal A))_0 \) to \( \TypeD(\mathcal A)_0 \). So, given a complex \( (X,d) \) over a differential bigraded \( \mathcal{I} \)-algebra \( \mathcal{A} \), we define the underlying \( \mathcal{I} \)-module of the corresponding type~D structure as the direct sum of copies \( R.x_i \) for each summand \( X_i \) of \( X \). The differential is defined by 
	\(d(x_i)=\sum_{j} x_j \otimes d_{ji}. \)
	Then, using the Koszul sign rule 
	\( (f\otimes g)(x\otimes y)=(-1)^{h(g)\cdot h(x)}f(x)\otimes g(y), \)  it is a routine check that the \( d^2 \)-relation is satisfied. 
	Similarly, we can define the homomorphism between type~D structures corresponding to a bigrading preserving morphism of complexes in the kernel of \( D \) and check that the notions of homotopy equivalence coincide.

	To see that this functor is essentially surjective, suppose \( N \) is a type~D structure over \( \mathcal A \). Then we define a complex \( (X(N),d) \) in \( \Cx(\Mat(\mathcal A)) \) as follows: first, we pick a basis \( \{\iota_j\}_{j} \) for \( \mathcal{I} \) and consider the decomposition \( N=\oplus_{j} N.\iota_j \). For each summand \( N.\iota_j \), we pick a basis \( \{x^k_j\} \). We set \( X(N)\coloneqq\bigoplus_{j,k}x^k_j.\iota_j \) and define \( d \) to be the matrix representing the differential of \( N \) in our chosen basis of \( N \). One can check that the \( d^2 \)-relation is satisfied, and the image of complex \( (X(N),d) \) under the functor from the previous paragraph is isomorphic to \( N \) in \( \TypeD(\mathcal A)_0 \).
\end{proof}

\begin{remark}\label{rmk:HighBrowDefTypeDoverI}
	These algebraic structures have appeared under different names in other contexts. What we call \emph{complexes over differential bigraded categories} first appeared (without the quantum grading) in \cite{Bondal_Kapranov} under the name of \emph{twisted complexes}.  Twisted complexes over differential graded categories were later promoted to \emph{twisted complexes over \( A_\infty \)-categories} \cite{Kontsevich_HMS}, the most general version of this algebraic structure. The concept of a \emph{complex over an ordinary category} was introduced by Bar-Natan \cite{BarNatanKhT}, which is a special case of a \emph{twisted complex over a differential graded category} where the differential is zero. Finally, \emph{type~D structures} were introduced by Lipshitz--Ozsv\'ath--Thurston \cite{LOT-main}, which, as shown in Proposition~\ref{prop:complexes_vs_Dstr}, can be regarded as twisted complexes over differential graded categories arising from differential graded \( \mathcal I \)-algebras.
Because our work builds on \cite{BarNatanKhT} and \cite{LOT-main}, we continue with the terminology \emph{complexes over differential bigraded \( \mathcal I \)-algebras} and \emph{type~D structures over differential bigraded \( \mathcal I \)-algebras}, which we use interchangably.
\end{remark}

Given a differential bigraded \( \mathcal{I} \)-algebra \( \mathcal A \), we will denote the category \( \Cx(\Mat(\mathcal A)) \) of complexes over \( \mathcal A \), or, equivalently, the category \( \TypeD(\mathcal A) \) of finite dimensional bigraded type~D structures over \( \mathcal A \), by \( \Mod^{\mathcal{A}} \). We will also sometimes indicate that \( N \in \ob(\Mod^{\mathcal{A}}) \) by adding a superscript \( N^{\mathcal A} \).
This notation differs slightly from \cite[Section~2.2.3]{LOT-bim}: we do not require type~D structures to be bounded, but we do require them to be finite dimensional. 
In Section~\ref{sec:InfinitelyStranded} 
we will have to work with infinite dimensional type~D structures, and so we define the following enlargement of \( \Mod^{\mathcal A} \).


\begin{definition}\label{def:infinite_D_structures}
Denote by \( \Mod_{\infty}^{\mathcal A} \) the enlargement of \( \Mod^{\mathcal A} \) consisting of type~D structures with possibly infinite dimensional underlying $\Rcomm$-module \( X=\bigoplus^{\infty}_{j=1} \Rcomm . \iota_j \).
The category \( \Mod_{\infty}^{\mathcal A} \), in a complete analogy with \( \Mod^{\mathcal A} \), is a differential bigraded category. 

\end{definition}


	Having a differential bigraded \( \mathcal I \)-algebra \( \mathcal{A} \), define the \textbf{opposite algebra} \( \op{\mathcal{A}} \) to be an algebra which differs from \( \mathcal{A} \) in multiplication: if \( ab=c \) in \( \mathcal{A} \), then \( ba=c \) in \( \op{\mathcal{A}} \). The bigrading and the differential in \( \op{\mathcal{A}} \) are the same as in  \( \mathcal{A} \). Given a right type~D structure \( N^\mathcal{A} \) over a differential bigraded \( \mathcal I \)-algebra \( \mathcal{A} \), \textbf{the dual type~D structure} \( \dual{N}^{\op{\mathcal{A}}} \) is the right type~D structure over the opposite algebra \( \op{\mathcal{A}} \), where the directions of arrows in \( N^\mathcal{A} \) are reversed, and the gradings of generators of \( N^\mathcal{A} \) are multiplied by \( -1 \). Equivalently, one can interpret the dual type~D structure as a left type~D structure \( ^{\mathcal{A}}\dual{N} \) over \( \mathcal{A} \).
In Sections~\ref{sec:MCGaction} and~\ref{sec:InfinitelyStranded} we will make use of the \emph{box tensor product} operation $\boxtimes$, as well as \emph{type~AD (DA, AA) structures}, sometimes also called \emph{type~AD (DA, AA) bimodules}. Because these sections restrict to $\fieldTwoElements$-coefficients, we need not pin down sign conventions for these algebraic structures. We refer the reader to the original source  for the definitions \cite[Section~2]{LOT-bim}.

\subsection{Cancellation and Cleaning-Up}\label{sec:cancellation _and_cleaning-up} We will make repeated use of two tools, which can also be found in \cite[Section~1]{pqMod}. We review these and, in particular, take care of signs.

\begin{wrapfigure}{r}{0.3333\textwidth}
	\centering
	\( \begin{tikzcd}[row sep=1.5cm, column sep=0.3cm]
	& (Z,\zeta) \arrow[bend left=12]{ld}{a} \arrow[bend left=12]{rd}{c} \\
	(Y_1,\varepsilon_1)\arrow[bend left=12]{rr}{f} \arrow[bend left=12]{ru}{b}
	& &
	(Y_2,\varepsilon_2) \arrow[bend left=12]{ll}{e}\arrow[bend left=12]{lu}{d}
	\end{tikzcd} \)
	\vspace*{-10pt}
	\caption{The object~\( (X,\delta) \).}\label{fig:AbstractCancellation}\vspace*{-5pt}
\end{wrapfigure}
\myfixwrapfig

\begin{lemma}[Cancellation Lemma]\label{lem:AbstractCancellation}
Let \((X,\delta)\) be an object of \(\Cx(\Mat(\mathcal{C}))\) for some differential bigraded category \(\mathcal{C}\) and suppose it has the form shown in Figure~\ref{fig:AbstractCancellation}, 
where \((Y_1,\varepsilon_1)\), \((Y_2,\varepsilon_2)\), \((Z,\zeta)\in\ob(\Cxpre(\Mat(\mathcal{C})))\) and \(f\) is an isomorphism with inverse \(g\). Then \((X,\delta)\) is chain homotopic to \((Z,\zeta-bgc)\).
\end{lemma}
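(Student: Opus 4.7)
The proof will proceed by Gaussian elimination. The plan is to present explicit formulas for a chain map $\iota : (Z, \zeta - bgc) \to (X, \delta)$, a chain map $\pi : (X, \delta) \to (Z, \zeta - bgc)$, and a chain homotopy $h : X \to X$ witnessing $\iota\pi \simeq \id_X$, and then to read off the identities these must satisfy from block entries of the $d^2$-relation $\delta^2 + \partial(\delta) = 0$ for $(X, \delta)$.

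Viewing $\delta$ as a $3 \times 3$ block matrix with respect to the decomposition $X = Z \oplus Y_1 \oplus Y_2$,
\[
\delta = \begin{pmatrix} \zeta & b & d \\ a & \varepsilon_1 & e \\ c & f & \varepsilon_2 \end{pmatrix},
\]
I would take
\[
\iota = \begin{pmatrix} \id_Z \\ -gc \\ 0 \end{pmatrix}, \qquad \pi = \begin{pmatrix} \id_Z & 0 & -bg \end{pmatrix}, \qquad h = \begin{pmatrix} 0 & 0 & 0 \\ 0 & 0 & g \\ 0 & 0 & 0 \end{pmatrix}.
\]
Matrix multiplication together with $fg = \id$ immediately gives $\pi\iota = \id_Z$, and a first preliminary step is to verify $(\zeta - bgc)^2 + \partial(\zeta - bgc) = 0$ so that $(Z, \zeta - bgc)$ is indeed an object of $\Cx(\Mat(\mathcal{C}))$. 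This reduces to combining the $(Z,Z)$-entry of $\delta^2 + \partial(\delta) = 0$ with the $(Z, Y_i)$- and $(Y_j, Z)$-entries and using $fg = gf = \id$.

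The core of the argument is then to verify the three identities $D(\iota) = 0$, $D(\pi) = 0$, and $D(h) = \id_X - \iota\pi$. Computing $\delta h + h\delta$ directly shows that the last of these reduces to the single non-trivial block identity
\[
\varepsilon_1 g + g\varepsilon_2 + gcbg + \partial(g) = 0,
\]
which can be obtained from the $(Y_2, Y_1)$-block of $\delta^2 + \partial(\delta) = 0$, namely $cb + f\varepsilon_1 + \varepsilon_2 f + \partial(f) = 0$, by conjugating on both sides by $g$ and applying the Leibniz-type identity $\partial(g) = g\,\partial(f)\,g$, which follows in turn from differentiating $\id = gf$. The chain map conditions $D(\iota) = 0$ and $D(\pi) = 0$ are analogous and follow from the $(Y_2, Z)$- and $(Z, Y_2)$-blocks, combined with the same identity above.

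The main point requiring care is sign bookkeeping. The entries $a, b, c, d, e, f$ of $\delta$ all have homological grading $+1$, so $g = f^{-1}$ has grading $-1$; the maps $\iota$ and $\pi$ are grading-preserving while $h$ has grading $-1$. The formula for $D$ on morphisms in $\Cx(\Mat(\mathcal{C}))$ and the Leibniz rule for $\partial$ each pick up Koszul signs depending on these parities, as spelled out in Definition~\ref{def:CatOfMatrices} and Example~\ref{exa:HighBrowDefTypeDoverI}, and one must also keep track of the block-dependent sign $(-1)^{h_j}$ appearing in the definition of $\partial$ on $\Mat(\mathcal{C})$. Once these signs are tracked correctly, the remaining verifications are mechanical matrix computations.
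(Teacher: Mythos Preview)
Your proposal is correct and takes essentially the same approach as the paper: your maps $\iota$, $\pi$, and $h$ coincide exactly with the paper's $F$, $G$, and $H$ (displayed in Figures~\ref{fig:ProofAbstractCancellationF}--\ref{fig:ProofAbstractCancellationHomotopy}), and the paper verifies the same identities in the same way, including deriving $\varepsilon_1 g + g\varepsilon_2 + gcbg + \partial(g) = 0$ from the $(Y_2,Y_1)$-block of $\delta^2+\partial(\delta)=0$ via $g\,\partial(f)\,g = \partial(g)$. Your explicit block-matrix presentation is, if anything, a slightly cleaner way to organize the same computation.
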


\begin{proof}
First we check that \( (Z,\zeta-bgc) \) is indeed an object of \( \Cx(\Mat(\mathcal{C})) \):
\[(\zeta-bgc)^2+\partial(\zeta-bgc)=\big(\zeta\zeta+dc+ba+\partial(\zeta)\big)+b\big(\varepsilon_1g+g\varepsilon_2+\partial(g)+gcbg\big) c=0\]
The first term vanishes because we are assuming that \( (X,\delta) \) satisfies the \( d^2 \)-relation. The second term vanishes because $gcbg 
=
-g(f\varepsilon_1+\varepsilon_2f+\partial(f))g=
-\varepsilon_1g-g\varepsilon_2-\partial(g).$
Next, we consider the chain maps 
\[ 
F\co(Z,\zeta-bgc)\rightarrow(X,\delta)
\qquad\qquad 
G\co(X,\delta)\rightarrow(Z,\zeta-bgc)
 \]
defined in Figure~\ref{fig:ProofAbstractCancellationF} and \ref{fig:ProofAbstractCancellationG}, respectively. One easily checks that \( D(F)=0 \) and \( D(G)=0 \). Indeed, the only non-trivial terms we need to compute are
\begin{align*}
-(-gc)(\zeta-bgc)+\varepsilon_1(-gc)+a+\partial(-gc)=& ~g(c\zeta+\partial(c))-(\varepsilon_1g+\partial(g))c+a-gcbgc\\
=&~-g(fa+\varepsilon_2c)-(\varepsilon_1g+\partial(g))c+a-gcbgc\\
=&~-\left(g\varepsilon_2+\varepsilon_1g+\partial(g)+gcbg\right)c=0
\end{align*}
for the first identity; a similar calculation checks the second identity. 
Now, \( GF=1_{Z} \) and conversely, it is not hard to check that 
\( FG=1_{X}+D(H), \)
where \( H \) is the homotopy given by the dashed line in Figure~\ref{fig:ProofAbstractCancellationHomotopy}.
\end{proof}

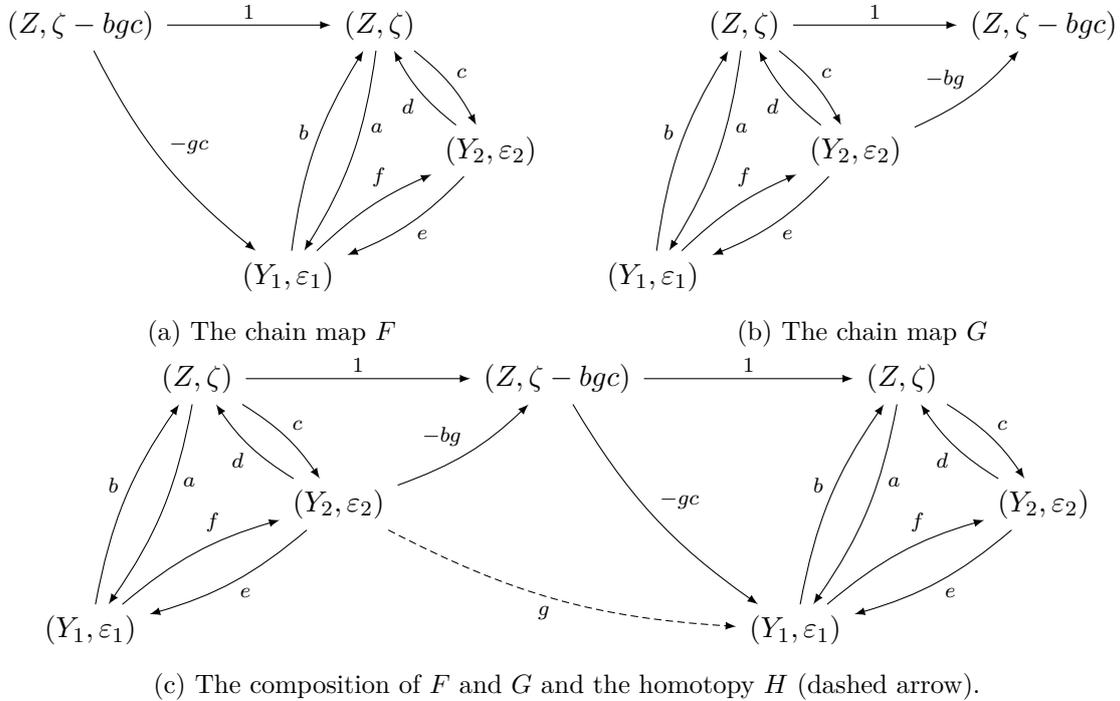
\begin{figure}[h]
	\centering
	\begin{subfigure}{0.48\textwidth}
		\centering
		\( \begin{tikzcd}[row sep=1cm, column sep=-0.2cm]
		(Z,\zeta-bgc)\arrow{rrr}{1}\arrow[bend right=12]{rrdd}{-gc} &~~~~~~~ && (Z,\zeta) \arrow[bend left=12,pos=0.3]{ldd}{a} \arrow[bend left=12]{rrd}{c}\\
		& &&&~& (Y_2,\varepsilon_2) \arrow[bend left=12]{llld}{e}\arrow[bend left=12]{llu}{d}\\
		& &(Y_1,\varepsilon_1) \arrow[bend left=10,pos=0.7]{rrru}{f} \arrow[bend left=12]{ruu}{b}
		\end{tikzcd} \)
		\caption{The chain map \( F \)}\label{fig:ProofAbstractCancellationF}
	\end{subfigure}
	\begin{subfigure}{0.48\textwidth}
		\centering
		\( \begin{tikzcd}[row sep=1cm, column sep=-0.2cm]
		& (Z,\zeta) \arrow[bend left=12,pos=0.3]{ldd}{a} \arrow[bend left=12]{rrd}{c}\arrow{rrrr}{1}&&&~~~~~ &(Z,\zeta-bgc)\\
		&&~& (Y_2,\varepsilon_2) \arrow[bend left=12]{llld}{e}\arrow[bend left=12]{llu}{d}\arrow[bend right=12]{rru}{-bg}\\
		(Y_1,\varepsilon_1) \arrow[bend left=10,pos=0.7]{rrru}{f} \arrow[bend left=12]{ruu}{b}
		\end{tikzcd} \)
		\caption{The chain map \( G \)}\label{fig:ProofAbstractCancellationG}
	\end{subfigure}
	\begin{subfigure}{0.95\textwidth}
		\centering
		\( \begin{tikzcd}[row sep=1cm, column sep=0cm]
		& (Z,\zeta) \arrow[bend left=12,pos=0.3]{ldd}{a} \arrow[bend left=12]{rrd}{c}\arrow{rrrr}{1}&&&~~~~~ &(Z,\zeta-bgc)
		\arrow{rrr}{1}\arrow[bend right=12]{rrdd}{-gc} &~~~~~~~ && (Z,\zeta) \arrow[bend left=12,pos=0.3]{ldd}{a} \arrow[bend left=12]{rrd}{c}
		\\
		&&~& (Y_2,\varepsilon_2) \arrow[bend left=12]{llld}{e}\arrow[bend left=12]{llu}{d}\arrow[bend right=12]{rru}{-bg}\arrow[dashed,swap, bend right=12]{rrrrd}{g}&&
		& &&&~& (Y_2,\varepsilon_2) \arrow[bend left=12]{llld}{e}\arrow[bend left=12]{llu}{d}
		\\
		(Y_1,\varepsilon_1) \arrow[bend left=12,pos=0.7]{rrru}{f} \arrow[bend left=12]{ruu}{b}&&&&&
		& &(Y_1,\varepsilon_1) \arrow[bend left=12,pos=0.7]{rrru}{f} \arrow[bend left=12]{ruu}{b}
		\end{tikzcd} \)
		\caption{The composition of \( F \) and \( G \) and the homotopy \( H \) (dashed arrow).}\label{fig:ProofAbstractCancellationHomotopy}
	\end{subfigure}
	\caption{Maps for the proof of Lemma~\ref{lem:AbstractCancellation}}\label{fig:ProofAbstractCancellation}
\end{figure}

\begin{lemma}[Clean-Up Lemma]\label{lem:AbstractCleanUp}
Let \((\object,d)\) be an object in \(\Cx(\mathcal{C})\) for some differential bigraded category \(\mathcal{C}\).
Then for any morphism \(h\in\Mor((\object,d),(\object,d))\) of bigrading \( h^0q^0 \) for which $h^2$ and $hD(h)$
vanish, \((\object,d)\) is isomorphic to \((\object,d+D(h))\).
\end{lemma}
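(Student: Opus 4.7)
The plan is to exhibit an explicit isomorphism by hand. Set $f\coloneqq\id_X - h$ and $g\coloneqq\id_X+h$; both are morphisms of bigrading $h^0q^0$, and the relations $f\circ g=g\circ f=\id_X-h^2=\id_X$ follow at once from $h^2=0$. The content of the lemma therefore reduces to two claims: that $(X,d+D(h))$ is actually an object of $\Cx(\mathcal{C})$, and that $f$ and $g$ are chain maps between the indicated source and target, so that they descend to mutually inverse morphisms in the underlying ordinary category.

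To verify the first claim I will expand $(d+D(h))^2+\partial(d+D(h))$. Since $d^2+\partial(d)=0$ and since $D$ is itself a differential on the pre-complex morphism spaces (so $D(D(h))=d\,D(h)+D(h)\,d+\partial(D(h))=0$, using $h(D(h))=1$), all cross terms cancel and what survives is $D(h)^2$. To see this vanishes, I first apply the Leibniz rule to $h^2=0$ to obtain $\partial(h)h+h\partial(h)=\partial(h^2)=0$, and then expand directly to find $D(h)h+hD(h)=\partial(h)h+h\partial(h)=0$; combined with the hypothesis $hD(h)=0$ this yields $D(h)h=0$ as well. Applying $D$ once more to this identity,
\[ 0 \;=\; D\bigl(D(h)\cdot h\bigr) \;=\; D(D(h))\cdot h + (-1)^{h(D(h))}\,D(h)\cdot D(h) \;=\; -\,D(h)^2,\]
so $D(h)^2=0$ as required.

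For the second claim, I compute $D(f)$ with source differential $d$ and target differential $d+D(h)$: expanding $(d+D(h))(\id_X-h) - (\id_X-h)d + \partial(\id_X-h)$ and grouping terms yields $D(f)=-D(h)\cdot h$, which vanishes by the previous step. The symmetric calculation for $g$ produces $D(g)=-h\cdot D(h)=0$ directly from the hypothesis. The main obstacle is bookkeeping of the Koszul signs inside $D$ and the Leibniz rule: with the wrong signs one is tempted to guess $f=\id_X+h$, but that choice fails already at the level of chain maps whenever $2\neq 0$. With the sign conventions fixed in Definition~\ref{def:dgcat}, however, the argument above goes through uniformly over any commutative ring $\Rcomm$, without any need to restrict to $\fieldTwoElements$ coefficients.
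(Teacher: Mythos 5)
Your proof is correct and follows the same strategy as the paper's: the explicit isomorphisms $\id_X-h$ and $\id_X+h$, verification that they are chain maps via $D(\id_X-h)=-D(h)h$ and $D(\id_X+h)=-hD(h)$, and the observation that $h^2=0$ makes them mutually inverse. Your intermediate reduction of the $d^2$-obstruction to $D(h)^2$ is equivalent (via the Leibniz rule $D(hD(h))=D(h)^2+hD^2(h)$) to the paper's reduction to $D(hD(h))-hD^2(h)$, so only the order of cancellations differs; one small wording caution is that $D^2=0$ does not hold on arbitrary $\Cxpre$ morphism spaces but is guaranteed here precisely because $(\object,d)\in\Cx(\mathcal{C})$, which is indeed the way your argument uses it.
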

\begin{proof}The identity \[(d+D(h))^2+\partial(d+D(h))=D(hD(h))-hD^2(h)=0\]
varifies that \( (\object,d+D(h)) \) is an object in \( \Cx(\Mat(\mathcal{C})) \).
Now consider the morphisms
\begin{center}
\( \begin{tikzcd}[row sep=1.5cm, column sep=0.6cm]
(\object,d) \arrow{rr}{1-h} && (\object,d+D(h))
\end{tikzcd} \)
\quad
\quad
\( \begin{tikzcd}[row sep=1.5cm, column sep=0.6cm]
(\object,d+D(h)) \arrow{rr}{1+h} && (\object,d)
\end{tikzcd} \)
\end{center}
and note that composing (in either order) gives \( 1-h^2=1 \). It remains to check that each morphism lies in the kernel of \( D \). To see this, notice that since \( D(h^2)=D(h)h+hD(h) \), the condition \( D(h)h=0 \) is equivalent to \( hD(h)=0 \) under the assumption that \( h^2=0 \). Therefore
\begin{align*}
    D(1-h)
    &=
    (d+D(h))(1-h)-(1-h)d+\partial(1-h)
    \\
    &=
    d+D(h)-dh-D(h)h-d+hd-\partial(h)
    \\
    &= (D(h)-dh+hd-\partial(h))- D(h)h  =0
\end{align*}
and $D(1+h)$ follows similarly.\end{proof}

\begin{remark}
Assuming that \( h^2=0 \) and \( \partial(h)=0 \), the second  condition of the Clean-Up Lemma is equivalent to \( hdh=0 \), since \( hD(h)=hdh-h^2d+h\partial(h) \). 
\end{remark}

\begin{definition}
	Given a complex \( (\object,d) \), we define 
	\( h^{n}q^{m}(\object,d)\coloneqq (h^nq^m\object,(-1)^n d). \)\end{definition}
	$h^{n}q^{m}(\object,d)$ is a well-defined complex since the sign of \( \partial(d) \) in the \( d^2 \)-relation~\eqref{eqn:d2term} changes by \( (-1)^n \) under a shift of \( \object \) in homological grading. In the special case \( (\object,d)\in \Mod^\Rcomm \) we obtain the usual conventions for shifting chain complexes; see \cite[Definition~2.1]{Keller}, for example. 

\begin{lemma}\label{lem:shifting_morphism_spaces}
	The map
	\begin{align*}
	\sigma: h^{n'-n}q^{m'-m}\Mor((X,d),(X',d'))
	&\rightarrow
	\Mor(h^nq^m(X,d),h^{n'}q^{m'}(X',d'))
	\\
	h^{n'-n}q^{m'-m} (f\co (X,d)\rightarrow(X',d'))
	&\mapsto
	(-1)^{n\cdot h(f)}\cdot f
	\end{align*}
	is an isomorphism of bigraded chain complexes.
\end{lemma}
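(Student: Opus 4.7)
The plan is to verify three things: that $\sigma$ preserves bigrading, that it intertwines the two differentials, and that it is bijective. The bigrading check is direct: the element $h^{n'-n}q^{m'-m}f$ has homological grading $h(f) + n' - n$ on the left, while the same underlying map $f$ viewed as a morphism between the shifted objects $h^nq^m(X,d)$ and $h^{n'}q^{m'}(X',d')$ also has homological grading $h(f) + n' - n$, since the grading of a morphism changes by target-shift minus source-shift (cf.~Remark~\ref{rmk:gradings_of_morphisms}); the same holds for quantum grading, and multiplication by $(-1)^{n h(f)}$ is grading-neutral. Bijectivity is equally direct: $\sigma$ is an $\Rcomm$-linear isomorphism of the underlying modules, with inverse $g \mapsto (-1)^{n h(g)} h^{n'-n}q^{m'-m}g$.

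The main content is the chain map property. On the source side, the shift convention $h^k(C,D) = (h^k C, (-1)^k D)$ introduced just before the lemma applies to any complex in $\Mod^{\Rcomm}$, and morphism spaces are such complexes; hence the shifted source differential sends $h^{n'-n}q^{m'-m}f$ to $(-1)^{n'-n} h^{n'-n}q^{m'-m}\bigl(d' \circ f - (-1)^{h(f)} f \circ d + \partial(f)\bigr)$. On the target side, the differential $D'$ on $\Mor(h^nq^m(X,d), h^{n'}q^{m'}(X',d'))$ is built from the signed differentials $(-1)^n d$ and $(-1)^{n'} d'$ of the shifted objects, together with the $\Mat(\mathcal{C})$-convention from Definition~\ref{def:CatOfMatrices} that the underlying $\partial$ on a morphism whose target has homological shift $n'$ picks up a factor $(-1)^{n'}$. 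Thus, for a morphism $g$ of homological grading $h(g)$,
\[ D'(g) = (-1)^{n'} d' \circ g - (-1)^{h(g)+n}\, g \circ d + (-1)^{n'}\partial(g). \]

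Substituting $g = \sigma(h^{n'-n}q^{m'-m}f) = (-1)^{n h(f)} f$, for which $h(g) = h(f) + n'-n$, and then comparing term by term with $\sigma$ applied to the shifted source differential (using that each of $d'\circ f$, $f\circ d$, and $\partial(f)$ has homological grading $h(f)+1$, so $\sigma$ contributes a sign $(-1)^{n(h(f)+1)}$), one finds that all three terms acquire the common overall sign $(-1)^{n' + n h(f)}$, with the middle term carrying an additional internal factor $(-1)^{h(f)}$. The sign $(-1)^{n h(f)}$ built into $\sigma$ is precisely what is needed to reconcile the $(-1)^{h(f)}$ coming from $D_{\Mor}$ on the source side with the $(-1)^{h(g)+n} = (-1)^{h(f) + n'}$ coming from $D'$ on the target side, while the $\partial$-terms agree thanks to the target-shift sign $(-1)^{n'}$ in the $\Mat$-convention. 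The only obstacle is careful sign bookkeeping; no new conceptual ingredients are required beyond the definitions already in place.
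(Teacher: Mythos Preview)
Your proof is correct; it carries out precisely the routine sign verification that the paper leaves to the reader (the paper's proof consists of the single sentence ``This is a routine check left to the reader''). Your identification of the key sign $(-1)^{n'}$ coming from the $\Mat(\mathcal{C})$-convention on $\partial$ and the computation $h(g)=h(f)+n'-n$ are exactly the points where care is needed, and you handle them correctly.
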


\begin{proof} This is a routine check left to the reader. 
\end{proof}
\section{Khovanov-theoretic invariants of links}\label{sec:KhLinksReview}


In order to set notation and establish conventions, we give an overview of the variants of Khovanov homology used in this paper. In this section we will mainly work over  a unital commutative ring \( \Rcomm \), but when we wish to highlight results that require it, we will denote a general field by \( \field \). We will reserve particular notation in two cases: denote by \( \fieldTwoElements \) the field of two elements and let \( \fieldTwoIsNotZero \) denote any field of characteristic \( \neq 2 \).

\subsection{Khovanov homology}\label{subsec:Kh}
We briefly recall the definition of Khovanov homology \cite{Khovanov}, mainly following the notation from  \cite{BarNatanKh}. Given  an \( n \)-crossing diagram \( \Diag \) of a link \( \Lk \), the 0- and 1-resolution of a crossing is defined as follows: 
\[ \text{(0-resolution) }\No \leftarrow \CrossingR \rightarrow\Ni\text{ (1-resolution)}\] 
Enumerating the crossings of \( \Diag \), one associates with each vertex of an \( n \)-dimensional cube \( v\in\{0,1\}^n \) the diagram \( \Diag(v) \) obtained by taking the \( v_i \)-resolution of the \( i^\text{th} \) crossing for each \( i=1,\dots, n \). Each diagram \( \Diag(v) \) consists of several disjoint, possibly nested, circles. If a vertex \( w\in\{0,1\}^n \) is obtained from a vertex \( v\in\{0,1\}^n \) by changing a single coordinate \( v_k \) of \( v \) from 0 to 1, we draw an arrow from \( \Diag(v) \) to \( \Diag(w) \). This arrow is labelled by the 2-dimensional cobordism between \( \Diag(v) \) and \( \Diag(w) \), which is equal to the saddle cobordism in a neighbourhood of the \( i^\text{th} \) crossing and agrees with the product cobordism outside this neighbourhood.
As a result, we obtain an \( n \)-dimensional cube whose vertices are labelled by 1-dimensional manifolds and whose edges are labelled by cobordims. To this topological picture, we now apply a TQFT that associates with each circle a free \( \Rcomm \)-module \( V=\langle\x_{+},\x_{-} \rangle_\Rcomm \), and with a disjoint union of circles \( \Diag(v) \) the tensor product of the corresponding \( \Rcomm \)-modules \( V_{\Diag(v)}=V^{\otimes |\Diag(v)|} \), where \( |\Diag(v)| \) is the number of components in the diagram \( \Diag(v) \). The TQFT associates maps to cobordisms via the following formulas:
\begin{align*}
\text{(split map)}\quad\PairsOfPantsR &\Delta\co\begin{cases}
\x_{+} \mapsto \x_{+} \otimes \x_{-} +\x_{-} \otimes \x_{+} & \\
\x_{-} \mapsto \x_{-} \otimes \x_{-}  & \end{cases}\\
\text{(merge map)}\quad\PairsOfPantsL &m\co\begin{cases}
\x_{+} \otimes \x_{-} \mapsto \x_{-} \quad & \x_{+} \otimes \x_{+} \mapsto \x_{+}\\
\x_{-} \otimes \x_{+} \mapsto \x_{-} \quad & \x_{-} \otimes \x_{-} \mapsto 0 \end{cases}
\end{align*}
These edge maps commute on each 2-dimensional face of the cube. So, in order to obtain a differential, we need to add signs to these edge maps such that the number of signs on the boundary of every face is odd. Such edge assignments exist; for example, with the notation as above, we could assign an edge from \( \Diag(v) \) to \( \Diag(w) \) the sign \( (-1)^{v_1+\cdots + v_{k-1}} \). 
Thus, \( \bigoplus_{v\in\{0,1\}^n} V_{\Diag(v)} \) together with these signed edge maps is a chain complex, which we denote by \( \CKh(\Diag) \). 

This chain complex carries various gradings that depend on the orientation of the link. Let \( n_+ \) denote the number of positive crossings \( \CrossingPosR \) and let \( n_- \) denote the number of negative crossings \( \CrossingNegR \) so that \( n=n_+ +n_- \). Up to a shift, the homological grading on the complex is given by \( |v| = \sum v_i \) (the height of a vertex). Let \( \xgen=(\x_{+})^{\otimes k}\otimes (\x_{-})^{\otimes \ell} \) be a generator in \( V_{\Diag(v)} \). Then the homological grading, the quantum grading and the \( \delta \)-grading of \( \xgen \) are defined as follows:
\[ h(\xgen) \coloneqq \vert v\vert - n_- \qquad q(\xgen)\coloneqq k - \ell + \vert v\vert +n_+-2n_- \qquad  \delta(\xgen)\coloneqq\tfrac{1}{2}(k-\ell+n_+ -\vert v\vert)\]
The quantum grading on the basic $\Rcomm$-module \( V=\langle \x_{+},\x_{-}\rangle \) is \( q(\x_{+})=+1, q(\x_{-})=-1 \). Note that \( \delta=\tfrac{1}{2}q-h \); the complex \( \CKh(\mathcal{D}) \) is bigraded. The differential of \( \CKh(\mathcal{D}) \) increases the homological grading by one, preserves the quantum grading, and decreases the \( \delta \)-grading by one.    
\begin{theorem}[Khovanov \cite{Khovanov}]\label{thm:KhIsALinkInvariant}
The homology of \( \CKh(\Diag) \) is a bigraded link invariant. In particular, it is independent of the choice of diagram and the choice of edge assignment. 
\end{theorem}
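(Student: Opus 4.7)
The plan is to split the statement into two parts: (i) independence of the edge assignment for a fixed diagram $\Diag$, and (ii) invariance of the chain homotopy type under the three Reidemeister moves. The homology of $\CKh(\Diag)$ is manifestly bigraded, and since the shift constants $n_+, n_-$ only depend on the oriented link, once chain homotopy invariance is established the bigrading on homology becomes a link invariant as well.

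For (i), I would observe that the condition ``every $2$-face of the cube anticommutes'' amounts to a $1$-cocycle condition on a certain $\F_2$-valued function on the edges of $\{0,1\}^n$. Any two such cocycles differ by a coboundary, i.e.\ by the differential of an $\F_2$-valued function on the vertices. Multiplying each module $V_{\Diag(v)}$ by the corresponding sign $(-1)^{\epsilon(v)}$ then defines an isomorphism between the two resulting chain complexes. In particular, this shows that $\CKh(\Diag)$ is well-defined up to isomorphism in $\Mod^{\Rcomm}$, independent of the choice of edge assignment.

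For (ii), I would work locally: for each Reidemeister move $R1$, $R2$, $R3$, exhibit an explicit chain homotopy equivalence between the local pieces of the complex, and then appeal to the fact that the rest of the cube is a tensor factor that carries through unchanged. The cleanest way to organize this in the language of Section~\ref{sec:AlgStructFromGDCats} is to use the Cancellation Lemma~\ref{lem:AbstractCancellation}: in each case one identifies an identity-like component of the differential (for $R1$, the component $\Delta\circ\text{cap}$ picking out $\x_+$; for $R2$, a saddle that merges two parallel strands; for $R3$, a double application of $R2$), cancels the corresponding acyclic summand, and checks that the remaining complex agrees on the nose with the one coming from the simpler diagram. The grading shifts coming from the change in $n_\pm$ under each Reidemeister move must be tracked, and one verifies that they exactly compensate for the quantum and homological shifts introduced by the cancellation, so that the resulting isomorphism preserves the bigrading.

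The main obstacle is $R3$. Unlike $R1$ and $R2$, whose complexes collapse after a single cancellation, the $R3$ move involves the $3$-cube of resolutions of the three participating crossings on each side of the move, and one must match these after cancellation. The standard trick, which I would follow, is to first apply the $R2$ equivalence to both sides of $R3$ to reduce the comparison to a smaller complex, and then identify the resulting complexes by a direct isomorphism (possibly after a sign-correcting automorphism, handled by the Clean-Up Lemma~\ref{lem:AbstractCleanUp}). Keeping track of signs throughout this step, especially over a general ring $\Rcomm$ rather than $\F_2$, is the most delicate part of the argument, but once the sign conventions are fixed it is a finite check.
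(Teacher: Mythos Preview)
The paper does not give its own proof of this theorem: it is stated as a known result and attributed to Khovanov's original paper \cite{Khovanov}, with no argument supplied beyond the citation. So there is nothing in the paper to compare your proposal against.

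That said, your outline is essentially the standard proof. Part (i) is exactly right: two edge assignments differ by a $1$-coboundary on the cube, and the corresponding sign flip on the vertex modules is a chain isomorphism. Part (ii) is also the standard strategy---local cancellation of acyclic subcomplexes for $R1$ and $R2$, then a reduction for $R3$. Your remark that $R3$ can be handled by first applying the $R2$ equivalence is one of the known approaches; another (Bar-Natan's, in \cite{BarNatanKhT}) is to observe directly that after one cancellation on each side the two $R3$ cubes become isomorphic complexes supported on the same four resolutions. Either route works, and your identification of the sign bookkeeping over a general $\Rcomm$ as the delicate point is accurate.
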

In other words, \( \CKh(\Diag) \) is a link invariant up to homotopy equivalence or, equivalently, as an isomorphism class of objects in \( \HomologyZero(\Mod^\Rcomm) \). The resulting homological link invariant \( \Homology(\CKh(\Diag)) \) is called Khovanov homology, and is denoted by \( \Kh(\Lk) \). 

\subsection{Bar-Natan's deformation}\label{subsec:BN-deformation}
There is a similar construction, deforming Khovanov homology, due to Bar-Natan  \cite[Section~9.3]{BarNatanKhT}.  The cube of resolutions is constructed as before, except that the ground ring \( \Rcomm \) is replaced by \( \Rcomm[H]\) with \(\gr(H)=h^0 q^{-2} \), and the differential is adjusted as follows:
\begin{equation}\label{eq:merge-split_maps_for_BN}
\begin{split}
\text{(split map)}\quad\PairsOfPantsR &\Delta\co\begin{cases}
\x_{+} \mapsto \x_{+} \otimes \x_{-} +\x_{-} \otimes \x_{+} \textcolor{blue}{- H\cdot \x_{+} \otimes \x_{+} }& \\
\x_{-} \mapsto \x_{-} \otimes \x_{-} & 
\end{cases}\\
\text{(merge map)}\quad\PairsOfPantsL &m\co\begin{cases}
\x_{+} \otimes \x_{-} \mapsto \x_{-} \quad & \x_{+} \otimes \x_{+} \mapsto \x_{+}\\
\x_{-} \otimes \x_{+} \mapsto \x_{-} \quad & \x_{-} \otimes \x_{-} \mapsto \textcolor{blue}{ H\cdot \x_{-}}
\end{cases}
\end{split}
\end{equation}
We write \( \CBN(\Diag) \) for the resulting bigraded chain complex over \( \Rcomm[H] \). By construction, it satisfies \( \CBN(\Diag)|_{H=0}=\CKh(\Diag)\otimes R[H] \). Moreover, analogously to Theorem~\ref{thm:KhIsALinkInvariant}, its homotopy type is a link invariant. We denote its homology, the Bar-Natan homology of $\Lk$, by \( \BN(\Lk) \).
We will sometimes indicate the $\Rcomm[H]$-action by a subscript \( \BN(\Lk)_{\Rcomm[H]} \).

By \( \fCBN(\Lk) \) we denote the complex which has the same underlying \( R \)-module as \( \CKh(\Lk) \), but the differential is given by setting \( H=1 \) in the maps~(\ref{eq:merge-split_maps_for_BN}). This complex is filtered (thus the ``\( f \)'' in the notation) with respect to the quantum grading filtration \( \mathcal{F}_q=\bigoplus_{j\geq q} \fCBN^j(\Lk) \), and, analogously to \( \CKh(\Lk) \), the \emph{filtered} homotopy equivalence class of \( \fCBN(\Lk) \) is a link invariant. The following is a very useful structural property of Bar-Natan's deformation. It was originally proved over \( \fieldTwoElements \) in \cite{Turner}, and then extended to general coefficients in \cite[Proposition 2.4]{MTV}, while the main idea dates back to \cite{Lee}. By \( \vert\Lk\vert \) we denote the number of components of the link.

\begin{proposition}\label{prop:H=1_is_trivial}
The homology of \( \fCBN(\Lk) \) is isomorphic to a free \( \Rcomm \)-module of rank \( 2^{\vert\Lk\vert} \).
\end{proposition}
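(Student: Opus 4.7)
The plan is to follow Lee's strategy, adapted by Turner and Mackaay--Turner--Vaz to the setting of arbitrary \(\Rcomm\). The starting point is that, after setting \(H=1\), the underlying Frobenius algebra becomes \(\Rcomm[\x_-]/(\x_-^2-\x_-)\), which splits as a direct product \(\Rcomm\times\Rcomm\) over \emph{any} \(\Rcomm\) (the discriminant of \(\x_-^2-\x_-\) is \(1\), so the corresponding extension is étale, and no invertibility of \(2\) is needed). Concretely, introducing the orthogonal idempotents \(e_0\coloneqq\x_+-\x_-\) and \(e_1\coloneqq\x_-\), a direct computation in~\eqref{eq:merge-split_maps_for_BN} with \(H=1\) shows that both TQFT maps diagonalise: \(m(e_i\otimes e_j)=\delta_{ij}e_i\), \(\Delta(e_0)=-e_0\otimes e_0\), and \(\Delta(e_1)=e_1\otimes e_1\).

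Passing to the tensor basis \(\{e_0,e_1\}\) on every circle, each vertex space \(V_{\Diag(v)}\) splits into one-dimensional \(\Rcomm\)-summands indexed by labellings \(\ell\co\text{circles}(\Diag(v))\to\{0,1\}\). Diagonality implies that each cube edge sends a summand \(\Rcomm_{(v,\ell)}\) either to zero or to a single summand \(\Rcomm_{(v+e_k,\ell')}\) by \(\pm\id\), where \(\ell'\) is obtained from \(\ell\) by the local rule: a merge requires the two input labels to coincide (and the merged circle inherits that label), while a split propagates the input label to both outputs. Hence \(\fCBN(\Diag)\) decomposes as a direct sum over the connected components of the graph whose vertices are the pairs \((v,\ell)\) and whose edges are the non-zero arrows of the cube.

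From here I would proceed in two steps. First, identify the connected components with the \(2^{|\Lk|}\) orientations of \(\Lk\): each orientation \(o\) determines a canonical labelling of the oriented resolution \(\Diag_o\) by assigning to each Seifert circle an element of \(\{e_0,e_1\}\) according to its induced boundary orientation and nesting, and every pair \((v,\ell)\) can be shown to share its component with exactly one such \((v_o,\ell_o)\) by a combinatorial argument tracking how Seifert circles transform under saddle moves. Second, show that each component's subcomplex is contractible to a single copy of \(\Rcomm\), by repeated use of the Cancellation Lemma~\ref{lem:AbstractCancellation} on the \(\pm\id\) edges; equivalently, exhibit the Lee-type canonical cycle \(\mathfrak{s}_o\) (whose leading term in the original \(\{\x_+,\x_-\}\)-basis is non-zero and distinct across orientations) as the unique surviving generator in its component.

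The hard part is the second step: although every non-zero edge is \(\pm\id\), one must verify that the signs arising from the Khovanov edge assignment, the minus sign in \(\Delta(e_0)=-e_0\otimes e_0\), and the iterated cancellation interact consistently so that no residual differential remains and no torsion is introduced. Over \(\fieldTwoElements\) (Turner's case) signs are irrelevant and the argument is immediate; over arbitrary \(\Rcomm\) one has to be more careful with the bookkeeping, and this is essentially the content of \cite[Proposition~2.4]{MTV}. Once established, summing over the \(2^{|\Lk|}\) components yields \(H_*(\fCBN(\Lk))\cong\Rcomm^{2^{|\Lk|}}\) as a free \(\Rcomm\)-module, as claimed.
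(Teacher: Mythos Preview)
The paper does not give its own proof of this proposition; it only attributes the result to Lee, Turner, and Mackaay--Turner--Vaz and moves on. Your sketch is precisely the argument from those sources, so in that sense you are aligned with the paper---indeed you supply more detail than the paper does. Your key observations are correct: setting \(H=1\) yields the Frobenius algebra \(\Rcomm[x]/(x^2-x)\), whose discriminant is \(1\), so the idempotent splitting \(e_0=\x_+-\x_-\), \(e_1=\x_-\) works over any commutative ring \(\Rcomm\) without needing \(2\) to be invertible; and the diagonalisation \(m(e_i\otimes e_j)=\delta_{ij}e_i\), \(\Delta(e_i)=\pm e_i\otimes e_i\) is exactly right.

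One small caution on your outline: the claim that \emph{every} pair \((v,\ell)\) lies in the same connected component as exactly one canonical \((v_o,\ell_o)\) is stronger than what is strictly needed and is not quite how the cited arguments proceed. What Lee and MTV actually show is that the canonical cycles \(\mathfrak{s}_o\) are linearly independent (they are supported in distinct colour-summands) and that they span homology, the latter via an induction on the number of crossings or a filtration argument rather than by a direct graph-component analysis. Your cancellation-based reformulation can be made to work, but the bookkeeping you flag as ``the hard part'' is genuinely where the content lies, and you are right to defer it to \cite[Proposition~2.4]{MTV}.
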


A slightly different point of view on \( \CBN(\Lk) \) is to regard this deformed Khovanov chain complex as a bigraded type~D structure over \( \Rcomm[H] \). Namely, the underlying \( \Rcomm \)-module is the same as for \( \CKh(\Lk) \), but the differential is given by the maps (\ref{eq:merge-split_maps_for_BN});  we denote the resulting type~D structure by \( \CKh(\Lk)^{\Rcomm[H]} \). This is a link invariant up to  homotopy equivalence or, equivalently, as an object in \( \HomologyZero (\Mod^{\Rcomm[H]}) \) (the proof is analogous to the ones for \( \Kh(\Lk) \) and \( \BN(\Lk) \)). The relation to Bar-Natan homology is given by
\( \CBN(\Lk)_{\Rcomm[H]}=\CKh(\Lk)^{\Rcomm[H]}\boxtimes {}_{\Rcomm[H]}{\Rcomm[H]}_{\Rcomm[H]}. \)
Strictly speaking, the box tensor product \( \boxtimes \) was defined in \cite{LOT-main} only over \( \fieldTwoElements \). Since we work with signs, let us spell out what we mean by this operation. As a vector space 
\( \CKh(\Lk)^{\Rcomm[H]}\boxtimes {}_{\Rcomm[H]}{\Rcomm[H]}\) is defined as \( \CKh(\Lk) \otimes \Rcomm[H] \).
The differential consists of arrows \(  \xgen \otimes H^i \rightarrow r\cdot \ygen \otimes H^{i+\ell} \) for each arrow \( \xgen \xrightarrow{r\cdot H^\ell} \ygen \) in \( \CKh^{R[H]} \) and \( i\geq 0 \) (\( r \) is a coefficient in \( R \)). The action by \( R[H] \) is defined by \( (\xgen \otimes H^i)\cdot H^\ell = \xgen \otimes H^{i+\ell} \). It is now clear that \( \CKh(\Lk)^{\Rcomm[H]}\boxtimes {}_{\Rcomm[H]}{\Rcomm[H]}_{\Rcomm[H]} \) is equal to \( \CBN(\Lk)_{\Rcomm[H]} \).

Summarizing, above we introduced three different structures, which are link invariants up to the relevant notion of homotopy: a bigraded chain complex \( \CBN(\Lk) \) with an \( \Rcomm[H] \)-action, an \( h \)-graded \( q \)-filtered chain complex \( \fCBN(\Lk) \) over \( R \), and a bigraded type~D structure \( \CKh(\Lk)^{\Rcomm[H]} \) over \( R[H] \). The relationships between them is as follows:
\begin{equation*}
\label{eq:D_str_vs_BN_homology}
\CBN(\Lk)_{\Rcomm[H]}=\CKh(\Lk)^{\Rcomm[H]}\boxtimes {}_{\Rcomm[H]}{\Rcomm[H]}_{\Rcomm[H]} \qquad \fCBN(\Lk) = \CKh(\Lk)^{\Rcomm[H]}|_{H=1}
\end{equation*}
The second equality says that one obtains \( \fCBN(\Lk) \) from \( \CKh(\Lk)^{\Rcomm[H]} \) by simply setting \( H=1 \) in the differential of the type~D structure. We note that, as with the \( \cdot\boxtimes R[H] \)-operation, the operation of obtaining filtered chain complexes from type~D structures by setting \( H=1 \) respects homotopies. This means that the homotopy equivalence class of the type~D structure \( \CKh(\Lk) ^{\Rcomm[H]} \) determines the filtered homotopy equivalence class of the filtered chain complex \( \fCBN(\Lk) \). This is because type~D structure homomorphisms and homotopies translate into filtered chain maps and homotopies. 

If \( \Rcomm \) is replaced by a field \( \field \), one can use the Cancellation Lemma~\ref{lem:AbstractCancellation} and the Clean-Up Lemma~\ref{lem:AbstractCleanUp} to prove that up to grading shifts any bigraded type~D structure \( D^{\field[H]} \) is homotopy equivalent to a direct sum of copies of \( [\gen] \) (a type~D structure with no actions), and copies of \( [\gen \xrightarrow{H^\ell} \gen], \ \ell\geq 1 \). Call such type~D structures fully cancelled and cleaned-up, and denote the fully cancelled and cleaned-up type~D structure \( \CKh(\Lk)^{\field[H]} \) by \( \Kh(\Lk)^{\field[H]} \). As a vector space \( \Kh(\Lk)^{\field[H]} \) is isomorphic to \( \Kh(\Lk;\field) \), but there are extra actions in \( \Kh(\Lk)^{\field[H]} \) picking up powers of \( H \). For example, in \( \Kh(T(2,3);\fieldTwoIsNotZero) \) (see the right of Figure~\ref{fig:exa:Pairing:Trefoil:unreduced}) there is an action \( v_1\xrightarrow{H^2}v_2 \). The following is an immediate corollary of Proposition~\ref{prop:H=1_is_trivial}; note that we say that \( \xgen \in \BN(\Lk)_{\field[H]} \) is \( H \)-torsion if \( \xgen \cdot H^\ell=0 \) for sufficiently high powers \( \ell \).

\begin{proposition}\label{prop:towers}
\( \Kh(\Lk)^{\field[H]} \) consists of exactly \( 2^{\vert\Lk\vert} \) free  summands \( [\gen] \), with possible additional summands \( [\gen \xrightarrow{H^\ell} \gen ], \ \ell\geq 1 \). It follows that \( \BN(\Lk)_{\field[H]} \) consists of \( 2^{\vert\Lk\vert} \) towers \( \field[H] \) plus possible  \( H \)-torsion.
\end{proposition}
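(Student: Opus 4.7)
The plan is to combine the classification statement given in the paragraph immediately preceding the proposition with the rank count provided by Proposition~\ref{prop:H=1_is_trivial}. First I would invoke this classification: iterated applications of the Cancellation Lemma~\ref{lem:AbstractCancellation} and the Clean-Up Lemma~\ref{lem:AbstractCleanUp} show that
\[
\Kh(\Lk)^{\field[H]} \;\simeq\; \bigoplus_{i=1}^{a}[\gen] \;\oplus\; \bigoplus_{j=1}^{b}\bigl[\gen \xrightarrow{H^{\ell_j}} \gen\bigr]
\]
(up to bigrading shifts), for some integers $a,b\geq 0$ and exponents $\ell_j\geq 1$. Thus the question reduces to computing the integer $a$.

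Next I would determine $a$ by setting $H=1$. As explained in the text, specializing $H=1$ respects homotopy equivalence of type~D structures and recovers the filtered complex $\fCBN(\Lk)$ from $\CKh(\Lk)^{\field[H]}$. Applied summand-wise to the decomposition above, each $[\gen]$ contributes a one-dimensional summand (no differential survives), while each $[\gen \xrightarrow{H^{\ell_j}} \gen]$ becomes $[\gen \xrightarrow{1} \gen]$, which is acyclic. Hence $\dim_\field H_*(\fCBN(\Lk)) = a$, and Proposition~\ref{prop:H=1_is_trivial} forces $a = 2^{\vert\Lk\vert}$, establishing the first claim.

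Finally, for the statement about $\BN(\Lk)_{\field[H]}$, I would apply $\,\cdot\boxtimes {}_{\field[H]}\field[H]_{\field[H]}$ to each summand separately. Each of the $2^{\vert\Lk\vert}$ trivial summands $[\gen]$ yields a tower $\field[H]$ in homology. Each summand $[\gen \xrightarrow{H^{\ell_j}} \gen]$ yields the complex $\field[H] \xrightarrow{H^{\ell_j}} \field[H]$, whose kernel at the source vanishes (because $\field[H]$ is an integral domain) and whose cokernel at the target is $\field[H]/(H^{\ell_j})$, an $H$-torsion $\field[H]$-module annihilated by $H^{\ell_j}$. Summing over $j$ gives precisely $2^{\vert\Lk\vert}$ towers plus possible $H$-torsion, as claimed.

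There is no real obstacle here; the substantive input is Proposition~\ref{prop:H=1_is_trivial} together with the simplification algorithm. The only piece of mild bookkeeping is verifying that setting $H=1$ respects homotopy equivalence and collapses each $[\gen \xrightarrow{H^\ell} \gen]$ to an acyclic complex, both of which are immediate from the definitions.
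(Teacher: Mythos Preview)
Your proposal is correct and matches the paper's approach exactly: the paper states this as an immediate corollary of Proposition~\ref{prop:H=1_is_trivial}, relying on the classification of type~D structures over $\field[H]$ (stated just before the proposition) together with the $H=1$ specialization to count free summands. Your write-up simply fills in the details that the paper leaves implicit.
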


\subsection{Reduced versions}
In this subsection it will be convenient to set \( \x_{+}=1 \) and \( \x_{-}=x \), so that the map \( m \) in Khovanov homology corresponds to multiplication in the ring \( R[\x]/(\x^2) \). In Bar-Natan homology, the appropriate ring is \( \Rcomm[H,\x]/(\x^2=H \x) \). The split map, the unit \( 1 \), and the counit \( \epsilon(\x)=1, \epsilon(1)=0 \) promote these rings to Frobenius algebras. 
\subsubsection{Reduced Khovanov homology}
Fix a basepoint \( p \) on the link \( \Lk \) away from the crossings, turning \( \Lk \) into a \textbf{pointed} link. Then every resolution \( \Diag(v) \) contains a marked circle. The construction above can be modified by assigning \( V^{\x}=\langle \x \rangle_\Rcomm \) instead of \( V=\langle 1,\x \rangle_\Rcomm \) to this circle in each resolution, resulting in a modified vector space \( V^{\x}_{\Diag(v)}\) assigned to each resolution. 
The split and merge maps are unchanged, except when the circle with the basepoint is involved in the saddle cobordism, we make the following adjustment:
\begin{align*}
\begin{split}
\Delta\co V^{x}\rightarrow V^{x}\otimes V
=
&
\begin{cases}
\textcolor{darkgreen}{\x} \mapsto \textcolor{darkgreen}{\x} \otimes \x
\end{cases}
\\
m\co V^{x}\otimes V\rightarrow V^{x}
=
&
\begin{cases}
\textcolor{darkgreen}{\x} \otimes 1 
\mapsto \textcolor{darkgreen}{\x},
\quad &
\textcolor{darkgreen}{\x} \otimes \x \mapsto 0.
\end{cases}
\end{split}
\end{align*}
Denote the resulting \( \Rcomm \)-module by \( \CKh^{\x}(\Diag,p)=\bigoplus_{v\in\{0,1\}^n} V^\x_{\Diag(v)} \) and its homology by \( \Kh^{\x}(\Lk,p) \); there is a dependence on the choice of a component containing the point \( p \). The invariance of \( \Kh^{\x}(\Lk,p) \) under Reidemeister moves is proved in the same way as invariance of \( \Kh(\Lk) \). To prove invariance under moving the basepoint along the component, one only needs to be able to move the basepoint through a crossing in the diagram:
\[ \CrossingLBasepointRotated\longleftrightarrow\CrossingLBasepoint\quad\text{and}\quad\CrossingRBasepointRotated\longleftrightarrow\CrossingRBasepoint\]
Observe that we can consider \( (\Diag,p) \) lying not on the plane \( \mathbb{R}^2 \), but on the sphere \( S^2 \). This adds an additional move to the diagrams that does not change the Khovanov complex, namely,  sliding an outer part of \( (\Diag,p) \) through infinity. As a result, fixing the basepoint on the left of the vertical strand, we can move the vertical strand to the right all the way through \( S^2 \), until it returns on the left of the basepoint. Throughout this process we only do Reidemeister moves and one move through the point at infinity. Thus, \( \CKh^{\x}(\Diag,p) \) is independent of the position of the basepoint on the link component.


Observe that \( \CKh^{\x}(\Diag,p) \) is a subcomplex of \( \CKh(\Diag) \), and this gives rise to a quotient complex 
\[ \CKh^{\x=0}(\Diag,p)=\CKh(\Diag)/\CKh^{\x}(\Diag,p).\]
Equivalently, we can use the \( \Rcomm \)-module \( V^{\x=0}:=V/V^{\x}=\langle 1 \rangle_\Rcomm \) on circles containing the basepoint. The split and merge maps are modified in the following way:
\begin{align*}
\begin{split}
\Delta\co V^{x=0}\rightarrow V^{x=0}\otimes V
=
&
\begin{cases}
\textcolor{darkgreen}{1} \mapsto \textcolor{darkgreen}{1} \otimes \x
\end{cases}
\\
m\co V^{x=0}\otimes V\rightarrow V^{x=0}
=
&
\begin{cases}
\textcolor{darkgreen}{1} \otimes 1 \mapsto \textcolor{darkgreen}{1},
\ &
\textcolor{darkgreen}{1} \otimes \x \mapsto 0
\end{cases}
\end{split}
\end{align*}
After identifying \( \x\in V^{\x} \) with \( 1\in V^{\x=0} \), these maps agree with those in the definition of \( \CKh^{\x}(\Diag,p) \). So \( \CKh^{\x=0}(\Diag,p) \) is identical to \( q^{2}h^{0}\CKh^{\x}(\Diag,p) \). We denote its homology by \( \Kh^{\x=0}(\Lk,p) \). To unify notation, we define the \textbf{reduced Khovanov homology of a link \( \Lk \)} by 
\[ \Khr(\Lk,p):=q^{-1}h^0\Kh^{\x=0}(\Lk,p)=q^{1}h^0\Kh^{\x}(\Lk,p).\]
The grading shifts are chosen such that \( \Khr \) of the unknot is supported in bigrading \( q^0h^{0} \).
The difference between the two definitions of \( \CKhr(\Lk,p) \) is that in one case, we view it as a subcomplex of \( \Kh(\Lk,p) \) and fix \( \x \) on the special circle. In the other case, we view \( \CKhr(\Lk,p) \) as a quotient complex and fix \( 1 \) on the special circle.

\subsubsection{Reduced Bar-Natan homology} \label{sec:BNr}
The reduced version of Bar-Natan homology is more complicated, which is ultimately a consequence of the fact that the polynomial \( \x^2 \) has one root, whereas the polynomial \( \x^2-H\x \) has two. Nevertheless, there is a well-defined reduced Bar-Natan homology; the construction is described below.


The starting point is that now the two reduced versions have different split/merge maps near the basepoint:
\begin{align}
\begin{split}\label{eq:merge-split_maps_reduced_BN_x}
\Delta\co V^{x}\rightarrow V^{x}\otimes V
=
&
\begin{cases}
\textcolor{darkgreen}{\x} \mapsto \textcolor{darkgreen}{\x} \otimes \x
\end{cases}
\\
m\co V^{x}\otimes V\rightarrow V^{x}
=
&
\begin{cases}
\textcolor{darkgreen}{\x}\otimes 1 \mapsto \textcolor{darkgreen}{\x},
\quad &
\textcolor{darkgreen}{\x} \otimes \x \mapsto H \textcolor{darkgreen}{\x}.
\end{cases}
\end{split}
\\
\begin{split}\label{eq:merge-split_maps_reduced_BN_x=0}
\Delta\co V^{x=0}\rightarrow V^{x=0}\otimes V
=
&
\begin{cases}
\textcolor{red}{1} \mapsto \textcolor{red}{1} \otimes \x - H \cdot \textcolor{red}{1} \otimes 1
\end{cases}
\\ 
m\co V^{x=0}\otimes V\rightarrow V^{x=0}
=
&
\begin{cases}
\textcolor{red}{1} \otimes 1 \mapsto \textcolor{red}{1},
\quad &
\textcolor{red}{1} \otimes \x \mapsto 0.
\end{cases}
\end{split}
\end{align}
For both theories, we can prove invariance under moving the basepoint via the sliding through infinity trick. Thus, at first glance, it seems that there are two different pointed link invariants: \( \BN^\x(\Lk,p) \) and \( \BN^{\x=0}(\Lk,p) \). It turns out that they are isomorphic up to a grading shift, which is proved in \cite[Theorem~4]{Wigderson} for \( \Rcomm=\fieldTwoElements \). Let us describe a shorter proof of this fact for any ring \( \Rcomm \).

First, there are two additional reduced versions, which one obtains by considering either \( V^{\x-H}=\langle \x-H \rangle_{\Rcomm[H]} \) or \( V^{x=H}=V/V^{\x-H} \) on the circles with basepoint. \( \CBN^{\x-H}(\Lk,p) \) is again a subcomplex of \( \CBN(\Lk) \), and we have the following modifications of the split/merge maps near the point \( p \):
\begin{align}
\begin{split}\label{eq:merge-split_maps_reduced_BN_x-H}
\Delta\co V^{x-H}\rightarrow V^{x-H}\otimes V
=
&
\begin{cases}
\textcolor{red}{(\x-H)} \mapsto \textcolor{red}{(\x-H)} \otimes \x - H \cdot \textcolor{red}{(\x-H)} \otimes 1
\end{cases}
\\
m\co V^{x-H}\otimes V\rightarrow V^{x-H}
=
&
\begin{cases}
\textcolor{red}{(\x-H)} \otimes 1 \mapsto\textcolor{red}{(\x-H)},
\quad &
\textcolor{red}{(\x-H)} \otimes \x \mapsto  0.
\end{cases}
\end{split}
\\
\begin{split}\label{eq:merge-split_maps_reduced_BN_x=H} 
\Delta\co V^{x=H}\rightarrow V^{x=H}\otimes V
=
&
\begin{cases}
\textcolor{darkgreen}{1} \mapsto \textcolor{darkgreen}{1}  \otimes x
\end{cases}
\\ 
m\co V^{x=H}\otimes V\rightarrow V^{x=H}
=
&
\begin{cases}
\textcolor{darkgreen}{1} \otimes 1 \mapsto \textcolor{darkgreen}{1},
\quad &
\textcolor{darkgreen}{1} \otimes \x \mapsto H \cdot \textcolor{darkgreen}{1}.
\end{cases}
\end{split}
\end{align}
By directly comparing the modified differentials, observe now that 
\begin{align*}
    \CBN^{\x}(\Lk,p) = q^{-2}h^0\CBN^{\x=H}(\Lk,p), \\
    \CBN^{\x-H}(\Lk,p) = q^{-2}h^0\CBN^{\x=0}(\Lk,p)
\end{align*}
as chain complexes. So we still have two candidates for the reduced Bar-Natan homology. 
\begin{lemma}\label{lem:OnlyOneReducedBNTheory}
$\CBN^{\x}(\Lk,p)$ and $\CBN^{\x-H}(\Lk,p)$ are isomorphic as bigraded chain complexes.
\end{lemma}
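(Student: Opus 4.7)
The plan is to construct an explicit bigrading-preserving chain isomorphism $\Phi\colon \CBN^\x(\Lk, p) \to \CBN^{\x-H}(\Lk, p)$ using a ``root-swap'' symmetry of the Frobenius algebra $A = \Rcomm[H][\x]/(\x^2 - H\x)$. Specifically, consider the involutive $\Rcomm[H]$-algebra automorphism $\sigma\colon A \to A$ defined by $\sigma(1) = 1$ and $\sigma(\x) = H - \x$, which exchanges the two roots $0$ and $H$ of the defining polynomial. Since $H - \x$ has the same $q$-grading as $\x$, the map $\sigma$ preserves the bigrading on $V$; moreover, because $\sigma(\x) = -(\x - H)$, it restricts to an $\Rcomm[H]$-module isomorphism $V^\x \to V^{\x-H}$ between the respective marked-circle submodules.

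The next step is to examine how $\sigma$ interacts with the TQFT edge maps \eqref{eq:merge-split_maps_for_BN}. Since $\sigma$ is a ring map, it intertwines multiplication: $\sigma \circ m = m \circ (\sigma \otimes \sigma)$. For comultiplication, a direct computation using $\Delta(1) = 1\otimes \x + \x \otimes 1 - H\cdot 1 \otimes 1$ and $\Delta(\x) = \x \otimes \x$ yields $(\sigma \otimes \sigma) \circ \Delta = -\Delta \circ \sigma$; morally, this sign arises because $\sigma$ negates the counit, as $\epsilon(H-\x) = -\epsilon(\x)$. Restricting these identities to the submodules appearing in $\CBN^\x$ and $\CBN^{\x-H}$---including the marked-circle formulas \eqref{eq:merge-split_maps_reduced_BN_x} and \eqref{eq:merge-split_maps_reduced_BN_x-H}---shows that at every edge of the cube of resolutions, $\sigma^{\otimes}$ intertwines the unsigned edge maps of the two complexes up to a factor of $+1$ on merges and $-1$ on splits.

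To absorb this sign discrepancy, define $\Phi_v \coloneqq s(v) \cdot \sigma^{\otimes|\Diag(v)|}$ at each vertex $v$ of the cube (with $\sigma$ restricted to $V^\x \to V^{\x-H}$ on the marked circle), with the scalar sign
\[
s(v) \coloneqq (-1)^{\tfrac{1}{2}\bigl(|v| + |\Diag(v)| - |\Diag(\mathbf 0)|\bigr)}.
\]
Every cube edge increments $|v|$ by $1$ and changes $|\Diag(v)|$ by $+1$ on splits and $-1$ on merges, so the exponent is always a non-negative integer, and $s(w)/s(v)$ equals $-1$ on split edges and $+1$ on merge edges. These signs exactly cancel the discrepancy above, so $\Phi$ commutes with the signed edge-assignment differentials (which are common to both complexes) and is therefore a chain map. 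Since each $\Phi_v$ is a bigrading-preserving isomorphism---$\sigma$ is one and $s(v) \in \{\pm 1\}$---the map $\Phi$ gives the desired isomorphism of bigraded chain complexes.

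The main obstacle is identifying the correct vertex sign $s(v)$: because the $\sigma$-compatibility differs between merges and splits, no global constant sign works, and coherence across the cube must be ensured. This coherence is automatic by the combinatorial fact that on every $2$-face of the cube, both paths from the initial to the terminal vertex have the same length and effect the same net change in $|\Diag|$, hence contain the same number of split edges.
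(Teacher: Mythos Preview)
Your proof is correct. Both your argument and the paper's exploit the root-swap symmetry of the Frobenius algebra $\Rcomm[H][\x]/(\x^2-H\x)$, but the implementations differ. You apply the algebra involution $\sigma\colon \x\mapsto H-\x$ uniformly to every circle and use the clean structural facts that $\sigma$ intertwines $m$ and anti-intertwines $\Delta$; the resulting global sign is absorbed by the vertex weight $s(v)=(-1)^{\#\text{split edges from }\mathbf 0\text{ to }v}$, which your formula $\tfrac12(|v|+|\Diag(v)|-|\Diag(\mathbf 0)|)$ makes manifestly well-defined. The paper instead performs a change of basis $y=\x-H$ on the unmarked circles only, compares the resulting merge/split formulas to those of $\CBN^{\x-H}$ by hand (finding they differ exactly in the sign of every $H$-term), and then corrects via a sign depending on the $q$-grading modulo $4$ of each individual generator. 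Your route is more conceptual---the algebra-automorphism property does the bookkeeping for you and the sign lives on vertices rather than on generators---while the paper's route makes the discrepancy (``all $H$-terms change sign'') more visibly explicit. Either way the underlying mechanism is the same symmetry.
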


\begin{proof}
First, let us change the basis in the complex $\CBN^{\x}(\Lk,p)$: let us keep the variable $\textcolor{darkgreen}{\x}$ on the marked component, but on all other components let us use the basis $\langle y=x-H, 1 \rangle$. In this basis, the merge and split maps look as follows:
\begin{align}
\begin{split}\label{eq:merge-split_maps_reduced_BN_x-H_newbasis}
\text{(split) } &\Delta\co V\rightarrow V\otimes V = \begin{cases}
1 \mapsto 1 \otimes \y +\y \otimes 1 \textcolor{blue}{+ H\cdot 1 \otimes 1 }& \\
\y \mapsto \y \otimes \y & 
\end{cases}\\
\text{(merge) } &m\co V\otimes V\rightarrow V =\begin{cases}
1 \otimes \y \mapsto \y \quad & 1 \otimes 1 \mapsto 1\\
\y \otimes 1 \mapsto \y \quad & \y \otimes \y \mapsto \textcolor{blue}{ -H\cdot \y}
\end{cases} 
\end{split}
\\
\begin{split}
\text{(split near} \ast \mspace{-10mu}\Circle \text{) } &\Delta\co V^{x}\rightarrow V^{x}\otimes V
=
\begin{cases}
\textcolor{darkgreen}{\x} \mapsto \textcolor{darkgreen}{\x} \otimes \y+ H \cdot \textcolor{darkgreen}{\x} \otimes 1
\end{cases}
\\
\text{(merge near} \ast \mspace{-10mu}\Circle \text{) } &m\co V^{x}\otimes V\rightarrow V^{x}
=
\begin{cases}
\textcolor{darkgreen}{\x}\otimes 1 \mapsto \textcolor{darkgreen}{\x},
\quad &
\textcolor{darkgreen}{\x} \otimes \y \mapsto 0.
\end{cases}
\end{split}
\end{align}
We now compare the above maps with the maps of  $\CBN^{\x-H}(\Lk,p)$ (Formulas~\eqref{eq:merge-split_maps_for_BN} and~\eqref{eq:merge-split_maps_reduced_BN_x-H}). Sending $\textcolor{darkgreen}{\x} \mapsto \textcolor{red}{\x-H}, \y \mapsto \x, 1 \mapsto 1$, we see that the merge/split maps almost coincide; the only difference is that all the $H$-terms have different signs. To account for that, consider a splitting along the parity of the quantum grading: 
$$\CBN^{\x}=\CBN^{\x}_{q\equiv 1(4)} \oplus \CBN^{\x}_{q\equiv3(4)}, \qquad \CBN^{\x-H}=\CBN^{x-H}_{q\equiv3(4)} \oplus \CBN^{x-H}_{q\equiv3(4)}.$$
We now tweak the isomorphism as follows:
\begin{align*}
f\co\CBN^{\x}_{q\equiv 1(4)}  &\rightarrow \CBN^{\x-H}_{q\equiv 1(4)}   & f\co\CBN^{\x}_{q\equiv3(4)}  &\rightarrow \CBN^{\x-H}_{q\equiv3(4)}  \\
\textcolor{darkgreen}{\x} &\mapsto \textcolor{red}{\x-H} & \textcolor{darkgreen}{\x} &\mapsto \textcolor{red}{-(\x-H)} \\
\y &\mapsto \x & \y &\mapsto -\x \\
1 &\mapsto 1 & 1 &\mapsto -1 
\end{align*}
The above defines an isomorphism of chain complexes, since in both complexes the differentials restricted to $\CBN_{q\equiv 1(4)}$ (or $\CBN_{q\equiv3(4)}$) all have even powers of $H$, while the differentials between $\CBN_{q\equiv 1(4)}$ and $\CBN_{q\equiv3(4)}$ have odd powers of $H$.
\end{proof}
This proves that the two candidates for the reduced version of Bar-Natan homology are the same. So we can now define the \textbf{reduced Bar-Natan homology of a pointed link \( (\Lk,p) \)} as  \[ \BNr(\Lk,p):=q^{-1}h^0\BN^{\x=0}(\Lk,p)=q^{1}h^0\BN^{\x}(\Lk,p).\]
Later, in Theorem~\ref{theo:BNr_is_a_link_invariant},
we will prove that, as a bigraded $R$-module,  $\BNr(\Lk, p)$ does not depend on the basepoint $p$; moreover, we will endow $\BNr(\Lk)$ with an $R[H_1,\ldots,H_\ell]$-module structure, where $\ell$ is the number of link components. For us only one $H$-action will be important, and thus we will always work with pointed links, even though we will usually just write $\Lk$ instead of $(\Lk, p)$.

As in the unreduced case, there is a reduced type~D structure invariant \( \CKhr(\Lk)^{\Rcomm[H]} \). The proof that the two versions \( \CKh^x(\Lk)^{\Rcomm[H]}\) and \(\CKh^{x-H}(\Lk)^{\Rcomm[H]} \) coincide adapts: in Lemma~\ref{lem:OnlyOneReducedBNTheory} instead of the change of basis $\langle y=x-H, 1 \rangle$ one has to apply the Clean-Up Lemma~\ref{lem:AbstractCleanUp} for the morphisms $h\co x \xrightarrow{-H} 1$ on each non-marked circle. This will change the type~D structure \( \CKh^x(\Lk)^{\Rcomm[H]} \), preserving its chain homotopy type, to a type~D structure that is isomorphic to \( \CKh^{x-H}(\Lk)^{\Rcomm[H]} \). The invariance of \( \CKhr(\Lk)^{\Rcomm[H]} \) up to chain homotopy is proved just as for \( \BNr(\Lk,p) \). The relationship to reduced Bar-Natan homology is, as before, 
\begin{equation}
\label{eq:red_D_str_vs_BN_homology}
\CBNr(\Lk)_{\Rcomm[H]}=\CKhr(\Lk)^{\Rcomm[H]}\boxtimes {}_{\Rcomm[H]}{\Rcomm[H]}_{\Rcomm[H]}.
\end{equation}

The corresponding reduced filtered theory can be defined simply from the type~D structure invariant: \( \fCBNr(\Lk)\coloneqq \CKhr(\Lk)^{\Rcomm[H]}|_{H=1} \). The results of \cite{Turner} and \cite{MTV} hold in the reduced case too. The proof is a standard adaptation of the unreduced case.
\begin{proposition}\label{prop:towersReduced}
The homology of \( \fCBNr(\Lk) \) is isomorphic to a free \( \Rcomm \)-module of rank \( 2^{\vert\Lk\vert-1} \).
\end{proposition}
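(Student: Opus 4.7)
My plan is to adapt the Lee/Turner/MTV proof of the unreduced case (Proposition~\ref{prop:H=1_is_trivial}) to the reduced setting. The key ingredient in that proof is a Frobenius algebra change of basis: after setting $H=1$, the Bar-Natan algebra $\Rcomm[x]/(x^2-Hx)$ becomes $\Rcomm[x]/(x^2-x)$, and $e_+\coloneqq x$ together with $e_-\coloneqq 1-x$ form a pair of orthogonal idempotents with $e_++e_-=1$, valid over any commutative unital ring $\Rcomm$ since $(1-x)^2=1-x$ and $x(1-x)=0$ hold without any hypotheses on $2$.

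First I would record the form of the split and merge maps~\eqref{eq:merge-split_maps_for_BN} in the basis $(e_+,e_-)$: at $H=1$ one computes $m(e_\epsilon\otimes e_{\epsilon'})=\delta_{\epsilon,\epsilon'}\,e_\epsilon$ and $\Delta(e_\pm)=\pm\,e_\pm\otimes e_\pm$, so the Bar-Natan TQFT decomposes, up to signs, as a direct sum of two copies of the one-dimensional TQFT indexed by labelings of the circles of each resolution by $e_+$ or $e_-$. Consequently $\fCBN(\Lk)$ decomposes as a direct sum of subcomplexes indexed by such labelings, and the MTV argument identifies the canonical cycles generating its homology with orientations of $\Lk$: at each vertex of the cube, the $e_\pm$ label on a Seifert circle records whether the induced orientation agrees with or opposes the standard Seifert convention. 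This recovers the rank $2^{|\Lk|}$ in the unreduced case.

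For the reduced complex I would use the submodule model $\fCBNr(\Lk)=\CKhr^{x}(\Lk)^{\Rcomm[H]}|_{H=1}$, whose marked circle in every resolution carries $V^{x}=\Rcomm\langle x\rangle=\Rcomm\,e_+$. In the idempotent decomposition this forces the marked circle always to carry the label $e_+$, which in the orientation correspondence pins down the orientation of the distinguished link component. The remaining orientations are parameterized by the orientations of the other $|\Lk|-1$ components, yielding exactly $2^{|\Lk|-1}$ canonical generators and hence the claimed rank. I would also verify the parallel statement for the quotient model $\CBNr^{x=0}$, where the marked circle is forced to carry $e_-$ instead; this is consistent with the isomorphism from Lemma~\ref{lem:OnlyOneReducedBNTheory}.

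The main obstacle is verifying compatibility of the reduced merge and split maps~\eqref{eq:merge-split_maps_reduced_BN_x} with the idempotent decomposition near the marked circle, and checking that the restriction of the MTV canonical generators to labelings with $e_+$ on the marked component still forms a basis of the homology. I expect this to be a routine but bookkeeping-heavy adaptation of the MTV argument with no new conceptual input required. A cleaner-looking alternative would be to use the short exact sequence $0\to \CBN^x(\Lk,p)\to \CBN(\Lk)\to \CBN^{x=0}(\Lk,p)\to 0$ at $H=1$ together with Lemma~\ref{lem:OnlyOneReducedBNTheory} and Proposition~\ref{prop:H=1_is_trivial}, but showing the vanishing of the connecting map in the associated long exact sequence seems to require essentially the same orientation-generator analysis, so I would proceed with the direct adaptation.
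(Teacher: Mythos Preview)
Your proposal is correct and matches the paper's approach exactly: the paper's entire proof is the one-line remark that this is ``a standard adaptation of the unreduced case,'' i.e.\ of the MTV/Turner argument for Proposition~\ref{prop:H=1_is_trivial}, which is precisely the adaptation you outline via the idempotent basis $e_+=x$, $e_-=1-x$. One small sharpening of your counting step: rather than arguing that the $e_+$ constraint on the marked circle pins down the orientation of the marked component (which is delicate, since reversing that component alone changes the Seifert resolution), it is cleaner to observe that the global involution $o\mapsto\bar{o}$ reversing \emph{all} components preserves the Seifert resolution while swapping every $e_+\leftrightarrow e_-$, so the $2^{|\Lk|}$ canonical generators split into $2^{|\Lk|-1}$ pairs with exactly one member in $\fCBN^{x}$.
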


For $\Rcomm=\field$, we denote the fully cancelled and cleaned-up \( \CKhr(\Lk)^{\field[H]} \) by \( \Khr(\Lk)^{\field[H]} \), as in the unreduced case. As a vector space,  \( \Khr(\Lk)^{\field[H]} \) is isomorphic to \( \Khr(\Lk;\field) \). The following is an immediate consequence of Proposition~\ref{prop:towersReduced}.

\begin{proposition}\label{prop:towersReduced-field}
\( \Khr(\Lk)^{\field[H]} \) consists of exactly \( 2^{\vert\Lk\vert-1} \) free  summands \( [\gen] \) with possible additional summands 
\(
\begin{tikzcd}[nodes={inner sep=2pt}, column sep=13pt,ampersand replacement=\&]
[\gen
\arrow{r}{H^\ell} 
\&
\gen]
\end{tikzcd}
\), \(\ell\geq 1 \). It follows that \( \BNr(\Lk)_{\field[H]} \) consists of \( 2^{\vert\Lk\vert-1} \) towers \( \field[H] \) plus possible \( H \)-torsion.
\end{proposition}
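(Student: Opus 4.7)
The plan is to mimic exactly the reasoning sketched after Proposition~\ref{prop:towers} in the unreduced case, now invoking Proposition~\ref{prop:towersReduced} in place of Proposition~\ref{prop:H=1_is_trivial}. First I would apply the Cancellation Lemma~\ref{lem:AbstractCancellation} and the Clean-Up Lemma~\ref{lem:AbstractCleanUp} to $\CKhr(\Lk)^{\field[H]}$: as explained for the unreduced case right before Proposition~\ref{prop:towers}, over a field~$\field$ any bigraded type~D structure over $\field[H]$ can be cancelled and cleaned up into a direct sum whose summands are either $[\gen]$ or $[\gen\xrightarrow{H^\ell}\gen]$ for some $\ell\geq1$. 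Denote this canonical form by $\Khr(\Lk)^{\field[H]}$; it is a homotopy invariant of the pointed link.

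Next I would count the $[\gen]$ summands by passing to the filtered theory. By definition, $\fCBNr(\Lk)=\CKhr(\Lk)^{\field[H]}|_{H=1}$, and since setting $H=1$ respects homotopies, $\fCBNr(\Lk)$ is (filtered) homotopy equivalent to $\Khr(\Lk)^{\field[H]}|_{H=1}$. Each torsion block $[\gen\xrightarrow{H^\ell}\gen]|_{H=1}=[\gen\xrightarrow{1}\gen]$ is acyclic, while each free block $[\gen]$ contributes a single generator to homology. Hence the number of $[\gen]$ summands equals $\dim_\field H_\ast(\fCBNr(\Lk))$, which is $2^{\vert\Lk\vert-1}$ by Proposition~\ref{prop:towersReduced}. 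This establishes the first sentence of the proposition.

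For the second sentence I would tensor the canonical form with ${}_{\field[H]}\field[H]_{\field[H]}$, using the identification of Equation~\eqref{eq:red_D_str_vs_BN_homology}, namely $\CBNr(\Lk)_{\field[H]}=\CKhr(\Lk)^{\field[H]}\boxtimes {}_{\field[H]}\field[H]_{\field[H]}$. Under $\boxtimes$, each summand $[\gen]$ yields a free tower $\field[H]$ in homology, while each summand $[\gen\xrightarrow{H^\ell}\gen]$ yields a torsion module $\field[H]/(H^\ell)$. Summing over all summands, $\BNr(\Lk)_{\field[H]}$ splits as $2^{\vert\Lk\vert-1}$ towers $\field[H]$ plus a (possibly trivial) direct sum of $H$-torsion modules, exactly as claimed.

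No step here looks genuinely hard: the structure theorem for type~D structures over $\field[H]$ is a routine consequence of the two lemmas from Section~\ref{sec:cancellation _and_cleaned-up}, and the count of $[\gen]$-summands is forced by Proposition~\ref{prop:towersReduced}. The only place that requires mild care is verifying that the $H=1$ specialisation and the $\boxtimes{}_{\field[H]}\field[H]$ operation both respect the homotopy equivalence used to arrive at the canonical form; both facts are already recorded in the discussion surrounding Equation~\eqref{eq:red_D_str_vs_BN_homology}, so the argument will be quite short.
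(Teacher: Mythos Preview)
Your proposal is correct and matches the paper's approach: the paper simply states that this proposition is ``an immediate consequence of Proposition~\ref{prop:towersReduced}'', exactly parallel to how Proposition~\ref{prop:towers} follows from Proposition~\ref{prop:H=1_is_trivial}, and your write-up spells out that parallel argument in full.
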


\subsection{\texorpdfstring{The \(s\)-invariant}{The s-invariant}}

We now specialize to fields \( \Rcomm=\field \) and to knots \( \Lk=\Knot \) in order to discuss Rasmussen's \( s \)-invariant \( s^\field(\Knot) \). This was originally defined over \( \Q \) via the Lee spectral sequence \cite{Rasmussen_slice_genus}, and then extended to other fields via the Bar-Natan spectral sequence \( \Kh(\Knot) \rightrightarrows \Homology (\fCBN(\Knot))= \field \oplus \field \) \cite{MTV}. Over \( \Q \) the equivalence of the two definitions from Lee and Bar-Natan spectral sequences follows from \cite[Proposition 3.1]{MTV}. 

Write $C$ for the filtered complex $\fCBN(\Knot;\field)$ and $\{\mathcal{F}_q\}$ for the quantum filtration on $C$; recall that  \(
\mathcal{F}_q\coloneqq
\{
x\in C \mid q(x)\geq q
\}
\). Rasmussen's $s$-invariant is defined $s^\field\coloneqq s_{\min}^{\field}(\Knot)+1 = s_{\max}^{\field}(\Knot)-1$
where
\begin{align*}
s_{\min}^{\field}(\Knot) 
&\coloneqq 
\max
\{
q \mid 
\Homology(\mathcal{F}_q)
\longrightarrow
\Homology(C)=\field^2 
\text{ is surjective} 
\}
\\
s_{\max}^{\field}(\Knot)
&\coloneqq
\max
\{
q \mid
\Homology(\mathcal{F}_q)
\longrightarrow
\Homology(C)=\field^2 
\text{ is nonzero} 
\}
\end{align*}
These definitions, taken from \cite[Theorem~2.5]{LipSar}, are equivalent to those using the $s$-grading~\cite{Rasmussen_slice_genus}. 
We can also define a reduced $s$-invariant $\tilde{s}^\field(\Knot)$, by considering the filtration $\{\widetilde{\mathcal{F}}_q\}$ on $\widetilde{C}\coloneqq\fCBNr(\Knot;\field)=q^{+1}\fCBN^x(\Knot;\field)$, and setting
$$
\tilde{s}^{\field}(\Knot)
\coloneqq
\max
\{
q \mid
\Homology(\widetilde{\mathcal{F}}_q)
\longrightarrow
\Homology(\widetilde{C})=\field
\text{ is surjective/nonzero} 
\}
$$
\begin{proposition}
	For any knot \(\Knot\), \(\tilde{s}^{\field}(\Knot)=s^{\field}(\Knot)\).
\end{proposition}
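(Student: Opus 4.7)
The strategy is to apply the short exact sequence of bigraded chain complexes of $\field[H]$-modules
\[ 0 \longrightarrow \CBN^{\x}(\Knot;\field) \longrightarrow \CBN(\Knot;\field) \longrightarrow \CBN^{\x=0}(\Knot;\field) \longrightarrow 0, \]
which under the grading identifications $\CBN^{\x}=q^{-1}\CBNr$ and $\CBN^{\x=0}=q^{+1}\CBNr$ from Section~\ref{sec:BNr} takes the form
\[ 0 \longrightarrow q^{-1}\CBNr(\Knot;\field) \longrightarrow \CBN(\Knot;\field) \longrightarrow q^{+1}\CBNr(\Knot;\field) \longrightarrow 0. \]
Equivalently, this gives a short exact sequence of the associated bigraded type~D structures over $\field[H]$, which encodes the filtered chain homotopy types of $C=\fCBN(\Knot;\field)$ and $\widetilde{C}=\fCBNr(\Knot;\field)$ after specializing to $H=1$.

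Both $s^\field(\Knot)$ and $\tilde{s}^\field(\Knot)$ can be read off the $q$-gradings of the free summands in the fully cancelled type~D structures. By Proposition~\ref{prop:towers}, $\Kh(\Knot)^{\field[H]}$ has exactly two free summands $[\gen]$ over $\field[H]$, and their $q$-gradings are precisely $s_{\min}^\field(\Knot)$ and $s_{\max}^\field(\Knot)$, since in the cancelled form the free generators carry no incoming or outgoing differentials, so their filtration levels after $H=1$ coincide with their $q$-gradings. Similarly, Proposition~\ref{prop:towersReduced-field} gives $\tilde{s}^\field(\Knot)$ as the $q$-grading of the unique free summand of $\Khr(\Knot)^{\field[H]}$. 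The identity $s^\field=\tilde{s}^\field$ therefore reduces to showing that the two free summands of $\Kh(\Knot)^{\field[H]}$ sit in $q$-gradings $\tilde{s}^\field-1$ and $\tilde{s}^\field+1$.

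To this end, one takes the long exact sequence in homology induced by the short exact sequence above. For a knot, the free parts of $\Kh^{\field[H]}$ and $\Khr^{\field[H]}$ are concentrated in homological degree zero (these are the Lee/Bar-Natan canonical generators), so the connecting maps in the long exact sequence can only take values in the $H$-torsion submodules of neighbouring terms. Quotienting each homology module by its $H$-torsion submodule therefore yields a short exact sequence of free bigraded $\field[H]$-modules
\[ 0 \longrightarrow q^{-1}\field[H] \longrightarrow \bigl(\Kh(\Knot)^{\field[H]}\bigr)^{\mathrm{free}} \longrightarrow q^{+1}\field[H] \longrightarrow 0 , \]
in which the outer generators live in $q$-gradings $\tilde{s}^\field-1$ and $\tilde{s}^\field+1$ respectively. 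Comparing Poincar\'e series in $q$ then forces the rank-$2$ middle term to have its two generators in $q$-gradings $\tilde{s}^\field-1$ and $\tilde{s}^\field+1$, which completes the identification $s^\field(\Knot)=\tilde{s}^\field(\Knot)$. The main technical point is the verification of the exactness of this sequence of free parts: injectivity on the left is straightforward from $\field[H]$-linearity and the injectivity of $\CKh^{\x}\hookrightarrow\CKh$ at chain level, whereas surjectivity on the right requires showing that the connecting homomorphism out of $H^0(q^{+1}\CKhr)$ vanishes on its free part, using $\field[H]$-linearity together with the fact that $H^1(q^{-1}\CKhr)$ is pure torsion for a knot.
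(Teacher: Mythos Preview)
Your approach via the short exact sequence of $\field[H]$-modules and the long exact sequence on Bar-Natan homology is natural, but the key step does not go through as stated. The claim that the connecting homomorphism $\delta_0\colon H^0(q^{+1}\CBNr)\to H^1(q^{-1}\CBNr)$ ``vanishes on its free part, using $\field[H]$-linearity together with the fact that $H^1(q^{-1}\CKhr)$ is pure torsion'' is not justified: an $\field[H]$-linear map from a free module to a torsion module can certainly be nonzero (e.g.\ the quotient $\field[H]\to\field[H]/(H)$). What you actually obtain from the torsion hypothesis is that for the free generator $c$ one has $\delta_0(H^nc)=0$ for some $n\geq 0$, so only $H^nc$ lies in the image of $g_0$. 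Passing to free quotients this shows the induced map $(H^0(\CBN))^{\mathrm{free}}\to (H^0(q^{+1}\CBNr))^{\mathrm{free}}$ has image containing $H^n\cdot(\text{generator})$, not necessarily the generator itself. Equivalently, localizing at $H$ gives an honest short exact sequence, but over $\field[H,H^{-1}]$ the $q$-grading of a rank-one free module is only well-defined modulo $2$, so the Poincar\'e series comparison collapses to a parity statement and cannot pin down $s=\tilde s$.

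The paper avoids this obstruction by working directly with the filtered complexes at $H=1$. There the chain complex $C=\fCBN(\Knot)$ genuinely splits as $C^x\oplus C^{x-1}$ (not merely as a module), and Lemma~\ref{lem:OnlyOneReducedBNTheory} identifies the two summands as filtered complexes. The sandwich of filtrations $\mathcal F_{q+2}\subseteq \mathcal F^x_q\oplus\mathcal F^x_q\subseteq\mathcal F_q$ then yields the two-sided inequality $s^\field-1\leq \tilde s^\field-1\leq s^\field-1$ directly. If you want to salvage your module-theoretic route, you would need an extra ingredient---essentially this chain-level splitting at $H=1$, or an explicit reason why $\delta_0$ kills the free generator on the nose---which amounts to importing the paper's idea anyway.
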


\begin{proof}
	First, observe that, as a homologically graded chain complex, $C$ can be written as a direct sum of $C^x\coloneqq\fCBN^x(\Knot;\field)$ and \(C^{x-1}\coloneqq\fCBN^{x-1}(\Knot;\field)\). These correspond to the subcomplexes of $C$ generated by those elements which label the circle with the basepoint $p$ by $x$ and $x-1$, respectively. Define filtrations $\mathcal{F}^x_q\coloneqq  \mathcal{F}_q\cap C^{x}$ and $\mathcal{F}^{x-1}_q\coloneqq \mathcal{F}_q\cap C^{x-1}$ on $C^x$ and $C^{x-1}$. 
	Observe now that $(C^x,\mathcal{F}^x_q)$ and $(C^{x-1},\mathcal{F}^{x-1}_{q})$ are isomorphic as filtered chain complexes (the isomorphism is given by setting $H=1$ in the proof of Lemma~\ref{lem:OnlyOneReducedBNTheory}).
  Moreover,
	$$
	\mathcal{F}_{q+2}
	\subseteq 
	\mathcal{F}^{x-1}_{q}\oplus \mathcal{F}^x_{q}
	\subseteq
	\mathcal{F}_{q}
	$$
	which can be seen by considering the bases on the circle with the basepoint $p$.
	Together with the previous observation, this gives rise to the following diagram of homologically graded chain complexes:
	$$
	\begin{tikzcd}
		\mathcal{F}_{q+2}
		\arrow{d}{i_{q+2}}
		\arrow[hookrightarrow]{r}{}
		&
		\mathcal{F}^{x}_{q}\oplus \mathcal{F}^x_{q}
		\arrow{d}{i^x_q\oplus i^x_q}
		\arrow[hookrightarrow]{r}{}
		&
		\mathcal{F}_{q}
		\arrow{d}{i_q}
		\\
		C
		\arrow[phantom]{r}{\cong}
		&
		C^x\oplus C^x
		\arrow[phantom]{r}{\cong}
		&
		C
	\end{tikzcd}
	$$
	So, in particular,
	$$
	\Homology(i_{q+2}) \text{ is non-zero} 
	\quad\Longrightarrow\quad
	\Homology(i^x_{q}) \text{ is non-zero/surjective} 
	\quad\Longrightarrow\quad
	\Homology(i_{q}) \text{ is surjective} 
	$$
	Thus
	$$
	s^{\field}(\Knot)-1
	=
	s_{\max}^{\field}(\Knot)-2
	\leq
	\tilde{s}^{\field}(\Knot)-1
	\leq 
	s_{\min}^{\field}(\Knot)
	=
	s^{\field}(\Knot)-1
	$$
	proving the claim.
\end{proof}

The following reformulation of the \( s \)-invariant, first observed in \cite{Khovanov_Frob_Ext}, will be of most use:
\begin{proposition}\label{prop:s-inv-as-grading}
For a knot \( \Knot \), the generator of the single tower \( \field[H] \) in \( \BNr(\Knot)_{\field[H]} \), and the generator of the single free summand \( [\gen] \) in \( \Khr(\Knot)^{\field[H]} \), both have \( q \)-grading \( s^\field(\Knot) \) and \( h\)-grading $0$.
\end{proposition}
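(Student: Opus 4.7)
The plan is to read off the bigrading of the unique free summand $[\gen]$ in the cancelled and cleaned-up type~D structure $\Khr(\Knot)^{\field[H]}$ provided by Proposition~\ref{prop:towersReduced-field}; call it $(h_0,q_0)$. I want to prove $(h_0,q_0)=(0,\,s^\field(\Knot))$. The quantum part will fall out of the filtered description of $\tilde{s}^\field=s^\field$ from the preceding proposition, and the homological part from the Lee/Bar-Natan fact that the homology of the filtered Bar-Natan complex of a knot is concentrated in $h=0$.

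For $q_0=s^\field(\Knot)$, I will compute $\tilde{s}^\field(\Knot)$ directly on the cancelled model. Since setting $H=1$ respects homotopies, the filtered complex $\widetilde{C}\coloneqq\fCBNr(\Knot)=\CKhr(\Knot)^{\field[H]}|_{H=1}$ is filtered homotopy equivalent to the chain complex obtained from the cancelled type~D structure at $H=1$. In this model each torsion block $[\gen\xrightarrow{H^\ell}\gen]$ becomes acyclic (the arrow is invertible at $H=1$), leaving a single cycle---the representative of the free summand---generating $H_*(\widetilde{C})=\field$ and sitting at filtration level $q_0$. For $q\le q_0$ this cycle lies in $\widetilde{\mathcal{F}}_q$, so $H_*(\widetilde{\mathcal{F}}_q)\to H_*(\widetilde{C})$ is surjective. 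For $q>q_0$ the subcomplex $\widetilde{\mathcal{F}}_q$ decomposes into (i) complete torsion blocks, which are contractible, and (ii) isolated targets of torsion arrows whose source has fallen out of filtration (this uses $q(\text{target})=q(\text{source})+2\ell$); the isolated targets are cycles in the cancelled model but bound their sources in $\widetilde{C}$, so they map to zero in homology. Hence $\tilde{s}^\field(\Knot)=q_0$, and the preceding proposition gives $q_0=s^\field(\Knot)$.

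For $h_0=0$, I appeal to the splitting $\fCBN(\Knot)=C^x\oplus C^{x-1}$ from the preceding proof, together with the isomorphism $C^x\cong C^{x-1}$ of filtered chain complexes (both are isomorphic to $\fCBNr(\Knot)$ up to a quantum grading shift, by specializing Lemma~\ref{lem:OnlyOneReducedBNTheory} to $H=1$). By the Lee/Bar-Natan theorem, $H_*(\fCBN(\Knot))=\field^2$ is concentrated in homological grading $0$ for a knot, which forces $H_*(\fCBNr(\Knot))=\field$ to sit in $h=0$ and hence $h_0=0$.

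The statement for $\BNr(\Knot)_{\field[H]}$ is then immediate from~\eqref{eq:red_D_str_vs_BN_homology}: the box tensor product $\Khr(\Knot)^{\field[H]}\boxtimes{}_{\field[H]}{\field[H]}_{\field[H]}$ turns each torsion summand into $H$-torsion and the unique free summand into the unique $\field[H]$-tower, whose generator inherits the bigrading $(0,\,s^\field(\Knot))$. I expect the main obstacle to be the $h$-grading claim: the $q$-grading argument reduces to bookkeeping with the cancelled model, whereas $h_0=0$ requires external input from Lee/Bar-Natan theory for knots.
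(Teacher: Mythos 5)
Your proof is correct, and it takes essentially the same route as the paper's: both exploit the fact that passing from the fully cancelled and cleaned-up type~D structure $\Khr(\Knot)^{\field[H]}$ to the filtered complex $\fCBNr(\Knot)$ by setting $H=1$ respects filtered homotopy, and both read off the $s$-invariant from the surviving class. The paper phrases this as ``the arrows $[\gen\xrightarrow{H^\ell}\gen]$ become the page-$\ell$ differentials of the spectral sequence $\Khr(\Knot)\rightrightarrows\Homology(\fCBNr(\Knot))$, so the unique free summand generates $E_\infty$''; your filtration-level analysis of $\widetilde{\mathcal{F}}_q$ is the unwound version of that statement and is correctly carried out (in particular, the inequality $q(\text{target})=q(\text{source})+2\ell>q(\text{source})$ guarantees that $\widetilde{\mathcal{F}}_q$ for $q>q_0$ decomposes into complete blocks and isolated \emph{targets}, never isolated sources, so the map on homology is indeed zero). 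Where you go beyond the paper is in the treatment of the $h$-grading: the paper's proof simply writes $h^0$ into the displayed formulas, tacitly invoking the Lee/Bar--Natan fact that $\Homology(\fCBN(\Knot))$ is concentrated in homological degree $0$ for a knot. Note that Proposition~\ref{prop:H=1_is_trivial} in the paper only records the \emph{rank} of that homology, not its grading, so you are right to flag this as genuine external input; your derivation of $h_0=0$ from the unreduced Lee/Bar--Natan concentration plus the splitting $\fCBN=C^x\oplus C^{x-1}$ (with $C^x\cong C^{x-1}$ via the $H=1$ specialization of Lemma~\ref{lem:OnlyOneReducedBNTheory}) cleanly fills what the paper's proof leaves implicit. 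The final transfer to $\BNr(\Knot)_{\field[H]}$ via $\boxtimes\,{}_{\field[H]}\field[H]$ is standard and matches the paper's first observation.
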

\begin{proof}
Observe, first, that the fact that the \( q \)-gradings of the two specified generators are equal follows immediately from Equation~\eqref{eq:red_D_str_vs_BN_homology}. 
Second, the relationship between type~D structures and filtered chain complexes implies that the type~D structure \( \Khr(\Knot) ^{\field[H]} \) determines the spectral sequence \( \Khr(\Knot)  \rightrightarrows \Homology (\fCBNr(\Knot)) = \field  \) in such a way that the actions 
\(
\begin{tikzcd}[nodes={inner sep=2pt}, column sep=13pt,ampersand replacement=\&]
\gen
\arrow{r}{H^{\ell}} 
\&
\gen
\end{tikzcd}
\)
translate into differentials appearing on the \( \ell \)'th page of the spectral sequence. This immediately implies: 
\[ 
\Khr(\Knot)^{\field[H]}  = q^{s^{\field}(\Knot)}h^0[\gen] \bigoplus_i 
\begin{tikzcd}[nodes={inner sep=2pt}, column sep=13pt,ampersand replacement=\&]
[\gen
\arrow{r}{H^{\ell_i}} 
\&
\gen]_i
\end{tikzcd} \qquad
\BNr(\Knot)_{\field[H]} =q^{s^{\field}(\Knot)}h^0 \field [H] \bigoplus_i \left[\field[H]/(H^{\ell_i}) \right]_i
\qedhere\]
\end{proof}

The analogue of the proposition above in the unreduced case implies:
\begin{align*}
\Kh(\Knot) ^{\field[H]}&=q^{s_\text{min}^{\field}(\Knot)}h^0[\gen]\oplus q^{s_\text{max}^{\field}(\Knot)} h^0[\gen] \bigoplus_i 
\begin{tikzcd}[nodes={inner sep=2pt}, column sep=13pt,ampersand replacement=\&]
[\gen
\arrow{r}{H^{\ell_i}} 
\&
\gen]_i
\end{tikzcd} \\
\BN(\Knot)_{\field[H]}&=q^{s_\text{min}^{\field}(\Knot)}h^0 \field [H]\oplus q^{s_\text{max}^{\field}(\Knot)} h^0 \field [H] \bigoplus_i \left[\field[H]/(H^{\ell_i}) \right]_i
\end{align*}

We highlight that the collection of invariants \( \{s^\field\} \) ranging over all possible choices of coefficient field \( \field \) is an interesting collection of invariants that deserves further study. In particular, Seed has observed that there exist examples of knots \( \Knot \) (eg \( \Knot 14n19265 \)) for which \( s^\fieldTwoElements(\Knot)\ne s^\Q(\Knot) \) \cite[Remark 6.1]{LipSar}; see also Section~\ref{subsec:field_dependence}.

\subsection{Exact sequences} We finish with a discussion of two exact sequences, one over \( \Rcomm \) and the other over {\( \fieldTwoIsNotZero \), which connect the three main homological invariants discussed so far: \( \BNr(\Lk) \), \( \Khr(\Lk) \), and \( \Kh(\Lk) \).
\begin{proposition}\label{prop:kh_mapping_cones}
There are two short exact sequences
\begin{gather*}
\begin{tikzcd}[ampersand replacement=\&]
0
\arrow{r}{}
\&
q^{-2}h^{0} \CBNr(\Lk)
\arrow{r}{H}
\&
\CBNr(\Lk)
\arrow{r}{}
\&
\CKhr(\Lk)
\arrow{r}{}
\&
0
\end{tikzcd}
\\
\begin{tikzcd}[ampersand replacement=\&]
0
\arrow{r}{}
\&
q^{-4}h^{0}\CBNr(\Lk;\fieldTwoIsNotZero) 
\arrow{r}{H^2}
\&
\CBNr(\Lk;\fieldTwoIsNotZero)
\arrow{r}{}
\&
q^{-1}h^{0}\CKh(\Lk;\fieldTwoIsNotZero)
\arrow{r}{}
\&
0
\end{tikzcd}
\end{gather*}
resulting in the mapping cone formulas
\begin{align*}
\CKhr(\Lk)&\simeq
\Big[\begin{tikzcd}[ampersand replacement=\&]
q^{-2}h^{-1}\CBNr(\Lk) 
\arrow{r}{H}
\&
\CBNr(\Lk)
\end{tikzcd}\Big]\\
\CKh(\Lk;\fieldTwoIsNotZero)&\simeq 
\Big[\begin{tikzcd}[ampersand replacement=\&]
q^{-3}h^{-1}\CBNr(\Lk;\fieldTwoIsNotZero)
\arrow{r}{H^2}
\&
q^{1}h^{0}\CBNr(\Lk;\fieldTwoIsNotZero)
\end{tikzcd}\Big]
\end{align*}
\end{proposition}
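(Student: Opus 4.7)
The plan is to establish both short exact sequences at the chain level; the two mapping cone formulas then follow from the standard quasi-isomorphism $C\simeq\mathrm{Cone}(A\to B)$ attached to any short exact sequence $0\to A\to B\to C\to 0$ whose first map is injective. In the conventions of Remark~\ref{rmk:gradings_of_morphisms}, the cone of $f\colon q^{a}h^{0}\CBNr\to q^{b}h^{0}\CBNr$ reads exactly as the bracketed complex $[q^{a}h^{-1}\CBNr\xrightarrow{f}q^{b}h^{0}\CBNr]$ displayed in the statement, and since both complexes are bounded and free over the ground ring this quasi-isomorphism upgrades to a chain homotopy equivalence. It therefore suffices to produce the two short exact sequences.

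The first is immediate. At the chain level $\CBNr(\Lk)$ is a free $\Rcomm[H]$-module---on each resolution a tensor product of the free $\Rcomm[H]$-modules $V$, $V^{x}$, $V^{x=0}$---and the Bar-Natan differential is $\Rcomm[H]$-linear, so multiplication by $H$ is an injective chain map of bigrading $q^{-2}h^{0}$. The cokernel $\CBNr/H\CBNr$ is obtained by setting $H=0$ in the deformed merge and split formulas~\eqref{eq:merge-split_maps_for_BN} and~\eqref{eq:merge-split_maps_reduced_BN_x=0}, and comparison with the formulas of Section~\ref{subsec:Kh} identifies this quotient with $\CKhr(\Lk)$.

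The second sequence is the main obstacle and requires a change of basis. Injectivity of $H^{2}$ is again freeness. To identify the cokernel with $q^{-1}\CKh(\Lk;\fieldTwoIsNotZero)$, the plan is to replace the basis $\{1,x\}$ on every unmarked circle by $\{1,y\}$ with $y=x-H/2$; this is permitted because $2$ is invertible in $\fieldTwoIsNotZero$, and is the same idea underlying Lemma~\ref{lem:OnlyOneReducedBNTheory}. A direct calculation shows that in the new basis the unmarked Bar-Natan merge and split of~\eqref{eq:merge-split_maps_for_BN} acquire only the quadratic-in-$H$ corrections $y\otimes y\mapsto (H^{2}/4)\cdot 1\otimes 1$ and $y\mapsto y\otimes y+(H^{2}/4)\cdot 1\otimes 1$, which vanish modulo $H^{2}$ and reduce to the unreduced Khovanov formulas with $y$ playing the role of $x$.

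To handle the marked circle, use the presentation $\CBNr=q^{-1}\CBN^{x=0}$. The marked factor contributes $\fieldTwoIsNotZero[H]/(H^{2})\otimes\langle 1_{\ast}\rangle$ modulo $H^{2}$, a two-dimensional $\fieldTwoIsNotZero$-space whose basis elements $1\otimes 1_{\ast}$ and $H\otimes 1_{\ast}$ live in quantum gradings $0$ and $-2$---exactly the gradings of $1_{\ast}$ and $x_{\ast}$ in $q^{-1}\CKh$. Define the $\fieldTwoIsNotZero$-linear bijection $\widetilde{\psi}\colon \CBNr/H^{2}\CBNr\to q^{-1}\CKh$ to send $1\otimes 1_{\ast}\mapsto 1_{\ast}$, $H\otimes 1_{\ast}\mapsto -2x_{\ast}$, and to use the identification $y=x$ on unmarked circles. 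After rewriting~\eqref{eq:merge-split_maps_reduced_BN_x=0} in the $\{1,y\}$ basis as $1_{\ast}\otimes y\mapsto -(H/2)\cdot 1_{\ast}$ and $1_{\ast}\mapsto 1_{\ast}\otimes y-(H/2)\cdot 1_{\ast}\otimes 1$, a short case check in each of the four marked merge/split cases verifies that the $\pm H/2$ corrections on the Bar-Natan side correspond under $\widetilde{\psi}$ precisely to the $x_{\ast}$ terms produced by the unreduced Khovanov split/merge on $\langle 1_{\ast},x_{\ast}\rangle$; thus $\widetilde{\psi}$ intertwines the differentials and delivers the required chain isomorphism.
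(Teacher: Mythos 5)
Your argument is correct and follows essentially the same strategy as the paper: freeness over $\Rcomm[H]$ gives injectivity of $H$ and $H^2$, the first cokernel is identified with $\CKhr$ by setting $H=0$, and the second cokernel is identified with $q^{-1}\CKh$ via the change of basis $y = x - H/2$ on unmarked circles together with an explicit bigraded $\fieldTwoIsNotZero$-linear isomorphism on the marked factor. The only cosmetic difference is that you carry out the marked-circle bookkeeping in the $\CBN^{x=0}$ model (defining $\widetilde{\psi}$ by $1\otimes 1_\ast\mapsto 1_\ast$, $H\otimes 1_\ast\mapsto -2x_\ast$), whereas the paper works with the equivalent $\CBN^x$ model and instead chooses the $\fieldTwoIsNotZero$-basis $\{x, xH/2\}$; both calculations check out and yield the same conclusion.
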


\begin{proof}
We start with the first long exact sequence. Observe that there is an isomorphism of chain complexes
\[  \CBNr(\Lk) / \left( H \cdot q^{-2}h^{0}\CBNr(\Lk) \right) \cong \CBNr_{H=0}(\Lk),\] 
where \( \CBNr_{H=0}(\Lk) \) is the reduced Bar-Natan complex over the quotient Frobenius algebra \( \Rcomm[H,\x]/(\x^2=H \x, H=0)=\Rcomm[\x]/(\x^2) \), which is by definition equal to \( \CKhr(\Lk) \).
For the second long exact sequence, we observe in the same fashion that there is an isomorphism of chain complexes
\[ \CBNr(\Lk;\fieldTwoIsNotZero) /  \left( H^2 \cdot q^{-4}h^{0}\CBNr(\Lk;\fieldTwoIsNotZero) \right) \cong \CBNr_{H^2=0}(\Lk;\fieldTwoIsNotZero),\] 
where \( \CBNr_{H^2=0}(\Lk;\fieldTwoIsNotZero) \) is the reduced Bar-Natan complex over the quotient Frobenius algebra \( \fieldTwoIsNotZero[H,\x]/(\x^2=H \x, H^2=0)=\langle 1,H,\x,H\x \rangle_{\Rcomm} \). 

Observe that there is a convenient change of basis in this Frobenius algebra: \( \y=\x-\frac{H}{2} \) (here we use that \( 2\neq0 \) in \( \fieldTwoIsNotZero \)). With respect to this basis, the merge map is multiplication in \( \fieldTwoIsNotZero[H,\y]/(\y^2=0 , H^2=0)=\langle 1,H,\y,H\y \rangle_{\Rcomm} \), and the split map is the  co-multiplication \( 1 \mapsto 1 \otimes \y + \y \otimes 1, \  \y \mapsto \y \otimes \y \).  Motivated by this change of basis, we find a special basis for \( \CBNr_{H^2=0}(\Lk;\fieldTwoIsNotZero) \), with respect to which the complex is isomorphic to the unreduced complex \( \CKh(\Lk;\fieldTwoIsNotZero) \). We will work with the \( q^{-1}h^{0}\CBNr_{H^2=0}=\CBN^\x_{H^2=0} \) version of the reduced Bar-Natan complex, and change its basis by \( \y=\x-\frac{H}{2} \). Applying the TQFT in each full resolution, the circles with basepoint \( \ast \mspace{-7mu}\Circle \) are mapped to \(  \langle \x\rangle_{\fieldTwoIsNotZero[H]/(H^2)} \cong \langle \y+\frac{H}{2} \rangle_{\fieldTwoIsNotZero[H]/(H^2)} \), and the circles without the basepoint \( \Circle \) are mapped to \(  \langle 1, \x \rangle_{\fieldTwoIsNotZero[H]/(H^2)} \cong \langle 1, \y+ \frac{H}{2} \rangle_{\fieldTwoIsNotZero[H]/(H^2)} \cong \langle 1, \y \rangle_{\fieldTwoIsNotZero[H]/(H^2)}  \). We now stop working over \( \fieldTwoIsNotZero[H]/(H^2) \) and pick a basis over \( \fieldTwoIsNotZero \): circles \( \ast \mspace{-7mu}\Circle \) are mapped to \( \langle \y+\frac{H}{2}, \y\frac{H}{2} \rangle_\fieldTwoIsNotZero \) (note that \( \y\frac{H}{2}= \frac{H}{2} (\y+\frac{H}{2}) \)), and circles \( \Circle \) are mapped to \( \langle 1, \y\rangle_\fieldTwoIsNotZero \). 
Finally, in the basis above, the isomorphism 
\[ q^2 h^0 \CBN^{\x(=\y+\frac{H}{2})}_{H^2=0} (\Lk;\fieldTwoIsNotZero) \cong \CKh(\Lk;\fieldTwoIsNotZero)\]
is established via the map \( \y+\frac{H}{2} \mapsto \x_+, \y\frac{H}{2} \mapsto \x_- \) on \( \ast \mspace{-7mu}\Circle \) and \( y\mapsto \x_-, 1\mapsto \x_+ \) on $\Circle$.
\end{proof}

\begin{corollary}
Over \( R=\field \) we have 
\begin{equation} \label{mcformula:full_kh}
\CKh(\Lk;\field) \simeq
    \left[ 
    \begin{tikzcd}[ampersand replacement=\& ]
        q^{-3}h^{-1}\CBNr(\Lk;\field) \arrow{r}{H} \& q^{-1}h^{0}\CBNr(\Lk;\field) \\
        q^{-1}h^{-1}\CBNr(\Lk;\field)  \arrow{ru}{2} \arrow{r}{H} \&  q^{1}h^{0} \CBNr(\Lk;\field) 
    \end{tikzcd}
    \right]
\end{equation}
\end{corollary}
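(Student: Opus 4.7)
The plan is to derive the corollary by combining the short exact sequence
\[ 0 \to \CKh^{\x}(\Lk;\field) \to \CKh(\Lk;\field) \to \CKh^{\x=0}(\Lk;\field) \to 0 \]
with the first mapping cone formula of Proposition~\ref{prop:kh_mapping_cones}, which holds over any commutative ring. Using the identifications $\CKh^{\x} \cong q^{-1}h^{0}\CKhr$ and $\CKh^{\x=0} \cong q^{+1}h^{0}\CKhr$ that follow from the definition of reduced Khovanov homology, this short exact sequence expresses $\CKh(\Lk;\field)$ as a mapping cone
\[ \CKh(\Lk;\field) \simeq \Big[q^{+1}h^{-1}\CKhr(\Lk;\field) \xrightarrow{\gamma} q^{-1}h^{0}\CKhr(\Lk;\field)\Big] \]
for some connecting chain map $\gamma$.

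Next, I would substitute the first mapping cone formula $\CKhr \simeq [q^{-2}h^{-1}\CBNr \xrightarrow{H} \CBNr]$ into each occurrence of $\CKhr$ above. This produces a four-object twisted complex with four copies of $\CBNr$ at the bigradings $A = q^{-3}h^{-1}\CBNr$, $B = q^{-1}h^{0}\CBNr$, $C = q^{-1}h^{-1}\CBNr$, $D = q^{+1}h^{0}\CBNr$, with the internal $H$-arrows within each row matching the diagram of the corollary. The remaining inter-row arrows arise from the components of $\gamma$.

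The main technical step is to identify these inter-row arrows with the single arrow $C \xrightarrow{2} B$ stated in the corollary. I would compute $\gamma$ at the chain level using the standard construction via a set-theoretic splitting and the explicit merge and split formulas of the Khovanov Frobenius algebra near the marked circle: for a generator with $1$ on the marked circle, its image under $\gamma$ is exactly the part of $d_{\CKh}$ of the lift that produces an $\x$ on the marked circle. After applying Clean-Up Lemma~\ref{lem:AbstractCleanUp} to absorb the other potential components of $\gamma$ into the existing $H$-arrows, the expectation is that $\gamma$ reduces to the single multiplication-by-$2$ arrow $C \to B$.

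The main obstacle is precisely this identification of $\gamma$, since the explicit chain-level bookkeeping is delicate. As a consistency check, over a field $\fieldTwoIsNotZero$ of characteristic different from $2$ one can cancel the isomorphism $C \xrightarrow{2} B$ via Cancellation Lemma~\ref{lem:AbstractCancellation}, producing a new arrow $A \to D$ of the form $\pm H^2/2$; this recovers the second mapping cone formula of Proposition~\ref{prop:kh_mapping_cones} up to an invertible scalar. Over $\fieldTwoElements$, where $2 = 0$, the arrow vanishes and the complex decomposes as $q^{-1}\CKhr \oplus q^{+1}\CKhr$, consistent with Shumakovitch's splitting of unreduced Khovanov homology in characteristic $2$.
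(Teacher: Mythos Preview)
Your proposal has a genuine gap at precisely the point you flag as the main obstacle: identifying the connecting map $\gamma$ with multiplication by $2$ after substituting the mapping cone formula for $\CKhr$. This is not a routine bookkeeping step. In fact, the remark immediately following this corollary in the paper says exactly this: the authors believe the differential connecting the two copies of $\CKhr$ in the right-hand side agrees with the connecting homomorphism of the short exact sequence $0 \to q^{-1}\CKhr \to \CKh \to q^{+1}\CKhr \to 0$, but they ``were not able to find an appropriate change of basis''. So the route you propose is one the authors themselves could not complete, and your sketch does not supply the missing argument.

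The paper's proof sidesteps this difficulty entirely by arguing in the opposite direction, and separately in each characteristic. Over a field $\fieldTwoIsNotZero$ of characteristic $\neq 2$, one starts from the right-hand side of the corollary, cancels the invertible arrow $C \xrightarrow{2} B$ via the Cancellation Lemma, and observes that the resulting two-term complex is (up to an invertible scalar) the $H^2$-cone already identified with $\CKh(\Lk;\fieldTwoIsNotZero)$ in the second part of Proposition~\ref{prop:kh_mapping_cones}. Over $\fieldTwoElements$, the $2$-arrow vanishes, the right-hand side splits as $q^{-1}\CKhr \oplus q^{+1}\CKhr$ by the first part of Proposition~\ref{prop:kh_mapping_cones}, and one invokes Shumakovitch's result that $\CKh(\Lk;\fieldTwoElements)$ splits the same way. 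You actually wrote down both of these observations, but only as a ``consistency check''; the point is that this check \emph{is} the proof, and no direct computation of $\gamma$ is needed.
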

\begin{proof}
Over  \( \fieldTwoIsNotZero\)  this follows from the second part of Proposition~\ref{prop:kh_mapping_cones}, after cancelling the  \(\times 2\)  differential by Lemma~\ref{lem:AbstractCancellation}. Over  \(\fieldTwoElements\)  (and other fields of characteristic 2) the right hand side splits into two  \(\CKhr(\Lk;\fieldTwoElements)\), thanks to the first part of Proposition~\ref{prop:kh_mapping_cones}. The statement now follows from the well-known fact that  \(\CKh(\Lk;\fieldTwoElements)\)  also splits into two summands, both isomorphic to  \(\CKhr(\Lk;\fieldTwoElements)\) \cite{Shum_torsion}.
\end{proof}
\begin{remark}
It is possible that the formula (\ref{mcformula:full_kh}) is true over $R$. After passing to homology in the rows of the right hand side complex in (\ref{mcformula:full_kh}), we obtain two copies of reduced Khovanov homology, connected by a differential which is 0 over  \(\fieldTwoElements \). We think that this differential actually agrees with (or is homotopic to) the connecting homomorphism of the short exact sequence 
\[ 0\rightarrow q^{-1}h^{0}\CKhr(\Lk) \rightarrow \CKh(\Lk) \rightarrow q^{+1}h^{0}\CKhr(\Lk) \rightarrow 0\]
but we were not able to find an appropriate change of basis.
\end{remark}

\section{Bar-Natan's universal cobordism category and tangle invariants}\label{sec:tangle_invariants}
We review Bar-Natan's cobordism category for Khovanov homology of tangles. Our conventions aim to stay as close to those of~\cite{BarNatanKh,BarNatanKhT,BarNatanBurgosSoto} as possible.



\begin{definition}\label{def:Cob}\cite[Definition~3.1, Section 4.1.2 and Section~11.3]{BarNatanKhT}
Given a finite, non-empty set \( B \) of points on the boundary of a disc \( D^2 \), let \( \Cob:=\Cob^3(B) \) be the following category:
\begin{itemize}
    \item The objects of \( \Cob \) are crossingless tangle diagrams \( T \) whose boundary \( \partial T \) coincides with \( B \). 

    \item A morphism \( T_0\rightarrow T_1 \) between two objects \( T_0 \) and \( T_1 \) of \( \Cob \) is a formal \( \Rcomm \)-linear combination of cobordisms from \( T_0 \) to \( T_1 \). By cobordism, we mean an orientable (possibly disconnected) surface with boundary, together with an identification of the boundary with
    \[ (T_0\times\{0\})\cup (B\times I)\cup (T_1\times\{1\}),
    \]
    which can be thought of as the subset of the boundary of a solid cylinder:
    \[ 
    (D^2\times\{0\})\cup (S^1\times I)\cup (D^2\times\{1\})=\partial(D^2\times I).
    \]
    We consider cobordisms up to 
    boundary preserving homeomorphism.
    \item The identity morphisms are given by product cobordisms; composition of morphisms in \( \Cob \) is given by concatenation.
\end{itemize}
We define the quantum grading of a cobordism \( C \) representing a morphism in \( \Cob \) by:
\[ q(C):=\chi(C)-\tfrac{1}{2}\vert B\vert\]
This turns \( \Cob \) into a graded category. 
\end{definition}
\begin{remark}
  Bar-Natan allows \( B \) to be the empty set. Since our focus lies on reduced theory, we will ignore this case. Moreover, he considers cobordisms which are embedded into the cylinder \( (D^2\times I,B\times I) \). The definition above corresponds to the {\em abstract} cobordisms from~\cite[Section~11.3]{BarNatanKhT}; see \cite[Section~7.5]{Naot} for an explanation of why the abstract cobordism theory is equivalent to the embedded cobordism theory.  Finally, the quantum grading of a cobordism is what Bar-Natan calls the degree of a cobordism. Note that the quantum grading of any product cobordism vanishes. 
\end{remark}
\begin{definition}\label{def:Cobl}
The category \( \Cob_{/l} \) is defined as a quotient of \( \Cob \) by imposing the following local relations on the morphisms of \( \Cob \): 
\begin{description}
    \item[\( S \)-relation] Whenever a cobordism contains a component which is a sphere, the cobordism is set equal to 0; in short: \[ \Sphere=0\]
    \item[\( T \)-relation] Whenever a cobordism contains a component which is a torus, the cobordism is equal to twice the same cobordism but with this torus component removed; in short: \[ \Torus=2\]
    \item[\( 4Tu \)-relation] Given a cobordism \( C \), let us consider four embedded open discs \( D_1 \) through \( D_4 \) on \( C \). 
    For \( i\neq j \), let \( C_{ij} \) denote the cobordism obtains from \( C \) by removing the discs \( D_i \) and \( D_j \) and replacing them by a tube with the same boundary. Then \[ C_{12}+C_{34}=C_{13}+C_{24}\] 
    or in pictures: 
    \[ \TuT+\TuB=\TuR +\TuL\]
\end{description}
The three relations are homogeneous with respect to the quantum grading on \( \Cob \), so the quantum grading descends to \( \Cob_{/l} \). 
\end{definition}


\begin{observation}
	The relations \( 4Tu \) and \( S \) imply the relation \( T \), unless the torus in the \( T \)-relation is the only component of the cobordism. This is because we can regard the torus as a sphere with a 1-handle and apply the \( 4Tu \)-relation:
	\[ \TorusRelI=\TorusRelIII+\TorusRelIV-\TorusRelII\]
	The third term is equal to 0 by the \( S \)-relation. Each of the first two terms is equal to the cobordism on the left without the torus. See also \cite{Tanaka}.
\end{observation}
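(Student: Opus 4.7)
The plan is to make precise the pictorial argument already sketched in the statement. Suppose $C$ is a cobordism one of whose connected components is a torus $T$, and let $C'$ denote the cobordism obtained from $C$ by removing $T$; the hypothesis that $T$ is not the only component of $C$ means $C'$ is nonempty. The goal is to deduce, using only the $S$- and $4Tu$-relations, that $C = 2C'$ in $\Cob_{/l}$.

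First, I would construct an auxiliary cobordism $\widetilde{C}$ by replacing the torus component $T$ with a sphere $S$, so that $\widetilde{C} = S \sqcup C'$. Regarding $T$ as $S$ with a 1-handle attached picks out two distinguished small discs $D_1, D_2 \subset S$ (the feet of the handle). Since $C'$ is nonempty, I can mark two further small discs $D_3, D_4$ on a single component of $C'$. Now apply the $4Tu$-relation to $\widetilde{C}$ with this choice of four discs, obtaining $\widetilde{C}_{12} + \widetilde{C}_{34} = \widetilde{C}_{13} + \widetilde{C}_{24}$.

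The second step is to identify each of the four terms. Tubing $D_1$ to $D_2$ turns $S$ into a torus while leaving $C'$ untouched, so $\widetilde{C}_{12} = T \sqcup C' = C$. Tubing $D_3$ to $D_4$ modifies $C'$ but leaves the sphere $S$ as an unchanged component, so $\widetilde{C}_{34} = 0$ by the $S$-relation. For $\widetilde{C}_{13}$ and $\widetilde{C}_{24}$, the key elementary observation is that $S$ with one open disc removed is homeomorphic to a disc $D^2$, and attaching $D^2$ to $C' \smallsetminus D_j$ through a cylinder is homeomorphic to simply capping off $\partial D_j$ by a disc; that is, it reproduces $C'$. Hence $\widetilde{C}_{13} = \widetilde{C}_{24} = C'$, and the $4Tu$-identity becomes $C + 0 = C' + C'$, which is exactly the $T$-relation applied to $C$.

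The argument is essentially formal once the right four discs are chosen, so no serious obstacle is expected; the only mildly delicate point is the cut-and-paste identification $\widetilde{C}_{13} \cong C' \cong \widetilde{C}_{24}$, which rests on the homeomorphism $D^2 \cup_{S^1} (S^1 \times I) \cong D^2$. Note also where the assumption is used: the construction required discs $D_3, D_4$ to be placed somewhere outside the torus component, and this is impossible precisely when $T$ is the only component of $C$, matching the excluded case in the statement.
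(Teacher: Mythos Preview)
Your proposal is correct and follows exactly the approach taken in the paper: apply the $4Tu$-relation to the auxiliary cobordism obtained by replacing the torus with a sphere, with two discs on the sphere and two on another component, then identify the four resulting terms just as you do. The only difference is that you spell out the cut-and-paste identification $\widetilde{C}_{13}\cong C'\cong\widetilde{C}_{24}$ in slightly more detail than the paper does.
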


\begin{definition}\label{def:Kob}
Let us endow \( \Cob \) and \( \Cob_{/l} \) with a homological grading which is constantly equal to 0. We then define \( \Kob \) and \( \Kob_{/l} \) as \( \Cx(\Mat(\Cob)) \) and \( \Cx(\Mat(\Cob_{/l})) \), respectively.
\end{definition}

Given an oriented tangle diagram \( T \), Bar-Natan defines a chain complex \( \KhT{T} \in \Kob \) as follows.
Let \( \mathcal{D} \) be an oriented diagram for a tangle \( T \). Let \( n_+ \) be the number of all positive crossings \( \CrossingPosR \), \( n_- \) the number of all negative crossings \( \CrossingNegR \) and \( n=n_++n_- \) the total number of crossings in the diagram \( \mathcal{D} \). The 0- and 1-resolution of a crossing is defined by
    \[ \text{(0-resolution) }\No \leftarrow \CrossingR \rightarrow\Ni\text{ (1-resolution)}\] 
which is independent of the orientation of \( T \). Fixing an enumeration of all crossings of \( \mathcal{D} \), one may then associate with a vertex of an \( n \)-dimensional cube \( v\in\{0,1\}^n \) an object \( \mathcal{D}(v) \) in \( \Cob \) obtained by taking the \( v_i \)-resolution of the \( i^\text{th} \) crossing for each \( i=1,\dots, n \). If a vertex \( w\in\{0,1\}^n \) is obtained from a vertex \( v\in\{0,1\}^n \) by changing a single coordinate \( v_k \) of \( v \) from 0 to 1, we draw an arrow from \( \mathcal{D}(v) \) to \( \mathcal{D}(w) \). This arrow is labelled by the cobordism which outside of a neighbourhood of the  \( i^\text{th} \) crossing agrees with the product cobordism, and in the neighbourhood of the \( i^\text{th} \) crossing is equal to the saddle cobordism, which we denote by
\[ 
    \begin{tikzcd}  
    \No
    \arrow{r}{\Nol}
    &
    \Ni
    \end{tikzcd}
\] 
As in the algebraic definition of Khovanov homology in Section~\ref{subsec:Kh}, we add signs to these edge maps such that the number of signs on the boundary of every 2-dimensional face of the cube is odd. 
Denote by \( \vert v\vert := \sum v_i \) the height of the vertex in the cube, which up to a shift is the homological grading on the complex. Namely, the gradings are defined as follows: 
\[
    h(\mathcal{D}(v))\coloneqq \vert v\vert - n_- \qquad
    q(\mathcal{D}(v))\coloneqq\vert v\vert +n_+-2n_- \qquad
    \delta(\mathcal{D}(v))\coloneqq\tfrac{1}{2}(n_+-\vert v\vert)
\]
Note that the quantum grading of the morphism on each arrow is equal to \( -1 \), since the quantum grading of a saddle cobordism is \( -1 \), the quantum grading of the product cobordism is 0 and the quantum grading is additive under gluing. Hence, the differential preserves quantum grading. Moreover, the homological grading increases along the differential by 1. So this defines an object in \( \Kob \) which we will denote by \( \KhT{T} \). We denote its induced object in \( \Kob_{/l} \) by \( \KhTl{T} \). This notation is justified by the following theorem.

\begin{theorem}\cite[Theorem 1, Section 4.2]{BarNatanKhT}
\(\KhTl{T}\) is an invariant of the oriented tangle \(T\) up to grading preserving chain homotopy.
\end{theorem}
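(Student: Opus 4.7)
The plan is to prove invariance of $\KhTl{T}$ under each of the three Reidemeister moves while carefully tracking the homological and quantum gradings. Since $\KhTl{T}$ is constructed from a tangle diagram as a cube of resolutions with an edge assignment, the theorem breaks into two independent checks: (i) the chain homotopy type is independent of the auxiliary choices made in the construction (the enumeration of the crossings and the edge assignment), and (ii) the chain homotopy type is preserved under R1, R2, R3.

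For step (i), I would observe that any two edge assignments differ by a coboundary in the cochain complex computing $H^2$ of the cube, and any two such are connected by an explicit isomorphism of complexes obtained by multiplying objects by $\pm 1$. Reordering crossings likewise yields an isomorphism. Both are grading-preserving because all signs are scalars in degree $h^0q^0$.

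For step (ii), the plan is to produce, for each local Reidemeister move, an explicit grading-preserving chain homotopy equivalence between the two-crossing (R2) or three-crossing (R3) subcomplexes, and then extend these equivalences over the remainder of the diagram via the functoriality of $\Mat(\Cob_{/l})$ and $\Cx$. Concretely:
\begin{itemize}
\item For R1, the two-term complex associated with a single curl looks like $[\Lo \to \Ni]$ (up to appropriate grading shifts involving $n_\pm$), and a standard strong deformation retract onto $[\Li]$ can be written down using capped-off disc and saddle cobordisms. The $S$-relation is what makes the composition of the inclusion and projection equal to the identity. The grading shifts coming from $n_+,n_-$ in the definition of $h,q$ are calibrated precisely to absorb the change in writhe, so the equivalence is grading-preserving.
\item For R2, I would write the four-term square and apply the Cancellation Lemma~\ref{lem:AbstractCancellation} twice to reduce it to a two-term strand complex chain homotopy equivalent to the trivial tangle, using the $S$-relation to verify the needed $d^2$-identities.
\item For R3, I would use the standard categorical trick: after performing a partial simplification on the two ``obvious'' crossings on each side of the move via R2-style cancellations, the two resulting complexes become manifestly isomorphic. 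The $4Tu$-relation is the essential input that makes the two sides of the R3 move match up on the nose in $\Cob_{/l}$; this is where the quotient by $/l$ is genuinely used.
\end{itemize}

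The main obstacle is R3: writing down the matching homotopy equivalences for the eight-vertex cubes on either side of R3 is combinatorially the most delicate part, and one must verify that the $4Tu$-relation provides exactly the identification needed on mixed saddle compositions. A clean way to carry this out is to first establish invariance under R2 as a stand-alone lemma, and then deduce R3 categorically by showing that both sides of the R3 move are, after two applications of R2-invariance, chain homotopy equivalent to a common complex. Throughout, grading preservation is automatic because the edge maps are saddles of quantum grading $-1$ and the homological shifts are absorbed into the $n_+,n_-$ normalizations, but one should double-check this at each reduction step using the grading conventions of Remark~\ref{rmk:gradings_of_morphisms}.
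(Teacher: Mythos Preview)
The paper does not prove this theorem; it is quoted from Bar-Natan's original paper and simply cited as \cite[Theorem~1, Section~4.2]{BarNatanKhT} without any argument reproduced. There is therefore no proof in the paper to compare your proposal against.

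That said, your outline is broadly the shape of Bar-Natan's original argument, so it is not wrong as a plan. A couple of points of caution if you intend to actually carry it out: for R1 the key relation is not the $S$-relation alone but rather the combination of the $S$- and $T$-relations (or, in the dotted/neck-cutting formulation, the delooping isomorphism), since the curl produces a closed circle that must be removed; and for R2 the cancellation that makes the middle terms disappear uses the $4Tu$-relation (or equivalently neck-cutting), not just the $S$-relation. Your description of the R3 strategy---reduce both sides via R2-style simplifications to a common complex---is accurate and is exactly how Bar-Natan organizes it.
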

In the terminology of Section~\ref{sec:AlgStructFromGDCats}, \(\KhTl{T}\) as an object in \( \HomologyZero (\Kob_{/l})=\HomologyZero ( \Cx (\Mat(\Cob_{/l}))) \) is an oriented tangle invariant.

Like the Khovanov type invariants for links with multiple components, the invariants for tangles depend on an orientation of the components. However, reversing the orientation of a component only affects the absolute bigrading. Let us make this more precise: 

\begin{definition}\label{def:reversingOneComponent}
Given an oriented tangle \( T \) and a component \( t \) of \( T \), let \( n_+(t) \) be the number of positive crossings that involve \( t \) and a \emph{different} component of \( T \). Similarly, define \( n_-(t) \) using  negative crossings. Then define the \textbf{linking number of \( t \) in \( T \)} as 
\[ \lk_T(t):=\tfrac{1}{2}n_+(t)-\tfrac{1}{2}n_-(t).\]
\end{definition}

\begin{proposition}\label{prop:reversingOneComponent}
Let \(T'\) be the oriented tangle obtained from an oriented tangle \(T\) by reversing the orientation on one of its components \(t\). Then \( \KhTl{T'}=h^{-2\lk_T(t)}q^{-6\lk_T(t)}\delta^{-\lk_T(t)}\KhTl{T} \).
\end{proposition}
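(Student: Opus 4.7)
The plan is to observe that the underlying cube of resolutions, together with its edge cobordisms and signs, is entirely independent of the orientation of $T$: the 0- and 1-resolutions are defined in an orientation-free way, and so are the saddle cobordisms and their edge assignments. Consequently, reversing the orientation on one component $t$ produces exactly the same object $\KhT{T}$ (and hence $\KhTl{T}$) up to an overall shift in the three gradings $h$, $q$, $\delta$, which enter only through the global correction terms $n_-$ and $n_+$ in the grading formulas
\[
h(\mathcal{D}(v)) = |v| - n_-, \qquad q(\mathcal{D}(v)) = |v| + n_+ - 2 n_-, \qquad \delta(\mathcal{D}(v)) = \tfrac{1}{2}(n_+ - |v|).
\]

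First, I would analyse how $n_\pm$ change under the reversal. A crossing whose two strands belong to components other than $t$ is untouched. A crossing between two strands both belonging to $t$ has both of its strand orientations reversed, so its sign is preserved. A crossing between $t$ and another component has exactly one of its two strand orientations reversed, so its sign flips. Hence, writing $n_\pm'$ for the crossing counts of $T'$,
\begin{align*}
n_+' &= n_+ - n_+(t) + n_-(t), \\
n_-' &= n_- - n_-(t) + n_+(t).
\end{align*}

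Next, I would plug these into the grading formulas. Using $n_+(t) - n_-(t) = 2\lk_T(t)$, the shifts are
\begin{align*}
h'(\mathcal{D}(v)) - h(\mathcal{D}(v)) &= -(n_-' - n_-) = -(n_+(t) - n_-(t)) = -2\lk_T(t), \\
q'(\mathcal{D}(v)) - q(\mathcal{D}(v)) &= (n_+' - n_+) - 2(n_-' - n_-) = -3(n_+(t) - n_-(t)) = -6\lk_T(t),
\end{align*}
and the $\delta$-shift follows from $\delta = \tfrac{1}{2}q - h$ as $-3\lk_T(t) + 2\lk_T(t) = -\lk_T(t)$.

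Finally, I would note that these three shifts are simultaneous and global (they are the same at every vertex $v$), so they simply translate into the formal shift $h^{-2\lk_T(t)} q^{-6\lk_T(t)} \delta^{-\lk_T(t)}$ applied to the complex. Since the underlying chain module, differential, and edge signs are literally unchanged, this yields the claimed equality $\KhTl{T'} = h^{-2\lk_T(t)} q^{-6\lk_T(t)} \delta^{-\lk_T(t)} \KhTl{T}$. There is no real obstacle here; the only point that requires a moment's care is the case analysis of crossings by component type, in particular recognising that self-crossings of $t$ contribute nothing because both strand orientations flip at once.
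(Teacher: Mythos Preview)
Your proof is correct and takes exactly the same approach as the paper's own proof, which simply states that the result ``follows on inspecting the effect of reversing the orientation of a tangle component on $n_+$ and $n_-$.'' You have carried out that inspection explicitly and carefully, including the observation that self-crossings of $t$ are unaffected.
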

\begin{proof}
This follows on inspecting the effect of reversing the orientation of a tangle component on \( n_+ \) and \( n_- \).
\end{proof}

\begin{corollary}\label{cor:reversingOneComponent}
	Let \(T'\) be the oriented tangle obtained from an oriented tangle \(T\) by reversing the orientation of all components. Then \( \KhTl{T'}=\KhTl{T} \).
\end{corollary}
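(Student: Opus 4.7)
The plan is to apply Proposition~\ref{prop:reversingOneComponent} iteratively, reversing one component at a time, and then to verify that the total accumulated grading shift vanishes. Let the components of $T$ be $t_1,\dots,t_m$, and let $T_0 = T$, $T_i$ be the tangle obtained from $T_{i-1}$ by reversing the orientation of $t_i$, so that $T' = T_m$. By Proposition~\ref{prop:reversingOneComponent},
\[
\KhTl{T'} \;=\; h^{-2\Sigma}\, q^{-6\Sigma}\, \delta^{-\Sigma}\, \KhTl{T},
\qquad \text{where } \Sigma := \sum_{i=1}^{m} \lk_{T_{i-1}}(t_i).
\]
The claim thus reduces to showing $\Sigma = 0$.

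For $i \neq k$, write $\lambda_{ik} := \lk_T(t_i,t_k)$ for the pairwise linking number between components $t_i$ and $t_k$ in the original oriented tangle $T$; these satisfy $\lambda_{ik} = \lambda_{ki}$. The key observation is how partial reversals affect these pairwise quantities: a crossing between two components flips its sign precisely when exactly one of the two involved strands has had its orientation reversed. Consequently, at stage $i-1$ (when components $t_1,\dots,t_{i-1}$ have been reversed but $t_i,\dots,t_m$ have not),
\[
\lk_{T_{i-1}}(t_i)
\;=\; \sum_{k \neq i} \lk_{T_{i-1}}(t_i, t_k)
\;=\; -\!\!\sum_{k < i} \lambda_{ik} \;+\; \sum_{k > i} \lambda_{ik}.
\]
Summing over $i$ and re-indexing each double sum,
\[
\Sigma \;=\; -\!\!\sum_{1 \le k < i \le m}\!\! \lambda_{ik} \;+\!\! \sum_{1 \le i < k \le m}\!\! \lambda_{ik}
\;=\; -\!\!\sum_{1\le k<i\le m}\!\!\lambda_{ik} \;+\!\! \sum_{1\le k<i\le m}\!\!\lambda_{ki} \;=\; 0,
\]
using $\lambda_{ik} = \lambda_{ki}$ in the final step. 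Hence all three grading shifts are trivial and $\KhTl{T'} = \KhTl{T}$.

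The only content is the bookkeeping in the middle step, and there is no real obstacle; one could alternatively bypass Proposition~\ref{prop:reversingOneComponent} entirely by noting that a simultaneous reversal of all components preserves the sign of every crossing (both strands flip at each crossing), so $n_+$ and $n_-$ are unchanged, the cube of resolutions and cobordisms is identical, and all grading shifts in the definition of $\KhT{\cdot}$ coincide --- but the telescoping argument above is the natural corollary-style proof.
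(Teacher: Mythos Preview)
Your proof is correct and follows essentially the same approach as the paper's: the paper also applies Proposition~\ref{prop:reversingOneComponent} component by component and observes that each crossing between distinct components contributes twice with opposite signs, and it likewise mentions the direct alternative that reversing both strands at a crossing preserves its sign. Your version simply spells out the telescoping bookkeeping in more detail.
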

\begin{proof}
	This can be seen by applying the previous proposition step by step to each component. Then each crossing involving different components contributes exactly twice, but with opposite signs. Alternatively, we can simply use the fact that the sign of a crossing does not change if we change the orientation of both strands. 
\end{proof}

\subsection{Bases of morphism spaces}\label{subsec:tangle_invariants:dotted_cobordisms}
Our next goal is to obtain a better understanding of the morphism spaces in \( \Cob_{/l} \). We do this by fixing certain bases for the morphism spaces, and in order to do this, it turns out to be useful to fix a special component for every cobordism in \( \Cob_{/l} \). To make these choices consistent, we distinguish one of the points in the set \( B \) of tangle ends (eg \( \Lo \)), and say that the special component of any cobordism between two crossingless tangles is the one containing this distinguished point of \( B \). (This is where we are using the fact that \( B \) is non-empty.) Note that this is compatible with composition of cobordisms. In local pictures of cobordisms, we will decorate the special component by an asterisk \( \ast \). Then we can introduce the following notation.

\begin{definition}\label{def:NotationDotsAndH}
We introduce a formal variable \( H, \ \gr(H)=q^{-2}h^0, \)  which corresponds to multiplying a cobordism by \( -1 \) and adding a single 1-handle to the special component:
\[ H\cdot \Star:=-\StarH\]
We also allow to decorate components of a cobordism by dots  \( \bullet \), which can freely move around on their components. Such a dot denotes the sum of the cobordism obtained by joining the component with the dot to the special component and minus the cobordism with an additional 1-handle on the special component:
\[ \StarDot:=\StarTube-\StarSheetH=\StarTube+H\cdot\StarSheet\]
Note that this implies that a cobordism with a dot in the special component is equal to 0:
\[ \StarDotSameComp=0\]
We adjust the formula for computing the quantum grading of a cobordism \( C \) with dots \( \bullet \) as follows:
\begin{equation}\label{eqn:quantum_grading:with_dots}
q(H^i \cdot C):=\chi(C)-2i-2\#\{\text{dots \( \bullet \)}\}-\tfrac{1}{2}|B|
\end{equation}
\end{definition}

\begin{proposition}\label{prop:CobRH}
With the above notation, we have the following relations.
\begin{description}
    \item[\(S_\bullet\)-relation] Whenever a cobordism contains a component which is a sphere with a single dot \(\bullet\), it is equal to the same cobordism but with this component removed: \[ \Spheredot=1\]
    \item[\(H\)-trading relation] Whenever two dots occupy the same component of a cobordism, we can remove one of them at the expense of formally multiplying the morphism by \(H\): \[ \planedotdot=H\cdot\planedot\]
    \item[Neck-cutting relation] Given a cobordism \(C\) containing a compressing disc \(D\), consider the cobordism \(C'\) obtained from \(C\) by doing surgery along \(D\), which contains two embedded discs \(D_1\) and \(D_2\) as mementos from the surgery. Then the morphism represented by \(C\) is equal to the formal sum of three morphisms, of which the first two are obtained from \(C'\) by placing a dot in \(D_1\) and \(D_2\), respectively, and the third is \(C'\) formally multiplied by \((-H)\). In pictures: \[ \tube=\DiscLdot\DiscR+\DiscL\DiscRdot-H\cdot\DiscL\DiscR\] 
\end{description}
\end{proposition}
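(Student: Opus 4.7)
The strategy is to derive each of the three relations from the definitions of the dot and of the formal variable $H$ (Definition~\ref{def:NotationDotsAndH}) combined with the $S$-, $T$-, and $4Tu$-relations (Definition~\ref{def:Cobl}). The $S_\bullet$- and $H$-trading relations follow essentially by unfolding definitions, whereas the neck-cutting relation is the substantive one and requires a careful application of $4Tu$.

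For the $S_\bullet$-relation, I would expand the dot on the isolated sphere according to its definition $\StarDot=\StarTube+H\cdot \StarSheet$. The first term is a $1$-handle attachment between the ambient cobordism and an $S^2$, which is topologically trivial (connect summing with a sphere is the identity), so this equals the cobordism with the sphere component removed. The correction term retains the sphere component and therefore vanishes by the original $S$-relation.

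For the $H$-trading relation, start with a cobordism carrying two dots on the same (necessarily non-special) component $X$ and expand one of the two dots using the definition. The two resulting summands are: a cobordism in which $X$ has been tubed to the special component, now carrying the remaining dot on a component that contains the special point, and $H$ times the cobordism with one dot remaining on $X$. The first summand vanishes by the identity $\StarDotSameComp=0$, leaving the second, which is the desired right-hand side.

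For the neck-cutting relation, let $C$ denote the cobordism with the neck, let $C'$ be the cobordism obtained by surgery along the compressing disc $D$, and let $\alpha$, $\beta$ be the two new disc regions in $C'$ (corresponding to $\DiscL$ and $\DiscR$). Apply the $4Tu$-relation to $C'$ with $D_1$ on $\alpha$, $D_2$ on $\beta$, and $D_3, D_4$ placed on the special component of $C'$. The four resulting terms become: $C'_{12}=C$ (the tube between $\alpha$ and $\beta$ rebuilds the neck); $C'_{34}=-H\cdot C'$ (a tube between two discs on the special component is a handle there); and, by rearranging the definition of the dot, $C'_{13}=\alpha_{\bullet}\cdot C'-H\cdot C'$ and $C'_{24}=\beta_{\bullet}\cdot C'-H\cdot C'$. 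Substituting these into $C'_{12}+C'_{34}=C'_{13}+C'_{24}$ and solving for $C$ gives exactly the neck-cutting formula. A pleasant feature is that this calculation is uniform in the combinatorics of $C'$: it applies whether $\alpha$ and $\beta$ lie on distinct components of $C'$, on a common non-special component, or on the special component itself, because the identities $\alpha_\bullet\cdot C'=0$ when $\alpha$ is on the special component, and the interpretation of handle-attachment versus component-merging, all align correctly with the conventions of Definition~\ref{def:NotationDotsAndH}.

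The main obstacle is not any individual computation but the combinatorial bookkeeping that tracks how each $4Tu$ tube attachment reorganizes the component structure of $C'$, and how this reorganization interacts with the definitions of the dot and of $H$. Choosing $D_3$ and $D_4$ on the special component is what makes the dot-definition enter cleanly on the right-hand side of $4Tu$ and lets a single calculation cover every topological configuration at once.
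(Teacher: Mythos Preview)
Your proposal is correct and follows essentially the same approach as the paper. In particular, for the neck-cutting relation you use the same $4Tu$ configuration as the paper (discs $D_1,D_2$ on the two surgery regions and $D_3,D_4$ on the special component); the only difference is cosmetic---you first rewrite the tubes $C'_{13}$ and $C'_{24}$ as dots minus $H\cdot C'$ and then substitute into $4Tu$, whereas the paper first writes out $4Tu$ in tube form, adds zero as $H\cdot C'+H\cdot C'-2H\cdot C'$, and then regroups to obtain dots.
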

\begin{proof}
The \( S_\bullet \)-relation is obvious. The \( H \)-trading relation can be seen by applying the definition to one of the dots and observing that one of the two resulting terms contains a dot on the special component, so it is zero. Finally, the neck-cutting relation follows essentially from the \( 4Tu \)-relation:
\begin{align*}
    \StarNeck 
    &
    ~~\begin{alignedat}[t]{3}%
  \,=\phantom{+\,H\cdot }
  &\StarNeckI~+
  &
  &\StarNeckII~-
  &
  &\StarNeckIII\\
  +\,H\cdot 
  &\StarNeckIIIp~+
  &\,H\cdot
  &\StarNeckIIIp~-
  &\,2H\cdot
  &\StarNeckIIIp\\
  \,=\phantom{+\,H\cdot }
  &\StarNeckIp~+
  &
  &\StarNeckIIp~-
  &\,H\cdot
  &\StarNeckIIIp\\
  \end{alignedat}
\end{align*}
\end{proof}

\begin{remark}\label{obs:HZero}
The category of dotted cobordisms has been studied previously. However, to the best of the authors' knowledge, the topological interpretation of dots from Definition~\ref{def:NotationDotsAndH} is new. For example, Naot interprets a single dot either as \( \tfrac{1}{2} \) the cobordism with a 1-handle attached near the dot (when working over \( \Q \)) or in terms of a tube connecting the component with the dot to a special marked component (when working over \( \Z \))~\cite[Sections~3 and~5]{Naot}. Bar-Natan defines \( \Cob_\bullet \) as the category of dotted cobordisms without any topological interpretation of dots~\cite[Section~11.2]{BarNatanKhT}. He then describes how to obtain a tangle invariant over the category \( \Cob_{\bullet/l} \) obtained from \( \Cob_\bullet \) by imposing the \( S \)-, \( S_\bullet \)-, \( H \)-trading and neck-cutting relations above, albeit with \( H=0 \). In fact, if we take the quotient of \( \Cob_{/l} \) by \( H=0 \), we obtain a quotient of Bar-Natan's category \( \Cob_{\bullet/l} \)
\[
\Cob_{/l} / \left( H=0 \right) \ \ = \ \ \Cob_{\bullet/l} / \left(\planedotstar =0\right)
\] 
since the proof of Proposition~\ref{prop:CobRH} shows that the \( 4Tu \)- and the neck-cutting relations are equivalent. The latter quotient category was used in \cite{HHHK}: they showed that the algebra $\BNAlgH|_{H=0}=\End_{\bullet/l}(\Li,\Lo) / \left(\planedotstar =0\right)$  (denoted by $\mathcal A_{\mathcal D}$ in their paper) embeds in the Fukaya category of the pillowcase; this is used to construct the 4-ended tangle invariant $L_T$, which is a twisted complex built out of two Lagrangians in the pillowcase.
\end{remark}

\begin{definition}
We say that a cobordism is \textbf{simple} if it does not contain any closed component and every open component is contractible and contains at most one dot.
\end{definition}

\begin{observation}
Up to adding or removing dots on the non-special components of cobordisms, there is exactly one simple cobordism between two crossingless tangles \( T,T'\in\ob(\Cob) \). So if \( k \) is the number of components in this cobordism, there are \( 2^{k-1} \) simple cobordisms between \( T \) and \( T' \). Moreover, using the neck-cutting relation, we can rewrite any cobordism as an \( R[H] \)-linear combination of cobordisms in which all components are contractible, but might have one or multiple dots on them. The \( H \)-trading relation allows us to reduce the number of dots until we arrive at an \( R[H] \)-linear combination of simple cobordisms. In other words,  simple cobordisms generate the morphism spaces over \( R[H] \). 
\end{observation}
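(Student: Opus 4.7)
The plan is to verify the two distinct claims of the observation in turn: first the count of simple cobordisms between two crossingless tangles, then the spanning statement.

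For the count, I would start from the topological picture: the boundary of any cobordism between two crossingless tangles $T,T'$ is the $1$-manifold $(T\times\{0\})\cup(B\times I)\cup(T'\times\{1\})\subset\partial(D^2\times I)$, which is a disjoint union of $k$ circles. A simple cobordism has every component contractible and orientable, so every component is a disc, and since a disc has a single boundary circle there is a canonical bijection between the components of the cobordism and the circles in the boundary. Thus the underlying unoriented surface of a simple cobordism between $T$ and $T'$ is uniquely determined: a disjoint union of $k$ discs, one capping off each boundary circle. Exactly one of these discs contains the distinguished point $\ast$, and a dot on it annihilates the cobordism by $\StarDotSameComp = 0$, while on each of the remaining $k-1$ discs one independently chooses whether to place a dot, giving $2^{k-1}$ simple cobordisms.

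For the spanning statement, I would proceed by induction on a complexity measure on $C$ such as $c(C)=-\chi(C)+\#\{\text{closed components of }C\}$, with the base case being cobordisms that are already simple. If some open component of $C$ has positive first Betti number---equivalently, it is not a disc---then by orientability it admits a compressing disc: non-separating when the genus is positive, and separating when the component has at least two boundary circles. Applying the neck-cutting relation from Proposition~\ref{prop:CobRH} rewrites $C$ as an $R[H]$-linear combination of three cobordisms, each of strictly smaller complexity and possibly carrying additional dots. Iterating, any cobordism becomes an $R[H]$-linear combination of cobordisms whose open components are discs and whose closed components are spheres. The closed spherical components are then eliminated: $S$ kills undotted spheres, $S_\bullet$ replaces $1$-dotted spheres by their removal, and $H$-trading followed by $S_\bullet$ converts multiply-dotted spheres into scalar powers of $H$. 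Finally, the $H$-trading relation reduces the dot count on every open disc to at most one, and any resulting term with a dot on the special disc vanishes, leaving an $R[H]$-combination of simple cobordisms.

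The main obstacle is making the induction rigorous: one must specify the complexity measure so that every neck-cutting step strictly decreases it in all three resulting terms, which requires picking compressing discs carefully (always reducing genus when possible, otherwise reducing the number of boundary circles of a component) and tracking the effect of separating surgeries. A secondary bookkeeping task is verifying quantum-grading consistency via~\eqref{eqn:quantum_grading:with_dots}: the factors of $H$ and the dots introduced at each neck-cutting or $H$-trading step precisely compensate the Euler-characteristic changes, so that every $R[H]$-linear combination sits in the correct graded morphism space.
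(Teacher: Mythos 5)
Your proposal is correct and follows the same outline as the paper's (entirely informal) text: the observation itself is the argument, and you are simply fleshing it out. Part one is right and clean: any simple cobordism is a union of discs, each bounding one of the circles of $\partial C = (T\times\{0\})\cup(B\times I)\cup(T'\times\{1\})$, and the bijection between discs and boundary circles shows $k$ is determined by $T,T'$ and the underlying surface is unique; then the $2^{k-1}$ comes from dotting each non-special disc, since a dot on the special disc gives zero. Part two is also essentially correct. One small imprecision: you only speak of compressing discs for \emph{open} components that are not discs, but the same move is needed for closed components of positive genus (the paper's $T$-relation handles closed tori, but higher-genus closed components still require neck-cutting). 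This is easily fixed by applying your genus-reducing move to every component, open or closed, until all are discs or spheres; your complexity measure $c(C) = -\chi(C) + \#\{\text{closed components}\}$ still strictly decreases under each such surgery, since $\chi$ goes up by $2$ while the number of closed components increases by at most $1$, and it is bounded below once closed spheres are removed as they appear (since every remaining component has boundary and there are at most $|\partial C|$ of them). So the induction terminates. The grading check you defer at the end is indeed routine via~\eqref{eqn:quantum_grading:with_dots}. Overall, this is a rigorous version of exactly what the paper is gesturing at.
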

In fact, simple cobordisms form bases for the morphism spaces in \( \Cob_{/l} \):
\begin{proposition}[{\cite[Proposition~4.4]{Naot}}]\label{prop:lin independence of cobordisms}
For any two objects \(T\) and \(T'\) of \(\Cob_{/l}\), the morphism space \(\Mor_{\Cob_{/l}}(T,T')\) is freely generated over \(\Rcomm[H]\) by simple cobordisms.
\end{proposition}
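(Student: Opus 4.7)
The generation part of the claim was already established in the observation preceding the proposition: the neck-cutting relation reduces every cobordism to one whose components are contractible, the $H$-trading relation reduces multiple dots on a component to at most one, and dots on the special component vanish by Definition~\ref{def:NotationDotsAndH}. What remains is to prove linear independence.

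The plan is to build a faithful, grading-preserving functor $F\co \Cob_{/l} \to \Mod_{R[H]}$ that visibly separates simple cobordisms on the nose. Following the classical Bar-Natan TQFT perspective, one assigns to each closed circle the Frobenius algebra $V = R[H][x]/(x^2 - Hx)$ with unit $1$, counit $\epsilon(1)=0$, $\epsilon(x)=1$, multiplication given by the Frobenius product, and comultiplication $\Delta(1) = 1\otimes x + x \otimes 1 - H\cdot 1\otimes 1$, $\Delta(x)=x\otimes x$ (matching the formulas in~\eqref{eq:merge-split_maps_for_BN}). Saddle cobordisms act by $m$ and $\Delta$, and a dot on a component is interpreted as multiplication by $x$ on the corresponding tensor factor, which matches Definition~\ref{def:NotationDotsAndH} once one observes that $x = $ (tube to nowhere) $+ \, H\cdot$ (sheet) holds as an identity in $V$. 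A direct computation shows that the $S$-relation becomes $\epsilon(1)=0$, the $T$-relation becomes $\epsilon \circ m \circ \Delta \circ \eta = 2$, and the $4Tu$-relation becomes Frobenius reciprocity; hence $F$ descends to $\Cob_{/l}$.

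Because $B$ is non-empty, every object of $\Cob_{/l}$ involves open arcs. To turn $F$ into something computable on morphism spaces, I would fix an auxiliary crossingless matching $U$ of $B$ such that $T\cup U$ and $T'\cup U$ are disjoint unions of circles, with the special point $\ast$ lying on a distinguished circle. Applying $F$ to the closure of any cobordism $C\co T\to T'$ by the product cobordism on $U$ yields an $R[H]$-linear map $F(T\cup U)\to F(T'\cup U)$ of free $R[H]$-modules. For a simple cobordism $C$, each non-special component is a disk $D_i$ whose boundary traces arcs on both $T\cup U$ and $T'\cup U$; evaluating $F$ on the unit input $1^{\otimes}$ returns an explicit tensor whose $i$-th factor is $x$ if $D_i$ carries a dot and $1$ otherwise (up to contributions from the special component, which by convention we quotient out). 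Thus the $2^{k-1}$ simple cobordisms with $k$ components map to $2^{k-1}$ distinct monomials in the $R[H]$-basis of $V^{\otimes(k-1)}$ obtained from the basis $\{1,x\}$, and are linearly independent.

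The main obstacle is ensuring that the closure $U$ is chosen so that $F$ is faithful on the span of simple cobordisms: we need each non-special component of the simple cobordism to contribute an independent tensor factor after closing, which is precisely why simple (hence disk) components were chosen. A cleaner, closure-free alternative is to invoke Naot's Proposition~4.4 directly, whose argument uses exactly this Frobenius-algebraic setup over $R[H]$; the point is that, once the three local relations and the derived neck-cutting/$H$-trading relations from Proposition~\ref{prop:CobRH} are matched to Frobenius identities, no further relations among simple cobordisms can hold.
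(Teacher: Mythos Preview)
The paper does not prove this proposition; it simply cites \cite[Proposition~4.4]{Naot}. So there is no ``paper's own proof'' to compare against beyond the citation itself.

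Your sketch is on the right track: using the deformed Frobenius algebra $R[H][x]/(x^2-Hx)$ to build a TQFT that detects dots is precisely the method behind Naot's argument. However, your closure argument is incomplete, and you acknowledge this yourself. The issue is that after closing with a matching $U$, the components of a simple cobordism from $T$ to $T'$ do not in general correspond bijectively to tensor factors of $F(T\cup U)$ or $F(T'\cup U)$; several arc-components of $T$ (and of $T'$) may merge into a single circle of the closure, and conversely a single disk component of the cobordism may have its boundary spread across several circles. So the claim that ``the $2^{k-1}$ simple cobordisms map to $2^{k-1}$ distinct monomials'' requires more care: one must either choose the matching $U$ judiciously and track how cups, caps, and saddles compose, or argue more intrinsically. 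Your final paragraph then falls back on invoking Naot's Proposition~4.4 directly, which is circular since that is exactly the statement in question.

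In short: your outline captures the correct mechanism, and goes further than the paper (which only cites), but the linear-independence step is not actually completed.
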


\begin{definition}
We call a complex in \( \Kob_{/l} \) \textbf{simple} if there is no generator with a closed component. 
\end{definition}


\begin{proposition}\label{prop:simplifyingComplexes}
Every complex in \( \Kob_{/l} \) is chain isomorphic to a simple one.
\end{proposition}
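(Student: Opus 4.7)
The plan is to prove this by the standard delooping procedure, adapted to our cobordism category that carries the formal parameter $H$. The essential observation is that a crossingless tangle containing a closed circle is isomorphic, as an object in $\Mat(\Cob_{/l})$, to a formal direct sum of two grading shifts of the same tangle with the circle removed. Once this is established, a straightforward inductive argument on the total number of closed components in the generators of a complex yields the proposition.

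The first step is to prove the \emph{delooping lemma}: if $T = T' \sqcup \Circle$ is a crossingless tangle containing a closed circle, then $T \cong q T' \oplus q^{-1} T'$ in $\Mat(\Cob_{/l})$. For this, I would write down the four basic cobordisms between $T$ and $T'$ obtained by capping and cupping the extra circle: a cap $\Delta^0 \co T \to T'$, the same cap decorated with a dot $\Delta^1 \co T \to T'$, and the corresponding cups $\nabla^0, \nabla^1 \co T' \to T$. Using the grading formula~\eqref{eqn:quantum_grading:with_dots}, the dotted cap has quantum grading $-1$ and the undotted cap has grading $+1$ (and similarly for the cups), so the maps
\[
F \coloneqq \begin{pmatrix} \Delta^1 \\ \Delta^0 \end{pmatrix} \co T \longrightarrow q T' \oplus q^{-1} T',
\qquad
G \coloneqq \begin{pmatrix} \nabla^0 & \nabla^1 - H \cdot \nabla^0\end{pmatrix} \co q T' \oplus q^{-1} T' \longrightarrow T
\]
are homogeneous of bidegree zero.

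The second step is to verify that $F$ and $G$ are mutually inverse. Each composition $\Delta^i \circ \nabla^j$ is the identity on $T'$ disjoint union with a sphere carrying $i+j$ dots. Using the $S$-relation, the $S_\bullet$-relation, and the $H$-trading relation, these spheres evaluate to $0$, $1$, $1$, and $H$ respectively; a short $2\times2$ matrix calculation then gives $F \circ G = \mathrm{id}$. For the reverse composition $G \circ F = \nabla^0 \Delta^1 + \nabla^1 \Delta^0 - H \cdot \nabla^0 \Delta^0$, I would observe that the identity cobordism on $T$ contains a tube $\Circle \times I$ as a direct summand, and apply the neck-cutting relation of Proposition~\ref{prop:CobRH} to that tube. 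The resulting three terms are precisely $\nabla^0 \Delta^1$, $\nabla^1 \Delta^0$, and $-H \cdot \nabla^0 \Delta^0$, yielding $G \circ F = \mathrm{id}_T$.

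The third step is the induction. Given a complex $(X,d)$ in $\Kob_{/l}$, suppose some generator of $X$ has a closed component. Write $X = X_i \oplus X_{\mathrm{rest}}$ where $X_i$ contains the offending closed circle, apply the isomorphism $F$ from the delooping lemma to the summand $X_i$ and the identity to $X_{\mathrm{rest}}$, and conjugate the differential. The result is an isomorphic complex in which $X_i$ has been replaced by $q X_i' \oplus q^{-1} X_i'$, each term having strictly fewer closed components than $X_i$. Iterating on the (finite) total number of closed circles across all finitely many generators, the process terminates in a simple complex. I do not anticipate a serious obstacle here: the main computation is the two composition checks in the delooping lemma, and both are routine once the neck-cutting and sphere-evaluation relations from Proposition~\ref{prop:CobRH} are in place.
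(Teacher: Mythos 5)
Your proof is correct and follows the same approach as the paper: establish the delooping isomorphism (which the paper records as Observation~\ref{obs:delooping}, with the same $F$ and $G$ up to notation, citing~\cite{BarNatanBurgosSoto}), then induct on the number of closed components. The explicit verification of $F\circ G = \mathrm{id}$ and $G\circ F = \mathrm{id}$ via the $S$-, $S_\bullet$-, $H$-trading, and neck-cutting relations is the correct computation and fills in details the paper leaves implicit.
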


\begin{observation}[Delooping]\label{obs:delooping}

If an object of \( \Cob_{/l} \), ie a crossingless tangle, contains a closed component, it is isomorphic in \( \Mat(\Cob_{/l}) \) to the direct sum of the same object but without this closed component. There are actually two pairs of chain isomorphisms that one can choose, but they are homotopic in \( \Cob_{/l} \):
\[
    \begin{tikzcd}[row sep=0.3cm, column sep=0.5cm]
    &
    \delta^{-\frac{1}{2}}q^{-1}\emptyset
    \arrow{dr}{\DiscRdot-H\cdot\DiscR}
    \\
    \delta^{0}q^0\Circle
    \arrow{ur}{\DiscL}
    \arrow[swap]{dr}{\DiscLdot}
    &&
    \delta^{0}q^0\Circle
    \\
    &
    \delta^{+\frac{1}{2}}q^{+1}\emptyset
    \arrow[swap]{ur}{\DiscR}
    \end{tikzcd}
    \qquad\text{ and }\qquad
    \begin{tikzcd}[row sep=0.3cm, column sep=0.6cm]
    &
    \delta^{-\frac{1}{2}}q^{-1}\emptyset
    \arrow{dr}{\DiscRdot}
    \\
    \delta^{0}q^0\Circle
    \arrow{ur}{\DiscL}
    \arrow[swap]{dr}{\DiscLdot-H\cdot\DiscL}
    &&
    \delta^{0}q^0\Circle
    \\
    &
    \delta^{+\frac{1}{2}}q^{+1}\emptyset
    \arrow[swap]{ur}{\DiscR}
    \end{tikzcd}
\]
To interpret these pictures correctly, one should extend all cobordisms via the identity to the rest of the crossingless tangle. Note that it is crucial that the rest of the tangle is non-empty and, in particular, that there is a special component. Otherwise, the symbols would be meaningless. Bar-Natan calls this procedure \textbf{delooping}~\cite[Lemma~3.1]{BarNatanBurgosSoto}. Delooping  is used to replace complexes over \( \Cob_{/l} \) by chain isomorphic ones whose underlying objects do not contain any closed components. 
\end{observation}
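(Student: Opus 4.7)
The plan is to verify that each of the two displayed pairs $(\phi,\psi)$ and $(\phi',\psi')$ consists of mutually inverse, bigrading-preserving isomorphisms in $\Mat(\Cob_{/l})$, and then to exhibit an explicit bigrading-preserving automorphism of $\delta^{-1/2}q^{-1}\emptyset \oplus \delta^{+1/2}q^{+1}\emptyset$ relating them. First, the grading check: using the formula $q(C) = \chi(C) - 2\#\{\text{dots}\} - \tfrac{1}{2}|B|$ from~\eqref{eqn:quantum_grading:with_dots} together with $\delta = q/2$ (since the homological grading on $\Cob_{/l}$ is identically zero), one finds $q(\DiscL) = q(\DiscR) = 1$ and $q(\DiscLdot) = q(\DiscRdot) = -1$. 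Combined with the shifts $\delta^{\mp 1/2}q^{\mp 1}$ on the two summands, the arrow-grading formula of Remark~\ref{rmk:gradings_of_morphisms} makes each of the four basic arrows in both diagrams of bigrading zero; since $\gr(H) = h^0q^{-2}$ exactly matches the grading difference between $\DiscR$ and $\DiscRdot$ (respectively $\DiscL$ and $\DiscLdot$), the correction terms $H\cdot\DiscR$ and $H\cdot\DiscL$ are also bigrading-preserving.

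The core computations are direct applications of Proposition~\ref{prop:CobRH}. For $\psi\circ\phi:\Circle\to\Circle$ the intermediate object is $\emptyset$, so no circles are glued and the composite expands to
$$\psi\circ\phi \;=\; \DiscL\DiscRdot + \DiscLdot\DiscR - H\cdot\DiscL\DiscR,$$
which is precisely the right-hand side of the neck-cutting relation and therefore equals the cylinder $\tube = \id_{\Circle}$. For $\phi\circ\psi:\emptyset\oplus\emptyset\to\emptyset\oplus\emptyset$ the intermediate object contains the circle, so in each matrix entry the two discs glue along this circle into a closed sphere. Applying the $S$-relation ($\Sphere = 0$), the $S_\bullet$-relation ($\Spheredot = 1$), and the $H$-trading relation ($\planedotdot = H\cdot\planedot$), both diagonal entries of $\phi\circ\psi$ reduce to $1$, while both off-diagonal entries vanish; for instance, the $(2,1)$ entry is $\DiscLdot\circ(\DiscRdot - H\DiscR) = (\text{sphere with two dots}) - H\cdot(\text{sphere with one dot}) = H - H = 0$. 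The same arguments apply verbatim to $(\phi',\psi')$ after transferring the $H$-correction from the first row of $\psi$ into the second row of $\phi'$.

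Finally, for the claim that the two pairs are ``homotopic in $\Cob_{/l}$'', the plan is to exhibit the bigrading-preserving automorphism
$$M = \begin{pmatrix} 1 & 0 \\ -H & 1 \end{pmatrix}: \delta^{-1/2}q^{-1}\emptyset \oplus \delta^{+1/2}q^{+1}\emptyset \longrightarrow \delta^{-1/2}q^{-1}\emptyset \oplus \delta^{+1/2}q^{+1}\emptyset,$$
with inverse obtained by flipping the sign of $H$; its off-diagonal entry preserves bigrading precisely because $\gr(H) = h^0q^{-2}$ equals the difference of the two summands' shifts. A direct matrix computation yields $\phi' = M\circ\phi$ and $\psi' = \psi\circ M^{-1}$, so the two deloopings differ only by conjugation with $M$. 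Consequently, whenever either pair is used to deloop a closed component in a complex $(\object,d)\in \Kob_{/l}$, the two resulting complexes are related by applying $M$ locally to the new summand, which is a chain isomorphism and in particular a chain homotopy equivalence -- this is the sense of ``homotopy'' invoked in the statement. The only real bookkeeping is keeping track of which intermediate object appears in each composition, so that the neck-cutting relation is applied through $\emptyset$ while the sphere relations are applied through $\Circle$.
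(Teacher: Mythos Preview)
Your proof is correct. The paper treats this as an observation and does not supply a separate proof beyond displaying the two diagrams; your argument is exactly the direct verification one would carry out: apply the neck-cutting relation for $\psi\circ\phi$, the $S$-, $S_\bullet$-, and $H$-trading relations for the entries of $\phi\circ\psi$, and relate the two choices by the triangular automorphism $M$.

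One small remark on the phrase ``homotopic in $\Cob_{/l}$'': since the differential on $\Cob_{/l}$ is identically zero, homotopy of morphisms there is literally equality, so the paper's wording is slightly informal. Your reading---that the two pairs differ by conjugation with the bigrading-preserving automorphism $M$ of the target, hence induce chain-isomorphic delooped complexes in $\Kob_{/l}$---is the correct precise statement and matches how the observation is actually used (e.g.\ in the proof of Proposition~\ref{prop:simplifyingComplexes}).
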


\begin{proof}[Proof of Proposition~\ref{prop:simplifyingComplexes}]
This follows from delooping by induction on the number of closed components.
\end{proof}

\subsection{The invariant for 4-ended tangles}\label{subsec:tangle_invariants:Four-ended_invariant}

In the case of 4-ended tangles \( T \), Proposition~\ref{prop:simplifyingComplexes} implies that the tangle invariant \( \KhTl{T} \) can be regarded as a complex over the full subcategory \(\End_{/l}(\Ni\oplus\No)\) of \(\Cob_{/l}\) generated by the two objects \(\Ni\) and \(\No\). 

\begin{definition}\label{def:BNAlgH}
Let \( \BNAlgH \) be the path algebra over \( \Rcomm \) of the quiver
\[
    \begin{tikzcd}[row sep=2cm, column sep=1.5cm]
    \DotB
    \arrow[in=145, out=-145,looseness=4]{rl}[description]{\DotcobB}
    \arrow[bend left]{r}[description]{\SaddleCB}
    &
    \DotC
    \arrow[bend left]{l}[description]{\SaddleBC}
    \arrow[in=35, out=-35,looseness=4]{rl}[description]{\DotcobC}
    \end{tikzcd}
\]
modulo the relations 
\[
\DotcobC \SaddleCB=0= \SaddleCB \DotcobB
\quad\text{and}\quad
\DotcobB \SaddleBC=0= \SaddleBC \DotcobC,
\]
and with gradings 
\[\gr(\DotcobB)=\gr(\DotcobC)=q^{-2}h^0
\quad\text{and}\quad
\gr(\SaddleCB)=\gr(\SaddleBC)=q^{-1}h^0.
\]
\end{definition}

\begin{remark}
We often abuse notation and write \( D \) for either \( \DotcobB \) or \( \DotcobC \) 
and \( S \) for \( \SaddleBC \) or \( \SaddleCB \). 
With this is mind, the relations above can be summarized as
\[
DS=0=SD.
\]
Moreover, note that we multiply paths from right to left, like morphisms in a category. For example, \( \DotcobC \SaddleCB=0 \) because of the relations and \( \SaddleCB \DotcobC=0 \) because the algebra elements are incomposable.
\end{remark}

\begin{theorem}\label{thm:OmegaFullyFaithful}
Let \( B=\{1,2,3,4\} \). For each choice \( i=1,2,3,4 \) of distinguished tangle end in \( B \), there exists an isomorphism 
\[ \omega_i\co\BNAlgH\rightarrow \End_{/l}(\Ni\oplus\No)\]
which on objects is defined by 
\(
\DotB\mapsto\No, 
\DotC\mapsto\Ni,
\)
and sends the two morphisms \( S \) to the saddles cobordisms, and the morphisms \( D \) to the dot cobordisms, that is, the identity cobordisms with a single dot in the non-special component. \end{theorem}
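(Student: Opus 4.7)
The plan is to define $\omega_i$ on the generators of the path algebra $\BNAlgH$, verify the defining relations descend, and then match explicit $\Rcomm$-bases on the two sides. On generators, I set the two idempotents to the identity cobordisms on $\No$ and $\Ni$; each loop $D$ to the identity cobordism carrying one dot on the non-special component, where ``special'' means the component containing the distinguished end $i$; and each edge $S$ to the saddle cobordism. Both sides are bigraded, and it is immediate from~\eqref{eqn:quantum_grading:with_dots} that these assignments preserve bigrading. The only thing to check for well-definedness is that $\omega_i(DS)=\omega_i(SD)=0$. In either composition, the saddle piece merges the two strands of $\Ni$ (or $\No$) with the strips of the identity piece into a single cobordism component that passes through every tangle end, in particular through $i$; this merged component is therefore the special one, the dot sits on it, and the identity $\StarDotSameComp=0$ forces the whole cobordism to zero. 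Hence $\omega_i$ descends to a well-defined bigraded $\Rcomm$-algebra homomorphism.

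For surjectivity I invoke Proposition~\ref{prop:lin independence of cobordisms}: the target is freely generated over $\Rcomm[H]$ by simple cobordisms. A direct enumeration shows these bases are $\{\mathrm{id},d\}$ in each diagonal block (with $d$ the non-special dotted identity) and $\{\mathrm{saddle}\}$ in each off-diagonal block, since the saddle has a single component which is forced to be special and hence cannot carry a dot. The images of $\iota$, $D$ and $S$ hit these generators directly. To obtain the $H$-multiples I compute $\omega_i(S^2)$: the composition of two saddles is topologically an annulus on $\No$, as seen by a quick Euler-characteristic count together with a connectedness check; neck-cutting along its compressing disc yields a sum of two dotted identities minus $H$ times the identity. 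Exactly one of the two dotted terms is a dot on the special component, hence vanishes, giving $\omega_i(S^2)=d-H\cdot\mathrm{id}$. Consequently $\omega_i(D-S^2)=H\cdot\mathrm{id}$, and iterating shows every $H^n\cdot\mathrm{id}$, $H^n d$ and $H^n\cdot\mathrm{saddle}$ lies in the image.

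For injectivity I describe explicit $\Rcomm$-bases of $\BNAlgH$. Because no reduced path may contain $DS$ or $SD$, the basis at each vertex consists of $\iota$, all $D^n$ with $n\geq 1$, and all $S^{2n}$ with $n\geq 1$; between distinct vertices the basis is $\{S^{2n+1}\}_{n\geq 0}$. Setting $H=D-S^2\in\BNAlgH$, the identities $DH=HD=D^2$ (immediate from $DS=SD=0$) yield $S^{2n}=(-1)^n(H^n-D^n)$, exhibiting an $\Rcomm$-invertible change of basis from $\{S^{2n}\}$ to $\{H^n\}$ at each vertex; similarly $HS=-S^3$ gives the change of basis $\{S^{2n+1}\}\leftrightarrow\{H^n S\}$. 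Under $\omega_i$ the new bases $\{\iota, D^n, H^n\}$ and $\{H^n S\}$ are sent bijectively to $\{\mathrm{id}, H^{n-1}d, H^n\mathrm{id}\}$ and $\{H^n\mathrm{saddle}\}$, which are precisely the $\Rcomm$-bases of the target delivered by Proposition~\ref{prop:lin independence of cobordisms}. Hence $\omega_i$ is an isomorphism.

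The main technical step is the neck-cutting calculation $\omega_i(S^2)=d-H\cdot\mathrm{id}$, which allows the algebra element $D-S^2$ to play the role of the formal $H$ on the cobordism side; recognising that exactly one of the two dotted terms is a special-component dot is what makes the whole base-matching argument go through. The dependence on $i$ enters only through the choice of which component is labelled special.
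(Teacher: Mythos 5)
Your proof is correct and follows the same strategy as the paper: define $\omega_i$ on generators, verify $DS=SD\mapsto 0$ via the vanishing of a dot on the special component, and then establish bijectivity by matching free $\Rcomm[H]$-bases on both sides using Proposition~\ref{prop:lin independence of cobordisms} together with the identity $\omega_i(D-S^2)=H\cdot\id$. You supply more detail than the paper's terse three-sentence proof, in particular the explicit $\Rcomm$-linear change of basis $\{\iota,D^n,S^{2n}\}\leftrightarrow\{\iota,D^n,H^n\}$ (and $\{S^{2n+1}\}\leftrightarrow\{H^nS\}$) that the paper leaves implicit when it asserts that the morphism spaces of $\BNAlgH$ are free over $\Rcomm[H]$ on $\iota$, $D$, $S$.
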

For example, $\omega_1$ sends
\(
\SaddleCB \mapsto \Lol, \SaddleBC \mapsto \Lil,   \DotcobB \mapsto  \LoDotB, \DotcobC \mapsto \LiDotR.
\) 

\begin{proof}[Proof of Theorem \ref{thm:OmegaFullyFaithful}]
The composition of any dot cobordism with a saddle cobordism vanishes, so the maps \( \omega_i \) are well-defined. Moreover, by the definition of the dot cobordism, the preimage of \( H \) under \( \omega_i \) is equal to \( D-SS \). Now note that over \( \Rcomm[H] \), the morphism spaces in \( \BNAlgH \) are generated by the identity morphisms, \( D \), and \( S \), and by Proposition~\ref{prop:lin independence of cobordisms}, the same is true for the identity, dot and saddle cobordisms in \( \End_{/l}(\Ni\oplus\No) \).
\end{proof}


\begin{remark}\label{rmk:Kh_arc_algebra}
There is a different way to arrive at the algebra $\BNAlgH$. Let  \( \widetilde{\mathcal{H}}^2 \) be the reduced version of  Khovanov's arc algebra \( \mathcal{H}^2 \); see, for example, \cite[Section 2.1]{Artem-Notes} where it is denoted by $B_r$. It turns out that $\widetilde{\mathcal{H}}^2 = \BNAlgH|_{H=0}$. In order to obtain the full algebra  $\BNAlgH$ where $H\neq 0$, one can change the ground ring from $\Rcomm$ to $\Rcomm[H]$, and deform the multiplication in \( \widetilde{\mathcal{H}}^2 \) using deformed merge-split maps (\ref{eq:merge-split_maps_for_BN}). The resulting deformed algebra \( \widetilde{\mathcal{H}}^2_\text{def} \) is equal to \( \BNAlgH \), which follows from Proposition~\ref{prop:alg_pairing_kh}.
\end{remark}

A  map of algebras \( f\co\mathcal A_1 \rightarrow \mathcal A_2 \) that preserves grading may be used to turn any complex \( (X=\bigoplus_i X_i,d) \) over \( {\mathcal A_1} \) into a complex over \( {\mathcal A_2} \) by substituting the objects \( X_i \) and the differential \( d \) according to the map \( f \); see for example~\cite[Definition~1.15]{pqMod}. Armed with this observation, we are now, finally, ready to introduce the main tangle invariant of the paper.

\begin{definition}\label{def:MainTangleInvariantTypeD}
Recall that \(\Mod^{\BNAlgH}\) is the category of bigraded complexes over \(\BNAlgH\). The maps \( \omega_i \) induce bigraded isomorphisms of categories
\[ \Omega_i\co\Mod^{\BNAlgH}\rightarrow \Mod^{\End_{/l}(\Ni\oplus\No)}\]
Given a \emph{pointed} 4-ended tangle \( T \), we define a complex over \( \BNAlgH \) via the preimage:
\[
\DD(T)\coloneqq \Omega_i^{-1}(\KhTl{T})
\]
\end{definition}

Note that, according to the conventions above, the pointed tangle \( T \) has a distinguished end \( i \) marked by \( * \). This pins down the choice of index \( i\in\{1,2,3,4 \}\).

\begin{observation}\label{obs:mutation}
Note that the four functors \( \omega_i \) agree on powers of \( S \). Moreover, the \( 4Tu \)-relation implies that
\begin{align*}
    \StarDot&=\NoStarTube-\NoStarSheetH\\
    &=\NoStarSheetHRev-\NoStarTube=-\StarDotRev
\end{align*}
So the images of the morphisms \( D \) differ by at most a minus sign. More specifically, \( \omega_i(D)=\pm \omega_j(D) \) with ``\( - \)'' iff the ends \( i \) and \( j \) lie on different components of the cobordisms in \( \omega_i(D) \). Thus, the same is true for odd powers of \( D \) and the functors \( \omega_i \) agree on even powers of \( D \). So, given the complex \( \DD(T) \) for a pointed tangle \( T \), this observation allows us to calculate the complex \( \DD(T') \) for the pointed tangle \( T' \) obtained from \( T' \) by changing the distinguished tangle end. We will return to this in Section~\ref{sec:mutation}, where we discuss mutation.
\end{observation}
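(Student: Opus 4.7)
The plan is to verify the three assertions of the observation in sequence.

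\textbf{Agreement on powers of \(S\).} The functor \(\omega_i\) sends the generator \(S\in\BNAlgH\) to the relevant saddle cobordism between \(\Ni\) and \(\No\). But the saddle cobordism is a purely topological datum, independent of the choice of distinguished end; the asterisk enters into the definition of \(\omega_i\) only through the dot notation. Hence \(\omega_i(S)=\omega_j(S)\) for all pairs \(i,j\), and functoriality upgrades this to \(\omega_i(S^k)=\omega_j(S^k)\) for every \(k\ge 0\), as well as to any composable word in the various \(S\)'s.

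\textbf{The \(\pm\)-sign for \(D\).} The morphism \(\omega_i(D)\) is the identity cobordism on \(\Ni\) or \(\No\) decorated by a single dot on the component \emph{not} containing end \(i\). If ends \(i\) and \(j\) lie on the same component of that identity cobordism, then ``non-\(i\)'' and ``non-\(j\)'' name the same component and \(\omega_i(D)=\omega_j(D)\) on the nose. When \(i\) and \(j\) lie on different components the task reduces to proving \(\StarDot=-\StarDotRev\) in \(\Cob_{/l}\). I would derive this from one application of the \(4Tu\)-relation: on the identity cobordism, place two small discs \(D_1,D_2\) on one component and two discs \(D_3,D_4\) on the other. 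Then \(C_{12}\) and \(C_{34}\) are the cobordisms obtained by attaching a 1-handle to each individual component, namely \(\NoStarSheetH\) and \(\NoStarSheetHRev\), while \(C_{13}=C_{24}\) is the tube joining the two components, \(\NoStarTube\). The \(4Tu\)-relation then reads
\[
\NoStarSheetH+\NoStarSheetHRev \;=\; 2\cdot\NoStarTube.
\]
Combined with the dot-definition applied to both \(\StarDot\) and \(\StarDotRev\), this produces the chain \(\StarDot=\NoStarTube-\NoStarSheetH=\NoStarSheetHRev-\NoStarTube=-\StarDotRev\) displayed in the observation.

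\textbf{Extension to powers of \(D\).} Writing \(\omega_i(D)=\epsilon\,\omega_j(D)\) with \(\epsilon\in\{\pm1\}\) determined as above, the functoriality of \(\omega_i\) together with the bilinearity of composition in \(\End_{/l}(\Ni\oplus\No)\) give
\[
\omega_i(D^k)\;=\;\omega_i(D)^k\;=\;(\epsilon\,\omega_j(D))^k\;=\;\epsilon^k\,\omega_j(D)^k\;=\;\epsilon^k\,\omega_j(D^k).
\]
For even \(k\) we have \(\epsilon^k=1\), so the images agree exactly; for odd \(k\) we have \(\epsilon^k=\epsilon\), so the sign is the same as for \(D\) itself. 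Combined with agreement on words in \(S\), this determines the sign relating \(\omega_i(w)\) and \(\omega_j(w)\) for any word \(w\in\BNAlgH\) purely from the parity of the number of \(D\)-letters involved.

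\textbf{Main obstacle.} The entire technical content sits in the \(4Tu\)-identity \(\NoStarSheetH+\NoStarSheetHRev=2\,\NoStarTube\); everything else is bookkeeping through the definitions of the dot and of the formal variable \(H\). The subtle point in applying \(4Tu\) is that \(C_{13}\) and \(C_{24}\) are not literally the same chosen surface, but they represent the same morphism in \(\Cob_{/l}\) because cobordisms are taken up to boundary-preserving homeomorphism and in each case the tubing merges the two original components into a single connected surface of the same topological type.
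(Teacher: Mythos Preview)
Your argument is correct and follows the same route as the paper: the paper simply writes down the chain \(\StarDot=\NoStarTube-\NoStarSheetH=\NoStarSheetHRev-\NoStarTube=-\StarDotRev\) and attributes the middle equality to the \(4Tu\)-relation, which is exactly the instance you spell out by placing two discs on each component. Your treatment of the \(S\)-words and the parity bookkeeping for powers of \(D\) is likewise what the paper intends; there is nothing to add.
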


 Let \( \revpre\co\op{(\BNAlgH)}\leftrightarrow {\BNAlgH}\) be the isomorphism of algebras given by 
    \[
    \op{\DotcobB} \leftrightarrow \DotcobB \qquad \op{\SaddleCB} \leftrightarrow \SaddleBC \qquad \op{\SaddleBC} \leftrightarrow \SaddleCB \qquad \op{\DotcobC} \leftrightarrow \DotcobC \]   This arises  from the fact that the inverse of a cobordism is itself a cobordism. Denote by \[\rev\co\Mod^{\op{(\BNAlgH)}}\leftrightarrow \Mod^{\BNAlgH} \] the induced functor for complexes.


\begin{definition}\label{def:mirrorsTangleAndTypeD}
    Given a 4-ended tangle \( T \), let \( \mirror T \) be the {\bf mirror} of \( T \) obtained from \( T \) by reversing all over- and under-crossings: \( \CrossingR\longleftrightarrow\CrossingL \).
    
    Given any type~D structure \(C^{\BNAlgH}\) (that is, a complex over \( \BNAlgH \)), define its mirror as
    \[
    \mirror(C)^{\BNAlgH} \coloneqq \rev\Big(\dual{C}^{\op{\BNAlgH}}\Big)
    \]  
    More explicitly, \(\mirror(C)^{\BNAlgH}\) is obtained from \(C^{\BNAlgH}\) by reversing the direction of all arrows, exchanging \( \SaddleBC \) and \( \SaddleCB \) and reversing the gradings of the generators. Notice that because both dualization and \( \rev \) change the algebra to its opposite, their composition does not change the algebra.
\end{definition}

\begin{proposition}\label{prop:mirrorsAndTypeDstructures}
The equalities \(\KhTl{\mirror T}=\mirror(\KhTl{T})\) and \(\DD(\mirror T)=\mirror(\DD(T))\) hold for any pointed 4-ended tangle \(T\).
\end{proposition}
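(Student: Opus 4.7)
The plan is to prove the two equalities in turn. The first, \(\KhTl{\mirror T}=\mirror(\KhTl{T})\), should follow by direct inspection of Bar-Natan's construction. Mirroring the tangle diagram exchanges the \(0\)- and \(1\)-resolution at every crossing and swaps \(n_+\) with \(n_-\), so the cube of resolutions for \(\mirror T\) is naturally identified with that for \(T\) via the involution \(v\mapsto\bar v:=(1-v_1,\dots,1-v_n)\). Under this identification, the underlying crossingless tangles \(\mathcal D(v)=\mathcal D_{\mirror}(\bar v)\) and the saddle cobordisms labelling edges are unchanged, but every edge now points in the opposite direction (since \(\bar v\) is obtained from \(\bar w\) by flipping a \(0\) to a \(1\) iff \(v\) was obtained from \(w\) by flipping a \(1\) to a \(0\)). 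Substituting \(|\bar v|=n-|v|\), \(n_+^{\mirror}=n_-\), and \(n_-^{\mirror}=n_+\) into the grading formulas of Section~\ref{sec:tangle_invariants} gives \(h_{\mirror}(\mathcal D_{\mirror}(\bar v))=-h(\mathcal D(v))\) and \(q_{\mirror}(\mathcal D_{\mirror}(\bar v))=-q(\mathcal D(v))\). These three facts together (reverse arrows, negate \(h\), negate \(q\)) match exactly the description of \(\mirror\) on type~D structures from Definition~\ref{def:mirrorsTangleAndTypeD}, once translated through the equivalence of Proposition~\ref{prop:complexes_vs_Dstr}.

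The second equality, \(\DD(\mirror T)=\mirror(\DD(T))\), will follow from the first via \(\DD(T)=\Omega_i^{-1}(\KhTl{T})\) once I verify that \(\Omega_i\) intertwines the two mirror operations. This reduces to the algebra-level assertion that \(\omega_i\colon\BNAlgH\to\End_{/l}(\Ni\oplus\No)\) intertwines \(\revpre\) with the geometric time-reversal of cobordisms (flipping each cobordism upside down). That, in turn, is a short check on generators: the dot cobordisms \(\omega_i(\DotcobB)\) and \(\omega_i(\DotcobC)\) are identity cobordisms decorated with a dot and are therefore invariant under time-reversal, while the saddle cobordisms \(\omega_i(\SaddleCB)\) and \(\omega_i(\SaddleBC)\) are each other's time-reverses. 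This exactly matches \(\revpre(\SaddleCB)=\SaddleBC\) and \(\revpre(D)=D\).

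The hardest aspect will not be the topology but the sign bookkeeping. Bar-Natan's construction uses an edge assignment to make each square of the cube anticommute, and our definition of \(\mirror\) combines dualization with \(\revpre\), so I need to check that the transpose of a signed cube is again a valid signed cube (with the evident reversed edge signs) and that the sign rules introduced in Definitions~\ref{def:CatOfMatrices} and~\ref{def:CatOfComplexes} interact correctly with the negation of bigradings. By Lemma~\ref{lem:shifting_morphism_spaces} and the functoriality of transposition, this amounts to a careful but essentially routine verification; no conceptually new input should be required.
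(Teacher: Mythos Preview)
Your proposal is correct and is precisely the unpacking the paper has in mind: the paper's own proof reads, in its entirety, ``This follows from the definitions.'' Your cube-of-resolutions argument with \(v\mapsto\bar v\), the grading check, and the verification that \(\omega_i\) intertwines \(\revpre\) with time-reversal of cobordisms are exactly the routine verifications that justify that one-liner, and your remarks on edge-assignment signs are the only place any care is needed.
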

\begin{proof}
    This follows from the definitions.
\end{proof}

\begin{example}\label{exa:BNntwisttangles}
Let us do some computations for the \( n \)-twist rational tangle. For \( n=1 \), we have
\[ 
    \KhTl{\CrossingPosMarkediiR}=\left[
    \begin{tikzcd}  
    \GGdqh{\Lo}{\frac{1}{2}}{1}{0}
    =
    h^0\delta^{\frac{1}{2}}q^1\Lo
    \arrow{r}{\Lol}
    &
    h^{1}\delta^0q^2\Li
    =
    \GGdqh{\Li}{0}{2}{1}
    \end{tikzcd}\right]
\] 
where \( \Lol \) denotes the saddle cobordism. 
By induction, for \( n\geq 1 \) we can show that
\[   \KhTl{\underbrace{\CrossingPosMarkediiR\cdots\CrossingPosR}_{n}}\!=\!
    \left[
    \begin{tikzcd}
    \GGdzh{\Lo}{\frac{n}{2}}{n}{0}
    \arrow{r}{\Lol}
      &
    \GGdzh{\Li}{\frac{n-1}{2}}{n+1}{1}
    \arrow{r}{\LiDotR}
    &
    \GGdzh{\Li}{\frac{n-1}{2}}{n+3}{2}
    \arrow{r}{\LiT}
    &
    \GGdzh{\Li}{\frac{n-1}{2}}{n+5}{3}
    \arrow{r}{\LiDotR}
    &
    \cdots
    \arrow{r}{}
    &
    \GGdzh{\Li}{\frac{n-1}{2}}{3n-1}{n}
    \end{tikzcd}\right]
\]
where \( \LiDotR \) denotes the identity cobordism \( \Li \) with a single dot in the non-special component, and \( \LiT=\LiDotR-H\cdot \Li \) is the composition of two saddle cobordisms. Therefore:
\[ 
\DD\Big(\underbrace{\CrossingPosMarkediiR\cdots\CrossingPosR}_{n}\Big)=
    \left[
    \begin{tikzcd}
    \GGdzh{\DotB}{\frac{n}{2}}{n}{0}
    \arrow{r}{S}
      &
    \GGdzh{\DotC}{\frac{n-1}{2}}{n+1}{1}
    \arrow{r}{D}
    &
    \GGdzh{\DotC}{\frac{n-1}{2}}{n+3}{2}
    \arrow{r}{SS}
    &
    \GGdzh{\DotC}{\frac{n-1}{2}}{n+5}{3}
    \arrow{r}{D}
    &
    \cdots
    \arrow{r}{}
    &
    \GGdzh{\DotC}{\frac{n-1}{2}}{3n-1}{n}
    \end{tikzcd}\right]
\]
If we move the distinguished tangle end to the bottom left corner, the complex does not change at all, since the distinguished component of the cobordism corresponding \( \DotcobC \) does not change, and there are no (odd) powers of \( \DotcobB \) in this complex. If we move the distinguished tangle end to one of the other two corners, we do see a change of signs; however, the complex above is chain isomorphic to any complex obtained by replacing some of the arrow labels by their negatives, because changes of basis \( \DotB \leftrightarrow (-\DotB), \ \DotC \leftrightarrow (-\DotC) \) can be used to ‘‘move'' signs along the complex \( \DD(T) \). So we conclude that the invariant of the \( n \)-twist tangle is independent of the choice of distinguished tangle end. 

The invariant for the \( -n \)-twist tangle, consisting of \( n \) negative crossings, can be easily obtained from the above, using Proposition~\ref{def:mirrorsTangleAndTypeD}:
\[ 
\DD\Big(\underbrace{\CrossingNegMarkediiR\cdots\CrossingNegR}_{n}\Big)=
    \left[
    \begin{tikzcd}
    \GGdzh{\DotC}{-\frac{n-1}{2}}{-3n+1}{-n}
    \arrow{r}{}
    &
    \cdots
    \arrow{r}{D}
    &
    \GGdzh{\DotC}{-\frac{n-1}{2}}{-n-5}{-3}
    \arrow{r}{SS}
    &
    \GGdzh{\DotC}{-\frac{n-1}{2}}{-n-3}{-2}
    \arrow{r}{D}
    &
    \GGdzh{\DotC}{-\frac{n-1}{2}}{-n-1}{-1}
    \arrow{r}{S}
    &
    \GGdzh{\DotB}{-\frac{n}{2}}{-n}{0}
    \end{tikzcd}\right]
\]
Again, the choice of basepoint is irrelevant for this example.
\end{example}

\begin{figure}[th]
  \begin{subfigure}{0.95\textwidth}
      \[ 
      \begin{tikzcd}[column sep=0pt,row sep=0pt]  
      \PretzeltangleOriented
      &
      \GGdzh{\Lo}{-1}{-8}{-3}
      \arrow{rr}{\LoT}
      &\phantom{XXXX}&
      \GGdzh{\Lo}{-1}{-6}{-2}
      \arrow{rr}{\LoDotB}
      &\phantom{XXXX}&
      \GGdzh{\Lo}{-1}{-4}{-1}
      \arrow{rr}{\Lol}
      &\phantom{XXXX}&
      \GGdzh{\Li}{-\frac{3}{2}}{-3}{0}
      \\
      \GGdzh{\Li}{0}{-4}{-2}
      \arrow[swap]{dd}{\Lil}
      &
      \GGdzh{\Li}{-1}{-12}{-5}
      \arrow{rr}{2\LiDotR-H\cdot\Li}
      \arrow[swap]{dd}{\Lil}
      &&
      \GGdzh{\Li}{-1}{-10}{-4}
      \arrow{dd}{-\Lil}
      &&
      \xcancel{\GGdzh{\Li}{-1}{-8}{-3}}
      \arrow{rr}{\LirO}
      \arrow{dd}{\Lil}
      &&
      \xcancel{\GGdzh{\LiO}{-\frac{3}{2}}{-7}{-2}}
      \arrow[swap]{dd}{-\LiOl}
      \\
      \parbox[c][20pt][c]{1pt}{}
      \\
      \GGdzh{\Lo}{-\frac{1}{2}}{-3}{-1}
      \arrow[swap]{dd}{\LoDotB}
      &
      \GGdzh{\Lo}{-\frac{3}{2}}{-11}{-4}
      \arrow{rr}{\LoT}
      \arrow[swap]{dd}{\LoDotB}
      \arrow[dashed]{rruu}{\Lol}
      &&
      \GGdzh{\Lo}{-\frac{3}{2}}{-9}{-3}
      \arrow{rr}{\LoDotB}
      \arrow[swap]{dd}{-\LoDotB}
      &&
      \GGdzh{\Lo}{-\frac{3}{2}}{-7}{-2}
      \arrow{rr}{\Lol}
      \arrow{dd}{\LoDotB}
      \arrow[dashed,swap]{ddll}{-\id}
      &&
      \xcancel{\GGdzh{\Li}{-2}{-6}{-1}}
      \\
      \parbox[c][20pt][c]{1pt}{}
      \\
      \GGdzh{\Lo}{-\frac{1}{2}}{-1}{0}
      &
      \GGdzh{\Lo}{-\frac{3}{2}}{-9}{-3}
      \arrow{rr}{\LoT}
      &&
      \GGdzh{\Lo}{-\frac{3}{2}}{-7}{-2}
      \arrow{rr}{\LoDotB}
      &&
      \GGdzh{\Lo}{-\frac{3}{2}}{-5}{-1}
      \arrow{rr}{\Lol}
      &&
      \GGdzh{\Li}{-2}{-4}{0}
      \end{tikzcd}
      \]
  \caption{The computation of \( \KhTl{T_{2,-3}} \).  The \( (2,-3) \)-pretzel tangle is shown in the top left corner. The topmost row and leftmost column show the complex for the right and left hand side of the tangle, respectively. The lower right double-complex is the result of pairing the complexes for these two rational tangles.}\label{fig:2m3pt:BNBracket:Raw}
  \end{subfigure}
  \begin{subfigure}{0.95\textwidth}
    \[ 
    \left[
    \begin{tikzcd}[column sep=40pt,row sep=20pt]  
    \GGdzh{\Li}{-1}{-12}{-5}
      \arrow{r}{\LiDotR}
      \arrow{d}{\Lil}
      &
      \GGdzh{\Li}{-1}{-10}{-4}
      \arrow{r}{\Lil}
      &
      \GGdzh{\Lo}{-\frac{3}{2}}{-9}{-3}
      \arrow{r}{\LoDotB}
      &
      \GGdzh{\Lo}{-\frac{3}{2}}{-7}{-2}
      \\
      \GGdzh{\Lo}{-\frac{3}{2}}{-11}{-4}
      \arrow{r}{\LoDotB}
      &
      \GGdzh{\Lo}{-\frac{3}{2}}{-9}{-3}
      \arrow{r}{\LiT}
      &
      \GGdzh{\Lo}{-\frac{3}{2}}{-7}{-2}
      \arrow{r}{\LoDotB}
      &
      \GGdzh{\Lo}{-\frac{3}{2}}{-5}{-1}
      \arrow{r}{\Lol}
      &
      \GGdzh{\Li}{-2}{-4}{0}
      \end{tikzcd}
      \right]
      \]
    \caption{The complex \( \KhTl{T_{2,-3}} \) after doing the cancellations and generalized base changes indicated by the dashed arrows in figure (a)}\label{fig:2m3pt:BNBracket:Simplified}
  \end{subfigure}
  \begin{subfigure}{0.95\textwidth}
    \[ 
    \left[
    \begin{tikzcd}[column sep=40pt,row sep=10pt]  
    \GGdzh{\DotC}{-1}{-12}{-5}
      \arrow{r}{D}
      \arrow{d}{S}
      &
      \GGdzh{\DotC}{-1}{-10}{-4}
      \arrow{r}{S}
      &
      \GGdzh{\DotB}{-\frac{3}{2}}{-9}{-3}
      \arrow{r}{D}
      &
      \GGdzh{\DotB}{-\frac{3}{2}}{-7}{-2}
      \\
      \GGdzh{\DotB}{-\frac{3}{2}}{-11}{-4}
      \arrow{r}{D}
      &
      \GGdzh{\DotB}{-\frac{3}{2}}{-9}{-3}
      \arrow{r}{SS}
      &
      \GGdzh{\DotB}{-\frac{3}{2}}{-7}{-2}
      \arrow{r}{D}
      &
      \GGdzh{\DotB}{-\frac{3}{2}}{-5}{-1}
      \arrow{r}{S}
      &
      \GGdzh{\DotC}{-2}{-4}{0}
      \end{tikzcd}
      \right]
      \]
    \caption{The complex \( \DD(T_{2,-3}) \).}
    \label{fig:2m3pt:BN}
  \end{subfigure}
  \caption{Computation of the Bar-Natan complex for the \( (2,-3) \)-pretzel tangle}\label{fig:2m3ptBNComplexComputationRH}
\end{figure}
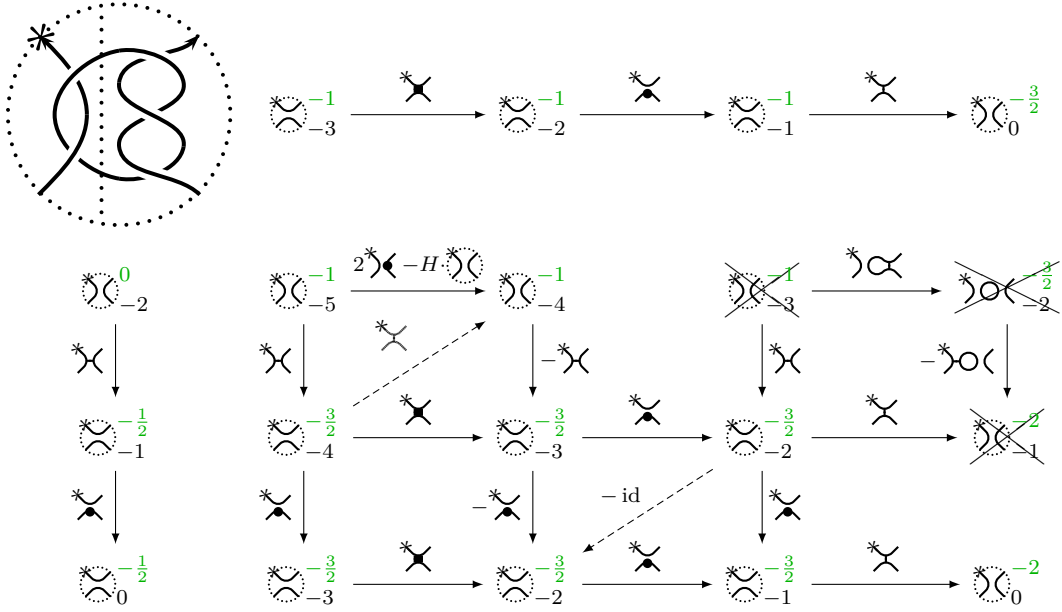
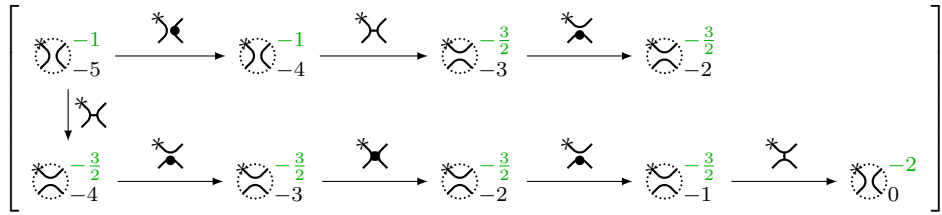
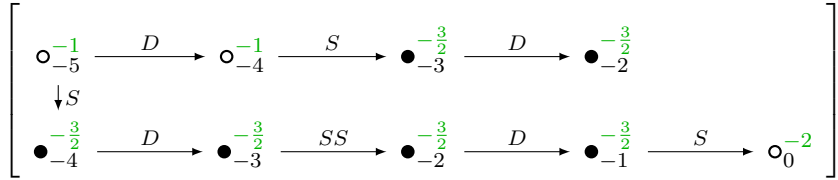

\begin{example}\label{exa:2m3ptBNComplexComputation}
In Figure~\ref{fig:2m3pt:BNBracket:Raw}, we compute \( \KhTl{T_{2,-3}} \) for the \( (2,-3) \)-pretzel tangle \( T_{2,-3} \), by splitting it into two rational tangles along the dotted line in the tangle diagram in the upper left corner of the figure. The complex for the 2-twist rational tangle is shown in the first column; notice the change in gradings compared to Example~\ref{exa:BNntwisttangles}, which is due to the different orientation. The complex for the \( -3 \)-twist rational tangle is shown in the first row of the figure. \( \KhTl{T_{2,-3}} \) is the double-complex obtained by gluing the objects together and extending the morphisms in each row and column by the identity. Note that we need to compute the horizontal morphisms very carefully, since for some of them, the special component changes. For example, the second morphisms of the first row of the double-complex is not equal to the dot cobordism, but 0. 
We simplify this complex by delooping the circle in the top right corner and cancelling the two resulting identity components of the differential. We also apply Lemma~\ref{lem:AbstractCleanUp} by setting the morphism \( h \) equal to the dashed arrows. As a result, we obtain the complex in Figure~\ref{fig:2m3pt:BNBracket:Simplified}. This complex now lies in the image of the functor \( \Omega_1 \), and its preimage \( \DD(T_{2,-3}) \) is shown in Figure\ref{fig:2m3pt:BN}. 
\end{example}
\begin{remark}\label{rmk:bordered_constr_of_D_structure}
There is another construction of a tangle invariant like \(\DD(T)\), using algebraic techniques of Khovanov \cite{Khovanov2} and bordered techniques of Manion \cite{Manion-bordered}. Instead of the algebra \( \BNAlgH \), one can use the deformed reduced Khovanov arc algebra \( \widetilde{\mathcal{H}}^2_\text{def} \) from Remark~\ref{rmk:Kh_arc_algebra}. To obtain a pointed 4-ended tangle invariant in the form of a type~D structure over \( \widetilde{\mathcal{H}}^2_\text{def} \), one starts with Manion's definition of the type~D structure \( \widehat{D}(T)^{\mathcal{H}^2}\) over the arc algebra; see \cite[Definition 3.1.3]{Manion-bordered}. Next, one can adapt it to the reduced case and obtains \( \widehat{D}(T)^{\widetilde{\mathcal{H}}^2} \). Finally, by deforming the differential of \( \widehat{D}(T)^{\widetilde{\mathcal{H}}^2} \) using the merge-split maps (\ref{eq:merge-split_maps_for_BN}) from page~\pageref{eq:merge-split_maps_for_BN}, one obtains a type~D structure \( \widehat{D}(T)^{\widetilde{\mathcal{H}}^2_\text{def}} \) which should be homotopy equivalent to \( \DD(T)^{\BNAlgH} \). 

\end{remark}


\begin{definition}\label{def:tanglepairing}
Given two pointed 4-ended tangles \( T_1 \) and \( T_2 \), let \( \Lk(T_1,T_2) \) be the link obtained by gluing \( T_1 \) to \( T_2 \) as shown in Figure~\ref{fig:tanglepairingII}. Equivalently, \( \Lk(T_1,T_2) \) is obtained by gluing \reflectbox{\( T_1 \)} to \( T_2 \) as shown in Figure~\ref{fig:tanglepairingI}, where \reflectbox{\( T_1 \)} is the tangle obtained from \( T_1 \) by rotating it along the vertical axis (along with the parameterization of the boundary). Note that \( \Lk(T_1,T_2)=\Lk(T_2,T_1) \), which can be seen by rotating the link in Figure~\ref{fig:tanglepairingI} along the vertical axis by \( \pi \).
\end{definition}

\begin{figure}[t]
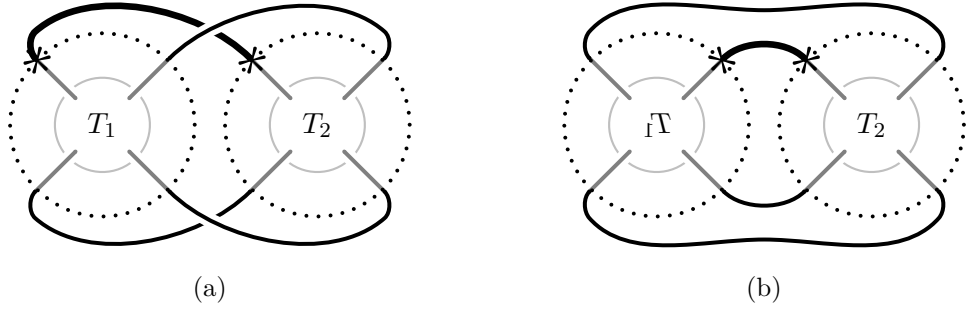

\centering
\begin{subfigure}{0.45\textwidth}
\centering
$\tanglepairingII$
\caption{}\label{fig:tanglepairingII}
\end{subfigure}
\begin{subfigure}{0.45\textwidth}
\centering
$\tanglepairingIalt$
\caption{}\label{fig:tanglepairingI}
\end{subfigure}
\caption{Two diagrams for the pointed link \( \Lk=\Lk(T_1,T_2) \) obtained by pairing two tangles \( T_1 \) and \( T_2 \)}
\end{figure}

Bar-Natan expresses various versions of Khovanov homology of a link \( \Lk \) in terms of morphism spaces from the zero object $\varnothing$ to \( \KhTl{\Lk} \) \cite[Section 9]{BarNatanKhT}. In case of tangles, instead of $\varnothing$ one can use different complexes as test objects as in \cite[Proposition~4.2.3]{Manion-thesis}. The proof of the following general result is based on the same ideas.

\begin{proposition}\label{prop:alg_pairing_kh}
Given two pointed 4-ended tangles \(T_1\) and \(T_2\), let \(\Lk=\Lk(T_1,T_2)\). Then
\[ 
h^0q^{-1}\delta^{-\frac{1}{2}}\CBNr(\Lk) 
\cong 
\Mor(\mirror(\DD(T_1)),\DD(T_2))
\]
as chain complexes of \( \Rcomm[H] \)-modules. 
\end{proposition}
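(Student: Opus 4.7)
The strategy is to decompose the proof into three layers: first, a vertex-by-vertex identification between the reduced Bar-Natan TQFT applied to the paired crossingless closures and the morphism spaces in $\End_{/l}(\Ni\oplus\No)$; second, compatibility of this identification with the cube differentials; third, transport along the isomorphism $\omega_1$ to the algebra $\BNAlgH$.

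\textbf{Step 1 (Cube decomposition).} The cube of resolutions of $\Lk=\Lk(T_1,T_2)$ is the product of the cubes of $T_1$ and $T_2$. Each vertex $(v,w)$ yields the crossingless closure $T_1(v)\cup T_2(w)$, a disjoint union of circles, exactly one of which contains the distinguished tangle ends identified under the pairing. I would view $\CBNr(\Lk)$ as obtained by applying the reduced Bar-Natan TQFT $\mathcal{F}^x$ (assigning $V^x$ to the marked circle and $V$ to the others, with merge/split maps as in~\eqref{eq:merge-split_maps_for_BN} and~\eqref{eq:merge-split_maps_reduced_BN_x}) to $\KhTl{\Lk}$, together with the overall $q^{+1}$-shift defining $\BNr$.

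\textbf{Step 2 (Fold/hom adjunction).} The key technical input is a natural bigraded $\Rcomm[H]$-module isomorphism
\[
\mathcal{F}^x\bigl(T_1(v)\cup T_2(w)\bigr)\;\cong\; q^{+1}\cdot \Mor_{\End_{/l}(\Ni\oplus\No)}\bigl(T_1(v),T_2(w)\bigr),
\]
which I would prove by folding: a cobordism $\varnothing\to T_1(v)\cup T_2(w)$ "opens up" along $B\times I$ into a cobordism $T_1(v)\to T_2(w)$, and the component carrying the marked point becomes exactly the special component (the one containing the distinguished tangle end). Using the simple-cobordism basis from Proposition~\ref{prop:lin independence of cobordisms}, a simple cobordism with $k$ dots on its non-special components corresponds, under folding, to the tensor of $x$ on the marked circle with $x$'s on those circles carrying dots and $1$'s on the remaining circles; the $H$-action on both sides is attaching a handle on the special/marked component. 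The grading match follows from the formula $q(H^i C)=\chi(C)-2i-2\#\{\bullet\}-|B|/2$ of~\eqref{eqn:quantum_grading:with_dots} together with the explicit $q$-gradings of $1,x\in V$ and the $q^{+1}$ reduction shift. The corresponding $\delta$-shift is forced by $\delta=\tfrac12 q-h$, accounting for the $\delta^{-1/2}$ in the statement.

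\textbf{Step 3 (Compatibility with differentials).} The edge maps in the cube of $\Lk$ corresponding to a crossing of $T_2$ act by post-composition with a saddle cobordism on the $T_2$-side of any cobordism $T_1(v)\to T_2(w)$, which matches the term $d_{N_2}\circ f$ in the morphism differential $D(f)=d_{N_2}\circ f - (-1)^{h(f)} f\circ d_{N_1}+\partial(f)$. Edges corresponding to crossings of $T_1$ act by pre-composition with a saddle on the $T_1$-side; under folding the $T_1$-side is read in reverse, which is exactly what the functor $\mirror$ of Definition~\ref{def:mirrorsTangleAndTypeD} records (reversal of arrows together with negation of gradings). The Koszul sign $(-1)^{h(f)}$ arises naturally from moving the pre-composition past the generator and matches the edge-assignment signs of the cube, using that $n_-(\Lk)=n_-(T_1)+n_-(T_2)$ so that the homological gradings add correctly. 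Assembling the vertex-wise isomorphisms of Step~2 thus produces a chain isomorphism
\[
q^{-1}\delta^{-1/2}\CBNr(\Lk)\;\cong\;\Mor_{\Cx(\Mat(\End_{/l}(\Ni\oplus\No)))}\!\bigl(\mirror\KhTl{T_1},\KhTl{T_2}\bigr).
\]

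\textbf{Step 4 (Transport to $\BNAlgH$).} After delooping (Observation~\ref{obs:delooping}) both $\KhTl{T_i}$ lie in $\Cx(\Mat(\End_{/l}(\Ni\oplus\No)))$, and applying $\Omega_1^{-1}$ from Theorem~\ref{thm:OmegaFullyFaithful} together with $\DD(\mirror T_1)=\mirror\DD(T_1)$ (Proposition~\ref{prop:mirrorsAndTypeDstructures}) yields the claimed isomorphism.

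The hardest part will be Step~3, specifically verifying that the signs from the chosen cube edge-assignment for $\Lk$ (which was only determined up to gauge) agree with the Koszul sign $(-1)^{h(f)}$ in the morphism differential, together with the sign reversals introduced by the $\mirror$ functor. Tracking this should amount to choosing compatible sign conventions on both sides, but the bookkeeping is delicate; a clean approach is to verify it only up to the ambiguity inherent in both complexes and then note that any two valid sign conventions produce isomorphic complexes.
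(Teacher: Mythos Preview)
Your overall strategy—vertex-by-vertex identification via simple cobordisms, then checking that pre-/post-composition with saddles reproduces the cube differential—is exactly the paper's approach. However, there is a genuine error in your choice of reduced model in Step~1/2 that makes Step~3 fail as written.

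You work with the $V^{x}$-version of $\CBNr$ (equations~\eqref{eq:merge-split_maps_reduced_BN_x}) and propose the dictionary ``special disc $\leftrightarrow x$, dotted disc $\leftrightarrow x$, undotted disc $\leftrightarrow 1$''. But compute the composition of a simple cobordism with a pair of pants near the special component in $\Cob_{/l}$: merging the special disc with a dotted disc produces a single special disc carrying a dot, which is \emph{zero} by Definition~\ref{def:NotationDotsAndH}; and splitting the special disc gives, via neck-cutting, $\DiscRAst\otimes\DiscRdot - H\cdot\DiscRAst\otimes\DiscR$. Under your dictionary these would have to match $m^{x}(x\otimes x)=Hx$ and $\Delta^{x}(x)=x\otimes x$, and they do not. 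The theory that \emph{does} match is $\CBN^{x-H}$ (equations~\eqref{eq:merge-split_maps_reduced_BN_x-H}), precisely because $(x-H)\cdot x=0$ in $R[H,x]/(x^{2}=Hx)$ mirrors the vanishing of a dot on the special component. This is why the paper runs the argument with $V^{x-H}$ and the dictionary $1\leftrightarrow\DiscR$, $x\leftrightarrow\DiscRdot$, $(x-H)\leftrightarrow\DiscRAst$.

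The fix is easy once identified: either switch to $\CBN^{x-H}$ from the outset, or keep $V^{x}$ but compose your fold isomorphism with the explicit chain isomorphism $\CBN^{x}\cong\CBN^{x-H}$ of Lemma~\ref{lem:OnlyOneReducedBNTheory}. Your remarks on signs are fine in spirit; the paper handles them by verifying directly that the edge assignment induced by the Koszul sign $(-1)^{h(f)}$ gives an odd number of minus signs around every face of the product cube, which is the cleanest way to close that bookkeeping.
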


\begin{proof}
	The right-hand side is equal to \( \Mor(\mirror(\KhTl{T_1}),\KhTl{T_2}) \). This follows from Theorem~\ref{thm:OmegaFullyFaithful} and both statements of Proposition~\ref{prop:mirrorsAndTypeDstructures}. So if we take the definition of \( \CBNr(\Lk) \) as \( h^0q^{1}\delta^{\frac{1}{2}}\CBN^{x-H}(\Lk) \), it suffices to show that
	\[ 
	\CBN^{x-H}(\Lk) 
	\cong
	\Mor(\mirror(\KhTl{T_1}),\KhTl{T_2}).
	\]	
	Let us first identify the underlying \( \Rcomm[H] \)-modules on both sides. Fix diagrams \( \mathcal{D}_1 \) and \( \mathcal{D}_2 \) for \( T_1 \) and \( T_2 \), respectively, and let \( \mathcal{D} \) be the corresponding link diagram for \( \Lk \) from Figure~\ref{fig:tanglepairingI}. 
	Choose an enumeration of the crossings of \( \Lk \) such that the indices of those crossings in \( T_1 \) are smaller than those in \( T_2 \). 
	Say, \( T_i \) has \( n_i \) crossings for \( i=1,2 \).
	Then the vertices of the cube of resolutions of \( \Lk \) are indexed by vectors \( v=v_1\oplus v_2\in\{0,1\}^{n_1}\oplus \{0,1\}^{n_2} \). We claim that 
	\[ 
	V^{x-H}_{\mathcal{D}(v)}
	\cong
	\Mor(\mathcal{D}_1(v_1),\mathcal{D}_2(v_2)).
	\]
	To see this, recall from Proposition~\ref{prop:lin independence of cobordisms} that the right-hand side is generated freely over \( \Rcomm[H] \) by simple cobordisms from \( \mathcal{D}_1(v_1) \) to \( \mathcal{D}_2(v_2) \). The particular way we connect $T_1$ to $T_2$ in Figure~\ref{fig:tanglepairingII} is behind the following key observation: each boundary component of a simple cobordism from \( \mathcal{D}_1(v_1) \) to \( \mathcal{D}_2(v_2) \) corresponds to a closed component of \( \mathcal{D}(v) \), where  \( \mathcal{D}(v) \) is a diagram obtained from Figure~\ref{fig:tanglepairingI} by resolving crossings according to \( v \). Moreover, each component of a simple cobordism, except the one containing the basepoint, may carry at most one dot; similarly, each component of \( \mathcal{D}(v) \) corresponds to two generators \( 1 \) and \( x \), except when it contains the basepoint. The special component of a cobordism may not carry any dot and the corresponding tensor factor \( V^{x-H} \) is one-dimensional. Thus, we obtain a one-to-one correspondence between generators of both sides: 
	\begin{equation}\label{eqn:algebraic_pairing:dictionary}
	V\ni 1=x_+\leftrightarrow \DiscR\qquad V\ni x=x_-\leftrightarrow \DiscRdot\qquad V^{x-H}\ni (x-H) \leftrightarrow \DiscRAst
	\end{equation}
	Let us verify that this one-to-one correspondence preserves the  bigrading. The homological grading vanishes on both sides. The contribution of a non-special component of a simple cobordism to the quantum grading is equal to \( -1 \) or \( +1 \), depending on whether it contains a dot or not. Then the remaining terms in \eqref{eqn:quantum_grading:with_dots} are the Euler characteristic of the special component minus \( \tfrac{1}{2}|B| \), which is \( 1-\tfrac{1}{2}\cdot 4=-1 \). Correspondingly, the quantum grading of \( x \) and \( 1\in V \) are \( -1 \) and \( +1 \), respectively, and the generator of \( V^{x-H}=\langle x-H\rangle \) has quantum grading \( -1 \).  
	So we conclude that if \( n^+_{i} \) and \( n^-_{i} \) are the number of positive and negative crossings in \( \mathcal{D}_i \), respectively, \[ h^{|v|-n_-}q^{|v|+n_+-2n_-}V^{x-H}_{\mathcal{D}(v)}\]
	is bigraded isomorphic to 
	\[ 
	\Mor(v_1,v_2)\coloneqq\Mor(\mirror(h^{|v_1|-n^-_1}q^{|v_1|+n^+_1-2n^-_1}\mathcal{D}_1(v_1)),h^{|v_2|-n^-_2}q^{|v_2|+n^+_2-2n^-_2}\mathcal{D}_2(v_2)).
	\]
	Notice that the gradings of the first factor count with minus sign in the grading of the morphism space elements (see Remark~\ref{rmk:gradings_of_morphisms}), but this minus sign is cancelled due to the fact that we consider the mirror of the first factor.  Since the \( H \)-action is formal on both sides, this is an isomorphism of \( \Rcomm[H] \)-modules. 
\begin{figure}[t]
	\centering
	\begin{equation*}
	\begin{split}
	\PairsOfPantsRAst &\Delta^{x-H}\co V^{x-H}\rightarrow V^{x-H}\otimes V
	=
	\begin{cases}
	\DiscRAst \mapsto \PairsOfPantsGluedAst\!\!\!\!=\DiscRAst \otimes \DiscRdot - H \cdot \DiscRAst \otimes \DiscR
	\end{cases}
	\\
	\PairsOfPantsLAst &m^{x-H}\co V^{x-H}\otimes V\rightarrow V^{x-H}
	=
	\begin{cases}
	\DiscRAst \otimes \DiscR \mapsto\DiscRAst,  & \\
	\DiscRAst \otimes \DiscRdot \mapsto  0. & 
	\end{cases}
	\end{split}
	\end{equation*}
	\begin{equation*}
	\begin{split}
	\PairsOfPantsR &\Delta\co V\rightarrow V\otimes V
	=
	\begin{cases}
	\DiscR \mapsto \DiscR \otimes \DiscRdot +\DiscRdot \otimes \DiscR - H\cdot \DiscR \otimes \DiscR & \\
	\DiscRdot \mapsto \DiscRdot \otimes \DiscRdot & 
	\end{cases}\\
	\PairsOfPantsL &m\co V\otimes V\rightarrow V
	=
	\begin{cases}
	\DiscR \otimes \DiscRdot \mapsto \DiscRdot \quad & 
	\DiscR \otimes \DiscR \mapsto \DiscR\\
	\DiscRdot \otimes \DiscR \mapsto \DiscRdot \quad & 
	\DiscRdot \otimes \DiscRdot \mapsto H\cdot \DiscRdot
	\end{cases}
	\end{split}
	\end{equation*}
	\caption{The calculation of compositions of merge and split maps with simple cobordisms. The first row illustrates that capping-off a pair of pants on one side results in a cobordism which can be simplified using the neck-cutting relation. The calculation for the other three maps is similar. }\label{fig:Composition:Pants:Simple:Cobordisms}
\end{figure}

The differential on the right-hand side is defined as the pre- and post-composition with the differentials of \( \mirror(\KhTl{T_1}) \) and \( \KhTl{T_2} \). 
	So if \( f\in \Mor(v_1,v_2) \), then
	\begin{equation}\label{eqn:algebraic_pairing:decomposition}
	 D(f)\in 
	\Big(\bigoplus \Mor(v'_1,v_2)\Big)
	\oplus
	\Big(\bigoplus \Mor(v_1,v'_2)\Big)
	\end{equation}
	where the direct sums are over all vectors \( v'_1 \) and \( v'_2 \) that can be obtained from \( v_1 \) and \( v_2 \), respectively, by replacing a single entry 0 by 1. Let us consider the components of the second direct sum first. These are equal to the composition of \( f \) with the differential 
  \( d^{v_2}_{v'_2} \)
  of \( \KhTl{T_2} \), ie the composition of the cobordism \( f \) with pairs of pants. This computation is done in Figure~\ref{fig:Composition:Pants:Simple:Cobordisms}.  If we compare the result with the definition of the merge and split maps (\ref{eq:merge-split_maps_for_BN}) and (\ref{eq:merge-split_maps_reduced_BN_x-H}) of \( \CBN^{x-H}(\Lk) \) via the dictionary \eqref{eqn:algebraic_pairing:dictionary}, we see that the two differentials agree up to the choices of edge assignments. 
	
	Let us now consider the components in the first direct summand of 
	\eqref{eqn:algebraic_pairing:decomposition}. 
	They are equal to \( -(-1)^{|v|-n_-} f\circ\mirror(d^{v_1}_{v'_1}) \), where \( \mirror(d^{v_1}_{v'_1}) \) is the component of the differential of \( \mirror(\KhTl{T_1}) \) from \( v'_1 \) to \( v_1 \). The sign comes from the definition of the differential \( D \) in Definition~\ref{def:CatOfComplexes} together with the fact that the homological grading of \( f \) is equal to 
	\((|v_2|-n_2^-)-(n_1-|v_1|-n_1^+)=|v|-n_-.\)
	Note that if \( d^{v_1}_{v'_1} \) is a merge cobordism, \( \mirror(d^{v_1}_{v'_1}) \) is a split cobordism, and vice versa. Thus, by contravariance of pre-composition \( -\circ\mirror(d^{v_1}_{v'_1}) \), \( f\circ\mirror(d^{v_1}_{v'_1}) \) corresponds to a merge map iff \( d^{v_1}_{v'_1} \) does. So these edge maps also agree with those in \( \CBN^{x-H}(\Lk) \) up to the choices of edge assignments.
	
	Finally, it suffices to check that the edge assignment on the right-hand side is such that all faces of the cube have an odd number of signs. For faces in which one of the two components stays constant, this is obvious. For the remaining faces, the edge assignment is 
	\[ 
	\begin{tikzcd}[column sep=40pt,row sep=12pt]  
	(v_1,v_2)
	\arrow{r}{}
	\arrow[swap]{d}{-(-1)^{|v_1|+|v_2|-n_-}}
	&
	(v_1,w_2)
	\arrow{d}{-(-1)^{|v_1|+|w_2|-n_-}=-(-1)^{|v_1|+|v_2|+1-n_-}}
	\\
	(w_1,v_2)
	\arrow{r}{}
	&
	(w_1,w_2)
	\end{tikzcd}
	\]
	where the horizontal maps have the same sign.
\end{proof}

\begin{remark}\label{rem:LinkInvariantFrom2EndedTangles}
	We may regard any pointed link \((\Lk,p)\) as a 2-ended tangle \(T_{\Lk,p}\) by cutting the link open at the basepoint \(p\). Then, by delooping, the complex \(\KhTl{T_{\Lk,p}}\) is chain isomorphic to a complex over $\Rcomm[H]$, built solely out of copies of the trivial 2-ended tangle \(\TrivialTwoTangle\). In fact, it agrees with \(\CKhr(\Lk,p)^{\Rcomm[H]}\). This can be seen by composing the delooping isomorphisms with pairs of pants, which computes precisely the split and merge maps of the type D structure \(\CKh^x(\Lk,p)^{\Rcomm[H]}\) from the Equations~\eqref{eq:merge-split_maps_for_BN} and~\eqref{eq:merge-split_maps_reduced_BN_x} on pages \pageref{eq:merge-split_maps_for_BN} and~\pageref{eq:merge-split_maps_reduced_BN_x}, except that $H$ is replaced by $(-H)$. By the same argument as in the proof of Lemma~\ref{lem:OnlyOneReducedBNTheory}, this complex is chain isomorphic to \(\CKh^x(\Lk,p)^{\Rcomm[H]}\).
\end{remark}

\begin{observation}
Here we consider the dependence on the basepoint of reduced Bar-Natan homology of the trefoil union the unknot. 
First, we need two auxiliary computations: reduced and unreduced Khovanov homologies of the trefoil, as type~D structures over $\Q[H]$, are 
\[\Khr(T(2,3))^{\Q[H]}=[\gen]\oplus[\gen \xrightarrow{H} \gen] \qquad 
\Kh(T(2,3))^{\Q[H]}=[\gen]\oplus[\gen]\oplus[\gen \xrightarrow{H^2} \gen]\]
On the level of vector spaces, these follow from computations in Example~\ref{exa:Pairing:Trefoil}. On the level of type~D structures, the additional actions picking up $H$ and $H^2$ follow from Proposition~\ref{prop:towersReduced-field} and Proposition~\ref{prop:towers}, together with the fact that for knots the free summands $[\gen]$ are contained in homological grading zero (see Proposition~\ref{prop:s-inv-as-grading}). On the one hand, if we reduce the trefoil component, $\Khr^{\Q[H]} \boxtimes \Q[H] \simeq \CBNr$  implies
\[\BNr(^\ast T(2,3) \cup \Circle ; \Q)= \BNr(T(2,3);\Q) \oplus \BNr(T(2,3);\Q) = \Q[H]\oplus \Q[H] \oplus \Q[H]/(H) \oplus \Q[H]/(H).\]
On the other hand, reducing the unknot, $ \Kh^{\Q[H]} \boxtimes \Q[H] \simeq \CBN$  implies 
\[  \BNr(T(2,3) \cup \ast \mspace{-7mu}\Circle ; \Q)= \BN(T(2,3);\Q) = \Q[H]\oplus \Q[H] \oplus \Q[H]/(H^2). \]
Note that the two reduced Bar-Natan homologies above are isomorphic as (bigraded) vector spaces, but are not isomorphic as $\Q[H]$-modules.
\end{observation}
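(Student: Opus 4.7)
The plan is to verify the observation in three stages: first determine the $\Q[H]$ type~D structures $\Khr(T(2,3))^{\Q[H]}$ and $\Kh(T(2,3))^{\Q[H]}$ by combining known vector space computations with the structural Propositions~\ref{prop:towersReduced-field}, \ref{prop:towers}, and~\ref{prop:s-inv-as-grading}; second, apply the box tensor product formula for the two choices of reduction basepoint on the split link $T(2,3)\sqcup\Circle$; and third, compare the resulting $\Q[H]$-modules directly. The main observation is that since a disjoint-union link splits multiplicatively under the TQFT, its type~D structure is the tensor product of those of its components, and the basepoint determines which component is reduced.

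For the first stage, recall from Example~\ref{exa:Pairing:Trefoil} that $\Khr(T(2,3);\Q)$ is 3-dimensional and $\Kh(T(2,3);\Q)$ is 4-dimensional. By Proposition~\ref{prop:towersReduced-field}, $\Khr(T(2,3))^{\Q[H]}$ decomposes into $2^{|L|-1}=1$ free summand $[\gen]$ plus $H^\ell$-torsion summands $[\gen\xrightarrow{H^\ell}\gen]$; the remaining two generators can therefore only fit as a single $H^\ell$-torsion block. Proposition~\ref{prop:s-inv-as-grading} places the free $[\gen]$ in homological grading $0$ (and $q$-grading $s^\Q(T(2,3))=2$), so the torsion pair consists of the remaining two generators sitting in homological gradings $2$ and $3$. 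The gap in $h$-grading forces $\ell=1$, yielding $\Khr(T(2,3))^{\Q[H]}=[\gen]\oplus[\gen\xrightarrow{H}\gen]$. Similarly, Proposition~\ref{prop:towers} gives $2^{|L|}=2$ free summands, both in $h$-grading $0$, and the remaining two generators form one $H^\ell$-torsion block; the gradings force $\ell=2$. (Alternatively, one may verify $\ell=2$ by invoking the mapping cone formula $\CKh(L;\Q)\simeq[q^{-3}h^{-1}\CBNr(L;\Q)\xrightarrow{H^2}q^{1}h^{0}\CBNr(L;\Q)]$ from Proposition~\ref{prop:kh_mapping_cones}.)

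For the second stage, we use that the unreduced TQFT assigns $V=\Q\langle x_+,x_-\rangle$ to the extra unknot component, whereas the reduced TQFT assigns the 1-dimensional module $V^{x}$. The disjoint nature of $L_1=T(2,3)\sqcup\Circle$ means that the complex of resolutions splits as a tensor product; hence
\[
\CKhr(\,^{\ast}T(2,3)\cup\Circle)^{\Q[H]}\;\cong\;\Khr(T(2,3))^{\Q[H]}\otimes_\Q V,
\qquad
\CKhr(T(2,3)\cup\,^{\ast}\mspace{-4mu}\Circle)^{\Q[H]}\;\cong\;\Kh(T(2,3))^{\Q[H]}\otimes_\Q V^{x}.
\]
Now apply Equation~\eqref{eq:red_D_str_vs_BN_homology}: box tensoring $-\boxtimes{}_{\Q[H]}\Q[H]_{\Q[H]}$ converts a free summand $[\gen]$ into a $\Q[H]$-tower and an $H^\ell$-torsion summand $[\gen\xrightarrow{H^\ell}\gen]$ into $\Q[H]\oplus\Q[H]/(H^\ell)$ after cancelling the acyclic free piece. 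Thus the first basepoint choice gives $\bigl(\Q[H]\oplus\Q[H]/(H)\bigr)\otimes\Q^2=\Q[H]^2\oplus(\Q[H]/(H))^2$, and the second gives $\Q[H]^2\oplus\Q[H]/(H^2)$, reproducing the stated formulas.

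Finally, comparison is a short algebraic check: as $\Q[H]$-modules the $H$-torsion submodules $(\Q[H]/(H))^{2}$ and $\Q[H]/(H^{2})$ are not isomorphic, since the former is annihilated by $H$ while the latter contains an element ($H$ itself) that is nonzero. The two modules have identical bigraded Poincaré series (this is a matter of bookkeeping—the $H^2$-torsion block splits into two $\Q$-dimensions matching exactly the pair of $\Q[H]/(H)$ summands under the Euler characteristic/Turner identification), so they are isomorphic as bigraded $\Q$-vector spaces. The only delicate point in the argument is pinning down the torsion exponents $\ell=1$ and $\ell=2$ via the homological gradings; this is the step where one must carefully inspect Example~\ref{exa:Pairing:Trefoil}, and it is the main obstacle in the sense that everything else is formal.
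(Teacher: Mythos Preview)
Your approach is correct and essentially the same as the paper's: determine the two type~D structures from the known vector-space computations together with Propositions~\ref{prop:towers}, \ref{prop:towersReduced-field}, and~\ref{prop:s-inv-as-grading}, then use the disjoint-union tensor splitting to identify the two reduced Bar-Natan homologies with $\BNr(T(2,3))\otimes V$ and $\BN(T(2,3))$ respectively.

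Two small slips to fix. First, the sentence ``an $H^\ell$-torsion summand $[\gen\xrightarrow{H^\ell}\gen]$ into $\Q[H]\oplus\Q[H]/(H^\ell)$ after cancelling the acyclic free piece'' is misstated: the homology of $\Q[H]\xrightarrow{H^\ell}\Q[H]$ is just $\Q[H]/(H^\ell)$, and your subsequent computation uses this correctly. Second, and more substantively, ``the gap in $h$-grading forces $\ell=1$'' is wrong as written: the $h$-gap in any summand $[\gen\xrightarrow{H^\ell}\gen]$ is always $1$, independent of $\ell$. What pins down $\ell$ is the $q$-grading gap, which equals $2\ell$ (since $\gr(H)=q^{-2}h^{0}$). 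In the reduced case the surviving generators sit at $(h,q)=(2,6)$ and $(3,8)$, so the $q$-gap of $2$ gives $\ell=1$; in the unreduced case they sit at $(2,5)$ and $(3,9)$, so the $q$-gap of $4$ gives $\ell=2$. Your conclusions are correct, but the stated reason is not.
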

The above observation leads to the following result:
\begin{theorem}\label{theo:BNr_is_a_link_invariant}
Reduced Bar-Natan homology $\BNr(\Lk,p)$, as a bigraded $R$-module, does not depend on the basepoint $p$. Moreover, if $\ell$ is the number of link components, there is an $R[H_1,\ldots,H_\ell]$-module structure on reduced Bar-Natan homology $\BNr(\Lk)$.
\end{theorem}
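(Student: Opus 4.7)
The plan is to enlarge the $H$-action to an $\Rcomm[H_1,\ldots,H_\ell]$-action by constructing one endomorphism per link component, and then to deduce basepoint-independence of $\BNr(\Lk)$ as a bigraded $\Rcomm$-module from the resulting symmetry.

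On the unreduced complex $\CBN(\Lk)$, for each component $c$ of $\Lk$ and a chosen point $p_c\in c$ away from crossings, I would define a chain endomorphism $x_c$ by multiplying by the Frobenius algebra generator $x$ on the unique circle containing $p_c$ in each full resolution. That $x_c$ is a chain map reduces to checking that Bar-Natan's merge and split maps~\eqref{eq:merge-split_maps_for_BN} are homomorphisms of $V$-modules, where $V=\Rcomm[H,x]/(x^2-Hx)$ acts through the marked tensor factor; this is a direct computation from the Frobenius structure (e.g. $\Delta(xa) = x\cdot\Delta(a)$). The chain-homotopy class of $x_c$ is independent of the choice of $p_c\in c$ by a standard basepoint-sliding argument, and the $x_c$ commute strictly and satisfy $x_c^2 = H\cdot x_c$. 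Hence $\CBN(\Lk)$ becomes a complex over $\Rcomm[H,x_1,\ldots,x_\ell]/(x_c^2-Hx_c)$, and Reidemeister invariance of this enhanced structure follows from naturality. Passing to the reduction, $\CBN^x(\Lk,p)=\im(x_p)\subseteq\CBN(\Lk)$ is a subcomplex and $\CBNr(\Lk,p)=h^0q^{+1}\im(x_p)$; the operators $x_c$ for $c\neq p$ preserve $\im(x_p)$ (they act on a disjoint tensor factor), while on $\im(x_p)$ the relation $x_p^2 = H x_p$ forces $x_p$ to act as multiplication by $H$. Setting $H_c := x_c$ for $c\neq p$ and $H_p := H$ then gives the desired $\Rcomm[H_1,\ldots,H_\ell]$-action on $\BNr(\Lk)$, together with the relations $H_c^2 = H_p H_c$.

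For basepoint-independence as a bigraded $\Rcomm$-module, I would use the short exact sequence of chain complexes
\[
0 \to q^{-1}\CBNr(\Lk,p) \xrightarrow{x_p} \CBN(\Lk) \to q^{+1}\CBNr(\Lk,p) \to 0,
\]
induced by the decomposition $V = xV \oplus \Rcomm[H]\cdot 1$ on the marked circle. The induced long exact sequence expresses the bigraded $\Rcomm$-module type of $\BN(\Lk)$, which is manifestly basepoint-independent, in terms of $\BNr(\Lk,p)$ together with its $H$-action. Combined with Proposition~\ref{prop:towersReduced}, which fixes the number of free $\Rcomm[H]$-summands at $2^{\ell-1}$, this constrains the total $\Rcomm$-dimension of the $H$-torsion of $\BNr(\Lk,p)$ in each bigrading to depend only on $\BN(\Lk)$, and hence to be basepoint-independent. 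The main obstacle is to strengthen this dimension equality into an honest bigraded $\Rcomm$-module isomorphism: as the trefoil-unknot example immediately preceding the theorem shows, the torsion $\Rcomm[H]$-structures of $\BNr(\Lk,p)$ and $\BNr(\Lk,q)$ can genuinely differ, so the argument must deliberately discard the $H$-action and match the torsion parts only as bigraded $\Rcomm$-modules; I expect this to be achievable by leveraging the commuting $H_c$-actions to permute torsion summands across different choices of basepoint.
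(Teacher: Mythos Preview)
Your construction of the commuting endomorphisms $x_c$ on $\CBN(\Lk)$ is sound, and restricting to $\im(x_p)$ does produce an $\Rcomm[H_1,\ldots,H_\ell]$-module structure on each individual $\BNr(\Lk,p)$. However, the resulting actions satisfy the extra relations $H_c^2 = H_p H_c$ for $c\neq p$, which is not what the paper's construction gives; the paper's $H_i$ is the genuine $H$-action obtained by moving the basepoint to the $i^{\text{th}}$ component, not multiplication by $x$ on an unmarked circle.

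The real gap, which you correctly identify yourself, is in the basepoint-independence argument. The long exact sequence you write relates $\BN(\Lk)$ to two shifted copies of $\BNr(\Lk,p)$, but the connecting homomorphism depends on $p$, so knowing $\BN(\Lk)$ together with the tower count from Proposition~\ref{prop:towersReduced} does not determine $\BNr(\Lk,p)$ as a bigraded $\Rcomm$-module. Your closing hope that the commuting $x_c$-actions will ``permute torsion summands'' is not a proof, and it is unclear how to turn it into one; the trefoil-plus-unknot example shows exactly that the $H$-torsion structure genuinely changes with $p$, so any argument must carefully explain why the underlying bigraded $\Rcomm$-modules nonetheless coincide.

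The paper sidesteps this obstacle entirely by a different, tangle-theoretic argument. Given two components $\Lk_1,\Lk_2$, one chooses a decomposition $\Lk=\Lk(\Ni,T)$ in which the two arcs of $\Ni$ lie on $\Lk_1$ and $\Lk_2$ respectively. Proposition~\ref{prop:alg_pairing_kh} then identifies $\BNr(\Lk,p\in\Lk_i)$ with $h^0q^1\Homology\bigl(\Mor(\DD(\Ni),\DD(T))\bigr)$, where $\DD$ is taken with the basepoint on the $i^{\text{th}}$ arc. But by Theorem~\ref{thm:OmegaFullyFaithful}, both of these morphism spaces are bigraded isomorphic to the single basepoint-free object $h^0q^1\Homology\bigl(\Mor(\KhTl{\Ni},\KhTl{T})\bigr)$ in $\Cob_{/l}$. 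This immediately gives basepoint-independence. In this common model, each $H_i$ is the operation of adding a genus to the cobordism component containing the $i^{\text{th}}$ tangle end, and these visibly commute.
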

\begin{proof}
In Section~\ref{sec:BNr} we showed that $\BNr(\Lk,p)$ only depends on the component of the link that contains the basepoint $p$. So it is enough to prove that moving the basepoint between the link components does not change $\BNr(\Lk,p)$, as a bigraded $R$-module. For that, fix two components $\Lk_1, \Lk_2$ of $\Lk$, and fix a decomposition $\Lk(\Ni,T)=\Lk$, such that the left and right components of $\Ni$ belong to $\Lk_1$ and $\Lk_2$, respectively. One can easily obtain such a decomposition by isotoping the link appropriately. 
Then, by Proposition~\ref{prop:alg_pairing_kh}, 
\begin{align*}
\BNr(\Lk,p\in \Lk_1) 
&\cong  \BNr(\Lk (\Li, {}^\ast T)) 
\cong  h^0 q^1\Homology( \Mor(\DD(\Li),\DD( {}^\ast T))) \quad\text{and}
\\ 
\BNr(\Lk,p\in \Lk_2)
&\cong \BNr(\Lk(\Lialt, T^\ast)) 
\cong h^0 q^1\Homology( \Mor(\DD(\Lialt),\DD( T ^\ast))).
\end{align*}
As bigraded $R$-modules, both are equal to 
$h^0 q^1\Homology(\Mor(\KhTl{\Ni},\KhTl{T}))$ by Theorem~\ref{thm:OmegaFullyFaithful}, which proves the first claim.

Now, to define the $H_i$-action on $\BNr(\Lk)$ one simply needs to take the version $\BNr(\Lk)=\BNr(\Lk, p \in \Lk_i)$, putting basepoint on the $i^\text{th}$ component. The fact that the different $H_i$ and $H_j$ actions commute follows from the comparison argument above: in the version $\BNr(\Lk)=h^0 q^1\Homology(\Mor(\KhTl{\Ni},\KhTl{T}))$ both actions are defined as adding a genus on the marked components (see Definition~\ref{def:NotationDotsAndH}). These operations adding genus clearly commute, resulting in the desired $R[H_1,\ldots,H_\ell]$-module structure on $\BNr(\Lk)$.
\end{proof}

\section{Classification results}\label{sec:classification}

In this section, we classify type~D structures over Bar-Natan's algebra \( \BNAlgH \) in terms of immersed curves on the 4-punctured sphere and we classify morphism spaces between any two such type~D structures in terms of wrapped Lagrangian Floer theory. This will follow from results that are more general (and do not pertain to Khovanov homology): we prove these two classification results for the categories of type~D structures over more general algebras associated with surfaces carrying certain extra structure. 

\subsection{Oriented surfaces with arc systems}\label{subsec:classification:setup}

    Let \( \Sigma \) be an oriented surface with boundary and without closed components and let \( A \) be a set of pairwise disjoint embedded arcs on \( \Sigma \) with boundary on \( \partial \Sigma \).     
    For each arc \( a\in A \), choose a closed neighbourhood \( N(a) \) of \( a \) such that \( N(a)\cap N(a')=\emptyset \) for any distinct arcs \( a,a'\in A \). 
    Let \( s_1(a) \) and \( s_2(a) \) be the closures of the two components of \( \partial N(a)\smallsetminus\partial\Sigma \) in \( \Sigma \) such that \( s_1(a) \) lies to the left of the oriented arc \( a \) and \( s_2(a) \) to its right; we call \( s_1(a) \) and \( s_2(a) \) the two \textbf{sides} of \( a \). 
    We also fix a foliation \( \mathcal{F}_a=I\times I \) of \( N(a) \) such that \( s_1(a) \), \( s_2(a) \) and \( a \) are leaves of \( \mathcal{F}_a \).
    Furthermore, we call the closures of the connected components of \( \Sigma\smallsetminus \bigcup_{a\in A}N(a) \) \textbf{faces} and denote the set of all faces by \( F(\Sigma,A) \).
    We denote the number of all sides that lie on the boundary of some face \(f\) in \( F(\Sigma,A) \) by \( n_f \). 
	We call~\( A \) (together with a choice of fixed~\( N(a) \) and foliation~\( \mathcal{F}_a \) as above) an \textbf{arc system} if 
	all faces \( f\in F(\Sigma,A) \) are annuli bounding exactly one component of \( \partial \Sigma \). We call those boundary components of \( \Sigma \) \textbf{inner} boundary components, denoted by \( \partial_i\Sigma \); we call all other boundary components, namely, those that are endpoints of arcs, \textbf{outer}. 

	Surfaces with arc systems correspond to marked surfaces with arc systems from~\cite[Definition~4.1]{pqMod}. Note that in this paper we will usually draw the surfaces \( \Sigma \) (or parts thereof) such that their normal vector fields, determined by the right-hand rule, point out of the plane of projection. This is opposite to the conventions used in~\cite{pqMod}.
	 

\begin{example}\label{exa:ThreePuncturedDisc}
Figure~\ref{fig:ExampleMarkedSurface} shows an arc system \( A:=\{b,c\} \) on the 3-punctured disc \( \Sigma \) and an arc system \( A^\perp:=\{b^\perp,c^\perp\} \) on \( -\Sigma \). The inner boundary components are drawn in black, outer ones in \textcolor{darkgreen}{green}. The oriented arcs in \( A \) are drawn in \textcolor{red}{red}, their neighbourhoods are shaded in light red (\textcolor{lightred}{\( \blacksquare \)}), bounded by solid arrows representing the sides of the arcs.
\end{example}

\begin{remark}\label{rem:dualarcsystem}
	The two arc systems from the previous example are dual to each other: given an arc system on a surface \( \Sigma \), replace each arc \( a \) by a perpendicular one connecting the inner boundary components of the two faces of \( \Sigma \) with respect to the original arc system on either side of \( a \). One can do this in such a way that all new arcs are pairwise disjoint. We now observe that outer boundary components become inner ones and vice versa, and moreover, each new face has exactly one inner boundary component, so the new arc system is well-defined. Finally, we change the orientation of the underlying surface \( \Sigma \).
\end{remark}

\begin{figure}[t]
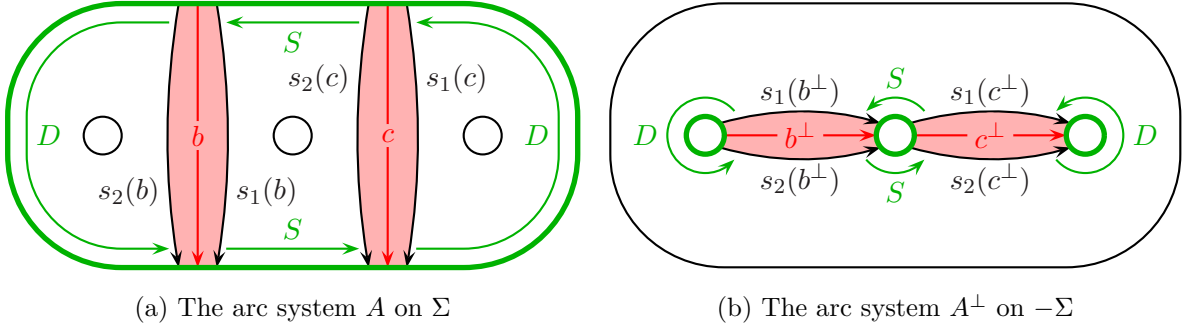

	\centering
	\begin{subfigure}[b]{0.49\textwidth}
	    \centering
		$\ArcSystemForTypeDStructures$
		\caption{The arc system \( A \) on \( \Sigma \)}\label{fig:ArcSystemForTypeDStructures}
	\end{subfigure}
	\begin{subfigure}[b]{0.49\textwidth}
		\centering
		$\ArcSystemForTwistedComplexes$
		\caption{The arc system \( A^\perp \) on \( -\Sigma \)}\label{fig:ArcSystemForTwistedComplexes}
	\end{subfigure}
	\caption{An arc system on the 3-punctured disc and its dual. The normal vector field in (a) points out of the projection plane, and into the plane in (b).}\label{fig:ExampleMarkedSurface}
\end{figure}

\begin{wrapfigure}{r}{0.3333\textwidth}
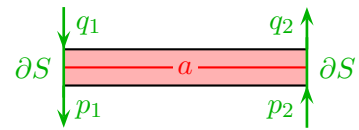

	\centering
	$\arcneighbourhood$
	\caption{A typical neighbourhood of an arc \( a \) }\label{fig:TypicalNeighbourhoodOfArc}\bigskip
\end{wrapfigure}
\myfixwrapfig

	Given an arc system \( A \) on a surface \( \Sigma \), consider the graph \( Q(\Sigma,A) \) whose vertices are obtained by contracting the closed neighbourhood of each arc in \( A \) to a single point and whose edges are deformation retracts of each face to its outer boundary component. By choosing the induced boundary orientation on \( \partial f \), the 1-cells inherit an orientation from \( \Sigma \), which turns \( Q(\Sigma,A) \) into a quiver. 
	 A typical neighbourhood of the image of an arc \( a\in A \) is shown in Figure~\ref{fig:TypicalNeighbourhoodOfArc}. Each arc \( a \) is the starting vertex of two arrows and the ending vertex of two arrows; note that these arrows need not be distinct.
	We associate with the pair \( (\Sigma,A) \) the path algebras with relations:
	\[ \mathcal{A}=\mathcal{A}(\Sigma,A):=\fieldTwoElements Q(\Sigma,A)/\mathcal{R}\quad\text{and}\quad\mathcal{A}^\perp=\mathcal{A}^\perp(\Sigma,A):=\fieldTwoElements Q(\Sigma,A)/\mathcal{R}^\perp\]
	where 
	\[ \mathcal{R}=\{p_1q_1=0=q_2p_2 \mid \text{ arcs }a\in A\}
	\quad\text{and}\quad
	\mathcal{R}^\perp=\{p_1p_2=0=q_2q_1 \mid \text{ arcs }a\in A\}\]
	Note that we follow the convention to read algebra elements from right to left.
	Every arrow in the quiver corresponds to an algebra element which we call an \textbf{elementary algebra element}. 
	For each arc \( a\in A \), denote the idempotent corresponding to~\( a \) by~\( \iota_a \) and let \( \mathcal{I} \) be the ring of all idempotents.
	We can consider both \( \mathcal{A} \) and \( \mathcal{A}^\perp \) as categories, where the underlying objects are given by the arcs in \( A \) in each case. Then for any \( a, b\in  A \)
	\[ \Mor_{\mathcal{A}}(a, b):=\iota_b.\mathcal{A}.\iota_a\qquad\qquad \Mor_{\mathcal{A}^\perp}(a, b):=\iota_b.\mathcal{A}^\perp.\iota_a\]
	and composition is given by algebra multiplication. 

\begin{remark}\label{rem:comparisonToKontsevich}
    The quivers obtained from an arc system \( A \) and its dual \( A^\perp \) on a surface \( \Sigma \) are the same. Moreover, we have
    \( \mathcal{A}(-\Sigma,A^\perp)=\mathcal{A}^\perp(\Sigma,A) \).
    The relations \( \mathcal{R}^\perp \) in the definition of \( \mathcal{A}^\perp \) resemble those in~\cite{HKK}. In fact the algebra \( \mathcal{A} \) is Koszul dual to the algebra \( \mathcal{A}^\perp \) in the sense of \cite[Definition 8.5]{LOT-mor}. 
    For the once-punctured torus, in particular, the corresponding algebras are opposite to each other; compare \cite{HRW}. In the sequel we will use \( \mathcal{A} \).  
\end{remark}
\begin{definition}
    Given an arc system \( A \) on a surface \( \Sigma \), we define a \( \mathbb{Q}^{\leq0} \)-grading on \( \mathcal{A} \) by setting \( q(p)=-\frac{2}{n_f} \) for any elementary algebra elements \( p \) of a face \(f\). This grading is referred to as the \textbf{quantum grading}; a \( \delta \)-grading is obtained  by setting \( \delta(p)=\tfrac{1}{2}q(p) \). 
\end{definition}

    Let \( \Mod^\mathcal{A} \) denote the category of type~D structures over \( \mathcal{A} \) whose differentials decrease the \( \delta \)-grading by 1 and preserve the quantum grading \( q \).

\begin{example}\label{exa:ThreePuncturedDiscQuiver}
    The quiver for the arc systems in Example~\ref{exa:ThreePuncturedDisc} is 
    \[
		\begin{tikzcd}[row sep=2cm, column sep=1.5cm]
	    b
		\arrow[leftarrow,in=145, out=-145,looseness=5]{rl}[description]{D}
		\arrow[leftarrow,bend left]{r}[description]{S}
		&
		c
		\arrow[leftarrow,bend left]{l}[description]{S}
		\arrow[leftarrow,in=35, out=-35,looseness=5]{rl}[description]{D}
		\end{tikzcd}
  \]
    which agrees with the quiver from Definition~\ref{def:BNAlgH}. 
    In fact, the algebras \( \mathcal{A}(\Sigma,A) \) and \( \BNAlgH \) are equal as bigraded algebras. Thus, \( \Mod^{\mathcal{A}} \) agrees with \( \Mod^{\BNAlgH} \).
\end{example}

\subsection{\texorpdfstring{An algebraic reinterpretation of type~D structures over \( \mathcal{A}(\Sigma,A) \)}{An algebraic reinterpretation of  type~D structures over A(Σ,A)}}\label{subsec:classification:precurves_algebra}
We will make use of an alternate description of the algebra \( \mathcal{A}(\Sigma,A) \) as a subalgebra of a larger algebra that is built out of simpler pieces.
    
    Let \( A \) be an arc system on an oriented surface \( \Sigma \).  For each face \( f\in F(\Sigma,A) \), let \( A_f \) be the free path algebra of the cyclic quiver with \( n_f \) vertices. Each side \( s \) of the face \(f\) corresponds to a vertex and thus to the constant path \( \iota_s \) at that vertex. 
    We denote the vector space generated by the idempotents of \( \mathcal{A}_f \) by \( \mathcal{I}_f \) and the (non-unital) subalgebra generated by paths of non-zero length by \( \mathcal{A}^+_f \).
    These fit into a short exact sequence:
	\[ 
	\begin{tikzcd}[row sep=10pt]
	0
	\arrow{r}{}
	&
	\mathcal{A}^+_f
	\arrow{r}{}
	&
	\mathcal{A}_f
	\arrow{r}{}
	&
	\mathcal{I}_f
	\arrow{r}{}
	&
	0
	\end{tikzcd}
	\]
	By taking the direct sum over all faces \( f\in F(S,M,A) \), we obtain the short exact sequence
	\[ 
	\begin{tikzcd}[row sep=10pt]
	0
	\arrow{r}{}
	&
	\overline{\mathcal{A}}^+
	\arrow{r}{}
	&
	\overline{\mathcal{A}}
	\arrow{r}{}
	&
	\overline{\mathcal{I}}
	\arrow{r}{}
	&
	0
	\end{tikzcd}
	\]
	where $\overline{\mathcal{A}}^+:=\bigoplus_f\mathcal{A}^+_f$, $\overline{\mathcal{A}}:=\bigoplus_f\mathcal{A}_f$, and $\overline{\mathcal{I}}:=\bigoplus_f\mathcal{I}_f$. We can identify \( \mathcal{I} \) with the subring of \( \overline{\mathcal{I}} \) given by \( \{\iota_{s_1(a)}+\iota_{s_2(a)} \mid a\in A\} \). Moreover, \( \mathcal{A} \) agrees with the subalgebra of \( \overline{\mathcal{A}} \) whose underlying vector space is given by \( \mathcal{I}\oplus\overline{\mathcal{A}}^+ \).
    For each face \( f\in F(\Sigma,A) \), let \( p_f \) be the sum of all length 1 paths in \( \mathcal{A}_f \), 
\( U_f:=p_f^{n_f} \)
 and \( U \) the sum of \( U_f \) over all faces \(f\). Our \textbf{standard basis} of \( \mathcal{A}^+_f.\iota_s \) is given by \( \{p_f^n.\iota_s\}_{n\geq1} \). We call \( n \) the \textbf{length of a basis element} \( p_f^n.\iota_s \). We denote the shortest element of the standard basis from a side \( s \) of a face to a side \( t \) of the same face by \( p^s_t \). 

\begin{example}\label{exa:ThreePuncturedDiscQuiver:Decomposed}
    For the quiver from Example~\ref{exa:ThreePuncturedDiscQuiver}, the definition above corresponds to a decomposition into three cyclic quivers:
    \[
    \begin{tikzcd}[row sep=2cm, column sep=1.5cm]
    s_2(b)
    \arrow[leftarrow,in=145, out=-145,looseness=5]{rl}[description]{D}
    &
    s_1(b)
    \arrow[leftarrow,bend left]{r}[description]{S}
    &
    s_2(c)
    \arrow[leftarrow,bend left]{l}[description]{S}
    &
    s_1(c)
    \arrow[leftarrow,in=35, out=-35,looseness=5]{rl}[description]{D}
    \end{tikzcd}
    \]
    Then, the element \( H\in\BNAlgH \) corresponds to \[ U=\iota_{s_2(b)}.D.\iota_{s_2(b)}+\iota_{s_1(b)}.SS.\iota_{s_1(b)}+\iota_{s_2(c)}.SS.\iota_{s_2(c)}+\iota_{s_1(c)}.D.\iota_{s_1(c)}.\]
\end{example}
    
The inclusions \( \mathcal{I}\hookrightarrow\overline{\mathcal{I}} \) and \( \mathcal{A}\hookrightarrow\overline{\mathcal{A}} \) induce a functor between the additive enlargements \( \Mat\mathcal{A} \) of \( \mathcal{A} \) and \( \Mat\overline{\mathcal{A}} \) of \( \overline{\mathcal{A}} \); compare \cite[section~4.2]{pqMod}. Moreover, in order to recover a given object in \( \Mat\mathcal{A} \) from its image under this functor, one only needs to remember how the generators in  idempotents \( \iota_{s_1(a)} \) and \( \iota_{s_2(a)} \) are matched up for each \( a\in A \). The same strategy appears in \cite[Section 3.4]{HRW}, but the surface decomposition used there is different.

\begin{definition}\label{def:SplittingCatsForEquivalenceFunctors}
    Let \( \Mat_i\overline{\mathcal{A}} \) be the category enriched over bigraded vector spaces
    \begin{itemize}
        \item whose objects are pairs \( (C, \{P_a\}_{a\in A}) \), where \( C \) is a bigraded right module over \( \overline{\mathcal{I}} \) and 
        \[ P_a\co C.\iota_{s_1(a)}\rightarrow C.\iota_{s_2(a)}\] 
        is a bigrading preserving vector space isomorphism for every arc \( a \); 
        \item whose morphism spaces \( \Mor((C, \{P_a\}_{a\in A}),(C', \{P'_a\}_{a\in A})) \) are defined as
		\[ \{\varphi\in\Mor_{\Mat\overline{\mathcal{A}}}(C,C') \mid \forall a\in A\co (\iota_{s_2(a)}.\varphi^\times.\iota_{s_2(a)})\circ P_a=P'_a\circ(\iota_{s_1(a)}.\varphi^\times.\iota_{s_1(a)})\},
		\]
		where \( \varphi^\times \) denotes the restriction of \( \varphi \) to its identity components; and 
		\item composition in \( \Mat_i\overline{\mathcal{A}} \) is inherited from \( \Mat\overline{\mathcal{A}} \). 
    \end{itemize} 
\end{definition}  

The categories   \(\Mat\mathcal{A}\) and \(\Mat_i\overline{\mathcal{A}}\) are equivalent \cite[Lemma~4.14]{pqMod}.



\begin{definition}\label{def:precurvesAlgebraic}
    Let us write \( \Mod_i^{\overline{\mathcal{A}}}=\Cx(\Mat_i\overline{\mathcal{A}}) \) for the category of complexes over \( \Mat_i\overline{\mathcal{A}} \). More explicitly, 
    \begin{itemize}
        \item objects are triples \( (C, \{P_a\}_{a\in A},\partial) \), where \( C \) is an object in \( \Mat\overline{\mathcal{I}} \), 
        \[ P_a\co C.\iota_{s_1(a)}\rightarrow C.\iota_{s_2(a)}\] 
        is a bigrading preserving isomorphism for every arc \( a \) and
        \[ \partial\co C\longrightarrow C\]
        defines an endomorphism of \( (C, \{P_a\}_{a\in A}) \) which decreases \( \delta \)-grading by 1, preserves quantum grading and satisfies \( \partial^2=0 \). 
        \item Morphism spaces are morphism spaces of the underlying objects in \( \Mat_i\overline{\mathcal{A}} \). 
        The differentials on these morphism spaces are given as usual by pre- and post-composition with the differentials. 
        \item Finally, \( \Mod_i^{\overline{\mathcal{A}}} \) inherits its composition from \( \Mat_i\overline{\mathcal{A}} \). 
    \end{itemize}
    We call an object in \( \Mod_i^{\overline{\mathcal{A}}} \) a \textbf{precurve}. If there is no risk of ambiguity, we often denote a precurve by \( C \) instead of \( (C, \{P_a\}_{a\in A},\partial) \). We call a precurve \( (C, \{P_a\}_{a\in A},\partial) \) \textbf{fully cancelled} if its differential does not contain any identity component, ie \( \partial^\times=0 \).
\end{definition}
    
Just as \(\Mat\mathcal{A}\) and \(\Mat_i\overline{\mathcal{A}}\) are equivalent, \( \Mod^{\mathcal{A}} \) and \( \Mod_i^{\overline{\mathcal{A}}} \) are equivalent as differential bigraded categories. Moreover, every bigraded type~D structure over \( \mathcal{A} \) is chain homotopic to a fully cancelled one~\cite[Lemma~4.17]{pqMod}. The same holds for precurves. 
    



\subsection{A geometric interpretation of precurves}\label{subsec:classification:precurves_geometry}


A fully cancelled precurve \( C \) may be described geometrically on the surface \( \Sigma \) with arc system \( A \). This corresponds to the train tracks introduced in \cite{HRW}; though the interpretation given here follows \cite{pqMod} more closely. For each side \( s \), we represent the fixed basis \( \{e^s_{i}\}_{i=1,\dots,n} \) of \( C.\iota_s \) by pairwise distinct dots \( \bullet \) on \( s \), which we order according to the orientation of \( s \). We label the \( i^\text{th} \) dot on \( s \) by \( (s,i) \).
Next, since we are assuming that the precurve is fully cancelled, every component
	\[
	\begin{tikzcd}
	e^s_i
	\arrow{r}{p_f^n.\iota_s}
	&
	e^{s'}_{i'}
	\end{tikzcd}
	\]
of the differential lies in \( \mathcal{A}_f^+ \) for some face \(f\).  Hence, we can represent it by an immersed oriented interval connecting the dot \( \bullet(s,i) \) to \( \bullet(s',i') \) on the face \(f\) which is homotopic to the path \( p_f^n.\iota_s \) in the quiver for the face \(f\). We call such an immersed interval a \textbf{two-sided \(f\)-join}. We then connect the dot for each generator of \( C \) which is neither the start nor end of a differential by an immersed curve on the face \(f\) to the inner boundary of \(f\). We call those intervals \textbf{one-sided \(f\)-joins}. 
We define the \textbf{length of an \(f\)-join} to be the length of the corresponding algebra element if the \(f\)-join is two-sided and 0 if it is one-sided. 

\begin{figure}[t]
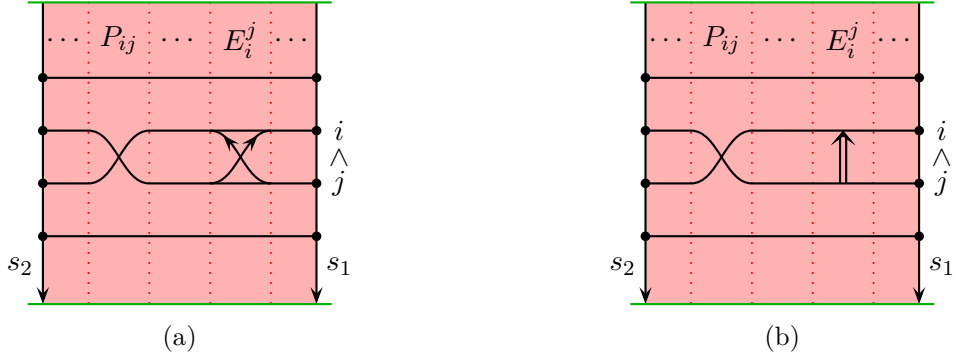

	\centering
	\begin{subfigure}[b]{0.49\textwidth}
		\centering
		$\traintrackneighbourhoodofarc$
		\caption{}\label{fig:traintrackmoves:KWZ}
	\end{subfigure}
	\begin{subfigure}[b]{0.49\textwidth}
		\centering
		$\neighbourhoodofarcHRWconvention$
		\caption{}\label{fig:traintrackmoves:HRW}
	\end{subfigure}
	\caption{Two equivalent views of a precurve in a neighbourhood of an arc: the conventions used in this paper are shown in (a) and the conventions from \cite{HRW} are shown in (b).}\label{fig:traintrackmoves}
\end{figure} 

Finally, each arc \( a\in A \) is labelled by a matrix \( P_a \), and we can also encode these matrices geometrically: consider a decomposition $P_a=P_a^{l_a}\dots P_a^1$
where, for each \( k=1,\dots,l_a \) and some \( i=i(k) \) and \( j=j(k) \) with \( j\neq i \), \( P_a^k \) is either equal to the elementary matrix \( E^{j}_{i}=(\delta_{i'j'}+\delta_{ii'}\delta_{jj'})_{i'j'} \) or the matrix \( P_{ij} \) corresponding to the permutation \( (i,j) \). 
We divide the neighbourhood \( N(a) \) of \( a \) along leaves of \( \mathcal{F}_a \) into \( l_a \)~segments, ordered from \( s_1(a) \) (left) to \( s_2(a) \) (right), and label the \( k^\text{th} \) segment by the matrix \( P_a^k \). Finally, each matrix \( P_a^k \) may be represented graphically as shown in Figure~\ref{fig:traintrackmoves:KWZ}. We call the crossing arcs for \( P_a^k=P_{ij} \) \textbf{crossings} and the pair of arrows for \( P_a^k=E^{j}_{i} \) \textbf{switches} or \textbf{crossover arrows}. This corresponds to the double-arrow notation from \cite[Figure 23]{HRW}, as shown in Figure~\ref{fig:traintrackmoves:HRW}. The advantage of the graphical notation used here is that it extends more easily to the setting where we replace $\fieldTwoElements$ by an arbitrary field; see Section \ref{subsec:classification:signs}.

	The point of the graphical notation for the matrices \( P_a \) is that we can recover the entry in the \( j^\text{th} \) column and \( i^\text{th} \) row of each \( P_a \) by counting (up to homotopy and modulo 2) all paths in the restriction of the precurve to \( N(a) \)
	\begin{enumerate}
		\item which start at \( \bullet(s_1(a),j) \) and end at \( \bullet(s_2(a),i) \),
		\item which are transverse to the foliation \( \mathcal{F}_a \) and 
		\item whose orientation agree with each crossover arrow they traverse. 
	\end{enumerate}
	Since
	\[ P_a^{-1}=(P_a^{l_a}\cdots P_a^1)^{-1}=(P_a^1)^{-1}\cdots(P_a^{l_a})^{-1}=P_a^1\cdots P_a^{l_a},\] 
	we can similarly read off the entry in the \( j^\text{th} \) column and \( i^\text{th} \) row of each \( P_a^{-1} \) by following paths from \( \bullet(s_2(a),j) \) to \( \bullet(s_1(a),i) \) which also satisfy the other two conditions above. 

The decomposition of \( P_a \) into elementary matrices is not unique; neither is the graphical representation of \( P_a \) in terms of crossings and crossover arrows. The relations between the elementary matrices \( E^j_i \) and \( P_{ij} \) give rise to certain moves for crossings and crossover arrows, which do not change the matrix \( P_a \). These moves are given in \cite[Figure 27]{HRW}; see also~\cite[Figure 26]{pqMod} (one such is illustrated in the example of Figure \ref{fig:simply-faced}). Rather than repeat these moves here, the reader can deduce them from Figure \ref{fig:traintrackmovesSigns} (which deals, more generally, with coefficients over an arbitrary field $\field$), by specializing the coefficient system in that figure to $\F$. 
Up to these moves and isotopies of the immersed curves, the above geometric interpretation of a precurve is unique and also uniquely defines a precurve. 

\begin{definition}\label{def:simplyfaced}
    A precurve is \textbf{simply-faced} if it is fully cancelled and every dot on each side lies on exactly one \(f\)-join.
\end{definition}

\newsavebox{\smlmat}
\savebox{\smlmat}{$\left(\begin{smallmatrix}1&0\\1&1\end{smallmatrix}\right)\left(\begin{smallmatrix}0&1\\1&0\end{smallmatrix}\right)=\left(\begin{smallmatrix}1&1\\0&1\end{smallmatrix}\right)\left(\begin{smallmatrix}1&0\\1&1\end{smallmatrix}\right)$}

\labellist \tiny
\pinlabel $D$ at 40 170  \pinlabel $D$ at 230 170 
\pinlabel $S$ at 120 165  \pinlabel $S$ at 120 272
\pinlabel {\bf M3a} at 52 104 \pinlabel {\bf M3b} at 137 107
\endlabellist
\begin{figure}[b]
\includegraphics[scale=0.75]{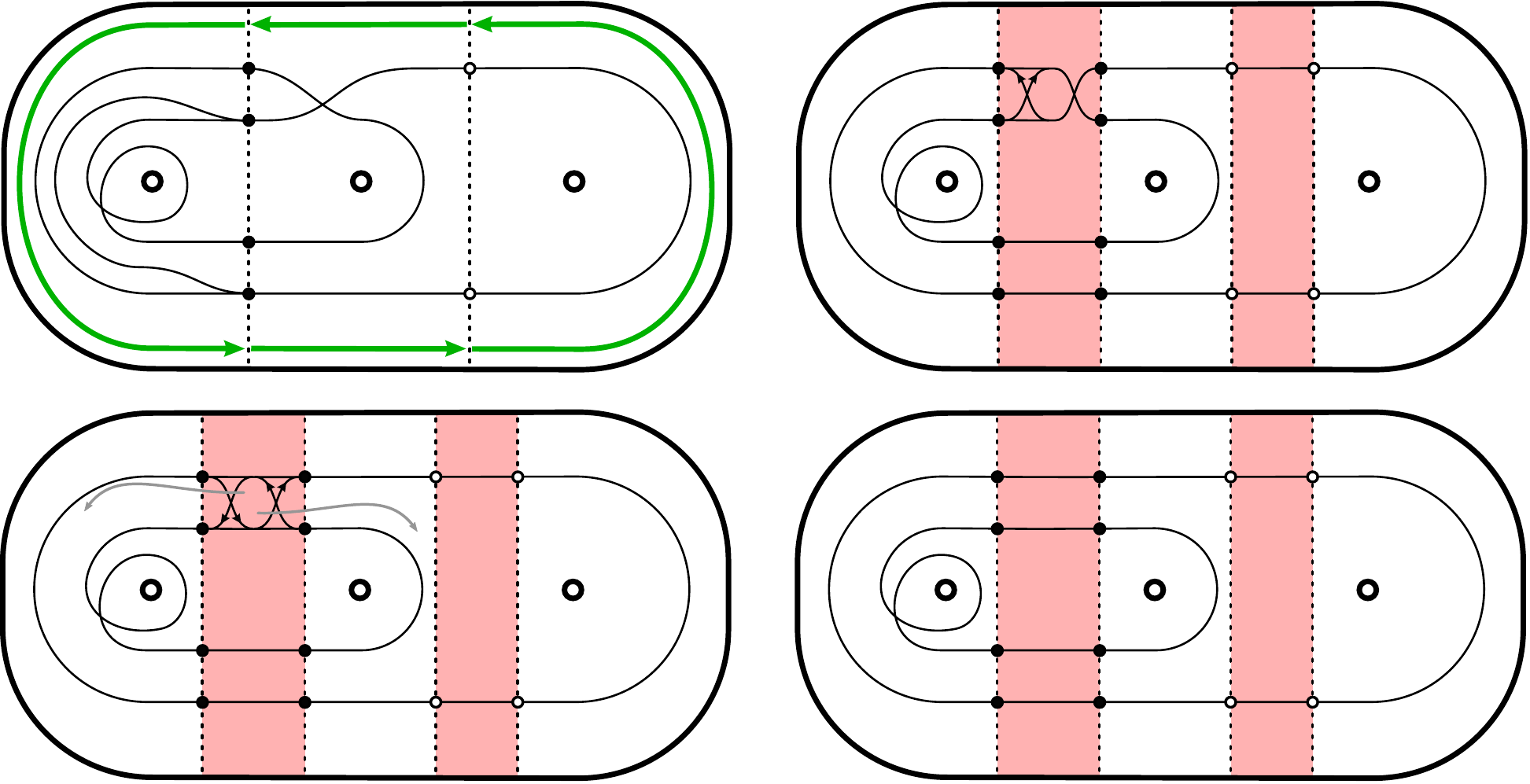}
\caption{Four views of one type~D structure over $\BNAlgH$:  the top left describes a type~D structure graphically, immersed in the 3-punctured disk as a train track (compare \cite{HRW}). This same type~D structure is expressed at top right as a simply-faced precurve, and given an equivalent definition at lower left; this equivalence corresponds to the choice of decomposition of $P_a$ according to~\usebox{\smlmat}. 
Finally, by applying Lemma \ref{lem:CalculusForPreloops} as indicated, we may cancel these switches, replacing the precurve with a pair of curves shown at lower right.}\label{fig:simply-faced}
\end{figure}

Said another way, a simply-faced precurve, when interpreted as a train track, has all of the complicated switching data confined to the arc neighbourhoods; see Figure \ref{fig:simply-faced}. Precurves in this form are more easily managed, and we may assume that precurves are simply-faced owing to the following:  


\begin{proposition}\label{prop:PreloopToCC}
	Every fully cancelled precurve is chain isomorphic to a simply-faced precurve.
\end{proposition}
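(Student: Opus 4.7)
\smallskip

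My plan is to prove the proposition by an iterative simplification argument, using the Clean-Up Lemma (Lemma~\ref{lem:AbstractCleanUp}) with homotopies that are geometrically realized as crossover arrows in the arc neighbourhoods. Define a complexity measure on fully cancelled precurves by
\[ c(C) \coloneqq \sum_{(s,i)} \max\bigl(0,\, m(s,i)-1\bigr), \]
where the sum runs over all dots $\bullet(s,i)$ and $m(s,i)$ denotes the number of $f$-joins incident to that dot. By construction $C$ is simply-faced exactly when $c(C)=0$, so it suffices to show that if $c(C)>0$ then there is a chain isomorphism to a fully cancelled precurve $C'$ with $c(C')<c(C)$.

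For the inductive step, pick a dot $\bullet(s,i)$ with $m(s,i)\geq 2$ and fix one distinguished $f$-join $\alpha$ incident to it. I will modify the differential by the Clean-Up Lemma using an endomorphism $h$ supported on a pair of dots inside the neighbourhood $N(a)$ of the arc $a$ containing $s$; geometrically $h$ is a single crossover arrow on $a$, chosen so that the change of basis $d \mapsto d + D(h)$ redirects every $f$-join incident to $\bullet(s,i)$ other than $\alpha$ away from $\bullet(s,i)$. The two local configurations one has to handle are essentially those drawn in Figures \ProofPreloopToCCHomotopyHi{} and \ProofPreloopToCCHomotopyHii{}, depending on whether the redirected join originally left $\bullet(s,i)$ into the same face as $\alpha$ or into the opposite face across the arc $a$. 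In either case the net effect on $c$ is a strict decrease: $\bullet(s,i)$ loses at least one incident join while the freshly-involved dot (on the opposite side of $a$) either was previously unused on that join or absorbs the redirection into an already-existing incidence.

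To apply the Clean-Up Lemma I have to verify that $h^{2}=0$ and $hD(h)=0$. Since $h$ lives entirely on a single crossover arrow in a single arc neighbourhood, $h^{2}=0$ holds because two consecutive crossover arrows on the same arc do not compose in $\Mor_{\Mat_i\overline{\mathcal{A}}}$ (they would force equal source and target dots); the algebra $\overline{\mathcal{A}}$ has vanishing differential, so $\partial(h)=0$; and $hdh=0$ follows from the fact that any nonzero contribution would require a composition $h\cdot d \cdot h$ whose middle factor re-enters $N(a)$ at the same pair of dots on which $h$ is supported, which is excluded because $C$ is fully cancelled (so $d$ has no length-zero components inside $N(a)$) and the dots involved can, after a preliminary relabelling, be taken distinct from all other dots carrying $d$-arrows of the relevant type. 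Together these imply $D(h)h=hD(h)=0$, so Lemma~\ref{lem:AbstractCleanUp} applies and produces a chain isomorphism to $(C,d+D(h))$, which is still fully cancelled after an application of the Cancellation Lemma to eliminate any identity components produced.

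The main obstacle is the bookkeeping: one must ensure that the redirection does not create new multiplicities elsewhere that cancel the local decrease of $c$. This is resolved case by case, exactly as in the simplification algorithms developed for type~D structures over $\overline{\mathcal{A}}$ in \cite[Section~4]{pqMod} and for train tracks in \cite{HRW}; the local moves for crossings and crossover arrows in Figure~\ref{fig:traintrackmoves} guarantee that each step is well-defined up to the ambiguity in the graphical representation of the matrices $P_a$. Since $c$ is a non-negative integer and strictly decreases at each step, the process terminates after finitely many steps, producing a simply-faced precurve chain isomorphic to~$C$.
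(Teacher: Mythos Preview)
The paper omits this proof, deferring to \cite[Proposition~4.16]{pqMod} and \cite[Proposition~23]{HRW}. Your overall plan---iterate the Clean-Up Lemma against a complexity counting excess joins at each dot---is the right shape, but your choice of the homotopy $h$ is wrong and the argument does not go through as written.

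You take $h$ to be ``a single crossover arrow on $a$'' supported in the arc neighbourhood $N(a)$. In this paper's conventions, crossover arrows encode the matrices $P_a$; they are not morphisms of precurves, and an identity-labelled endomorphism is constrained by the compatibility condition in Definition~\ref{def:SplittingCatsForEquivalenceFunctors}, so it cannot be freely supported on a single pair of dots in $N(a)$. More to the point, such a basis change does not redirect $f$-joins away from a fixed dot in the way you describe. The homotopies actually used (and depicted in the two figures you invoke) are morphisms labelled by \emph{face} algebra elements of $\mathcal{A}^+_f$: if $\bullet(s,i)$ carries two joins and the shortest one has other endpoint $\bullet(s',i')$, one takes $h$ to be a single arrow at $e^{s'}_{i'}$ labelled by an element of $\mathcal{A}^+_f$ of the appropriate length, so that one term of $D(h)$ cancels the second join at $(s,i)$. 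With the wrong $h$, your justifications for $h^2=0$ and $hdh=0$ (``two consecutive crossover arrows do not compose'', ``the middle factor re-enters $N(a)$'') are not meaningful. And even with the correct $h$, the remaining terms of $D(h)$ create new joins at $(s',i')$ and at the far end of the removed join, so your complexity $c$ need not decrease globally; the termination argument in \cite{pqMod} orders the reductions by join length so that the newly created joins are strictly longer than those already processed. That ordering is exactly the ``bookkeeping'' you wave away, and it relies on having chosen the right $h$ in the first place.
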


The proof of this result follows very closely that of \cite[Proposition~4.16]{pqMod} (see also \cite[Proposition~23]{HRW}) so we omit it. Towards simplifying precurves, we introduce the following moves (illustrated in Figure \ref{fig:GraphicCalculus}):

\begin{figure}[t]
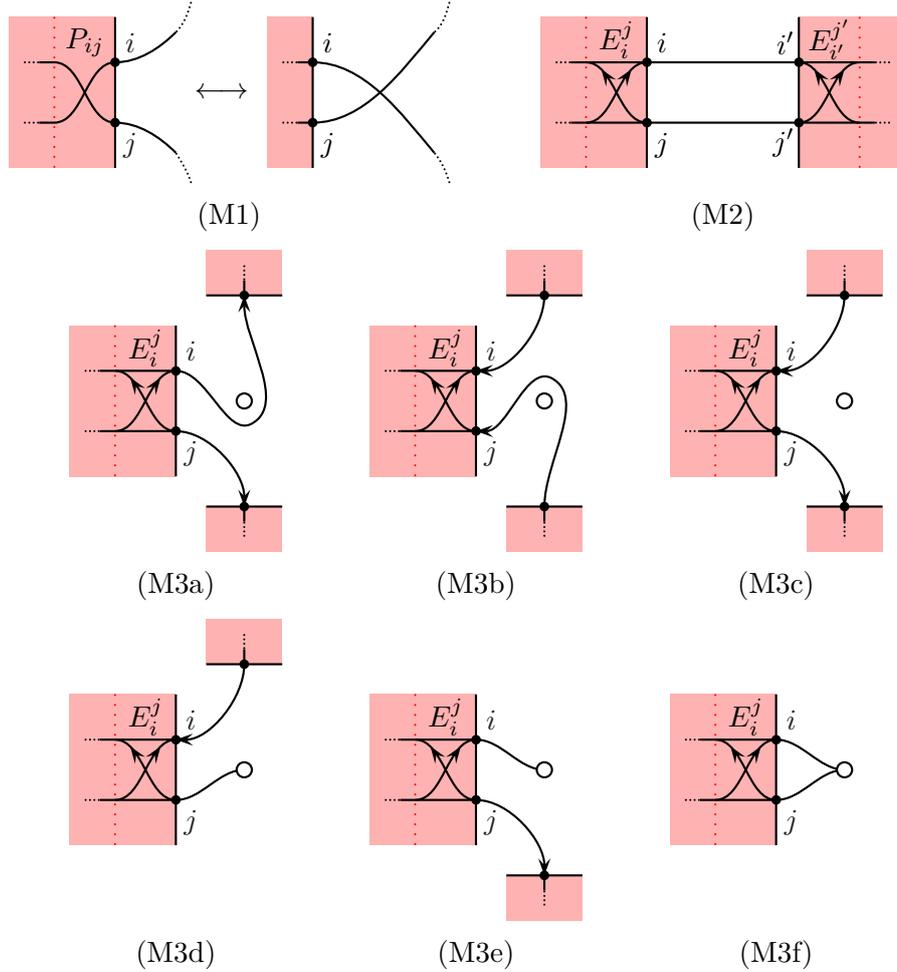

	\centering
	\begin{subfigure}{0.4\textwidth}\centering
		$\CalculusForPreloopsMi$\\
		(M1)
	\end{subfigure}
	\begin{subfigure}{0.4\textwidth}\centering
		$\CalculusForPreloopsMii$\\
		(M2)
	\end{subfigure}
	\\
	\begin{subfigure}{0.24\textwidth}\centering
		$\CalculusForPreloopsMiiiA$\\
		(M3a)
	\end{subfigure}
	\begin{subfigure}{0.24\textwidth}\centering
		$\CalculusForPreloopsMiiiB$\\
		(M3b)
	\end{subfigure}
	\begin{subfigure}{0.24\textwidth}\centering
		$\CalculusForPreloopsMiiiC$\\
		(M3c)
	\end{subfigure}
	\\
	\begin{subfigure}{0.24\textwidth}\centering
		$\CalculusForPreloopsMiiiD$\\
		(M3d)
	\end{subfigure}
	\begin{subfigure}{0.24\textwidth}\centering
		$\CalculusForPreloopsMiiiE$\\
		(M3e)
	\end{subfigure}
	\begin{subfigure}{0.24\textwidth}\centering
		$\CalculusForPreloopsMiiiF$\\
		(M3f)
	\end{subfigure}
	\caption{The graphical calculus for precurves: (M1) allows us to move a crossing into or out of and arc neighbourhood; (M2) allows us to cancel pairs of crossover arrows (or, equivalently, move crossover arrows along a parallel set of arcs in come face); and the (M3a)--(M3f) describe configurations of arrows that can be removed. Note that in general, this says that arrows traveling clockwise around a puncture can be removed.}\label{fig:GraphicCalculus}
\end{figure}

\begin{description}
		\item[(M1)] Multiply \(P_a\) on the right/left by \(P_{ij}\), depending on whether \(s=s_1(a)\) or \(s=s_2(a)\), and switch the endpoints of the two  two-sided \(f\)-joins ending in  \(\bullet(s,i)\) and \(\bullet(s,j)\). 
		\item[(M2)] Suppose \(\bullet(s,i)\) and \(\bullet(s,j)\) have the same bigrading. Suppose further that \(\bullet(s,i)\) and \(\bullet(s,j)\) lie on two (distinct) two-sided \(f\)-joins which are homotopic to each other relative to the sides of \(f\). Say the other endpoints of the two \(f\)-joins are \(\bullet(s',i')\) and \(\bullet(s',j')\), respectively. Then multiply \(P_a\) on the right/left by \(E^j_{i}\), depending on whether \(s=s_1(a)\) or \(s=s_2(a)\), and multiply \(P_{a'}\) on the right/left by \(E^{j'}_{i'}\), depending on whether \(s'=s_1(a')\) or \(s'=s_2(a')\).  
		\item[(M3)] Suppose \(\bullet(s,i)\) and \(\bullet(s,j)\) have the same bigrading. Suppose further that \(\bullet(s,i)\) and \(\bullet(s,j)\) lie on two (distinct) \(f\)-joins which diverge, ie one of the following applies: 
		\begin{enumerate}[label=(\alph*)]
		    \item Both \(f\)-joins are two-sided and start at \(\bullet(s,i)\) and \(\bullet(s,j)\), respectively, and the first \(f\)-join is strictly longer than the second. 
		    \item Both \(f\)-joins are two-sided and end at \(\bullet(s,i)\) and \(\bullet(s,j)\), respectively, and the second \(f\)-join is strictly longer than the first.  
		    \item Both \(f\)-joins are two-sided and the first ends at \(\bullet(s,i)\) and the second starts at \(\bullet(s,j)\). 
		    \item The first \(f\)-join is two-sided and ends at \(\bullet(s,i)\) and the second \(f\)-join is one-sided. 
		    \item The second \(f\)-join is two-sided and starts at \(\bullet(s,j)\) and the first \(f\)-join is one-sided. 
		    \item Both \(f\)-joins are one-sided. 
		\end{enumerate}
		Then multiply \(P_a\) on the right/left by \(E^j_{i}\) depending on whether \(s=s_1(a)\) or \(s=s_2(a)\). 
	\end{description}

These moves are sufficient to relate any two chain isomorphic precurves, according to the following lemma, which corresponds to \cite[Lemma~4.26]{pqMod}.

\begin{lemma}\label{lem:CalculusForPreloops}
	Let \((C, \{P_a\}_{a\in A},\partial)\) be a simply-faced precurve on a surface \(\Sigma\) with arc system~\(A\). For \(a\in A\), let \(s\) be a side of \(a\) and \(f\) the face adjacent to \(s\). Let \(\bullet(s,i)\) and \(\bullet(s,j)\) be two distinct dots on \(s\). Then \((C, \{P_a\}_{a\in A},\partial)\) is chain isomorphic to the precurve obtained by one of the moves (M1), (M2), or (M3).\qed
\end{lemma}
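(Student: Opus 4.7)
The proof will treat move (M1) separately from (M2) and (M3). For (M1), I will observe that permuting the two basis elements $\bullet(s,i)$ and $\bullet(s,j)$ is an honest vector-space isomorphism of $C.\iota_s$; replacing $P_a$ with $P_a\cdot P_{ij}$ or $P_{ij}\cdot P_a$ and swapping the endpoints of the two $f$-joins attached to $\bullet(s,i)$ and $\bullet(s,j)$ is precisely the data of this isomorphism viewed as a morphism in $\Mat_i\overline{\mathcal{A}}$, so the two precurves are chain isomorphic with no homotopy argument required.

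For (M2) and (M3), I will pass through the equivalence between $\Mod_i^{\overline{\mathcal{A}}}$ and $\Mod^{\mathcal{A}}$ to work with the corresponding type~D structure $N$ over $\mathcal{A}$, and apply the Clean-Up Lemma~\ref{lem:AbstractCleanUp}. The morphism $h\in\End(N)$ will have a single non-trivial component sending the generator at $\bullet(s,j)$ to the generator at $\bullet(s,i)$ with coefficient $\iota_a\in\mathcal{A}$; the hypothesis that $\bullet(s,i)$ and $\bullet(s,j)$ share a bigrading ensures that $h$ is bigrading-preserving, and $h^2=0$ is automatic since $i\ne j$. A direct unravelling of $D(h)=dh+hd$ (signs are irrelevant as we work over $\fieldTwoElements$) will show that $d+D(h)$ is exactly the differential of the new precurve, and bookkeeping of the identity-type components of $D(h)$ will confirm that the adjustments of $P_a$, and in (M2) also of $P_{a'}$, agree with those prescribed by the moves. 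Once $hD(h)=0$ is verified, the Clean-Up Lemma supplies the chain isomorphism.

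The main obstacle will be verifying $hD(h)=0$, equivalently $hdh=0$ since $\partial$ vanishes on $\mathcal{A}$. Because $h$ is non-zero only on the generator at $\bullet(s,j)$, this reduces to checking that $d(e^s_i)$ contains no term of the form $e^s_j\otimes r$ for any $r\in \iota_s\mathcal{A}\iota_s$; since such an $r$ would have to be a non-trivial element of the cyclic quiver algebra $\mathcal{A}_f^+$, this amounts to ruling out a ``return'' $f$-join from $\bullet(s,i)$ back to $\bullet(s,j)$. In (M2) the homotopy of the two $f$-joins forces the outgoing arrow from $\bullet(s,i)$ to target a dot on the opposite side of $a$, while in the six subcases of (M3) the divergence hypothesis—the strict length inequality in (M3a)--(M3c), or the presence of at least one one-sided join in (M3d)--(M3f)—directly forbids such a return by a combination of length, direction-in-the-cyclic-quiver, and simply-faced considerations. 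The delicate part of the write-up will be a uniform treatment of the six subcases of (M3); the overall strategy follows~\cite[Lemma~4.26]{pqMod}, and the arguments there translate to our more general setting without essential change.
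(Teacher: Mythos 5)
The paper gives no explicit proof of this lemma — it appeals directly to \cite[Lemma~4.26]{pqMod} — so you are supplying an argument that the paper only sketches by reference. Your overall route (direct basis permutation for (M1); Clean-Up Lemma~\ref{lem:AbstractCleanUp} applied to the single-component morphism $h$ with label $\iota_a$ for (M2) and (M3)) is indeed the same strategy. That said, two of the finer points you flag as the ``main obstacle'' are handled more easily than you suggest, and a third is not quite right as stated.

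First, $hD(h)=0$ is forced by the bigrading alone and needs no case analysis. Since $h^2=0$ and $\partial$ vanishes on $\mathcal{A}$, one only needs the $(e_i\to e_j)$-component $d_{ji}\in\iota_a.\mathcal{A}.\iota_a$ to vanish. But the hypothesis that $\bullet(s,i)$ and $\bullet(s,j)$ share a bigrading, together with the grading convention $\gr(x\xrightarrow{a}y)=\gr(y)-\gr(x)+\gr(a)$ and the fact that the differential preserves quantum grading, forces $q(d_{ji})=0$. Every element of $\mathcal{A}^+$ has strictly negative quantum grading, so $d_{ji}$ could only be a multiple of $\iota_a$; but the precurve is fully cancelled (simply-faced implies $\partial^\times=0$), so $d_{ji}=0$. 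Your proposed geometric argument about ``return'' $f$-joins is thus unnecessary for this step.

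Second, your plan to verify the $P_a$ (and, in (M2), $P_{a'}$) adjustment by ``bookkeeping of the identity-type components of $D(h)$'' will not work as written: since the precurve is fully cancelled, $D(h)=dh+hd$ is built entirely from $d$, hence has no identity components at all. The changes to $P_a$ and $P_{a'}$ are not visible as identity components of the new differential; they arise when one translates the Clean-Up isomorphism $1+h$ of type~D structures back through the equivalence $\Mod^{\mathcal{A}}\simeq\Mod_i^{\overline{\mathcal{A}}}$ into a morphism in $\Mat_i\overline{\mathcal{A}}$, where the compatibility constraint $(\iota_{s_2(a)}.\varphi^\times.\iota_{s_2(a)})\circ P_a = P'_a\circ(\iota_{s_1(a)}.\varphi^\times.\iota_{s_1(a)})$ is exactly what absorbs $E^j_i$ into $P_a$ (and $E^{j'}_{i'}$ into $P_{a'}$ in case (M2)). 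This is also where the divergence/homotopy hypotheses actually earn their keep: not for $hD(h)=0$, but to ensure that $d+D(h)$ reassembles into a precurve whose $f$-joins are unchanged and whose only modification is the prescribed elementary matrix factor, so that the output is again simply-faced. You should reorient the ``delicate part'' of your write-up around this matching, rather than around $hD(h)=0$.
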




Lemma \ref{lem:CalculusForPreloops}, ultimately, allows us to simplify an arbitrary simply-faced precurve considerably. To this end, our main object of interest is:

\begin{definition}\label{def:curves}
	Let \( \Sigma \) be a surface with an arc system \( A \). A \textbf{curve} on \( (\Sigma,A) \) is a pair \( (\gamma, X) \), where either
	\begin{enumerate}
		\item \( \gamma \) is an immersion of an oriented circle into \( \Sigma \), representing a non-trivial primitive element of \( \pi_1(\Sigma) \) and 
		\( X\in \GL_n(\fieldTwoElements) \) for some \( n \); or \label{def:curves:loops}
		\item \( \gamma \) is an immersion of an interval \( (I,\partial I) \) into \( (\Sigma,\partial_i\Sigma) \), defining a non-trivial element of \( \pi_1(\Sigma,\partial_i \Sigma) \) and 
		\( X=\id\in\GL_n(\fieldTwoElements) \) for some positive integer \( n \) \label{def:curves:paths}
	\end{enumerate}
	satisfying the following properties:
	\begin{itemize}
		\item \( \gamma \) restricted to each component of the preimage of each face is an embedding and 
		\item \( \gamma \) restricted to each component of the preimage of the neighbourhood \( N(a) \) of each arc \( a \) is an embedding, intersecting each leaf of \( \mathcal{F}_a \) exactly once.
	\end{itemize}
	\end{definition}
In case \eqref{def:curves:loops}, we call the curve \textbf{compact}, in case \eqref{def:curves:paths} \textbf{non-compact}. We call 
	\( X \) the \textbf{local system} of a curve \( (\gamma,X) \), which for  non-compact curves only records some positive integer \( n \). We consider curves up to homotopy through curves. Furthermore, we consider the local systems of compact curves up to matrix similarity, and also up to transposing-and-inverting the matrix together with inverting the orientation of \( \gamma \). As in~\cite[Definition~4.28]{pqMod}, we define a \textbf{bigraded curve} in terms of a consistent bigrading on the intersection points of the curve with the arcs in \( A \). 
	A \textbf{multicurve} is a finite set of bigraded curves such that all underlying curves are pairwise non-homotopic as unoriented curves. We denote the set of all such multicurves by \( \loops(\Sigma,A) \).

	
	Given an arc system \( A \) on a surface \( \Sigma \), we define a map
	\[ \Pi_i\co\loops(\Sigma,A)\rightarrow \ob\Mod_i^{\overline{\mathcal{A}}}/\text{(chain homotopy)}\]
	as follows: consider a single curve \( (\gamma,X) \) and choose a small immersed tubular neighbourhood of \( \gamma \). Replacing \( \gamma \) by \( \dim X \) parallel copies of \( \gamma \) in this neighbourhood, in each face \( f\in F(\Sigma,A) \) the joins of~\( \Pi_i(\gamma,X) \) are given by the intersection of these  \( \dim X \) parallel copies with~\(f\). Then pick an intersection point \( x \) of an arc \( a \) with \( \gamma \). Let the matrix \( P_a \) be the diagonal block matrix with blocks of dimension \( \dim X \) such that all blocks are equal to the identity matrix with the exception of the one corresponding to the intersection point \( x \). We define this block to be equal to \( X \) if \( \gamma \) goes through \( x \) from the left of \( a \) to its right (ie from \( s_1(a) \) to \( s_2(a) \)), and set it equal to \( (X^t)^{-1} \) otherwise. On all other arcs, we choose the identity matrix. 
	Finally, we extend \( \Pi_i \) to multicurves by taking unions on both sides. 
	
	Note that the definition of \( \Pi_i \) is independent of the choice of \( a \) and \( x \) up to homotopy in \( \Mod_i^{\overline{\mathcal{A}}} \), which can be seen by repeatedly applying (M2).  Similarly, we see that conjugation of the local system \( X \) of a loop \( (\gamma,X) \) does not change the homotopy type of the image under \( \Pi_i \).
	
	Since the categories of precurves and type~D structures over \( \mathcal{A} \) are equivalent, \( \Pi_i \) induces a map
	\[ \Pi\co \loops(\Sigma,A)\rightarrow\ob\Mod^{\mathcal{A}}/\text{(chain homotopy)}.\]
  
  \begin{remark}Restricting for a moment to 1-dimensional (trivial) local systems, the generators of $\Pi(\gamma)$ are in one-to-one correspondence with intersections between $\gamma$ and the arcs of $A$; note that the arc system $A$ forms a \emph{skeleton} in the sense that $\Sigma$ deformation retracts to $A \cup \{\text{outer boundary of }\Sigma\}$. We point out that the strategy of obtaining generators of a twisted complex $\Pi(\gamma)$ from intersections of $\gamma$ with a skeleton works in higher dimensions: in \cite{CDGG} the authors prove that an object in the wrapped Fukaya category of a Weinstein manifold, defined by a Lagrangian submanifold, is quasi-isomorphic to a twisted complex generated by the intersections of the Lagrangian with the skeleton.\end{remark}

Our main observation is that precurves can be simplified to curves: 

\begin{theorem}\label{thm:EverythingIsLoopTypeUpToLocalSystems}
	Any bigraded type~D structure over \( \mathcal{A} \) is homotopic to a type~D structure in the image of \(\Pi\). 
\end{theorem}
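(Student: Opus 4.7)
The plan is to reduce the problem to a purely combinatorial simplification of simply-faced precurves, and then to show that after exhaustively applying the moves (M1), (M2), (M3) from Lemma~\ref{lem:CalculusForPreloops}, the resulting precurve is forced to be in the image of $\Pi$. First I would use the equivalence of categories between $\Mod^{\mathcal{A}}$ and $\Mod_i^{\overline{\mathcal{A}}}$, together with the remark that every precurve is chain homotopic to a fully cancelled one, to replace an arbitrary bigraded type~D structure with a fully cancelled precurve. Then Proposition~\ref{prop:PreloopToCC} allows me to assume the precurve is simply-faced, so that in each face $f$ the $f$-joins are disjoint arcs (two-sided or one-sided) that are completely determined by the endpoints on the sides.

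Next, I would introduce a combinatorial complexity function on simply-faced precurves, along the lines of \cite[Proposition~4.30]{pqMod}. A natural choice is the lexicographically ordered tuple whose first entry is the total number of crossover arrows and whose refined entries record how many of those arrows connect pairs of dots with ``divergent'' $f$-joins in the sense of (M3a)--(M3f). The point of the divergence condition is that whenever a crossover arrow is adjacent to two $f$-joins whose shapes disagree on either side of the arc neighbourhood, precisely one of the six configurations listed in (M3) is present, and applying the corresponding move strictly reduces the complexity while preserving the chain isomorphism class. Iterating this process, which terminates because each step strictly reduces a well-founded invariant, I arrive at a simply-faced precurve in which every crossover arrow connects two $f$-joins which \emph{agree} on both of its sides (ie are parallel in the sense made precise by (M2)).

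At this point I would package the remaining configurations geometrically. Group together all dots belonging to a maximal ``parallel bundle'' of $f$-joins on both sides of every arc, so that within each bundle the only remaining switching data consists of crossover arrows between homotopic two-sided $f$-joins and of crossings between inequivalent bundles. Using (M1) I may push crossings between distinct bundles out of the arc neighbourhoods and into the faces, where they look exactly like transverse intersections of two immersed curves. Within a single bundle, (M2) allows me to concentrate all crossover arrows on a single chosen arc of that bundle; the resulting matrix is an invertible linear automorphism of the underlying $\fieldTwoElements$-vector space, and up to further (M2) moves it is well-defined up to matrix similarity. Reading the bundle as a homotopy class of an immersed curve $\gamma$ together with this invertible matrix $X$ exhibits it as a curve in the sense of Definition~\ref{def:curves}, so the whole precurve is visibly $\Pi$ of the resulting multicurve.

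The main obstacle I expect is verifying termination of the simplification algorithm: although each individual application of (M3) reduces the count of ``bad'' crossover arrows, the moves (M1) and (M2) needed to bring the precurve into a form where (M3) applies can in principle introduce new crossover arrows, and one must pick the complexity carefully so that it decreases under the entire compound move. The second delicate point is the global identification of the parallel bundles: locally each bundle is just a family of parallel $f$-joins, but to assemble these into a single immersed $1$-manifold one has to track how the bundles on adjacent faces connect through the arc neighbourhoods after all the switching data has been absorbed into the local systems. Both of these points are handled by an essentially identical argument in~\cite[Section~4.5]{pqMod} and~\cite[Section~3]{HRW}, and the bigrading is preserved throughout because each of (M1), (M2), (M3) only alters $P_a$ by a bigrading-preserving invertible matrix.
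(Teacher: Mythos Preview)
Your approach is essentially the same as the paper's: both reduce to simply-faced precurves and then defer the core arrow-pushing algorithm and its termination to \cite{HRW} (with \cite{pqMod} as the closer-in-setup reference). The paper's own proof is in fact shorter than yours and does not spell out a complexity function at all.

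There is one genuine point you miss that the paper explicitly addresses. In the present setting, unlike in \cite{HRW}, $f$-joins can wind around inner boundary components multiple times, so two distinct $f$-joins may start (or end) on the \emph{same} side of a face. This is a new feature that could in principle break the HRW algorithm, which assumed each strand meets each side at most once. The paper disposes of this by observing that the quantum grading separates such strands: two dots on the same side coming from joins of different winding numbers cannot share a bigrading, hence cannot be connected by a crossing or crossover arrow, and so can be treated as lying on different sides for the purposes of the algorithm. You should include this observation, since without it the appeal to \cite{HRW} is not quite justified.

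A couple of minor corrections: your reference to \cite[Section~4.5]{pqMod} is to the morphism classification, not the object classification; the relevant material is earlier in that paper. Also, your description of (M1) as pushing crossings ``out into the faces, where they look like transverse intersections'' is not quite right: after full simplification there are no crossings or crossover arrows between distinct (non-parallel) components at all, and intersections between different immersed curves in the faces are just ordinary transverse intersections of the underlying curves, not part of the $P_a$ data.
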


\begin{proof}
This is essentially proved in \cite{HRW}; see \cite{pqMod} for a setup closer to the one used here. The main technical result underpinning this is an algorithm that allows us to remove all of the crossover arrows with the exception of crossover arrows connecting parallel curves \cite[Proposition 29]{HRW}; Figure \ref{fig:simply-faced} shows a particular, very simple example in the case when the surface in question is a 4-punctured sphere. A new feature that arises in the present setting is that precurves may wind around the inner boundary components more than once, so in particular, they may return to the same side. Note, however, that the quantum grading ensures that those ends cannot be connected by crossings or crossover arrows, so they can be treated as if they were on different sides. 
\end{proof}

We will return to this result, and the key aspects of its proof, in Section \ref{subsec:classification:signs}, for the more general setting in which $\F$ is replaced by an arbitrary field $\field$. 


\subsection{Classification of morphisms between simply-faced precurves}\label{subsec:classification:morphisms}

The classification of morphisms is very similar to~\cite[Section~4.5]{pqMod}. We first homotope precurves into some standard pairing position. However, since it is slightly harder to keep track of intersection points when curves can wrap around inner boundary components multiple or even infinitely many times, the setup in this subsection is more elaborate than in~\cite{pqMod}.

    For each face \(f\) of a surface \( \Sigma \) with arc system \( A \), we identify \( f\smallsetminus\partial_i\Sigma \) with \( S^1\times[0,\infty) \), parametrized by polar coordinates via
    \begin{align*}
        \varphi_f\co \mathbb{R}\times[0,\infty)&\rightarrow S^1\times [0,\infty)\\
        (t_1,t_2)&\mapsto\left(\exp(\tfrac{2\pi i}{n_f}\cdot t_1),t_2\right)
    \end{align*}
    We do this in such a way that for a suitably choosen \( \varepsilon>0 \), the interval \( [s-2\varepsilon, s+2\varepsilon]\times{0} \) lies on the \( s^\text{th} \) side of the face \(f\). 
    \begin{definition}\label{def:MorSpaces}
    We say a bigraded precurve \( C \) on a surface \(\Sigma\) with arc system \(A\) is in a \textbf{first pairing position} if the following holds:
	\begin{enumerate}
	    \item For each face \(f\), the intersection points of \( C \) with the \( s^\text{th} \) side of \(f\) lies in an open \( \varepsilon \)-neighbourhood of the point \( \varphi_f(s+\varepsilon) \). \label{enu:pairingposSides}
	    \item On\label{enu:pairingposArcs} the neighbourhood \( N(a) \) of each arc \( a\in A \), the precurve stays in a small tubular neighbourhood of a straight line between the points chosen in~(\ref{enu:pairingposSides}).
	    \item The\label{enu:pairingposOuterJoinsI}  two-sided \(f\)-joins of \( C \) of absolute length \( a \) starting at the \( s^\text{th} \) side of \(f\) lie in a small tubular neighbourhood of the path 
	    \begin{align*}
	        \vartheta^{+}_f(s,a)\co [0,2a]&\rightarrow S^1\times [0,\infty)\\
	        t&\mapsto
	        \begin{cases}
	        \varphi_f(s+\varepsilon+t,t)& 0\leq t\leq a\\
	        \varphi_f(s+\varepsilon+a,2a-t)& a\leq t\leq 2a
	        \end{cases}
	    \end{align*}
	    The red curve in Figure~\ref{fig:MorphismsBetweenFJoinsTwoTwoIllustrated} shows a typical example of a path \(\vartheta^{+}_f(s,a)\).
	    \item The\label{enu:pairingposInnerJoinsI} one-sided \(f\)-joins of \( C \) starting at the \( s^\text{th} \) side of \(f\) lie in a small tubular neighbourhood of the wrapped ray
	    \begin{align*}
	        \vartheta^{+}_f(s,\infty)\co[0,\infty)&\rightarrow S^1\times [0,\infty)\\
	        t&\mapsto\varphi_f(s+\varepsilon+t,t)
	    \end{align*}
	    The red curve in Figure~\ref{fig:MorphismsBetweenFJoinsOneTwoIllustrated} shows a typical example of a ray \(\vartheta^{+}_f(s,\infty)\).
	\end{enumerate}
	We say a bigraded curve \( C \) on a surface \(\Sigma\) with arc system \(A\) is in a \textbf{second pairing position} if the above holds after replacing \( \varepsilon \) by \( -\varepsilon \) in conditions (\ref{enu:pairingposSides})--(\ref{enu:pairingposOuterJoinsI}), renaming the paths in condition (\ref{enu:pairingposOuterJoinsI})  \( \vartheta^{-}_f(s,a) \) and replacing condition (\ref{enu:pairingposInnerJoinsI}) by the following condition:
	\begin{enumerate}
	    \item[(4')] The\label{enu:pairingposInnerJoinsII} one-sided \(f\)-joins of \( C \) starting at the \( s^\text{th} \) side of \(f\) lie in a small tubular neighbourhood of the straight ray 
	    \begin{align*}
	        \vartheta^{-}_f(s,\infty)\co[0,\infty)&\rightarrow S^1\times [0,\infty)\\
	        t&\mapsto\varphi_f(s-\varepsilon,t)
	    \end{align*}
	\end{enumerate}
\end{definition}
For examples of curves in a second pairing position, see the blue curves in Figures~\ref{fig:MorphismsBetweenFJoinsTwoTwoIllustrated} and~\ref{fig:MorphismsBetweenFJoinsTwoOneIllustrated}, which show a path \( \vartheta^{-}_f(s'-a',a') \) and a ray \( \vartheta^{-}_f(s',\infty) \), respectively.
	We say a pair \( (C,C') \) of two bigraded curves \( C=(C, \{P_a\}_{a\in A},\partial) \) and \( C'=(C', \{P'_a\}_{a\in A},\partial') \) is in \textbf{pairing position}, if \( C \) is in a first pairing position, \( C' \) is in a second pairing position, the crossover arrows and crossings for \( P_a \) and \( P'_a \) lie in a small neighbourhood of the second side \( s_2(a) \) for each arc \( a \) and the two precurves intersect minimally with respect to these two positions. The following two definitions correspond to \cite[Definitions~4.35 and~4.37]{pqMod}.

\begin{figure}[t]
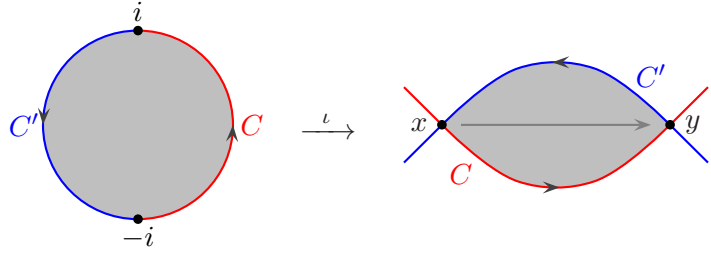
\centering
$\bigonConventions$
\caption{Our orientation conventions for bigons: \( \iota \) maps the unit disc in \( \mathbb{C} \) to \( \Sigma \). In both cases, the normal vector, determined by the right-hand rule, points out of the projection plane. The arrows on the boundary of the disc and bigon indicate the induced boundary orientations. The generator \( x \) is an upper generator (so it lies in the neighbourhood of an arc) and \( y \) is a lower generator (so it lies in some face).  }\label{fig:LagrangianHFConventions}
\end{figure}

\begin{definition}\label{def:LagrangianFH}
With a pair of two simply-faced precurves \( C \) and \( C' \) in pairing position, we associate a chain complex \( \CF(C,C') \) as follows:
as a vector space over \( \fieldTwoElements \), \( \CF(C,C') \) decomposes into two summands \( \CFtimes(C,C') \) and \( \CFplus(C,C') \). The former is generated by the intersection points between the curves in the neighbourhoods of the arcs, the latter by those on faces. We call the former \textbf{upper} and the latter \textbf{lower} intersection points/generators of \( \CF(C,C') \). 
The differential on \( \CF(C,C') \) is a map 
\[ d\co\CFtimes(C,C')\rightarrow\CFplus(C,C')\]
defined by counting bigons connecting upper intersection points to lower ones. More precisely, a bigon from an upper intersection point \( x \) to a lower one \( y \) is an orientation-preserving embedding
\[ \iota\co D^2\hookrightarrow \Sigma\]
satisfying the following properties:
\begin{itemize}
\item the restriction of \( \iota \) to the non-negative real part of \( \partial D^2 \) is a path from \( x=\iota(-i) \) to \( y=\iota(+i) \) on the first precurve \( C \) such that the orientation is opposite to the orientation of any crossover arrows in \( C \);
\item the restriction of \( \iota \) to the non-positive real part of \( \partial D^2 \) is a path from \( y \) to \( x \) on the second precurve \( C' \) such that the orientation is opposite to the orientation of any crossover arrows in \( C' \);
\item \( x \) and \( y \) are convex corners of the image of \( \iota \). 
\end{itemize}
See Figure~\ref{fig:LagrangianHFConventions} for an illustration of the above conventions.
If \( \mathcal{M}(x,y) \) denotes the set of such bigons up to reparameterization, the differential \(d\) is defined by
\[ d(x)=\sum \#\mathcal{M}(x,y)~y.\]
By construction, it only connects upper generators to lower ones. We denote the homology of \( \CF(C,C') \) by \( \HF(C,C') \) and call it the \textbf{wrapped Lagrangian Floer homology} of \( C \) and \( C' \).
\end{definition}

\begin{definition}\label{def:resolution}
With an intersection point \( x \) between two simply-faced precurves \( C \) and \( C' \) in pairing position, we may associate a morphism of precurves \( \varphi(x) \) from \( C \) to \( C' \) as follows. Consider the union of all paths \( \gamma\co [0,1]\rightarrow C\cup C' \) (considered up to homotopy) satisfying the following conditions: 
\begin{enumerate}
\item the restriction of \( \gamma \) to \( [0,\frac{1}{2}] \) is a path on \( C \) from a dot \( \bullet(s,i) \) to \( x \) which does not meet any other dot on \( C \) and which follows the orientation of any crossover arrows,
\item the restriction of \( \gamma \) to \( [\frac{1}{2},1] \) is a path on \( C' \) from \( x \) to a dot \( \bullet(s',i') \) which does not meet any other dot on \( C' \) and which follows the orientation of any crossover arrows,
\item \( \gamma \) turns right at the intersection point \( x \): $\resolution$.
\end{enumerate}
If such a path can be chosen to lie entirely inside an arc neighbourhood, we label it by the corresponding idempotent. Otherwise, the path can be chosen to lie entirely inside a face \(f\).  In this case, if the path winds around the inner boundary component of \(f\) in anticlockwise direction, it corresponds to an element of \( \mathcal{A}_f \), which we choose as its label. If the path winds in clockwise direction, we ignore it. Finally, counting the remaining paths modulo 2, we may regard them as a morphism \( \varphi(x) \) from \( C \) to \( C' \), which we call the \textbf{resolution of \( x \)}. 
\end{definition}

The following lemmas are proved in the same way as~\cite[Lemmas~4.38, 4.39 and 4.41]{pqMod}.

\begin{lemma}\label{lem:ResolutionWellDefined}
The resolution \(\varphi(x)\) is a well-defined morphism of precurves from \(C\) to \(C'\).\qed
\end{lemma}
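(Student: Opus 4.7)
The plan is to verify two things: (i) that the collection of paths in Definition~\ref{def:resolution} gives a well-defined element of the morphism space in \( \Mat\overline{\mathcal{A}} \), and (ii) that this element actually lies in the subspace of morphisms in \( \Mat_i\overline{\mathcal{A}} \), namely, that it intertwines the matrices \( \{P_a\} \) and \( \{P'_a\} \) in the sense of Definition~\ref{def:SplittingCatsForEquivalenceFunctors}.

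For (i), I would argue locally. In any arc neighborhood \( N(a) \) the paths allowed by the three conditions in Definition~\ref{def:resolution} are severely constrained: respecting crossover-arrow orientations and the prohibition on meeting other dots force such paths to move monotonically away from \( x \) across the foliation \( \mathcal{F}_a \), so the number of eligible path germs is bounded by the number of generators on the relevant side. In each face \( f \), by contrast, a path may wrap anticlockwise around the inner boundary component arbitrarily often, and each winding contributes a distinct algebra element in \( \mathcal{A}_f^+ \). Nevertheless, for any fixed pair of generators \( \bullet(s,i) \) and \( \bullet(s',i') \) and any fixed element \( p_f^n \), only finitely many homotopy classes of paths contribute, so the resulting coefficient in \( \overline{\mathcal{A}} \) is well-defined; the total morphism is a (possibly infinite) direct sum of such coefficients, one for each pair of generators. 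The bigrading constraint on precurves, together with the requirement that paths do not meet other dots, then limits which pairs \( (\bullet(s,i),\bullet(s',i')) \) can be connected for a given \( x \), so \( \varphi(x) \) is an honest element of the morphism space of \( \Mat\overline{\mathcal{A}} \).

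For (ii), I would inspect the standard pairing position inside each \( N(a) \). By Definition~\ref{def:MorSpaces}, the crossings and crossover arrows encoding \( P_a \) and \( P'_a \) have been pushed into a small neighborhood of the side \( s_2(a) \), while the remainder of \( N(a) \) consists of parallel strands of the two precurves. The identity components of \( \varphi(x) \) arise exactly from right-turning paths that traverse \( N(a) \), and the rules of Definition~\ref{def:resolution} (right turn at \( x \); respect crossover-arrow orientations) coincide precisely with the graphical recipe from Section~\ref{subsec:classification:precurves_geometry} for reading off the entries of \( P_a \) and \( P_a^{-1} \). Matching up the combinatorics side-by-side on \( s_1(a) \) and \( s_2(a) \) then reduces the intertwining relation
\[ (\iota_{s_2(a)}\varphi(x)^\times \iota_{s_2(a)})\circ P_a = P'_a\circ(\iota_{s_1(a)}\varphi(x)^\times \iota_{s_1(a)}) \]
to a tautological identity of matrix products.

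The main obstacle is the careful topological bookkeeping of right-turning paths near an intersection point that lies inside the crossing/crossover region of an arc neighborhood: one must check that a right-turning path that enters \( x \) along \( C \) and exits along \( C' \) respects crossover-arrow orientation on both sides consistently, so that the local contributions assemble into the correct matrix entries. Since we work over \( \fieldTwoElements \), there are no signs to track, and this is ultimately a finite, local verification on each side of each arc. With these two ingredients in place, \( \varphi(x) \) is a morphism of precurves, as claimed.
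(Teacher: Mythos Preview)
The paper does not give its own proof of this lemma; it simply records that it is proved in the same way as \cite[Lemma~4.38]{pqMod} and marks it with a \qed. Your two-step plan---first checking that the collection of right-turning paths defines an element of \(\Mor_{\Mat\overline{\mathcal{A}}}(C,C')\), then verifying the intertwining condition from Definition~\ref{def:SplittingCatsForEquivalenceFunctors}---is exactly the natural structure such a proof must take, and presumably matches the argument in \cite{pqMod}. Your sketch for part (ii), reducing the intertwining relation to the graphical rule from Section~\ref{subsec:classification:precurves_geometry} for reading off matrix entries of \(P_a\) and \(P'_a\), is the correct idea; the finiteness argument in (i) is also sound, since paths are forbidden from meeting intermediate dots and are therefore confined to a single \(f\)-join on each precurve (in faces) or to a bounded portion of \(N(a)\) (in arc neighbourhoods).
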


\begin{lemma}
The resolution \(\varphi(x)\) of any generator \(x\) of \(\CF(C,C')\) is homogeneous with respect to the bigrading.\qed
\end{lemma}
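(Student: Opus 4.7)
The plan is to verify that each of the basic morphisms summed in $\varphi(x)$ shares a common bigrading that depends only on $x$. Recall that a basic morphism $\bullet(s,i) \to a \cdot \bullet(s',i')$ of precurves has bigrading $\gr(\bullet(s',i')) - \gr(\bullet(s,i)) + \gr(a)$; the task is to show this value is constant across all contributing paths $\gamma$.

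First I would fix one contributing path $\gamma_0$ and record its bigrading. Any other contributing path $\gamma$ passes through the same intersection point $x$, so $\gamma$ and $\gamma_0$ agree near $x$ and differ only by modifications of the pieces on $C$ and on $C'$. These modifications can be decomposed into three elementary moves: sliding across a crossover arrow, passing through a crossing, or extending or shortening the path by traversing an f-join on one of the precurves. The fact that $C$ and $C'$ are bigraded precurves means that their differentials preserve $q$-grading and decrease $\delta$-grading by $1$; equivalently, traversing an f-join of length $n$ in a face $f$ shifts the endpoint bigrading by $\gr(p_f^n)$, while crossover arrows and crossings identify pairs of dots of equal bigrading.

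Next I would check that each of these local path-modifications adjusts the algebra label $a$ of the basic morphism by precisely the amount needed to cancel the endpoint shift. Sliding across a crossover arrow or crossing leaves the label untouched, and by the previous paragraph the endpoint bigradings are unaffected, so the bigrading of the basic morphism is preserved. Extending a path in the face $f$ by an f-join of length $n$ multiplies the label by $p_f^n$ and changes the endpoint bigrading by $-\gr(p_f^n)$ (or vice versa, depending on direction), so again the bigrading of the basic morphism is unchanged. Iterating these elementary comparisons, the bigrading of the basic morphism associated with $\gamma$ equals that of $\gamma_0$.

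The main technical obstacle will be the local analysis at $x$ itself, particularly for a lower intersection point in a face $f$, where the conventions that $\gamma$ turns right at $x$ and that only anticlockwise-winding paths contribute must be translated into an honest constraint on labels. One must in particular verify that two contributing paths differing by a full loop around an inner boundary component of $f$ give morphisms of the same bigrading: the loop carries the extra label $U_f = p_f^{n_f}$, whose bigrading must cancel against the bigrading shift between the associated dots, and this is forced by the compatibility of the bigrading on $C$ and $C'$ with $\gr(U_f)$. This argument parallels the proof of \cite[Lemma~4.41]{pqMod}, adapted to the present setting, where curves are permitted to wind around the inner boundary components more than once.
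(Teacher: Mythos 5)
The paper does not give its own proof here; it invokes the analogous argument from \cite[Lemma~4.41]{pqMod}. Your proposal is the natural reconstruction, and the core of it is sound: fix a contributing path, reduce to the case of paths related by elementary moves, and check that each move leaves the bigrading of the resulting basic morphism unchanged. The decisive point, which you identify correctly, is that crossings and crossover arrows encode the matrices $P_a$, $P_a'$, and these are \emph{bigrading-preserving} isomorphisms by the definition of a bigraded precurve; hence all dots that can serve as the start (resp.\ end) of a contributing path have the same bigrading, and the idempotent label does not change, so the morphism bigrading is the same.

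However, your third elementary move — ``extending or shortening the path by traversing an $f$-join'' — is both unnecessary and, as you state it, incorrect. The identity you invoke (``traversing an $f$-join of length $n$ shifts the endpoint bigrading by $\gr(p_f^n)$'') does not follow from $C$ being bigraded. If $e^s_i \xrightarrow{p_f^n} e^{s'}_{i'}$ is a component of the differential, then by the grading convention of Remark~\ref{rmk:gradings_of_morphisms}, $\gr(e^{s'}_{i'}) - \gr(e^s_i) + \gr(p_f^n) = q^0 h^1$, so the shift in dot grading is $q^0 h^1 - \gr(p_f^n)$, not $-\gr(p_f^n)$ (and certainly not $+\gr(p_f^n)$). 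Consequently, replacing the endpoint dot and multiplying the label by $p_f^n$ would change the morphism bigrading by $\delta^{-1}$, not leave it invariant. The reason this does not break your conclusion is that the move never arises: Definition~\ref{def:resolution} requires the two halves of $\gamma$ to meet no dot other than their endpoints, and $f$-joins run from dot to dot (or to the inner boundary), so extending across an $f$-join is forbidden. The same remark disposes of your worry about two paths differing by a full loop around an inner boundary: such a path would necessarily re-cross a dot and is not a valid contributing path. In short, the only ambiguity comes from moves inside arc neighbourhoods, and your treatment of those is correct; I would drop the $f$-join discussion and the purported cancellation involving $\gr(U_f)$ rather than repair it.
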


\begin{definition}
We endow \( \CF(C,C') \) with a \textbf{bigrading} by defining the bigrading of any intersection point to be the bigrading of its resolution. 
\end{definition}

\begin{lemma}
The differential \(d\) on \(\CF(C,C')\) decreases the \(\delta\)-grading by 1 and preserves the quantum grading.\qed
\end{lemma}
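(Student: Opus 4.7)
The plan is to reduce the statement to a general grading property of the differential $D$ on morphism spaces of type~D structures, via the resolution map $x\mapsto \varphi(x)$. Recall that the bigrading on $\CF(C,C')$ is defined so that the bigrading of an intersection point $x$ equals the bigrading of its resolution $\varphi(x)\in\Mor(C,C')$. By construction of $\Mor$ on the underlying differential bigraded category (Definitions~\ref{def:dgcat} and~\ref{def:CatOfComplexes}), the differential $D$ preserves $q$ and increases $h$ by $1$, hence decreases $\delta=\tfrac{1}{2}q-h$ by $1$. So it suffices to show that for every bigon $\iota\colon D^2\hookrightarrow\Sigma$ contributing to $d$, the resolution $\varphi(y)$ of its lower corner appears as a summand of $D(\varphi(x))$, where $x$ is its upper corner.

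First I would fix such a bigon and analyse it locally. Since $C$ and $C'$ are in pairing position, $x$ lies in some arc neighbourhood $N(a)$ and $y$ lies in some face $f$ incident to $a$ (the one on whose side $s$ the bigon sits). The sides of the bigon lie on a single $f$-join of $C$ and a single $f$-join of $C'$, each in the standard form $\vartheta^{\pm}_f(s,\bullet)$ of Definition~\ref{def:MorSpaces}. The enumeration of combinatorial types of such bigons is finite and closely parallels the corresponding case analysis in the proof of \cite[Lemma~4.41]{pqMod}: one distinguishes whether the joins on each of $C$ and $C'$ are one- or two-sided, whether either side of the bigon winds around the inner boundary of $f$, and the relative lengths of the two joins. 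In each case the bigon itself is an embedded disc in $\Sigma$ whose interior is disjoint from $C\cup C'$.

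Second, I would compute $\varphi(x)$ and $\varphi(y)$ explicitly. Choose the dot on $C$ used to resolve both $x$ and $y$ to be the same dot $\bullet(s_0,i_0)$, chosen on the portion of $C$ that bounds the bigon (and similarly for $C'$, with a dot $\bullet(s_0',i_0')$). With such choices, Definition~\ref{def:resolution} yields $\varphi(x)=\bullet(s_0,i_0)\xrightarrow{\,\alpha\,}\bullet(s_0',i_0')$ and $\varphi(y)=\bullet(s_0,i_0)\xrightarrow{\,\alpha'\,}\bullet(s_0',i_0')$, where in each case $\alpha$ (resp.~$\alpha'$) is the algebra element encoded by the two halves of the path through $x$ (resp.~$y$) that turns right at the intersection point. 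Comparing the two paths directly on the bigon shows that $\alpha'$ factors as $\alpha'=\beta\cdot\alpha$ (or $\alpha\cdot\beta$), where $\beta\in\mathcal{A}_f$ is precisely the elementary algebra element associated with the component of the differential of $C'$ (resp.~$C$) that bounds the bigon on the $C'$-side (resp.~$C$-side). In other words, $\varphi(y)$ is exactly the summand of $d'\circ\varphi(x)$ (or $\pm\varphi(x)\circ d$) corresponding to that differential component, and thus $\varphi(y)$ is a term in $D(\varphi(x))$.

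Finally, the $D$-invariance of the quantum grading and $D$'s effect of $-1$ on the $\delta$-grading, established in Section~\ref{sec:AlgStructFromGDCats} (see Remark~\ref{rmk:gradings_of_morphisms}), immediately yield $q(\varphi(y))=q(\varphi(x))$ and $\delta(\varphi(y))=\delta(\varphi(x))-1$. By the definition of the bigrading on $\CF(C,C')$, this is what was to be shown. The main obstacle in the argument is the bigon case analysis in step two: one must verify uniformly, and in particular in the new configurations where an $f$-join winds around the inner boundary of $f$ (which do not occur in \cite{pqMod}), that the algebra elements $\alpha$ and $\alpha'$ differ by precisely one elementary factor $\beta\in\mathcal{A}_f$ corresponding to an honest differential component of $C$ or $C'$; this is a routine but careful picture-chase using the standard paths $\vartheta^{\pm}_f(s,\bullet)$ that define pairing position.
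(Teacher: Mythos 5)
Your overall strategy — reduce to showing that each bigon from an upper generator $x$ to a lower generator $y$ corresponds to a term of the algebraic differential $D$ applied to $\varphi(x)$, then invoke the homogeneity of $D$ — is sound and matches the spirit of the proof that the paper defers to (\cite[Lemma~4.41]{pqMod}). The paper offers no proof body for this lemma beyond that citation, and that cited lemma is established by exactly the kind of local bigon case analysis you describe. Your organisation is slightly different in that you deduce the grading shift from $D$'s homogeneity rather than doing explicit grading arithmetic along the bigon's boundary, but the substance — fix a bigon, localise to the two $f$-joins and the arc neighbourhood that bound it, read off the algebra elements from the standard paths $\vartheta^{\pm}_f(s,\bullet)$ — is the same.

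Two small points worth correcting or sharpening. First, in step two you say $\beta$ is ``the elementary algebra element'' associated with the bounding differential component; in general a differential component of $C$ or $C'$ is a power $p_f^k\in\mathcal{A}_f$ with $k\geq 1$ and need not be elementary, so $\beta$ should be ``the algebra element labelling the differential component,'' not an elementary one. This does not affect the conclusion since only homogeneity of $D$ is used, not the length of $\beta$. Second, it is worth being explicit that you do not actually need the full resolution $\varphi(y)$ to appear as a summand of $D(\varphi(x))$; because the preceding lemma shows resolutions are homogeneous, it suffices that a \emph{single} component of $\varphi(y)$ matches a nonzero term in $D(\varphi(x))$, which is what your choice of the dots $\bullet(s_0,i_0)$ and $\bullet(s'_0,i'_0)$ on the bigon boundary actually produces. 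You correctly flag that the new configurations in this paper — $f$-joins that wind around an inner boundary component multiple or infinitely many times — require redoing the case analysis from \cite{pqMod}, and that this case-check is the true content of the argument; that is also where the paper's reference to Figure~\ref{fig:BigonCounts} (replacing the corresponding figure in \cite{pqMod}) does the work.
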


Given two fully cancelled precurves \( C \) and \( C' \), let us write \( (\Mor^+(C,C'),D^+) \) for the subcomplex of \( (\Mor(C,C'),D) \) generated by those morphisms which do not contain an identity component. Similarly, let \( \Mor^\times(C,C') \) be the vector space of all morphisms that consist of identity components only. Then \( (\Mor(C,C'),D) \) is naturally chain isomorphic to the cone of the morphism 
\[ 
 \beta\co
 (\Mor^\times(C,C'),0)
 \hookrightarrow
 (\Mor(C,C'),D)
 \xrightarrow{~D~}
 (\Mor(C,C'),D)
 \twoheadrightarrow
 (\Mor^+(C,C'),D^+),
\]
where the first and third maps are induced by the inclusion \( \mathcal{A}^\times\hookrightarrow\mathcal{A} \) and surjection \( \mathcal{A}\twoheadrightarrow\mathcal{A}^+ \), respectively. Since \( D^+D=0 \), \( \beta \) induces a homomorphism \( \beta_\ast \) on homology. Naturally, this homomorphism agrees with the boundary map of the long exact sequence induced by the short exact sequence of chain complexes
\begin{equation*}
0
\rightarrow
(\Mor^+(C,C'),D^+)
\hookrightarrow
(\Mor(C,C'),D)
\twoheadrightarrow
(\Mor^\times(C,C'),0)
\rightarrow
0.
\end{equation*}
So in summary, we obtain
\begin{align*}
\Homology(\Mor(C,C'),D)
&\cong  
\Homology\left[
(\Mor^\times(C,C'),0)
\xrightarrow{~\beta~}
(\Mor^+(C,C'),D^+)
\right]
\\
&\cong 
\Homology\left[
\Mor^\times(C,C')
\xrightarrow{~\beta_\ast~}
\Homology(\Mor^+(C,C'),D^+)
\right].
\end{align*}
We are now ready to state and prove the central result of this subsection; compare~\cite[Theorem~4.43]{pqMod}:

\begin{theorem}\label{thm:PairingMorLagrangianFH}
Given a pair of simply-faced precurves \(C\) and \(C'\) in pairing position on a surface \(\Sigma\) with arc system \(A\), the chain complex \(\CF(C,C')\) and the mapping cone of the map
\[
\beta_\ast\co \Mor^\times(C,C')
\longrightarrow
\Homology(\Mor^+(C,C'),D^+)
\]
are bigraded chain isomorphic via the correspondence between generators of \(\CF(C,C')\) and their resolutions. In particular, 
\(\HF(C,C')\cong \Homology(\Mor(C,C'),D).\)
\end{theorem}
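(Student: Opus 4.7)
The plan is to construct a bigraded linear isomorphism from $\CF(C,C')$ to the underlying vector space of the mapping cone of $\beta_\ast$, sending each intersection point $x$ to (the class of) its resolution $\varphi(x)$, and then to verify that the differential on $\CF$ corresponds exactly to $\beta_\ast$ under this identification. First, observe from Definition~\ref{def:resolution} that the resolution $\varphi(x)$ of an upper generator $x$ (lying in an arc neighbourhood) is an $\Rcomm$-linear combination of identity components, and hence lies in $\Mor^\times(C,C')$; conversely, the resolution of a lower generator $y$ (lying in a face $f$) is a combination of components labelled by non-constant paths in $\mathcal{A}_f^+$, hence represents a class in $\Homology(\Mor^+(C,C'),D^+)$. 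I would then show that the two assignments $x\mapsto \varphi(x)$ and $y\mapsto [\varphi(y)]$ define bigraded bijections onto $\Mor^\times(C,C')$ and $\Homology(\Mor^+(C,C'),D^+)$ respectively. This is a local calculation, one for each arc and one for each face, using the standard layout imposed by the pairing position of Definition~\ref{def:MorSpaces}.

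The core of the argument is then to verify that the differential $d$ of $\CF(C,C')$, which counts bigons from upper to lower generators, corresponds to $\beta_\ast$ under the above identification. For an upper generator $x$, unfolding $\beta$ one obtains the class of $D(\varphi(x))$ in $\Mor^+(C,C')$, and by the Leibniz rule this equals $\partial'\circ\varphi(x)\pm\varphi(x)\circ\partial$. Geometrically, pre- or post-composing $\varphi(x)$ with a two-sided $f$-join of $C$ or $C'$ emanating from the arc on which $x$ sits produces a new path on $C\cup C'$ which, after a right turn, is precisely the resolution $\varphi(y)$ of some lower intersection point $y$; moreover, the strip swept out between the old and new path is an embedded bigon contributing to $\mathcal{M}(x,y)$. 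Conversely, every bigon in $\mathcal{M}(x,y)$ arises in this way from a unique term in $D(\varphi(x))$. Together with the sign/parity analysis (trivial in this $\mathbb{F}_2$ setting, cf.\ Section~\ref{subsec:classification:signs}) this gives the identity $\beta_\ast(\varphi(x))=\sum_y \#\mathcal{M}(x,y)\cdot[\varphi(y)]$, which is exactly the mapping cone differential on the image of $\CFtimes(C,C')$.

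The main obstacle, and what makes the present statement not merely a cosmetic restatement of \cite[Theorem~4.43]{pqMod}, is the face-local bookkeeping in the presence of wrapping: one-sided $f$-joins have been deformed into infinite spirals $\vartheta^{\pm}_f(s,\infty)$ while two-sided joins are laid out along the paths $\vartheta^{\pm}_f(s,a)$, and curves may now return to the same side many times. I therefore expect that the technical heart of the proof is a face-by-face dictionary showing that (i) the intersection points of $C$ and $C'$ in a face $f$ biject with pairs consisting of an endpoint of a join of $C$ and of a join of $C'$ whose associated algebra elements in $\mathcal{A}_f^+$ share a common prefix, and (ii) the relations defining $\Homology(\Mor^+(C,C'),D^+)$ match exactly the identifications produced by composing with $\partial$ or $\partial'$. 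Once this dictionary is established, together with the analogous, much simpler, analysis inside each arc neighbourhood (which identifies $\Mor^\times(C,C')$ with $\CFtimes(C,C')$), the theorem follows formally by combining the mapping cone description of $\Homology(\Mor(C,C'),D)$ recalled before the statement with the identification of differentials carried out above.
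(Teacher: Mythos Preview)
Your overall strategy is correct and matches the paper's: identify upper intersection points with $\Mor^\times(C,C')$, lower intersection points with a basis of $\Homology(\Mor^+(C,C'),D^+)$, and then verify that the bigon count realizes $\beta_\ast$. You also correctly locate the technical difficulty in the face-local analysis caused by wrapping.

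That said, your heuristic (i)---that lower intersection points biject with ``pairs of endpoints whose algebra elements share a common prefix''---is not the right picture and would not survive contact with the actual computation. What the paper does instead is a four-case analysis depending on whether each of the two $f$-joins is one- or two-sided. In the two-sided/two-sided case, for instance, the intersection points come in two flavours, one on each of the two straight segments $\varphi_f(\{s+\varepsilon+a\}\times[0,a])$ and $\varphi_f(\{s'-\varepsilon\}\times[0,a'])$; these are indexed by integers $l$ satisfying explicit inequalities, and their resolutions are respectively a \emph{pair} of parallel ``horizontal'' morphisms and a single ``diagonal'' morphism. The inequalities on $l$ are precisely what guarantees these morphisms lie in $\ker D^+$ but not in $\operatorname{im} D^+$. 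The other three cases (one-sided vs.\ two-sided etc.) are simpler specializations. So the dictionary is not about common prefixes but about length constraints matching the geometric range of the spirals and straight segments. For the final step identifying $\beta_\ast$ with $d$, the paper's criterion is cleaner than your Leibniz-rule sketch: a generator of $\Homology(\Mor^+(C,C'))$ is the target of a bigon precisely when it admits a null-homotopy via $D$ that is \emph{not} a null-homotopy via $D^+$ (i.e.\ the homotopy has an identity component).
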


\begin{figure}[b]
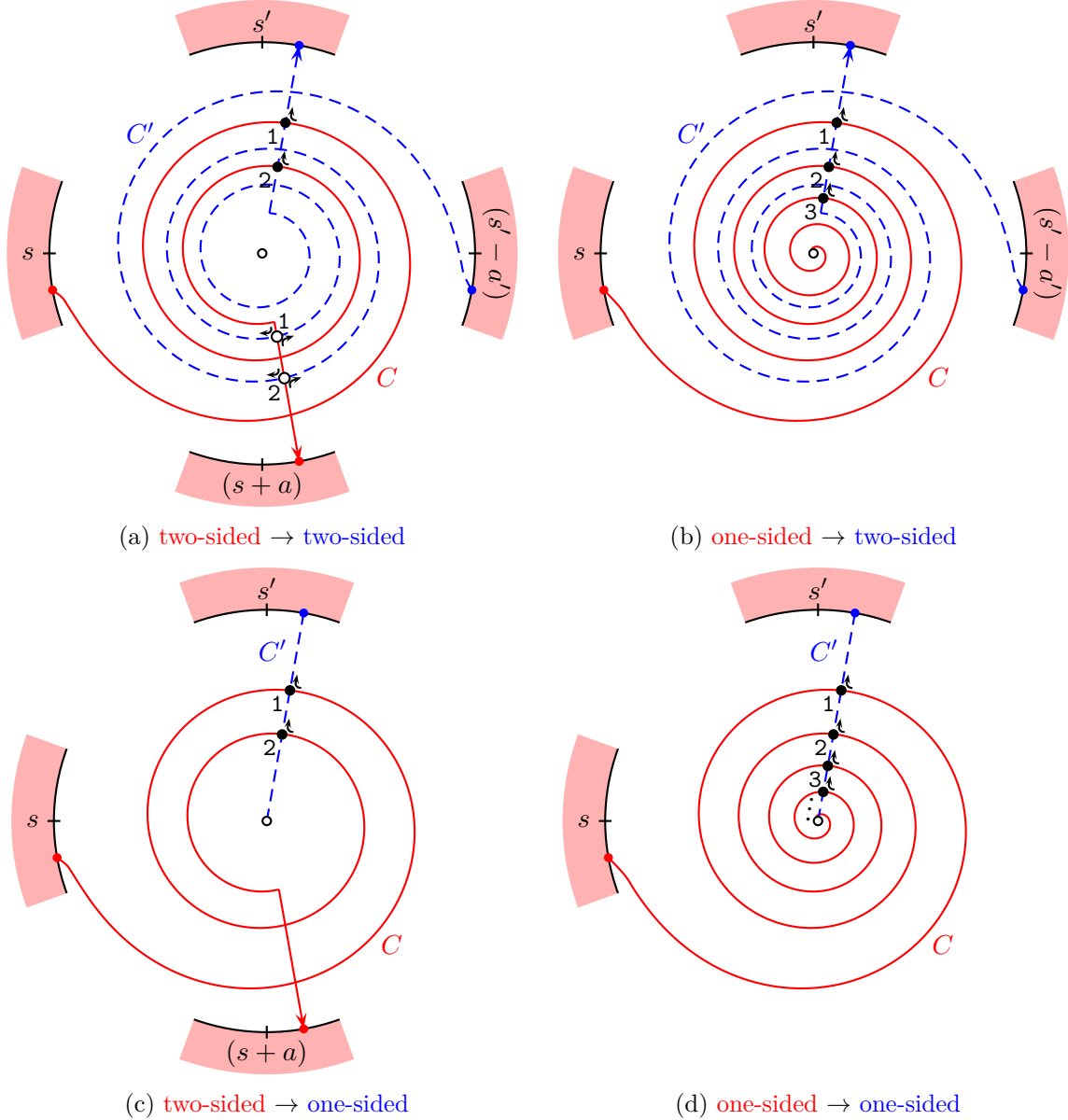

	\centering
	\begin{subfigure}{0.48\textwidth}
    	\centering
		$\MorCaseA$
		\caption{\textcolor{red}{two-sided} \( \rightarrow \) \textcolor{blue}{two-sided}}\label{fig:MorphismsBetweenFJoinsTwoTwoIllustrated}
	\end{subfigure}
	\begin{subfigure}{0.48\textwidth}
		\centering
		$\MorCaseB$
		\caption{\textcolor{red}{one-sided} \( \rightarrow \)  \textcolor{blue}{two-sided}}\label{fig:MorphismsBetweenFJoinsOneTwoIllustrated}
	\end{subfigure}
	\medskip
	\\
	\begin{subfigure}{0.48\textwidth}
    	\centering
		$\MorCaseC$
		\caption{\textcolor{red}{two-sided} \( \rightarrow \) \textcolor{blue}{one-sided}}\label{fig:MorphismsBetweenFJoinsTwoOneIllustrated}
	\end{subfigure}
	\begin{subfigure}{0.48\textwidth}
		\centering
		$\MorCaseD$
		\caption{\textcolor{red}{one-sided} \( \rightarrow \) \textcolor{blue}{one-sided}}\label{fig:MorphismsBetweenFJoinsOneOneIllustrated}
	\end{subfigure}
	\caption{An illustration of the four different cases in the local computation of the homology of \( \Mor^+(C,C') \) from the proof of Theorem~\ref{thm:PairingMorLagrangianFH}; see also Figure~\ref{fig:MorphismsBetweenFJoins} for a graphical representation for the same four cases. The integer labels of the intersection points indicate in which direction the indices \( l \) from the proof increase on the straight line segments of the \( f \)-joins, see Remark~\ref{rem:LagHFindexing}. Depending on \( s \), \( s' \), \( a \), and \( a' \), the actual indices for each straight line segment might be shifted by some integer.} \label{fig:MorphismsBetweenFJoinsIllustrated}
\end{figure}

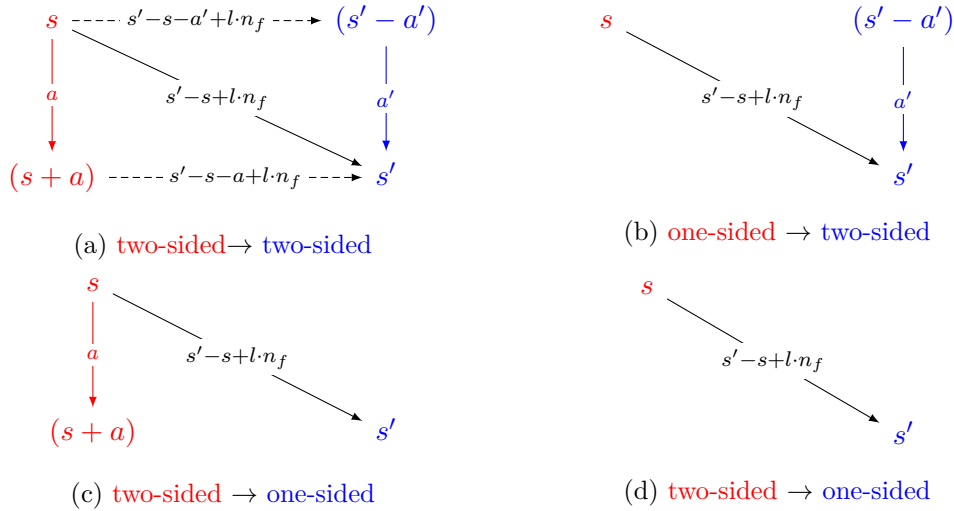
\begin{figure}[b]
	\centering
	\begin{subfigure}{0.45\textwidth}
		\[ 
		\begin{tikzcd}[column sep=80pt,row sep=40pt]
		\textcolor{red}{s}
		\arrow[red]{d}[description]{a}
		\arrow[dashed]{r}[description]{s'-s-a'+l\cdot n_f}
		\arrow{rd}[description]{s'-s+l\cdot n_f}
		&
		\textcolor{blue}{(s'-a')}
		\arrow[blue]{d}[description]{a'}
		\\
		\textcolor{red}{(s+a)}
		\arrow[dashed]{r}[description]{s'-s-a+l\cdot n_f}
		&
		\textcolor{blue}{s'}
		\end{tikzcd}
		\]
		\caption{\textcolor{red}{two-sided}\( \rightarrow \)  \textcolor{blue}{two-sided}}\label{fig:MorphismsBetweenFJoinsOuterOuter}
	\end{subfigure}
	\begin{subfigure}{0.45\textwidth}
		\[ 
		\begin{tikzcd}[column sep=80pt,row sep=40pt]
		\textcolor{red}{s}
		\arrow{rd}[description]{s'-s+l\cdot n_f}
		&
		\textcolor{blue}{(s'-a')}
		\arrow[blue]{d}[description]{a'}
		\\
		&
		\textcolor{blue}{s'}
		\end{tikzcd}
		\]
		\caption{\textcolor{red}{one-sided} \( \rightarrow \)  \textcolor{blue}{two-sided}}\label{fig:MorphismsBetweenFJoinsInnerOuter}
	\end{subfigure}
	\\
	\begin{subfigure}{0.45\textwidth}
		\[ 
		\begin{tikzcd}[column sep=80pt,row sep=40pt]
		\textcolor{red}{s}
		\arrow[red]{d}[description]{a}
		\arrow{rd}[description]{s'-s+l\cdot n_f}
		\\
		\textcolor{red}{(s+a)}
		&
		\textcolor{blue}{s'}
		\end{tikzcd}
		\]
		\caption{\textcolor{red}{two-sided} \(  \rightarrow \) \textcolor{blue}{one-sided}}\label{fig:MorphismsBetweenFJoinsOuterInner}
	\end{subfigure}
	\begin{subfigure}{0.45\textwidth}
		\[ 
		\begin{tikzcd}[column sep=80pt,row sep=40pt]
		\textcolor{red}{s}
		\arrow{rd}[description]{s'-s+l\cdot n_f}
		\\
		&
		\textcolor{blue}{s'}
		\end{tikzcd}
		\]
		\caption{\textcolor{red}{two-sided} \( \rightarrow \) \textcolor{blue}{one-sided}}\label{fig:MorphismsBetweenFJoinsInnerInner}
	\end{subfigure}
	\caption{A graphical representation of morphisms between two \(f\)-joins, illustrating the four different cases in the local computation of the homology of \( \Mor^+(C,C') \) from the proof of Theorem~\ref{thm:PairingMorLagrangianFH}. Generators are labelled by their sides (modulo \( n_f \)) and morphisms by their lengths.}\label{fig:MorphismsBetweenFJoins}
\end{figure}

\begin{proof}
	The identification of \( \Mor^\times(C,C') \) with upper intersection points between \( C \) and \( C' \) works just as in the proof of~\cite[Theorem~4.43]{pqMod}.
	The identification of \( \Homology(\Mor^+(C,C'),D^+) \) with lower intersection points requires more work. As in~\cite{pqMod}, we can focus on a single \(f\)-join of \( C \) and a single \(f\)-join of \( C' \) for the same face \(f\). Then there are some \( s,s',a,a'\in\mathbb{Z} \), such that the \(f\)-join of \( C \) lies in the neighbourhood of \( \alpha\in\{\vartheta^{+}_f(s,a),\vartheta^{+}_f(s,\infty)\} \) and the \(f\)-join of \( C' \) lies in the neighbourhood of \( \alpha'\in\{\vartheta^{-}_f(s'-a',a'),\vartheta^{-}_f(s',\infty)\} \). 
	We want to show that the resolutions of intersection points of these two \(f\)-joins form a basis of the homology of the morphism space between the type~D structures over \( \mathcal{A}_f^+ \) associated with the two \(f\)-joins. 
	For this, we distinguish four different cases, depending on whether the \(f\)-joins are one- or two-sided. For simplicity, we will identify the \(f\)-joins with \( \alpha \) and \( \alpha' \), respectively. 
    
    \begin{description}
    	\item[Case (a)] Suppose \( (\alpha,\alpha')=(\vartheta^{+}_f(s,a),\vartheta^{-}_f(s'-a',a')) \). This case is illustrated in Figure~\ref{fig:MorphismsBetweenFJoinsTwoTwoIllustrated}. The first segments of the two paths, ie those that wind around the inner boundary components of \( (\Sigma,A) \), are disjoint and so are the second (straight) path segments. Hence, the only intersection points between \( \alpha \) and \( \alpha' \) lie on the path segments \( \varphi_f(\{s+\varepsilon+a\}\times[0,a]) \) and \( \varphi_f(\{s'-\varepsilon\}\times[0,a']) \). Let us discuss these two types of intersection points separately: 
    \begin{itemize}
            \item[$\DotC$]
            The set of intersection points on \( \varphi_f(\{s+\varepsilon+a\}\times[0,a]) \) is given by 
            \[ \{\varphi_f(s'-a'-\varepsilon+t,t) \mid t\in[0,\min(a,a')]\co s+\varepsilon+a=s'-a'-\varepsilon+t+l\cdot n_f \text{ for some }l\in\mathbb{Z}\}.\]
            Thus, we can index these intersection points by integers \( l\in\mathbb{Z} \) satisfying 
            \[ 0\leq \overbrace{s-s'+a+a'+2\varepsilon-l\cdot n_f}^t\leq\min(a,a'),\]
            which is equivalent to 
            \[ 0\leq s-s'+a+a'-l\cdot n_f<\min(a,a').\]
            Let us rewrite this once more and express it in terms of the following three conditions:
            \[ 0\geq s'-s-a-a'+l\cdot n_f,\quad s'-s-a'+l\cdot n_f>0\quad\text{ and }\quad s'-s-a+l\cdot n_f>0.\]
            Let us now turn to Figure~\ref{fig:MorphismsBetweenFJoinsOuterOuter}, which shows some morphisms between the type~D structures given by the two \( f \)-joins. It is straightforward to check that the resolution of an intersection point \( \DotC \) from above is given by the horizontal (dashed) arrows. Moreover, the last two conditions above are satisfied iff the horizontal arrows define a morphism over \( \mathcal{A}_f^+ \), ie if their lengths are strictly positive; also, such a  morphism is not in the image of \( D^+ \) iff the length \( s'-s-a-a'+l\cdot n_f \) of a diagonal arrow from \( (s+a) \) to \( (s'-a') \) is less than or equal to 0, ie iff the first condition is satisfied. Note that if the length of this diagonal is equal to 0, the corresponding horizontal morphism lies in the image of~\( D \).
            
            \item[$\DotB$]
            The set of intersection points on \( \varphi_f(\{s'-\varepsilon\}\times[0,a']) \) is given by 
            \[ \{\varphi_f(s+\varepsilon+t,t) \mid t\in[0,\min(a,a')]\co s'-\varepsilon=s+\varepsilon+t-l\cdot n_f \text{ for some }l\in\mathbb{Z}\}.\]
            So, in the same way as above, we can index these intersection points by integers \( l\in\mathbb{Z} \) satisfying 
            \[ 0\leq s'-s-2\varepsilon+l\cdot n_f\leq\min(a,a'),\] 
            or equivalently, 
            \[ 0< s'-s+l\cdot n_f\leq\min(a,a').\]
            The resolution of such an intersection point is given by the diagonal arrow in Figure~\ref{fig:MorphismsBetweenFJoinsOuterOuter}. Moreover, such an arrow defines a morphism over \( \mathcal{A}^+_f \) iff its length \( s'-s+l\cdot n_f \) is strictly positive, ie iff the first inequality is satisfied; moreover, by considering either of the two horizontal arrows of length \( s'-s-a'+l\cdot n_f \) from \( s \) to \( (s'-a') \) and \( s'-s-a+l\cdot n_f \) from \( (s+a) \) to \( s' \), we see that such a morphism is not in the image of \( D^+ \) iff the lengths of both of these null-homotopies are less than or equal to 0. This is equivalent to the second inequality above. Again, note that if the length of one of these arrows is equal to 0, the morphism lies in the image of~\( D \).
        \end{itemize}
        We now conclude with the observation that any morphism between the two \( f \)-joins which lies in the kernel of \( D^+ \) is a linear combination of pairs of horizontal arrows or diagonal arrows as considered above.
    \item[Case (b)] Suppose \( (\alpha,\alpha')=(\vartheta^{+}_f(s,\infty),\vartheta^{-}_f(s'-a',a')) \). This case is illustrated in Figures~\ref{fig:MorphismsBetweenFJoinsOneTwoIllustrated} and~\ref{fig:MorphismsBetweenFJoinsInnerOuter}. 
    The intersection points in this case lie on the line \( \varphi_f(\{s'-\varepsilon\}\times[0,a']) \) and are given by 
    \[ \{\varphi_f(s+\varepsilon+t,t) \mid t\in[0,a']\co s'-\varepsilon=s+\varepsilon+t-l\cdot n_f \text{ for some }l\in\mathbb{Z}\}.\]
    Thus, we can index these intersection points by integers \( l\in\mathbb{Z} \) satisfying 
    \[ 0\leq s'-s-2\varepsilon+l\cdot n_f\leq a',\] 
    or equivalently, 
    \[ 0< s'-s+l\cdot n_f \leq a'.\]
    We can now argue as for the intersection points \( \DotB \) in case (a). 
    
    \item[Case (c)] Suppose \( (\alpha,\alpha')=(\vartheta^{+}_f(s,a),\vartheta^{-}_f(s',\infty)) \), see Figures~\ref{fig:MorphismsBetweenFJoinsTwoOneIllustrated} and~\ref{fig:MorphismsBetweenFJoinsOuterInner}
    The intersection points in this case lie on the line \( \varphi_f(\{s'-\varepsilon\}\times[0,a]) \) and are given by 
    \[ \{\varphi_f(s+\varepsilon+t,t) \mid t\in[0,a]\co s'-\varepsilon=s+\varepsilon+t-l\cdot n_f \text{ for some }l\in\mathbb{Z}\}.\]
    Thus, we can index these intersection points by integers \( l\in\mathbb{Z} \) satisfying 
    \[ 0\leq s'-s-2\varepsilon+l\cdot n_f\leq a,\] 
    or equivalently, 
    \[ 0< s'-s+l\cdot n_f \leq a.\]
    Again, we can now argue as for the intersection points \( \DotB \) in case (a). 
    
    \item[Case (d)] Suppose \( (\alpha,\alpha')=(\vartheta^{+}_f(s,\infty),\vartheta^{-}_f(s',\infty)) \).
    This is the simplest of the four cases, see Figures~\ref{fig:MorphismsBetweenFJoinsOneOneIllustrated} and~\ref{fig:MorphismsBetweenFJoinsInnerInner}. The intersection points lie on the line \( \varphi_f(\{s'-\varepsilon\}\times[0,\infty)) \) and are given by 
    \[ \{\varphi_f(s+\varepsilon+t,t) \mid t\in[0,\infty)\co s'-\varepsilon=s+\varepsilon+t-l\cdot n_f \text{ for some }l\in\mathbb{Z}\}.\]
    Thus, we can index these intersection points by integers \( l\in\mathbb{Z} \) satisfying 
    \[ 0\leq s'-s-2\varepsilon+l\cdot n_f,\] 
    or equivalently, 
    \[ 0< s-s'+l\cdot n_f.\]
    Again, we can now argue as before. 
\end{description}

\begin{figure}[t]
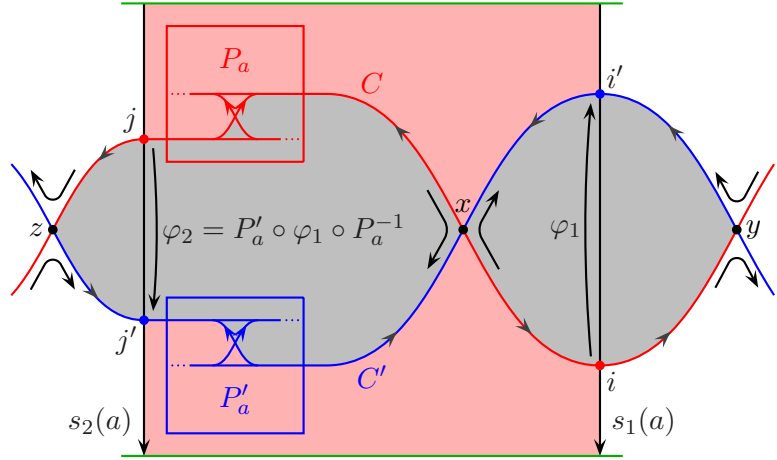

	\centering
	$\BigonCounts$
	\caption{Illustration for the identification of bigons with the map \( \beta_\ast \) in the proof of Theorem~\ref{thm:PairingMorLagrangianFH}.}\label{fig:BigonCounts}
\end{figure}


Finally, we need to identify the map \( \beta_\ast \) with the differential on \( \CF(C,C') \). This works just as in~\cite[proof of Theorem~4.43]{pqMod}, noting that \cite[Figure~33]{pqMod} needs to be replaced by Figure~\ref{fig:BigonCounts} due to the difference in orientation conventions and that a generator of \( \Homology(\Mor^+(C,C')) \) is the end of a bigon iff there is a null-homotopy via \( D \), but not \( D^+ \).
\end{proof}

\begin{remark}\label{rem:LagHFindexing}
Theorem~\ref{thm:PairingMorLagrangianFH} provides a \emph{canonical} isomorphism between \( \HF(C,C')\) and \( \Homology (\Mor (C,C'), D)\), and thus the \( \Rcomm[U] \)-action on \( \Homology (\Mor (C,C'), D) \) gives a well-defined \( \Rcomm[U]\)-action on wrapped Lagrangian Floer homology \(\HF(C,C') \).

We pause to consider the indexing of the intersection points on the straight line segments \( \varphi_f(\{s+\varepsilon\}\times[0,\infty)) \) and \( \varphi_f(\{s'-\varepsilon\}\times[0,\infty)) \) in the proof above.  The indices of the intersection points on the straight line segment for the first \(f\)-join increase from the inner boundary of \(f\) to the outside; the intersection points on the straight line segment for the second \(f\)-join are indexed in the opposite direction. Thus, the \( U \)-action on \( \Homology(\Mor^+(C,C')) \) can be translated into a \( U \)-action on \( \CFplus(C,C') \) as follows: given an intersection point \( x\in\CFplus(C,C') \), \( x\cdot U \) is the next intersection point on the same straight line segment as \( x \) (if it exists) or 0 (otherwise). 
  
  One can interpret this \( U \)-action geometrically in terms of counting punctured bigons \( \iota_U \) with both convex and concave corners, see Figure~\ref{fig:LagrangianHFConventionsUaction}. One can even extend this action to \( \CF(C,C') \), but without more work this is not  canonical.
\end{remark}

\begin{figure}[t]
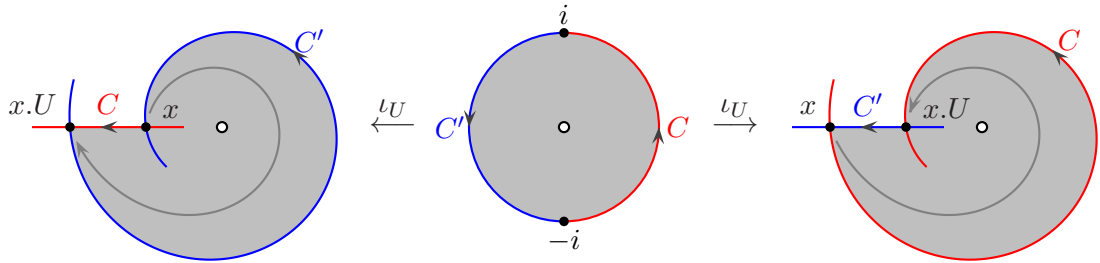
\centering
	$\UbigonConventions$
	\caption{A geometric interpretation of the \( U \)-action on upper intersection points from Remark~\ref{rem:LagHFindexing}.}\label{fig:LagrangianHFConventionsUaction}
\end{figure}

\subsection{A formula for computing the homology of morphism spaces between curves}\label{subsec:classification:morphisms-formula}

If \( C \) and \( C' \) are two precurves, then by Theorem~\ref{thm:EverythingIsLoopTypeUpToLocalSystems}, there are two multicurves \( L \) and \( L' \) such that \( C \) and \( \Pi_i(L) \) as well as \( C' \) and \( \Pi_i(L') \) are homotopic, respectively. Thus, 
\( \Mor(C,C') \) and \( \Mor(L,L'):=\Mor(\Pi_i(L),\Pi_i(L')) \) are chain homotopic. If we put \( \Pi_i(L) \) and \( \Pi_i(L') \) into pairing position, Theorem~\ref{thm:PairingMorLagrangianFH} says that the homology of \( \Mor(L,L') \) is graded isomorphic to \( \HF(L,L'):=\HF(\Pi_i(L),\Pi_i(L')) \). But as in~\cite{pqMod}, we can actually compute \( \HF(L,L') \) without putting the curves into pairing position. For simplicity, let us assume that both curves are compact; the other cases are treated similarly. 

\begin{definition}
	Let \( (L,L') \) be a pair of bigraded curves \( L=(\gamma,X) \) and \( L'=(\gamma',X') \) on an oriented surface \( \Sigma \) with arc system \( A \) and let \( n:=\dim X \) and \( n':=\dim X' \). Assume that \( \gamma \) and \( \gamma' \) intersect minimally. Let \( \fieldTwoElements\langle\gamma\cap\gamma'\rangle \) denote the vector space over \( \fieldTwoElements \) spanned by intersection points between \( \gamma \) and \( \gamma' \). 
	Each intersection point can be \( (\delta, q)-\)bigraded in exactly the same way as intersection points in \( \CF(C,C') \). 
	If \( \gamma \) and \( \gamma' \) are parallel, let \( \delta(\gamma,\gamma') \) be the unique real number one needs to add to the \( \delta \)-grading of each intersection point of \( \gamma \) with arcs in \( A \) such that \( \gamma \) and \( \gamma' \) agree as \( \delta \)-graded curves. Similarly, define \( q(\gamma,\gamma') \) using the quantum grading of \( \gamma \) and \( \gamma' \). 
	For any non-negative integer \( m \), let \( V_{\delta}^q(m) \) be an \( m \)-dimensional vector space in \( \delta \)-grading \( \delta \) and quantum grading \( q \).
\end{definition}

\begin{theorem}\label{thm:PairingFormula}
	With the notation from above, \(\HF(L,L')\) is graded isomorphic to 
	\begin{equation*}\label{eqn:MorSpacesNonparallel}
	V^0_0(n\cdot n')\otimes \fieldTwoElements\langle\gamma\cap\gamma'\rangle,
	\end{equation*}
	unless \(\gamma\) and \(\gamma'\) are parallel. If they are parallel, let us assume without loss of generality that their orientations agree. Then, \(\HF(L,L')\) is graded isomorphic to 
	\begin{equation}\label{eqn:MorSpacesParallel}
	\Big(V^0_0(n\cdot n')\otimes\fieldTwoElements\langle\gamma\cap\gamma'\rangle\Big)
	\oplus
	\Big(\!\big(V^0_0(1)\oplus V^0_{1}(1)\big)\otimes V^{q(\gamma,\gamma')}_{\delta(\gamma,\gamma')}\left(\dim\left(\ker\left((X^{-1})^t\otimes X'-\id\right)\right)\right)\!\!\Big).
	\end{equation}
\end{theorem}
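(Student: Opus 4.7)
The plan is to combine the two main preceding results: Theorem~\ref{thm:EverythingIsLoopTypeUpToLocalSystems}, which lets us replace $L$ and $L'$ by the simply-faced precurves $\Pi_i(L)$ and $\Pi_i(L')$, and Theorem~\ref{thm:PairingMorLagrangianFH}, which then identifies the homology of the morphism space between them with the wrapped Lagrangian Floer homology in pairing position. Recall that $\Pi_i(L)$ consists of $n=\dim X$ mutually parallel copies of $\gamma$, with a single block of crossover arrows encoding $X$ on one chosen arc, and analogously for $\Pi_i(L')$ with $n'=\dim X'$ parallel copies of $\gamma'$. Place $\Pi_i(L)$ into a first pairing position and $\Pi_i(L')$ into a second pairing position as in Definition~\ref{def:MorSpaces}, pushing (via moves (M1)--(M3)) all of the crossover-arrow data of both curves into small neighbourhoods of their respective chosen arcs.

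In the non-parallel case, I would isotope $\gamma$ and $\gamma'$ into minimal position so that their parallel copies remain disjoint except in neighbourhoods of the $|\gamma\cap\gamma'|$ transverse intersections. Each transverse intersection then contributes $n\cdot n'$ generators to $\CF(\Pi_i(L),\Pi_i(L'))$, all in the same bigrading. The only additional intersections must lie inside the arc neighbourhoods carrying the crossover-arrow blocks for $X$ or $X'$; a short argument using the invertibility of $X$ and $X'$, together with the identity bigons appearing in case~(d) of the proof of Theorem~\ref{thm:PairingMorLagrangianFH}, cancels these extra upper generators in pairs with lower generators. What remains is exactly $V_0^0(n\cdot n')\otimes\fieldTwoElements\langle\gamma\cap\gamma'\rangle$.

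For the parallel case, I would homotope $\gamma'$ into a tubular neighbourhood of $\gamma$ so that $\gamma\cap\gamma'=\emptyset$ and the first summand of the claimed formula vanishes. All remaining intersections between $\Pi_i(L)$ and $\Pi_i(L')$ are then produced inside this tubular neighbourhood, and after placing the two precurves into pairing position they group naturally into a local model consisting of $n n'$ pairs of intersection points, with one intersection per pair of parallel strands contributing to $\CFtimes$ (upper, in the arc neighbourhood) and one contributing to $\CFplus$ (lower, in the adjacent face). Identifying this local subcomplex with $\mathrm{Hom}(\fieldTwoElements^n,\fieldTwoElements^{n'})\otimes\bigl(V_0^0(1)\oplus V_1^0(1)\bigr)$, I would then compute the map $\beta_\ast$ from Theorem~\ref{thm:PairingMorLagrangianFH} by counting the bigons between these upper and lower generators. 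Tracking the two blocks of crossover arrows carefully, the contribution of one side of the arc is the identity on the $\mathrm{Hom}$-factor, while the contribution of the wrap around the inner boundary encodes the conjugation $M\mapsto (X')\,M\,X^{-1}$, i.e.\ the linear map $(X^{-1})^t\otimes X'$ on $\mathrm{Hom}(\fieldTwoElements^n,\fieldTwoElements^{n'})$; taking the difference gives the operator $\id-(X^{-1})^t\otimes X'$, whose kernel is precisely the intertwiner space that appears in~\eqref{eqn:MorSpacesParallel}. The bigrading shift $V^{q(\gamma,\gamma')}_{\delta(\gamma,\gamma')}$ then arises because any parallel pushoff of $\gamma'$ must be placed in a possibly shifted absolute grading relative to $\gamma$.

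I expect the main obstacle to be this local identification of the Floer differential with $\id-(X^{-1})^t\otimes X'$, which requires a careful accounting of (i) the orientations of crossover arrows and how they transform under the substitution $X\leadsto(X^t)^{-1}$ built into the definition of $\Pi_i$ (which side of an arc is $s_1$ versus $s_2$), (ii) the count of bigons in case~(d) of Theorem~\ref{thm:PairingMorLagrangianFH} in the presence of multiple parallel strands, and (iii) the bigrading of the two local generators $V_0^0(1)$ and $V_1^0(1)$, which must match the two Morse generators on a circle. Once this local model is established, the other variants (both curves non-compact with trivial local systems, or one compact and one non-compact) fall out of the same analysis by substituting the one-sided $f$-join pairing prescriptions of Definition~\ref{def:MorSpaces}, with all $V_1^0(1)$ factors disappearing in the non-compact case since non-compact curves do not carry non-trivial local systems.
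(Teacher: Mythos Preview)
Your proposal is correct and aligns with the paper's approach: the paper's proof is a one-line reference stating it is identical to the proof of \cite[Theorem~4.45]{pqMod}, and what you have sketched is precisely the content of that argument---reducing via Theorems~\ref{thm:EverythingIsLoopTypeUpToLocalSystems} and~\ref{thm:PairingMorLagrangianFH} to a local bigon count near the crossover-arrow blocks, which in the parallel case yields the operator $(X^{-1})^t\otimes X'-\id$. Your identification of the main technical point (the careful sign/orientation accounting in the local model) is also accurate.
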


\begin{proof}
	This is identical to the proof of~\cite[Theorem~4.45]{pqMod}. 
\end{proof}


\begin{observation}\label{obs:LargeHAction}
    Consider the cone of the map
    \[
    \begin{tikzcd}
    \Pi(L)
    \arrow{r}{U^N\cdot\id}
    &
    \Pi(L)
    \end{tikzcd}
    \]
    for some curve \(L=(\gamma,X)\) and some integer \(N>0\). 
    This cone is a type~D structure over \( \mathcal{A} \) and hence, it is represented by some curve. For arbitrary \(N\), one essentially has to go through the arrow-pushing algorithm to find this curve. However, if \(N\) is large enough, such that \(\gamma\) wraps around each inner boundary component of \((\Sigma,A)\) strictly less than \(N\) times, this is very simple. Indeed, for such large \(N\), the map \(U^N\cdot\id\) is null-homotopic if \(\gamma\) is compact. So in this case, the cone is represented by two copies of \(L\) with appropriate grading shifts. If \(\gamma\) is non-compact, we can still find null-homotopies for all components of \(U^N\cdot\id\) except at the two endpoints of \(\gamma\). So in this case, the cone of \(U^n\cdot\id\) is represented by the compact curve obtained by taking two copies of \(\gamma\) and joining their ends by a path that wraps \(N\) times around the two inner boundary components of \((\Sigma,A)\) at which \(\gamma\) ends. Note that the local system on this compact curve is trivial, since the local systems of non-compact curves are trivial. 
\end{observation}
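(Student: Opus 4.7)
The plan is to represent the cone as a precurve and simplify it using the graphical calculus of Lemma~\ref{lem:CalculusForPreloops}. First, observe that on a generator $x.\iota_s$ of $\Pi(L)$, the algebra element $U^N$ acts as $p_f^{N\cdot n_f}.\iota_s$, where $f$ is the unique face of $(\Sigma,A)$ containing the side $s$. Geometrically, this corresponds to a two-sided $f$-join from the copy of $x$ in the source of the cone to the copy of $x$ in the target, winding $N$ times around the inner boundary of $f$. Consequently, the cone of $U^N\cdot\id$ is represented by the precurve consisting of two parallel copies of the simply-faced precurve representation of $\Pi(L)$, together with, at each generator, an additional $N$-winding $f$-join between the two copies.

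The hypothesis on $N$ ensures that every pre-existing two-sided $f$-join in $\Pi(L)$ winds around the inner boundary of $f$ strictly less than $N$ times, so every new $N$-winding $f$-join is strictly longer than every $f$-join already present. In the compact case, every generator $x$ of $\Pi(L)$ is the endpoint of two two-sided $f$-joins (one on each side of $x$), and together with the new $N$-winding $f$-join attached at the same dot, these $f$-joins diverge in the sense of case (M3a) or (M3b) of Lemma~\ref{lem:CalculusForPreloops}. Applying (M3) iteratively to every such configuration cancels all of the $N$-winding arrows, leaving two disjoint copies of the simply-faced precurve $\Pi(L)$ with the appropriate grading shifts coming from the shift of $U^N$.

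In the non-compact case, the same argument works at every interior generator. The two endpoints of $\gamma$, however, sit on one-sided $f$-joins; at these points, cases (M3d) or (M3e) of Lemma~\ref{lem:CalculusForPreloops} allow us to simplify away other crossover arrows created by the procedure but do not eliminate the $N$-winding join itself. Consequently, two $N$-winding $f$-joins remain, joining the endpoints of the two parallel copies of $\gamma$ across the corresponding inner boundaries. Assembling the two copies of $\gamma$ with these two joining arcs produces a single immersed compact curve obtained by taking two parallel copies of $\gamma$ connected at each end by a path that winds $N$ times around the inner boundary in question. Since non-compact curves by convention carry only trivial local systems, the two copies are genuinely parallel and the resulting compact curve inherits the trivial $1$-dimensional local system, as claimed.

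The principal technical obstacle will be the bookkeeping for the iterative application of (M3), since each (M3) move rearranges the endpoints of adjacent $f$-joins and can, in principle, destroy the divergence needed at the next step. The strict inequality in the hypothesis on $N$ is what makes this go through: at every stage of the simplification, among the $f$-joins at a given dot, the one coming from $U^N\cdot\id$ is strictly the longest, so case (M3a), (M3b), or one of (M3d), (M3e) always remains applicable. Sweeping through the dots in the cyclic order around each face — starting from the shortest two-sided $f$-joins and working outwards — carries out the cancellation cleanly and is essentially the localized version of the arrow-pushing algorithm of \cite{HRW} in our setting.
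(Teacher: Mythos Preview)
There is a genuine gap in your use of move (M3). The moves of Lemma~\ref{lem:CalculusForPreloops} apply only to \emph{simply-faced} precurves, and (M3) concerns two \emph{distinct} dots $\bullet(s,i)$ and $\bullet(s,j)$, each carrying a single $f$-join; its effect is to multiply $P_a$ by an elementary matrix, i.e.\ to add or remove a crossover arrow in the arc neighbourhood, not to remove an $f$-join. The cone precurve you build is not simply-faced: each dot now carries both its original $f$-join and a new $N$-winding one to the other copy. So (M3) does not apply as stated, and even where it applies it would not ``cancel all of the $N$-winding arrows'' as you claim.

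The paper's argument avoids the precurve calculus entirely and is more direct: one shows that the morphism $U^N\cdot\id$ is null-homotopic when $\gamma$ is compact. The homotopy $h$ is obtained by reversing each two-sided $f$-join of $\Pi(L)$: for a differential component labelled $p_f^{\ell}$, put an $h$-component in the opposite direction labelled $p_f^{Nn_f-\ell}$, which is a legitimate element of $\mathcal{A}^+$ precisely because $\ell<Nn_f$ by hypothesis. Since every dot of a compact simply-faced curve lies on exactly one two-sided $f$-join, and the relations in $\mathcal{A}$ annihilate compositions that pass from one face to the other through an arc, one has $D(h)=U^N\cdot\id$. In the non-compact case the same $h$ null-homotopes every component of $U^N\cdot\id$ except at the two dots carrying one-sided $f$-joins, leaving exactly the two $U^N$-arrows that join the ends of the two parallel copies. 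If you replace your appeal to (M3) by this explicit null-homotopy (equivalently, by the Clean-Up Lemma~\ref{lem:AbstractCleanUp} applied to the cone with $h$ as the off-diagonal part), the argument becomes a single computation rather than the delicate iteration your last paragraph anticipates.
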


We are now ready to finish the classification of the objects of \( \Mod^{\overline{\mathcal{A}}}_i \) and \( \Mod^{\mathcal{A}} \). 

\begin{theorem}\label{thm:CompleteClassification}
	Let \(L=\{(\gamma_i,X_i)\}_{i\in I}\) and \(L'=\{(\gamma'_{i'},X'_{i'})\}_{i'\in I'}\) be two multicurves. Then \(\Pi_i(L)\) is homotopic to~\(\Pi_i(L')\) iff there is a bijection \(\iota\co I\rightarrow I'\) such that \(\gamma_i\)~is homotopic to~\(\gamma'_{\iota(i)}\) and \(X_i\)~is similar to~\(X'_{\iota(i)}\). The same holds for \(\Pi\) instead of \(\Pi_i\).
\end{theorem}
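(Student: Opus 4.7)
The plan is to prove the equivalence in two directions, with the reverse implication being essentially formal while the forward implication requires using wrapped Floer homology as a complete system of invariants.

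For the reverse direction, if a bijection $\iota$ is given with $\gamma_i$ homotopic to $\gamma'_{\iota(i)}$ and $X_i$ similar to $X'_{\iota(i)}$, I would realize a homotopy $\Pi_i(L)\simeq\Pi_i(L')$ by a finite sequence of moves (M1)--(M3) from Lemma~\ref{lem:CalculusForPreloops}. A homotopy of the underlying unoriented curves can be broken into elementary local isotopies (crossing an arc, sliding through an arc neighbourhood, or sliding past an inner boundary component of a face), each of which corresponds to one of the moves (M1), (M3), or an insertion/cancellation via (M2). A similarity $X_i = P X'_{\iota(i)} P^{-1}$ is then realized by decomposing $P$ into elementary matrices and inserting the corresponding crossover arrows in the neighbourhood of a single arc that meets $\gamma_i$, using (M2) to cancel across parallel bundles.

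For the forward direction, suppose $\Pi_i(L)\simeq\Pi_i(L')$. The strategy is to use Theorem~\ref{thm:PairingFormula} as a separating family of invariants, combined with a Krull--Schmidt argument. First I would check that $\Mod^{\overline{\mathcal{A}}}_i$ enjoys the Krull--Schmidt property, so that each precurve admits an essentially unique decomposition into indecomposable summands. Since a block-diagonal local system $X = X_1 \oplus X_2$ yields $\Pi_i(\gamma, X) \simeq \Pi_i(\gamma, X_1) \oplus \Pi_i(\gamma, X_2)$, indecomposability of $\Pi_i(\gamma, X)$ forces $X$ to be indecomposable in the linear-algebraic sense (a single rational-canonical-form block over $\fieldTwoElements$), and for non-compact curves forces $X = \id$ to be one-dimensional. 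By Krull--Schmidt it then suffices to show that two indecomposables $\Pi_i(\gamma, X)$ and $\Pi_i(\gamma', X')$ are homotopy equivalent only if $\gamma \simeq \gamma'$ and $X$ is similar to $X'$.

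To separate such indecomposables, I would test against auxiliary curves $L'' = (\gamma'', X'')$ and read off $\dim \HF(\Pi_i(\cdot), \Pi_i(L''))$, which is a homotopy invariant by Theorem~\ref{thm:PairingMorLagrangianFH}. When $\gamma''$ is not homotopic to $\gamma$ as an unoriented curve, Theorem~\ref{thm:PairingFormula} gives $\dim X \cdot \dim X'' \cdot |\gamma\cap\gamma''|$, and by letting $\gamma''$ range over representatives of different homotopy classes (for instance arcs cutting each face, or simple curves realizing each free-homotopy class in $\pi_1(\Sigma)$) one recovers the underlying homotopy class of each compact component and the endpoint data of each non-compact component of $L$; in particular the total weighted curve content $\sum_i (\dim X_i)[\gamma_i]$ of $L$ agrees with that of $L'$. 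When $\gamma''$ is parallel to $\gamma$, the extra summand in~\eqref{eqn:MorSpacesParallel} contributes
\[
\dim\ker\!\bigl((X^{-1})^t\otimes X''-\id\bigr),
\]
and the main linear-algebra input is that these kernel dimensions, as $X''$ varies over invertible matrices of all sizes over $\fieldTwoElements$, determine $X$ up to similarity: this is a classical consequence of the fact that the rational canonical form of $X$ is recovered from the dimensions of $\ker(f(X))$ as $f$ ranges over polynomials, and such kernels are encoded in $\ker((X^{-1})^t\otimes X''-\id)$ by taking $X''$ to be Jordan-type test blocks.

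The main obstacle I anticipate is precisely this last step: making sure that the pairing formula really is sensitive enough, over the field $\fieldTwoElements$, to separate all similarity classes of local systems, and handling with care the orientation ambiguity in Definition~\ref{def:curves} (matrix $X$ is considered up to similarity and also up to transpose-inverse when the orientation of $\gamma$ is reversed). Once this is settled, the forward direction is obtained component by component: each homotopy class contributes its own independent batch of similarity data, and matching the invariants face-by-face produces the required bijection $\iota$. The statement for $\Pi$ follows immediately from the equivalence between $\Mod^{\mathcal{A}}$ and $\Mod^{\overline{\mathcal{A}}}_i$ recalled after Definition~\ref{def:precurvesAlgebraic}.
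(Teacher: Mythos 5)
Your proposal uses a genuinely different strategy than the paper, and while the compact-case part is plausible, there is a real gap in the non-compact case that is not merely a technical detail.

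For the forward direction with \emph{compact} curves, your plan (Krull--Schmidt decomposition into indecomposables, then pairing against test curves and using the kernel formula in Theorem~\ref{thm:PairingFormula} to pin down the similarity class of the local system) is defensible. The paper instead cites the analogous result in \cite{pqMod}, which compares \emph{growth rates} of the two summands of \eqref{eqn:MorSpacesParallel} as the test curve twists, rather than reading off exact kernel dimensions. Your exact linear-algebra route would buy a more self-contained argument, at the cost of verifying (i) that $\Mod^{\overline{\mathcal{A}}}_i$ is Krull--Schmidt, and (ii) that the kernel dimensions $\dim\ker\left((X^{-1})^t\otimes X''-\id\right)$ for varying $X''$ characterize $X$ up to the \emph{combined} symmetry of similarity and transpose-inversion-with-orientation-reversal; you correctly flag (ii) as delicate but do not close it.

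The genuine gap is in the \emph{non-compact} case. Theorem~\ref{thm:PairingFormula} is stated for compact curves, and for two non-compact components sharing an endpoint the wrapped Floer homology is infinite-dimensional, so the raw dimension is useless as an invariant. Your proposal says only that one ``recovers the endpoint data of each non-compact component,'' but endpoint data nowhere near determines the homotopy class rel $\partial_i\Sigma$: there are infinitely many non-homotopic arcs between a fixed pair of punctures. So the argument as sketched does not distinguish, say, an arc from the same arc twisted around a puncture once. The paper explicitly remarks that the growth arguments ``do not work for non-compact curves,'' and the key move in its proof is the reduction in Observation~\ref{obs:LargeHAction}: replace $\Pi(L)$ by the cone of $U^N\cdot\id$ for $N$ large, which turns each non-compact component into a compact one that wraps exactly $N$ times around its two endpoint boundaries. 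Since these large-wrapping compact curves are recognizable (no compact component of the original $L$ wraps that much) and the original arcs are recoverable from them, one can run the compact-case argument on the cones and deduce the theorem. Without this mapping-cone reduction or an equivalent, your argument cannot conclude.
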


\begin{proof}
    It suffices to show the statement for \( \Pi_i \). 
     For compact curves, this follows from the same arguments as~\cite[Theorem~4.46]{pqMod}, which rely on different growth properties of the dimensions of the two summands in \eqref{eqn:MorSpacesParallel} from Theorem~\ref{thm:PairingFormula}~\cite[Theorem~4.45]{pqMod} under pairing with particular test curves. These arguments do not work for non-compact curves. However, we can reduce the general case to the compact case using Observation~\ref{obs:LargeHAction}. Indeed, when passing to the mapping cone of the map \( U^N\cdot \id \) for sufficiently large \( N \), any two non-homotopic curves stay non-homotopic, since those curves coming from compact curves wrap around any inner boundary component strictly less than \( N \) times and those coming from non-compact ones wrap \( N \) times around exactly two inner boundary components and otherwise stay parallel to the original curves. So we can in fact recover the original curves \( L \) and \( L' \) from the respective curves of their cones. 
\end{proof}

\subsection{Signs}\label{subsec:classification:signs}

All classification results in this section remain true if we replace the field \( \fieldTwoElements \) by an arbitrary field \( \field \). The proofs are essentially the same; we itemize the key modifications. The most important of these modifications will be adapting the arrow sliding algorithm \cite{HRW}, which plays a crucial role in the proof of Theorem \ref{thm:EverythingIsLoopTypeUpToLocalSystems}, to work with signs. This will require slightly more complicated train tracks (in the terminology of \cite{HRW}) and result in precurves that (in their geometric guise) have additional decorations. 

First, we need to change the definition of \( U_f \) from \( U_f:=p_f^{n_f} \) to \( U_f:=-(-p_f)^{n_f} \) so that in Example~\ref{exa:ThreePuncturedDiscQuiver:Decomposed}, we can identify \( H=D-SS \) with the sum \( U \) of \( U_f \) over all faces \( f \). Our precurves need to be adapted as follows: first, we label each two-sided \( f \)-join corresponding to a 
\begin{wrapfigure}{r}{0.3\textwidth}
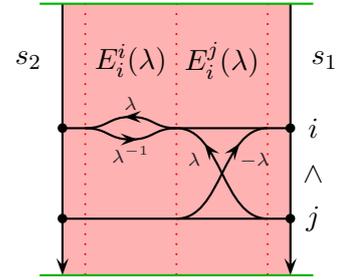

	\centering
	$\traintrackneighbourhoodofarcSigns$
	\caption{A passing loop (left) and a pair of labelled crossover arrows (right).}\label{fig:traintrackneighbourhoodofarcSigns}
\end{wrapfigure}
component 
\[
\begin{tikzcd}
e^s_i
\arrow{r}{\lambda p_f^n.\iota_s}
&
e^{s'}_{i'}
\end{tikzcd}
\hspace{0.3333\textwidth}
\]
by the coefficient \( \lambda\in\field\smallsetminus\{0\} \). Furthermore, we replace the elementary matrices \( E^j_i \) over \( \fieldTwoElements \) by the matrices \( E^j_i(\lambda) \) over \( \field \) defined by
\[
E^{j}_{i}(\lambda):=
\begin{cases}
(\delta_{i'j'}+\lambda\delta_{ii'}\delta_{jj'})_{i'j'} & \text{if \( j\neq i \)}
\\
(\delta_{i'j'}+(\lambda-1)\delta_{ii'}\delta_{jj'})_{i'j'} & \text{if \( j=i \)}
\end{cases}
\hspace{0.3333\textwidth}
\]
for some \( \lambda\in\field\smallsetminus\{0\} \). Note that for \( \field=\fieldTwoElements \) and \( i\neq j \), \( E^j_i=E^j_i(1) \). Moreover,
\[
\Big(E^{j}_{i}(\lambda)\Big)^{-1}=
\begin{cases}
E^{j}_{i}(-\lambda) & \text{if \( j\neq i \)}
\\
E^{j}_{i}(\lambda^{-1}) & \text{if \( j=i \)}
\end{cases}
\hspace{0.3333\textwidth}
\]
Thus, following the basic principles of traintracks that give rise to our graphical notation, with the understanding that we multiply all labels along paths through the precurve, we represent \( E^{j}_{i}(\lambda) \) by labelled crossover arrows (or, labelled switches) and \textbf{passing loops}. The convention we will use for this is shown in Figure~\ref{fig:traintrackneighbourhoodofarcSigns}. Observe that this graphical representation of precurves is unique up to isotopy of the immersed curves and the moves from Figure~\ref{fig:traintrackmovesSigns}. Finally, in Definition~\ref{def:simplyfaced}, we add the condition that every two-sided \( f \)-join in a simply-faced precurve be labelled by the identity in \( \field \).

\begin{figure}
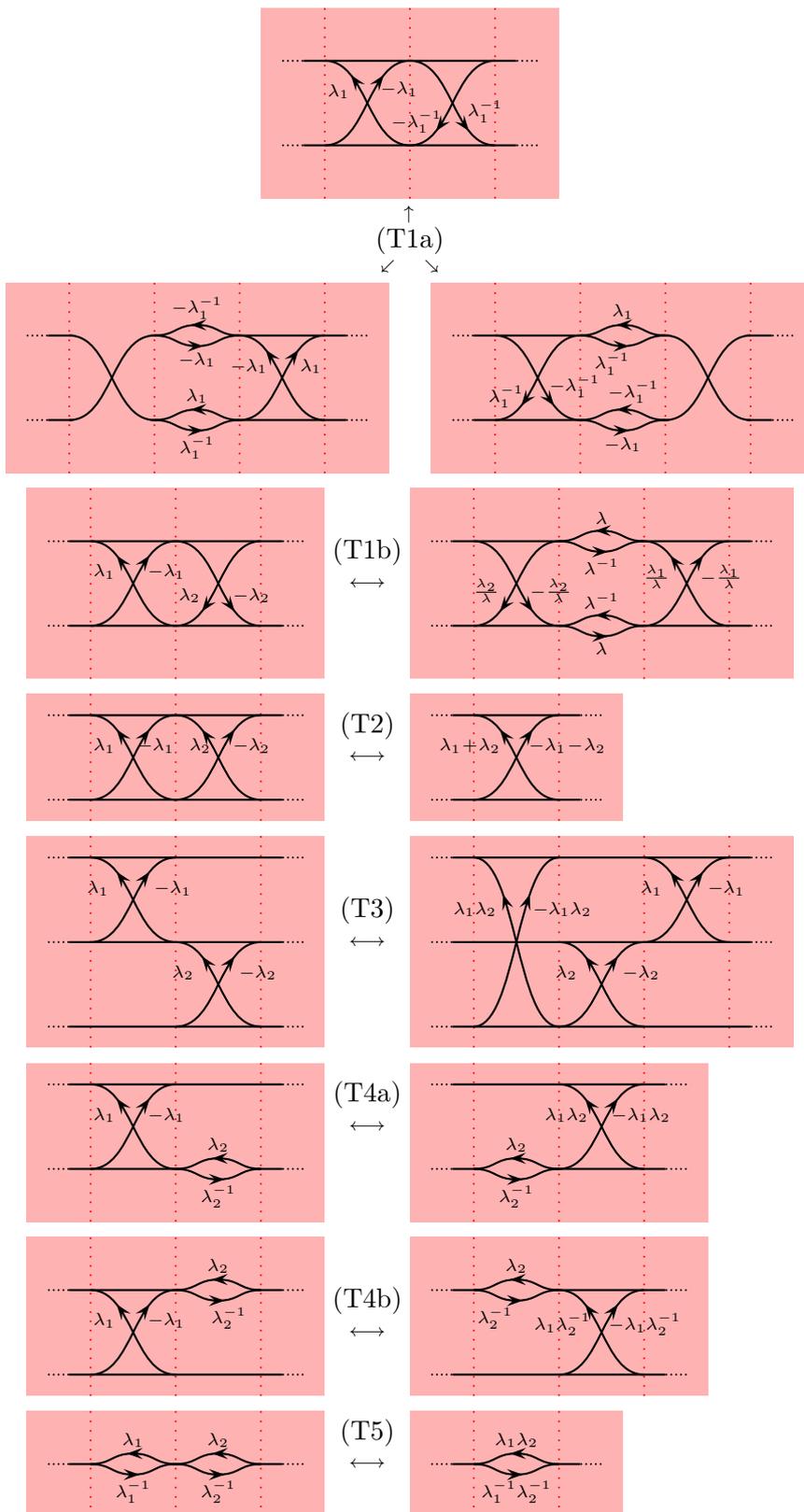

    \centering
    $\Tia$\\
    $\Tib$\\
    $\Tii$\\
    $\Tiii$\\
    $\Tiva$\\
    $\Tivb$\\
    $\Tv$
    \caption{Train-track moves over any field \( \field \). In move (T1b), \( \lambda=1+\lambda_1\lambda_2 \), which we assume to be non-zero.}
    \label{fig:traintrackmovesSigns}
\end{figure}


The proof of Proposition~\ref{prop:PreloopToCC} works over any field \( \field \) up to two modifications: first, we need to take into account that two-sided \( f \)-joins are now labelled by non-zero elements in \( \field \). Hence, the morphisms \( h_1 \) and \( h_2 \) also need to be multiplied by a suitable coefficient. Second, the algorithm only gives us a fully cancelled pre-curve in which every dot lies on exactly one \( f \)-join; some two-sided \( f \)-joins might still be labelled by some non-identity element of \( \field \). We can fix this using the following observation:

\begin{observation}
    Let \((C, \{P_a\}_{a\in A},\partial)\) be a precurve on a surface \(\Sigma\) with arc system~\(A\) and \(\lambda\in\field\). For \(a\in A\), let \(s\) be a side of \(a\), \(f\) the face adjacent to \(s\) and \(\bullet(s,i)\) a dot on \(s\). Then \((C, \{P_a\}_{a\in A},\partial)\) is chain isomorphic to the precurve obtained by multiplying the labels of all two-sided \(f\)-joins starting at \(\bullet(s,i)\) by \(\lambda\), multiplying the labels of all two-sided \(f\)-joins ending at \(\bullet(s,i)\) by \(\lambda^{-1}\) and multiplying \(P_a\) on the right by \(E^i_i(\lambda)\) if \(s=s_1(a)\) or on the left by \(E^i_i(\lambda^{-1})\) if \(s=s_2(a)\).
\end{observation}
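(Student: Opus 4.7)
The plan is to realize the modification as a scalar change of basis at the single generator $\bullet(s,i)$. I would define $\varphi\co C\to C$ to be the $\overline{\mathcal{I}}$-linear endomorphism sending $e^s_i \mapsto \lambda^{-1}e^s_i$ and acting as the identity on every other basis element (assuming $\lambda\in\field^\times$, as is needed for the statement to be meaningful). This map is bigrading-preserving, equal to its own identity-component restriction $\varphi^\times$, and invertible with inverse given by replacing $\lambda$ by $\lambda^{-1}$. The claim will be that $\varphi$ is a chain isomorphism in $\Mod_i^{\overline{\mathcal{A}}}$ from $(C,\{P_a\},\partial)$ to the modified precurve described in the observation.

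First I would check that $\varphi$ defines an isomorphism in $\Mat_i\overline{\mathcal{A}}$ from $(C,\{P_a\})$ to $(C,\{P_a'\})$, where $\{P_a'\}$ is the prescribed modification. The only arc for which the intertwining condition of Definition~\ref{def:SplittingCatsForEquivalenceFunctors} is nontrivial is the arc $a$ containing $s$ as a side. If $s=s_1(a)$, then $\iota_{s_1(a)}.\varphi^\times.\iota_{s_1(a)}=E^i_i(\lambda^{-1})$ and $\iota_{s_2(a)}.\varphi^\times.\iota_{s_2(a)}=\id$, so the intertwining condition reads $P_a = P_a'\cdot E^i_i(\lambda^{-1})$, i.e.\ $P_a' = P_a\cdot E^i_i(\lambda)$. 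If instead $s=s_2(a)$, the symmetric computation yields $P_a' = E^i_i(\lambda^{-1})\cdot P_a$. Both cases match the statement exactly.

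Second I would verify that conjugating $\partial$ by $\varphi$ implements the claimed label changes on the two-sided $f$-joins incident to $\bullet(s,i)$. A component $e^t_j \xrightarrow{\mu p_f^n.\iota_t} e^u_k$ with neither endpoint equal to $e^s_i$ is fixed by conjugation; a component with $e^t_j=e^s_i$ but $e^u_k\neq e^s_i$ picks up a factor $\lambda$ from $\varphi^{-1}$ applied at the source and is rescaled to $\lambda\mu$; a component with $e^u_k=e^s_i$ but $e^t_j\neq e^s_i$ picks up a factor $\lambda^{-1}$ from $\varphi$ applied at the target and is rescaled to $\lambda^{-1}\mu$; and a loop at $\bullet(s,i)$ picks up both factors and is unchanged, consistent with its simultaneous role as a join starting and ending at $\bullet(s,i)$. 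This is precisely the rule described in the observation.

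There is no substantive obstacle: the entire argument is elementary linear algebra. The only point requiring care is the sign bookkeeping, namely the distinction between the cases $s=s_1(a)$ and $s=s_2(a)$, which determines whether $E^i_i(\lambda)$ appears on the right of $P_a$ or $E^i_i(\lambda^{-1})$ on the left. The choice to rescale by $\lambda^{-1}$ (rather than $\lambda$) at $\bullet(s,i)$ is precisely what produces the exponents of the elementary matrices as stated.
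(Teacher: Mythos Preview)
Your proof is correct. The paper states this as an observation without proof, treating it as elementary; your change-of-basis argument via the diagonal automorphism $\varphi$ rescaling the single generator $e^s_i$ is exactly the natural way to see it, and your bookkeeping for the intertwining condition in $\Mat_i\overline{\mathcal{A}}$ and for the conjugated differential is accurate in both cases $s=s_1(a)$ and $s=s_2(a)$.
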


\begin{figure}[t]
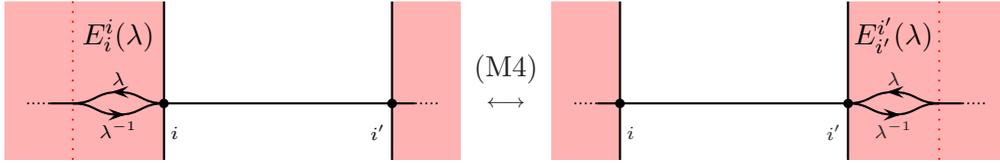

    \centering
    $\PushPassingLoops$
    \caption{Pushing passing loops along \( f \)-joins}
    \label{fig:pushing_passing_loops}
\end{figure}

Lemma~\ref{lem:CalculusForPreloops} also remains true after replacing the elementary matrices \( E^j_i \) by \( E^j_i(\lambda) \) for any \( \lambda\in\field \) and the matrix \( E^{j'}_{i'} \) in move (M2) by \( E^{j'}_{i'}(-\lambda) \).
There is a fourth move (M4) which allows us to push passing loops along \( f \)-joins; see Figure~\ref{fig:pushing_passing_loops}. Notice that, taking this move together with those collected in Figure \ref{fig:traintrackmovesSigns}, we can move passing loops to anywhere that is convenient (up to adjusting the coefficients) and, effectively, ignore passing loops when we simplify. 

\labellist \tiny
\endlabellist
\begin{figure}[t]
\includegraphics[scale=0.75]{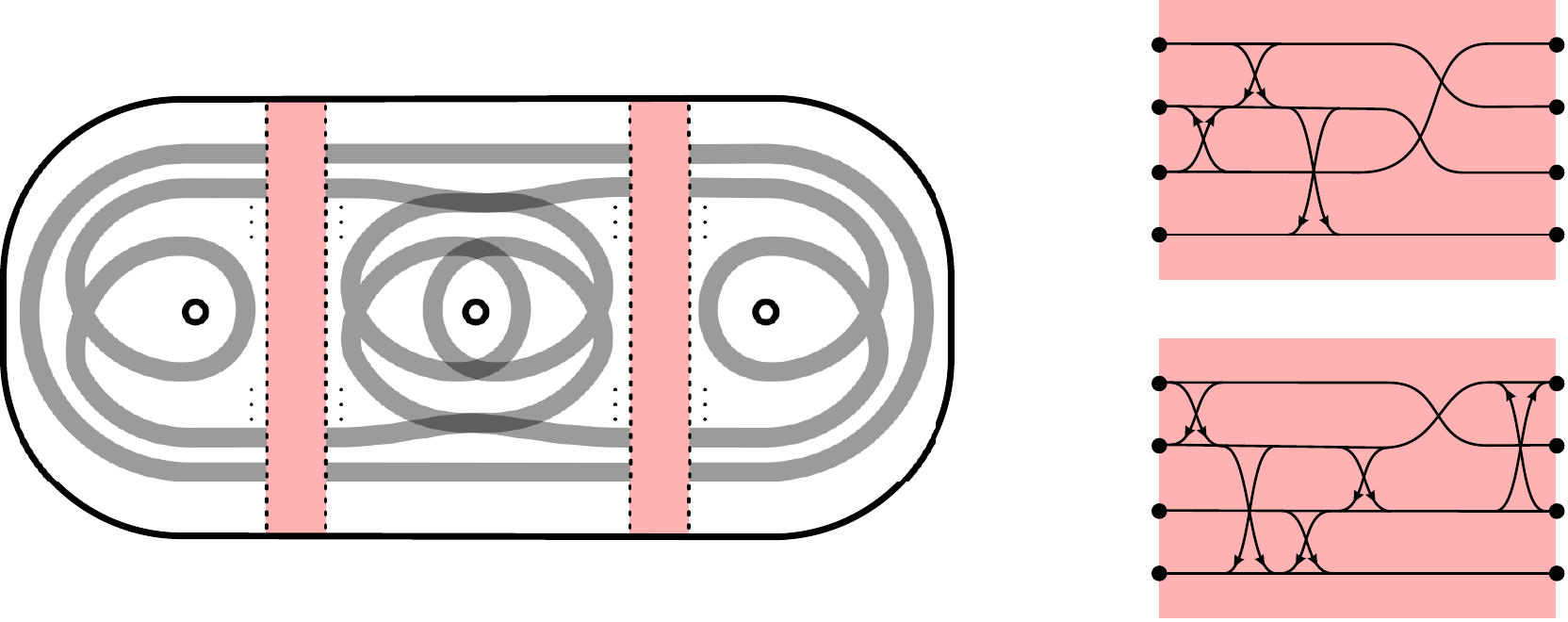}
\caption{Summarizing the simplification process, when $\field = \fieldTwoElements$: on the left, the precurve is arranged so that strands are ordered from outside to inside according to how much they wrap in each face. On the right, two equivalent arc-regions are shown. The lower of the two illustrates the desired form, which is guaranteed by \cite[Lemma 31]{HRW}.}\label{fig:taxonomy}
\end{figure}

This last observation allows us to enhance the arrow-pushing algorithm from \cite{HRW} to work with signs,  so that the proof of Theorem~\ref{thm:EverythingIsLoopTypeUpToLocalSystems} still works. For the purpose of the algorithm, we can simply ignore passing loops, which will ultimately be absorbed into the local systems. We take a moment to explain what needs to be carefully checked in order to deal with labelled switches. Central to the algorithm is \cite[Lemma 31]{HRW}, which says that crossover arrows can be arranged in a particular way in each of the arc neighbourhoods. For concreteness, consider the marked surface associated with $\BNAlgH$. As indicated in Figure \ref{fig:taxonomy}, we can choose our simply faced precurves so that strands are ordered from most-to-least wrapping (compare \cite[Section 3.7,  particularly Figure 32]{HRW}). Note that making such a choice is a simple matter of multiplication by a permutation matrix. 

With this choice in hand, the version of \cite[Lemma 31]{HRW} that is of relevance to us has each arc neighbourhood expressed as follows: a permutation among the strands, on the left of which are only downward pointing switches and on the right of which are only upward pointing switches. (There are, of course, passing loops as well, but we can assume that these are collected to one side of the arc region and ignore them, owing to the observation above.) 
The existence of such a decomposition follows from a version of Gaussian elimination in which the row permutation is performed after eliminating all entries below the pivot
elements; this is sometimes called an $LPU$ decomposition. However, in order to show that the algorithm terminates, one needs to keep track of individual arrows (and certain weights associated with them, see~\cite[Section~3.7]{HRW}). Let us discuss how this works in some more detail. 

\labellist
\pinlabel $=$ at 137 118 \pinlabel $=$ at 137 30
 \tiny
\pinlabel $\lambda_1$ at 28 132  \pinlabel $-\lambda_1$ at 51 132
\pinlabel $\lambda_2$ at 75 110  \pinlabel $-\lambda_2$ at 98 110
\pinlabel $\frac{\lambda_2}{\lambda}$ at 174 108  \pinlabel $-\frac{\lambda_2}{\lambda}$ at 197 108
\pinlabel ${\lambda_1\!\lambda}$ at 209 128  \pinlabel $-\lambda_1\!\lambda$ at 232 128
\pinlabel $\lambda$ at 248 149  \pinlabel $\frac{1}{\lambda}$ at 248 128
\pinlabel $\lambda$ at 248 92  \pinlabel $\frac{1}{\lambda}$ at 248 112
\pinlabel $\lambda_1$ at 28 43 \pinlabel $-\lambda_1$ at 51 43
\pinlabel $\frac{1}{\lambda_1}$ at 174 24 \pinlabel $-\frac{1}{\lambda_1}$ at 197 24
\pinlabel $-\lambda_1$ at 209 43 \pinlabel $\lambda_1$ at 232 43
\pinlabel $\lambda_1$ at 248 62 \pinlabel $\frac{1}{\lambda_1}$ at 248 40
\pinlabel $-\lambda_1$ at 248 3  \pinlabel $-\frac{1}{\lambda_1}$ at 248 24
\endlabellist
\begin{figure}[t]
\includegraphics[scale=0.75]{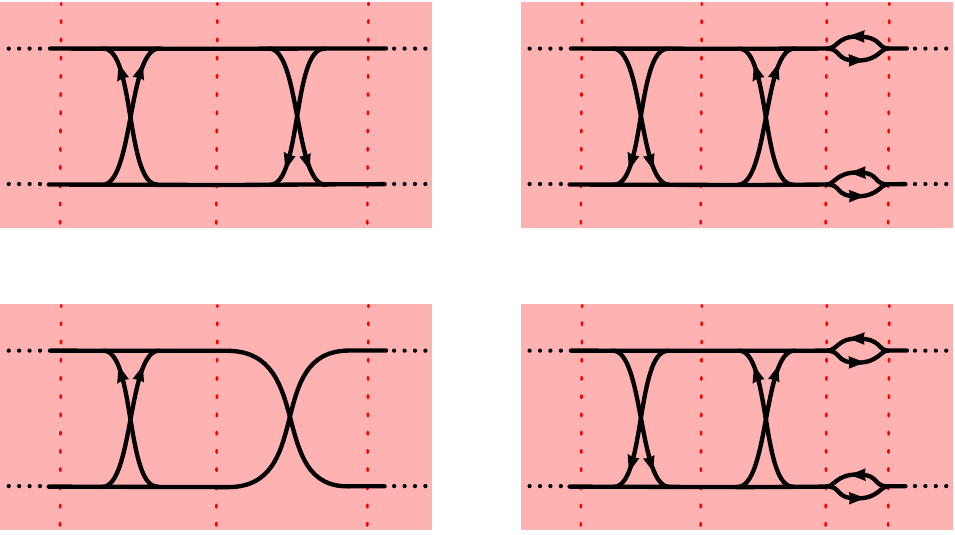}
\caption{The top row shows two equivalent local pictures of train tracks representing equal matrices, where $\lambda_1,\lambda_2\in\field$, in the generic case where $\lambda=1+\lambda_1\lambda_2$ does not vanish (the non-generic case where $\lambda_2=-\lambda_1^{-1}$ is analogous to the usual setup with mod 2 coefficients). This is an application of move (T1b) from Figure \ref{fig:traintrackmovesSigns}. The lower row shows equivalent local pictures of train tracks used to move a crossover arrow past a transposition, using the move (T1a).}\label{fig:casework}
\end{figure}

After the initial reordering of the strands, we can push all crossings to the middle of the arc neighbourhood, potentially with switches pointing in any direction on either side. Suppose there are upward pointing arrows to the left of the crossings. Let us focus on the {\em innermost} of these, that is, the upward switch closest to the crossings; see Figure \ref{fig:taxonomy} for a specific example. Then, up to changing coefficients according to the move (T3), it is possible to slide this switch past all of the downward pointing switches it meets (which adds more arrows, but they point in the correct direction), with one exception. This exceptional case, where an upward pointing crossover arrow meets a downward pointing one between the same strands, is shown in Figure \ref{fig:casework}. The generic case when $\lambda_2\ne-\lambda_1^{-1}$ is handled by (T1b): crossover arrows with generic labels in $\field$ slide freely past one another up to the addition of new passing loops (these can be pushed away using (T4) moves). The non-generic case is handled as in the case where $\field = \fieldTwoElements$; this uses (T1a) when working with signs.

Once the upward pointing switch reaches the crossings, it either freely slides to the other side, with a possible change in its length but not a change to its direction, or it meets a crossing as on the bottom left of Figure~\ref{fig:casework}. As illustrated in the same figure, we may use (T1a) to produce two switches and push them to the left and right of the remaining crossings such that they point in the correct directions. 

We can now repeat this procedure until there are no more upward pointing switches to the left of the crossings. We can deal similarly with the downward pointing switches to the right of the crossings. 
Now, having adapted \cite[Lemma 31]{HRW} to work with signs, \cite[Proposition 29]{HRW} goes through as before, and hence Theorem~\ref{thm:EverythingIsLoopTypeUpToLocalSystems} does not depend of the choice of $\field$.



There remains one final adjustment: the bigon count in the definition of the differential of the wrapped Lagrangian Floer homology of two pre-curves needs to take the labelling of the crossover arrows and passing loops into account. So with the notation as in Definition~\ref{def:LagrangianFH}, we define
\[ d(x)=\sum_{y}\Big(\sum_{\iota\in \mathcal{M}(x,y)}\lambda(\iota)\Big)y,\]
where \( \lambda(\iota) \) is the product of all labels on the boundary of the bigon \( \iota \). Similarly, we adapt the definition of the resolution of an intersection point between two simply-faced precurves in pairing position. With these changes, the proofs of Theorems~\ref{thm:PairingMorLagrangianFH}, \ref{thm:PairingFormula} and~\ref{thm:CompleteClassification} for arbitrary fields \( \field \) go through. 

\begin{wrapfigure}{r}{0.3333\textwidth}
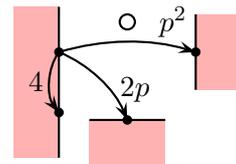

	\centering
	$\ExampleOverIntegers$
	\caption{A precurve over \( \mathbb{Z} \)}\label{fig:ExampleOverIntegers}\medskip
  \label{fig:block_near_arc}
\end{wrapfigure}
\myfixwrapfig
\begin{remark}\label{rem:classification_over_Z}
    A classification of precurves with coefficients in \( \mathbb{Z} \) seems to be considerable harder than over fields. There are two reasons for this: first, we can no longer fully cancel all precurves and, second, we may not be able to simplify the precurves with respect to faces (Proposition~\ref{prop:PreloopToCC}). Figure~\ref{fig:ExampleOverIntegers} illustrates this: there, we have four generators which could be part of a precurve. Over \( \Q \), the arrow labelled by 4 could be cancelled. However, over \( \mathbb{Z} \), this is not possible. Furthermore, over \( \Q \), the arrow \( p^2 \) could be cancelled using the shorter arrow labelled \( 2p \). Again, this is not possible over \( \mathbb{Z} \).
\end{remark}

\section{Immersed curve invariants}\label{sec:FigureEight}
We are now equipped to define various curve invariants of pointed 4-ended tangles. This section produces the main three curve invariants for the paper: \( \BNr(T) \), \( \Khr(T) \), and \( \Kh(T) \).
\subsection{Arc type curve}
The classification results from Section~\ref{sec:classification} allow us to interpret Bar-Natan's invariant \( \DD(T) \) of a pointed 4-ended tangle in terms of immersed curves on the 3-punctured disc from Figure~\ref{fig:ExampleMarkedSurface}. We usually identify this surface with the 4-punctured sphere \(\FourPuncturedSphere\) from Section~\ref{subsec:intro:classification} such that the arcs $b$ and $c$ correspond to the arcs $\DotBarc$ and $\DotCarc$, respectively. 

\begin{definition}\label{def:ImmersedCurveInvariantspqMod}
    Given a pointed 4-ended tangle \( T \), let \( \Arc(T) \) denote the multicurve associated with \( \DD(T) \). We call \( \Arc(T) \) the \textbf{arc type invariant of \( T \)}. 
\end{definition}
Note that up to homotopy,  \( \DD(T) \) and \( \Arc(T) \) contain the same amount of information, so \( \Arc(T) \) simply offers an interpretation of the algebraic invariant \( \DD(T) \) as a geometric invariant living on the parameterized boundary of the tangle.  
In fact, as we will show in Theorem~\ref{thm:MCGaction}, over \(\Rcomm=\fieldTwoElements \) adding twists to the ends of a tangle \( T \) corresponds to adding Dehn twists to the curves \( \Arc(T) \). So one may actually ignore the parameterization given by the arc system without loosing any information about \( \Arc(T;\fieldTwoElements ) \), or equivalently \( \DD(T;\fieldTwoElements ) \), except for the absolute bigradings. 

\begin{example}[\(\Arc \) for trivial and rational tangles]\label{ex:curves_for_rational_tangles}
The invariant \(\Arc(\Li) \) is a single arc \(\DotCarc\) from Figure~\ref{fig:exa:classification:curves:trivial}, which can be obtained by pushing the tangle component without the reduction point \(*\) to the boundary of \( D^3 \). More generally, by computations from Example~\ref{exa:BNntwisttangles}, the invariants of \( n\)-twist tangles are obtained from the arc \(\DotCarc\) by applying the corresponding Dehn twists. In other words, the arc type invariant \( \BNr(T_n) \) is obtained by pushing the tangle component without the reduction point \(*\) to the boundary of \( D^3 \).  Examples of immersed curves for
\[
		\DD(T_{n})=\DD\left( \left.\substack{\CrossingPosMarkedi \\ \raisebox{5pt}{\vdots} \\ \CrossingPos} \right\}n \right) =
    \left[
    \begin{tikzcd}[nodes={inner sep=2pt},column sep=14pt]
    \GGzqh{\DotC}{\frac{n}{2}}{n}{0}
    \arrow{r}{S}
      &
    \GGzqh{\DotB}{\frac{n-1}{2}}{n+1}{1}
    \arrow{r}{D}
    &
    \GGzqh{\DotB}{\frac{n-1}{2}}{n+3}{2}
    \arrow{r}{SS}
    &
    \GGzqh{\DotB}{\frac{n-1}{2}}{n+5}{3}
    \arrow{r}{D}
    &
    \cdots
    \arrow{r}{}
    &
    \GGzqh{\DotB}{\frac{n-1}{2}}{3n-1}{n}
    \end{tikzcd}\right]
\]
\[ 
    \DD(T_{-n})=\DD\left( \left.\substack{\CrossingNegMarkedi \\ \raisebox{5pt}{\vdots} \\ \CrossingNeg} \right\}n \right)=
    \left[
    \begin{tikzcd}[nodes={inner sep=2pt},column sep=14pt]
    \GGzqh{\DotB}{-\frac{n-1}{2}}{-3n+1}{-n}
    \arrow{r}{}
    &
    \cdots
    \arrow{r}{D}
    &
    \GGzqh{\DotC}{-\frac{n-1}{2}}{-n-5}{-3}
    \arrow{r}{SS}
    &
    \GGzqh{\DotB}{-\frac{n-1}{2}}{-n-3}{-2}
    \arrow{r}{D}
    &
    \GGzqh{\DotB}{-\frac{n-1}{2}}{-n-1}{-1}
    \arrow{r}{S}
    &
    \GGzqh{\DotC}{-\frac{n}{2}}{-n}{0}
    \end{tikzcd}\right]
\]
are shown in Figure~\ref{fig:rational_curves}.
By Theorem~\ref{thm:MCGaction} (see above), the arc type invariant \( \BNr(T;\fieldTwoElements) \) of \emph{any} rational tangle is obtained by pushing the tangle component without the reduction point \(*\) to the boundary of \( D^3 \). We expect that this is true over a general field \( \field \) as well.

Thomson described an algorithm for computing Bar-Natan's tangle invariant $\KhTl{T}$ for any rational tangle $T$ over the dotted cobordism category with the additional relation $H=0$~\cite{ThompsonRationalTangles}. He proves that one can describe these invariants in terms of zig-zag graphs and that the bigradings on these invariants can be understood explicitly in terms of an action by the braid group on three strands. Our interpretation of Bar-Natan's invariant in terms of immersed curves offers a satisfying explanation for these observations.
\end{example}

\begin{figure}[t]
  \centering
  \begin{subfigure}[t]{0.45\textwidth}
  \centering
  \includegraphics[scale=0.6]{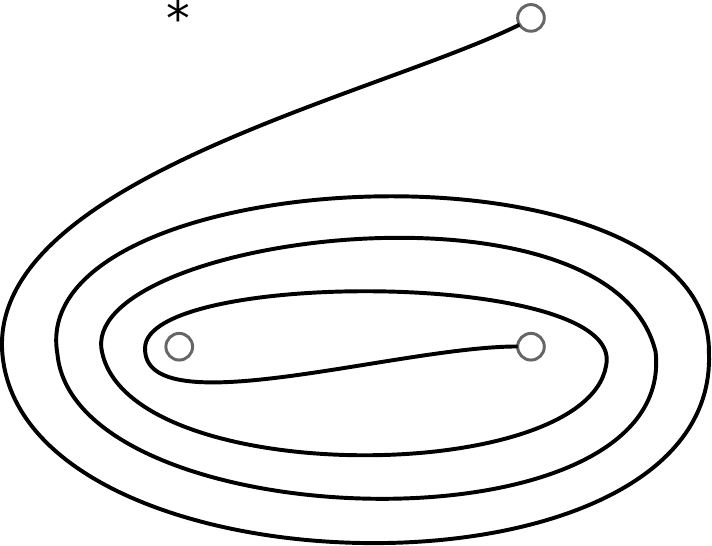}
  \caption{\( \BNr(T_8) \)}
  \end{subfigure}
  \begin{subfigure}[t]{0.45\textwidth}
  \centering
  \includegraphics[scale=0.6]{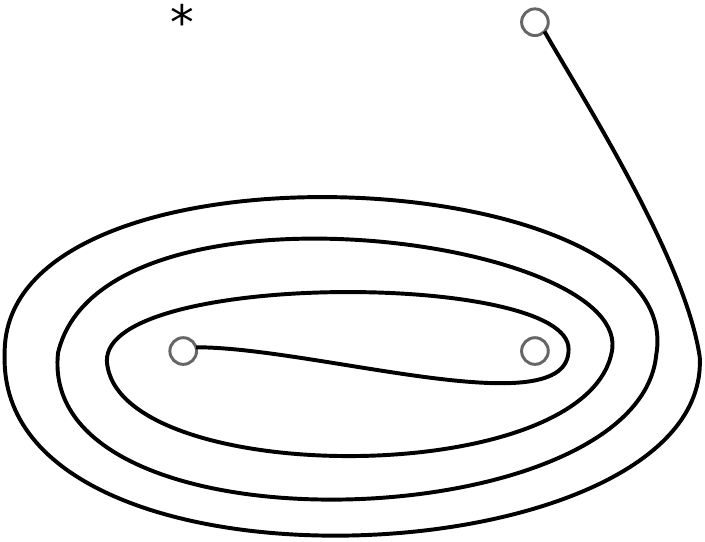}
  \caption{\( \BNr(T_{-7}) \)}
  \end{subfigure}
  \caption{Arc type curves associated to rational tangles.}\label{fig:rational_curves}
\end{figure}

\begin{example}[\(\Arc \) for some 2-stranded pretzel tangles]\label{exa:ArcTwoStrandedPretzel}
Consider the invariants associated with \( (2,-n) \)-pretzel tangles \( T_{2,-n} \); these tangles include the already familiar tangle \( T_{2,-3} \) from the top-left of Figure~\ref{fig:2m3ptBNComplexComputationRH}. Figure~\ref{fig:ArcTypePretzel} shows the curves $\BNr(T_{2,-n} )$ for some integers $n$; it includes the generators and arrows of the complexes $\DD(T_{2,-n} )$ to facilitate the translation between the algebraic object \(\DD(T)\) and the geometric object \(\BNr(T)\). 
\begin{itemize}
    \item[\( n=2 \):] The tangle \( T_{2,-2} \) has a single closed component. Choosing the same orientation of the tangle ends as in Example~\ref{exa:2m3ptBNComplexComputation}, the complex \( \DD(T_{2,-2}) \) is equal to
    \[ 
		\begin{tikzcd}[column sep=30pt,row sep=20pt]  
		\GGdzh{\DotC}{-\frac{1}{2}}{-9}{-4}
    	\arrow{r}{S}
    	&
    	\GGdzh{\DotB}{-1}{-8}{-3}
    	\arrow{r}{D}
    	&
     	\GGdzh{\DotB}{-1}{-6}{-2}
     	&
     	\GGdzh{\DotB}{-1}{-6}{-2}
    	\arrow{r}{D}
    	&
    	\GGdzh{\DotB}{-1}{-4}{-1}
    	\arrow{r}{S}
    	&
     	\GGdzh{\DotC}{-\frac{3}{2}}{-3}{0}
    	\end{tikzcd}
    \]
    So \( \Arc(T_{2,-2}) \) has two components, which coincide with the arc type invariants of the two 2-twist rational tangles, see Figure~\ref{fig:ArcTypePretzelII}.
    
    \item[\( n=3 \):] We have already computed \( \DD(T_{2,-3}) \) of the \( (2,-3) \)-pretzel tangle in Example~\ref{exa:2m3ptBNComplexComputation}, see in particular Figure~\ref{fig:2m3pt:BN}.
    Figure~\ref{fig:ArcTypePretzelIII} shows the corresponding immersed curve \( \BNr(T_{2,-3}) \).  
    \item[\( n=5 \):] The \( (2,-5) \)-pretzel tangle \( T_{2,-5} \) is interesting, because it illustrates that \( \Arc(T) \) may contain closed components. \( \Arc(T_{2,-5}) \) consists of two components, of which one agrees with the immersed curve from Figure~\ref{fig:ArcTypePretzelIII} and the other with the one from Figure~\ref{fig:ArcTypePretzelV}. 
\end{itemize}
\end{example}

\begin{figure}[t]
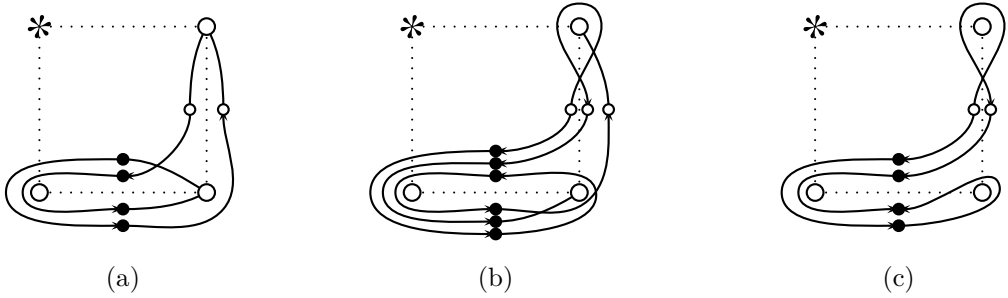

    \centering
    \begin{subfigure}{0.3\textwidth}
    \centering
    $\pretzelarcinvariantII$
    \caption{}\label{fig:ArcTypePretzelII}
    \end{subfigure}
    \begin{subfigure}{0.3\textwidth}
    \centering
    $\pretzelarcinvariant$
    \caption{}\label{fig:ArcTypePretzelIII}
    \end{subfigure}
    \begin{subfigure}{0.35\textwidth}
    \centering
    $\pretzelarcinvariantV$
    \caption{}\label{fig:ArcTypePretzelV}
    \end{subfigure}
    \caption{The arc invariants of some two-stranded pretzel tangles from Example~\ref{exa:ArcTwoStrandedPretzel}}
    \label{fig:ArcTypePretzel}
\end{figure}

\begin{example}[\(\BNr\) for connected sums \( (\Lk,p) \# \Li \)]\label{example:non-prime-tangles}

By Remark~\ref{rem:LinkInvariantFrom2EndedTangles}, the type D structure \(\CKhr(\Lk,p)^{\Rcomm[H]}\) associated with a pointed link \((\Lk,p)\) is chain isomorphic to the complex \(\KhTl{T_{\Lk,p}}\) associated with the 2-ended tangle \(T_{\Lk,p}\) obtained by cutting the link open at the basepoint \(p\). 
By adding a trivial vertical tangle strand to \(T_{\Lk,p}\), we obtain a 4-ended tangle which we denote by \((\Lk,p)\#\Li\). As the notation suggests, we may think of this as a \emph{pointed connected sum} of the trivial tangle $\Li$ with the link \((\Lk,p)\) at the basepoints. By construction, the complexes \(\KhTl{T_{\Lk,p}}\) and \(\KhTl{(\Lk,p)\#\Li}\) agree, except that the objects in the former are the trivial 2-ended tangles \(\TrivialTwoTangle\) and the objects of the latter are the trivial 4-ended tangles $\Li$. Now, the complex \(\KhTl{(\Lk,p)\#\Li}\) is essentially the same as \(\DD((\Lk,p)\#\Li)\). Consequently, \(\DD((\Lk,p)\#\Li)\) and \(\CKhr(\Lk,p)^{\Rcomm[H]}\) are also essentially the same. More precisely, 
\[ \DD((\Lk,p) \# \Li)^{\BNAlgH} \simeq \emb (\CKhr(\Lk,p)^{\Rcomm[H]}) \] 
where \( \emb\co\Mod^{\Rcomm[H]}\rightarrow\Mod^{\BNAlgH} \) is the functor induced by the algebra inclusion
\[ 
\embpre\co\Rcomm[H] \rightarrow \BNAlgH   \] sending $1\mapsto \DotC$ and $H \mapsto \DotcobC -S\SaddleBC$.
For an explicit example, see Section~\ref{sec:mutation}, in particular Figures~\ref{fig:kh-field-dependence} and~\ref{fig:curve-field-dependence}. Similar statements are true for pointed connected sums with the trivial tangle \( \Lo \).
\end{example}

\subsection{Figure-8 type curve}\label{sec:FigureEightCurve}
As we will see in Proposition~\ref{prop:NumberOfNonCompactComponentsInArcInvariant}, the arc type invariant \( \BNr(T) \) always  has at least one component which is non-compact. In this section, we define an immersed curve invariant which only has compact components. 

\begin{definition}\label{def:FigureEightCurve}
	Given a pointed 4-ended tangle \( T \), we denote the mapping cone 
	\[
	\Big[\begin{tikzcd}
	q^{-1}h^{-1}\delta^{\frac{1}{2}} \DD(T)
	\arrow{r}{H\cdot \id}
	&
	q^{+1}h^0\delta^{\frac{1}{2}} \DD(T)
	\end{tikzcd}\Big]
	\]
	by \( \DD_1(T) \). This is a well-defined bigraded complex, since \( \gr(H)=q^{-2}h^0 \). We denote the immersed curve corresponding to \( \DD_1(T) \) by \( \Khr(T) \). We call it the \textbf{figure-8 type invariant} of the tangle.
\end{definition}

\begin{wrapfigure}{r}{0.13\textwidth}
  \vspace{0.2cm}
  \centering
  \includegraphics[scale=0.3]{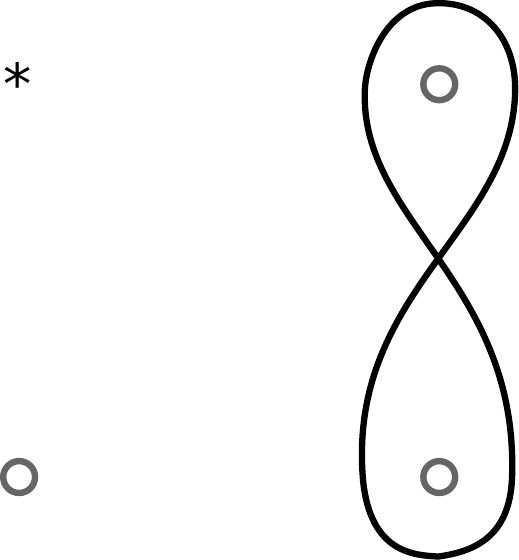}
  \caption{}
  \label{fig:eight}
  \bigskip
\end{wrapfigure}
\myfixwrapfig

\begin{example}[\( \Khr \) for trivial and rational tangles]\label{exa:fig_eights_rational_tangles}
Justifying the choice of the name, the figure-8 invariant  \( \Khr(\Li) \) is obtained by replacing the arc type invariant \( \BNr(\Li)=\DotCarc \) by the figure-8 curve in Figure~\ref{fig:eight}. By Theorem~\ref{thm:MCGaction}, the mod 2 invariants \( \Khr(T;\fieldTwoElements) \) of rational tangles are obtained from this curve by applying the corresponding Dehn twists, and thus the figure-8 invariant of any rational tangle is obtained by turning its arc type invariant (see Example~\ref{ex:curves_for_rational_tangles}) into a figure-8. 
\end{example}

\begin{figure}[t]
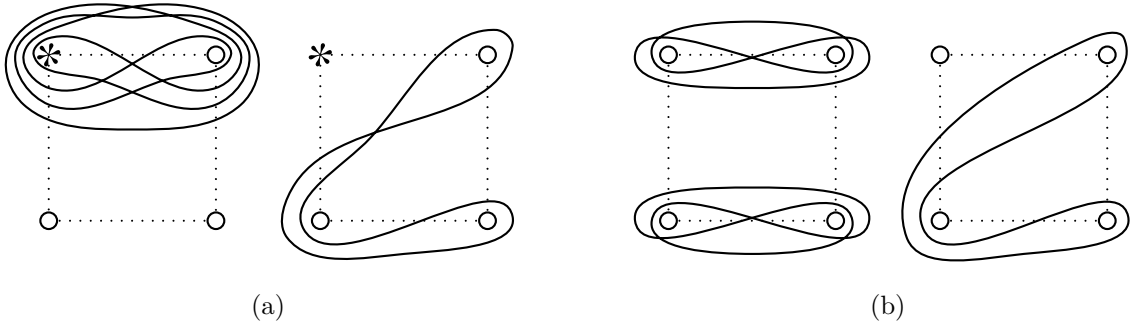

	\centering
	\begin{subfigure}{0.48\textwidth}
		\centering
		$\PTImmersedCurveKhNonRatX$
		$\PTImmersedCurveKhRatX$
		\caption{}\label{fig:2m3ptImmersedCurveKhX}
	\end{subfigure}
	\quad
	\begin{subfigure}{0.48\textwidth}
		\centering
		$\PTImmersedCurveHFNonRatX$
		$\PTImmersedCurveHFRatX$
		\caption{}\label{fig:2m3ptImmersedCurveHFX}
	\end{subfigure}	
	\caption{The immersed curves from Khovanov homology (a) and Heegaard Floer theory (b) for the \( (2,-3) \)-pretzel tangle.}\label{fig:2m3ptImmersedCurves}
\end{figure}

\begin{figure}
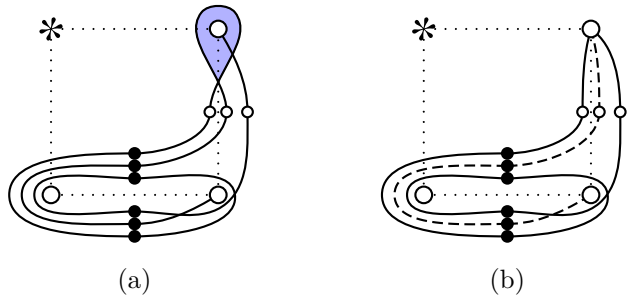

    \centering
    \begin{subfigure}{0.3\textwidth}
    \centering
    $\pretzelarcinvariantMarkedFishtail$
    \caption{}
    \label{fig:pretzelarcinvariantWOfishtail:a}
    \end{subfigure}
    \begin{subfigure}{0.3\textwidth}
    \centering
    $\pretzelarcinvariantWOfishtail$
    \caption{}
    \end{subfigure}
    \caption{The arc invariant for the \( (2,-3) \)-pretzel tangle before (a) and after (b) resolving the single teardrop marked blue at the top right corner. In subfigure (b), the rational tangle component is dashed.}
    \label{fig:pretzelarcinvariantWOfishtail}
\end{figure}

\begin{example}[\( \Khr \) for some 2-stranded pretzel tangles]\label{exa:8curveTwoStrandedPretzel} 
We compute the figure-8 invariants for the same \( (2,-n) \)-pretzel tangles \( T_{2,-n} \) as in Example~\ref{exa:ArcTwoStrandedPretzel} above. 
\begin{itemize}
    \item[\( n=2 \):] Each component of the arc type invariant of the \( (2,-2) \)-pretzel tangle is an embedded arc, so by the previous example, its corresponding figure-8 invariant is obtained by replacing each of them by a figure-8 curve. 
    \item[\( n=3 \):] When passing to the figure-8 invariant, \( \Arc(T_{2,-3}) \) decomposes into two components, which are shown in Figure~\ref{fig:2m3ptImmersedCurveKhX}. In Figure~\ref{fig:pretzelarcinvariantWOfishtail}, we offer a geometric interpretation for this phenomenon: first we resolve the teardrop marked blue in subfigure (a) to obtain the two components in subfigure (b). One of those components (the dashed one) is the curve of a two-twist rational tangle; the other one starts and ends at the top right puncture and winds around the top two punctures twice. Now observe that  \( \Khr(T_{2,-3}) \) is obtained by replacing each of those two components by a figure-8 curve. 
    
    \item[\( n=5 \):] Of course, the mapping cone of \( H\cdot \id \) on the non-compact component of \( \Arc(T_{2,-5}) \) is equal to the same two components as for the \( (2,-3) \)-pretzel tangle. The compact component simply splits into two copies of itself. Thus \( \Khr(T_{2,-5}) \) consists of a curve on the left of Figure~\ref{fig:pretzelarcinvariantWOfishtail:a}, together with three identical curves, which agree with the invariant of a two-twist rational tangle from the right of Figure~\ref{fig:pretzelarcinvariantWOfishtail:a}. 
\end{itemize}
\end{example}

\begin{remark}
It is interesting to compare the figure-8 invariants to the immersed curves \( \HFT(T) \) from~\cite{pqSym,pqMod}, coming from Heegaard Floer homology: 
\begin{itemize}
    \item \( \HFT(T) \) for a rational tangle \( T \) is a single embedded curve which can be obtained from the figure-8 invariant by resolving the self-intersection. 
    \item Up to grading shifts, we have
    \(\Khr(T_{2,-2})=\Khr(T_{2})\oplus\Khr(T_{-2})\). The invariant \(\HFT(T_{2,-2}) \) also contains two rational components, namely \(\HFT(T_2)\) and \(\HFT(T_{-2})\), which can be obtained from \(\Khr(T_{2,-2})\) by resolving the self-intersections of \(\Khr(T_{2})\) and \(\Khr(T_{-2})\) as above. However, \(\HFT(T_{2,-2})\) contains two more (irrational) components, which have no corresponding counterparts in \(\Khr(T_{2,-2})\). 
    \item \( \HFT(T_{2,-3}) \), shown in Figure~\ref{fig:2m3ptImmersedCurveHFX}, has three components, while \(\Khr(T_{2,-3})\) only has two. However, both invariants have a component which, on its own, is the invariant of the same 2-twist rational tangle, namely 
\( \TwistTwoUp\relax\).
    \item The multicurve \(\HFT(T_{2,-5})\) is obtained from \(\HFT(T_{2,-3})\) by adding two more copies of its rational component. The same is true for \(\Khr(T_{2,-5})\) and \(\Khr(T_{2,-3})\). 
    A similar behaviour can also be observed for the two immersed curve invariants of other 2-stranded pretzel tangles.
\end{itemize}
\end{remark}

In all examples above (in fact, for all examples that we know the invariants for), \( \Khr(T) \) and \( \HFT(T) \) contain a component which agrees with the respective invariant of a rational tangle. As the \( (2,-2) \)-pretzel tangle illustrates, there might be multiple such components corresponding to different rational tangles. However, if a tangle has no closed component, we have so far only observed such components corresponding to the same rational tangles.
\begin{question} Is this a general phenomenon? In other words, given a tangle \( T \) without closed components, is there a unique rational tangle \( T_R \) such that the immersed curve invariant (\(\Khr\) from Khovanov / \( \HFT \) from Heegaard Floer) of \( T \) contains a component which is the invariant of \( T_R \)?
\end{question}

Of course, in the light of Dowlin's spectral sequence from Khovanov homology to knot Floer homology \cite{Dowlin}, one might ask if a similar general relationship between the Khovanov and Heegaard Floer tangle invariants can be made precise. However, the comparison above also raises the question if there might be an arc type invariant for tangle Floer homology. The answer to this is yes: such an arc type invariant can be easily defined as the image of the generalised peculiar module \( \CFTminus(T) \) defined in~\cite{pqMod} under a suitable quotient functor. Alternatively, it should be possible to define such an arc invariant using a suitable generalization of Ozsváth and Szabó's algebraic tangle invariants for \( (1, 3) \)-tangles. This leaves off with the questions of whether a gluing theorem works in this setting.  

If an immersed curve \( L \) contains at most one teardrop and no closed components (as is the case for \( \Khr(T)(T_{2,-3}) \)) then the curve corresponding to the mapping cone of \( H\cdot \id \) on \( L \) is obtained by resolving this teardrop and replacing every resulting non-compact component by an immersed figure-8 curve. This follows from the classification of morphisms in terms of intersection points from Section~\ref{sec:classification}. \begin{question}\label{que:ArcToFigureEights}How does this generalize to arbitrary immersed curves?
\end{question}

\subsection{A whole family of compact curves}
\begin{definition}\label{def:Kh_curve}
Let \( \Kh(T) \) be a curve invariant associated with the type~D structure
\[ 
\left[ 
    \begin{tikzcd}[ampersand replacement=\&]
        q^{-2}h^{-1}\DD(T) \arrow{r}{H \cdot \id} \& q^{0}h^{0}\DD(T) \\
        q^{0}h^{-1}\DD(T)  \arrow{ru}{2 \cdot \id} \arrow{r}{H \cdot \id} \& q^{2}h^{0} \DD(T) 
    \end{tikzcd}
    \right]
\]
\end{definition}
Observe that over \( \fieldTwoElements \) one has \( \Kh(T;\fieldTwoElements) = q^{-1}h^{0}\Khr(T;\fieldTwoElements) \oplus q^{+1}h^{0}\Khr(T;\fieldTwoElements) \), due to the vanishing of the diagonal \( (2 \cdot \id) \) arrows in the definition of \( \Kh(T;\fieldTwoElements) \). Over \( \fieldTwoIsNotZero \) (where $\fieldTwoIsNotZero$ is any field with characteristic not equal to 2) the curve \( \Kh(T;\fieldTwoIsNotZero) \)  corresponds to a type~D structure \[ \DD_2(T;\fieldTwoIsNotZero) \coloneqq [q^{-2}h^{-1}\delta^{1-\frac{2}{2}} \DD(T) \xrightarrow{H^2\cdot \id} q^{+2}h^{0}\delta^{\frac{2}{2}} \DD(T)] \] which is obtained by cancellation of \( (2 \cdot \id) \) arrows (using Lemma~\ref{lem:AbstractCancellation}) in the definition of \( \Kh(T;\fieldTwoIsNotZero) \). This dependence on the field is illustrated in the case of the trivial tangle \( \Kh(\Li) \) in Observation~\ref{obs:2_torsion_in_Kh}.

\begin{definition}\label{def:whole_family_of_curves}
	Given a pointed 4-ended tangle \( T \), we denote the mapping cone 
	\[
	\Big[\begin{tikzcd}
	q^{-n}h^{-1}\delta^{1-\frac{n}{2}} \DD(T)
	\arrow{r}{H^n\cdot \id}
	&
	q^{+n}h^{0}\delta^{\frac{n}{2}} \DD(T)
	\end{tikzcd}\Big]
	\]
	by \( \DD_n(T) \). Note that for \( n=1 \), \( \DD_n(T)=\DD_1(T) \) from Definition~\ref{def:FigureEightCurve}. Again, this is a well-defined bigraded complex, since \( \gr(H)=q^{-2}h^0 \). We denote the immersed curve corresponding to \( \DD_n(T) \) by \( \Eight[n](T) \). We call it the \textbf{\( \mathbf{n}^\text{th} \) figure-8 type invariant} of the tangle.  
\end{definition}

\begin{remark}\label{rmk:Kh_and_Khr_in_terms_of_Ln}
Note that the family \( \Eight[n](T) \) generalizes the previous compact curves we introduced:
\( \Khr(T)=\Eight[1](T)\), \(\Kh(T;\fieldTwoIsNotZero)=\Eight[2](T;\fieldTwoIsNotZero)\), \(\Kh(T;\fieldTwoElements)=q^{-1}h^{0}\Eight[1](T) \oplus q^{+1}h^{0}\Eight[1](T)\).
\end{remark}

\begin{wrapfigure}{r}{0.13\textwidth}
  \vspace{0.2cm}
  \centering
  \includegraphics[scale=0.3]{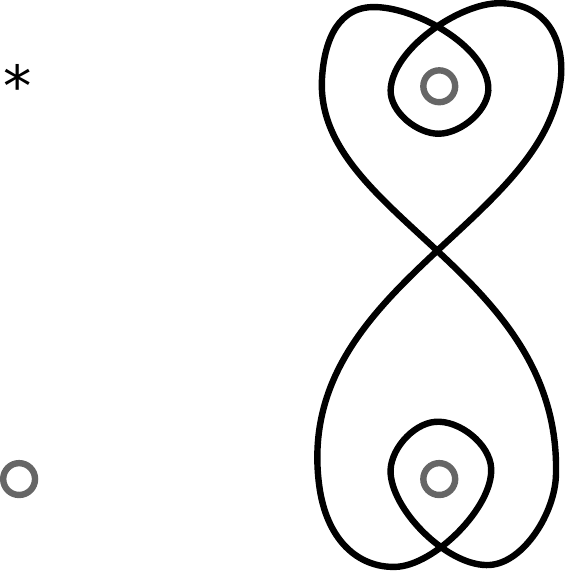}
  \caption{}
  \label{fig:heart}
  \bigskip
\end{wrapfigure}
\myfixwrapfig

\begin{example}[\( L_n \) for trivial tangles and rational tangles]
The \( n^\text{th} \) figure-8 invariant of the tangle \( \Li \) is obtained by replacing the arc type invariant \( \BNr(\Li)=\DotCarc \) by a figure-8 curve which winds around each of the two punctures \( n \) times; for example, \( \Eight[2](\Li) \) is the curve in Figure~\ref{fig:heart}. Again, the same is true for \( \Eight[n](T;\fieldTwoElements) \) for any rational tangle by the naturality of the mapping class group. Interestingly, for the \( (2,-3) \)-pretzel tangle, the algorithm from Example~\ref{exa:fig_eights_rational_tangles} of turning an arc into generalized figure-8 works for \( n>1 \), since the curve \( \Arc(T_{2,-3}) \) winds around any puncture at most once, so in particular strictly less than \( n \) times. In fact, for large \( N \), we can recover \( \Arc(T) \) from \( \Eight[N](T) \) by Observation~\ref{obs:LargeHAction}. 
\end{example}


\subsection{Properties of curves}
\begin{proposition}\label{prop:NumberOfNonCompactComponentsInArcInvariant}
Let \(T\) be a tangle with exactly \(l\) closed components. Then the curve \( \Arc(T) \) has exactly \(2^{l}\) non-compact components. Moreover, at each of the two non-special punctures which are connected by the tangle \(T\), there are exactly \(2^l\) ends.
\end{proposition}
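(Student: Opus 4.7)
The plan is to reduce the statement to the known count of free $\field[H]$-summands in $\BNr$ of a link, via the Geometric Pairing Theorem. The heuristic is that compact components of $\BNr(T;\field)$ are $H$-torsion while non-compact components each carry exactly one $\field[H]$-tower, and these are precisely what the pairing with a test arc extracts.

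Denote by $i$ the non-special puncture joined to $\ast$ by $T$, and by $j,k$ the remaining two non-special punctures. First I would choose a trivial tangle $T_1$ whose open arcs combine with those of $T$ into a single new closed loop, so that $\Lk_1 \coloneqq \Lk(T,T_1)$ has exactly $l+1$ components. By the Geometric Pairing Theorem (Theorem~\ref{thm:intro:pairing}) and Proposition~\ref{prop:towersReduced-field},
\[
\HF(\BNr(\mirror T_1;\field),\BNr(T;\field))_{\field[H]}\cong \BNr(\Lk_1;\field)_{\field[H]}
\]
has exactly $2^l$ free $\field[H]$-summands. By Observation~\ref{obs:LargeHAction}, every compact component $\gamma$ of $\BNr(T;\field)$ satisfies $H^N\cdot \id_\gamma \simeq 0$ for large $N$, so compacts contribute only $H$-torsion to the pairing. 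Thus all $2^l$ free summands come from non-compact components. The key technical claim, which I would prove using Theorem~\ref{thm:PairingFormula} together with the wrapping description of $\HF$ for non-compact curves, is that each non-compact component contributes exactly one free $\field[H]$-tower to the pairing: after closure, each non-compact curve $\eta$ together with the arc $\BNr(T_1)$ closes up into a single new closed loop of $\Lk_1$, which accounts for one tower in $\BNr(\Lk_1)$. Granting this claim yields exactly $2^l$ non-compact components.

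For the ``moreover'' claim, I would repeat the argument with a second trivial tangle $T_2$ whose open arcs have the opposite connectivity, so that $\Lk_2 \coloneqq \Lk(T,T_2)$ has $l+2$ components and hence $\BNr(\Lk_2;\field)$ has $2^{l+1}$ free $\field[H]$-summands. Tracking which endpoint configurations of the non-compact components of $\BNr(T;\field)$ can pair with $\BNr(T_2)$ to produce free summands, and comparing the resulting total with both tower counts $2^l$ and $2^{l+1}$, will constrain the endpoint data so as to force every non-compact component to have one end at $j$ and one end at $k$, giving precisely $2^l$ ends at each of $j$ and $k$ and none at $i$. The main obstacle in both parts is the careful case analysis of the pairing behaviour for non-compact curves with varying endpoint overlap with $\BNr(T_1)$ and $\BNr(T_2)$; I expect this to follow systematically from Theorem~\ref{thm:PairingFormula} and the wrapping rules for non-compact arcs described in Section~\ref{subsec:classification:morphisms-formula}.
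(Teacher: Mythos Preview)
Your overall strategy---pair $\BNr(T)$ with test arcs and count the $\field[H]$-towers using Proposition~\ref{prop:towersReduced-field}---is exactly the paper's strategy, but your execution has a genuine gap.

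The ``key technical claim'' that each non-compact component of $\BNr(T)$ contributes exactly one free $\field[H]$-tower to the pairing with a single test arc is not justifiable at this stage, and is essentially what you are trying to prove. What a pairing with a test arc ending at punctures $p$ and $q$ actually counts is the number of \emph{ends} of $\BNr(T)$ landing at $p$ or $q$: each such end, wrapped against the test arc, produces one tower. A priori a non-compact component might have both ends at the same puncture, or both ends at punctures missed by your test arc; it would then contribute two or zero towers, not one. Your parenthetical justification (``each non-compact curve $\eta$ together with the arc $\BNr(T_1)$ closes up into a single new closed loop of $\Lk_1$'') conflates the tangle closure in $S^3$ with the curve pairing on $S^2_{4,\ast}$; these are entirely different operations.

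The paper avoids this by never trying to count components directly. Instead it sets $n_2,n_3,n_4$ to be the number of ends at each non-special puncture and uses \emph{three} rational closures $\Lk_{2,3}$, $\Lk_{3,4}$, $\Lk_{2,4}$ rather than two. These have $l+1$, $l+1$, and $l+2$ components respectively, yielding the linear system
\[
n_2+n_3=2^l,\qquad n_3+n_4=2^l,\qquad n_2+n_4=2^{l+1},
\]
which solves to $n_2=n_4=2^l$ and $n_3=0$. The number of non-compact components is then half the total number of ends, giving $2^l$, and the endpoint distribution follows immediately. Your two-pairing setup does not supply enough equations to pin down the $n_i$; the third closure is essential.
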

\begin{proof}
    Let us enumerate the tangle ends from 1 to 4 in anticlockwise direction, starting at the special puncture $\ast$. Suppose that the tangle \( T \) connects the tangle ends 2 and 4. (The other two cases can be handled similarly.) For \( i\in\{2,3,4\} \), let \( n_i \) be the number of ends of \( \Arc(T) \) at the puncture \( i \). For \( i,j\in\{2,3,4\} \) with \( i\neq j \), let \( \Lk_{i,j} \) be the link obtained by pairing the tangle \( T \) with some rational tangle connecting the tangle ends \( i \) and \( j \). Then \( \Lk_{2,3} \) and \( \Lk_{3,4} \) have \( l+1 \) components, whereas \( \Lk_{2,4} \) has \( l+2 \) components. Hence, by 
    Proposition~\ref{prop:towersReduced}, \( \BNr(\Lk_{2,3}) \) and \( \BNr(\Lk_{3,4}) \) have \( 2^l \) \( \Rcomm[H] \)-towers each, whereas \( \BNr(\Lk_{2,4}) \) has \( 2^{l+1} \) \( \Rcomm[H] \)-towers. By the geometric pairing Theorem~\ref{thm:pairing}, and the fact that each tower comes from from the wrapping behavior of non-compact curves, we obtain the linear system
    \begin{align*}
        n_2+n_3 &= 2^l
        &
        n_2+n_4 &= 2^{l+1}
        &
        n_3+n_4 &= 2^l
    \end{align*}
    which is equivalent to \( n_2=n_4=2^l \) and \( n_3=0 \).
\end{proof}

\begin{question}
Does each of the non-compact components of \(\Arc(T)\) end on distinct punctures?
\end{question}

\begin{proposition}\label{prop:compactness_eights}
    The multicurve \( \Eight[n](T) \) is compact for any \( n>0 \).
\end{proposition}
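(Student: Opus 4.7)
The plan is to apply the classification theorem (Theorem~\ref{thm:EverythingIsLoopTypeUpToLocalSystems}), which guarantees that $\Eight[n](T)$ is a well-defined multicurve on $\FourPuncturedSphere$, and then to show that every component of this multicurve is compact. Recall that non-compact components correspond, under the dictionary of Section~\ref{subsec:classification:precurves_geometry}, to one-sided $f$-joins in a simply-faced precurve representative, that is, dots connected to the inner boundary of a face rather than to another dot. So it suffices to exhibit a precurve for $\DD_n(T)$ whose underlying multicurve contains no such one-sided $f$-joins.

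The central observation is that $H = D - S^2 \in \BNAlgH$, when restricted to the idempotent $\iota_s$ of any side $s$ adjacent to a face $f$, equals $\pm p_f^{n_f}$, a generator of the cyclic subalgebra $\mathcal{A}_f^+$ (compare Example~\ref{exa:ThreePuncturedDiscQuiver:Decomposed}). Starting from a simply-faced precurve $L$ representing $\BNr(T)$, the precurve for $\DD_n(T) = [\DD(T) \xrightarrow{H^n \cdot \id} \DD(T)]$ has doubled generators $\{x^-,x^+\}$, and each arrow $x^- \xrightarrow{H^n} x^+$ becomes a two-sided $f$-join of length $n$ in the face $f$ adjacent to the idempotent of $x$. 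If $x$ is the endpoint of a non-compact component of $\BNr(T)$, i.e.\ if $x$ carries a one-sided $f$-join in $L$, then in $\DD_n(T)$ both $x^-$ and $x^+$ inherit a one-sided $f$-join and additionally acquire a two-sided $f$-join going between them. Applying the moves (M3d) and (M3e) from Lemma~\ref{lem:CalculusForPreloops} in turn resolves the two one-sided $f$-joins at $x^-$ and $x^+$ against the two-sided join, replacing them by a segment in $f$ that wraps $n$ times around the puncture bounding $f$ and connects the former endpoint of $L^-$ to that of $L^+$. Performing this simplification at both endpoints of each non-compact arc in $\BNr(T)$ closes it up into a single compact closed curve; compact components of $\BNr(T)$ similarly yield compact multicurves in $\Eight[n](T)$.

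The main obstacle is to make this combinatorial simplification precise when multiple non-compact components of $\BNr(T)$ end at the same puncture, or when a single non-compact component has both endpoints at the same puncture; in such cases the intermediate configuration is not simply-faced, and the arrow-pushing algorithm of \cite[Proposition~29]{HRW} (extended to signs as in Section~\ref{subsec:classification:signs}) must be invoked to finish the simplification. A cleaner alternative, which avoids most of this bookkeeping, is to appeal directly to Observation~\ref{obs:LargeHAction}: for any $N$ greater than the maximum wrapping number of any component of $\BNr(T)$ about any puncture, the cone $\DD_N(T)$ is represented by a manifestly compact multicurve. For the general case, one can then show that the cone constructions for different exponents fit into compatible exact triangles, or simply repeat the endpoint analysis above locally near each puncture, since the configuration near each puncture depends only on the one-sided $f$-joins meeting that puncture and on the new length-$n$ two-sided joins.
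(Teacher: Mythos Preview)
Your approach is genuinely different from the paper's, and in its current form it has a real gap.

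The paper does not attempt any direct manipulation of the precurve for $\DD_n(T)$. Instead it uses the Pairing Theorem (specifically (P5)): for each trivial tangle $T_1\in\{\Li,\Lo\}$ one has
\[
\HF\big(\mirror(\BNr(T_1)),\Eight[n](T)\big)\;\cong\;\coker H^n
\;=\;\Homology\!\big[\CBNr(\Lk)\xrightarrow{H^n}\CBNr(\Lk)\big]
\]
for the corresponding closure $\Lk$, which is finite-dimensional because $\BNr(\Lk)$ is a finitely generated $\field[H]$-module. Since the two arcs $\DotBarc,\DotCarc$ together meet all three non-special punctures, a non-compact component of $\Eight[n](T)$ would force one of these wrapped Floer groups to be infinite-dimensional (case~(d) of Theorem~\ref{thm:PairingMorLagrangianFH}), a contradiction.

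Your argument has two concrete problems. First, the moves (M3d) and (M3e) of Lemma~\ref{lem:CalculusForPreloops} do not ``resolve'' a one-sided $f$-join against a two-sided one: those moves only multiply $P_a$ by an elementary matrix, i.e.\ add or remove a crossover arrow. They leave the $f$-joins themselves untouched, so the sentence describing how the two one-sided joins at $x^-$ and $x^+$ merge into a single wrapping segment is not justified by the lemma you cite. To actually bring the cone precurve into simply-faced form you would need to run the full algorithm behind Proposition~\ref{prop:PreloopToCC}, and you would also have to contend with the fact that $H^n=D^n+(-1)^nS^{2n}$ contributes two-sided joins in \emph{both} faces adjacent to each arc, not just one.

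Second, the passage from Observation~\ref{obs:LargeHAction} (which only handles $N$ exceeding the maximal wrapping number) to arbitrary $n$ is not carried out. The two proposed fixes---exact triangles among the $\DD_n$, or a local puncture-by-puncture analysis---are each plausible in spirit, but neither is executed. In fact, the exact-triangle route (e.g.\ the triangle $\DD_{N-1}\to\DD_N\to\DD_1$) ultimately relies on knowing that a cone of a map between compact multicurves is again compact, and the cleanest way to see \emph{that} is precisely the finite-dimensionality-of-$\HF$ criterion the paper uses. So even your alternative routes lead back to the paper's argument.
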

\begin{proof}
 This follows from the Pairing Theorem (Theorem~\ref{thm:pairing}), more precisely, pairing (\ref{pairing:coneH}): the wrapped Lagrangian Floer homology of \( \Eight[n](T) \) with any of the arcs corresponding to trivial tangles is \( [\BNr(\Lk) \xrightarrow{H^n}\BNr(\Lk)] \) for some link $\Lk$. This is finite dimensional, so \( \Eight[n](T) \) cannot contain any non-compact components. 
\end{proof}

\begin{proposition}\label{prop:noembeddedcurves}
For any pointed 4-ended tangle \( T \), no component of \( \Arc(T) \),  \(\Khr(T)\), \(\Kh(T)\) or more generally \( \Eight[n](T) \) for \( n>0 \) is embedded and compact. In fact, no compact embedded curve can be equipped with a quantum grading.
\end{proposition}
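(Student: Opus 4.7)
The plan is to reduce everything to the second (stronger) statement, which I will prove by a quantum-grading monodromy argument. Indeed, $\BNr(T)$, $\Khr(T)$, $\Kh(T)$ and the $\Eight[n](T)$ are all bigraded multicurves, so if no compact embedded curve on $\FourPuncturedSphere$ admits a quantum grading, then none of their components can be both compact and embedded. For $\Eight[n](T)$ with $n>0$, Proposition~\ref{prop:compactness_eights} already ensures every component is compact, so the statement gives a substantive restriction.

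So I will concentrate on showing: no compact embedded curve $\gamma$ on $\FourPuncturedSphere$ can carry a quantum grading. My plan is to transport $\gamma$ into algebra via the constructions of Section~\ref{subsec:classification:precurves_geometry}. Namely, taking $\gamma$ in minimal position (so each intersection with an arc in $\{b,c\}$ gives a single generator), $\gamma$ determines a simply-faced precurve whose generators correspond to $\gamma\cap(b\cup c)$ and whose differentials correspond to the $f$-joins of $\gamma$ in each face of $(\FourPuncturedSphere,\{b,c\})$. Since $\gamma$ is embedded and compact, these arrows form a single directed cycle
\[
x_1 \xrightarrow{a_1} x_2 \xrightarrow{a_2} \cdots \xrightarrow{a_n} x_1,
\]
where each $a_i\in\BNAlgH$ is a nontrivial element (a nonzero power of $S$ or $D$) corresponding to an $f$-join.

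The decisive step is then the quantum-grading computation. By our grading conventions (Remark~\ref{rmk:gradings_of_morphisms}) and the requirement that differentials preserve $q$, each arrow satisfies $q(x_{i+1}) - q(x_i) + q(a_i) = 0$. Summing around the loop telescopes to $\sum_{i=1}^n q(a_i) = 0$. But every nontrivial element of $\BNAlgH$ has strictly negative quantum grading, since the two generating elements satisfy $q(S)=-1$ and $q(D)=-2$, and these are the only elementary contributions to any nonzero path. Hence $\sum q(a_i) < 0$, contradicting the previous equality.

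The main thing to be careful about is just that $\gamma$ really does yield a nontrivial cycle of nontrivial arrows: if $\gamma$ could be realised as a curve using no $f$-joins at all, the argument would be vacuous. This is automatic for curves that represent primitive elements of $\pi_1(\FourPuncturedSphere)$ and intersect the arc system $\{b,c\}$ at least once, and one checks easily that any essential compact embedded curve on $\FourPuncturedSphere$ that appears as a component of a multicurve in the sense of Definition~\ref{def:curves} must meet $b\cup c$ (otherwise it would lie in a single annular face and be homotopic to a puncture loop, which bounds no $f$-join structure supporting a type~D differential). With this minor check in hand, the contradiction above completes the proof.
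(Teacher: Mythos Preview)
Your monodromy idea is natural, but the decisive claim—that for an embedded compact curve the differential arrows form a single \emph{directed} cycle—is not true in general, and your telescoping step relies on exactly this. The underlying graph of generators and $f$-joins is indeed a single cycle, but the orientation of each $f$-join as a differential arrow is dictated by the cyclic direction of its face, not by how you traverse~$\gamma$; these need not all agree.

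A concrete counterexample: take the simple closed curve $\gamma$ separating the two punctures in the single-sided faces from the middle puncture together with~$\ast$. In minimal position $\gamma$ meets each of $b$ and $c$ twice, and its two segments in the middle ($S$-)face lie on the \emph{same} side of that face's puncture. Both are therefore homotopic to the same elementary edge of the $S$-face quiver, so both $S$-labelled arrows point the same way between the idempotents. The resulting complex is
\[
\begin{tikzcd}
\DotC \arrow{r}{D} \arrow{d}[swap]{S} & \DotC \arrow{d}{S} \\
\DotB & \DotB \arrow{l}{D}
\end{tikzcd}
\]
whose underlying graph is a $4$-cycle but with one source and one sink—not a directed cycle. (It still admits no quantum grading: the obstruction computes to $\pm 4$. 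But that is a separate check, not your telescoping argument.) Note too that if your directed-cycle claim held, the same telescoping with the homological grading would show no compact curve whatsoever admits an $h$-grading, contradicting the existence of compact components in $\BNr(T_{2,-5})$.

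The paper circumvents this by using the mapping class group action (Theorem~\ref{thm:MCGaction}) to reduce every embedded compact curve to one of three explicit curves and checking each directly. Your approach can be repaired: the quantum monodromy around $\gamma$ in fact equals twice the sum of the winding numbers of $\gamma$ about the three non-special punctures, and for an essential embedded curve this sum lies in $\{\pm 1,\pm 2,\pm 3\}$, hence is never zero. But establishing this winding-number identity is what is actually needed, and it does not reduce to ``all arrows point the same way.''
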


\begin{wrapfigure}{r}{0.15\textwidth}
	\centering
	$\EmbeddedCurvesX$
	\vspace*{-7pt}
	\caption{}\label{fig:EmbeddedCurvesEight}
\end{wrapfigure}
\myfixwrapfig

\begin{proof}
It suffices to show the second statement. Note that the quantum grading is independent of the field $\field$ of coefficients. So let us assume without loss of generality that $\field=\fieldTwoElements$. 
Then, any embedded compact curve can be mapped to one of the three curves in Figure~\ref{fig:EmbeddedCurvesEight} by iteratively applying bimodules for suitable Dehn twists from Section~\ref{sec:MCGaction}. So it suffices to see that none of these three curves admits a quantum grading. This is elementary.
\end{proof}
\section{Pairing theorem}\label{sec:Pairing}

We can now combine results from previous sections, namely Theorem~\ref{thm:PairingMorLagrangianFH} and Proposition~\ref{prop:alg_pairing_kh}, to show how to interpret Khovanov and Bar-Natan homology of a link in \( S^3 \) in terms of the wrapped Lagrangian Floer theory in the 4-punctured sphere.

\subsection{\texorpdfstring{$\BNr(\Lk)$, $\Khr(\Lk)$ and $\Kh(\Lk)$  viewed as wrapped Lagrangian Floer homology}{BN(L) and Kh(L) viewed as wrapped Lagrangian Floer homology}}
Given a multicurve \( L \) in $\FourPuncturedSphere$, let \( \mirror(L) \) denote the multicurve defined as follows: the underlying oriented curves of \( \mirror(L) \) are obtained as the image of the underlying oriented curves of \( L \) under the involution which rotates the Figure~\ref{fig:ArcSystemForTypeDStructures} about the middle horizontal line by $180^\circ$. The bigrading of \( \mirror(L) \) is equal to the reversed bigrading of \( L \). Finally, the local systems of \( L \) and \( \mirror(L) \) are obtained by inversion and transposition.

\begin{proposition}\label{prop:mirrorsAndCurves}
For any multicurve \(L\) we have \(\Pi(\mirror(L))=\mirror(\Pi(L))\). Moreover, if \(T\) is a pointed 4-ended tangle then \(\mirror(\Arc(T))=\Arc(\mirror T )\) and, similarly, \(\mirror(\Eight[i](T))=h^{1}\Eight[i](\mirror T )\) for any positive integer \(i\).
\end{proposition}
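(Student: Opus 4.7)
My plan is to prove the first identity $\Pi(\mirror L) = \mirror \Pi(L)$ directly from the definitions of the two mirror operations, and then deduce the tangle statements by combining this with Proposition~\ref{prop:mirrorsAndTypeDstructures} and a short grading calculation for the mapping cone defining $\DD_i(T)$.

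For the first identity, I would match the three pieces of data defining the geometric $\mirror$ with the three ingredients defining the algebraic $\mirror$ from Definition~\ref{def:mirrorsTangleAndTypeD}. The involution of Figure~\ref{fig:ArcSystemForTypeDStructures} preserves both arcs $b$ and $c$ setwise, preserves the two $D$-faces, and interchanges the two $S$-faces. Applied to a simply-faced precurve, it therefore sends every two-sided $f$-join labeled by a power of $\SaddleBC$ to a join of the same length labeled by the matching power of $\SaddleCB$ (and vice versa), and leaves powers of $D$ unchanged; this exactly mirrors the algebra anti-isomorphism $\revpre\co \op{\BNAlgH} \to \BNAlgH$ from Definition~\ref{def:mirrorsTangleAndTypeD}. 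The reversal of bigrading in the geometric $\mirror$ matches the negation of gradings on generators in the algebraic $\mirror$. Finally, because the involution reverses the transverse orientation of each arc, in the dictionary of Section~\ref{subsec:classification:precurves_geometry} the roles of $X$ and $(X^t)^{-1}$ in the matrices $P_a$ swap for each arc traversed by $\gamma$; transposing and inverting the local system in the geometric definition of $\mirror$ precisely compensates for this, so the matrices $P_a$ produced by $\Pi(\mirror L)$ agree with those of $\mirror \Pi(L)$. Assembling these three matches yields the identity.

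For the tangle statement $\mirror(\Arc(T)) = \Arc(\mirror T)$, Proposition~\ref{prop:mirrorsAndTypeDstructures} gives $\DD(\mirror T) = \mirror \DD(T)$. Combined with the first identity,
\[
\Pi(\Arc(\mirror T)) = \DD(\mirror T) = \mirror \DD(T) = \mirror \Pi(\Arc(T)) = \Pi(\mirror \Arc(T)),
\]
and Theorem~\ref{thm:CompleteClassification} (essential bijectivity of $\Pi$ on equivalence classes) lets me conclude equality of multicurves. For the statement about $\Eight[i]$, I would compute $\mirror \DD_i(T)$ at the level of type~D structures: writing $\DD_i(T)$ as the mapping cone $[q^{-i} h^{-1} \delta^{1-i/2} \DD(T) \xrightarrow{H^i \cdot \id} q^i h^0 \delta^{i/2} \DD(T)]$, applying the algebraic $\mirror$ reverses the arrow and negates the bigradings on both summands. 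Comparing the resulting complex term-by-term with $\DD_i(\mirror T) = [q^{-i} h^{-1} \delta^{1-i/2} \mirror \DD(T) \xrightarrow{H^i \cdot \id} q^i h^0 \delta^{i/2} \mirror \DD(T)]$ shows each summand is shifted by $h^{+1}$, so $\mirror \DD_i(T) \simeq h^1 \DD_i(\mirror T)$; applying $\Pi^{-1}$ and the first identity finishes the argument.

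The main obstacle will be the first step, particularly the bookkeeping needed to verify that the only effect of the algebraic mirror on the matrix data $P_a$ is reproduced precisely by the transposition-inversion of the local system in the geometric mirror. This depends delicately on the orientation conventions of Section~\ref{subsec:classification:precurves_geometry} and on the specific definition of $\Pi$, and signs (when $\field \neq \fieldTwoElements$) must also be tracked through the swap $\SaddleBC \leftrightarrow \SaddleCB$.
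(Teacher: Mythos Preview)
Your approach is essentially the same as the paper's: both arguments unpack the geometric mirror in terms of the correspondence between precurves and complexes from Section~\ref{subsec:classification:precurves_algebra}, check that the involution realizes the algebra anti-isomorphism $\revpre$, and then derive the tangle statements from Proposition~\ref{prop:mirrorsAndTypeDstructures}. The paper phrases the local-system step slightly differently---it observes that under mirroring the order of crossover arrows on each arc is preserved while their orientations reverse, which directly translates to transposing and inverting the local system---whereas you frame it via the swap of $X$ and $(X^t)^{-1}$ in the definition of $\Pi$; these are equivalent viewpoints.

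One small slip: there is only \emph{one} $S$-face in the arc system of Figure~\ref{fig:ArcSystemForTypeDStructures} (with two sides $s_1(b)$ and $s_2(c)$), not two; the rotation preserves this face but swaps its two sides. This does not affect your conclusion that $\SaddleBC\leftrightarrow\SaddleCB$ under the involution. Your explicit verification of the $h^{1}$ shift for $\Eight[i]$ is a welcome addition---the paper merely asserts that the second part follows from the first together with Proposition~\ref{prop:mirrorsAndTypeDstructures}, leaving this grading bookkeeping to the reader.
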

\begin{proof}
    The first part follows directly from Definition~\ref{def:mirrorsTangleAndTypeD} together with the correspondence from Section~\ref{subsec:classification:precurves_algebra} between complexes and precurves. In particular, note that the order of crossover arrows on the arcs \( b \) and \( c \) stays the same under mirroring, but the orientation of those crossover arrows is reversed. Since the underlying curves of \( \mirror(L) \) pass through the two arcs in the same direction as the underlying curves of \( L \), this corresponds precisely to inverting and transposing the local systems. (This is where the difference to the analogous Proposition~5.4 in~\cite{pqMod} lies.)
    The second part follows from the first in conjunction with Proposition~\ref{prop:mirrorsAndTypeDstructures}. 
\end{proof}

\begin{theorem}[Pairing Theorem]\label{thm:pairing}
Let \(T_1\) and \(T_2\) be two pointed 4-ended tangles, and \(\Lk\) be the link \(\Lk(T_1,T_2)\) from Definition~\ref{def:tanglepairing}. Then over any field \(\field\) we have:
\begin{align}
    \BNr(\Lk) 
    &\cong q^{1}h^{0}\HF(\mirror(\BNr(T_1)),\BNr(T_2)) \label{pairing:BNr}\tag{P1}
    \\
    \Khr(\Lk) 
    &\cong \HF(\mirror(\BNr(T_1)),\Khr(T_2)) \cong \HF(\mirror(\Khr(T_1)),\BNr(T_2)) \label{pairing:Khr}\tag{P2}
    \\
    \Kh(\Lk)
    & \cong \HF(\mirror(\BNr(T_1)),\Kh(T_2)) \cong \HF(\mirror(\Kh(T_1)),\BNr(T_2)) \label{pairing:Kh}\tag{P3}
    \\
    q^{-1}h^{-1}\Khr(\Lk)  \oplus  q^{+1}h^{0}\Khr(\Lk)
    &
    \cong 
    \HF(\mirror(\Khr(T_1)),\Khr(T_2))
    \label{pairing:KhrKhr}\tag{P4}
\end{align}
More generally, if \(i\geq 1\) and
\(
\coker H^i \coloneqq \Homology
\Big[\begin{tikzcd}
q^{-i-1}h^{-1}\CBNr(\Lk)
\arrow{r}{H^i\cdot\id}
&
q^{i-1}h^{0}\CBNr(\Lk)
\end{tikzcd}\Big],
\)
then over any field \(\field\),
\begin{align}
\coker H^i 
&\cong  
\HF(\mirror(\BNr(T_1)),\Eight[i](T_2)) 
\cong
\HF(\mirror(\Eight[i](T_1)),\BNr(T_2))
\label{pairing:coneH}\tag{P5}
\end{align}
Finally, if \(i,j\geq1\), \(\mathfrak{M}=\max(i,j)\) and \(\mathfrak{m}=\min(i,j)\), then over any field \(\field\),
\begin{align}
q^{-\mathfrak{M}} h^{-1}\coker H^{\mathfrak{m}} 
\oplus
q^{\mathfrak{M}} h^{0}\coker H^{\mathfrak{m}} 
&
\cong 
\HF(\mirror(\Eight[i](T_1)),\Eight[j](T_2))
\label{pairing:ConeHConeH}\tag{P6}
\end{align}
\end{theorem}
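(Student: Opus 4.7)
The plan is to reduce (P6) to the already-proven pairing (P5) by decomposing one of the two figure-8 type invariants as a mapping cone. Assume first that $i \leq j$, so $\mathfrak{m}=i$ and $\mathfrak{M}=j$; the case $i>j$ will follow from a symmetric argument at the end. Using the presentation $\DD_j(T_2) = \mathrm{Cone}\bigl(q^{-j}h^{-1}\delta^{1-j/2}\DD(T_2) \xrightarrow{H^j\cdot \id} q^{j}h^{0}\delta^{j/2}\DD(T_2)\bigr)$ together with bifunctoriality of $\Mor$, I will rewrite $\Mor(\mirror\DD_i(T_1), \DD_j(T_2))$ as a mapping cone whose two sides are shifted copies of $\Mor(\mirror\DD_i(T_1), \DD(T_2))$, with the connecting map given by post-composition with $H^j\cdot\id$. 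By Theorem~\ref{thm:PairingMorLagrangianFH} the homology of this mapping cone computes $\HF(\mirror\Eight[i](T_1), \Eight[j](T_2))$.

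The second step is to take homology. By pairing (P5), the homology of each side is a grading-shifted copy of $\coker H^i$. Moreover, since the chain isomorphism in Proposition~\ref{prop:alg_pairing_kh} is an isomorphism of $\field[H]$-modules, post-composition with $H^j\cdot\id$ on $\DD(T_2)$ corresponds to multiplication by $H^j$ on $\CBNr(\Lk)$. Consequently, the induced map on homology is multiplication by $H^j$ on $\coker H^i$.

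The key algebraic observation is that multiplication by $H^j$ on $\coker H^i$ vanishes whenever $j \geq i$. Indeed, on the complex $[\BNr(\Lk) \xrightarrow{H^i} \BNr(\Lk)]$ that computes $\coker H^i$, the endomorphism $H^j\cdot\id$ is null-homotopic via the chain homotopy $h = H^{j-i}\cdot\id \colon \BNr(\Lk) \to \BNr(\Lk)$ viewed as a degree-$(-1)$ map from the $h^0$-summand to the $h^{-1}$-summand, since $H^i \cdot H^{j-i} = H^{j-i}\cdot H^i = H^j$ yields $D(h) = H^j\cdot\id$. Hence the connecting map induces zero on homology, and the cone decomposes as
$$q^{-j}h^{-1}\coker H^i \oplus q^{j}h^{0}\coker H^i = q^{-\mathfrak{M}}h^{-1}\coker H^{\mathfrak{m}} \oplus q^{\mathfrak{M}}h^{0}\coker H^{\mathfrak{m}},$$
matching the right-hand side of (P6). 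For the case $i>j$, I will instead use the mapping cone presentation of $\mirror\DD_i(T_1)$ in the first (contravariant) slot of $\Mor$, together with the second formulation of (P5); contravariance turns the connecting map into pre-composition with $H^i\cdot\id$, which on homology acts as multiplication by $H^i$ on $\coker H^j$, and the analogous null-homotopy argument finishes the proof with $\mathfrak{m}=j$ and $\mathfrak{M}=i$.

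The main obstacle I anticipate is the careful bookkeeping of bigradings (especially the $\delta$-grading) so that the advertised shifts $q^{\mp\mathfrak{M}}h^{0/-1}$ appear correctly, together with verifying that post- and pre-composition with $H^\bullet\cdot\id$ genuinely intertwines with $H^\bullet$-multiplication on $\CBNr(\Lk)$ under the isomorphism of Proposition~\ref{prop:alg_pairing_kh}. The latter equivariance should be transparent from the construction of that isomorphism (where the generator of $V^{x-H}$ is identified with the $\ast$-marked component of a simple cobordism, and the $H$-action corresponds to attaching a genus on the marked component), but making this precise is the most technical step.
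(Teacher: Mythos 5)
Your proposal is correct, and it follows a genuinely different route than the paper's own proof. The paper decomposes \emph{both} $\Eight[i](T_1)$ and $\Eight[j](T_2)$ as mapping cones simultaneously, producing a four-term ``square'' complex of morphism spaces; after applying Lemma~\ref{lem:shifting_morphism_spaces} and~\eqref{pairing:BNr}, it becomes a square of shifted copies of $\CBNr(\Lk)$ with $H^i$ and $H^j$ arrows, which is then simplified at the chain level via the Clean-Up Lemma~\ref{lem:AbstractCleanUp} (using $h = H^{i-j}\cdot\id$, the dashed arrow). You instead unfold only the larger-index invariant, reduce each of the two resulting $\Mor$-spaces to $\coker H^{\mathfrak{m}}$ via~\eqref{pairing:coneH}, and then argue the connecting map dies on homology. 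The arguments are parallel — your observation that $H^{\mathfrak{M}}\cdot\id$ is null-homotopic on the cone $[\CBNr\xrightarrow{H^{\mathfrak{m}}}\CBNr]$ (via $h=H^{\mathfrak{M}-\mathfrak{m}}\cdot\id$) is exactly the same algebraic phenomenon the paper's Clean-Up step exploits on the larger square — but yours is more modular: it literally reuses~\eqref{pairing:coneH} as a black box and isolates the conceptual reason for the splitting, namely that the connecting map is $H^{\mathfrak{M}}$-multiplication, which vanishes on $\coker H^{\mathfrak{m}}$ since $\mathfrak{M}\geq\mathfrak{m}$.

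The one point you rightly flag as delicate — that the isomorphism in~\eqref{pairing:coneH} is $\field[H]$-equivariant, so that post-composition with $H^j\cdot\id$ really does correspond to $H^j$-multiplication on $\coker H^i$ — does go through, but you should spell it out: the isomorphism in~\eqref{pairing:coneH} is built from Proposition~\ref{prop:alg_pairing_kh} (an isomorphism of chain complexes of $\field[H]$-modules) composed with the shift isomorphism $\sigma$ of Lemma~\ref{lem:shifting_morphism_spaces}. Since $H$ preserves homological grading, $\sigma$ (which introduces only the sign $(-1)^{n\cdot h(f)}$) commutes with the $H$-action, so equivariance is preserved at every step. With that checked, your mapping-cone argument gives a long exact sequence with vanishing connecting map, which splits over the field $\field$, yielding the claimed direct-sum decomposition with the correct shifts $q^{\mp\mathfrak{M}}h^{-1/0}$. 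The $i>j$ case by contravariance in the first slot works exactly as you describe.
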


\begin{proof}
\eqref{pairing:BNr} follows from 
\[ \HF(\mirror(\BNr(T_1)),\BNr(T_2)) \overset{(a)}{\cong} \Homology\bigg(\Mor\Big(\mirror(\DD(T_1)),\DD(T_2)\Big)\bigg) \overset{(b)}{\cong} q^{-1}h^{0}\BNr(\Lk)\]
where (a) is Theorem~\ref{thm:PairingMorLagrangianFH}, and (b) is Proposition~\ref{prop:alg_pairing_kh}. This is the main pairing result, which implies all the others, as we explain below.

\eqref{pairing:Khr} follows from the more general statement \eqref{pairing:coneH} with \( i=1 \), because \( \Khr(T)=\Eight[1](T_1) \) and \(\coker H^1=\Khr(\Lk)\) by Proposition~\ref{prop:kh_mapping_cones}. 

\eqref{pairing:Kh} over \( \fieldTwoIsNotZero \) follows from \eqref{pairing:coneH} with \( i=2 \), because \( \Kh(T;\fieldTwoIsNotZero)=\Eight[2](T_1;\fieldTwoIsNotZero) \) and \(\Kh(\Lk;\fieldTwoIsNotZero) = \coker H^2\) by Proposition~\ref{prop:kh_mapping_cones}.

\eqref{pairing:Kh} over \( \fieldTwoElements \) follows from the fact that \( \Kh(T;\fieldTwoElements) = q^{-1}h^{0}\Khr(T;\fieldTwoElements) \oplus q^{+1}h^{0}\Khr(T;\fieldTwoElements) \). Thus we have
\begin{align*}
&\HF(\mirror(\Kh(T_1;\fieldTwoElements)),\BNr(T_2;\fieldTwoElements))\\ 
& \cong q^{-1}h^{0}\HF(\mirror(\Khr(T_1;\fieldTwoElements)),\BNr(T_2;\fieldTwoElements))  \oplus  q^{+1}h^{0}\HF(\mirror(\Khr(T_1;\fieldTwoElements)),\BNr(T_2;\fieldTwoElements)) \\
& \overset{(a)}{\cong}  q^{-1}h^{0} \Khr(\Lk;\fieldTwoElements) \oplus q^{+1}h^{0}  \Khr(\Lk;\fieldTwoElements) \overset{(b)}{\cong} \Kh(\Lk;\fieldTwoElements)
\end{align*}
where \( (a) \) follows from (\ref{pairing:BNr}), and \( (b) \) is a well known fact: Khovanov homology over \( \fieldTwoElements \) splits into two summands, each of which is isomorphic to reduced Khovanov homology.

\eqref{pairing:KhrKhr} follows from \eqref{pairing:ConeHConeH} with \( i=1=j \).

\eqref{pairing:coneH} can be proved as follows: by Theorem~\ref{thm:PairingMorLagrangianFH}, 
\[ 
\HF(\mirror(\Eight[i](T_1)),\Arc(T_2)) 
\cong
\Homology
\bigg[
\Mor\Big(\!\!
\begin{tikzcd}
q^{-i}h^{0}\mirror(\DD(T_1))
\arrow{r}{H^i}
&
q^{+i}h^{1}\mirror(\DD(T_1))
\end{tikzcd}\!\!,
\DD(T_2)
\Big)
\bigg]
\]
By the definition of morphism spaces as bigraded chain complexes, this is equal to the homology of the cone
\begin{align*}
\Big[ \Mor\Big(q^{+i}h^{1}\mirror(\DD(T_1)),\DD(T_2)\Big)
&\rightarrow
\Mor\Big(q^{-i}h^{0}\mirror(\DD(T_1)),\DD(T_2)\Big) \Big]
\\
f
&\mapsto
-(-1)^{h(f)}\cdot f\cdot H^i
\end{align*}
By Lemma~\ref{lem:shifting_morphism_spaces}, this is isomorphic to the mapping cone 
\[ 
\Big[
\begin{tikzcd}
q^{-i}h^{-1}\Mor\Big(\mirror(\DD(T_1)),\DD(T_2)\Big)
\arrow{r}{H^i}
&
q^{i}h^{0}\Mor\Big(\mirror(\DD(T_1)),\DD(T_2)\Big)
\end{tikzcd}\Big]
\]
and applying (\ref{pairing:BNr}) gives
\[  
\Big[
\begin{tikzcd}
q^{-i-1}h^{-1}\CBNr(\Lk) 
\arrow{r}{H^i}
&
q^{i-1}h^{0}\CBNr(\Lk) 
\end{tikzcd}\Big]
=
\coker H^i
\]
The second isomorphism of \eqref{pairing:coneH} is proved analogously.

\eqref{pairing:ConeHConeH} is proved using a similar argument. Again, by
Theorem~\ref{thm:PairingMorLagrangianFH} and the definition of morphism spaces, we have
\begin{align*}
&~
\HF(\mirror(\Eight[i](T_1)),\Eight[j](T_2)) 
\\
\cong
&~
\Homology\bigg[\Mor\Big(
\!\!
\begin{tikzcd}[ampersand replacement=\&]
q^{-i}h^{0}\mirror(\DD(T_1))
\arrow{r}{H^i}
\&
q^{+i}h^{1}\mirror(\DD(T_1))
\end{tikzcd}\!\!,
\!\!
\begin{tikzcd}[ampersand replacement=\&]
q^{-j}h^{-1}\DD(T_2)
\arrow{r}{H^j}
\&
q^{+j}h^{0}\DD(T_2)
\end{tikzcd}\!\!
\Big)\bigg] 
\\
\cong
&~
\Homology
\left[
\begin{tikzcd}[column sep=-3cm, row sep = 1cm, ampersand replacement=\&] 
\&  
\Mor\Big(q^{+i}h^{1}\mirror(\DD(T_1)),q^{-j}h^{-1}\DD(T_2)\Big) 
\arrow[dl,"f\mapsto f\cdot H^j" description] 
\arrow[rd,"f\mapsto -(-1)^{h(f)}\cdot f\cdot H^i" description]
\\
\Mor\Big(q^{+i}h^{1}\mirror(\DD(T_1)),q^{+j}h^{0}\DD(T_2)\Big)
\arrow[dr,"f\mapsto -(-1)^{h(f)}\cdot f\cdot H^i" description] 
\&\&
\Mor\Big(q^{-i}h^{0}\mirror(\DD(T_1)),q^{-j}h^{-1}\DD(T_2)\Big) 
\arrow[ld,"f\mapsto f\cdot H^j" description]
\\
\&
\Mor\Big(q^{-i}h^{0}\mirror(\DD(T_1)),q^{+j}h^{0}\DD(T_2)\Big)
\end{tikzcd}
\right]
\end{align*}
By Lemma~\ref{lem:shifting_morphism_spaces} and (\ref{pairing:BNr}), this is isomorphic to
\begin{align*}
&~ 
q^{-1}h^{0}\Homology\left[
\begin{tikzcd}[column sep=0pt,ampersand replacement=\&]   
\&
q^{-i-j}h^{-2}\CBNr(\Lk) \arrow[ld,"-H^j" description] 
\arrow[rd,"H^i" description]
\\
q^{j-i}h^{-1}\CBNr(\Lk)
\arrow[rd,"H^i" description] 
\arrow[rr, dashed, "H^{i-j}"]   
\&\&
q^{i-j}h^{-1}\CBNr(\Lk) 
\arrow[ld,"H^j" description]
\\
\&
q^{i+j}h^{0}\CBNr(\Lk)
\end{tikzcd}
\right]
\end{align*}
(but without the dashed arrow). Note that this complex is isomorphic to the complex obtained by switching \( i \) and \( j \). This symmetry allows us to assume without loss of generality that \( i\geq j \). Then we apply the Clean-Up Lemma~\ref{lem:AbstractCleanUp} to the endomorphisms given by the dashed arrow above and obtain
\[ 
q^{-1}h^{0}\Homology \left[
\begin{tikzcd}[column sep=0pt,ampersand replacement=\&]   
\&
q^{-i-j}h^{-2}\CBNr(\Lk) \arrow[ld,"H^j" description] 
\\
q^{j-i}h^{-1}\CBNr(\Lk)
\&\&
q^{i-j}h^{-1}\CBNr(\Lk) 
\arrow[ld,"H^j" description]
\\
\&
q^{i+j}h^{0}\CBNr(\Lk)
\end{tikzcd}
\right]
\]
which is equal to 
\( q^{-i}h^{-1}\coker H^j
\oplus
q^{+i}h^{0}\coker H^j \).
\end{proof}

\subsection{Basic examples}

In the remainder of this section, we illustrate the Pairing Theorem~\ref{thm:pairing} above in a number of examples. Let us start with three very basic ones: the unknot, the 2-component unlink and the right-handed trefoil.

\begin{figure}[ph]
    \centering
    \begin{subfigure}{\textwidth}
		\[ \PairingClosureDiagramUnknot=\PairingStandardDiagramUnknot\]
		\caption{A splitting of the unknot \( \Circle=\Lk\left(\LiRed,\LoBlue\right) \)}\label{fig:exa:Pairing:Unknot:Diagram}
	\end{subfigure}
	\begin{subfigure}{\textwidth}
	\centering
    $\PairingUnknotArcArc$
	$\vc{
    	\begin{tikzpicture}[scale=.5]
	    \draw[lightgray,step=1] (-0.5,-3.5) grid (2.5,1.5);
        \draw[line width=1pt,->] (-0.5,-3) -- (3,-3) node[right] {\( h \)};
        \draw[line width=1pt,->] (0,-3.5) -- (0,2) node[left,above] {\( q \)};
        \draw (0.5,-3) node[below] {\scriptsize \( 0 \)};
        \draw (1.5,-3) node[below] {\scriptsize \( 1 \)};
        \draw (0,-2.5) node[left] {\scriptsize \( -6 \)};
        \draw (0,-1.5) node[left] {\scriptsize \( -4 \)};
        \draw (0,-0.5) node[left] {\scriptsize \( -2 \)};
        \draw (0,0.5) node[left] {\scriptsize \( 0 \)};
        \draw(0.5, 0.5) node {\( w_0 \)};
        \draw(0.5, -0.5) node {\( w_1 \)};
        \draw(0.5, -1.5) node {\( w_2 \)};
        \draw(0.5, -2.5) node {\( \raisebox{0.2cm}{\vdots} \)};
    \end{tikzpicture}}$
    \caption{\( q^1 h^0 \HF \left( \textcolor{red}{\mirror(\BNr(\LiRed))},\textcolor{blue}{\BNr(\LoBlue)} \right) \cong  \BNr\left( \Circle \right) \)}\label{fig:exa:Pairing:Unknot:ArcArc}
    \end{subfigure}
    \begin{subfigure}{\textwidth}
    \centering
    $\PairingUnknotLoopArc$
    $\vc{
    	\begin{tikzpicture}[scale=.5]
	    \draw[lightgray,step=1] (-0.5,-0.5) grid (2.5,2.5);
	    \draw[line width=1pt,->] (-0.5,0) -- (3,0) node[right] {\( h \)};
	    \draw[line width=1pt,->] (0,-0.5) -- (0,3) node[above] {\( q \)};
        \draw (0.5,0) node[below] {\scriptsize \( 0 \)};
        \draw (1.5,0) node[below] {\scriptsize \( 1 \)};
        \draw (0,0.5) node[left] {\scriptsize \( 0 \)};
        \draw (0,1.5) node[left] {\scriptsize \( 2 \)};
        \draw(0.5, 0.5) node {\( w_0 \)};
    \end{tikzpicture}}$
    \caption{\( \HF \left( \textcolor{red}{\mirror(\Khr(\LiRed))},\textcolor{blue}{\BNr(\LoBlue)}  \right) \cong \Khr\left( \Circle  \right) \)}\label{fig:exa:Pairing:Unknot:EightArc}
    \end{subfigure}
    \caption{Illustration for Example~\ref{exa:Pairing:Unknot}}\label{fig:exa:Pairing:Unknot}
\end{figure}
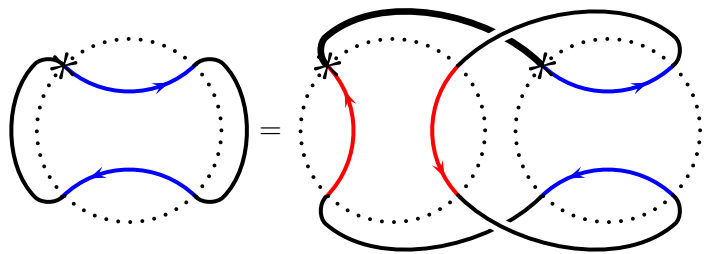
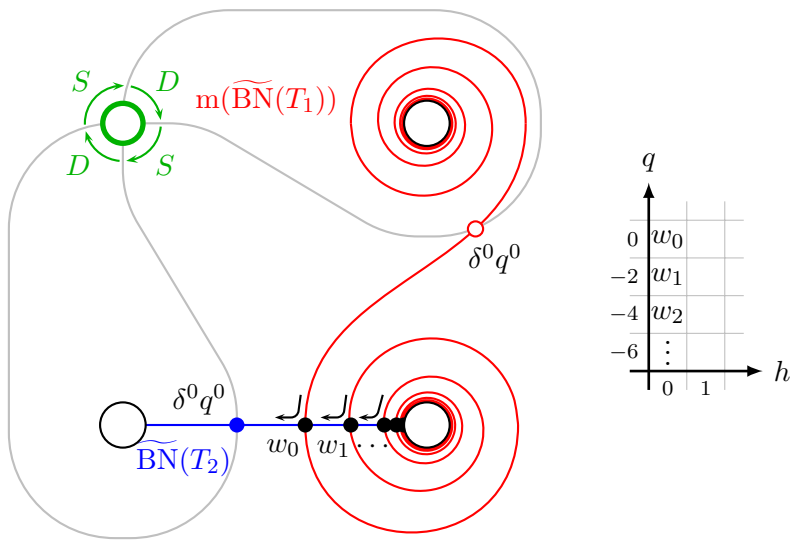
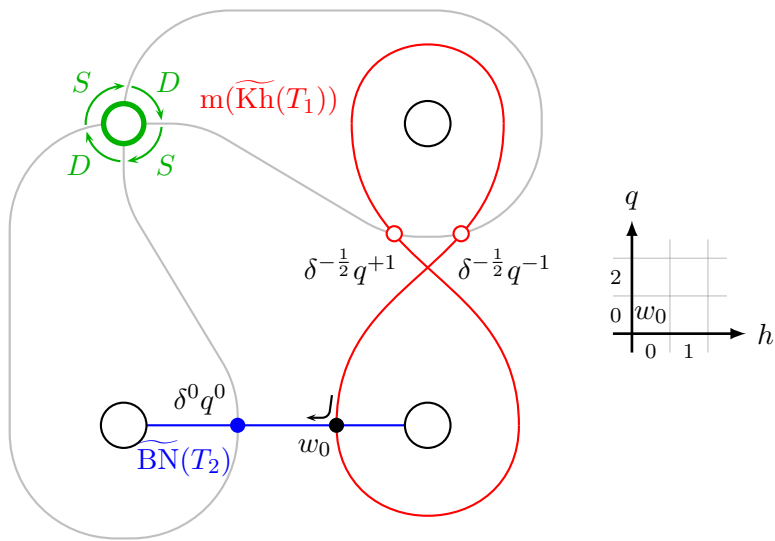

\begin{figure}[ph]
    \centering
    \begin{subfigure}{\textwidth}
		\[ \PairingClosureDiagramUnlink=\PairingStandardDiagramUnlink\]
		\caption{A splitting of the two-component unlink \( \Circle \Circle =\Lk\left(\LoRed,\LoBlue  \right)\)}\label{fig:exa:Pairing:Unlink:Diagram}
	\end{subfigure}
	\begin{subfigure}{\textwidth}
	\centering
    $\PairingUnlinkArcArc$
	$\vc{
    	\begin{tikzpicture}[scale=.5]
        \draw[lightgray] (2.5,-3.5) -- (2.5,2.5);
        \draw[lightgray] (3.5,-3.5) -- (3.5,2.5);
        \draw[lightgray] (-0.5,2) --  (4,2);
        \draw[lightgray] (-0.5,1) --  (4,1);
        \draw[lightgray] (-0.5,0) --  (4,0);
        \draw[lightgray] (-0.5,-1) -- (4,-1);
        \draw[lightgray] (-0.5,-2) -- (4,-2);
        \draw[lightgray] (-0.5,-3) -- (4,-3);
        \draw[line width=1pt,->] (-0.5,-3) -- (4.5,-3) node[right] {\( h \)};
        \draw[line width=1pt,->] (0,-3.5) -- (0,3) node[left,above] {\( q \)};
        \draw (1.25,-3) node[below] {\scriptsize \( 0 \)};
        \draw (3,-3) node[below] {\scriptsize \( 1 \)};
        \draw (0,-2.5) node[left] {\scriptsize \( -7 \)};
        \draw (0,-1.5) node[left] {\scriptsize \( -5 \)};
        \draw (0,-0.5) node[left] {\scriptsize \( -3 \)};
        \draw (0,0.5) node[left] {\scriptsize \( -1 \)};
        \draw (0,1.5) node[left] {\scriptsize \( 1 \)};
        \draw(1.25, 1.5) node {\( w_0 \)};
        \draw(1.25, 0.5) node {\( w_1~w'_1 \)};
        \draw(1.25, -0.5) node {\( w_2~w'_2 \)};
        \draw(1.25, -1.5) node {\( w_3~w'_3 \)};
        \draw(1.25, -2.5) node {\( \raisebox{0.2cm}{\vdots~~~\vdots} \)};
    \end{tikzpicture}}$
    \caption{\( q^1 h^0 \HF\left(\textcolor{red}{\mirror(\BNr(\LoRed))},\textcolor{blue}{\BNr(\LoBlue)} \right) \cong \BNr\left(\Circle \Circle  \right) \)}\label{fig:exa:Pairing:Unlink:ArcArc}
    \end{subfigure}
    \begin{subfigure}{\textwidth}
    \centering
    $\PairingUnlinkLoopArc$
	$\vc{
    	\begin{tikzpicture}[scale=.5]
	    \draw[lightgray,step=1] (-0.5,-0.5) grid (2.5,2.5);
        \draw[line width=1pt,->] (-0.5,0) -- (3,0) node[right] {\( h \)};
        \draw[line width=1pt,->] (0,-0.5) -- (0,3) node[above] {\( q \)};
        \draw (0.5,0) node[below] {\scriptsize \( 0 \)};
        \draw (1.5,0) node[below] {\scriptsize \( 1 \)};
        \draw (0,0.5) node[left] {\scriptsize \( -1 \)};
        \draw (0,1.5) node[left] {\scriptsize \( 1 \)};
        \draw(0.5, 0.5) node {\( w'_1 \)};
        \draw(0.5, 1.5) node {\( w_0 \)};
    \end{tikzpicture}}$
    \caption{\(  \HF\left(\textcolor{red}{\mirror(\Khr(\LoRed))},\textcolor{blue}{\BNr(\LoBlue)}  \right) \cong \Khr\left(\Circle \Circle  \right) \)}\label{fig:exa:Pairing:Unlink:EightArc}
    \end{subfigure}
    \caption{Illustration for Example~\ref{exa:Pairing:Unlink}}\label{fig:exa:Pairing:Unlink}
\end{figure}

\begin{figure}[ph]
    \centering
    \begin{subfigure}{\textwidth}
		\[ \PairingClosureDiagramTrefoil=\PairingStandardDiagramTrefoil\]
		\caption{A splitting of the right-handed trefoil knot \( T(2,3)= \Lk\left(\LiRed,\ThreeTwistTangleBlue\right) \) into two rational tangles}\label{fig:exa:Pairing:Trefoil:Diagram}
	\end{subfigure}
	\begin{subfigure}{\textwidth}
	\centering
    $\PairingTrefoilArcArc$
	$\vc{
    	\begin{tikzpicture}[scale=.5]
		    \draw[lightgray,step=1] (-0.5,-3.5) grid (4.5,5.5);
            \draw[line width=1pt,->] (-0.5,-3) -- (5,-3) node[right] {\( h \)};
            \draw[line width=1pt,->] (0,-3.5) -- (0,6) node[left,above] {\( q \)};
            \draw (0.5,-3) node[below] {\scriptsize \( 0 \)};
            \draw (1.5,-3) node[below] {\scriptsize \( 1 \)};
            \draw (2.5,-3) node[below] {\scriptsize \( 2 \)};
            \draw (3.5,-3) node[below] {\scriptsize \( 3 \)};
            \draw (0,-2.5) node[left] {\scriptsize \( -6 \)};
            \draw (0,-1.5) node[left] {\scriptsize \( -4 \)};
            \draw (0,-0.5) node[left] {\scriptsize \( -2 \)};
            \draw (0,0.5) node[left] {\scriptsize \( 0 \)};
            \draw (0,1.5) node[left] {\scriptsize \( 2 \)};
            \draw (0,2.5) node[left] {\scriptsize \( 4 \)};
            \draw (0,3.5) node[left] {\scriptsize \( 6 \)};
            \draw (0,4.5) node[left] {\scriptsize \( 8 \)};
            \draw(0.5, 1.5) node {\( w_0 \)};
            \draw(0.5, 0.5) node {\( w_1 \)};
            \draw(0.5, -0.5) node {\( w_2 \)};
            \draw(3.5, 4.5) node {\( v_2 \)};
            \draw(0.5, -1.5) node {\( \raisebox{0.2cm}{\vdots} \)};
    \end{tikzpicture}}$
    \caption{\( q^1 h^0 \HF\left(\textcolor{red}{\mirror(\BNr(\LiRed))},\textcolor{blue}{\BNr(\ThreeTwistTangleBlue)} \right) \cong \BNr\left( T(2,3) \right) \)}\label{fig:exa:Pairing:Trefoil:Resolutions:ArcArc}
    \end{subfigure}
    \begin{subfigure}{\textwidth}
    \centering
    $\PairingTrefoilLoopArc$
$\vc{
    	\begin{tikzpicture}[scale=.5]
		    \draw[lightgray,step=1] (-0.5,-0.5) grid (4.5,5.5);
            \draw[line width=1pt,->] (-0.5,0) -- (5,0) node[right] {\( h \)};
            \draw[line width=1pt,->] (0,-0.5) -- (0,6) node[above] {\( q \)};
            \draw (0.5,0) node[below] {\scriptsize \( 0 \)};
            \draw (1.5,0) node[below] {\scriptsize \( 1 \)};
            \draw (2.5,0) node[below] {\scriptsize \( 2 \)};
            \draw (3.5,0) node[below] {\scriptsize \( 3 \)};
            \draw (0,0.5) node[left] {\scriptsize \( 0 \)};
            \draw (0,1.5) node[left] {\scriptsize \( 2 \)};
            \draw (0,2.5) node[left] {\scriptsize \( 4 \)};
            \draw (0,3.5) node[left] {\scriptsize \( 6 \)};
            \draw (0,4.5) node[left] {\scriptsize \( 8 \)};
            \draw(0.5, 1.5) node {\( w_0 \)};
            \draw(3.5, 4.5) node {\( v_2 \)};
            \draw(2.5, 3.5) node {\( v_1 \)};
    \end{tikzpicture}}$
    \caption{\(  \HF\left(\textcolor{red}{\mirror(\Khr(\LiRed))},\textcolor{blue}{\BNr(\ThreeTwistTangleBlue)}  \right) \cong \Khr\left( T(2,3)  \right) \)}\label{fig:exa:Pairing:Trefoil:Resolutions:EightArc}
    \end{subfigure}
    \caption{Illustration for Example~\ref{exa:Pairing:Trefoil}}\label{fig:exa:Pairing:Trefoil}
\end{figure}

\begin{example}[Unknot]\label{exa:Pairing:Unknot}
	Consider Figure~\ref{fig:exa:Pairing:Unknot:Diagram}, which shows a splitting of the unknot \( \Circle=\Lk(\LiRed,\LoBlue) \) into two trivial tangles \( \LiRed \) and \( \LoBlue \).
	The left-hand side of Figure~\ref{fig:exa:Pairing:Unknot:ArcArc} shows the wrapped Lagrangian Floer homology between \( \textcolor{red}{\mirror(\Arc(\LiRed))} \) and \( \textcolor{blue}{\Arc(\LoBlue)} \), which consists of infinitely many points \( w_i \) where \( i=0,1,2,\dots \). The resolutions of those intersection points, indicated by the black arrows in this figure, are given by the morphisms
	\[ 
	\begin{tikzcd}[ampersand replacement=\&]
	\textcolor{red}{\mirror(\Arc(\LiRed))}
	\arrow{r}{w_i}
	\&
	\textcolor{blue}{\Arc(\LoBlue)}
	\end{tikzcd}
    \quad
	\leftrightarrow
    \quad
    \begin{tikzcd}[ampersand replacement=\&]
    \GGzqh{\DotCred}{0}{0}{0}
    \arrow{r}{S^{1+2i}}
    \&
    \GGzqh{\DotBblue}{0}{0}{0}
    \end{tikzcd}
	\]
	So the bigrading \( \gr(w_i) \) of \( w_i \) is equal to
	\[ 
	\gr\Big(\!\!
	\begin{tikzcd}[ampersand replacement=\&]
	\GGzqh{\DotCred}{0}{0}{0}
	\arrow{r}{S^{1+2i}}
	\&
	\GGzqh{\DotBblue}{0}{0}{0}
	\end{tikzcd}\!\!\Big)
	=
	\gr(\GGzqh{\DotBblue}{0}{0}{0})
	-
	\gr(\GGzqh{\DotCred}{0}{0}{0})
	+
	\gr(S^{1+2i})
	=
	\gr(S^{1+2i})
	=
	q^{-(1+2i)}h^{0}
	\]
	and 
	\[ q^{+1}h^0 \HF(\textcolor{red}{\mirror(\Arc(\LiRed))},\textcolor{blue}{\Arc(\LoBlue)})\cong q^0 h^0\field[H].\]
	This is indeed \( \BNr( \Circle) \), which is also shown on the right-hand side of Figure~\ref{fig:exa:Pairing:Unknot:ArcArc}. Similarly, we can compute \( \Khr(\Circle) \), which according to the Pairing Theorem is equal to 
	\[ \HF(\textcolor{red}{\mirror(\Khr(\LiRed))},\textcolor{blue}{\Arc(\LoBlue)}).\]
	The corresponding calculation is shown in Figure~\ref{fig:exa:Pairing:Unknot:EightArc}. The resolution of the single intersection point \( w_0 \) in this figure is equal to the morphism
	\[ 
	\begin{tikzcd}[ampersand replacement=\&]
	\textcolor{red}{\mirror(\Khr(\LiRed))}
	\arrow{r}{w_0}
	\&
	\textcolor{blue}{\Arc(\LoBlue)}
	\end{tikzcd}	
	\quad
    \leftrightarrow
    \quad
	\begin{tikzcd}[row sep=0pt]
	\GGzqh{\DotCred}{}{-1}{0}
	\arrow[red]{dd}{H}
	\arrow{rd}{S}
	\\
	&
	\GGzqh{\DotBblue}{}{0}{0}
	\\
	\GGzqh{\DotCred}{}{+1}{+1}
	\end{tikzcd}
	\]
	The bigrading \( \gr(w_0) \) of \( w_0 \) is equal to
	\[ 
	\gr\Big(\!\!
	\begin{tikzcd}[ampersand replacement=\&]
	\GGzqh{\DotCred}{0}{-1}{0}
	\arrow{r}{S}
	\&
	\GGzqh{\DotBblue}{0}{0}{0}
	\end{tikzcd}\!\!\Big)
	=
	\gr(\GGzqh{\DotBblue}{0}{0}{0})
	-
	\gr(\GGzqh{\DotCred}{0}{-1}{0})
	+
	\gr(S)
	=
	h^{0}q^{0}.\]
	So we obtain a single generator in bigrading \( h^0q^0 \), which is indeed \( \Khr(\Circle) \).
\end{example}

\begin{example}[Unlink]\label{exa:Pairing:Unlink}
	Consider the splitting of the two-component unlink \( \Circle \Circle = \Lk(\LoRed,\LoBlue) \) from Figure~\ref{fig:exa:Pairing:Unlink:Diagram} and the wrapped Lagrangian Floer theory
	\[ \HF(\textcolor{red}{\mirror(\Arc(\LoRed))},\textcolor{blue}{\Arc(\LoBlue)})\cong\langle\dots,w'_3,w'_2,w'_1,w_0,w_1,w_2,w_3,\dots\rangle\]
	between the two corresponding arc invariants in Figure~\ref{fig:exa:Pairing:Unlink:ArcArc}. Note that the intersection points are in 1:1-correspondence with the standard generators of
	\[ \Mor(\DotBred,\DotBblue)=\langle\dots,D^3,D^2,D,\id,S^2,S^4,S^6,\dots\rangle.\]
	The bigrading of \( w_0 \) is \( \delta^0q^0 \). To see that the above agrees with
	\[ q^{-1}\BNr(\Lk(\LoRed,\LoBlue))\cong q^0 h^0\fieldTwoElements[H]\oplus q^{-2} h^0\fieldTwoElements[H]\]
	as an \( \fieldTwoElements[H] \)-module,
	observe that \( Hw_0=w_1+w'_1 \), so the two \( H \)-towers are generated by \( w_0 \) and \( w'_1 \) (or \( w_0 \) and \( w_1 \)). Finally, Figure~\ref{fig:exa:Pairing:Unlink:EightArc} shows the computation of the reduced Khovanov homology of the unlink: the wrapped Lagrangian Floer theory is generated by two intersection points in bigradings \( q^{1}h^0 \) and \( q^{-1}h^0 \), respectively, which is indeed the reduced Khovanov homology of the two-component unlink.
\end{example}

\begin{figure}[t]
    \centering
    $\PairingTrefoilHeartsArc
    \vc{
    \begin{tikzpicture}[scale=.5]
		\draw[lightgray,step=1] (-0.5,-0.5) grid (4.5,5.5);
        \draw[line width=1pt,->] (-0.5,0) -- (5,0) node[right] {\( h \)};
        \draw[line width=1pt,->] (0,-0.5) -- (0,6) node[above] {\( q \)};
        \draw (0.5,0) node[below] {\scriptsize \( 0 \)};
        \draw (1.5,0) node[below] {\scriptsize \( 1 \)};
        \draw (2.5,0) node[below] {\scriptsize \( 2 \)};
        \draw (3.5,0) node[below] {\scriptsize \( 3 \)};
        \draw (0,0.5) node[left] {\scriptsize \( 1 \)};
        \draw (0,1.5) node[left] {\scriptsize \( 3 \)};
        \draw (0,2.5) node[left] {\scriptsize \( 5 \)};
        \draw (0,3.5) node[left] {\scriptsize \( 7 \)};
        \draw (0,4.5) node[left] {\scriptsize \( 9 \)};
        \draw(0.5, 0.5) node {\( w_1 \)};
        \draw(0.5, 1.5) node {\( w_0 \)};
        \draw(3.5, 4.5) node {\( v_2 \)};
        \draw(2.5, 2.5) node {\( v_1 \)};
    \end{tikzpicture}}$
    \caption{
    \(  \HF  \left( 
    \mirror ( \Kh( \protect\LiRed; \fieldTwoIsNotZero) ),
    \BNr(\protect\ThreeTwistTangleBlue ; \fieldTwoIsNotZero) 
    \right) \cong
    \Kh\left( T(2,3); \fieldTwoIsNotZero  \right)
    \)
    }
    \label{fig:exa:Pairing:Trefoil:unreduced}
\end{figure}

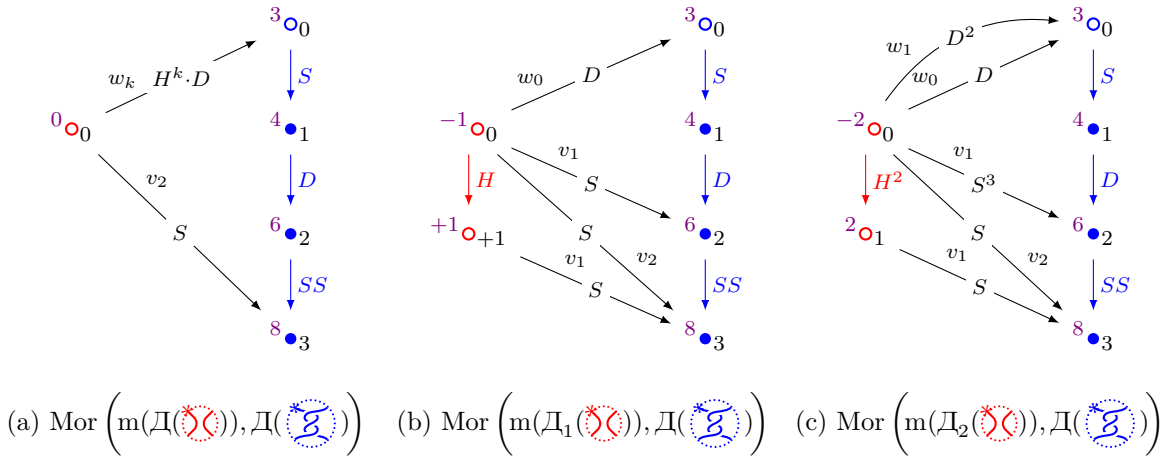
\begin{figure}[t]
    \centering
    \begin{subfigure}{0.32\textwidth}
    	\[ 
	    \begin{tikzcd}[column sep=1cm]
		&
		&
		\GGzqh{\DotCblue}{\frac{3}{2}}{3}{0}
		\arrow[blue]{d}{S}
	    \\
	    \GGzqh{\DotCred}{0}{0}{0}
	    \arrow[urr,"w_k" near start, "H^k\cdot D" description]
	    \arrow[ddrr, "S" description, "v_2" near start]
		&
		&
		\GGzqh{\DotBblue}{1}{4}{1}
		\arrow[blue]{d}{D}
		\\
		&
		&
		\GGzqh{\DotBblue}{1}{6}{2}
		\arrow[blue]{d}{SS}
		\\
		&
		&
		\GGzqh{\DotBblue}{1}{8}{3}
		\end{tikzcd}
		\]
        \caption{\( \Mor\left(\!\mirror(\DD(\LiRed)),\DD(\ThreeTwistTangleBlue)\!\right) \)}\label{fig:exa:Pairing:Trefoil:BNr:Morphisms}
	\end{subfigure}
	\begin{subfigure}{0.32\textwidth}
	\[ 
   	\begin{tikzcd}[column sep=1cm]
		&
		&
		\GGzqh{\DotCblue}{\frac{3}{2}}{3}{0}
		\arrow[blue]{d}{S}
	    \\
	    \GGzqh{\DotCred}{-\frac{1}{2}}{-1}{0}
	    \arrow[red]{d}{H}
	    \arrow[urr,"w_0" near start,"D" description]
	    \arrow[ddrr,"v_2" near end,"S" description]
	    \arrow[drr,"v_1" near start,"S" description]
		&
		&
		\GGzqh{\DotBblue}{1}{4}{1}
		\arrow[blue]{d}{D}
		\\
		\GGzqh{\DotCred}{-\frac{1}{2}}{+1}{+1}
		\arrow[drr,"v_1" near start,"S" description]
		&
		&
		\GGzqh{\DotBblue}{1}{6}{2}
		\arrow[blue]{d}{SS}
		\\
		&
		&
		\GGzqh{\DotBblue}{1}{8}{3}
		\end{tikzcd}
    \]
    \caption{   	\( \Mor\left(\!\mirror(\DD_1(\LiRed)),\DD(\ThreeTwistTangleBlue)\!\right) \)    }\label{fig:exa:Pairing:Trefoil:Khr:Morphisms}
    \end{subfigure}
    \begin{subfigure}{0.32\textwidth}
    \[ 
    \begin{tikzcd}[column sep=1cm]
        &
        &
        \GGzqh{\DotCblue}{\frac{3}{2}}{3}{0}
        \arrow[blue]{d}{S}
        \\
        \GGzqh{\DotCred}{-1}{-2}{0}
        \arrow[red]{d}{H^2}
        \arrow[urr,"w_0" near start,"D" description]
        \arrow[urr, bend left, "w_1" near start,"D^2" description]
        \arrow[ddrr,"v_2" near end,"S" description]
        \arrow[drr,"v_1" near start,"S^3" description]
        &
        &
        \GGzqh{\DotBblue}{1}{4}{1}
        \arrow[blue]{d}{D}
        \\
        \GGzqh{\DotCred}{0}{2}{1}
        \arrow[drr,"v_1" near start,"S" description]
        &
        &
        \GGzqh{\DotBblue}{1}{6}{2}
        \arrow[blue]{d}{SS}
        \\
        &
        &
        \GGzqh{\DotBblue}{1}{8}{3}
        \end{tikzcd}
    \]
    \caption{\( \Mor\left(\!\mirror(\DD_2(\LiRed)),\DD(\ThreeTwistTangleBlue)\!\right) \)}\label{fig:exa:Pairing:Trefoil:Kh:Morphisms}
    \end{subfigure} 
    \caption{Generators of the homology groups of the morphisms spaces for Example~\ref{exa:Pairing:Trefoil} }\label{fig:exa:Pairing:Trefoil:Resolutions}
\end{figure}

\begin{example}[Trefoil]\label{exa:Pairing:Trefoil}
Figure~\ref{fig:exa:Pairing:Trefoil:Diagram} shows a splitting of the right-handed trefoil knot \( T(2,3) \) into two rational tangles. The reduced Khovanov homology \( \Khr(T(2,3)) \) is 3-dimensional; see Figure~\ref{fig:exa:Pairing:Trefoil:Resolutions:EightArc}. The bigradings of the intersection points \( w_0 \), \( v_1 \) and \( v_2 \) are \( h^{0}q^{2} \), \( h^{2}q^{6} \) and \( h^{3}q^{8} \), respectively, so we obtain the 3-dimensional vector space supported in those bigradings illustrated on the right-hand side of Figure~\ref{fig:exa:Pairing:Trefoil:Resolutions:EightArc}. 

The reduced Khovanov homology, viewed as a type~D structure \( \Khr(T(2,3))^{\field[H]} \), carries a map from \( v_1 \) to \( v_2 \) picking up \( H \). So \( \BNr(T(2,3)) \) is the homology of a chain complex consisting of towers at each of \( w_0 \), \( v_1 \), and \( v_2 \), and differentials between \( v_1 \) and \( v_2 \) towers picking up \( H \). The calculation of \( \BNr \) in terms of wrapped Lagrangian Floer homology is shown in Figure~\ref{fig:exa:Pairing:Trefoil:Resolutions:ArcArc}. 

Lastly we discuss the unreduced Khovanov homology \( \Kh(T(2,3)) \). Over \( \fieldTwoElements \) it is equal to \( q^1 h^0\Khr(T(2,3);\fieldTwoElements)\oplus q^{-1} h^0\Khr(T(2,3);\fieldTwoElements) \), and the corresponding intersection picture is the same as in Figure~\ref{fig:exa:Pairing:Trefoil:Resolutions:EightArc}, except there are two red figure-8 curves with different bigradings. Over \( \fieldTwoIsNotZero \) the situation is more interesting; the curve \( \textcolor{red}{\mirror(\Kh(\LiRed))} \) corresponds to type~D structure \( \mirror(\DD_2(\Li)) \). Khovanov homology \( \Kh(T(2,3);\fieldTwoIsNotZero \)) is 4-dimensional, and we describe the corresponding intersection picture in Figure~\ref{fig:exa:Pairing:Trefoil:unreduced}. 

In Figure~\ref{fig:exa:Pairing:Trefoil:Resolutions}, we describe the generators of all three homologies as elements of the corresponding morphism spaces.
\end{example}

\subsection{Closures of (2,-3)-pretzel tangle}
More complicated examples are obtained by considering various closures of the \( (2,-3) \)-pretzel tangle.

\begin{figure}[t]
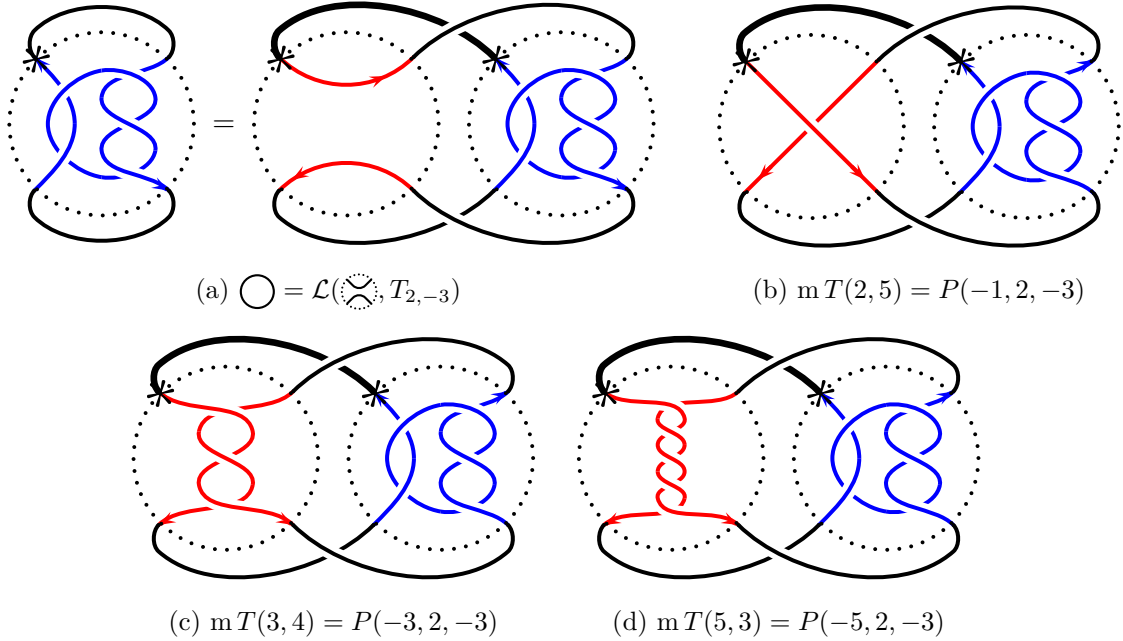

    \centering
    \begin{subfigure}{0.6\textwidth}
    	\centering
    	$ \PairingClosureDiagramUnknotFromPT=\PairingStandardDiagramUnknotFromPT$
    	\caption{\( \Circle=\Lk(\No,T_{2,-3}) \)}\label{fig:exa:Pairing:unknot}
    \end{subfigure}
    \begin{subfigure}{0.36\textwidth}
    \centering
	$\PairingStandardDiagramTorusknotFromPTa$
		\caption{\( \mirror T(2,5)=P(-1,2,-3) \)}\label{fig:exa:Pairing:TorusKnots:DiagramI}
	\end{subfigure}
	\medskip\\
    \begin{subfigure}{0.36\textwidth}
    \centering
	$\PairingStandardDiagramTorusknotFromPTb$
		\caption{\( \mirror T(3,4)=P(-3,2,-3) \)}\label{fig:exa:Pairing:TorusKnots:DiagramII}
	\end{subfigure}
    \begin{subfigure}{0.36\textwidth}
    \centering
	$\PairingStandardDiagramTorusknotFromPTc$
		\caption{\( \mirror T(5,3)=P(-5,2,-3) \)}\label{fig:exa:Pairing:TorusKnots:DiagramIII}
	\end{subfigure}
    \caption{Various knots obtained as closures of the \( (2,-3) \)-pretzel tangle}\label{fig:exa:Pairing:TorusKnots:Diagrams}
\end{figure}

\begin{figure}[t]
	\centering
    $\PairingTorusknotFromPtArcArc$
    \hspace{-3cm}
    \raisebox{-0.5cm}{$\vc{
	    \begin{tikzpicture}[scale=.5]
	    \draw[lightgray,step=1] (0.5,0.5) grid (5.5,4.5);
	    \draw[line width=1pt,->] (0,4) -- (6.5,4) node[right] {\( h \)};
	    \draw[line width=1pt,->] (5,0) -- (5,5.5) node[above] {\( \delta \)};
	    \draw (0.5,4) node[above] {\scriptsize \( -8 \)};
	    \draw (1.5,4) node[above] {\scriptsize \( -6 \)};
	    \draw (2.5,4) node[above] {\scriptsize \( -4 \)};
	    \draw (3.5,4) node[above] {\scriptsize \( -2 \)};
	    \draw (4.5,4) node[above] {\scriptsize \( 0 \)};
	    \draw (5,0.5) node[right] {\scriptsize \( -4 \)};
	    \draw (5,1.5) node[right] {\scriptsize \( -3 \)};
	    \draw (5,2.5) node[right] {\scriptsize \( -2 \)};
	    \draw (5,3.5) node[right] {\scriptsize \( -1 \)};
	    \draw(4.5, 0.7) node {\( \vdots \) };
	    \draw(4.5, 2.5) node {\( w_0 \)};
	    \draw(4.5, 1.5) node {\( w_1 \)};
	    \draw(3.5, 2.5) node {\( \varkappa_2 \)};
	    \draw(2.5, 2.5) node {\( \varkappa_1 \)};
	    \draw(2.5, 3.5) node {\( v_1 \)};
	    \draw(1.5, 3.5) node {\( v_2 \)};
	    \draw(-0.5, 3) node {\(  \underbrace{\ \ \ \ \ \ \cdots}_{\text{if }n>5}   \)};
	    \end{tikzpicture}
	}$}
	\caption{\( 
    \HF(h^0 q^{-n} \textcolor{red}{\mirror(\Arc(T_{-n}))}, \textcolor{blue}{\Arc(T_{2,-3})})
    \cong 
    h^0q^{n-1} \BNr(P(-n,2,-3)) \) for \( n=1,3,5 \)}
    \label{fig:exa:Pairing:TorusKnots:ArcArc}
\end{figure}

\begin{figure}[t]
	\centering
	$\PairingTorusknotFromPtArcEight \vspace{-0.5cm}$
	\begin{tikzpicture}[scale=.5]
	\draw[lightgray,step=1] (0.5,-0.5) grid (9.5,2.5);
	\draw[line width=1pt,->] (0,2) -- (10.5,2) node[right] {\( h \)};
	\draw[line width=1pt,->] (9,-1) -- (9,3.5) node[above] {\( \delta \)};
	\draw (0.5,2) node[above] {\scriptsize \( -8 \)};
	\draw (1.5,2) node[above] {\scriptsize \( -7 \)};
	\draw (2.5,2) node[above] {\scriptsize \( -6 \)};
	\draw (3.5,2) node[above] {\scriptsize \( -5 \)};
	\draw (4.5,2) node[above] {\scriptsize \( -4 \)};
	\draw (5.5,2) node[above] {\scriptsize \( -3 \)};
	\draw (6.5,2) node[above] {\scriptsize \( -2 \)};
	\draw (7.5,2) node[above] {\scriptsize \( -1 \)};
	\draw (8.5,2) node[above] {\scriptsize \( 0 \)};
	\draw (9,0.5) node[right] {\scriptsize \( -3/2 \)};
	\draw (9,1.5) node[right] {\scriptsize \( -1/2 \)};
	\draw(8.5, 0.5) node {\( \varkappa_1 \)};
	\draw(6.5, 0.5) node {\( \varkappa_2 \)};
	\draw(5.5, 0.5) node {\( \varkappa_3 \)};
	\draw(3.5, 0.5) node {\( \varkappa_4 \)};
	\draw(4.5, 0.5) node {\( v_1 \)};
	\draw(4.5, 1.5) node {\( v_2 \)};
	\draw(2.5, 1.5) node {\( v_3 \)};
	\draw(1.5, 1.5) node {\( v_4 \)};
	\draw(-0.5, 1) node {\(  \underbrace{\ \ \ \ \ \ \cdots}_{\text{if }n>5}   \)};
	\end{tikzpicture}
	\caption{\( 
    \HF( h^0 q^{-n} \textcolor{red}{\mirror(\BN(T_{-n})) } , \textcolor{blue}{\Khr(T_{2,-3})} )
    \cong 
    h^0q^{n} \Khr(P(-n,2,-3)) \) for \( n=1,3,5 \)}
    %
    \label{fig:exa:Pairing:TorusKnots:ArcEight}
\end{figure}

\begin{example}[Unknot closure of the \( (2,-3) \)-pretzel tangle]\label{exa:Pairing:UnknotClosure}
	If we pair the \( (2,-3) \)-pretzel tangle \( T_{2,-3} \) with one of the two trivial tangles, we obtain the unknot, see Figure~\ref{fig:exa:Pairing:unknot}.
	And indeed, the wrapped Lagrangian Floer homology of \( \textcolor{red}{\mirror(\Arc(\LoRed))} \) (the bottom arc $\DotBarc$) and \( \textcolor{blue}{\Arc(T_{2,-3})} \) (the curve in Figure~\ref{fig:ArcTypePretzelIII}) consists of a single \( H \)-tower. To compute the correct bigradings of this \( H \)-tower, note that the orientation of \( T_{2,-3} \) is different from the one in Example~\ref{exa:2m3ptBNComplexComputation}. The linking number between the two components of \( T_{2,-3} \) with the orientation from said example is \( -1 \), so by Proposition~\ref{prop:reversingOneComponent}, we only need to multiply the bigrading of its invariant by \( h^{2}q^{6}\delta^{1} \) to obtain the invariant of \( T_{2,-3} \) with the new orientation. And indeed, the tower starts in bigrading \( h^0q^{-1} \). 
	Moreover, \( \textcolor{blue}{\Arc(T_{2,-3})} \) and \( \textcolor{red}{\mirror(\Khr(\LoRed))} \) intersect in a single point of bigrading \( \delta^{0}q^{0} \) and so do \( \textcolor{blue}{\Khr(T_{2,-3})} \) and \( \textcolor{red}{\mirror(\Arc(\LoRed))} \), confirming $\Khr(\Lk(\LoRed, \textcolor{blue}{T_{2,-3}}))=\field$. These are very simply sanity checks one can perform on our immersed curve invariants for any 4-ended tangle that admits a closure to the unknot.
\end{example}

\begin{example}[Torus knot closures of the \( (2,-3) \)-pretzel tangle]\label{exa:Pairing:TorusKnots}

Figure~\ref{fig:exa:Pairing:TorusKnots:Diagrams} shows various knots obtained by gluing \( -n \)-twist tangles \( T_{-n} \) to the \( (2,-3) \)-pretzel tangle for \( n=1,3,5\), which all happen to be torus knots. The corresponding immersed curves of \( \textcolor{red}{\mirror(\Arc(T_{-n}))} \) and \( \textcolor{blue}{\Arc(T_{2,-3})} \) are shown in Figure~\ref{fig:exa:Pairing:TorusKnots:ArcArc}, except that the bigrading of the first curve has been shifted by \( \delta^{-\frac{n}{2}}q^{-n} \). In all three pairings, we have a single tower generated by \( w_0 \) in bigrading \( \delta^{-2}q^{-4} \). Also, the twisting does not affect the two generators \( \varkappa_1 \) (\( \delta^{-2}q^{-12} \)) and \( \varkappa_2 \) (\( \delta^{-2}q^{-8} \)). For \( n=1 \), these are all generators. For \( n=3 \), there is also a generator \( v_1 \) in bigrading \( \delta^{-1}q^{-10} \). For \( n=5 \), there is \( v_1 \), but also another generator \( v_2 \), which has bigrading \( \delta^{-1}q^{-14} \). For each additional full twist, we obtain one more generator, whose bigrading is obtained from that of the previous one by multiplication by \( \delta^{-1}q^{-4} \). This is summarized on the right of Figure~\ref{fig:exa:Pairing:TorusKnots:ArcArc}.

Similarly, we can pair the arc invariants of the \( -n \)-twist tangles with the figure-8 invariant of \( T_{2,-3} \); see Figure~\ref{fig:exa:Pairing:TorusKnots:ArcEight}. Here, the fact that the twisting preserves certain generators is even more obvious, since \( \textcolor{blue}{\Khr(T_{2,-3})} \) splits into two components, one of which stays invariant under twisting. Indeed, we always have the intersection points 
\( \varkappa_1 \) (\( \delta^{-\frac{3}{2}}q^{-3} \)), 
\( \varkappa_2 \) (\( \delta^{-\frac{3}{2}}q^{-7} \)), 
\( \varkappa_3 \) (\( \delta^{-\frac{3}{2}}q^{-9} \)) and 
\( \varkappa_4 \) (\( \delta^{-\frac{3}{2}}q^{-13} \)). 
The other component of \( \textcolor{blue}{\Khr(T_{2,-3})} \) is equal to the invariant of a \( 2 \)-twist rational tangle (up to a shift in bigrading). Therefore, 
\begin{align*}
& \delta^{\frac{n}{2}}q^{n}
\Khr(\Lk(T_{-n},T_{2,-3}))
\\
\cong~& 
\delta^{\frac{n}{2}}q^{n}
\HF(\textcolor{red}{\mirror(\Arc(T_{-n}))},\textcolor{blue}{\Khr(T_{2,-3})})\\
\cong~& 
\HF(\delta^{-\frac{n}{2}}q^{-n}\textcolor{red}{\mirror(\Arc(T_{-n}))},\textcolor{blue}{\Khr(T_{2,-3})})\\
\cong~&
\langle\varkappa_1,\varkappa_2,\varkappa_3,\varkappa_4\rangle
\oplus
\HF(\delta^{-\frac{n}{2}}q^{-n}\textcolor{red}{\mirror(\Arc(T_{-n}))},\delta^{-2}q^{-12}\textcolor{blue}{\Khr(T_{2})})\\
\cong~&
\langle\varkappa_1,\varkappa_2,\varkappa_3,\varkappa_4\rangle
\oplus
\delta^{\frac{n-4}{2}}q^{n-12}\HF(\textcolor{red}{\mirror(\Arc(T_{-n}))},\textcolor{blue}{\Khr(T_{2})})\\
\cong~&
\langle\varkappa_1,\varkappa_2,\varkappa_3,\varkappa_4\rangle
\oplus
\delta^{\frac{n-4}{2}}q^{n-12}\Khr(T(2,2-n))
\end{align*}
as is shown on the right of Figure~\ref{fig:exa:Pairing:TorusKnots:ArcEight}.
The 4-dimensional vector space 
\( \varkappa:=\langle\varkappa_1,\varkappa_2,\varkappa_3,\varkappa_4\rangle\), 
supported in gradings
 \( \delta^{-\frac{3}{2}}(q^{-3}+q^{-7}+q^{-9}+q^{-13}) \), 
agrees with the invariant \( \varkappa \) for the \( (2,-3) \)-pretzel tangle (with the appropriate suture) from~\cite[Figure~5]{Kappa}. This is discussed further in Section \ref{sub:rem-sut-tang} (Note that the quantum grading in~\cite{Kappa} is half of our quantum grading and that the two \( \delta \)-gradings differ by the factor \( -1 \).) The limit \( \displaystyle \lim_{n\to \infty} \Khr(T(2,2-n)) = \langle v_2, v_3, v_4 \ldots \rangle\) for the other summand  is equal to \( \field[x,y]/(x^2=y^3) \); see Example~\ref{ex:Uinf_invariants}.

The observation that there are well-defined limits of \( \BNr(\Lk(T_{-n},T_{2,-3})) \) and \( \Khr(\Lk(T_{-n},T_{2,-3})) \) as \( n \to \infty \) is the starting point for Section~\ref{sec:InfinitelyStranded}.
\end{example}

\section{The mapping class group action}\label{sec:MCGaction}

In this section, we will only work in characteristic 2. However, we expect the main result to hold over any field.

\begin{wrapfigure}{r}{0.33\textwidth}
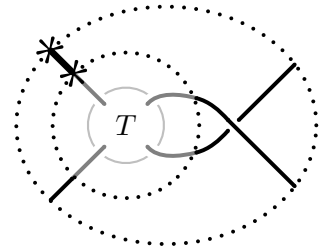

	\centering
	$\DehnTwistTangleDiagram$
	\caption{The tangle \( \tau(T) \)}\label{fig:AddingASingleCrossing}
\end{wrapfigure}

\subsection{Naturality of the action}

In Definition~\ref{def:4-ended_tangle} of a \emph{parameterized} 4-ended tangle \( T \), we fixed a circle-parameterization of the boundary 4-punctured sphere $\partial D^3\smallsetminus \partial T$. The mapping class group of the 4-punctured sphere acts on this parameterization and hence on the tangles, see Figure~\ref{fig:AddingASingleCrossing}. On the other hand, our arc type invariant $\BNr(T)$, as well as the family of compact invariants $\Eight[n](T)$, are curves with local systems on the 4-punctured sphere $\FourPuncturedSphere$, so the mapping class group acts on them as well. The main result from this section is that our immersed curve invariants are compatible with these actions. As a consequence, our curves are invariants of \emph{unparameterized} 4-ended tangles (Definition~\ref{def:4-ended_tangle}), if viewed as multicurves on the tangle boundary $\partial D^3\smallsetminus \partial T$.

\begin{theorem}\label{thm:MCGaction}
	Let \( T \) be a pointed 4-ended tangle and \( \rho \) an element of the mapping class group of the 4-punctured sphere which fixes the reduction basepoint. Then 
	\[ \Arc(\rho(T))=\rho(\Arc(T)) \qquad \text{and} \qquad  \Eight[n](\rho(T))=\rho(\Eight[n](T)) \quad\text{ for all $n\geq 1$.} \] 
\end{theorem}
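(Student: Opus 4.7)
The plan is to proceed by induction on the length of $\rho$ in a generating set for the subgroup of the mapping class group of $\FourPuncturedSphere$ that fixes the special puncture. This subgroup is generated by the two half-twists exchanging adjacent pairs of non-special punctures, each of which corresponds on the tangle side to composing $T$ with a single crossing at two adjacent tangle ends, as in Figure~\ref{fig:AddingASingleCrossing}. Once naturality is established for each such generator $\tau$, iterated application yields the theorem for arbitrary $\rho$.

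For each such $\tau$, the first step is to produce an AD-bimodule ${}_{\BNAlgH}\mathbf{M}_\tau{}^{\BNAlgH}$ over $\fieldTwoElements$ such that
\[ \DD(\tau(T);\fieldTwoElements) \simeq \DD(T;\fieldTwoElements) \boxtimes \mathbf{M}_\tau. \]
Such a bimodule is extracted from Bar-Natan's invariant of the elementary crossing $\KhTl{\CrossingPosR}$ (and its mirror), viewed as a two-sided complex whose objects and morphisms are interpreted as matrices over $\BNAlgH$ on each side via Theorem~\ref{thm:OmegaFullyFaithful}. The displayed identity then follows from the compatibility of $\KhTl{-}$ with horizontal tangle composition, which is immediate from its cube-of-resolutions definition, together with the delooping isomorphisms of Observation~\ref{obs:delooping}.

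The main geometric content is to verify that box-tensoring with $\mathbf{M}_\tau$ realises, on the curve side, the Dehn half-twist $\tau$. Concretely, given a multicurve $L$ with $\Pi(L) \simeq \DD(T;\fieldTwoElements)$, I would compute $\Pi(L) \boxtimes \mathbf{M}_\tau$ directly as a precurve whose train-track structure differs from $\Pi(L)$ only in the neighbourhoods of the two arcs of the parameterization affected by $\tau$. Applying the arrow-sliding procedure from Section~\ref{subsec:classification:signs} together with the graphical calculus of Figure~\ref{fig:GraphicCalculus} and the Cancellation and Clean-Up Lemmas~\ref{lem:AbstractCancellation} and~\ref{lem:AbstractCleanUp}, one simplifies the result to a simply-faced precurve representing some multicurve $L'$, and the claim is that $L'$ agrees with $\tau(L)$ up to homotopy. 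This is a local, finite case analysis organised by the positions of the $f$-joins of $L$ near the two affected arcs. The hard part will be the bookkeeping: one has to carry bigradings and local systems consistently through the simplification and check that the resulting grading shifts match those computed from adding a crossing directly as in Proposition~\ref{prop:reversingOneComponent} and Example~\ref{exa:BNntwisttangles}.

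The statement for the compact invariants $\Eight[n]$ then follows formally. By Definition~\ref{def:whole_family_of_curves}, $\Eight[n](T;\fieldTwoElements)$ is the multicurve of the mapping cone of $H^n \cdot \id$ on $\DD(T;\fieldTwoElements)$. A direct computation using the relations $DS = 0 = SD$ shows that $H = D - S^2$ is central in $\BNAlgH$, hence $- \boxtimes \mathbf{M}_\tau$ commutes with the mapping cone construction. On the curve side, the Dehn twist $\tau$ commutes with the passage from an arc-type curve to its $n$-th figure-eight analogue, since this passage is implemented curve-by-curve. Combining this with the already established naturality for $\Arc$ gives $\Eight[n](\rho(T);\fieldTwoElements) = \rho(\Eight[n](T;\fieldTwoElements))$.
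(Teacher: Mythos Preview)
Your overall strategy---reduce to half-twist generators, build an AD-bimodule $\mathbf{M}_\tau$ realizing the crossing addition, and verify by a local case analysis on elementary curve segments that $-\boxtimes\mathbf{M}_\tau$ matches the geometric half-twist---is exactly the approach the paper takes. The paper carries this out explicitly: it computes the bimodule (Proposition~\ref{prop:AddingASingleCrossing} and Figure~\ref{fig:BNTwistingAD}), tabulates its effect on each type of arrow in a curve-form complex (Figures~\ref{tab:MCGactionArcsPuzzlePieces} and~\ref{tab:MCGactionArcsPuzzlePiecesSimplified}), and matches the output against the curve segments read off the Dehn-twisted arc system (Figure~\ref{fig:MCGactionArcsPuzzlePieces}).

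There is one genuine gap in your argument for $\Eight[n]$. Centrality of $H$ in $\BNAlgH$ does \emph{not} by itself imply that $-\boxtimes\mathbf{M}_\tau$ commutes with the mapping cone of $H^n\cdot\id$. What is needed is that the bimodule $\mathbf{M}_\tau$ itself intertwines multiplication by $H$: concretely, that it carries actions $(H^k\mid H^k)$ on each generator, so that the input $H$-action matches the output $H$-action. An arbitrary AD-bimodule over an algebra with central $H$ need not have this property. The paper establishes it by direct inspection of the computed bimodule (the $(H^k\mid H^k)$ loops in Figure~\ref{fig:BNTwistingAD}), and then phrases the commutation as
\[
{}_{\BNAlgH}\tau^{\BNAlgH}\boxtimes{}_{\BNAlgH}\big[\mathbb{I}\xrightarrow{(-\mid H^n)}\mathbb{I}\big]^{\BNAlgH}
\;\simeq\;
{}_{\BNAlgH}\big[\mathbb{I}\xrightarrow{(-\mid H^n)}\mathbb{I}\big]^{\BNAlgH}\boxtimes{}_{\BNAlgH}\tau^{\BNAlgH}.
\]
Your alternative geometric justification---that the passage from $\Arc$ to $\Eight[n]$ is ``implemented curve-by-curve'' and hence commutes with Dehn twists---also does not stand on its own: as the paper itself flags in Question~\ref{que:ArcToFigureEights}, the geometric effect of coning by $H^n\cdot\id$ on arbitrary multicurves is not understood in closed form, so you cannot invoke it as a known operation. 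The fix is easy once you have $\mathbf{M}_\tau$ in hand: just record the $(H^k\mid H^k)$ actions and use them.
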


Note that the analogous result for the curves $\Khr(T)$ and $\Kh(T)$ also holds; this follows from Remark~\ref{rmk:Kh_and_Khr_in_terms_of_Ln}. The proofs of both identities above follow along the same lines as those of the corresponding theorem for the Heegaard Floer type invariant \(\HFT(T)\) in~\cite{pqSym}---the difference is that the computation in bordered sutured Floer theory is replaced by its combinatorial Khovanov theory analogue. It suffices to consider the half Dehn twist (braid move) \( \rho=\tau \) from Figure~\ref{fig:AddingASingleCrossing}; then, the case for general \( \rho \) follows easily.

To begin, we need to compute the type AD structure which corresponds to the action of the Dehn twist \( \tau \) on the type~D structure \( \DD(T)^{\BNAlgH}\).

\begin{proposition}\label{prop:AddingASingleCrossing}
	Let \(T\) be a pointed 4-ended tangle and \(\tau T\) the tangle obtained by adding a single crossing to it as shown in Figure~\ref{fig:AddingASingleCrossing}. Then
	\[ \DD(\tau T)^{\BNAlgH}\simeq \DD(T)^{\BNAlgH}\boxtimes\BNTwisting\] where \( \BNTwisting \) is the type AD structure from Figure~\ref{fig:BNTwistingAD}. 
\end{proposition}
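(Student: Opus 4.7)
The strategy is to compute $\DD(\tau T)$ directly using Bar-Natan's cube of resolutions at the single additional crossing, and then recognize the resulting complex as the box tensor product $\DD(T) \boxtimes \BNTwisting$. Since $\tau T$ differs from $T$ only in a small region at the top, Bar-Natan's construction expresses $\KhTl{\tau T}$ as the mapping cone of the saddle cobordism between the 0- and 1-resolutions of the new crossing. Concretely, one resolution leaves the underlying flat tangle essentially isotopic to $T$ (up to a reshuffling of which strands meet which endpoints of $T$), while the other resolution joins the top two strands by a cap-cup and so, after isotopy, produces $T$ together with an extra closed circle. Delooping this closed circle via Observation~\ref{obs:delooping} doubles the relevant summand with explicit grading shifts and a cleanup differential.

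To turn this mapping cone into the promised box-tensor form, I would first compute $\DD(\tau\Ni)$ and $\DD(\tau\No)$ explicitly as complexes over $\BNAlgH$, using the isomorphism $\End_{/l}(\Ni\oplus\No)\cong\BNAlgH$ from Theorem~\ref{thm:OmegaFullyFaithful}. These two elementary calculations tabulate exactly the data of a DA bimodule: for each incoming idempotent $b$ and $c$ on the left, the generators of $\BNTwisting$ are the delooped pieces of $\tau\Ni$ and $\tau\No$ with their bigrading shifts, while the right $D$-action and the left $\mathcal{A}_\infty$-action encode how $\SaddleBC, \SaddleCB, \DotcobB, \DotcobC$ act on them. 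The claim then reduces to a ``locality'' statement: since $\KhTl{-}$ is compatible with horizontal composition of tangles and since $\DD(-)$ is just $\KhTl{-}$ reinterpreted through $\Omega_i^{-1}$, composing $T$ with $\tau(\Li\oplus\Lo)$ at the top corresponds on the algebraic side to tensoring $\DD(T)^{\BNAlgH}$ with the DA bimodule built from $\DD(\tau\Ni)$ and $\DD(\tau\No)$.

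The main obstacle is that the long $S$/$D$-chains of $\DD(T)$ can interact with the cleanup homotopy introduced by delooping the circle in the 1-resolution, forcing $\BNTwisting$ to carry nontrivial higher $\mathcal{A}_\infty$-operations $m_{1+k}$ rather than being a plain bimodule. Making this bookkeeping precise amounts to expanding the delooping isomorphism together with the relations $DS=0=SD$ in $\BNAlgH$ and verifying by induction on the length of an incoming sequence of algebra elements that the total contribution matches the action stated in Figure~\ref{fig:BNTwistingAD}. Working in characteristic~$2$ removes all sign ambiguities, so the verification reduces to a finite case analysis on the generators of $\BNAlgH$ acting on the (few) generators of $\BNTwisting$; invariance under chain-homotopy on the $T$-side is then automatic from the definition of $\boxtimes$.
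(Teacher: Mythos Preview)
Your strategy is essentially the paper's: resolve the new crossing last, deloop, compute how the objects $\Ni,\No$ and the basic morphisms between them transform, package this as a type~AD bimodule, and transport along $\omega_1$. Two small corrections to your sketch: the extra circle appears only when the underlying generator is $\Li$ (so $\Li$ deloops to three generators while $\Lo$ maps directly to the two-term complex $[\Lo\xrightarrow{S}\Li]$ with no circle), and the length-two $A_\infty$-operations in $\BNTwisting$ arise not from the delooping homotopy itself but from cancelling the identity differential $(-\vert 1)$ in the resulting five-generator bimodule $\BNTwistingTildeCobCob$ (Figure~\ref{fig:BNTwistingPrime}), which yields the three-generator $\BNTwisting$ via the Cancellation Lemma's zigzags.
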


\begin{figure}[b]
	\centering
	\[ 
	\begin{tikzcd}[column sep=40pt,row sep=35pt]
	\GGdzh{\DotB \DotB}{\frac{n_+}{2}}{}{-n_-}
	\arrow[bend right=7]{rr}[description]{(-\vert S)}
	\arrow[in=-150,out=150,looseness=5.5]{rl}[description]{\substack{(D \cdot H^k \vert D \cdot H^k) \\ (H^k \vert H^k)}}
	&&
	\GGdzh{\DotB \DotC}{\frac{n_+-1}{2}}{}{-1-n_-}
	\arrow[bend right=7]{ll}[description]{(S\cdot H^k,S \cdot H^n\vert S \cdot H^{n+k})}
	\arrow[in=0,out=-90,pos=0.4]{dl}[description]{(S \cdot H^k \vert D \cdot H^k )}
	\arrow[in=+25,out=-25,looseness=5]{lr}[description]{ (H^k \vert H^k)}
	\\
	&
	\GGdzh{\DotC \DotC}{\frac{n_+}{2}}{}{1-n_-}
	\arrow[out=180,in=-90,pos=0.7]{lu}[description]{(S \cdot H^k ,D\cdot H^n \vert S\cdot H^{k+n})}
	\arrow[pos=0.5]{ru}[description]{(S\cdot H^k \vert H^k)}
	\arrow[in=-25,out=-155,looseness=3]{rl}[description]{(H^k \vert H^k)+(D \cdot H^k \vert SS \cdot H^k )}
	\end{tikzcd}
	\]
	\caption{The bimodule \( \BNTwisting \); \( (n_+,n_-) \) is either \((1,0)\) or \((0,1)\), depending on the sign of the right crossing in Figure~\ref{fig:AddingASingleCrossing}
	}\label{fig:BNTwistingAD}
\end{figure}


\begin{figure}[t]
	\centering
	\begin{subfigure}
	{\textwidth}
		\newlength{\braceflex}
		\setlength{\braceflex}{-4pt}
		\newlength{\midflex}
		\setlength{\midflex}{-4pt}
	\begin{align*}
		\left(\hspace*{\braceflex}
		\begin{tikzcd}[ampersand replacement = \&]
		\Li
		\arrow{d}{D\cdot H^k}
		\\
		\Li
		\end{tikzcd}\hspace*{\braceflex}\right)
		\mapsto 
		&
		\left(\hspace*{\braceflex}
		\begin{tikzcd}[ampersand replacement = \&,column sep=20pt]
		\Li
		\arrow[swap]{d}{H^{k+1}}
		\arrow[out=-60,in=90,swap,pos=0.7]{drr}{H^k}
		\&
		\Li
		\arrow{r}{1}
		\arrow[swap]{l}{D}
		\arrow[swap,pos=0.2]{d}{D \cdot H^k}
		\&
		\Li
		\\
		\Li
		\&
		\Li
		\arrow{r}{1}
		\arrow[swap]{l}{D}
		\&
		\Li
		\end{tikzcd}\hspace*{\braceflex}\right)\hspace*{\midflex}
		&
		\hspace*{\midflex}\left(\hspace*{\braceflex}
		\begin{tikzcd}[ampersand replacement = \&]
		\Lo
		\arrow{d}{D \cdot H^k}
		\\
		\Lo
		\end{tikzcd}\hspace*{\braceflex}\right)
		\mapsto 
		&
		\left(\hspace*{\braceflex}
		\begin{tikzcd}[ampersand replacement = \&,column sep=20pt]
		\Lo
		\arrow[swap]{d}{D \cdot H^k}
		\arrow{r}{S}
		\&
		\Li
		\\
		\Lo
		\arrow{r}{S}
		\&
		\Li
		\end{tikzcd}\hspace*{\braceflex}\right)
		\\
		\left(\hspace*{\braceflex}
		\begin{tikzcd}[ampersand replacement = \&]
		\Li
		\arrow{d}{H^k}
		\\
		\Li
		\end{tikzcd}\hspace*{\braceflex}\right)
		\mapsto 
		&
		\left(\hspace*{\braceflex}
		\begin{tikzcd}[ampersand replacement = \&,column sep=20pt]
		\Li
		\arrow{d}{H^k}
		\&
		\Li
		\arrow{r}{1}
		\arrow[swap]{l}{D}
		\arrow{d}{H^k}
		\&
		\Li
		\arrow{d}{H^k}
		\\
		\Li
		\&
		\Li
		\arrow{r}{1}
		\arrow[swap]{l}{D}
		\&
		\Li
		\end{tikzcd}\hspace*{\braceflex}\right)\hspace*{\midflex}
		&
		\hspace*{\midflex}\left(\hspace*{\braceflex}
		\begin{tikzcd}[ampersand replacement = \&]
		\Lo
		\arrow{d}{H^k}
		\\
		\Lo
		\end{tikzcd}\hspace*{\braceflex}\right)
		\mapsto 
		&
		\left(\hspace*{\braceflex}
		\begin{tikzcd}[ampersand replacement = \&,column sep=20pt]
		\Lo
		\arrow[swap]{d}{H^k}
		\arrow{r}{S}
		\&
		\Li
		\arrow{d}{H^k}
		\\
		\Lo
		\arrow{r}{S}
		\&
		\Li
		\end{tikzcd}\hspace*{\braceflex}\right)
		\\
		\left(\hspace*{\braceflex}
		\begin{tikzcd}[ampersand replacement = \&]
		\Li
		\arrow{d}{S \cdot H^k}
		\\
		\Lo
		\end{tikzcd}\hspace*{\braceflex}\right)
		\mapsto 
		&
		\left(\hspace*{\braceflex}
		\begin{tikzcd}[ampersand replacement = \&,column sep=20pt]
		\Li
		\arrow[out=-90,in=135,pos=0.3,swap]{drr}{H^k}
		\&
		\Li
		\arrow[pos=0.2,swap]{d}{S \cdot H^k}
		\arrow{r}{1}
		\arrow[swap]{l}{D}
		\&
		\Li
		\arrow{d}{H^{k+1}}
		\\
		\&
		\Lo
		\arrow{r}{S}
		\&
		\Li
		\end{tikzcd}\hspace*{\braceflex}\right)\hspace*{\midflex}
		&
		\hspace*{\midflex}\left(\hspace*{\braceflex}
		\begin{tikzcd}[ampersand replacement = \&]
		\Lo
		\arrow{d}{S \cdot H^k}
		\\
		\Li
		\end{tikzcd}\hspace*{\braceflex}\right)
		\mapsto 
		&
		\left(\hspace*{\braceflex}
		\begin{tikzcd}[ampersand replacement = \&,column sep=20pt]
		\&
		\Lo
		\arrow{r}{S}
		\arrow[swap]{d}{S \cdot H^k}
		\&
		\Li
		\arrow{d}{H^k}
		\\
		\Li
		\&
		\Li
		\arrow{r}{1}
		\arrow[swap]{l}{D}
		\&
		\Li		
		\end{tikzcd}\hspace*{\braceflex}\right)
		\end{align*}
		\caption{The action of the half Dehn twist \( \tau \) on the morphisms in \( \Cob_{/l} \)}\label{fig:BNTwisting}
	\end{subfigure}
\begin{subfigure}{\textwidth}
		\[ 
		\begin{tikzcd}[column sep=10pt,row sep=50pt,scale=2]
		&
		\GGdzh{\Lo \Lo}{0}{0}{0}
		\arrow{rr}[description]{(-\vert S)}
		\arrow[bend left,pos=0.75]{dr}[near end,description]{(S\cdot H^k\vert S\cdot H^k)}
		\arrow[in=-155,out=155,looseness=5]{rl}[description]{\substack{(D \cdot H^k \vert D \cdot H^k) \\ (H^k \vert H^k)}}
		&&
		\GGdzh{\Lo \Li}{-\frac{1}{2}}{+1}{1}
		\arrow[pos=0.7,bend left=20]{dr}[near end,description]{(S \cdot H^k\vert H^k)}
		\arrow[in=25,out=-25,looseness=4]{lr}[description]{(H^k \vert H^k)}
		\\
		\GGdzh{\Li \Li}{0}{+2}{1}
		\arrow[in=-90,out=-90,looseness=0.4]{rrrr}[description]{(D \cdot H^k\vert H^k)}
		\arrow[in=-155,out=155,looseness=5]{rl}[description]{\substack{(D\cdot H^k \vert H^{k+1})\\ (H^k \vert H^k)}}
		\arrow{urrr}[pos=0.15,description]{(S \cdot H^k\vert H^k)}
		&&
		\GGdzh{\Li \Li}{0}{0}{0}
		\arrow[dashed]{rr}[description]{(-\vert 1)}
		\arrow{ll}[description]{(-\vert D)}
		\arrow[pos=0.7,bend left]{ul}[near end,description]{\!\!(S \cdot H^k\vert S \cdot H^k)\!\!}
		\arrow[in=-45,out=-135,looseness=3]{lr}[description]{\substack{(D \cdot H^k\vert D \cdot H^k) + (H^k \vert H^k)}}
		&&
		\GGdzh{\Li \Li}{-1}{0}{1}
		\arrow[pos=0.7,bend left]{ul}[near end,description]{\!\!(S \cdot H^k\vert H^{k+1})\!\!}
		\arrow[in=25,out=-25,looseness=5]{lr}[description]{(H^k \vert H^k)}
		\end{tikzcd}
		\]
		\vspace{-0.5cm}
		\caption{The bimodule \( \BNTwistingTildeCobCob \), summarizing the actions of \( \tau \) from (a)}\label{fig:BNTwistingPrime}
	\end{subfigure}
	\begin{subfigure}{\textwidth}
		\[ 
		\begin{tikzcd}[column sep=30pt,row sep=25pt]
		\GGdzh{\Lo \Lo}{0}{0}{0}
		\arrow[bend right=6]{rr}[description]{(-\vert S)}
		\arrow[in=-155,out=155,looseness=5]{rl}[description]{\substack{(D \cdot H^k \vert D \cdot H^k) \\ (H^k \vert H^k)}}
		&&
		\GGdzh{\Lo \Li}{-\frac{1}{2}}{+1}{1}
		\arrow[bend right=6]{ll}[description]{(S\cdot H^k,S \cdot H^n\vert S \cdot H^{n+k})}
		\arrow[in=0,out=-90,pos=0.4]{dl}[description]{(S \cdot H^k \vert D \cdot H^k )}
		\arrow[in=+25,out=-25,looseness=4]{lr}[description]{ (H^k \vert H^k)}
		\\
		&
		\GGdzh{\Li \Li}{0}{+2}{1}
		\arrow[out=180,in=-90,pos=0.7]{lu}[description]{(S \cdot H^k ,D\cdot H^n \vert S\cdot H^{k+n})}
		\arrow[pos=0.5]{ru}[description]{(S\cdot H^k \vert H^k)}
		\arrow[in=-25,out=-155,looseness=2]{rl}[description]{(H^k \vert H^k)+(D \cdot H^k \vert SS \cdot H^k )}
		\end{tikzcd}
		\]
		\caption{The bimodule \( \BNTwistingCobCob \), obtained from \( \BNTwistingTildeCobCob \) by canceling the dashed arrow }\label{fig:BNTwistingAD:embedded}
	\end{subfigure}
	\caption{Computations for the proof of Proposition~\ref{prop:AddingASingleCrossing}. In both (b) and (c), a label containing the index \( k \) should be understood as the infinite sum of such labels where \( k\geq0 \). Also, to obtain the correct absolute grading, multiply the two bimodules by \(\delta^{\frac{1}{2}n_+}h^{-n_-}\). }\label{fig:BNTwistingMor}
\end{figure}
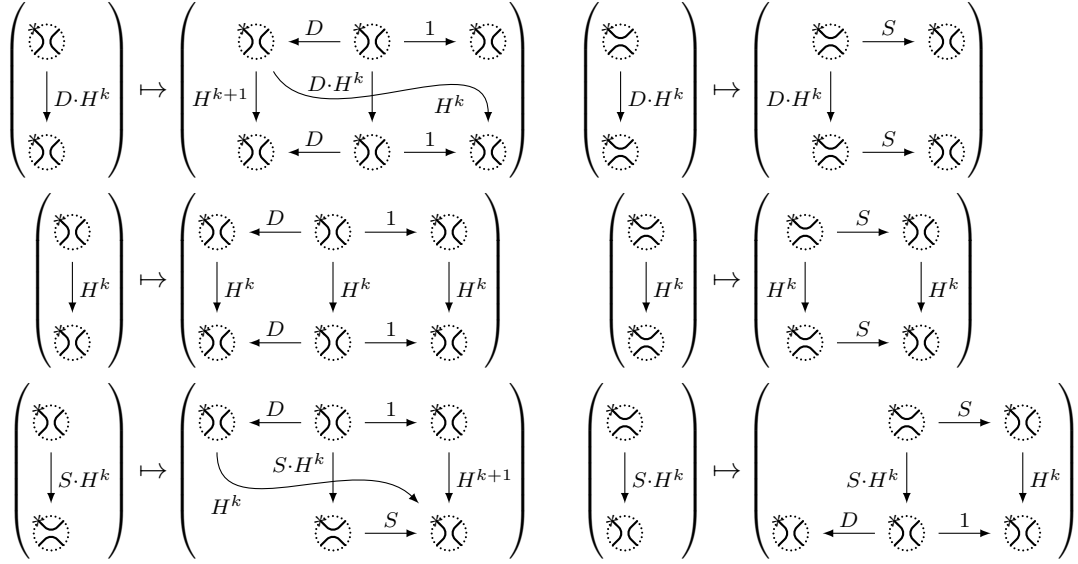
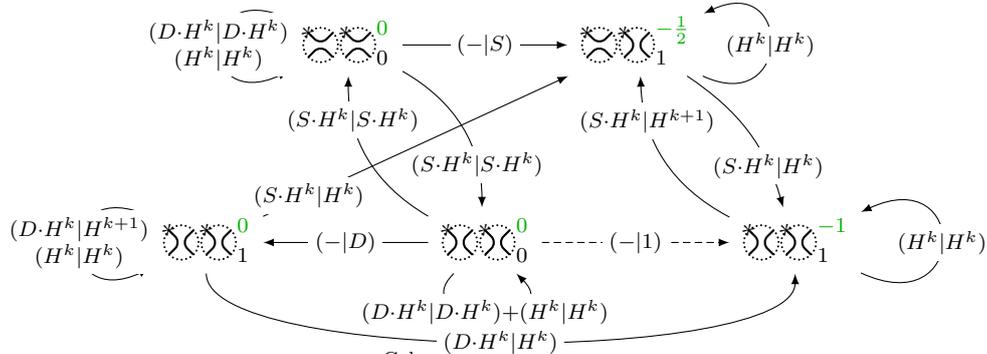
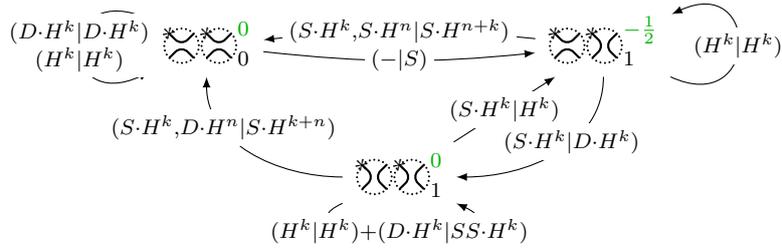

\begin{proof}
	We may compute \( \KhTl{\tau T} \) in steps: (1) resolve all crossings of \( \tau T \) except the new one; (2) deloop; (3) resolve the remaining crossing; (4) deloop once more if necessary. 
	Steps (1) and (2) essentially compute \( \KhTl{T} \). The complex after step (3) can be computed from \( \KhTl{T} \) by gluing the right hand side of \( \KhTl{T} \) to the left of the complex 
	\[ \KhTl{\CrossingR}=\delta^{\frac{1}{2}n_+}h^{-n_-}\left[
	\begin{tikzcd}
	\GGdzh{\No}{0}{0}{0}
	\arrow{r}{\Nol}
	&
	\GGdzh{\Ni}{-\frac{1}{2}}{+1}{1}
	\end{tikzcd}
	\right]\]
	where \( (n_+,n_-) \) is either \((1,0)\) or \((0,1)\), depending on whether the remaining crossing is positive or negative. (This gluing process is explained in  \cite[Section~5]{BarNatanKhT}.) We will describe the gluing \( \KhTl{T} \leftrightarrow \KhTl{\CrossingR}\) in terms of box-tensoring with a certain type AD structure.

	On objects, the gluing is easy to describe. After steps (3) and (4), where we use the first delooping isomorphism from Remark~\ref{obs:delooping}, we have replaced the objects in \( \KhTl{T} \) as follows:
	\[ 
	\GGdzh{\Li}{0}{0}{0}
	\mapsto 
	\delta^{\frac{1}{2}n_+}h^{-n_-}
	\left[
	\begin{tikzcd}[column sep=15pt]
	\GGdzh{\Li}{0}{+2}{1}
	&
	\GGdzh{\Li}{0}{0}{0}
	\arrow{r}{1}
	\arrow[swap]{l}{\LiDotR}
	&
	\GGdzh{\Li}{-1}{0}{1}
	\end{tikzcd}\right]
	\qquad
	\GGdzh{\Lo}{0}{0}{0}
	\mapsto 
	\delta^{\frac{1}{2}n_+}h^{-n_-}
	\left[
	\begin{tikzcd}[column sep=15pt]
	\GGdzh{\Lo}{0}{0}{0}
	\arrow{r}{\Lol}
	&
	\GGdzh{\Li}{-\frac{1}{2}}{+1}{1}
	\end{tikzcd}\right]
	\]
	Similarly, we can compute the action on morphisms; this is done in Figure~\ref{fig:BNTwisting}. Note that we abuse notation by writing \( D \) and \( S \) for the images of these elements under the embedding \( \omega_1 \) from Theorem~\ref{thm:OmegaFullyFaithful}. These actions on objects and morphisms are summarized in the type AD structure \( \BNTwistingTildeCobCob \) in Figure~\ref{fig:BNTwistingPrime}, where we use notation $\Cob_{/l}$ for $\End_{/l}(\Li\oplus\Lo)$. After canceling the identity component, we obtain the type AD structure \( \BNTwistingCobCob \) 
	in Figure~\ref{fig:BNTwistingPrime}. By construction:
	\[
	 \KhTl{\tau T}
	\simeq
	\KhTl{T}^{\Cob_{/l}}\boxtimes\BNTwistingCobCob
	\]
	We can rewrite this expression as follows:
	\[
	\KhTl{T}^{\Cob_{/l}}\boxtimes\BNTwistingCobCob
	\overset{(a)}{\simeq}
	\Omega_1(\DD(T)^{\BNAlgH})\boxtimes\BNTwistingCobCob 
	\overset{(b)}{\simeq}
	\DD(T)^{\BNAlgH}\boxtimes\BNTwistingBCob
	\overset{(c)}{\simeq}
	\Omega_1\left(\DD(T)^{\BNAlgH}\boxtimes\BNTwisting \right)
	\]
	Here, \( \BNTwistingBCob \) is the image of \( \BNTwistingCobCob \) under the functor induced by the embedding \( \omega_1 \) from Theorem~\ref{thm:OmegaFullyFaithful}; analogously \( \BNTwisting \) is obtained from  \( \BNTwistingBCob \). (a) and (c) follow from the definition of \( \DD(T) \), and (b) follows from the Pairing Adjunction~\cite[Theorem~1.21]{pqMod}. Finally, we apply \( \Omega_1^{-1} \) to obtain the desired identity
	\( \DD(\tau T)^{\BNAlgH} \simeq \DD(T)\boxtimes\BNTwisting. \) 
\end{proof}

\begin{figure}[t]
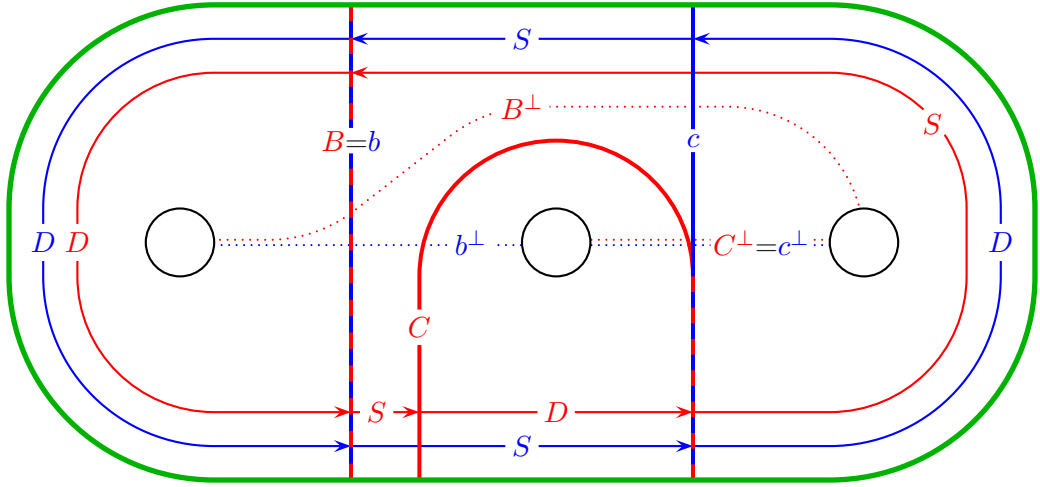

	\centering
	$\MCGactionArcs$
	\caption{The action of the Dehn twist \( \tau \) on the parameterizations of the fixed 3-punctured disc}  \label{fig:MCGactionArcs}
\end{figure}

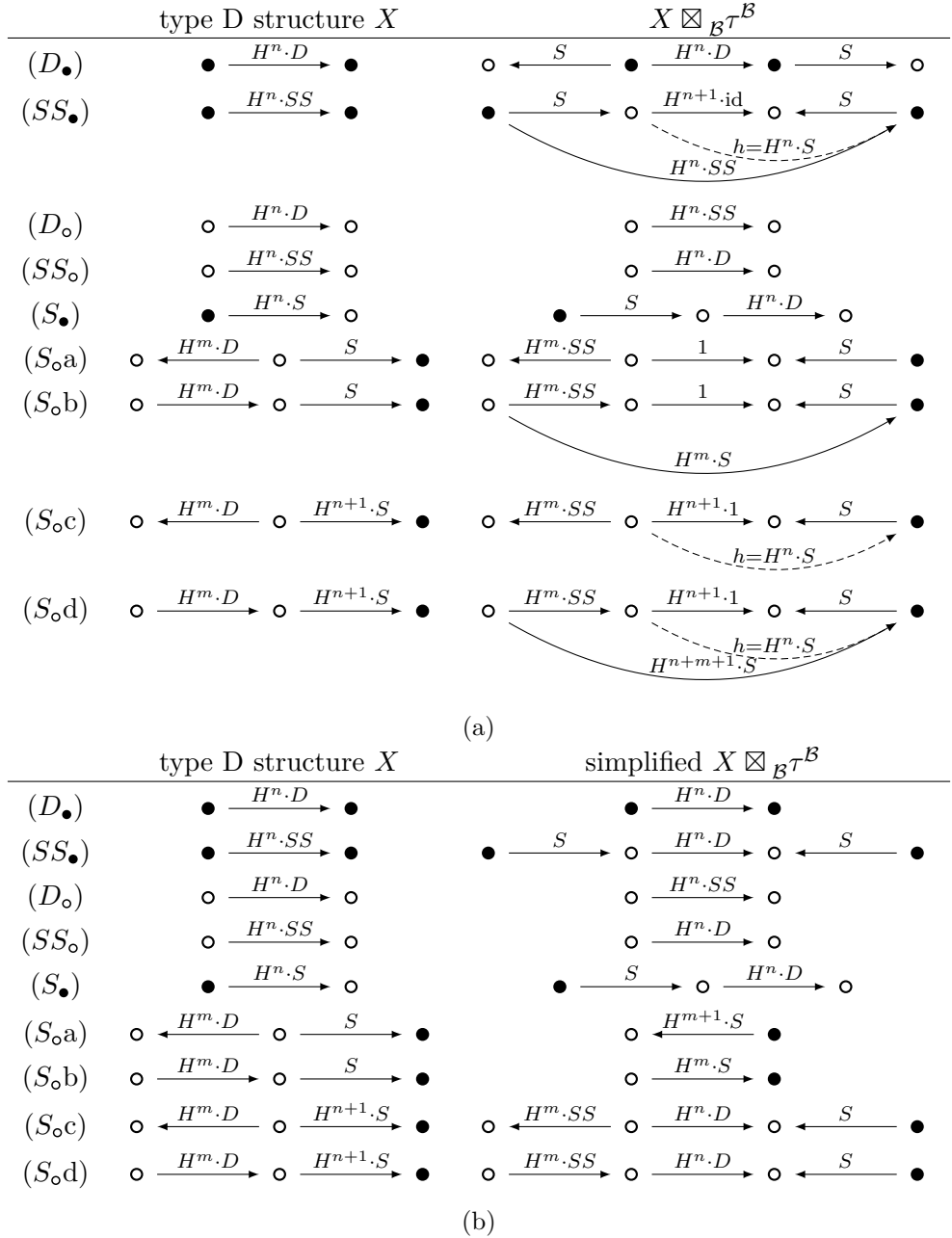
\begin{figure}[t]
	\begin{subfigure}{\textwidth}
		\centering
	\begin{tabular}{ccc}
		&
		type~D structure \( X \) 
		&
		\( X\boxtimes\BNTwisting \)
		\\
		\hline
		(\( \DotcobB \))&
		\( 
		\begin{tikzcd}[column sep=40pt]
		\DotB
		\arrow{r}{H^n\cdot D}
		&
		\DotB
		\end{tikzcd}
		 \)
		&
		\( 
		\begin{tikzcd}[column sep=40pt]
		\DotC
		&
		\DotB
		\arrow{r}{H^n\cdot D}
		\arrow[swap]{l}{S}
		&
		\DotB
		\arrow{r}{S}
		&
		\DotC
		\end{tikzcd}
		 \)
		\\
		(\( S\SaddleCB \))&
		\( 
		\begin{tikzcd}[column sep=40pt]
		\DotB
		\arrow{r}{H^n\cdot SS}
		&
		\DotB
		\end{tikzcd}
		 \)
		&
		\( 
		\begin{tikzcd}[column sep=40pt]
		\DotB
		\arrow{r}{S}
		\arrow[bend right]{rrr}{H^n\cdot SS}
		&
		\DotC
		\arrow{r}{H^{n+1}\cdot \id}
		\arrow[bend right,dashed]{rr}{h=H^n\cdot S}
		&
		\DotC
		&
		\DotB
		\arrow[swap]{l}{S}
		\end{tikzcd}
		 \)
		\\
		(\( \DotcobC \))&
		\( 
		\begin{tikzcd}[column sep=40pt]
		\DotC
		\arrow{r}{H^n\cdot D}
		&
		\DotC
		\end{tikzcd}
		 \)
		&
		\( 
		\begin{tikzcd}[column sep=40pt]
		\DotC
		\arrow{r}{H^{n}\cdot SS}
		&
		\DotC
		\end{tikzcd}
		 \)
		\\
		(\( S\SaddleBC \))&
		\( 
		\begin{tikzcd}[column sep=40pt]
		\DotC
		\arrow{r}{H^n\cdot SS}
		&
		\DotC
		\end{tikzcd}
		 \)
		&
		\( 
		\begin{tikzcd}[column sep=40pt]
		\DotC
		\arrow{r}{H^{n}\cdot D}
		&
		\DotC
		\end{tikzcd}
		 \)
		\\
		(\( \SaddleCB \))&
		\( 
		\begin{tikzcd}[column sep=40pt]
		\DotB
		\arrow{r}{H^n\cdot S}
		&
		\DotC
		\end{tikzcd}
		 \)
		&
		\( 
		\begin{tikzcd}[column sep=40pt]
		\DotB
		\arrow{r}{S}
		&
		\DotC
		\arrow{r}{H^{n}\cdot D}
		&
		\DotC
		\end{tikzcd}
		 \)
		\\
		(\( \SaddleBC \)a)&
		\( 
		\begin{tikzcd}[column sep=40pt]
		\DotC
		&
		\DotC
		\arrow{r}{S}
		\arrow[swap]{l}{H^m\cdot D}
		&
		\DotB
		\end{tikzcd}
		 \)
		&
		\( 
		\begin{tikzcd}[column sep=40pt]
		\DotC
		&
		\DotC
		\arrow[swap]{l}{H^m\cdot SS}
		\arrow{r}{1}
		&
		\DotC
		&
		\DotB
		\arrow[swap]{l}{S}
		\end{tikzcd}
		 \)
		\\
		(\( \SaddleBC \)b)&
		\( 
		\begin{tikzcd}[column sep=40pt]
		\DotC
		\arrow{r}{H^m\cdot D}
		&
		\DotC
		\arrow{r}{S}
		&
		\DotB
		\end{tikzcd}
		 \)
		&
		\( 
		\begin{tikzcd}[column sep=40pt]
		\DotC
		\arrow{r}{H^m\cdot SS}
		\arrow[bend right]{rrr}{H^m\cdot S}
		&
		\DotC
		\arrow{r}{1}
		&
		\DotC
		&
		\DotB
		\arrow[swap]{l}{S}
		\end{tikzcd}
		 \)
		\\
		(\( \SaddleBC \)c)&
		\( 
		\begin{tikzcd}[column sep=40pt]
		\DotC
		&
		\DotC
		\arrow{r}{H^{n+1}\cdot S}
		\arrow[swap]{l}{H^m\cdot D}
		&
		\DotB
		\end{tikzcd}
		 \)
		&
		\( 
		\begin{tikzcd}[column sep=40pt]
		\DotC
		&
		\DotC
		\arrow[swap]{l}{H^m\cdot SS}
		\arrow{r}{H^{n+1}\cdot 1}
		\arrow[bend right,dashed]{rr}{h=H^n\cdot S}
		&
		\DotC
		&
		\DotB
		\arrow[swap]{l}{S}
		\end{tikzcd}
		 \)
		\\
		(\( \SaddleBC \)d)&
		\( 
		\begin{tikzcd}[column sep=40pt]
		\DotC
		\arrow{r}{H^m\cdot D}
		&
		\DotC
		\arrow{r}{H^{n+1}\cdot S}
		&
		\DotB
		\end{tikzcd}
		 \)
		&
		\( 
		\begin{tikzcd}[column sep=40pt]
		\DotC
		\arrow{r}{H^m\cdot SS}
		\arrow[bend right]{rrr}{H^{n+m+1}\cdot S}
		&
		\DotC
		\arrow{r}{H^{n+1}\cdot 1}
		\arrow[bend right,dashed]{rr}{h=H^n\cdot S}
		&
		\DotC
		&
		\DotB
		\arrow[swap]{l}{S}
		\end{tikzcd}
		 \)
	\end{tabular}
	\caption{}\label{tab:MCGactionArcsPuzzlePieces}
	\end{subfigure}
	\begin{subfigure}{\textwidth}
		\centering
	\begin{tabular}{ccc}
		&
		type~D structure \( X \) 
		&
		simplified \( X\boxtimes\BNTwisting \)
		\\
		\hline
		(\( \DotcobB \))&
		\( 
		\begin{tikzcd}[column sep=40pt]
		\DotB
		\arrow{r}{H^n\cdot D}
		&
		\DotB
		\end{tikzcd}
		 \)
		&
		\( 
		\begin{tikzcd}[column sep=40pt]
		\DotB
		\arrow{r}{H^n\cdot D}
		&
		\DotB
		\end{tikzcd}
		 \)
		\\
		(\( S\SaddleCB \))&
		\( 
		\begin{tikzcd}[column sep=40pt]
		\DotB
		\arrow{r}{H^n\cdot SS}
		&
		\DotB
		\end{tikzcd}
		 \)
		&
		\( 
		\begin{tikzcd}[column sep=40pt]
		\DotB
		\arrow{r}{S}
		&
		\DotC
		\arrow{r}{H^{n}\cdot D}
		&
		\DotC
		&
		\DotB
		\arrow[swap]{l}{S}
		\end{tikzcd}
		 \)
		\\
		(\( \DotcobC \))&
		\( 
		\begin{tikzcd}[column sep=40pt]
		\DotC
		\arrow{r}{H^n\cdot D}
		&
		\DotC
		\end{tikzcd}
		 \)
		&
		\( 
		\begin{tikzcd}[column sep=40pt]
		\DotC
		\arrow{r}{H^{n}\cdot SS}
		&
		\DotC
		\end{tikzcd}
		 \)
		\\
		(\( S\SaddleBC \))&
		\( 
		\begin{tikzcd}[column sep=40pt]
		\DotC
		\arrow{r}{H^n\cdot SS}
		&
		\DotC
		\end{tikzcd}
		 \)
		&
		\( 
		\begin{tikzcd}[column sep=40pt]
		\DotC
		\arrow{r}{H^{n}\cdot D}
		&
		\DotC
		\end{tikzcd}
		 \)
		\\
		(\( \SaddleCB \))&
		\( 
		\begin{tikzcd}[column sep=40pt]
		\DotB
		\arrow{r}{H^n\cdot S}
		&
		\DotC
		\end{tikzcd}
		 \)
		&
		\( 
		\begin{tikzcd}[column sep=40pt]
		\DotB
		\arrow{r}{S}
		&
		\DotC
		\arrow{r}{H^{n}\cdot D}
		&
		\DotC
		\end{tikzcd}
		 \)
		\\
		(\( \SaddleBC \)a)&
		\( 
		\begin{tikzcd}[column sep=40pt]
		\DotC
		&
		\DotC
		\arrow{r}{S}
		\arrow[swap]{l}{H^m\cdot D}
		&
		\DotB
		\end{tikzcd}
		 \)
		&
		\( 
		\begin{tikzcd}[column sep=40pt]
		\DotC
		&
		\DotB
		\arrow[swap]{l}{H^{m+1}\cdot S}
		\end{tikzcd}
		 \)
		\\
		(\( \SaddleBC \)b)&
		\( 
		\begin{tikzcd}[column sep=40pt]
		\DotC
		\arrow{r}{H^m\cdot D}
		&
		\DotC
		\arrow{r}{S}
		&
		\DotB
		\end{tikzcd}
		 \)
		&
		\( 
		\begin{tikzcd}[column sep=40pt]
		\DotC
		\arrow{r}{H^m\cdot S}
		&
		\DotB
		\end{tikzcd}
		 \)
		\\
		(\( \SaddleBC \)c)&
		\( 
		\begin{tikzcd}[column sep=40pt]
		\DotC
		&
		\DotC
		\arrow{r}{H^{n+1}\cdot S}
		\arrow[swap]{l}{H^m\cdot D}
		&
		\DotB
		\end{tikzcd}
		 \)
		&
		\( 
		\begin{tikzcd}[column sep=40pt]
		\DotC
		&
		\DotC
		\arrow[swap]{l}{H^m\cdot SS}
		\arrow{r}{H^{n}\cdot D}
		&
		\DotC
		&
		\DotB
		\arrow[swap]{l}{S}
		\end{tikzcd}
		 \)
		\\
		(\( \SaddleBC \)d)&
		\( 
		\begin{tikzcd}[column sep=40pt]
		\DotC
		\arrow{r}{H^m\cdot D}
		&
		\DotC
		\arrow{r}{H^{n+1}\cdot S}
		&
		\DotB
		\end{tikzcd}
		 \)
		&
		\( 
		\begin{tikzcd}[column sep=40pt]
		\DotC
		\arrow{r}{H^m\cdot SS}
		&
		\DotC
		\arrow{r}{H^{n}\cdot D}
		&
		\DotC
		&
		\DotB
		\arrow[swap]{l}{S}
		\end{tikzcd}
		 \)
	\end{tabular}
	\caption{}\label{tab:MCGactionArcsPuzzlePiecesSimplified}
	\end{subfigure}
	\caption{Computations for the proof of Theorem~\ref{thm:MCGaction}.}\label{fig:MCGbimodule}
\end{figure}

\begin{figure}[t]
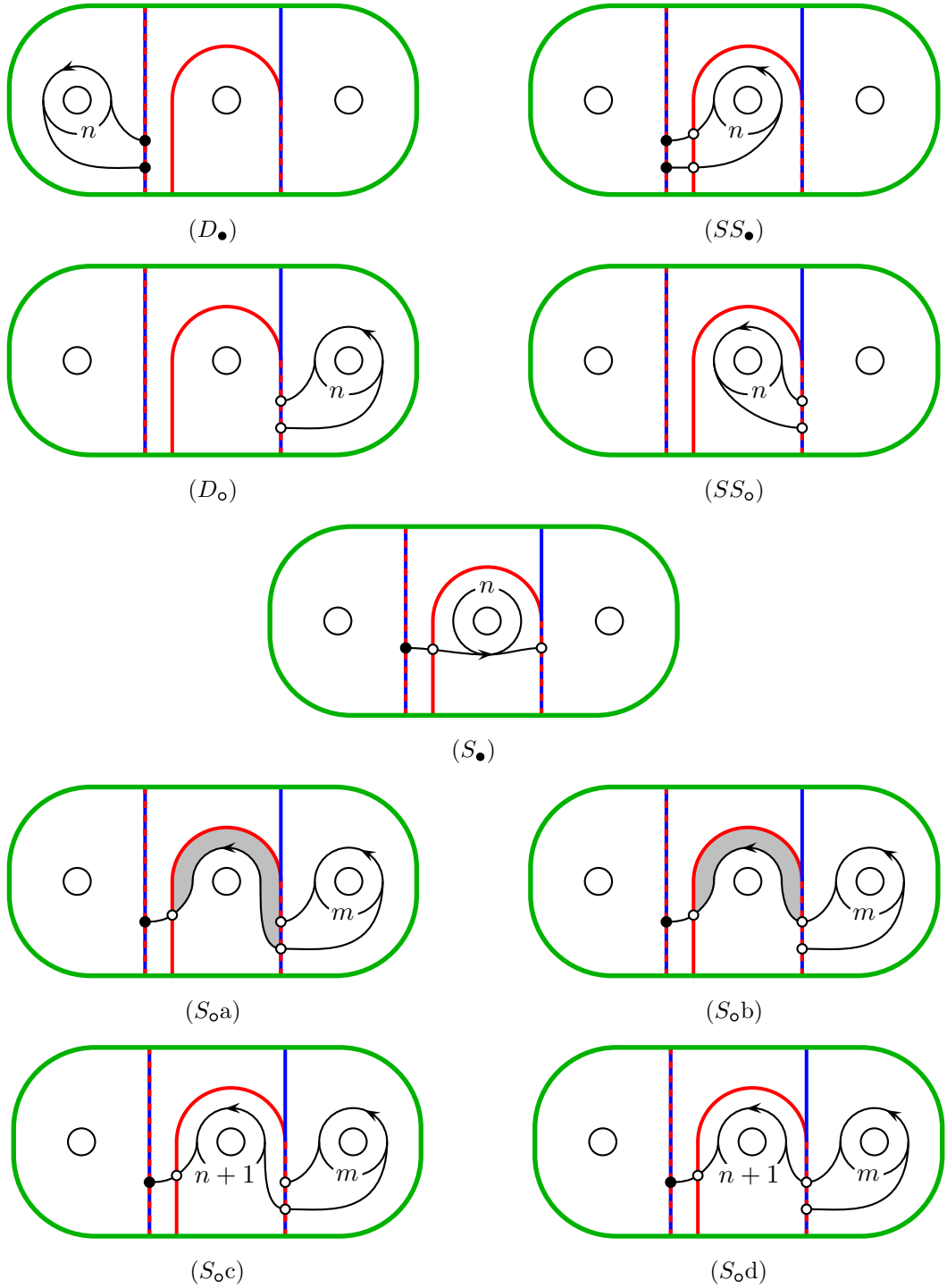

	\centering
	\captionsetup[subfigure]{labelformat=empty}
	\begin{subfigure}{0.45\textwidth}
		\centering
		$\MCGactionArcsBBHnD$
		\caption{(\( \DotcobB \))}
	\end{subfigure}
	\quad
	\begin{subfigure}{0.45\textwidth}
		\centering
		$\MCGactionArcsBBHnSS$
		\caption{(\( S\SaddleCB \))}
	\end{subfigure}
	\medskip\\
	\begin{subfigure}{0.45\textwidth}
		\centering
		$\MCGactionArcsCCHnD$
		\caption{(\( \DotcobC \))}
	\end{subfigure}
	\quad
	\begin{subfigure}{0.45\textwidth}
		\centering
		$\MCGactionArcsCCHnSS$
		\caption{(\( S\SaddleBC \))}
	\end{subfigure}
	\medskip\\
	\begin{subfigure}{0.45\textwidth}
		\centering
		$\MCGactionArcsBCHnS$
		\caption{(\( \SaddleCB \))}
	\end{subfigure}
	\medskip\\
	\begin{subfigure}{0.45\textwidth}
		\centering
		$\MCGactionArcsCBHSD$
		\caption{(\( \SaddleBC \)a)}
	\end{subfigure}
	\quad
	\begin{subfigure}{0.45\textwidth}
		\centering
		$\MCGactionArcsCBHDS$
		\caption{(\( \SaddleBC \)b)}
	\end{subfigure}
	\medskip\\
	\begin{subfigure}{0.45\textwidth}
		\centering
		$\MCGactionArcsCBHnmSD$
		\caption{(\( \SaddleBC \)c)}
	\end{subfigure}
	\quad
	\begin{subfigure}{0.45\textwidth}
		\centering
		$\MCGactionArcsCBHnmDS$
		\caption{(\( \SaddleBC \)d)}
	\end{subfigure}
	\caption{The action of the Dehn twist \( \tau \) on the morphisms in \( \BNAlgH \) (\( m,n\geq0 \))
	}\label{fig:MCGactionArcsPuzzlePieces}
\end{figure}

\begin{proof}[Proof of Theorem~\ref{thm:MCGaction} for \( \rho=\tau \)]	
	Rather than adding a twist to the tangle $T$ and keeping the parameterization the same, we keep the tangle \( T \) fixed and change the parameterization on $\FourPuncturedSphere = \partial D^3 \smallsetminus \partial T$ instead. For this, just as in Section~\ref{sec:classification}, we are representing the 4-punctured sphere $\FourPuncturedSphere$ (Figure~\ref{fig:quiverFuk:surface:dual}) as a 3-punctured disc (Figure~\ref{fig:MCGactionArcs}), equipped with the arc system \( \{\textcolor{blue}{b},\textcolor{blue}{c}\} \) and the dual arc system given by \( \{\textcolor{blue}{b^\perp},\textcolor{blue}{c^\perp}\} \). The new arc system and its dual are given by \( \{\textcolor{red}{B},\textcolor{red}{C}\} \)  and  \( \{\textcolor{red}{B^\perp},\textcolor{red}{C^\perp}\} \), respectively, as shown in Figure~\ref{fig:MCGactionArcs}. One can easily verify that this action on the parameterization has the same effect as applying the Dehn twist \( \tau \) to the tangle \( T \). 
	
	We may now take type~D structure \( \DD(T)^{\BNAlgH} \), find a homotopic type~D structure in the image of \(\Pi\) (Theorem~\ref{thm:EverythingIsLoopTypeUpToLocalSystems}), and draw it as an immersed curve $\BNr(T)$, such that generators in idempotents \( \DotB \) and \( \DotC \) correspond to intersection points with arcs \( \textcolor{blue}{b} \) and \( \textcolor{blue}{c} \), respectively, and differentials correspond to curve segments in the corresponding faces of the 3-punctured disc. 
	
	As we will show, it now suffices to consider the action of the type AD structure \( \BNTwisting \) on simple type~D structures over \( \BNAlgH \), namely, those listed on the left of Figure~\ref{fig:MCGbimodule}, and compare how the corresponding elementary curve segments, shown in Figure~\ref{fig:MCGactionArcsPuzzlePieces}, interact with the original and the new arc system on the fixed 3-punctured disc. 

  Let us consider an arbitrary type~D structure \( X^{\BNAlgH} \) and put it in curve position. For simplicity, we first consider the case when the local systems are trivial. Then, the type~D structure is represented by a collection of cyclic or linear graphs whose arrows are alternatingly labelled by the algebra elements \( D \) multiplied by some power of \( H \) and algebra elements \( S \) or \( SS \) multiplied by some power of \( H \). In Figure~\ref{tab:MCGactionArcsPuzzlePieces}, we consider the action of the bimodule \( \BNTwisting \) on all six different types of arrows, splitting the last case
     \(
     \begin{tikzcd}[column sep=30pt]
	 	\DotC
	 	\arrow{r}{H^{n}\cdot S}
	 	&
	 	\DotB
	 \end{tikzcd}
	 \)
	into four separate subcases. Note that for the computation of the right column the type AD structure $\BNTwisting$ is written with respect to a certain basis, so in order to compute the action on an arrow labelled \( H^n\cdot SS \), we use the identity 
	\[ H^n\cdot SS=H^{n+1}\cdot \id+H^n \cdot D.\] The subdivision of the last case is necessary because of one of the two higher actions of the type AD structure \( \BNTwisting \), namely \( (S\cdot H^k,D\cdot H^n\vert S\cdot H^{k+n}) \). The other higher action \( (S\cdot H^k,S\cdot H^n\vert S\cdot H^{n+k}) \) does not contribute any differential in the \( \boxtimes \)-tensor product, since there are no two consecutive arrows $S\cdot H^n$ and $S\cdot H^k$ in our type~D structures. 
	So, to obtain \( X\boxtimes\BNTwisting \), we can now piece the computed type~D structures from the right column of Figure~\ref{tab:MCGactionArcsPuzzlePieces} together, along the vertices corresponding to the generator \( \DotC\DotC \) in \( \BNTwisting \) and along the arrows coming from the differential \( (-\vert S) \) in \( \BNTwisting \). Unless \( X \) is only built out of arrows \( (\DotcobB) \), \( (\DotcobC) \), \( (S\SaddleBC) \) and \( (\SaddleCB) \), \( X\boxtimes\BNTwisting \) is not in curve position. However, this can be easily fixed by doing certain homotopies, indicated by the dashed arrows in the right column of Figure~\ref{tab:MCGactionArcsPuzzlePieces}. Using the fact that arrows in \( X \) are alternatingly labelled by the algebra elements \( D \) multiplied by some power of \( H \) and algebra elements \( S \) or \( SS \) multiplied by some power of \( H \), we can see that these homotopies have no effect on the rest of the type~D structure \( X\boxtimes\BNTwisting \). Finally, we do some cancellations in the two cases (\( \SaddleBC a \)) and (\( \SaddleBC b \)). The simplified building blocks are shown in Figure~\ref{tab:MCGactionArcsPuzzlePiecesSimplified}. These can be pieced together along the vertices corresponding to the generators \( \DotC\DotC \) and  \( \DotB\DotB \) in \( \BNTwisting \), noting that in case \( (\DotcobB) \), we have omitted the arrows coming from the differential \( (-\vert S) \) in \( \BNTwisting \). 
	
  It now only remains to compare the building blocks from Figure~\ref{tab:MCGactionArcsPuzzlePiecesSimplified} to Figure~\ref{fig:MCGactionArcsPuzzlePieces} and observe that the shown curve segments with respect to the original (blue) arc system correspond to the type~D structures in the left column of Figure~\ref{tab:MCGactionArcsPuzzlePiecesSimplified}, and with respect to the new (red) arc system they correspond precisely to the type~D structures in the right column of Figure~\ref{tab:MCGactionArcsPuzzlePiecesSimplified}. 
  
  The case when a curve has a non-trivial local system on one of its components can be reduced to the previous case. Since any local system on a non-compact component is equivalent to a trivial local system, we may assume that the component in question is compact. Suppose the component contains no generator \( \DotB \). Then we only have arrows corresponding to the two cases \( (\DotcobC) \) and \( (S\SaddleBC) \). If a component contains a generator \( \DotB \), then
  we see an arrow from case \( (\DotcobB) \). Therefore we can push the local system into one of the three components \( (\DotcobC) \), \( (S\SaddleBC) \), or \( (\DotcobB) \), and in each case it is clear (from Figure~\ref{tab:MCGactionArcsPuzzlePiecesSimplified}) that the local systems before and after applying \( \BNTwisting \) agree. 
  This concludes the proof of naturality of \( \Arc(T) \) for \( \rho=\tau \).
  
  Let us now address the proof of naturality of \( \Eight[n](T) \). First, observe that for any type~D structure \( X^{\BNAlgH} \) one has \( 
  \big[
  \begin{tikzcd}[column sep=30pt]
  X
  \arrow{r}{H^n}
  &
  X
  \end{tikzcd}
  \big]^{\BNAlgH}= X^{\BNAlgH} \boxtimes {}_{\BNAlgH} 
  \big[
  \begin{tikzcd}[column sep=30pt]
  \mathbb{I}
  \arrow{r}{(-|H^n)}
  &
  \mathbb{I}
  \end{tikzcd}
  \big]^{\BNAlgH}\), where \( {}_{\BNAlgH}
  \big[
  \begin{tikzcd}[column sep=30pt]
  \mathbb{I}
  \arrow{r}{(-|H^n)}
  &
  \mathbb{I}
  \end{tikzcd}
  \big]^{\BNAlgH} \) is the mapping cone of the \emph{identity AD bimodule} \( {}_{\BNAlgH}\mathbb{I}^{\BNAlgH} \) (see \cite[Definition~2.2.48]{LOT-bim}). Thus it is sufficient to prove 
  \[ {}_{\BNAlgH}\tau^{\BNAlgH} \boxtimes {}_{\BNAlgH}\big[
  \begin{tikzcd}[column sep=30pt]
  \mathbb{I}
  \arrow{r}{(-|H^n)}
  &
  \mathbb{I}
  \end{tikzcd}
  \big]^{\BNAlgH} \simeq {}_{\BNAlgH}\big[
  \begin{tikzcd}[column sep=30pt]
  \mathbb{I}
  \arrow{r}{(-|H^n)}
  &
  \mathbb{I}
  \end{tikzcd}
  \big]^{\BNAlgH} \boxtimes {}_{\BNAlgH}\tau^{\BNAlgH}\]
  which follows easily from the actions \( (H^k \vert H^k ) \) in \( {}_{\BNAlgH}\tau^{\BNAlgH} \).
\end{proof}

\begin{proof}[Proof of Theorem~\ref{thm:MCGaction} for general \( \rho \)]
    We have shown so far that the theorem is true for \( \rho=\tau \) and all tangles \( T \). Then, we can apply the above statement to the tangle \( T'=\tau^{-1}(T) \) and obtain
	\[ \tau^{-1}(\Arc(T))=\tau^{-1}(\Arc(\tau(T')))=\tau^{-1}(\tau(\Arc(T')))=\Arc(\tau^{-1}(T))\]
	(and similarly for $\Eight[n](T)$),
	so the theorem also holds for the Dehn twist \( \rho=\tau^{-1} \). Next we use a rotation by \( 180^\circ \) about the axis through 
	the North-West and South-East tangle ends (for visualization see Figure~\ref{fig:intro:ArcParametrization})
	to deduce  Theorem~\ref{thm:MCGaction} for \( \rho \) equal to the Dehn twist (and its inverse) in the neighbourhood of the arc \( b^\perp \). Indeed, on the algebra \( \BNAlgH \), this rotation has the effect of switching idempotents. 
	Then, naturally, the corresponding versions of Proposition~\ref{prop:AddingASingleCrossing} and Theorem~\ref{thm:MCGaction} hold. 
	Finally, we use the fact that the two Dehn twists along the arcs \( b^\perp \) and \( c^\perp \) generate the subgroup of the mapping class group of the 4-punctured sphere $\FourPuncturedSphere$, elements of which fix the reduction basepoint $\ast$. 
\end{proof}

\FloatBarrier

\subsection{Identification of the pillowcase with the tangle boundary}\label{sec:pillowcase}
 Hedden, Herald, and Kirk associate with 4-ended tangles \( T \) various curves in the space
\[
\RepVarSphere\coloneqq \{\text{traceless representations } \pi_1(S^2\smallsetminus \text{4 points})\rightarrow SU(2)\}/\!\!\sim
\]
where \emph{traceless} means that the loops around the four punctures are sent to traceless elements, and $\sim$ denotes conjugation \cite{HHK1,HHK2}. The basic idea of the construction is as simple as it is beautiful: the inclusion of the tangle boundary $(S^2\smallsetminus \text{4 points})$ into the tangle complement $B^3\smallsetminus T$ induces a map from
\[
R(B^3,T)\coloneqq \{\text{traceless representations } \pi_1(B^3\smallsetminus T)\rightarrow SU(2)\}/\!\!\sim
\]
to the representation variety $\RepVarSphere$; \emph{traceless} for $R(B^3,T)$ means that meridians of all tangle components are sent to traceless elements. 
This map, in particular its image, is a tangle invariant.
Hedden, Herald, and Kirk identify $\RepVarSphere$ with the pillowcase $P$, the quotient of the 2-dimensional torus by the hyperelliptic involution. This is an orbifold homeomorphic to a 2-dimensional sphere with four singular points.
We show the following:
\begin{proposition}\label{prp:pillowcase}
	Distinguish one of the punctures in the tangle boundary \(\PFPS\) by an asterisk~\(\ast\). Then, up to homotopy, there is a unique identification of \(P\cong \RepVarSphere\) minus the four marked points with the tangle boundary such that \(R(B^3,T)\) agrees with \(\BNr(T;\fieldTwoElements)\) for all pointed rational tangles. Moreover, this identification is compatible with Dehn twists. 
\end{proposition}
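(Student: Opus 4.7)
The plan is to construct an identification by matching the invariants of the two trivial tangles, then leverage naturality under the mapping class group action (Theorem~\ref{thm:MCGaction} and its classical analogue for the representation variety) to propagate the identification to all rational tangles.

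Both $P\smallsetminus\{\text{4 marked points}\}$ and the tangle boundary $\PFPS$ are orientation-preserving homeomorphic to the 4-punctured sphere. After distinguishing one marked point of $P$ to correspond to the asterisk $\ast$ on the tangle boundary, any two such orientation-preserving homeomorphisms between them differ, up to homotopy, by an element of the mapping class group of the pointed 4-punctured sphere. Recall from Hedden--Herald--Kirk~\cite{HHK1,HHK2} that for a rational tangle $T_{p/q}$, the image $R(B^3,T_{p/q})\subseteq P$ is a properly embedded arc whose homotopy class is determined by the slope $p/q$; in particular the arcs $R(B^3,\Li)$ and $R(B^3,\Lo)$ are two distinct properly embedded arcs meeting the marked points in the prescribed way. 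On the other side, by Example~\ref{ex:curves_for_rational_tangles}, $\BNr(\Li;\fieldTwoElements)=\DotCarc$ and $\BNr(\Lo;\fieldTwoElements)=\DotBarc$ are likewise two distinct properly embedded arcs on $\FourPuncturedSphere$.

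Next, I would exhibit a specific pointed identification $\varphi\co P\smallsetminus\{\text{4 marked points}\}\rightarrow \FourPuncturedSphere$ sending $R(B^3,\Li)$ to $\BNr(\Li;\fieldTwoElements)$ and $R(B^3,\Lo)$ to $\BNr(\Lo;\fieldTwoElements)$. This is possible because, once $\ast$ is fixed, specifying the images of two non-homotopic properly embedded arcs that share three endpoints (the three non-special punctures) pins down a pointed homeomorphism of the 4-punctured sphere up to homotopy. For an arbitrary rational tangle $T_{p/q}$ we may write $T_{p/q}=\rho(\Li)$ for some $\rho$ in the mapping class group of the tangle boundary; by naturality of the representation variety one has $R(B^3,T_{p/q})=\rho_P(R(B^3,\Li))$, where $\rho_P$ is the corresponding mapping class on $P$, and by Theorem~\ref{thm:MCGaction} one has $\BNr(T_{p/q};\fieldTwoElements)=\rho(\BNr(\Li;\fieldTwoElements))$. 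Consequently $\varphi$ intertwines these two actions and sends $R(B^3,T_{p/q})$ to $\BNr(T_{p/q};\fieldTwoElements)$ for every rational tangle. Uniqueness follows from the fact that the collection of arcs $\{\BNr(T_{p/q};\fieldTwoElements)\}_{p/q\in\mathbb{Q}\cup\{\infty\}}$ fills out all isotopy classes of essential properly embedded arcs between the three non-special punctures, so any mapping class group element preserving all of them must be trivial. Compatibility with Dehn twists then drops out of the same parallel naturality on the two sides.

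The main obstacle is the concrete verification underlying the setup of $\varphi$: one must check that, under the Hedden--Herald--Kirk parametrization of the pillowcase, the arcs $R(B^3,\Li)$ and $R(B^3,\Lo)$ have endpoints at the right three marked points of $P$ (namely those corresponding to the non-special punctures) and together cut $P\smallsetminus\{\text{4 marked points}\}$ into discs in the same combinatorial way as $\DotCarc\cup\DotBarc$ cuts $\FourPuncturedSphere$. Once this combinatorial matching is in place, the construction of $\varphi$ and the naturality argument proceed as above.
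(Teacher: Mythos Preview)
Your outline is close in spirit to the paper's argument, but there is a genuine gap in the step where you assert that ``$\varphi$ intertwines these two actions.'' You construct $\varphi$ by matching the two trivial-tangle arcs, and this indeed pins down $\varphi$ up to isotopy. But it does not by itself show that the mapping-class-group action on $\RepVarSphere$ (induced by precomposition on representations) corresponds, under $\varphi$, to the geometric mapping-class-group action on $\FourPuncturedSphere$. A priori, the induced homeomorphism $\rho_P$ on $P$ is \emph{some} mapping class of the pillowcase; you must verify it is the one named $\rho$. Your argument becomes circular: you invoke the intertwining to extend the match from the two trivial tangles to all rational tangles, and then say Dehn-twist compatibility ``drops out'' of that match---but the intertwining \emph{is} Dehn-twist compatibility.

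The paper resolves this by explicit computation rather than abstract naturality. It fixes generators $a,b,c$ of $\pi_1(S^2\smallsetminus 4\text{ points})$, writes down a concrete homeomorphism $P=[-1,1]^2/(\pm 1)\to\RepVarSphere$ in quaternion coordinates, computes $R(B^3,\Lo)$ and $R(B^3,\Li)$ as the straight lines $y=1$ and $x=1$ (this is the ``main obstacle'' you correctly flag), and then---crucially---computes the action of each generating Dehn twist $\tau_{12},\tau_{23}$ on representations and checks directly that it agrees with the linear action $\left(\begin{smallmatrix}1&1\\0&1\end{smallmatrix}\right)$ (and its companion) on the torus cover of the pillowcase. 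That second computation is the missing ingredient in your sketch; once it is in hand, the extension to all rational tangles and uniqueness follow exactly as you describe.
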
 
As a result, we may regard $R(B^3,T)$---as well as the other immersed curves from~\cite{HHK1,HHK2,HHHK}---as an invariant of unparametrized pointed 4-ended tangles that live on the tangle boundary, placing them on the same footing as $\BNr(T)$, $\Khr(T)$, and $\Kh(T)$.

\begin{figure}[t]
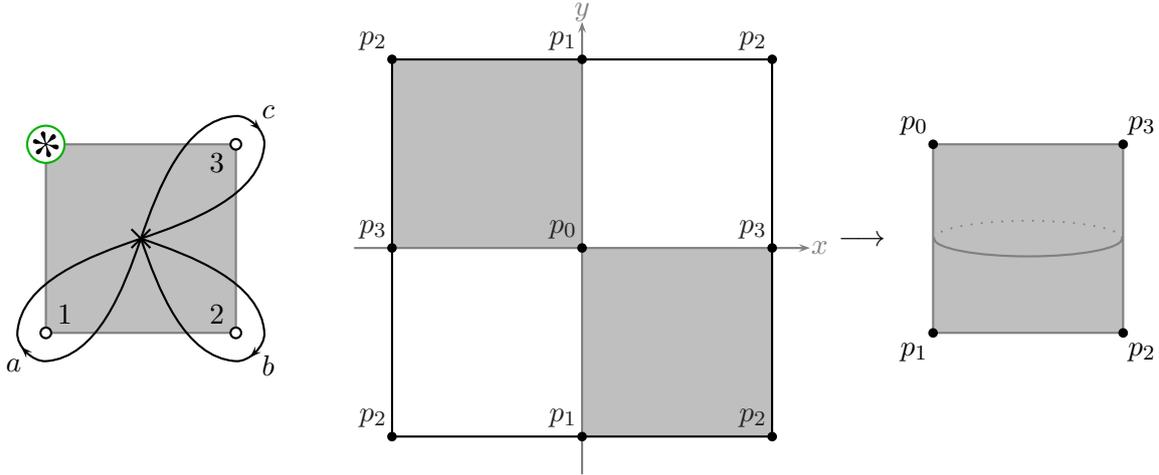

	\centering
	$\PillowcaseAsTangleBoundary
	\qquad
	\PillowcaseAsTorusQuotient\longrightarrow\PillowcaseAsPillowcase$
	\caption{The tangle boundary $\PFPS$ (left) and the pillowcase as the quotient of the torus (right)}
	\label{fig:pillowcase}
\end{figure}

\begin{proof}[Proof of Proposition \ref{prp:pillowcase}]
 Fix generators $a$, $b$, and $c$ for the free group $\pi_1(S^2\smallsetminus \text{4 points})$ as shown in Figure~\ref{fig:pillowcase}. The  map defining the pillowcase as the quotient of the 2-dimensional torus under the hyperelliptic involution is illustrated on the right of Figure~\ref{fig:pillowcase}, where the torus is drawn in terms of its fundamental domain in the universal cover. 
These two observations give rise to an explicit description of the homeomorphism between the pillowcase $P$ and $\RepVarSphere$:
\begin{align*} 
\varphi\co
P = [-1,1]^2/(\pm1)
&\rightarrow
\Big\{\rho\co\langle a, b, c\rangle\rightarrow SU(2)\Big\vert \rho(a),\rho(b),\rho(c)\in su(2)\Big\}\Big/\!\!\sim~ = \RepVarSphere   \\
(x,y)
&\mapsto 
\Big(\rho_{x,y}\co a \mapsto \ii e^{\k\pi y}, b \mapsto \ii, c \mapsto \ii e^{\k\pi x}\Big)
\end{align*}
Here, we are using the usual identification of $SU(2)$ with the unit quaternions, under which elements in the subset of traceless matrices $su(2)$ correspond to the unit quaternions with vanishing real component.
The homeomorphism $\varphi$ differs slightly from the one in~\cite[Proposition~3.1]{HHK1}. However, both homomorphisms share the following desirable property: the four marked points on the pillowcase, that is, the branched points $p_0$, $p_1$, $p_2$, and $p_3$ of the torus cover in Figure~\ref{fig:pillowcase}, correspond to reducible non-central representations, and all other points correspond to irreducible representations. More explicitly:
\begin{align*}
p_0=(0,0) &\leftrightarrow  (a \mapsto \ii, b \mapsto \ii, c \mapsto \ii ) \\
p_1=(0,1) &\leftrightarrow  (a \mapsto -\ii, b \mapsto \ii, c \mapsto \ii ) \\
p_2=(1,1) &\leftrightarrow  (a \mapsto \ii, b \mapsto -\ii, c \mapsto \ii ) \\ 
p_3=(1,0) &\leftrightarrow  (a \mapsto \ii, b \mapsto \ii, c \mapsto -\ii )
\end{align*}
We now compute $R(B^3,\Lo)$. The inclusion of the tangle boundary into the tangle complement induces the quotient map 
on fundamental groups which corresponds to adding the relation $ab=1$. So the image of $R(B^3,\Lo)$ in the pillowcase under the identification $\varphi$ is given by the straight line $y=1$ connecting $p_1$ and $p_2$. Similarly, if we add the relation $bc=1$, we obtain the image of $R(B^3,\Li)$, which is equal to the line $x=1$ connecting $p_2$ and $p_3$. 
So it is natural to identify the pillowcase minus the marked points with the tangle boundary in the following way: $p_0$ corresponds to the special puncture and $p_1$, $p_2$ and $p_3$ correspond to the punctures 1, 2 and 3 in Figure~\ref{fig:pillowcase}, respectively, and the front (back) part of the pillowcase is mapped to the grey (white) part of the tangle boundary. Under this identification, \(R(B^3,T)\) agrees with \(\BNr(T;\fieldTwoElements)\) for the two trivial tangles. 

It now suffices to show that this identification is compatible with the action of the mapping class group of the 4-punctured sphere that fixes the special puncture $\ast$. This mapping class group is generated by the Dehn twists $\tau_{12}$ and $\tau_{23}$ switching the punctures (1 and 2) and (2 and 3), respectively. On the fundamental group $\pi_1(S^2\smallsetminus \text{4 points})=\langle a,b,c\rangle$, $\tau_{12}$ acts as follows:
\[
\tau_{12}\co 
a\mapsto b,~
b\mapsto b^{-1}ab,~
c\mapsto c
\]
Now, on one hand, $\tau_{12}$ acts on $\RepVarSphere$ by precomposition; we obtain
\[
\Big(
\tau_{12}.\rho_{x,y}\co 
a\mapsto \ii,~
b\mapsto \ii^{-1}\ii e^{\k\pi y}\ii,~
c\mapsto \ii e^{\k\pi x}
\Big)
\sim
\Big(
a\mapsto \ii e^{\k\pi y},~
b\mapsto \ii,~
c\mapsto \ii e^{\k\pi (x+y)}
\Big)
\]
by conjugation with the element $e^{\k\pi y/2}$.
On the other hand, the action of $\tau_{12}$ on the tangle boundary is equal to the map induced by the linear transformation 
$
\left(\begin{smallmatrix}
1 & 1 \\
0 & 1
\end{smallmatrix}\right)
$
on the covering torus, so the two actions agree. 
The same argument works for the Dehn twist $\tau_{23}$. 
\end{proof}
Let us compute the invariant $R(B^3,\CrossingLDot)$ to check our conventions for the Dehn twist actions on the representation variety and the pillowcase: indeed, adding the relation $ac=1$ gives the line $y=x-1$, which agrees with $\BNr(\CrossingLDot;\fieldTwoElements)$.

\section{Signs and mutation}\label{sec:mutation}

Khovanov homology behaves differently over \( \fieldTwoElements \) and \( \Q \). This can be seen, for instance, when considering mutant pairs of knots. In this section we explore this phenomenon.

\subsection{Dependence of invariants on the field}\label{subsec:field_dependence}
We first note that  \( \DD(T) \) and its corresponding curve \( \BNr(T) \) depend on the choice of \( \field \).
\begin{example}\label{example:diff_curves}
Figure~\ref{fig:kh-field-dependence} shows the reduced Khovanov homology of the torus knot \( T(4,5) \), along with the differentials of the type~D structure \( \Khr(T(4,5))^{\Rcomm[H]} \), for \( \Rcomm=\fieldTwoElements \) and \( \Rcomm=\Q \). For these computations, we used the program \cite{knotkit} due to Seed, which computes the Bar-Natan spectral sequence. Obtaining the type D structure from the spectral sequence turns out to be easy in this case owing to grading constraints.
\begin{figure}[t]
\centering 
\begin{tikzpicture}[scale=.5]
  \draw[lightgray,step=1] (-0.5,-0.5) grid (11.5,9.5);
  \draw[line width=1pt,->] (-0.5,0) -- (12,0) node[right] {\( h \)};
  \draw[line width=1pt,->] (0,-0.5) -- (0,10) node[above] {\( q \)};
  \draw (0.5,0) node[below] {\scriptsize \( 0 \)};
  \draw (1.5,0) node[below] {\scriptsize \( 1 \)};
  \draw (2.5,0) node[below] {\scriptsize \( 2 \)};
  \draw (3.5,0) node[below] {\scriptsize \( 3 \)};
  \draw (4.5,0) node[below] {\scriptsize \( 4 \)};
  \draw (5.5,0) node[below] {\scriptsize \( 5 \)};
  \draw (6.5,0) node[below] {\scriptsize \( 6 \)};
  \draw (7.5,0) node[below] {\scriptsize \( 7 \)};
  \draw (8.5,0) node[below] {\scriptsize \( 8 \)};
  \draw (9.5,0) node[below] {\scriptsize \( 9 \)};
  \draw (10.5,0) node[below] {\scriptsize \( 10 \)};
  \draw (0,0.5) node[left] {\scriptsize \( 12 \)};
  \draw (0,1.5) node[left] {\scriptsize \( 14 \)};
  \draw (0,2.5) node[left] {\scriptsize \( 16 \)};
  \draw (0,3.5) node[left] {\scriptsize \( 18 \)};
  \draw (0,4.5) node[left] {\scriptsize \( 20 \)};
  \draw (0,5.5) node[left] {\scriptsize \( 22 \)};
  \draw (0,6.5) node[left] {\scriptsize \( 24 \)};
  \draw (0,7.5) node[left] {\scriptsize \( 26 \)};
  \draw (0,8.5) node[left] {\scriptsize \( 28 \)};
  \draw (0.5, 0.5) node {\( \gen \)};
  \draw (2.5, 2.5) node {\( \gen \)};
  \draw (3.5, 3.5) node {\( \gen \)};
  \draw (4.5, 3.5) node {\( \gen \)};
  \draw (5.5, 5.5) node {\( \gen \)};
  \draw (6.5, 4.5) node {\( \gen \)};
  \draw (6.5, 5.5) node {\( \gen \)};
  \draw (7.5, 5.5) node {\( \gen \)};
  \draw (7.5, 6.5) node {\( \gen \)};
  \draw (8.5, 6.5) node {\( \gen \)};
  \draw (9.5, 7.5) node {\( \gen \)};
  \draw (9.5, 8.5) node {\( \gen \)};
  \draw (10.5, 8.5) node {\( \gen \)};
  \draw[->] (2.712, 2.712) -- (3.288, 3.288) node[midway,below,scale=.8]{\( H \)};
  \draw[->] (4.712, 3.712) -- (5.288, 5.288) node[midway,left,scale=.8]{\( H^2 \)};
  \draw[->] (8.712, 6.712) -- (9.288, 8.288) node[midway,left,scale=.8]{\( H^2 \)};
  \draw[->] (6.712, 4.712) -- (7.288, 5.288) node[midway,below,scale=.8]{\( H \)};
  \draw[->] (6.712, 5.712) -- (7.288, 6.288) node[midway,above,scale=.8]{\( H \)};
  \draw[->] (9.712, 7.712) -- (10.288, 8.288) node[midway,below,scale=.8]{\( H \)};
\end{tikzpicture}
\begin{tikzpicture}[scale=.5]
  \draw[lightgray,step=1] (-0.5,-0.5) grid (11.5,9.5);
  \draw[line width=1pt,->] (-0.5,0) -- (12,0) node[right] {\( h \)};
  \draw[line width=1pt,->] (0,-0.5) -- (0,10) node[above] {\( q \)};
  \draw (0.5,0) node[below] {\scriptsize \( 0 \)};
  \draw (1.5,0) node[below] {\scriptsize \( 1 \)};
  \draw (2.5,0) node[below] {\scriptsize \( 2 \)};
  \draw (3.5,0) node[below] {\scriptsize \( 3 \)};
  \draw (4.5,0) node[below] {\scriptsize \( 4 \)};
  \draw (5.5,0) node[below] {\scriptsize \( 5 \)};
  \draw (6.5,0) node[below] {\scriptsize \( 6 \)};
  \draw (7.5,0) node[below] {\scriptsize \( 7 \)};
  \draw (8.5,0) node[below] {\scriptsize \( 8 \)};
  \draw (9.5,0) node[below] {\scriptsize \( 9 \)};
  \draw (10.5,0) node[below] {\scriptsize \( 10 \)};
  \draw (0,0.5) node[left] {\scriptsize \( 12 \)};
  \draw (0,1.5) node[left] {\scriptsize \( 14 \)};
  \draw (0,2.5) node[left] {\scriptsize \( 16 \)};
  \draw (0,3.5) node[left] {\scriptsize \( 18 \)};
  \draw (0,4.5) node[left] {\scriptsize \( 20 \)};
  \draw (0,5.5) node[left] {\scriptsize \( 22 \)};
  \draw (0,6.5) node[left] {\scriptsize \( 24 \)};
  \draw (0,7.5) node[left] {\scriptsize \( 26 \)};
  \draw (0,8.5) node[left] {\scriptsize \( 28 \)};
  \draw (0.5, 0.5) node {\( \gen \)};
  \draw (2.5, 2.5) node {\( \gen \)};
  \draw (3.5, 3.5) node {\( \gen \)};
  \draw (4.5, 3.5) node {\( \gen \)};
  \draw (5.5, 5.5) node {\( \gen \)};
  \draw (6.5, 4.5) node {\( \gen \)};
  \draw (7.5, 6.5) node {\( \gen \)};
  \draw (8.5, 6.5) node {\( \gen \)};
  \draw (9.5, 7.5) node {\( \gen \)};
  \draw[->] (2.712, 2.712) -- (3.288, 3.288) node[midway,below,scale=.8]{\( H \)};
  \draw[->] (8.712, 6.712) -- (9.288, 7.288) node[midway,below,scale=.8]{\( H \)};
  \draw[->] (4.712, 3.712) -- (5.288, 5.288) node[midway,left,scale=.8]{\( H^2 \)};
  \draw[->] (6.712, 4.712) -- (7.288, 6.288) node[midway,left,scale=.8]{\( H^2 \)};
\end{tikzpicture}
\caption{Left: \( \Khr(T(4,5))^{\fieldTwoElements[H]} \). Right: \( \Khr(T(4,5))^{\Q[H]} \).}
\label{fig:kh-field-dependence}
\end{figure}
With this information we can obtain \( \DD( T(4,5) \# \Li) \) as in Example~\ref{example:non-prime-tangles}: Figure~\ref{fig:curve-field-dependence} shows 
\( \DD( T(4,5) \# \Li )\) over \( \fieldTwoElements \) and over \( \Q \).
\begin{figure}[t]
\centering 
\begin{tikzpicture}[scale=.5]
  \draw[lightgray,step=1] (-0.5,-0.5) grid (11.5,9.5);
  \draw[line width=1pt,->] (-0.5,0) -- (12,0) node[right] {\( h \)};
  \draw[line width=1pt,->] (0,-0.5) -- (0,10) node[above] {\( q \)};
  \draw (0.5,0) node[below] {\scriptsize \( 0 \)};
  \draw (1.5,0) node[below] {\scriptsize \( 1 \)};
  \draw (2.5,0) node[below] {\scriptsize \( 2 \)};
  \draw (3.5,0) node[below] {\scriptsize \( 3 \)};
  \draw (4.5,0) node[below] {\scriptsize \( 4 \)};
  \draw (5.5,0) node[below] {\scriptsize \( 5 \)};
  \draw (6.5,0) node[below] {\scriptsize \( 6 \)};
  \draw (7.5,0) node[below] {\scriptsize \( 7 \)};
  \draw (8.5,0) node[below] {\scriptsize \( 8 \)};
  \draw (9.5,0) node[below] {\scriptsize \( 9 \)};
  \draw (10.5,0) node[below] {\scriptsize \( 10 \)};
  \draw (0,0.5) node[left] {\scriptsize \( 12 \)};
  \draw (0,1.5) node[left] {\scriptsize \( 14 \)};
  \draw (0,2.5) node[left] {\scriptsize \( 16 \)};
  \draw (0,3.5) node[left] {\scriptsize \( 18 \)};
  \draw (0,4.5) node[left] {\scriptsize \( 20 \)};
  \draw (0,5.5) node[left] {\scriptsize \( 22 \)};
  \draw (0,6.5) node[left] {\scriptsize \( 24 \)};
  \draw (0,7.5) node[left] {\scriptsize \( 26 \)};
  \draw (0,8.5) node[left] {\scriptsize \( 28 \)};
  \draw (0.5, 0.5) node{\( \DotC \)};
  \draw (2.5, 2.5) node{\( \DotC \)};
  \draw (3.5, 3.5) node{\( \DotC \)};
  \draw (4.5, 3.5) node{\( \DotC \)};
  \draw (5.5, 5.5) node{\( \DotC \)};
  \draw (6.5, 4.5) node{\( \DotC \)};
  \draw (6.5, 5.5) node{\( \DotC \)};
  \draw (7.5, 5.5) node{\( \DotC \)};
  \draw (7.5, 6.5) node{\( \DotC \)};
  \draw (8.5, 6.5) node{\( \DotC \)};
  \draw (9.5, 7.5) node{\( \DotC \)};
  \draw (9.5, 8.5) node{\( \DotC \)};
  \draw (10.5, 8.5) node{\( \DotC \)};
  \draw[->] (2.712, 2.712) -- (3.288, 3.288) node[midway,below,scale=.8]{\( H \)};
  \draw[->] (4.712, 3.712) -- (5.288, 5.288) node[midway,left,scale=.8]{\( H^2 \)};
  \draw[->] (8.712, 6.712) -- (9.288, 8.288) node[midway,left,scale=.8]{\( H^2 \)};
  \draw[->] (6.712, 4.712) -- (7.288, 5.288) node[midway,below,scale=.8]{\( H \)};
  \draw[->] (6.712, 5.712) -- (7.288, 6.288) node[midway,above,scale=.8]{\( H \)};
  \draw[->] (9.712, 7.712) -- (10.288, 8.288) node[midway,below,scale=.8]{\( H \)};
\end{tikzpicture}
\begin{tikzpicture}[scale=.5]
  \draw[lightgray,step=1] (-0.5,-0.5) grid (11.5,9.5);
  \draw[line width=1pt,->] (-0.5,0) -- (12,0) node[right] {\( h \)};
  \draw[line width=1pt,->] (0,-0.5) -- (0,10) node[above] {\( q \)};
  \draw (0.5,0) node[below] {\scriptsize \( 0 \)};
  \draw (1.5,0) node[below] {\scriptsize \( 1 \)};
  \draw (2.5,0) node[below] {\scriptsize \( 2 \)};
  \draw (3.5,0) node[below] {\scriptsize \( 3 \)};
  \draw (4.5,0) node[below] {\scriptsize \( 4 \)};
  \draw (5.5,0) node[below] {\scriptsize \( 5 \)};
  \draw (6.5,0) node[below] {\scriptsize \( 6 \)};
  \draw (7.5,0) node[below] {\scriptsize \( 7 \)};
  \draw (8.5,0) node[below] {\scriptsize \( 8 \)};
  \draw (9.5,0) node[below] {\scriptsize \( 9 \)};
  \draw (10.5,0) node[below] {\scriptsize \( 10 \)};
  \draw (0,0.5) node[left] {\scriptsize \( 12 \)};
  \draw (0,1.5) node[left] {\scriptsize \( 14 \)};
  \draw (0,2.5) node[left] {\scriptsize \( 16 \)};
  \draw (0,3.5) node[left] {\scriptsize \( 18 \)};
  \draw (0,4.5) node[left] {\scriptsize \( 20 \)};
  \draw (0,5.5) node[left] {\scriptsize \( 22 \)};
  \draw (0,6.5) node[left] {\scriptsize \( 24 \)};
  \draw (0,7.5) node[left] {\scriptsize \( 26 \)};
  \draw (0,8.5) node[left] {\scriptsize \( 28 \)};
  \draw (0.5, 0.5) node{\( \DotC \)};
  \draw (2.5, 2.5) node{\( \DotC \)};
  \draw (3.5, 3.5) node{\( \DotC \)};
  \draw (4.5, 3.5) node{\( \DotC \)};
  \draw (5.5, 5.5) node{\( \DotC \)};
  \draw (6.5, 4.5) node{\( \DotC \)};
  \draw (7.5, 6.5) node{\( \DotC \)};
  \draw (8.5, 6.5) node{\( \DotC \)};
  \draw (9.5, 7.5) node{\( \DotC \)};
  \draw[->] (2.712, 2.712) -- (3.288, 3.288) node[midway,below,scale=.8]{\( H \)};
  \draw[->] (8.712, 6.712) -- (9.288, 7.288) node[midway,below,scale=.8]{\( H \)};
  \draw[->] (4.712, 3.712) -- (5.288, 5.288) node[midway,left,scale=.8]{\( H^2 \)};
  \draw[->] (6.712, 4.712) -- (7.288, 6.288) node[midway,left,scale=.8]{\( H^2 \)};
\end{tikzpicture}
\caption{Left: \( \DD( T(4,5) \# \protect\Li; \fieldTwoElements ) \). Right: \( \protect \DD( T(4,5) \# \protect\Li; \Q ) \).}
\label{fig:curve-field-dependence}
\end{figure}
\end{example}

Example~\ref{example:diff_curves} shows that \( \BNr(T) \), and thus also \( \Khr(T) \) and \( \Kh(T) \), can depend on the field of coefficients \( \field \). In  view of Theorem~\ref{thm:pairing}, this provides a geometric way to see torsion in the link invariants \( \BNr(\Lk)\), \(\Khr(\Lk)\), and \(\Kh(\Lk) \). Recall that there is an  abundance of 2-torsion in \( \Kh(\Lk;\Z) \) (see \cite[Conjecture 1]{Shum_torsion}). Contrasting this, for \( \Khr(\Lk;\Z) \) 2-torsion appears first for \( 13 \)-crossing knots \cite{Shum_torsion}. 
\begin{observation}\label{obs:2_torsion_in_Kh}
The ranks of the unreduced Khovanov homologies \( \Kh(\Knot;\Q) \) and \( \Kh(\Knot;\fieldTwoElements) \) differ when $\Knot$ is the trefoil knot, since  \( \Kh(\Knot;\Z) \) contains 2-torsion. The tangle invariants \( \Kh(T;\Q) \) and \( \Kh(T;\fieldTwoElements) \) already differ for the trivial tangle:
\[ \Kh(\Li;\fieldTwoElements)
 = q^{-1}h^{0}\Eight[1](\Li;\fieldTwoElements) \oplus q^{+1}h^{0}\Eight[1](\Li;\fieldTwoElements)
 , \qquad \Kh(\Li;\Q)=\Eight[2](\Li ;\Q )
,\]
where the first curve is drawn in Figure~\ref{fig:eight_for_trivial}, and the second curve is drawn in Figure~\ref{fig:heart_for_trivial}. 
\begin{figure}[t]
  \bigskip
  \centering
  \begin{subfigure}[t]{0.45\textwidth}
  \centering
  \includegraphics[scale=0.6]{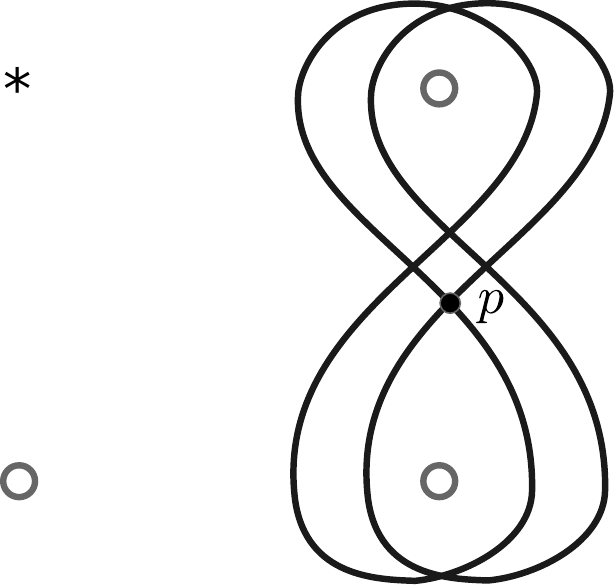}
  \caption{\( \Kh( \Li;\fieldTwoElements)
  \)}
  \label{fig:eight_for_trivial}
  \end{subfigure}
  \begin{subfigure}[t]{0.45\textwidth}
  \centering
  \includegraphics[scale=0.6]{figures/heart_for_trivial}
  \caption{\( \Kh( \Li;\Q) \)}
  \label{fig:heart_for_trivial}
  \end{subfigure}
  \caption{}
  \label{fig:comparison-of-Kh-curves-for-triv-tangle}
\end{figure}

Observe that the curve \( \Eight[1](\Li) \oplus \Eight[1](\Li) \) is obtained from the curve \( \Eight[2](\Li) \) by smoothing a self-intersection \( p \). Ignoring coefficients for a moment, this shows that there is a rank inequality for any test curve $L$: 
\[
\rk \HF(\Eight[1](\Li)\oplus \Eight[1](\Li), L)\geq\rk \HF(\Eight[2](\Li), L) 
\]
Any link $\Lk$ can be written as \(\Lk(\Li,T) \). Setting $L=\BNr(T)$, the above inequality illustrates that the Khovanov homology of $\Lk$ with \( \fieldTwoElements \) coefficients \( \Kh(\Lk;\fieldTwoElements)=\HF(\Eight[1](\Li) \oplus \Eight[1](\Li) , \BNr(T)) \) can be easily larger than the Khovanov homology with \( \Q \) coefficients  \( \Kh(\Lk;\Q )= \HF( \Eight[2](\Li ) , \BNr(T) ) \). In this case, the integral Khovanov homology \( \Kh(\Lk;\Z) \) would have 2-torsion. However, this argument only works if \(L=\BNr(T)\) is independent of the field of coefficients, which is not true in general.
\end{observation}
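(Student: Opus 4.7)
The plan is to reduce Observation~\ref{obs:2_torsion_in_Kh} to a direct application of Remark~\ref{rmk:Kh_and_Khr_in_terms_of_Ln}, combined with the explicit descriptions of $\Eight[n](\Li)$ from Example~\ref{exa:fig_eights_rational_tangles} and Definition~\ref{def:whole_family_of_curves}. Indeed, both displayed identities for $\Kh(\Li;\field)$ are not new claims about curves, but simply the specializations to the trivial tangle $T=\Li$ of the general identities
\[ \Kh(T;\fieldTwoElements) = q^{-1}h^{0}\Eight[1](T;\fieldTwoElements) \oplus q^{+1}h^{0}\Eight[1](T;\fieldTwoElements) \qquad \text{and} \qquad \Kh(T;\fieldTwoIsNotZero) = \Eight[2](T;\fieldTwoIsNotZero) \]
which were recorded in Remark~\ref{rmk:Kh_and_Khr_in_terms_of_Ln}. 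So the only real content to verify is the claim about 2-torsion in $\Kh(\Knot;\Z)$ for the trefoil (a standard computation that does not involve our curve invariants at all) and the geometric assertion that smoothing a single self-intersection of $\Eight[2](\Li)$ produces $\Eight[1](\Li) \oplus \Eight[1](\Li)$.

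First I would unpack Definition~\ref{def:whole_family_of_curves} for $T=\Li$. Since $\DD(\Li)=[\DotC]$, the complex $\DD_n(\Li)$ is the mapping cone
\[
\Big[\begin{tikzcd}
q^{-n}h^{-1}\delta^{1-\frac{n}{2}}[\DotC]
\arrow{r}{H^n\cdot \id}
&
q^{+n}h^{0}\delta^{\frac{n}{2}}[\DotC]
\end{tikzcd}\Big]
\]
and since $H=D-S^2$, this is precisely a two-generator type~D structure with a single action $H^n=(D-S^2)^n$ between them. Applying the Classification Theorem (Theorem~\ref{thm_from_intro:classification}) and tracking the corresponding curve segments through the dictionary of Example~\ref{exa:intro:complexes_vs_curves}, $\Eight[n](\Li)$ is represented by a compact curve that wraps $n$ times around each of the two non-special, non-reduced punctures adjacent to the arc $\DotCarc$. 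For $n=1$ this is the standard figure-$8$ drawn in Figure~\ref{fig:eight_for_trivial}; for $n=2$ it is the doubly-wrapped heart curve of Figure~\ref{fig:heart_for_trivial}.

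Next I would verify the geometric smoothing claim. The heart curve $\Eight[2](\Li)$ has exactly one self-intersection at the center of the picture, and smoothing that intersection (in the direction consistent with the orientation) splits the curve into two embedded loops, each of which winds once around one of the two punctures — exactly two disjoint copies of the figure-$8$ representing $\Eight[1](\Li)$, but now shifted so that one lies slightly ``above'' and one slightly ``below'' the arc (accounting for the grading shifts $q^{\pm 1}$). This is a purely topological observation about the two curves.

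The only potential subtlety — which is really the main point of the Observation — is the passage from the rank inequality to 2-torsion in $\Kh(\Lk;\Z)$: the argument $\rk \HF(\Eight[1](\Li)\oplus \Eight[1](\Li),L) \geq \rk \HF(\Eight[2](\Li), L)$ would require that $L = \BNr(T)$ be independent of the choice of coefficient field. I would emphasize that this is \emph{not} true in general (as Example~\ref{example:diff_curves} demonstrates for $T(4,5)\#\Li$), so the observation is genuinely heuristic rather than a proof of the existence of 2-torsion; the trefoil statement itself is established by independent computation of $\Kh(T(2,3);\Z)$. Making this distinction clearly is the only delicate point, and it requires no further argument beyond citing the example.
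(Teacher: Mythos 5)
Your proposal is correct and tracks the paper's own (implicit) reasoning closely. The two displayed identities are indeed nothing more than the specialization of Remark~\ref{rmk:Kh_and_Khr_in_terms_of_Ln} to $T = \Li$, and you are right that the only independent content is (i) the standard fact that $\Kh(T(2,3);\Z)$ has $2$-torsion, (ii) the topological observation that smoothing the unique self-intersection of $\Eight[2](\Li)$ yields two disjoint copies of the figure-$8$, and (iii) the honest acknowledgment that the resulting rank inequality does not by itself prove $2$-torsion because $\BNr(T)$ may depend on $\field$ (Example~\ref{example:diff_curves}). Your unpacking of Definition~\ref{def:whole_family_of_curves} for $\DD(\Li)=[\DotC]$ — giving the two-generator mapping cone under $H^n$ and hence the $n$-fold wrapping — is exactly the computation underlying Figures~\ref{fig:eight_for_trivial} and~\ref{fig:heart_for_trivial}, and matches Example~\ref{exa:fig_eights_rational_tangles}. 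You correctly flag the distinction between heuristic and proof, which the paper itself records in its closing caveat. No substantive gap.
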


It is tempting to apply the observation above towards a proof of \cite[Conjecture 1]{Shum_torsion} which says that every knot \( \Knot\ne \Circle \) has 2-torsion in \( \Kh(\Knot;\Z) \). More precisely, one could try to prove that for any decomposition \( \Knot=\Lk(\Li,T) \), the curve \( \BNr(T;\Q) \) intersects \( \Kh(\Li;\Q) \) in fewer points than the curve  \( \BNr(T;\fieldTwoElements) \) intersects \( \Kh(\Li;\fieldTwoElements) \). This lead us to the following:
\begin{proposition}\label{prop:H1_implies_2-torsion}
If \( \ \BNr(\Lk)_{\Q[H]} \) contains \( \Q[H]/(H) \), or equivalently, if \( \ \Khr(\Lk)^{\Q[H]} \) contains
\(
\begin{tikzcd}[nodes={inner sep=2pt}, column sep=13pt,ampersand replacement=\&]
[\gen
\arrow{r}{H} 
\&
\gen]
\end{tikzcd}
\), then \( \Kh(\Lk;\Z) \) contains 2-torsion.
\end{proposition}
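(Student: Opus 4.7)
The plan is to convert the hypothesis into a dimension inequality comparing $\Kh(\Lk;\Q)$ with $\Kh(\Lk;\fieldTwoElements)$, and then invoke the Universal Coefficient Theorem to detect $2$-torsion in $\Kh(\Lk;\Z)$.

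First, I would combine the mapping-cone formulas of Proposition~\ref{prop:kh_mapping_cones} with the structure theorem for $\BNr(\Lk)_{\field[H]}$ (Proposition~\ref{prop:towersReduced-field}). Writing $\BNr(\Lk;\Q)_{\Q[H]}$ as $2^{\vert\Lk\vert-1}$ towers plus torsion summands $\Q[H]/(H^{n_i})$, let $a$ count those with $n_i = 1$ and $b$ count those with $n_i \geq 2$, and let $t_\fieldTwoElements$ denote the analogous total torsion count for $\BNr(\Lk;\fieldTwoElements)_{\fieldTwoElements[H]}$. A direct dimension count of $\ker$ and $\coker$ for $H^2$ on $\BNr(\Lk;\Q)$ and for $H$ on $\BNr(\Lk;\fieldTwoElements)$ then yields
$$\dim_\Q \Kh(\Lk;\Q) = 2^{\vert\Lk\vert} + 2a + 4b, \qquad \dim_\fieldTwoElements \Kh(\Lk;\fieldTwoElements) = 2^{\vert\Lk\vert} + 4 t_\fieldTwoElements,$$
using, for the second equality, that $\Kh(\Lk;\fieldTwoElements)$ splits as two grading-shifted copies of $\Khr(\Lk;\fieldTwoElements)$.

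Next, the Universal Coefficient Theorem applied to the free $\Z$-complex $\CKh(\Lk;\Z)$ gives
$$\dim_\fieldTwoElements \Kh(\Lk;\fieldTwoElements) - \dim_\Q \Kh(\Lk;\Q) = 2 m_2,$$
where $m_2$ is the number of $\Z/2^k$ summands of $\Kh(\Lk;\Z)$ as an abelian group. Applied instead to the free $\Z$-complex $\CKhr(\Lk;\Z)$, together with the analogous identity $\dim_\field \Khr(\Lk;\field) = 2^{\vert\Lk\vert-1} + 2 t_\field$ coming from the $H$-mapping-cone, the same theorem forces $t_\fieldTwoElements \geq t_\Q = a + b$.

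Putting the pieces together,
$$2 m_2 = 4 t_\fieldTwoElements - 2a - 4b \;\geq\; 4(a+b) - 2a - 4b \;=\; 2a,$$
so $m_2 \geq a$. The hypothesis that $\BNr(\Lk)_{\Q[H]}$ contains a $\Q[H]/(H)$ summand is precisely $a \geq 1$, which forces $m_2 \geq 1$ and hence $2$-torsion in $\Kh(\Lk;\Z)$. The main difficulty is purely bookkeeping: one must keep the various dimension contributions straight and verify that the $H^2$-mapping-cone formula of Proposition~\ref{prop:kh_mapping_cones} is available over $\Q$ (which it is, since $\Q$ has characteristic different from $2$).
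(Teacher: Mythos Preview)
Your proof is correct and follows essentially the same route as the paper: both arguments combine the mapping-cone formulas of Proposition~\ref{prop:kh_mapping_cones} with the structure of $\BNr(\Lk)_{\field[H]}$, apply the Universal Coefficient Theorem to $\CKhr$ to compare ranks over $\Q$ and $\fieldTwoElements$, and use the splitting $\Kh(\Lk;\fieldTwoElements)\cong\Khr(\Lk;\fieldTwoElements)^{\oplus 2}$ to conclude. The paper phrases this as the chain of inequalities $\rk\Kh(\Lk;\Q)<2\rk\Khr(\Lk;\Q)\leq 2\rk\Khr(\Lk;\fieldTwoElements)=\rk\Kh(\Lk;\fieldTwoElements)$, whereas you unpack the same computation into explicit dimension formulas; your version has the minor bonus of yielding the quantitative bound $m_2\geq a$, but the underlying argument is the same.
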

\begin{proof}
While there is a geometric proof using Observation~\ref{obs:2_torsion_in_Kh}, we choose an easier, algebraic route.  Ignoring gradings, Proposition~\ref{prop:kh_mapping_cones} implies:
\[ \Khr(\Lk;\Q)\cong \Homology \Big[
\begin{tikzcd}[nodes={inner sep=2pt}, column sep=13pt,ampersand replacement=\&]
\BNr(\Lk;\Q)
\arrow{r}{H} 
\&
\BNr(\Lk;\Q) 
\end{tikzcd} 
\Big] \qquad \Kh(\Lk;\Q)\cong \Homology \Big[
\begin{tikzcd}[nodes={inner sep=2pt}, column sep=13pt,ampersand replacement=\&]
\BNr(\Lk;\Q)
\arrow{r}{H^2} 
\&
\BNr(\Lk;\Q) 
\end{tikzcd}
\Big]\]
Recall that we have a decomposition \( \BNr(\Lk)_{\Q[H]} = \bigoplus_j [\Q [H]]_j \bigoplus_i \left[\Q[H]/(H^{m_i}) \right]_i \). The key point is that the mapping cone formulas above allow us to understand the rank of Khovanov homology from the structure of \( \BNr(\Lk)_{\Q[H]} \): 
\begin{itemize}
\item \( \Q[H] \) contributes 1 generator to \( \Khr(\Lk;\Q) \) and 2 generators to \( \Kh(\Lk;\Q) \); 
\item \( \Q[H]/(H)  \) contributes 2 generators to \( \Khr(\Lk;\Q) \) and 2 generators to \( \Kh(\Lk;\Q) \);
\item \( \Q[H]/(H^m) ,m\geq 2 \) each contribute 2 generators to \( \Khr(\Lk;\Q) \) and 4 generators to \( \Kh(\Lk;\Q) \). 
\end{itemize}
Therefore, having \( \Q[H]/(H) \) present in \( \BNr(\Lk)_{\Q[H]} \) implies  \( \rk\Kh(\Lk;\Q) < 2 \rk\Khr(\Lk;\Q) \). Then the universal coefficient formula implies \( \rk\Khr(\Lk;\Q) \leq \rk\Khr(\Lk;\fieldTwoElements) \). Combining this with the fact that \( 2 \rk\Khr(\Lk;\fieldTwoElements)=\rk\Kh(\Lk;\fieldTwoElements) \), we conclude that having \( \Q[H]/(H) \) in \( \BNr(\Lk)_{\Q[H]} \) implies
\[ \rk\Kh(\Lk;\Q) < 2 \rk\Khr(\Lk;\Q) \leq 2 \rk\Khr(\Lk;\fieldTwoElements) = \rk\Kh(\Lk;\fieldTwoElements),\]
which in turn implies that there must be 2-torsion in \( \Kh(\Lk;\Z) \).
\end{proof}

\begin{question}
Does \( \BNr(\Knot)_{\Q[H]} \) contain a summand \( \Q[H]/(H) \) for every non-trivial knot \( \Knot \)? Or equivalently, does \( \Khr(\Knot)^{\Q[H]} \) contain a summand 
\(
\begin{tikzcd}[nodes={inner sep=2pt}, column sep=13pt,ampersand replacement=\&]
[\gen
\arrow{r}{H} 
\&
\gen]
\end{tikzcd}
\) for every non-trivial knot \( \Knot \)?
\end{question}
If the answer is yes, this would imply the existence of 2-torsion in \( \Kh(\Knot;\Z) \) for every knot \( \Knot\ne \Circle \). We remark that all examples of \( \BNr(\Knot)_{\Q[H]} \) that the authors are aware of include \( \Q[H]/(H) \). On the other hand, computations suggested that the knight-move conjecture \cite[Conjecture~1]{BarNatanKh} was true; in our setting, in the light of Proposition~\ref{prop:kh_mapping_cones}, this states that \( \BNr(\Knot)_{\Q[H]} \) consists of only \( \Q[H] \), \( \Q[H]/(H) \) and \( \Q[H]/(H^2) \). Recently, Manolescu and Marengon disproved the knight-move conjecture, and thus found a knot containing \( \Q[H]/(H^m),m\geq 3 \) in its reduced Bar-Natan homology \cite{Man_Mar_knight_move_false}. 
The following result first appeared in \cite[Corollary 5]{Shum_torsion}, and was generalized in \cite{AP_torsion_thickness}.
\begin{corollary}
All homologically thin non-trivial knots \( \Knot \) (ie such that \( \Khr(\Knot;\Z) \) is concentrated in a single \( \delta \)-grading) have 2-torsion in \( \Kh(\Knot;\Z) \). In particular, all non-trivial quasi-alternating knots have 2-torsion in \( \Kh(\Knot;\Z) \). 
\end{corollary}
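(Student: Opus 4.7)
The plan is to reduce this to Proposition~\ref{prop:H1_implies_2-torsion} by verifying that $\Khr(\Knot)^{\Q[H]}$ contains a summand of the form $\bigl[\gen \xrightarrow{H} \gen\bigr]$ whenever $\Knot$ is a non-trivial homologically thin knot.

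To begin, I would invoke Proposition~\ref{prop:towersReduced-field} with $\field = \Q$, giving a decomposition
\[
\Khr(\Knot)^{\Q[H]} \;=\; [\gen] \,\oplus\, \bigoplus_{i} \Bigl[\gen \xrightarrow{H^{\ell_i}} \gen\Bigr]
\]
for some positive integers $\ell_i$. The key grading computation is the following: since $\gr(H) = h^0 q^{-2}$, one has $\delta(H) = -1$, and because the differential in a type~D structure over $\BNAlgH$ decreases $\delta$-grading by~$1$, the two generators in a summand $\bigl[\gen \xrightarrow{H^{\ell}} \gen\bigr]$ must differ in $\delta$-grading by exactly $\ell - 1$. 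Thinness of $\Khr(\Knot;\Z)$ descends to $\Q$-coefficients, and since the underlying vector space of $\Khr(\Knot)^{\Q[H]}$ is precisely $\Khr(\Knot;\Q)$, every such pair of generators must lie in the same $\delta$-grading. Hence $\ell_i = 1$ for all $i$.

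Next I would argue that the index set $\{i\}$ is non-empty whenever $\Knot$ is non-trivial, or equivalently that $\rk_\Q \Khr(\Knot;\Q) \geq 2$. If instead $\rk_\Q \Khr(\Knot;\Q) = 1$, then the decomposition above degenerates to $\Khr(\Knot)^{\Q[H]} = [\gen]$, and Kronheimer--Mrowka's unknot detection theorem~\cite{K-M_unknot_detector} forces $\Knot = \Circle$, contradicting non-triviality. I expect this step to be the main non-formal input; the rest of the argument is essentially grading bookkeeping.

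Combining these observations, any non-trivial thin knot admits a summand $\bigl[\gen \xrightarrow{H} \gen\bigr]$ in $\Khr(\Knot)^{\Q[H]}$, so Proposition~\ref{prop:H1_implies_2-torsion} produces 2-torsion in $\Kh(\Knot;\Z)$. The final assertion then reduces to the standard fact that quasi-alternating knots are homologically thin, a result of Manolescu and Ozsv\'ath, and so it follows immediately from the first part.
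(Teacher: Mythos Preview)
Your proof is correct and follows essentially the same approach as the paper: both arguments use thinness to force all exponents $\ell_i=1$ via the $\delta$-grading constraint, invoke Kronheimer--Mrowka to guarantee at least one such summand exists, apply Proposition~\ref{prop:H1_implies_2-torsion}, and then cite Manolescu--Ozsv\'ath for the quasi-alternating case. Your version just makes the grading bookkeeping (that the two generators of $[\gen\xrightarrow{H^\ell}\gen]$ differ in $\delta$-grading by $\ell-1$) more explicit than the paper does.
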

\begin{proof}
The fact that \( \Khr(\Knot;\Z) \) is concentrated in a single \( \delta \)-grading implies that 
\( 
\begin{tikzcd}[nodes={inner sep=2pt}, column sep=13pt,ampersand replacement=\&]
[\gen
\arrow{r}{H^m} 
\&
\gen]
\end{tikzcd}\),
\(m\geq 2 \) 
does not appear in \( \Khr(\Knot)^{\Q[H]} \). Non-triviality  \( \Knot\neq \Circle \) implies that there must be 
\(
\begin{tikzcd}[nodes={inner sep=2pt}, column sep=13pt,ampersand replacement=\&]
[\gen
\arrow{r}{H} 
\&
\gen]
\end{tikzcd}
\) 
in \( \Khr(\Knot)^{\Q[H]} \) due to the fact that the rank of \( \Khr \) detects the unknot \cite{K-M_unknot_detector}. Thus there is \( \Q[H]/(H) \) in \( \BNr(\Knot)_{\Q[H]} \), and so by Proposition~\ref{prop:H1_implies_2-torsion}, Khovanov homology \( \Kh(\Knot;\Z) \) must have 2-torsion.
The fact that quasi-alternating knots are homologically thin is proved in \cite{Man-Ozs_quasi}.
\end{proof}

\begin{wrapfigure}{r}{0.2\textwidth}
    \centering
    \begin{tikzpicture}[scale=.65]
      \draw[lightgray,step=1] (-0.5,-0.5) grid (2.5,2.5);
      \draw[line width=1pt,->] (-0.5,0) -- (3,0) node[right] {\( h \)};
      \draw[line width=1pt,->] (0,-0.5) -- (0,3) node[above] {\( q \)};
      \draw(0.5, 0.5) node {\( \gen \)};
      \draw(0.5, 1.5) node {\( \gen \)};
      \draw(1.5, 1.5) node {\( \gen \)};
      \draw(-0.5, 0.5) node {\scriptsize\( 0 \)};
      \draw(-0.5, 1.5) node {\scriptsize\( 2 \)};
      \draw(0.5, -0.5) node {\scriptsize\( 0 \)};
      \draw(1.5, -0.5) node {\scriptsize\( 1 \)};
      \draw[->] (0.5, 0.5) -- (1.5, 1.5);
      \draw(1.25, 0.75) node {\( H \)};
      \draw[->] (0.5, 1.5) -- (1.5, 1.5) node[midway,above] {\( 2 \)};
    \end{tikzpicture}\vspace{-8pt}
    \caption{} \label{fig:s-invs-different}
\end{wrapfigure}
Finally, we illustrate the dependence of the \( s \)-invariant on the choice of field. Again, we consider the cases \( \Q \) and \( \fieldTwoElements \). Seed found a knot \( \Knot 14n19265 \) for which \( s^\fieldTwoElements\ne s^\Q \); see \cite[Remark 6.1]{LipSar}. We attempted to determine \( \Khr(\Knot 14n19265)^{\Z[H]} \) from the existing computer programs, in order to see how the \( q \)-grading of the single free summand \( [\gen] \) could differ with the choice of field. Based on computations of the Bar-Natan spectral sequence \( \Khr(\Knot 14n19265) \rightrightarrows \Homology (\CBNr(\Knot)|_{H=1})= \field \) (using  \cite{knotkit}), for \( \field=\fieldTwoElements \) and  \( \field=\Q \), we were only able to make an educated guess for \( \Khr(\Knot 14n19265)^{\Z[H]} \) and, in particular, how this type~D structure behaves  near the free summand with grading \( s(\Knot 14n19265) \). We illustrate the relevant part of the complex in Figure~\ref{fig:s-invs-different}: notice that, working over \( \fieldTwoElements \) the free summand \( [\gen] \) has \( q \)-grading \( 2 \), while working over \( \Q \) allows one to cancel the horizontal arrow and the left free summand \( [\gen] \) has \( q \)-grading \( 0 \).

\subsection{Conway mutation}

	Given a tangle \( R \), let \( \mutx(R) \), \( \muty(R) \) and \( \mutz(R) \) be the tangles obtained from \( R \) by  $180^\circ$ rotations about the $x$-, $y$-, and $z$-axis, respectively; see Figure~\ref{fig:mutation}.  We say that  \( \mutx(R) \), \( \muty(R) \) and \( \mutz(R) \) are obtained from \( R \) by mutation. 
  If \( R \) is oriented, we orient these three tangles such that the orientations agree at the tangle ends. If this means that we need to reverse the orientation of the two open components of~\( R \), then we also reverse the orientation of all other components; otherwise we do not change any orientation. 
	
\begin{definition}\label{def:mutation}	We say that two links $\Lk$ and $\Lk'$ are \textbf{Conway mutants} if they agree outside a 3-ball whose boundary intersects the links transversely in four points, and the tangles inside the 3-ball are related by mutation; in other words $\Lk=\Lk(T_0,T)$ and  $\Lk'=\Lk(T_0,\muti(T))$.
\end{definition}

\begin{definition}
Let \( \varphi_{x} \), \( \varphi_{y} \), and \( \varphi_{z} \) be the three  automorphisms of \( \BNAlgH \) defined by
	\begin{align*}
	\varphi_{x}\co & 
	\DotcobB \mapsto -\DotcobB,\quad
	\DotcobC \mapsto +\DotcobC,\quad
	\SaddleCB \mapsto \SaddleCB,\quad
	\SaddleBC \mapsto \SaddleBC,\\
	\varphi_{y}\co & 
	\DotcobB \mapsto +\DotcobB,\quad
	\DotcobC \mapsto -\DotcobC,\quad
	\SaddleCB \mapsto \SaddleCB,\quad
	\SaddleBC \mapsto \SaddleBC,\\
	\varphi_{z}\co & 
	\DotcobB \mapsto -\DotcobB,\quad
	\DotcobC \mapsto -\DotcobC,\quad
	\SaddleCB \mapsto \SaddleCB,\quad
	\SaddleBC \mapsto \SaddleBC.
	\end{align*}
	Let us denote the induced functors on type~D structures by \( \mutx \), \( \muty \), and \( \mutz \), respectively.
\end{definition}

\begin{figure}[b]
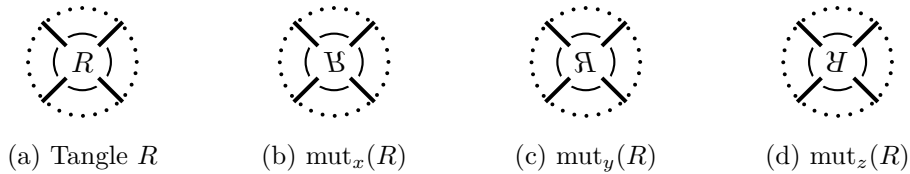

  \centering
  \begin{subfigure}{0.2\textwidth}
    \centering
    $\MutationTangle$
    \caption{Tangle \( R \)}
  \end{subfigure}
  \begin{subfigure}{0.2\textwidth}
    \centering
    $\MutationTangleX$
    \caption{\( \mutx(R) \)}
  \end{subfigure}
  \begin{subfigure}{0.2\textwidth}
    \centering
    $\MutationTangleY$
    \caption{\( \muty(R) \)}
  \end{subfigure}
  \begin{subfigure}{0.2\textwidth}
    \centering
    $\MutationTangleZ$
    \caption{\( \mutz(R) \)}
  \end{subfigure}
  \caption{Conway mutation.}\label{fig:mutation}
\end{figure}

\begin{theorem}\label{thm:mutation:DD}
	For any pointed 4-ended tangle \(T\) and \( i\in\{x,y,z\} \), 
	\(\DD(\muti(T))=\muti(\DD(T))\).
\end{theorem}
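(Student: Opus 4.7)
The plan is to realize each mutation $\muti$ as a self-homeomorphism $\rho_i$ of the pair $(D^3, B)$, exploit the naturality of Bar-Natan's construction under such homeomorphisms, and then identify the resulting algebra automorphism of $\BNAlgH$ using Observation~\ref{obs:mutation} to account for the change of distinguished tangle end. Concretely, for $i \in \{x,y,z\}$ the $180^\circ$ rotation $\rho_i$ about the corresponding axis permutes the four points of $B$ and fixes the crossingless tangles $\Ni$ and $\No$ up to relabelling of their endpoints, so it induces an auto-equivalence $\rho_i^*$ of $\Cob_{/l}$ with $\KhTl{\muti(T)} = \rho_i^*(\KhTl{T})$. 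Because $\rho_i$ carries the distinguished end of $T$ from position $1$ to position $\rho_i(1)$, one has
\[
\DD(\muti(T)) \;=\; \Omega_{\rho_i(1)}^{-1}\!\left(\rho_i^*\!\left(\Omega_1(\DD(T))\right)\right),
\]
and the theorem reduces to showing that the composite algebra automorphism $\omega_{\rho_i(1)}^{-1} \circ \rho_i^* \circ \omega_1 \colon \BNAlgH \to \BNAlgH$ coincides with $\varphi_i$ on each of the four generators $\SaddleBC, \SaddleCB, \DotcobB, \DotcobC$.

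The verification on the saddle generators is immediate: saddles do not refer to the choice of special component and are invariant under each $\rho_i^*$ up to isotopy, so the composite sends each $S$ to itself, matching $\varphi_i(S) = S$. The dot generators require unpacking the decomposition $\omega_j(D) = (\text{tube}) - (\text{identity with a }1\text{-handle on the }j\text{-special component})$ from Definition~\ref{def:NotationDotsAndH}; the tube piece is invariant under each rotation, while the $1$-handle is transported between components of the underlying crossingless tangle $\Ni$ or $\No$. Rewriting the transported cobordism in the dotted basis for $\omega_{\rho_i(1)}$ then relies on the $4Tu$-based identity of Observation~\ref{obs:mutation}, namely $\omega_1(D) = \pm \omega_{\rho_i(1)}(D)$, where the sign is negative exactly when the distinguished ends $1$ and $\rho_i(1)$ lie on different components of the underlying crossingless tangle. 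A case analysis over $i \in \{x,y,z\}$ and the two underlying tangles ($\No$ for $\DotcobB$ and $\Ni$ for $\DotcobC$) then yields the signs prescribed by $\varphi_i$.

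The main technical obstacle will be the sign bookkeeping: one must simultaneously track how $\rho_i^*$ transports the $1$-handle term in the expansion of $\omega_1(D)$ and how the $4Tu$-relation rewrites the result through $\omega_{\rho_i(1)}^{-1}$, ensuring that the net sign agrees with the prescribed negation pattern of $\varphi_i$ on $\DotcobB$ and $\DotcobC$. Once the three identities are verified on the generators, they extend by multiplicativity to all of $\BNAlgH$, yielding $\DD(\muti(T)) = \muti(\DD(T))$ for each $i \in \{x,y,z\}$.
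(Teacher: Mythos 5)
Your overall strategy --- realize each mutation as an auto-equivalence $\rho_i^*$ of $\Cob_{/l}$, use the identification of Theorem~\ref{thm:OmegaFullyFaithful} on both sides, and identify the induced algebra automorphism with $\varphi_i$ via Observation~\ref{obs:mutation} --- is exactly the paper's. There is, however, a genuine error in your reduction. The distinguished end of $\muti(T)$ remains at position $1$, not at position $\rho_i(1)$: mutation is by definition a modification supported inside a $3$-ball, so the reduction point on the boundary sphere stays put (otherwise the decompositions $\Lk=\Lk(T_0,T)$ and $\Lk'=\Lk(T_0,\muti(T))$ would not share a common basepoint). The correct formula is therefore $\DD(\muti(T)) = \Omega_1^{-1}\bigl(\rho_i^*(\Omega_1(\DD(T)))\bigr)$, and the automorphism you must identify with $\varphi_i$ is $\omega_1^{-1}\circ\rho_i^*\circ\omega_1$, not $\omega_{\rho_i(1)}^{-1}\circ\rho_i^*\circ\omega_1$.

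This is not a harmless relabelling. With your indices, $\omega_{\rho_i(1)}^{-1}\circ\rho_i^*\circ\omega_1$ is in fact the \emph{identity} on $\BNAlgH$: the tube in the expansion of $\omega_1(D)$ is $\rho_i^*$-invariant and the $1$-handle is carried from the component containing $1$ to the component containing $\rho_i(1)$, so $\rho_i^*(\omega_1(D)) = \omega_{\rho_i(1)}(D)$ on the nose (and the saddles are fixed). The signs of $\varphi_i$ appear only when one re-expresses $\omega_{\rho_i(1)}(D)$ back in the $\omega_1$-basis via Observation~\ref{obs:mutation} --- which is precisely what your second paragraph does. So your verification is computing $\omega_1^{-1}\circ\rho_i^*\circ\omega_1$ and is inconsistent with the $\omega_{\rho_i(1)}^{-1}$ stated in the first paragraph; taken literally, the first paragraph would yield $\DD(\muti(T))=\DD(T)$ with no signs, which is false (and would contradict the known failure of mutation invariance of Bar-Natan homology over $\Q$). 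Correcting the index makes your argument close up and coincide with the paper's proof.
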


\begin{proof}
	Let us first consider the case \( i=z \). Let \( \rotz \) denote the operation which rotates both the objects and morphisms in \( \Cob_{/l} \) by \( 180^\circ \) in the plane. Then, by construction, \( \KhTl{\mutz(T)}=\rotz(\KhTl{T}) \). Note, in particular, that the bigradings agree, regardless of the orientations on \(T\). Together with Definition~\ref{def:MainTangleInvariantTypeD} of \( \DD(T) \) and \( \DD(\mutz(T)) \), this gives
	\[ 
	\Omega_1(\DD(\mutz(T)))
	=
	\KhTl{\mutz(T)}
	=
	\rotz(\KhTl{T})
	=
	\rotz(\Omega_1(\DD(T))).
	\]
	The key idea now is to invoke Observation~\ref{obs:mutation}, which gives \( \rotz\circ~\omega_1=\omega_1\circ \varphi_z \). This, in turn, implies 
	\( \rotz\circ~\Omega_1=\Omega_1\circ \mutz \), so the above is equal to \( \Omega_1(\mutz(\DD(T))) \). Finally, faithfulness of \( \Omega_1 \) implies the desired identity. 
	
	In the cases \( i=x \) and \( i=y \), let \( \refx \) and \( \refy \) denote the reflection of objects and morphisms in \( \Cob_{/l} \) along the horizontal and vertical axis, respectively. Then, the identities 
	\( \KhTl{\muti(T)}=\refi(\KhTl{T}) \)
	can be seen on the level of the cube of resolutions. The rest of the argument is the same as above. We obtain
	\[ 
	\Omega_1(\DD(\muti(T)))
	=
	\KhTl{\muti(T)}
	=
	\refi(\KhTl{T})
	=
	\refi(\Omega_1(\DD(T)))
	=
	\Omega_1(\muti(\DD(T)))
	\]
	by Definition~\ref{def:MainTangleInvariantTypeD} and Observation~\ref{obs:mutation}, which implies the desired identities.
\end{proof}

\begin{theorem}\label{thm:mutation:BNr}
	For any pointed 4-ended tangle \(T\) and \( i\in\{x,y,z\} \), 
	\(\BNr(\muti(T))\) and \(\BNr(T)\) agree up to multiplication of the local systems by \(\pm1\). In particular, 	\(\BNr(\muti(T);\fieldTwoElements)=\BNr(T;\fieldTwoElements)\) and hence \(\BNr(\Lk;\fieldTwoElements)=\BNr(\Lk';\fieldTwoElements)\) for any mutant links \(\Lk\) and \(\Lk'\). 
\end{theorem}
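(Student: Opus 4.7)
The plan is to deduce both statements from Theorem~\ref{thm:mutation:DD}, which says $\DD(\mu_i(T)) = \mu_i(\DD(T))$. Here the functor $\mu_i$ is induced by the algebra automorphism $\varphi_i$ of $\BNAlgH$ that fixes all saddle generators $S$ and negates some subset of $\{D_b, D_c\}$. Our task reduces to understanding how such sign-flipping automorphisms act on the immersed multicurve representatives provided by the Classification Theorem~\ref{thm_from_intro:classification}.

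First I would put $\DD(T)$ into simply-faced precurve form using Proposition~\ref{prop:PreloopToCC}. Since $\varphi_i$ affects only the coefficients of the two elementary algebra elements living in the outer $D$-faces of the parameterized 3-punctured disc (cf.\ Example~\ref{exa:ThreePuncturedDiscQuiver:Decomposed}), the combinatorial shape of the precurve -- which dots sit on which sides, and which $f$-joins connect which dots -- is preserved. Only the $\field$-valued labels on the two-sided $f$-joins inside those outer faces get multiplied by $-1$, and the switches/passing loops in the arc neighbourhoods are unchanged. Consequently the underlying immersed curves of $\BNr(T)$ and $\BNr(\mu_i(T))$ agree, and only the local systems can differ.

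The next step is a monodromy argument. For each compact component $(\gamma, X)$ of $\BNr(T)$, I would try to absorb the sign changes introduced by $\varphi_i$ via a change of basis that multiplies each generator lying on $\gamma$ by $\pm 1$. Each individual adjustment rescales all adjacent arrows by $\pm 1$, so walking once around the closed curve $\gamma$ accumulates a monodromy sign in $\{\pm 1\}$; this sign is precisely the factor by which the local system $X$ is rescaled. For non-compact components the underlying interval is simply connected, so no monodromy arises and the (trivial) local system is preserved. Equivalently, since $\varphi_i$ has order two, the induced action on local systems (considered up to similarity) must square to the identity, and combined with the preservation of the underlying curve this forces the local system change to lie in $\{\pm \id\}$. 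This establishes the first statement. The second statement follows immediately: over $\fieldTwoElements$ we have $-1 = 1$, so the local systems coincide on the nose and $\BNr(\mu_i(T);\fieldTwoElements) = \BNr(T;\fieldTwoElements)$.

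For the final statement about links, suppose $\Lk = \Lk(T_0, T)$ and $\Lk' = \Lk(T_0, \mu_i(T))$ are Conway mutants. By Theorem~\ref{theo:BNr_is_a_link_invariant} the bigraded module $\BNr(\Lk;\fieldTwoElements)$ is independent of the basepoint, so we may choose the basepoint to lie in $T_0$. Applying the Geometric Pairing Theorem~\ref{thm:pairing} gives
\[
\BNr(\Lk;\fieldTwoElements) \cong q^{1}h^{0}\,\HF\bigl(\mirror(\BNr(T_0;\fieldTwoElements)),\, \BNr(T;\fieldTwoElements)\bigr),
\]
and the analogous formula for $\Lk'$. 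Since $\BNr(\mu_i(T);\fieldTwoElements) = \BNr(T;\fieldTwoElements)$ as multicurves on the tangle boundary, the two wrapped Lagrangian Floer homologies are isomorphic, and we conclude $\BNr(\Lk;\fieldTwoElements) = \BNr(\Lk';\fieldTwoElements)$. The main technical obstacle will be the monodromy-sign argument for local systems, which requires a careful trace through the graphical calculus and moves (M1)--(M4) of Section~\ref{sec:classification} to verify that the obstruction actually lands in $\{\pm 1\}$ and to see concretely how it is computed; a secondary bookkeeping issue is handling the fact that $\mu_i$ generally permutes the four tangle ends (including the distinguished puncture $*$), which is what necessitates the appeal to basepoint independence in the link statement.
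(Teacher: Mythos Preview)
Your approach matches the paper's: deduce the curve statement from Theorem~\ref{thm:mutation:DD} and the link statement from the Pairing Theorem~\ref{thm:pairing}; your monodromy argument correctly fills in what the paper declares ``immediate.'' Two small corrections: the ``equivalently'' claim is not valid—an order-two action on similarity classes of local systems need not be scalar multiplication by $\pm1$—so drop it and rely solely on the monodromy argument, which is sound; and the basepoint concern you flag at the end is unnecessary, since in the paper's conventions $\mu_i(T)$ keeps the distinguished end fixed (the proof of Theorem~\ref{thm:mutation:DD} uses $\Omega_1$ for both $T$ and $\mu_i(T)$), so the Pairing Theorem applies directly without invoking Theorem~\ref{theo:BNr_is_a_link_invariant}.
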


\begin{proof}
	The first statement follows immediately from Theorem~\ref{thm:mutation:DD}. Then, if the field of coefficients has characteristic 2, the local systems actually agree. The final statement follows from the previous one together with Theorem~\ref{thm:pairing} for reduced Bar-Natan homology. 
\end{proof}

This gives a new proof of a result due to Bloom \cite{Bloom} and Wehrli \cite{wehrli2010mutation}:

\begin{corollary} Khovanov homology is mutation invariant over $\fieldTwoElements$.
\qed
\end{corollary}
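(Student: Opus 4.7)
The plan is to bootstrap this corollary from Theorem~\ref{thm:mutation:BNr} (mutation invariance of reduced Bar-Natan homology over $\fieldTwoElements$) using the mapping cone formula of Proposition~\ref{prop:kh_mapping_cones} together with Shumakovitch's splitting $\Kh(\Lk;\fieldTwoElements) \cong q^{-1}h^0\Khr(\Lk;\fieldTwoElements) \oplus q^{+1}h^0\Khr(\Lk;\fieldTwoElements)$. Almost all the work has already been done; this corollary should be essentially immediate.

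First I would invoke Theorem~\ref{thm:mutation:BNr}, which produces for any mutant pair $\Lk$ and $\Lk'$ an isomorphism $\BNr(\Lk;\fieldTwoElements) \cong \BNr(\Lk';\fieldTwoElements)$. The key observation is that this is not merely an isomorphism of bigraded $\fieldTwoElements$-vector spaces but in fact an isomorphism of bigraded $\fieldTwoElements[H]$-modules, as is apparent from the Pairing Theorem~\ref{thm:pairing}\eqref{pairing:BNr}: the curve invariants $\BNr(T_i;\fieldTwoElements)$ (which are mutation invariant as bigraded curves by Theorem~\ref{thm:intro:mutation:BNrTangles}) yield, via wrapped Lagrangian Floer homology, the $\fieldTwoElements[H]$-module $\BNr(\Lk;\fieldTwoElements)$, with the $H$-action encoded geometrically by wrapping around non-special punctures.

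Next, I apply Proposition~\ref{prop:kh_mapping_cones} which yields the chain homotopy equivalence
\[
\CKhr(\Lk;\fieldTwoElements) \simeq \Bigl[\begin{tikzcd}[ampersand replacement=\&] q^{-2}h^{-1}\CBNr(\Lk;\fieldTwoElements) \arrow{r}{H} \& \CBNr(\Lk;\fieldTwoElements) \end{tikzcd}\Bigr].
\]
Since the isomorphism $\BNr(\Lk;\fieldTwoElements) \cong \BNr(\Lk';\fieldTwoElements)$ is $\fieldTwoElements[H]$-linear, it induces a bigraded isomorphism on the mapping cones above, giving $\Khr(\Lk;\fieldTwoElements) \cong \Khr(\Lk';\fieldTwoElements)$.

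Finally, I would invoke the splitting $\Kh(\Lk;\fieldTwoElements) \cong q^{-1}h^0\Khr(\Lk;\fieldTwoElements) \oplus q^{+1}h^0\Khr(\Lk;\fieldTwoElements)$ (recalled just after Proposition~\ref{prop:kh_mapping_cones}, and originally due to Shumakovitch~\cite{Shum_torsion}), to conclude $\Kh(\Lk;\fieldTwoElements) \cong \Kh(\Lk';\fieldTwoElements)$. There is no real obstacle here; the only subtle point that warrants explicit mention is that the isomorphism supplied by Theorem~\ref{thm:mutation:BNr} must be lifted to an $\fieldTwoElements[H]$-module isomorphism, and this is ensured by working at the level of curves and applying the pairing theorem, rather than the bare statement of mutation invariance of $\BNr(\Lk;\fieldTwoElements)$ as a bigraded vector space.
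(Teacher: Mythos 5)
Your argument is correct and reaches the same conclusion, but it takes a somewhat more circuitous route than the one the paper leaves implicit. The paper's corollary, coming immediately after Theorem~\ref{thm:mutation:BNr} with a bare \(\qed\), is most directly read as follows: the first statement of that theorem gives \(\BNr(T;\fieldTwoElements)=\BNr(T';\fieldTwoElements)\) for mutant tangles, and hence (via the Classification Theorem) \(\DD(T;\fieldTwoElements)\simeq\DD(T';\fieldTwoElements)\); one then applies the Pairing Theorem~\ref{thm:pairing}\eqref{pairing:Kh} to the splitting \(\Lk=\Lk(T_0,T)\), \(\Lk'=\Lk(T_0,T')\) to get \(\Kh(\Lk;\fieldTwoElements)\cong\HF(\mirror(\Kh(T_0)),\BNr(T))=\HF(\mirror(\Kh(T_0)),\BNr(T'))\cong\Kh(\Lk';\fieldTwoElements)\), staying at the tangle level as long as possible. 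You instead descend to the link level first, producing the \(\fieldTwoElements[H]\)-module isomorphism \(\BNr(\Lk;\fieldTwoElements)\cong\BNr(\Lk';\fieldTwoElements)\), and then climb back up to \(\Kh\) via the mapping-cone formula of Proposition~\ref{prop:kh_mapping_cones} and Shumakovitch's splitting. Both routes are valid; yours is a bit more work, but it has the virtue of making explicit the one genuine subtlety, namely that an isomorphism of \(\BNr\) as bigraded vector spaces alone would not suffice and one needs the module structure.

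One small wrinkle in your phrasing deserves attention. You say that because the isomorphism \(\BNr(\Lk;\fieldTwoElements)\cong\BNr(\Lk';\fieldTwoElements)\) is \(\fieldTwoElements[H]\)-linear, ``it induces a bigraded isomorphism on the mapping cones above.'' But the mapping cone in Proposition~\ref{prop:kh_mapping_cones} is formed at the chain level on \(\CBNr\), whereas your isomorphism lives on homology. To close the gap you should either (i) invoke the structure theorem for bounded complexes of finitely generated free modules over the PID \(\fieldTwoElements[H]\), which shows that the \(\fieldTwoElements[H]\)-module \(\BNr(\Lk)\) determines \(\CBNr(\Lk)\) up to chain homotopy equivalence, or (ii) pass to the long exact sequence of the short exact sequence in Proposition~\ref{prop:kh_mapping_cones}, observe that over the field \(\fieldTwoElements\) the resulting short exact sequence \(0\to\coker H\to\Khr(\Lk)\to\ker H\to 0\) splits, and conclude that \(\Khr(\Lk)\) is determined as a bigraded vector space by the bigraded \(\fieldTwoElements[H]\)-module \(\BNr(\Lk)\). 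Either patch is routine, and the underlying reason your route works at all is the same as the paper's: mutation invariance is already known at the level of the curve \(\BNr(T;\fieldTwoElements)\), so one has a chain homotopy equivalence to begin with, not merely an isomorphism on homology.
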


\begin{figure}
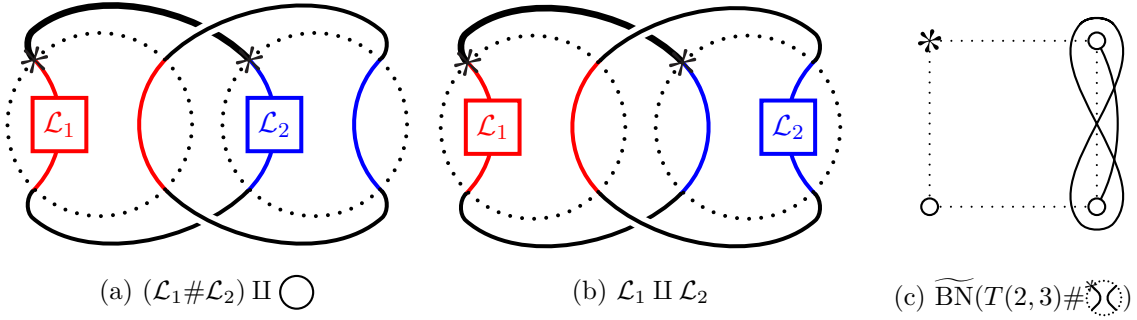

	\centering
	\begin{subfigure}{0.35\textwidth}
		\centering
		$\MutationCounterexampleII$
		\caption{\( (\Lk_1\#\Lk_2)\amalg \Circle \)}\label{fig:mutation:counterexample:II}
	\end{subfigure}
	\begin{subfigure}{0.35\textwidth}
		\centering
		$\MutationCounterexampleI$
		\caption{\( \Lk_1\amalg \Lk_2 \)}\label{fig:mutation:counterexample:I}
	\end{subfigure}
	\begin{subfigure}{0.25\textwidth}
		\centering
		$\MutationCounterexampleCurve$
		\caption{$\BNr(T(2,3)\# \Li)$}\label{fig:mutation:counterexample:curve}
	\end{subfigure}
	\caption{Counterexample for mutation invariance of Khovanov homology over $\Q$.}\label{fig:mutation:counterexample}
\end{figure}

\begin{example}\label{exa:mutation:counterexample}
	Over $\Q$, however, it is known that \(\BNr\), \(\Khr\), and \(\Kh\) are {\em not} mutation invariant. The counterexample, found by Wehrli, illustrated in Figures~\ref{fig:mutation:counterexample:II} and~\ref{fig:mutation:counterexample:I}. He observed that the disjoint union of two links \( \Lk_1 \) and \( \Lk_2 \) and the disjoint union of an unknot with the connected sum of \( \Lk_1 \) and \( \Lk_2 \) are mutant links whose Khovanov homologies can, in general, be different \cite[Theorem~3]{WehrliCounterexample}. We can explain this failure of mutation invariance using our curve invariants. Assume for simplicity that both links are equal to the trefoil knot \( \Lk_1=\Lk_2=T(2,3) \). 
Let \( T = T(2,3)\# \Li \) be the connected sum of the trefoil knot with the component of the trivial tangle \(\Li\) containing the basepoint $\ast$. We  compute
\[
\Khr(T(2,3))^{\Q[H]}
= 
[\gen]
\oplus
\begin{tikzcd}[nodes={inner sep=2pt}, column sep=30pt,ampersand replacement=\&]
[\gen
\arrow{r}{H} 
\&
\gen]
\end{tikzcd}
\]
Then by Example~\ref{example:non-prime-tangles},
\[ 
\DD(T)
=
[\DotC]
\oplus
\begin{tikzcd}[nodes={inner sep=2pt}, column sep=30pt,ampersand replacement=\&]
[\DotC
\arrow{r}{D-SS} 
\&
\DotC]
\end{tikzcd}
\]
The corresponding multicurve $\BNr(T)$ is illustrated in Figure~\ref{fig:mutation:counterexample:curve}. Its compact component carries the local system \( (-1) \). By Theorem~\ref{thm:mutation:DD}, the curve $\BNr(\mutz(T))$ corresponds to the type~D structure
	\[ 
   \DD(\mutz(T))
   =
   [\DotC]
   \oplus
   \begin{tikzcd}[nodes={inner sep=2pt}, column sep=30pt,ampersand replacement=\&]
   [\DotC
   \arrow{r}{-D-SS} 
   \&
   \DotC].
   \end{tikzcd}
  \]
	Notice that the local system on the compact curve has changed to \( (+1) \). We now see that the additional two generators of
	\[ 
	\BNr(T(2,3)\#T(2,3)\amalg \Circle; \Q)
	\cong
	\HF(\textcolor{red}{\mirror(\BNr(T))},\textcolor{blue}{\BNr(T)})
	\]
	in comparison to 
	\[ 
	\BNr(T(2,3)\amalg T(2,3); \Q)
	\cong
	\HF(\textcolor{red}{\mirror(\BNr(T))},\textcolor{blue}{\BNr(\mutz(T))})
	\]	
	come from the contribution of the local systems, ie the second summand of Equation~\eqref{eqn:MorSpacesParallel} from Theorem~\ref{thm:PairingFormula}. In the first case, the two local systems are both \( (-1) \), so the dimension of 
	\[ \ker\left((X^{-1})^t\otimes X'-\id\right)=\ker\left((-1)\otimes (-1)-\id\right)=\ker(0)\] 
	is 1. In the second case, the local systems are \( (-1) \) and \( (+1) \), so
	\[ \ker\left((X^{-1})^t\otimes X'-\id\right)=\ker\left((-1)\otimes (+1)-\id\right)=\ker(-2),\] 
	which is zero, unless we work over $\fieldTwoElements$. We can argue similarly for the reduced and unreduced Khovanov homology of these two links.
\end{example}

\begin{remark}
This example has a satisfying consequence: while non-trivial local systems have not appeared to date in any of the immersed curve invariants arising in Heegaard Floer theory \cite{HRW, HRW-prop,pqSym,pqMod}, we see that there is a meaningful contribution from local systems in the context of Khovanov homology. 
\end{remark}

\begin{definition}
	A Conway mutation is said to be component preserving if each tangle end belongs to the same tangle component before and after the mutation. 
\end{definition}

The following conjecture remains open in general.

\begin{conjecture}
	\(\BNr(\Lk;\Z)\), \(\Khr(\Lk;\Z)\), and \(\Kh(\Lk;\Z)\)  are invariant under component preserving Conway mutation. In particular, \(\BNr(\Knot;\Z)\), \(\Khr(\Knot;\Z)\), and \(\Kh(\Knot;\Z)\) are preserved by mutation if $\Knot$ is a knot.
\end{conjecture}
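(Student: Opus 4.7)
My plan is to combine Theorem~\ref{thm:mutation:DD} with a careful sign-tracking argument that exploits the component-preserving hypothesis. By that theorem, $\DD(\muti(T)) = \muti(\DD(T))$, where $\muti$ acts as the identity on $S$ and as $\pm 1$ on the dot generators. Thanks to the pairing Proposition~\ref{prop:alg_pairing_kh} and Theorem~\ref{thm:pairing}, invariance of $\BNr(\Lk;\Z)$ reduces to producing a bigraded chain homotopy equivalence
\[
\Mor\bigl(\DD(T_0),\DD(T)\bigr)\simeq \Mor\bigl(\DD(T_0),\muti(\DD(T))\bigr),
\]
natural in $\DD(T_0)$, whenever the matching of tangle components at the four endpoints agrees for $T$ and $\muti(T)$. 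The mapping cone formulas of Proposition~\ref{prop:kh_mapping_cones} will then upgrade this to the corresponding statements for $\Khr(\Lk;\Z)$ and $\Kh(\Lk;\Z)$.

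The first step is to decompose the generators of $\DD(T)$ according to the component structure at each vertex $v$ of the cube of resolutions: the crossingless tangle $\mathcal{D}(v)$ partitions the four endpoints into components, and after delooping, the surviving generators are recorded in idempotents $\DotB,\DotC$. The component-preserving hypothesis provides, for each $v$, a canonical bijection between the components of $\mathcal{D}(v)$ and those of $\muti(\mathcal{D}(v))$ that matches endpoints consistently. The next step is to use this bijection to assign a sign $\epsilon_v \in \{\pm 1\}$ to each generator so that the $\Z$-linear map $x \mapsto \epsilon_{v(x)}\cdot x$ intertwines the differentials of $\DD(T)$ and $\muti(\DD(T))$. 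Compatibility with saddle morphisms is automatic (as $\muti$ fixes $S$), so the entire check amounts to controlling how $\epsilon_v$ behaves across dot morphisms: here Observation~\ref{obs:mutation} is decisive, since it tells us the sign in $\omega_i(D)$ is governed precisely by whether two specified endpoints lie on the same component of $\mathcal{D}(v)$, which is exactly the data preserved under component-preserving mutation.

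The main obstacle will be global consistency of the $\epsilon_v$. A priori, the signs are defined vertex by vertex, but the differential of $\DD(T)$ also contains higher powers of $D$ and compositions $SS$ coming from delooping. Ensuring that $\epsilon_v$ extends coherently through all such compositions requires the sign ambiguity in Observation~\ref{obs:mutation} to be canceled by a change of basis that is consistent across the entire cube; the counterexample in Example~\ref{exa:mutation:counterexample} shows that this consistency genuinely fails when the mutation changes the component structure, so the argument must use the hypothesis in an essential way. I would attempt to package the sign assignment as a 1-cochain on the cube of resolutions whose coboundary captures the discrepancy between $\DD(T)$ and $\muti(\DD(T))$, and to show this coboundary vanishes precisely when the mutation is component preserving.

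As an alternative route, one could try to lift Theorem~\ref{thm:MCGaction} to $\Z$-coefficients: component-preserving mutation corresponds to a specific element of the mapping class group of the 4-punctured sphere fixing the distinguished puncture, so $\Z$-naturality of the curve invariants would immediately give mutation invariance. However, the proof of Theorem~\ref{thm:MCGaction} makes essential use of characteristic $2$ (in particular, in the cancellations for the bimodule $\BNTwisting$), so this route demands independently controlling the signs in the twisting bimodule. For the special case that $\Knot$ is a knot, the component-preserving hypothesis is satisfied for \emph{every} Conway mutation that produces another knot, and this already gives the stated corollary.
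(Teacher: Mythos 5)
This statement is labelled a \emph{conjecture} in the paper, and the sentence immediately preceding it reads ``The following conjecture remains open in general.'' There is no proof to compare against: the authors are not claiming to prove this, and you should not either. The task here was to recognize that the result is unproven; a ``blind proof'' of an explicitly open conjecture should immediately raise a red flag.

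With that said, your strategy is a reasonable sketch of why one might \emph{hope} the conjecture is true, but it is not a proof, and you do not close the gap you yourself identify. Your plan is to define signs $\epsilon_v$ vertex-by-vertex on the cube of resolutions and show that $x\mapsto\epsilon_{v(x)}\cdot x$ intertwines the differentials of $\DD(T)$ and $\muti(\DD(T))$; you then flag ``global consistency of the $\epsilon_v$'' as the main obstacle and say you ``would attempt to package the sign assignment as a 1-cochain... and to show this coboundary vanishes.'' That last sentence is a statement of intent, not an argument. Concretely, two things are unresolved. First, the generators of $\DD(T)$ do not live on the cube of resolutions: $\DD(T)$ is obtained from $\KhTl{T}$ by delooping, so a single vertex can contribute several generators and the differentials are assembled from compositions of saddle, dot, and delooping maps; the parametrization ``one sign per vertex $v$'' is therefore not even well-posed as stated. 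Second, Observation~\ref{obs:mutation} only controls the sign of a single $D$-arrow via component data at a single resolution; the component-preserving hypothesis makes this consistent \emph{locally}, but whether the local choices can be realized by one global $\Z$-linear basis change is exactly the open question, and Example~\ref{exa:mutation:counterexample} shows the obstruction is genuine (there the cohomological obstruction you posit is nonzero and detected by a local system $(-1)$). Your alternative route via lifting Theorem~\ref{thm:MCGaction} to $\Z$ is, by your own admission, blocked by the same need to control signs in the twisting bimodule. Finally, the remark that the knot case ``already gives the stated corollary'' is misleading: it would follow from the main statement, but the main statement has not been established.
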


\begin{definition}\label{def:arc-and-compact-components}
	Recall from Section~\ref{sec:FigureEight} that the reduced Bar-Natan homology \( \BNr(T) \) of a pointed 4-ended tangle \( T \) necessarily contains at least one non-compact component, and may or may not contain compact components. 
	We denote the non-compact part (or \( arc \) part) of \( \BNr(T) \) by \( \BNra(T) \), and the compact part by \( \BNrc(T) \). We also write \( \DDa(T) \) and \( \DDc(T) \) for the two summands of \( \DD(T) \) which correspond to \( \BNra(T) \) and \( \BNrc(T) \), respectively.
\end{definition}

\begin{corollary}\label{cor:mutation_preserves_arc_component}
Mutation preserves \(\BNra(T)\), and therefore also preserves \(\DDa(T)\).
\end{corollary}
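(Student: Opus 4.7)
The plan is to deduce this directly from Theorem~\ref{thm:mutation:BNr} combined with the classification of type~D structures in terms of immersed curves (Theorem~\ref{thm:CompleteClassification}). First I would invoke Theorem~\ref{thm:mutation:BNr}, which asserts that $\BNr(T)$ and $\BNr(\muti(T))$ share the same underlying immersed curves and that their local systems differ at most by multiplication by $\pm 1$.

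The key observation will be that this $\pm 1$ ambiguity can only affect compact components. Indeed, by Definition~\ref{def:curves}, a non-compact curve is required to carry the trivial local system $X = \id \in \GL_n(\field)$; its only invariantly meaningful datum beyond the underlying curve is the positive integer $n$, and this integer is unchanged whether we scale $\id$ by $+1$ or $-1$. Consequently, the non-compact parts of the two multicurves coincide on the nose: $\BNra(\muti(T)) = \BNra(T)$.

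For the second half of the statement, I would appeal to Theorem~\ref{thm:CompleteClassification}, which identifies homotopy equivalence classes of type~D structures over $\BNAlgH$ with bigraded multicurves via the map $\Pi$. Because $\Pi$ carries disjoint unions of multicurves to direct sums of type~D structures, the geometric splitting of $\BNr(T)$ into compact and non-compact components corresponds to a well-defined (up to homotopy) direct sum decomposition $\DD(T) \simeq \DDa(T) \oplus \DDc(T)$, which is precisely Definition~\ref{def:arc-and-compact-components}. Applying this to $\muti(T)$ and combining with $\BNra(\muti(T)) = \BNra(T)$ then yields $\DDa(\muti(T)) \simeq \DDa(T)$, as desired.

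The only point requiring care---rather than a genuine obstacle---will be justifying that the summand $\DDa(T)$ is an invariant of the homotopy class of $\DD(T)$ and not merely of a particular representative; but this is a formal consequence of Theorem~\ref{thm:CompleteClassification}, since the bijection between multicurves and homotopy classes of type~D structures respects the compact/non-compact dichotomy. In particular, one does not need to analyse the action of $\muti$ directly on $\DDa(T)$---the classification turns a statement about signs in the algebra into the geometric statement about local systems that was already handled in Theorem~\ref{thm:mutation:BNr}.
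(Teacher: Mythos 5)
Your proof is correct and takes essentially the same approach as the paper: the paper's own one-line argument is that $\BNra(T)$ consists only of immersed arcs, whose local systems are trivial, so the $\pm 1$ ambiguity of Theorem~\ref{thm:mutation:BNr} cannot affect the non-compact part. Your additional discussion of Theorem~\ref{thm:CompleteClassification} to justify the splitting $\DD(T)\simeq\DDa(T)\oplus\DDc(T)$ is sound elaboration but does not change the argument.
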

\begin{proof}
This follows from Theorem~\ref{thm:mutation:BNr}, since \( \BNra(T) \) consists of only immersed arcs, and the local system on any immersed arc is trivial.
\end{proof}

\begin{theorem}
For any two mutant knots \(\Knot\) and \(\Knot'\) and any field \(\field\), \(s^\field(\Knot)=s^\field(\Knot')\).
\end{theorem}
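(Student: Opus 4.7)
The plan is to identify $s^\field(\Knot)$ with the $q$-grading of the unique $\field[H]$-tower in $\BNr(\Knot;\field)$ and to show that this tower is determined by the mutation-invariant non-compact (arc) part of the tangle invariant. By Definition~\ref{def:mutation}, write $\Knot=\Lk(T_0,T)$ and $\Knot'=\Lk(T_0,\muti(T))$ for some pointed 4-ended tangles $T_0,T$ and some $i\in\{x,y,z\}$. The Pairing Theorem~\eqref{pairing:BNr} gives, up to a grading shift, an isomorphism of bigraded $\field[H]$-modules
\[
\BNr(\Knot;\field)\cong qh^{0}\,\HF(\mirror(\BNr(T_0;\field)),\BNr(T;\field)),
\]
and analogously for $\Knot'$ with $\muti(T)$ replacing $T$.

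First I would decompose each of $\BNr(T)=\BNra(T)\cup\BNrc(T)$ and $\BNr(T_0)=\BNra(T_0)\cup\BNrc(T_0)$ according to Definition~\ref{def:arc-and-compact-components}, inducing a direct sum decomposition of $\field[H]$-modules
\[
\HF(\mirror(\BNr(T_0)),\BNr(T))=\bigoplus_{\star,\bullet\in\{a,c\}}\HF(\mirror(\BNr_\star(T_0)),\BNr_\bullet(T)).
\]
The key observation is that every summand involving at least one compact component is finite-dimensional over $\field$, and hence pure $H$-torsion. Indeed, a compact curve has only finitely many transverse intersection points with any other multicurve (it stays in a compact region away from the punctures, whereas the wrapping behavior producing infinite intersection counts, and hence $\field[H]$-towers, can only occur between two non-compact curves ending at a common puncture); so Theorem~\ref{thm:PairingFormula} yields finite-dimensional Floer homology, and since $H$ strictly lowers the $q$-grading, its action on such a bigraded finite-dimensional module must be nilpotent. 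Consequently, the $\field[H]$-tower structure of $\BNr(\Knot;\field)$ is captured entirely by the arc-arc summand $qh^0\,\HF(\mirror(\BNra(T_0;\field)),\BNra(T;\field))$, and the same holds for $\Knot'$.

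Next, by Proposition~\ref{prop:towersReduced-field}, $\BNr(\Knot;\field)_{\field[H]}$ contains a single $\field[H]$-tower (since $|\Knot|=1$), and by Proposition~\ref{prop:s-inv-as-grading} its generator lies in bigrading $q^{s^\field(\Knot)}h^0$. Corollary~\ref{cor:mutation_preserves_arc_component} gives $\BNra(\muti(T);\field)=\BNra(T;\field)$, so the arc-arc summands for $\Knot$ and $\Knot'$ coincide as bigraded $\field[H]$-modules. Therefore their unique towers lie in the same $q$-grading, yielding $s^\field(\Knot)=s^\field(\Knot')$.

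The main obstacle is pinning down the finite-dimensionality claim for pairings involving a compact component; this is morally contained in Theorem~\ref{thm:PairingFormula}, but one has to verify it applies with one compact and one non-compact curve, and to handle the corner case where two compact curves are parallel (where the correction term coming from the local systems is still finite-dimensional). Once this is in place, the remainder of the argument is purely formal, assembling the Pairing Theorem, the arc-compact decomposition, and the mutation invariance of $\BNra(T)$.
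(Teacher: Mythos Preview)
Your proof is correct and follows essentially the same approach as the paper: decompose the pairing into arc/compact pieces, observe that any summand involving a compact component is finite-dimensional and hence $H$-torsion, so the unique tower lives in the arc-arc summand, and then invoke Corollary~\ref{cor:mutation_preserves_arc_component}. The only cosmetic difference is that the paper phrases the decomposition in terms of the algebraic morphism spaces $\Homology(\Mor(\mirror(\DD_i(T_0)),\DD_j(T)))$ via Proposition~\ref{prop:alg_pairing_kh}, whereas you use the geometric $\HF$ side of the Pairing Theorem; and the paper treats the finite-dimensionality of pairings with a compact curve as evident rather than citing Theorem~\ref{thm:PairingFormula}, so your ``main obstacle'' is not something the paper resolves more carefully than you do.
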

\begin{proof}
Suppose \( \Knot=\Lk(T_0,T) \) and \( \Knot'=\Lk(T_0,T') \), where \( T' \) is obtained from \( T \) by Conway mutation.
Then by Proposition~\ref{prop:alg_pairing_kh}:
\[ 
\BNr(\Knot) 
\cong 
\Homology(\Mor(\mirror(\DD(T_0)),\DD(T)))
\cong
\bigoplus_{i,j \in \{a,c\}} 
\Homology(\Mor(\mirror(\DD_i(T_0)),\DD_j(T)))
\]
Note that this splitting respects the \( \field[H] \)-module structure. Now the key observation is the following: the \( \field[H] \)-tower of \( \BNr(\Knot) \) (which must exist by Proposition~\ref{prop:towersReduced-field}) is contained in the $\DDa$-$\DDa$ summand
\[ 
\Homology(\Mor(\mirror(\DDa(T_0)),\DDa(T))) 
.\]
This is because the other three summands compute the Lagrangian Floer homology of a multicurve with \emph{compact} curves, which is necessarily finite dimensional. Similarly, the \( \field[H] \)-tower of \( \BNr(\Knot') \) is also contained in the $\DDa$-$\DDa$ summand
\[ 
\Homology(\Mor(\mirror(\DDa(T_0)),\DDa(T'))) 
.\]
By Proposition~\ref{prop:s-inv-as-grading}, \( s^\field(\Knot) \) and \( s^\field(\Knot') \) are the \( q \)-gradings of the top generators of those two towers, respectively. 
By Corollary~\ref{cor:mutation_preserves_arc_component}, \(\DDa(T)\) and \(\DDa(T')\) are graded isomorphic. Therefore, the two $\DDa$-$\DDa$ summands above are isomorphic as \( \field[H] \)-modules,
and so, in particular, the two \( \field[H] \)-towers start in the same quantum grading.
\end{proof}

\section{\texorpdfstring{Khovanov homology of infinitely twisted knots}{Khovanov homology of infinitely twisted knots}}\label{sec:InfinitelyStranded}
In this section, we define new invariants by applying certain representable functors to our algebraic tangle invariants $\DD(T)$. Geometrically, this corresponds to pairing $\BNr(T)$ with particular immersed curves. While we will emphasize this geometric perspective, we will carry out all constructions in this section algebraically. 
We work over \( \fieldTwoElements \) throughout.

\subsection{Invariants of infinitely twisted knots}
The central objects for this section will be \textbf{\( (2,-\infty) \)-twisted knots}. These are defined as unoriented knot-like objects in \( S^3 \), which admit a decomposition \( \Knot=\Lk(T_{-\infty},T) \) where \( T \) is an ordinary unoriented 4-ended tangle
and \( T_{-\infty} \) is a 4-ended tangle with \( -\infty \) twists (this is a formal object, illustrated in Figure~\ref{fig:Kinf_intro}). 
Denote a \( (2,-\infty) \)-twisted knot by \( \Knot_\infty \). These are topological objects in the sense that they are defined up to isotopy of the smooth part \( T \) (that is away from a ball containing  \( T_{-\infty} \)) and up to adding and removing twists \( \CrossingL \) and \( \CrossingR \) on the top and on the bottom of \( T \). The latter move is explained by the fact that the \( \infty \) part \( T_{-\infty} \) can ‘‘produce'' and ‘‘absorb'' arbitrarily many twists, both at the top and on the bottom. The \( (2,+\infty) \)-twisted knots are defined in the same way; we focus on the \( -\infty \) case noting that the \( +\infty \) case is similar.

From Example~\ref{ex:curves_for_rational_tangles} we observe that there exists a limit \( \lim_{n\to \infty}\DD(\mirror T_{-n})\) and take this as a definition of the type~D structure \( \DD(\mirror T_{-\infty}) \), which we denote by \( \DDinf \) below. 
Analogously, there exists a limit \( \lim_{n\to \infty}
\big[
\begin{tikzcd}
\DD(\mirror T_{-n})
\arrow{r}{H}
&
\DD(\mirror T_{-n})
\end{tikzcd}
\big]
\) and we use this limit as a prototype for 
\(\big[
\begin{tikzcd}
\DD(\mirror T_{-\infty})
\arrow{r}{H}
&
\DD(\mirror T_{-\infty})
\end{tikzcd}
\big]\), which we denote by \( \DDinfinf \).

\begin{definition}
We specify the following type~D structures over $\BNAlgH$:
\begin{equation*}
\begin{split}
\DDinf &\coloneqq  
\left[
\begin{tikzcd}[ampersand replacement=\&]
\DotC \arrow{r}{S}  \& \DotB \arrow{r}{D} \& \DotB  \arrow{r}{SS} \& \DotB  \arrow{r}{D} \& \DotB  \arrow{r}{SS} \& \cdots 
\end{tikzcd}
\right]\\
\DDinfinf  & \coloneqq
    \left[
    \begin{tikzcd}[ampersand replacement=\&]
    \DotC \arrow{r}{S}\arrow{d}{D}  \& \DotB \arrow{r}{D} \& \DotB  \arrow{r}{SS} \& \DotB  \arrow{r}{D} \& \DotB  \arrow{r}{SS} \& \cdots \\
    \DotC \arrow{r}{S} \& \DotB \arrow{r}{D} \& \DotB  \arrow{r}{SS} \& \DotB  \arrow{r}{D} \& \DotB  \arrow{r}{SS} \& \cdots 
    \end{tikzcd}
    \right]
\end{split}
\end{equation*}
\end{definition}

Note that \( \DDinf \) and \( \DDinfinf \) are infinite dimensional, and so they are objects of \( \Mod_\infty^{\BNAlgH} \), rather than \( \Mod^{\BNAlgH} \); see Definition~\ref{def:infinite_D_structures}. The following definition is motivated by the Pairing Theorem (Theorem~\ref{thm:pairing}) and the decomposition \( \Knot_\infty=\Lk(T_{-\infty},T) \).

\begin{definition}\label{def:BNr_Khr_inf_invariants}
With a \( (2,-\infty) \)-twisted knot \( \Knot_\infty \) we associate the vector spaces
\[ 
\BNr(\Knot_\infty)
\coloneqq 
\Homology (\Mor_{\fs}(\DDinf,\DD(T))) 
\qquad
\Khr(\Knot_\infty)
\coloneqq 
\Homology (\Mor_{\fs}(\DDinfinf,\DD(T)))
\]
where \( \Mor_{\fs} \) specifies that only compact (and therefore finite) support elements of the morphism spaces are considered.
\end{definition}

We can view \(\DDinf\) and \(\DDinfinf\) as the complexes corresponding to the curves 
$\Linf$ and $\Linfinf$ in Figure~\ref{fig:inf_curves}, which in turn are the limits of the curves \(\BNr(\mirror(T_{-n}))\) and \(\Khr(\mirror(T_{-n}))\), respectively.
\begin{figure}[t]
  \centering
  \begin{subfigure}[t]{0.4\textwidth}
  \includegraphics[scale=0.6]{figures/Linf}
  \caption{\( \Linf =  \lim_{n\to \infty} \mirror(\BNr (T_{-n})) \)}
  \label{fig:inf_curve_1}
  \end{subfigure}
  \begin{subfigure}[t]{0.4\textwidth}
  \includegraphics[scale=0.6]{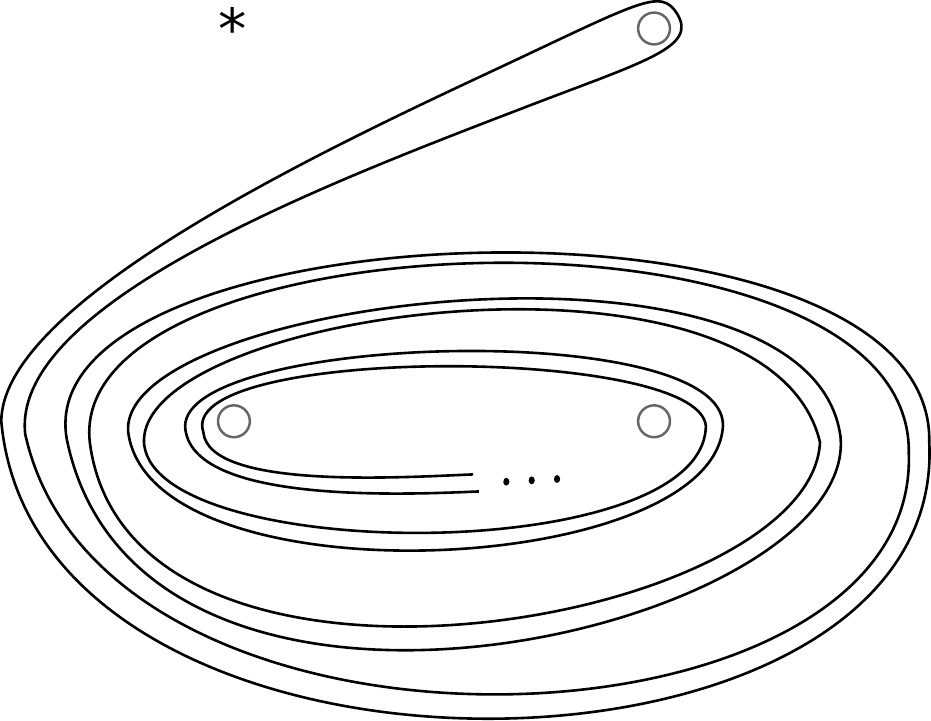}
  \caption{\( \Linfinf = \lim_{n\to \infty} \mirror(\Khr (T_{-n})) \)}
  \label{fig:inf_curve_2}
  \end{subfigure}
  \caption{}
  \label{fig:inf_curves}
\end{figure}
This suggests that we may regard \( \BNr(\Knot_\infty) \) and \( \Khr(\Knot_\infty) \) as the wrapped Lagrangian Floer homologies \( \HF(\Linf,\BNr (T)) \) and \( \HF(\Linfinf,\BNr (T)) \), respectively. Below, in Theorem~\ref{thm:lifting_to_D_str}, we will see how this point enables us to compute  \( \BNr(\Knot_\infty) \) in terms of $\DD(T)$.

The infinite twisting in \( \Knot_\infty \) endows the invariants \( \BNr(\Knot_\infty) \) and \( \Khr(\Knot_\infty) \) with rich module structures. Consider the following bigraded algebras:
\begin{align*}
&\RxH \coloneqq \fieldTwoElements[x,H]/(\xben H), \ \gr(\xben)=q^{-4}\delta^0, \gr(H)=q^{-2}\delta^{-1} \\
&\Rxy \coloneqq \fieldTwoElements[\xben,\yben]/(\xben^3=\yben^2), \ \gr(\xben)=q^{-4}\delta^{0}, \gr(\yben)=q^{-6}\delta^{0}\\
&\RzT \coloneqq \fieldTwoElements\langle z,T \rangle/ (z^2,T^2), \ \gr(T)=q^{2}\delta^0, \gr(z)=q^{-6}\delta^{0}
\end{align*}
Note that there is an inclusion \( \Rxy \subset \RzT, \ \xben \mapsto zT+Tz, \yben \mapsto z+TzTzT \). These algebras arise as endomorphism algebras:
\begin{align*}
\RxH &\subset \Mor(\DDinf,\DDinf) , \ 1\mapsto \id,
\xben \mapsto \{ \text{dotted arrows below} \},
H \mapsto \{ \text{dashed arrow below} \},\\
&\begin{tikzcd}[ampersand replacement = \&,column sep =1.4cm]
\DotC \arrow{r}{S} \arrow[lr, in=+135,out=+45,looseness=4, blue, dashed , "D" description]  \& \DotB \arrow{r}{D} \& \DotB \arrow[ll, bend right, dotted, "S" description]  \arrow{r}{SS} \& \DotB \arrow[ll, bend right, dotted, "1" description]  \arrow{r}{D} \& \DotB \arrow[ll, bend right,dotted, "1" description]  \arrow{r}{SS} \& \cdots \arrow[ll, bend right,dotted, "\cdots" description] 
\end{tikzcd}
\\
\Rxy &\subset  \Mor(\DDinfinf,\DDinfinf) , \ 1\mapsto \id,
\xben \mapsto \{ \text{dotted arrows below} \},
\yben \mapsto \{ \text{dashed arrows below} \},\\
  &\begin{tikzcd}[ampersand replacement = \&, row sep=1.4cm,column sep =1.4cm]
  \DotC \arrow{r}{S}\arrow{d}{D}  
  \& \DotB\arrow{r}{D} 
  \& \DotB\arrow[ll, bend right, dotted, "S" description]  \arrow{r}{SS} 
  \& \DotB\arrow[ll, bend right, dotted, "1" description] \arrow{r}{D} 
  \& \DotB \arrow{r}{SS} \arrow[ll, bend right, dotted, "1" description] \arrow[dllll, blue, dashed, near end, "S" description]  
  \& \DotB  \arrow{r}{D} \arrow[ll, bend right, dotted, "1" description] \arrow[dllll, blue, dashed, near end, "1" description] 
  \& \cdots \arrow[dllll, blue, dashed, near end, "1" description] \arrow[ll, bend right, dotted, "\cdots" description] \&  \arrow[dllll, blue, dashed,  near end, "\cdots" description] \\
  \DotC \arrow[r,"S" below]
  \& \DotB \arrow[r,"D" below]
  \& \DotB \arrow[ll, bend left, dotted, "S" description] \arrow[r,"SS" below] \arrow[llu, blue, dashed, near end, "S" description] 
  \& \DotB \arrow[ll, bend left, dotted, "1" description]  \arrow[r,"D" below] \arrow[llu, blue, dashed, near end, "1" description]  
  \& \DotB \arrow[ll, bend left, dotted, "1" description] \arrow[r,"SS" below] \arrow[llu, blue, dashed, very near end, "1" description] 
  \& \DotB  \arrow[r,"D" below] \arrow[ll, bend left, dotted, "1" description] \arrow[llu, blue, dashed, very near end, "1" description]  
  \& \cdots \arrow[ll, bend left, dotted, "\cdots" description] \arrow[llu, blue, dashed, near end,"\cdots" description] 
\end{tikzcd}
\\
\RzT &\subset  \Mor(\DDinfinf,\DDinfinf) , \ 1\mapsto \id,
z \mapsto \{ \text{dotted arrows below} \},
T \mapsto \{ \text{dashed arrows below} \},\\
&\begin{tikzcd}[ampersand replacement = \&, row sep=1.4cm,column sep =1.4cm]
  \DotC \arrow[d, blue, dashed, bend right=45, "1" description]  \arrow{r}{S}\arrow{d}{D}  \& \DotB \arrow[d, blue, dashed, bend right=45, "1" description] \arrow{r}{D} \& \DotB \arrow[d, blue, dashed, bend right=45, "1" description]  \arrow{r}{SS} \& \DotB \arrow[d, blue, dashed, bend right=45, "1" description]  \arrow{r}{D} \& \DotB \arrow[d, blue, dashed, bend right=45, "1" description]  \arrow{r}{SS} \& \cdots \arrow[d, blue, dashed, bend right=45, "\cdots" description]  \\
  \DotC \arrow{r}{S} \& \DotB \arrow{r}{D} \& \DotB \arrow[llu, dotted, bend right=10, "S" description] \arrow{r}{SS} \& \DotB \arrow[llu, dotted, bend right=10, "1" description]  \arrow{r}{D} \& \DotB \arrow[llu, dotted, bend right=10, "1" description] \arrow{r}{SS} \& \cdots \arrow[llu, dotted, bend right=10, "\cdots" description] 
\end{tikzcd}
\end{align*}

\begin{definition}\label{def:module_structure}
Endow \( \BNr(\Knot_\infty)\) with an \( \RxH \)-module structure 
\(\BNr(\Knot_\infty)\otimes \RxH \rightarrow \BNr(\Knot_\infty) \) 
by  precomposition with elements of \( \RxH \subset \Mor(\DDinf,\DDinf) \).
Similarly, we can regard \(\Khr(\Knot_\infty)\) as an \(\RzT\)-module or an \( \Rxy \)-module.
\end{definition}

We find it useful to think of these actions as products in the Fukaya category: remembering that, morally, we have \(\BNr(\Knot_\infty)=\HF(\Linf,\BNr (T))\) and \(\Khr(\Knot_\infty)=\HF(\Linfinf,\BNr (T))\), the product structure in the Fukaya category 
\begin{align*}
&\HF(\Linf,\BNr (T))\otimes \HF(\Linf,\Linf) \rightarrow \HF(\Linf,\BNr (T))  \\
&\HF(\Linfinf,\BNr (T))\otimes \HF(\Linfinf,\Linfinf) \rightarrow \HF(\Linfinf,\BNr (T))
\end{align*}
explains the action
\(
\BNr(\Knot_\infty)\otimes \RxH \rightarrow \BNr(\Knot_\infty)\) and the action 
\(\Khr(\Knot_\infty)\otimes \RzT \rightarrow \Khr(\Knot_\infty).
\)

\begin{theorem}\label{thm:inf_twisted_invariants_well_defined}
The modules \( \BNr(\Knot_\infty)_{\RxH} \) and \( \Khr(\Knot_\infty)_{\RzT} \) are well-defined invariants of \( (2,-\infty) \)-twisted knots. 
\end{theorem}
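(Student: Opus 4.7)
The plan is to show that $\BNr(\Knot_\infty)_{\RxH}$ and $\Khr(\Knot_\infty)_{\RzT}$ are independent of the chosen decomposition $\Knot_\infty = \Lk(T_{-\infty}, T)$. Two such decompositions of the same $(2,-\infty)$-twisted knot differ by a sequence of moves of two kinds: (i) an isotopy of the smooth tangle $T$ fixing a neighbourhood of the sphere along which $T$ is glued to $T_{-\infty}$, and (ii) transferring a single crossing $\CrossingL$ or $\CrossingR$ across that sphere, between $T$ and $T_{-\infty}$.

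Invariance under (i) is essentially automatic: the chain homotopy type of $\DD(T)$ is an invariant of the pointed 4-ended tangle (Definition~\ref{def:MainTangleInvariantTypeD}), so the morphism complexes $\Mor_{\fs}(\DDinf,\DD(T))$ and $\Mor_{\fs}(\DDinfinf,\DD(T))$ are well-defined up to quasi-isomorphism. The module actions in Definition~\ref{def:module_structure} are defined by precomposition with a fixed family of endomorphisms of $\DDinf$ and $\DDinfinf$, so they are transported functorially along these quasi-isomorphisms.

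For invariance under (ii), Proposition~\ref{prop:AddingASingleCrossing} shows that moving a crossing into $T$ via a Dehn twist $\tau$ replaces $\DD(T)$ by $\DD(T) \boxtimes \BNTwisting$ (with the appropriate sign). The central algebraic claim is the chain homotopy equivalence
\begin{equation*}
\DDinf \boxtimes \bimoduleG \simeq \DDinf
\end{equation*}
in $\ModInf^{\BNAlgH}$, where $\bimoduleG$ is a bimodule representing the inverse Dehn twist absorbed into $T_{-\infty}$; an analogous equivalence holds for $\DDinfinf$. Geometrically, this reflects the fact that the curves $\Linf$ and $\Linfinf$, which wind infinitely many times around the two top punctures of $\FourPuncturedSphere$, are invariant under the Dehn twist in question. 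Algebraically, I would produce the equivalence by telescoping the finite-tangle identifications $\DD(\mirror T_{-n-1}) \simeq \DD(\mirror T_{-n}) \boxtimes \bimoduleG$ as $n \to \infty$, exploiting the structure of $\DDinf$ and $\DDinfinf$ as direct limits of truncations. Granted this, together with an adaptation of the pairing adjunction \cite[Theorem~1.21]{pqMod} to the finite-support morphism complexes of $\ModInf^{\BNAlgH}$, we obtain
\begin{equation*}
\Mor_{\fs}(\DDinf, \DD(T) \boxtimes \BNTwisting)
\simeq
\Mor_{\fs}(\DDinf \boxtimes \bimoduleG, \DD(T))
\simeq
\Mor_{\fs}(\DDinf, \DD(T)),
\end{equation*}
and similarly for $\Khr$.

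The main obstacle is ensuring that this quasi-isomorphism intertwines the module actions. The ring elements $\xben, H \in \RxH$ and $z, T \in \RzT$ are realized by specific, non-identity endomorphisms of $\DDinf$ and $\DDinfinf$ displayed after Definition~\ref{def:module_structure}; one must verify that the telescoping equivalence carries each of these endomorphisms to itself up to chain homotopy. A further technical point is to confirm that the restriction to finite-support morphism complexes is compatible with the telescoping construction, so that the equivalences of infinite type~D structures genuinely descend to quasi-isomorphisms of the complexes used in Definition~\ref{def:BNr_Khr_inf_invariants}.
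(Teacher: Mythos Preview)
Your overall strategy matches the paper's: reduce to invariance under isotopy of $T$ and under adding/removing a single twist, and for the latter use that $\DDinf$ and $\DDinfinf$ are fixed by the relevant Dehn-twist bimodule. But the paper's execution is sharper in one decisive way, and you are missing a minor point.

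The decisive difference is that the paper does not build the equivalence $\DDinf\boxtimes\tau\simeq\DDinf$ via a telescoping limit argument; it simply observes that $\DDinf\boxtimes{}_{\BNAlgH}\tau^{\BNAlgH}=\DDinf$ is a literal \emph{equality} of type~D structures (here $\tau$ is the bottom half-Dehn-twist bimodule, the one from Figure~\ref{fig:BNTwistingAD} with $\DotB$ and $\DotC$ swapped). This collapses precisely the part you flag as the main obstacle. The paper's argument for the vector-space invariance is then the two-step chain
\[
\Mor_{\fs}(\DDinf,\DD(T))\;\simeq\;\Mor_{\fs}(\DDinf\boxtimes\tau^{\pm1},\DD(T)\boxtimes\tau^{\pm1})\;=\;\Mor_{\fs}(\DDinf,\DD(T)\boxtimes\tau^{\pm1}),
\]
where the first map is the quasi-autoequivalence $-\boxtimes\tau^{\pm1}$ of $\Mod_\infty^{\BNAlgH}$ (citing \cite[Lemmas~2.3.3 and~2.4.9]{LOT-bim}), not a pairing adjunction. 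For the module structure, because the second step is an equality, one only has to check that the endomorphisms $\xben\boxtimes\id$ and $H\boxtimes\id$ of $\DDinf\boxtimes\tau=\DDinf$ agree on the nose with $\xben$ and $H$; this is a finite computation using the explicit formula for $(-)\boxtimes\id$ from \cite[discussion before Lemma~2.3.3]{LOT-bim}, and similarly for $z,T$ acting on $\DDinfinf$. Your telescoping route would in principle work but leaves you tracking endomorphisms through an infinite tower of homotopies, which is exactly what the equality sidesteps.

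Two smaller remarks. First, you omit the basepoint issue: the definition of $\DD(T)$ depends on a distinguished tangle end, and the paper notes that over $\fieldTwoElements$ this choice is irrelevant (Observation~\ref{obs:mutation}). Second, your symbol $\bimoduleG$ for the Dehn-twist bimodule clashes with the paper's later use of $\bimoduleG$ for the bimodule in Theorem~\ref{thm:lifting_to_D_str}; the paper writes ${}_{\BNAlgH}\tau^{\BNAlgH}$ here.
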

\begin{proof}
We address the issues of Definition~\ref{def:module_structure} one by one.

\textbf{1) The absence of marking.} The definitions of \( \BNr(\Knot_\infty)\) and \( \Khr(\Knot_\infty) \) depend on \( \DD(T) \); recall that according to Definition~\ref{def:MainTangleInvariantTypeD} the latter object depends on the choice of marked point. However, since we are working over \( \fieldTwoElements \), the marked point does not affect the type~D structure \( \DD(T) \); see Observation~\ref{obs:mutation}. 

\textbf{2) Invariance as vector spaces.} The homotopy equivalence class of \( \DD(T) \) does not change with Reidemeister moves of the diagram of \( T \), and thus the invariants \( \BNr(\Knot_\infty) \) and \( \BNr(\Knot_\infty) \) are preserved by smooth isotopies of \( T \). Moreover, the invariants are also preserved by addition and removal of twists \( \CrossingL \) and \( \CrossingR \) on the top and on the bottom of \( T \). For \( \BNr(\Knot_\infty) \) this follows from 
  \[ 
  \Mor_{\fs}(\DDinf ,\DD(T)) 
  \overset{(a)}{\simeq} 
  \Mor_{\fs}(\DDinf \bt {}_{\BNAlgH}{\tau^{\pm 1}}^{\BNAlgH},\DD(T) \bt {}_{\BNAlgH}{\tau^{\pm 1}}^{\BNAlgH}) 
  \overset{(b)}{\simeq} 
  \Mor_{\fs}(\DDinf ,\DD(T) \bt {}_{\BNAlgH}{\tau^{\pm 1}}^{\BNAlgH}) 
  \]
  where \( _{\BNAlgH}\tau^{\BNAlgH} \) is the bimodule corresponding to the bottom half Dehn twist, which is agrees with the bimodule in Figure~\ref{fig:BNTwistingAD} but with \( \DotB \) and \( \DotC \) interchanged. The chain homotopy (a) follows from \cite[Lemma~2.3.3, Lemma 2.4.9]{LOT-bim};
  (b) follows from \( \DDinf\bt {}_{\BNAlgH}{\tau}^{\BNAlgH} = \DDinf \), which implies \( \DDinf \simeq \DDinf \bt {}_{\BNAlgH}{\tau^{-1}}^{\BNAlgH}  \). 
  Note that the \(\bt\)-tensor products are well-defined since the type~AD bimodule \( _{\BNAlgH}\tau^{\BNAlgH} \) is left operationally bounded in the sense of \cite{LOT-bim}. 
  
  Analogous statements hold for \( \DDinfinf \), and thus the vector space \( \Khr(\Knot_\infty) \) is also well-defined.

\textbf{3) The actions are well-defined algebraically.} Notice that the elements of \( \RxH\), \(\Rxy\), and \(\RzT \) are cycles in the relevant morphism spaces, and thus descend to \(\Homology(\Mor(\DDinf,\DDinf))\) and \(\Homology(\Mor(\DDinfinf,\DDinfinf))\). This implies that the actions in Definition~\ref{def:module_structure} are well-defined not only on the level of chain complexes \( \CBNr(\Knot_\infty), \ \CKhr(\Knot_\infty)  \), but also on the level of homologies \( \BNr(\Knot_\infty), \ \Khr(\Knot_\infty) \).

\textbf{4) Invariance of the module structure.} We also need to prove that the module structure \( \BNr(\Knot_\infty)\otimes \RxH \rightarrow \BNr(\Knot_\infty)  \) is invariant with respect to adding twists, ie that the maps
  \begin{align*} 
  &\Homology(\Mor_{\fs}(\DDinf,\DD(T)))\otimes \RxH \rightarrow \Homology(\Mor_{\fs}(\DDinf,\DD(T))) \text{ and}\\ 
  &\Homology(\Mor_{\fs}(\DDinf,\DD(T)\boxtimes \tau)) \otimes \RxH \rightarrow \Homology(\Mor_{\fs}(\DDinf,\DD(T) \boxtimes \tau ))
  \end{align*} 
  coincide after the identification
  \[
  \Homology(\Mor_{\fs}(\DDinf,\DD(T))) \cong \Homology(\Mor_{\fs}(\DDinf,\DD(T)\bt \tau))
  \]
  from Step 2. Consider the following commutative diagram:
  $$
  \begin{tikzcd}[column sep=0.5cm]
  \Homology(\Mor(\DDinf,\DD(T)))\otimes \Homology(\Mor(\DDinf,\DDinf)) \arrow{r} \arrow{d}{\cong}& 
  \Homology(\Mor(\DDinf,\DD(T))) \arrow{d}{\cong}  \\
  \Homology(\Mor(\DDinf \bt \tau,\DD(T) \bt \tau))\otimes \Homology(\Mor(\DDinf \bt \tau,\DDinf \bt \tau)) \arrow{r} \arrow[equal]{d} & 
  \Homology(\Mor(\DDinf\bt \tau ,\DD(T)\bt \tau)) \arrow[equal]{d}   \\
  \Homology(\Mor(\DDinf,\DD(T) \bt \tau))\otimes \Homology(\Mor(\DDinf,\DDinf)) \arrow{r} & 
  \Homology(\Mor(\DDinf,\DD(T)\bt \tau))
  \end{tikzcd}
  $$
  The horizontal maps represent compositions of morphisms. 
  The top two vertical maps are induced by the quasi-auto-equivalence $-\bt {}_{\BNAlgH}\tau^{\BNAlgH}$ of the category $\Mod_{\infty}^{\BNAlgH}$. 
  The bottom two equalities are due to $\DDinf \bt \tau = \DDinf$. The right vertical arrow induces the identification $\Homology(\Mor_{\fs}(\DDinf,\DD(T))) \cong \Homology(\Mor_{\fs}(\DDinf,\DD(T)\bt \tau))$ from Step 2. The actions $- \otimes \xben$ and $- \otimes H$ from the top row become actions $- \otimes (\xben \boxtimes \id)$ and $- \otimes (H \boxtimes \id )$ in the second row.  Hence it suffices to show that these actions coincide, namely 
  \( \xben \bt \id  \) and \( \xben \) act identically on \( \DDinf\bt {\tau} = \DDinf \), and the same for actions \( H \bt \id  \) and \( H \). This is a straightforward computation using the formula for \( \xben \bt \id  \) and \( H \bt \id  \), which can be found in the discussion preceding \cite[Lemma~2.3.3]{LOT-bim}. (Note in particular that \( \xben \bt \id  \) and \( H \bt \id  \) are well-defined since the type~AD bimodule \( _{\BNAlgH}\tau^{\BNAlgH} \) is left operationally bounded.)

  The invariance of the module structure \( \Khr(\Knot_\infty)\otimes \RzT \rightarrow \Khr(\Knot_\infty) \) with respect to adding twists is proved analogously. 
\end{proof}
The fact that the infinite part \( T_{-\infty} \) can freely produce/absorb twists complicates the definition of the absolute bigrading on \( \BNr(\Knot_\infty) \) and \( \Khr(\Knot_\infty) \).  It is still possible to pin down an absolute bigrading on our invariants, by adding a correction term $q^{-(n_+ -n_-)}\delta^{-\frac{1}{2}(n_+ -n_-)}$ in front of $\Mor_{\fs}(\DDinf,\DD(T))=\BNr(\Knot_\infty)$, where $n_+,n_-$ are positive and  negative crossing between \emph{arc} components of tangle $T$. For simplicity we choose to work with the relative bigrading.

\begin{example}\label{ex:Uinf_invariants}
In the simple case of the infinite torus knot \( T(2,-\infty)=\Lk(T_{-\infty},\Ni) \), our invariants turn out to be equal to the algebras considered as modules:
\begin{equation*}
\BNr(T(2,-\infty))_\RxH= \RxH_\RxH \qquad \Khr(T(2,-\infty))_\Rxy = \Rxy_\Rxy
\end{equation*}
The corresponding calculation is illustrated in Figure~\ref{fig:Uinf_invariants}. Let us explain how to do this calculation
‘‘locally''. Take, for example, the bottom left generator $\DotC$ of $\DDinfinf$ in Figure~\ref{fig:Benheddi_comp}; near that generator the complex $\DDinfinf$ is $[\cdots\xrightarrow{D}\DotC\xrightarrow{S}\cdots]$. The morphisms from $\DotC$ to $\DD(\Ni)=\DotC$ form a vector space $\langle \ldots,D^2,D,1,S^2,S^4,\ldots \rangle$. On the one hand, due to the arrow $\xrightarrow{D}\DotC$, only the generators $S^2,S^4,\ldots$ are in the kernel of the differential in
$\Mor(\DDinfinf,\DotC)$. 
On the other hand, due to the arrow $\DotC\xrightarrow{S}$, the generators $S^2,S^4,\ldots$ are in the image of the differential. Thus, no morphisms from the bottom left generator $\DotC$ of $\DDinfinf$ in Figure~\ref{fig:Benheddi_comp} survive in homology. Next, one can take another generator of $\DDinfinf$, and argue as before; following this strategy completes the calculation in Figure~\ref{fig:Uinf_invariants}. Note that this example recovers a result of Benheddi \cite{Benheddi}. We can also view these computations geometrically; see Figure~\ref{fig:Uinf_invariants_geometrically}.
\end{example}

\begin{figure}[t]
  \centering
  \begin{subfigure}[t]{0.45\textwidth}
  \begin{tikzcd}[ampersand replacement = \&,column sep=10pt, row sep=50pt]
  \DotC \arrow{r}{S} \arrow[drr, purple,bend left=10, "1" near start, "1" description] \arrow[drr, bend right=40, purple, "H" near start, "D" description] \arrow[drr, bend right=80, purple, "\cdots" left, "\cdots" description]   \& \DotB \arrow{r}{D} \& \DotB \arrow[d,  purple, "\xben" near start, "S" description]  \arrow{r}{SS} \& \DotB \arrow{r}{D} \& \DotB \arrow[lld, purple, "\xben^2" near start,"S" description]  \arrow{r}{SS} \& \cdots \& \arrow[lllld, purple, bend left, "\cdots" near start,"\cdots" description] \\
  \& \& \DotC
  \end{tikzcd}
  \caption{\( \Homology \left( \Mor_{\fs}(\DDinf,\DD(\Li)) \right)=\RxH_\RxH \) }
  \end{subfigure}
  \begin{subfigure}[t]{0.45\textwidth}
  \begin{tikzcd}[ampersand replacement = \&, column sep=10pt, row sep=25pt]
  \DotC  \arrow{r}{S}\arrow{d}{D} \arrow[dddrrr, bend right, purple, "1" near end, "1" description]
  \& \DotB \arrow{r}{D} 
  \& \DotB  \arrow{r}{SS}  \arrow[dddr, purple, "\xben" very near start, "S" description]
  \& \DotB  \arrow{r}{D} 
  \& \DotB  \arrow{r}{SS} \arrow[dddl, purple, "\xben^2" very near start, "S" description] 
  \& \cdots 
  \&  \arrow[dddlll, purple, "\xben^3" very near start, "S" description]  \\
  \DotC \arrow{r}{S} 
  \& \DotB \arrow{r}{D} 
  \& \DotB \arrow{r}{SS} \arrow[ddr, bend right, purple, "\yben" very near start, "S" description] 
  \& \DotB \arrow{r}{D} 
  \& \DotB  \arrow{r}{SS} \arrow[ddl, purple, "\yben \xben" very near start, "S" description] 
  \& \cdots  
  \&  \arrow[ddlll, bend left, purple, "\yben \xben^2" very near start, "S" description]  \\ \\
  \& \& \& \DotC
  \end{tikzcd}
  \caption{\( \Homology \left( \Mor_{\fs}(\DDinfinf,\DD(\Li)) \right)=\Rxy_\Rxy \)}
  \label{fig:Benheddi_comp}
  \end{subfigure}
  \caption{}
  \label{fig:Uinf_invariants}
\end{figure}
\begin{figure}[t]
  \centering
  \begin{subfigure}[t]{0.45\textwidth}
  \includegraphics[scale=0.6]{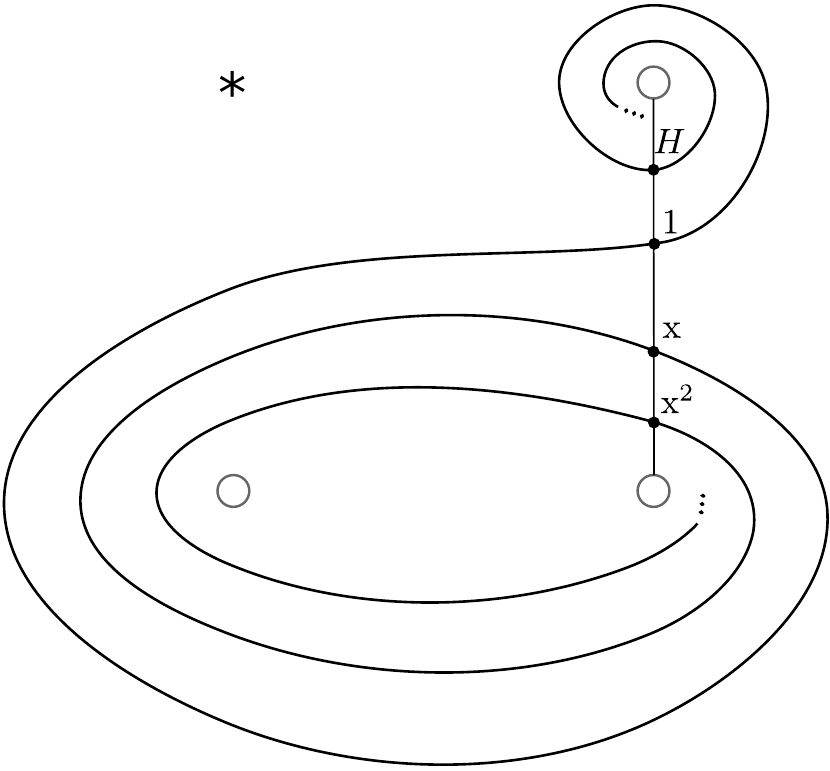}
  \caption{ \( \HF(\Linf,\BNr (\Li))=\RxH \)}
  \label{fig:Uinf_arc}
  \end{subfigure}
  \begin{subfigure}[t]{0.45\textwidth}
  \includegraphics[scale=0.6]{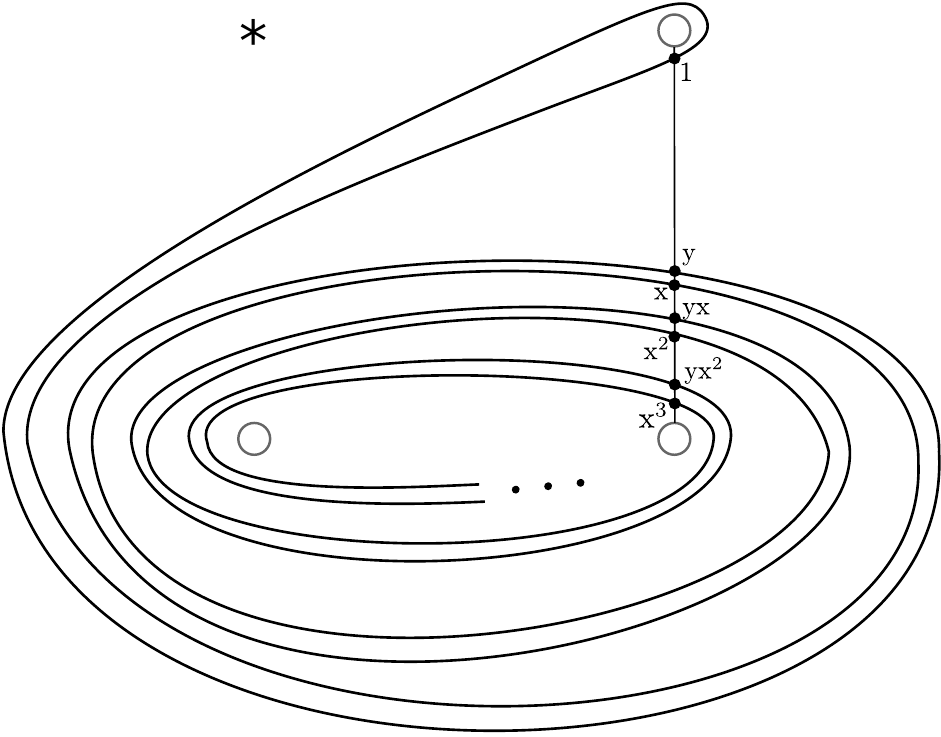}
  \label{fig:Uinf_eight}
  \caption{\( \HF(\Linfinf,\BNr (\Li))=\Rxy \)}
  \end{subfigure}
  \caption{}
  \label{fig:Uinf_invariants_geometrically}
\end{figure}

We will explain how to compute \( \BNr(\Knot_{\infty})_\RxH \) for arbitrary \( \Knot_{\infty} \) in Theorem~\ref{thm:lifting_to_D_str}.

\subsection{A remark on sutured tangles}\label{sub:rem-sut-tang}
There is an alternate definition of the invariants \( \BNr(\Knot_\infty)\) and \(\Khr(\Knot_\infty) \): consider the direct system of complexes \[  \cdots \rightarrow \CBNr(\Lk(T_{-n},T)) \rightarrow \CBNr(\Lk(T_{-(n+1)},T)) \rightarrow \CBNr(\Lk(T_{-(n+2)},T)) \rightarrow \cdots\] where each arrow corresponds to the map from the long exact sequence associated with the resolution of the bottom crossing of the tangles \( T_{-n} \). Using this direct system, one can define \( \BNr(\Lk(T_{-\infty},T)) \) and, analogously, \( \Khr(\Lk(T_{-\infty},T)) \) as a direct limit, and the resulting invariants will be isomorphic to the ones we defined.

This point of view appears in another setting: in \cite{Kappa}, the second author considers {\em sutured} tangles arising as the quotients of (equivariantly meridional sutured) strongly invertible knot exteriors. These objects are equivalent to the $\Knot_{\infty}$ described above, but come with the additional restriction that, in our conventions, the  $\Lk(\Lo,T)$-closure of the tangle is the unknot. The utility of this setup is that it allows any invariant of the sutured tangle (or suitably restricted class of \( (2,-\infty) \)-twisted knots) to be used as an invariant of the strongly invertible knot $(\Knot,h)$ in the cover. To fix notation, given a strongly invertible knot $(\Knot,h)$, we let $T_{\Knot,h}$ be the associated quotient tangle; note that the complement of $\Knot$ is the double branched cover of $T_{\Knot,h}$, by construction. 

The invariant $\varkappa(\Knot,h)$ is a finite dimensional graded vector space that is defined as the quotient of an inverse limit $\underset{\raisebox{3pt}{$\longleftarrow$}}{\operatorname{Kh}}(T_{\Knot,h})=\varprojlim \widetilde{\operatorname{Kh}}(T_{\Knot,h}(i))$ \cite[Definition 10]{Kappa}; this uses the system above, interpreted as an inverse system. The homological grading endows $\underset{\raisebox{3pt}{$\longleftarrow$}}{\operatorname{Kh}}(T_{\Knot,h})$ with an absolute integer grading and, in fact, $\underset{\raisebox{3pt}{$\longleftarrow$}}{\operatorname{Kh}}(mT_{\Knot,h})$ and the analogous {\em direct} limit $\underset{\raisebox{3pt}{$\longrightarrow$}}{\operatorname{Kh}}(T_{\Knot,h})$ agree as graded vector spaces; compare \cite[Section 3]{Kappa}. As a result, writing $\varkappa(T_{\Knot,h})=\varkappa(\Knot,h)$, we can extract $\varkappa(T_{\Knot,h})$ from $\underset{\raisebox{3pt}{$\longrightarrow$}}{\operatorname{Kh}}(mT_{\Knot,h})$ and consider $\Khr(\Knot_\infty)$ for $\Knot_\infty=\Lk(T_{-\infty},mT_{\Knot,h})$.

Another and much simpler means of extracting a finite dimensional graded vector space is to consider $\Khr(\Knot_\infty)$ as an \( \Rxy \)-module, and define $\ker(\xben^N)$ for sufficiently large $N$. This has the obvious additional advantage that it carries the structure of an \( \Rxy \)-module. Given $\Khr(\Knot_\infty)$  with $\Knot_\infty=\Lk(T_{-\infty},mT_{\Knot,h})$, the torsion submodule $\ker(\xben^N)$ corresponds to $\varkappa(T_{\Knot,h})$, as vector spaces. In particular, the module $\ker(\xben^N)\subset\Khr(T_{\Knot,h})$ is an interesting invariant of strong inversions. (In Example \ref{exa:Pairing:TorusKnots}, it is calculated for the left-hand trefoil, which arises as the double branched cover of the tangle $T_{2,-3}$.) For instance, this module vanishes if and only if the strongly invertible knot is the trivial knot; compare \cite[Theorem 1]{Kappa} and Example \ref{ex:Uinf_invariants}.

It seems surprising that, in practice, no more information is gained by working with the (apparently) more general $\RzT$-modules. We end with:

\begin{question} Is the $\RzT$-module structure on $\Khr(\Knot_\infty)$ determined by the \( \Rxy \)-module structure? \end {question}

\subsection{\texorpdfstring{Lifting \( \BNr(\Knot_{\infty})_\RxH \) to a type~D structure}{Lifting BN(K∞) to a type~D structure}}

In Section~\ref{subsec:BN-deformation} we observed  that \( \CBN(\Lk)_{\fieldTwoElements[H]} \) lifts to a type~D structure so that \( \CBN(\Lk)_{\fieldTwoElements[H]} \simeq \CKh(\Lk)^{\fieldTwoElements[H]} \bt {\fieldTwoElements[H]} \). Such a statement is true for other invariants in low-dimensional topology, for example, the minus version of Heegaard Floer homology \( \mathit{HF}^{-}(Y)_{\fieldTwoElements[U]} \). 
In this section we prove that the module \( \BNr(\Knot_{\infty})_\RxH \) also naturally lifts to a type~D structure and,
along the way, we will obtain a recipe for computing \( \BNr(\Knot_{\infty})_\RxH \). 

We begin with a geometric observation: having in mind the interpretation \( \BNr(\Knot_\infty)=\HF(\Linf,\BNr (T)) \), we can deform \( \BNr (T) \) into the neighborhood of two arcs \( \DotBarc \) and \( \DotCarc \), and also deform \( \Linf \) as in Figure~\ref{fig:Uinf_arc}.
Notice that \( \Linf \) only intersects the vertical arcs \( \DotCarc \) of \( \BNr (T) \). This suggests that only the generators \( \DotC \) of \( \DD(T) \) affect \( \BNr(\Knot_\infty) \). Indeed, this is what we show below.

\begin{theorem}\label{thm:lifting_to_D_str}
Suppose \( \Knot_{\infty}=\Lk(T_{-\infty},T) \). Then
\begin{equation*}
\label{eq:lifting_to_D_str}
\BNr(\Knot_{\infty})_\RxH \simeq \DD(T)^{\BNAlgH} \bt {}_{\BNAlgH}\bimoduleG^\RxH \bt \RxH
\end{equation*}
where the type AD bimodule \( {}_{\BNAlgH}\bimoduleG^\RxH \) is equal to 
\[
\begin{tikzcd}
    \DotC\gen
    \arrow[in=-45,out=45,looseness=4]{rl}{\substack{(S,D,S | \xben) \\ (S,D,SS,D,S | \xben^2) \\ (S,D,SS,D,SS,D,S|\xben^3) \\ \cdots}}
    \arrow[rl, in=-135,out=135,looseness=4,"(D^k  | H^k)" left]
\end{tikzcd}
 \]
\end{theorem}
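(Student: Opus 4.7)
The plan is to interpret both sides of the claimed equivalence as explicit chain complexes of $\RxH$-modules and to construct a quasi-isomorphism between them. The guiding intuition is that $\bimoduleG$ ``picks out'' only the $\DotC$-generators of $\DD(T)$---geometrically the observation (visible in Figure~\ref{fig:Uinf_arc}) that $\Linf$ meets only the $\DotCarc$-arcs of the parametrization of $\FourPuncturedSphere$---while the higher $A_\infty$ actions of $\bimoduleG$ encode how the alternating $D,SS$-tail of $\DDinf$ interacts with the $\DotB$-generators of $\DD(T)$.

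On the right, one unpacks the box tensor product directly. Since $\bimoduleG$ has a single generator $\gen$ in left-idempotent $\DotC$, the underlying vector space of $\DD(T)\bt\bimoduleG$ is the $\DotC$-idempotent subspace $\DD(T).\DotC$, and its $\RxH$-valued type~D differential has two kinds of summands: (i) for each arrow $y\to y'\otimes D^k$ in $\DD(T)$ between $\DotC$-generators, it contributes $y'\otimes H^k$ (from the $(D^k\mid H^k)$-family of actions); and (ii) for each length-$(2n+1)$ alternating path $y\xrightarrow{S}\cdot\xrightarrow{D}\cdot\xrightarrow{SS}\cdot\xrightarrow{D}\cdots\xrightarrow{SS}\cdot\xrightarrow{D}\cdot\xrightarrow{S}y'$ in $\DD(T)$ whose algebra labels spell out one of the higher-action sequences of $\bimoduleG$, it contributes $y'\otimes x^n$. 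Tensoring with $\RxH_\RxH$ then produces the desired $\RxH$-module chain complex.

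On the left, a finite-support morphism $f\in\Mor_{\fs}(\DDinf,\DD(T))$ is a sequence $(f(e_i))_{i\geq 0}$, almost all zero, subject to the obvious idempotent constraints, with differential $D(f)(e_i)=d_{\DD(T)}(f(e_i))+f(e_{i+1})\cdot a_i$, where $a_0=S$ and $a_i\in\{D,SS\}$ alternates along $\DDinf$. A key structural observation is that the relations $DS=0=SD$ in $\BNAlgH$ force $f(e_{i+1})\cdot a_i$ to vanish automatically whenever $a_i=D$, leaving only the $S$- and $SS$-residuals as possible obstructions to closedness. I would define the candidate quasi-isomorphism
$$\Phi\co\DD(T)\bt\bimoduleG\bt\RxH\longrightarrow\Mor_{\fs}(\DDinf,\DD(T))$$
by first setting $\Phi(y\otimes\gen\otimes 1)$ to be the morphism whose $e_i$-values trace out the initial segment of the alternating path $y\xrightarrow{S}\cdot\xrightarrow{D}\cdot\xrightarrow{SS}\cdots$ in $\DD(T)$ (with $\Phi(y\otimes\gen\otimes 1)(e_0)=y\otimes 1$ and subsequent terms forced recursively by the requirement that $f(e_{i+1})\cdot a_i$ cancel the $S$- or $SS$-residual of $d_{\DD(T)}(f(e_i))$), and then extending by $\Phi(y\otimes\gen\otimes r)\coloneqq\Phi(y\otimes\gen\otimes 1)\circ r$ using the inclusion $\RxH\subset\Mor(\DDinf,\DDinf)$ described just before the theorem. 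As a sanity check, the toy case $\DD(T)=[\DotC]$ worked out in Example~\ref{ex:Uinf_invariants} already recovers $\Homology(\Mor_{\fs}(\DDinf,[\DotC]))\cong\RxH$ via exactly this recipe.

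The hard part will be verifying that $\Phi$ is a chain map and a quasi-isomorphism. For the chain-map property, one has to check $D\Phi(y\otimes\gen\otimes r)=\Phi(\delta_{\mathrm{RHS}}(y\otimes\gen\otimes r))$ for every basis element; the $H^k$-contributions drop out immediately from $DS=SD=0$, but the $x^n$-contributions require matching the alternating tail of $\DDinf$ against the length-$(2n+1)$ sequences $(S,D,SS,D,\dots,D,S\mid x^n)$ defining the higher actions of $\bimoduleG$---a direct but notationally painful induction on $n$, together with case analysis on whether the alternating path starting at $y$ in $\DD(T)$ dies out (giving zero on both sides) or completes to a higher-action sequence (giving the expected $x^n$-residual). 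Given the chain-map property, quasi-isomorphism follows from a filtration of $\Mor_{\fs}$ by the largest index $i$ with $f(e_i)\neq 0$ and a cancellation argument (cf.\ Lemma~\ref{lem:AbstractCancellation}) showing that every homology class is represented by an element in the image of $\Phi$, with injectivity then falling out of a bigrading-by-bigrading rank comparison that uses the finite-support hypothesis.
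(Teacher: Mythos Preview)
Your approach differs substantially from the paper's, and the paper's route both sidesteps your case analysis and avoids a gap in your argument.

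The paper does not build an explicit chain map depending on $\DD(T)$. Instead it first promotes $\DDinf$ to a type~AD bimodule ${}_{\RxH}\DDinf^{\BNAlgH}$ and invokes the $\Mor$/box-tensor correspondence (\cite[Proposition~2.7]{LOT-mor}) to rewrite
\[
\BNr(\Knot_\infty)_{\RxH}\;\simeq\;\DD(T)^{\BNAlgH}\bt{}_{\BNAlgH}\BNAlgH_{\BNAlgH}\bt{}^{\BNAlgH}\dual{\DDinf}_{\RxH},
\]
thereby reducing the theorem to a bimodule equivalence
\[
{}_{\BNAlgH}\bimoduleG^{\RxH}\bt\RxH\;\simeq\;{}_{\BNAlgH}\BNAlgH_{\BNAlgH}\bt{}^{\BNAlgH}\dual{\DDinf}_{\RxH}
\]
that is \emph{independent of $T$}. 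This is then established by a single application of a homological perturbation lemma (\cite[Lemma~12.3]{OS-bordered-matchings}): one writes out the right-hand bimodule explicitly, exhibits a contracting homotopy onto the surviving generators, and reads off the induced higher actions as zigzag paths through the diagram. All dependence on $\DD(T)$ is factored out before any real work begins.

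Your direct construction also contains a concrete error. The ``key structural observation'' that $DS=0=SD$ forces $f(e_{i+1})\cdot a_i$ to vanish whenever $a_i=D$ is false: if $f(e_{i+1})$ has a component $z\otimes 1$ or $z\otimes D^k$, then right-multiplication by $D$ yields $z\otimes D$ or $z\otimes D^{k+1}$, which is nonzero. The relations $DS=SD=0$ only annihilate the $S$-type components under right-multiplication by $D$. This breaks the recursive definition of $\Phi(y\otimes\gen\otimes 1)$ as you have stated it: the $D$-residuals of $d_{\DD(T)}(f(e_i))$ do not disappear for free, and accounting for them correctly would amount to redoing---now once for every $\DD(T)$---the zigzag bookkeeping that the paper performs once at the bimodule level. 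Separately, ``tracing out the initial segment of the alternating path $y\xrightarrow{S}\cdot\xrightarrow{D}\cdot\xrightarrow{SS}\cdots$ in $\DD(T)$'' is not well-posed without further hypotheses: an outgoing arrow from a $\DotC$-generator of $\DD(T)$ may be labelled $S^k$ with $k>1$ or $D^k$, so there need be no path in $\DD(T)$ matching the $S,D,SS,D,\dots$ template of $\DDinf$, and a general $\DD(T)$ may also branch.
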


\begin{figure}[t]
  \[ 
  \begin{tikzcd}[scale=0.5,column sep=2.2cm, ,row sep=1.5cm]
  \boxed{\DotC\gen \bt H }
  \arrow[r, "\substack{(-|H)\\(D|-)}" description] 
  \arrow[rr, bend left = 16, "\substack{(-|H^2)\\(D^2|-)}" description]
  \arrow[rrr, bend left = 22, "\ldots" description]
  & \boxed{\DotC\gen \bt H^2  }
  \arrow[r, "\substack{(-|H)\\(D|-)}" description]
  \arrow[rr, bend left = 12, "\ldots" description]
  & \boxed{\DotC\gen \bt H^3 }
  \arrow[r, "\cdots" description] 
  & \cdots \\
  \boxed{\DotC\gen \bt 1}
  \arrow[u, "\substack{(-|H)\\(D|-)}" description]
  \arrow[ru, "\substack{(-|H^2)\\(D^2|-)}" description]
  \arrow[rru, pos=0.7, bend right = 6, "\substack{(-|H^3)\\(D^3|-)}" description]
  \arrow[rrru, pos=0.7, bend right = 8, "\cdots" description]
  \arrow[d, "\substack{(-|\xben) \\ (S,D,S|-)}" description]
  \arrow[rd, "\substack{(-|\xben^2) \\ (S,D,SS,D,S|-)}" description]
  \arrow[rrd, pos=0.7, bend left = 6, "\substack{(-|\xben^3) \\ (S,D,SS,D,SS,D,S|-)}" description]
  \arrow[rrrd, pos=0.7, bend left = 8, "\cdots" description]
  \\
  \boxed{\DotC\gen \bt \xben  }
  \arrow[r, "\substack{(-|\xben) \\ (S,D,S|-)}" description] 
  \arrow[rr, bend right = 14, "\substack{(-|\xben^2) \\ (S,D,SS,D,S|-)}" description]
  \arrow[rrr, bend right = 22, "\ldots" description]
  & \boxed{\DotC\gen \bt \xben^2  }
  \arrow[r, "\substack{(-|\xben) \\ (S,D,S|-)}" description]
  \arrow[rr, bend right = 12, "\ldots" description]
  & \boxed{\DotC\gen \bt \xben^3}
  \arrow[r, "\cdots" description] 
  & \cdots
  \end{tikzcd}
  \]
  \caption[hm]{\( {}_{\BNAlgH}{\bimoduleG}^\RxH \bt {}_{\RxH}\RxH_{\RxH} \)}\label{fig:left_bimodule}
\end{figure} 

\begin{figure}[t]
  \[ 
  \begin{tikzcd}[column sep=0.5cm,row sep=2cm]
  \vdots& \vdots & \vdots & \vdots & \vdots & \vdots & \vdots\\
  \boxed{D^2\bt \DotC\gen}
  \arrow[u, "\cdots" description] 
  & 
  D^2 \bt \DotB\gen 
  \arrow[u, "\cdots" description]
  \arrow[rr, gray,bend right,very near start, sloped, "(-|\xben)" description]
  \arrow[rrrr, gray,bend right, sloped, "(-|\xben^2)" description]
  & 
  D^2 \bt \DotB\gen
  \arrow[ul, blue,dashed, very near end,"1" description]
  \arrow[u, "\cdots" description]
  \arrow[rr, gray,bend right,very near start, sloped, "(-|\xben)" description]
  & 
  D^2 \bt \DotB\gen
  \arrow[u, "\cdots" description]
  \arrow[rr, gray,bend right,very near start, sloped, "(-|\xben)" description]
  &  
  D^2 \bt \DotB\gen
  \arrow[ul, blue,dashed, very near end,"1" description]
  \arrow[u, "\cdots" description]
  & 
  D^2 \bt \DotB\gen     
  \arrow[u, "\cdots" description]
  & 
  \cdots 
  \arrow[ul, blue,dashed, very near end,"1" description]
  \\
  \boxed{D^1 \bt \DotC\gen} 
  \arrow[u, "\substack{(-|H)\\(D|-)}" description]
  & 
  D^1 \bt \DotB\gen 
  \arrow[u, "(D|-)" description]
  \arrow[rr, gray,bend right,very near start, sloped, "(-|\xben)" description]
  \arrow[rrrr, gray,bend right, sloped, "(-|\xben^2)" description]
  & 
  D^1 \bt \DotB\gen
  \arrow[ul, blue,dashed, very near end,"1" description]
  \arrow[u, "(D|-)" description]
  \arrow[rr, gray,bend right,very near start, sloped, "(-|\xben)" description]
  & 
  D^1 \bt \DotB\gen
  \arrow[u, "(D|-)" description]
  \arrow[rr, gray,bend right,very near start, sloped, "(-|\xben)" description]
  &  
  D^1 \bt \DotB\gen
  \arrow[ul, blue,dashed, very near end,"1" description]
  \arrow[u, "(D|-)" description]
  & 
  D^1 \bt \DotB\gen 
  \arrow[u, "(D|-)" description]
  & 
  \cdots
  \arrow[ul, blue,dashed, very near end,"1" description]
  \\
  \boxed{1 \bt \DotC\gen} 
  \arrow[u, "\substack{(-|H)\\(D|-)}" description]
  \arrow[uu, bend left = 40,near end, "\substack{(-|H^2)\\(D^2|-)}" description]
  \arrow[d, "(S|-)" description]
  \arrow[dd, bend right = 40,near end, "(S^2|-)" description]
  \arrow[drr, gray,bend right,very near start, sloped, "(-|\xben)" description]
  \arrow[drrrr, gray,very near start, sloped, "(-|\xben^2)" description]
  & 
  1 \bt \DotB\gen 
  \arrow[dl, blue,dashed, very near end,"1" description]
  \arrow[u, "(D|-)" description]
  \arrow[uu, bend left = 40,near end, "(D^2|-)" description]
  \arrow[d, "(S|-)" description]
  \arrow[dd, bend right = 40,near end, "(S^2|-)" description]
  \arrow[rr, gray,bend right,very near start, sloped, "(-|\xben)" description]
  \arrow[rrrr, gray,bend right, sloped, "(-|\xben^2)" description]
  & 
  1 \bt \DotB\gen
  \arrow[ul, blue,dashed, very near end,"1" description]
  \arrow[u, "(D|-)" description]
  \arrow[uu, bend left = 40,near end, "(D^2|-)" description]
  \arrow[d, "(S|-)" description]
  \arrow[dd, bend right = 40,near end, "(S^2|-)" description]
  \arrow[rr, gray,bend right,very near start, sloped, "(-|\xben)" description]
  & 
  1 \bt \DotB\gen
  \arrow[ddl, blue,dashed, very near end,"1" description]
  \arrow[u, "(D|-)" description]
  \arrow[uu, bend left = 40,near end, "(D^2|-)" description]
  \arrow[d, "(S|-)" description]
  \arrow[dd, bend right = 40,near end, "(S^2|-)" description]
  \arrow[rr, gray,bend right,very near start, sloped, "(-|\xben)" description]
  &  
  1 \bt \DotB\gen
  \arrow[ul, blue,dashed, very near end,"1" description]
  \arrow[u, "(D|-)" description]
  \arrow[uu, bend left = 40,near end, "(D^2|-)" description]
  \arrow[d, "(S|-)" description]
  \arrow[dd, bend right = 40,near end, "(S^2|-)" description]
  & 
  1 \bt \DotB\gen 
  \arrow[ddl, blue,dashed, very near end,"1" description]
  \arrow[u, "(D|-)" description]
  \arrow[uu, bend left = 40,near end, "(D^2|-)" description]
  \arrow[d, "(S|-)" description]
  \arrow[dd, bend right = 40,near end, "(S^2|-)" description]
  & 
  \cdots
  \arrow[ul, blue,dashed, very near end,"1" description]
  \\
  S^1 \bt \DotC\gen 
  \arrow[d, "(S|-)" description]
  \arrow[drr, gray,bend right,very near start, sloped, "(-|\xben)" description]
  \arrow[drrrr, gray,very near start, sloped, "(-|\xben^2)" description]
  & 
  S^1 \bt \DotB\gen 
  \arrow[dl, blue,dashed, very near end,"1" description]
  \arrow[d, "(S|-)" description]
  \arrow[rr, gray,bend right,very near start, sloped, "(-|\xben)" description]
  \arrow[rrrr, gray,bend right, sloped, "(-|\xben^2)" description]
  & 
  \boxed{S^1 \bt \DotB\gen}
  \arrow[d, "(S|-)" description]
  \arrow[rr, gray,bend right,very near start, sloped, "(-|\xben)" description]
  & 
  S^1 \bt \DotB\gen
  \arrow[ddl, blue,dashed, very near end,"1" description]
  \arrow[d, "(S|-)" description]
  \arrow[rr, gray,bend right,very near start, sloped, "(-|\xben)" description]
  &  
  \boxed{S^1 \bt \DotB\gen}
  \arrow[d, "(S|-)" description]
  & 
  S^1 \bt \DotB\gen 
  \arrow[ddl, blue,dashed, very near end,"1" description]
  \arrow[d, "(S|-)" description]
  & 
  \cdots
  \\
  S^2 \bt \DotC\gen 
  \arrow[d, "\cdots" description]
  \arrow[drr, gray,bend right,very near start, sloped, "(-|\xben)" description]
  \arrow[drrrr, gray,very near start, sloped, "(-|\xben^2)" description]
  & 
  S^2 \bt \DotB\gen 
  \arrow[dl, blue,dashed, very near end,"1" description]
  \arrow[d, "\cdots" description]
  \arrow[rr, gray,bend right,very near start, sloped, "(-|\xben)" description]
  \arrow[rrrr, gray,bend right, sloped, "(-|\xben^2)" description]
  & 
  S^2 \bt \DotB\gen
  \arrow[d, "\cdots" description]
  \arrow[rr, gray,bend right,very near start, sloped, "(-|\xben)" description]
  & 
  S^2 \bt \DotB\gen
  \arrow[d, "\cdots" description]
  \arrow[rr, gray,bend right,very near start, sloped, "(-|\xben)" description]
  &  
  S^2 \bt \DotB\gen
  \arrow[d, "\cdots" description]
  & 
  S^2 \bt \DotB\gen 
  \arrow[d, "\cdots" description]
  & 
  \cdots
  \\
  \vdots& \vdots & \vdots & \vdots & \vdots & \vdots & \vdots
  \end{tikzcd}
  \]
  \caption{\( {}_{\BNAlgH}\BNAlgH_{\BNAlgH} \bt {}^{\BNAlgH}\dual{\DDinf}_{\RxH} \)}\label{fig:right_bimodule}
\end{figure} 

\begin{proof}
  Let us rephrase the module structure from Definition~\ref{def:module_structure}; for this, we first promote the type~D structure \( \DDinf^{\BNAlgH} \) to a type AD bimodule:
  \[ 
  _\RxH \DDinf^{\BNAlgH} =
  \begin{tikzcd}[column sep=1.5cm]
  \gen \DotC \arrow{r}{S} \arrow[lr, in=+135,out=+45,looseness=9 , "(H^k | D^k)" description]  
  & 
  \gen \DotB \arrow{r}{(- | D))} 
  & 
  \gen \DotB \arrow[ll, bend right, "(\xben | S)" description]  \arrow{r}{(- | SS)}
  & 
  \gen \DotB \arrow[ll, bend right, "(\xben | 1)" description]  \arrow{r}{(- | D)} 
  & 
  \gen \DotB \arrow[ll, bend right, "(\xben | 1)" description] \arrow[llll, bend right, "(\xben^2 | S)" description]   \arrow{r}{(- | SS)} 
  & 
  \cdots \arrow[ll, bend right, "\cdots" description] \arrow[llll, bend right, "(\xben^2 | 1)" description] 
  \end{tikzcd}
  \]
  Now we can view the module structure as follows:
  \[ \BNr(\Knot_\infty)_{\RxH} = \Homology ( \Mor_{\fs}(_{\RxH}\DDinf^{\BNAlgH},\DD(T)^{\BNAlgH} )) \simeq \DD(T)^{\BNAlgH} \bt {}_{\BNAlgH}\BNAlgH_{\BNAlgH} \bt {}^{\BNAlgH}\dual{\DDinf}_{\RxH}\]
  where the second homotopy equivalence follows from \cite[Proposition~2.7]{LOT-mor} and the fact that we consider compactly supported elements of the morphism space. Thus, for the proof of the theorem, it remains to establish the following homotopy equivalence of bimodules:
  \[ {}_{\BNAlgH}{\bimoduleG}^\RxH \bt {}_{\RxH}\RxH_{\RxH} \simeq {}_{\BNAlgH}\BNAlgH_{\BNAlgH} \bt {}^{\BNAlgH}\dual{\DDinf}_{\RxH}\]
  We describe these bimodules explicitly; see Figure~\ref{fig:left_bimodule} for \( {}_{\BNAlgH}{\bimoduleG}^\RxH \bt {}_{\RxH}\RxH_{\RxH} \) and Figure~\ref{fig:right_bimodule} for \( {}_{\BNAlgH}\BNAlgH_{\BNAlgH} \bt {}^{\BNAlgH}\dual{\DDinf}_{\RxH} \). We now apply \cite[Lemma 12.3]{OS-bordered-matchings} to the bimodule \( {}_{\BNAlgH}\BNAlgH_{\BNAlgH} \bt {}^{\BNAlgH}\dual{\DDinf}_{\RxH} \), which we denote by \( {}_{\BNAlgH}Y_{\RxH} \) for short. Namely, we notice that only the boxed generators in Figure~\ref{fig:right_bimodule} survive in homology, and denote the vector space generated by them by \( Z=\Homology (Y) \). Denote the map corresponding to the reversed dashed arrows in Figure~\ref{fig:right_bimodule} by \( T\co Y\rightarrow Y \). We also have an inclusion \( f\co Z\rightarrow Y \), and a projection  \( g\co Y\rightarrow Z \). One can now check that all the conditions of \cite[Lemma 12.3]{OS-bordered-matchings} are satisfied:
  \[ g \circ f=\operatorname{Id}_{Z} \quad \operatorname{Id}_{Y}+\partial T+T \partial=f \circ g \quad T \circ T=0 \quad f \circ T=0 \quad T \circ g=0\]
  Thus we obtain a bimodule 
  \( {}_{\BNAlgH}Z_{\RxH} \simeq {}_{\BNAlgH}Y_{\RxH}={}_{\BNAlgH}\BNAlgH_{\BNAlgH} \bt {}^{\BNAlgH}\dual{\DDinf}_{\RxH}. \) Moreover, by \cite[Lemma 12.3]{OS-bordered-matchings} all actions in \( {}_{\BNAlgH}Z_{\RxH}  \) come from the zigzags
  \[ 
  \begin{tikzcd}
  \boxed{*} \arrow[rd]&         & * \arrow[ld, blue, dashed]\arrow[rd] &         & \cdots \arrow[ld, blue, dashed]\arrow[rd]  &        & *\arrow[ld, blue, dashed]\arrow[rd]  \\
                 & * &         & * &       &* &       & \boxed{*}
  \end{tikzcd}
  \]
  in Figure~\ref{fig:right_bimodule}. A routine search through this diagram shows that the only zigzags involving the dashed arrows are the ones depicted in Figure~\ref{fig:zigzags}.

  \begin{figure}[t]
  \centering
  \begin{align*}
    &
    \begin{tikzpicture}[scale=0.4]
    \draw [fill] (1,1) circle [radius=0.07];
    \draw [fill] (1,2) circle [radius=0.07];
    \draw [fill] (1,3) circle [radius=0.07];
    \draw [fill] (1,4) circle [radius=0.07];
    \draw [fill] (1,5) circle [radius=0.07];
    \draw [fill] (2,1) circle [radius=0.07];
    \draw [fill] (2,2) circle [radius=0.07];
    \draw [fill] (2,3) circle [radius=0.07];
    \draw [fill] (2,4) circle [radius=0.07];
    \draw [fill] (2,5) circle [radius=0.07];
    \draw [fill] (3,1) circle [radius=0.07];
    \draw [fill] (3,2) circle [radius=0.07];
    \draw [fill] (3,3) circle [radius=0.07];
    \draw [fill] (3,4) circle [radius=0.07];
    \draw [fill] (3,5) circle [radius=0.07];
    \draw [fill] (4,1) circle [radius=0.07];
    \draw [fill] (4,2) circle [radius=0.07];
    \draw [fill] (4,3) circle [radius=0.07];
    \draw [fill] (4,4) circle [radius=0.07];
    \draw [fill] (4,5) circle [radius=0.07];
    \draw [fill] (5,1) circle [radius=0.07];
    \draw [fill] (5,2) circle [radius=0.07];
    \draw [fill] (5,3) circle [radius=0.07];
    \draw [fill] (5,4) circle [radius=0.07];
    \draw [fill] (5,5) circle [radius=0.07];
    \draw [fill] (6,1) circle [radius=0.07];
    \draw [fill] (6,2) circle [radius=0.07];
    \draw [fill] (6,3) circle [radius=0.07];
    \draw [fill] (6,4) circle [radius=0.07];
    \draw [fill] (6,5) circle [radius=0.07];
    \draw (0.8,2.8) rectangle (1.2,3.2);
    \draw (0.8,3.8) rectangle (1.2,4.2);
    \draw (0.8,4.8) rectangle (1.2,5.2);
    \draw (2.8,1.8) rectangle (3.2,2.2);
    \draw (4.8,1.8) rectangle (5.2,2.2);
    \draw[->] (1,3) -- (1,2);
    \draw[->] (2,3) -- (2,4);
    \draw[->] (3,3) -- (3,2);
    \draw[->, densely dashed,blue] (2,3) -- (1,2);
    \draw[->, densely dashed,blue] (3,3) -- (2,4);
    \draw (7, 1) -- (7,5);
    \end{tikzpicture}
    \quad 
    \begin{tikzpicture}[scale=0.4]
    \draw [fill] (1,1) circle [radius=0.07];
    \draw [fill] (1,2) circle [radius=0.07];
    \draw [fill] (1,3) circle [radius=0.07];
    \draw [fill] (1,4) circle [radius=0.07];
    \draw [fill] (1,5) circle [radius=0.07];
    \draw [fill] (2,1) circle [radius=0.07];
    \draw [fill] (2,2) circle [radius=0.07];
    \draw [fill] (2,3) circle [radius=0.07];
    \draw [fill] (2,4) circle [radius=0.07];
    \draw [fill] (2,5) circle [radius=0.07];
    \draw [fill] (3,1) circle [radius=0.07];
    \draw [fill] (3,2) circle [radius=0.07];
    \draw [fill] (3,3) circle [radius=0.07];
    \draw [fill] (3,4) circle [radius=0.07];
    \draw [fill] (3,5) circle [radius=0.07];
    \draw [fill] (4,1) circle [radius=0.07];
    \draw [fill] (4,2) circle [radius=0.07];
    \draw [fill] (4,3) circle [radius=0.07];
    \draw [fill] (4,4) circle [radius=0.07];
    \draw [fill] (4,5) circle [radius=0.07];
    \draw [fill] (5,1) circle [radius=0.07];
    \draw [fill] (5,2) circle [radius=0.07];
    \draw [fill] (5,3) circle [radius=0.07];
    \draw [fill] (5,4) circle [radius=0.07];
    \draw [fill] (5,5) circle [radius=0.07];
    \draw [fill] (6,1) circle [radius=0.07];
    \draw [fill] (6,2) circle [radius=0.07];
    \draw [fill] (6,3) circle [radius=0.07];
    \draw [fill] (6,4) circle [radius=0.07];
    \draw [fill] (6,5) circle [radius=0.07];
    \draw (0.8,2.8) rectangle (1.2,3.2);
    \draw (0.8,3.8) rectangle (1.2,4.2);
    \draw (0.8,4.8) rectangle (1.2,5.2);
    \draw (2.8,1.8) rectangle (3.2,2.2);
    \draw (4.8,1.8) rectangle (5.2,2.2);
    \draw[->] (1,3) -- (1,2);
    \draw[->] (2,3) -- (2,4);
    \draw[->] (3,3) -- (3,1);
    \draw[->] (4,3) -- (4,4);
    \draw[->] (5,3) -- (5,2);
    \draw[->, densely dashed,blue] (2,3) -- (1,2);
    \draw[->, densely dashed,blue] (4,3) -- (3,1);
    \draw[->, densely dashed,blue] (3,3) -- (2,4);
    \draw[->, densely dashed,blue] (3,3) -- (2,4);
    \draw[->, densely dashed,blue] (5,3) -- (4,4);
    \draw (7, 1) -- (7,5);
    \end{tikzpicture}
    \quad
    \begin{tikzpicture}[scale=0.4]
    \draw [fill] (1,1) circle [radius=0.07];
    \draw [fill] (1,2) circle [radius=0.07];
    \draw [fill] (1,3) circle [radius=0.07];
    \draw [fill] (1,4) circle [radius=0.07];
    \draw [fill] (1,5) circle [radius=0.07];
    \draw [fill] (2,1) circle [radius=0.07];
    \draw [fill] (2,2) circle [radius=0.07];
    \draw [fill] (2,3) circle [radius=0.07];
    \draw [fill] (2,4) circle [radius=0.07];
    \draw [fill] (2,5) circle [radius=0.07];
    \draw [fill] (3,1) circle [radius=0.07];
    \draw [fill] (3,2) circle [radius=0.07];
    \draw [fill] (3,3) circle [radius=0.07];
    \draw [fill] (3,4) circle [radius=0.07];
    \draw [fill] (3,5) circle [radius=0.07];
    \draw [fill] (4,1) circle [radius=0.07];
    \draw [fill] (4,2) circle [radius=0.07];
    \draw [fill] (4,3) circle [radius=0.07];
    \draw [fill] (4,4) circle [radius=0.07];
    \draw [fill] (4,5) circle [radius=0.07];
    \draw [fill] (5,1) circle [radius=0.07];
    \draw [fill] (5,2) circle [radius=0.07];
    \draw [fill] (5,3) circle [radius=0.07];
    \draw [fill] (5,4) circle [radius=0.07];
    \draw [fill] (5,5) circle [radius=0.07];
    \draw [fill] (6,1) circle [radius=0.07];
    \draw [fill] (6,2) circle [radius=0.07];
    \draw [fill] (6,3) circle [radius=0.07];
    \draw [fill] (6,4) circle [radius=0.07];
    \draw [fill] (6,5) circle [radius=0.07];
    \draw [fill] (7,1) circle [radius=0.07];
    \draw [fill] (7,2) circle [radius=0.07];
    \draw [fill] (7,3) circle [radius=0.07];
    \draw [fill] (7,4) circle [radius=0.07];
    \draw [fill] (7,5) circle [radius=0.07];
    \draw [fill] (8,1) circle [radius=0.07];
    \draw [fill] (8,2) circle [radius=0.07];
    \draw [fill] (8,3) circle [radius=0.07];
    \draw [fill] (8,4) circle [radius=0.07];
    \draw [fill] (8,5) circle [radius=0.07];
    \draw (0.8,2.8) rectangle (1.2,3.2);
    \draw (0.8,3.8) rectangle (1.2,4.2);
    \draw (0.8,4.8) rectangle (1.2,5.2);
    \draw (2.8,1.8) rectangle (3.2,2.2);
    \draw (4.8,1.8) rectangle (5.2,2.2);
    \draw (6.8,1.8) rectangle (7.2,2.2);
    \draw[->] (1,3) -- (1,2);
    \draw[->] (2,3) -- (2,4);
    \draw[->] (3,3) -- (3,1);
    \draw[->] (4,3) -- (4,4);
    \draw[->] (5,3) -- (5,1);
    \draw[->] (6,3) -- (6,4);
    \draw[->] (7,3) -- (7,2);
    \draw[->, densely dashed,blue] (2,3) -- (1,2);
    \draw[->, densely dashed,blue] (4,3) -- (3,1);
    \draw[->, densely dashed,blue] (6,3) -- (5,1);
    \draw[->, densely dashed,blue] (7,3) -- (6,4);
    \draw[->, densely dashed,blue] (3,3) -- (2,4);
    \draw[->, densely dashed,blue] (3,3) -- (2,4);
    \draw[->, densely dashed,blue] (5,3) -- (4,4);
    \end{tikzpicture} 
    \quad \raisebox{0.7cm}{$\cdots$}
    \\
    &
    \begin{tikzpicture}[scale=0.4]
    \draw [fill] (1,1) circle [radius=0.07];
    \draw [fill] (1,2) circle [radius=0.07];
    \draw [fill] (1,3) circle [radius=0.07];
    \draw [fill] (1,4) circle [radius=0.07];
    \draw [fill] (1,5) circle [radius=0.07];
    \draw [fill] (2,1) circle [radius=0.07];
    \draw [fill] (2,2) circle [radius=0.07];
    \draw [fill] (2,3) circle [radius=0.07];
    \draw [fill] (2,4) circle [radius=0.07];
    \draw [fill] (2,5) circle [radius=0.07];
    \draw [fill] (3,1) circle [radius=0.07];
    \draw [fill] (3,2) circle [radius=0.07];
    \draw [fill] (3,3) circle [radius=0.07];
    \draw [fill] (3,4) circle [radius=0.07];
    \draw [fill] (3,5) circle [radius=0.07];
    \draw [fill] (4,1) circle [radius=0.07];
    \draw [fill] (4,2) circle [radius=0.07];
    \draw [fill] (4,3) circle [radius=0.07];
    \draw [fill] (4,4) circle [radius=0.07];
    \draw [fill] (4,5) circle [radius=0.07];
    \draw [fill] (5,1) circle [radius=0.07];
    \draw [fill] (5,2) circle [radius=0.07];
    \draw [fill] (5,3) circle [radius=0.07];
    \draw [fill] (5,4) circle [radius=0.07];
    \draw [fill] (5,5) circle [radius=0.07];
    \draw [fill] (6,1) circle [radius=0.07];
    \draw [fill] (6,2) circle [radius=0.07];
    \draw [fill] (6,3) circle [radius=0.07];
    \draw [fill] (6,4) circle [radius=0.07];
    \draw [fill] (6,5) circle [radius=0.07];
    \draw (0.8,2.8) rectangle (1.2,3.2);
    \draw (0.8,3.8) rectangle (1.2,4.2);
    \draw (0.8,4.8) rectangle (1.2,5.2);
    \draw (2.8,1.8) rectangle (3.2,2.2);
    \draw (4.8,1.8) rectangle (5.2,2.2);
    \draw[->] (3,2) -- (3,1);
    \draw[->] (4,3) -- (4,4);
    \draw[->] (5,3) -- (5,2);
    \draw[->, densely dashed,blue] (4,3) -- (3,1);
    \draw[->, densely dashed,blue] (5,3) -- (4,4);
    \draw (7, 1) -- (7,5);
    \end{tikzpicture}
    \quad 
    \begin{tikzpicture}[scale=0.4]
    \draw [fill] (1,1) circle [radius=0.07];
    \draw [fill] (1,2) circle [radius=0.07];
    \draw [fill] (1,3) circle [radius=0.07];
    \draw [fill] (1,4) circle [radius=0.07];
    \draw [fill] (1,5) circle [radius=0.07];
    \draw [fill] (2,1) circle [radius=0.07];
    \draw [fill] (2,2) circle [radius=0.07];
    \draw [fill] (2,3) circle [radius=0.07];
    \draw [fill] (2,4) circle [radius=0.07];
    \draw [fill] (2,5) circle [radius=0.07];
    \draw [fill] (3,1) circle [radius=0.07];
    \draw [fill] (3,2) circle [radius=0.07];
    \draw [fill] (3,3) circle [radius=0.07];
    \draw [fill] (3,4) circle [radius=0.07];
    \draw [fill] (3,5) circle [radius=0.07];
    \draw [fill] (4,1) circle [radius=0.07];
    \draw [fill] (4,2) circle [radius=0.07];
    \draw [fill] (4,3) circle [radius=0.07];
    \draw [fill] (4,4) circle [radius=0.07];
    \draw [fill] (4,5) circle [radius=0.07];
    \draw [fill] (5,1) circle [radius=0.07];
    \draw [fill] (5,2) circle [radius=0.07];
    \draw [fill] (5,3) circle [radius=0.07];
    \draw [fill] (5,4) circle [radius=0.07];
    \draw [fill] (5,5) circle [radius=0.07];
    \draw [fill] (6,1) circle [radius=0.07];
    \draw [fill] (6,2) circle [radius=0.07];
    \draw [fill] (6,3) circle [radius=0.07];
    \draw [fill] (6,4) circle [radius=0.07];
    \draw [fill] (6,5) circle [radius=0.07];
    \draw [fill] (7,1) circle [radius=0.07];
    \draw [fill] (7,2) circle [radius=0.07];
    \draw [fill] (7,3) circle [radius=0.07];
    \draw [fill] (7,4) circle [radius=0.07];
    \draw [fill] (7,5) circle [radius=0.07];
    \draw [fill] (8,1) circle [radius=0.07];
    \draw [fill] (8,2) circle [radius=0.07];
    \draw [fill] (8,3) circle [radius=0.07];
    \draw [fill] (8,4) circle [radius=0.07];
    \draw [fill] (8,5) circle [radius=0.07];
    \draw (0.8,2.8) rectangle (1.2,3.2);
    \draw (0.8,3.8) rectangle (1.2,4.2);
    \draw (0.8,4.8) rectangle (1.2,5.2);
    \draw (2.8,1.8) rectangle (3.2,2.2);
    \draw (4.8,1.8) rectangle (5.2,2.2);
    \draw (6.8,1.8) rectangle (7.2,2.2);
    \draw[->] (3,2) -- (3,1);
    \draw[->] (4,3) -- (4,4);
    \draw[->] (5,3) -- (5,1);
    \draw[->] (6,3) -- (6,4);
    \draw[->] (7,3) -- (7,2);
    \draw[->, densely dashed,blue] (4,3) -- (3,1);
    \draw[->, densely dashed,blue] (6,3) -- (5,1);
    \draw[->, densely dashed,blue] (7,3) -- (6,4);
    \draw[->, densely dashed,blue] (5,3) -- (4,4);
    \draw (9, 1) -- (9,5);
    \end{tikzpicture}
    \quad
    \begin{tikzpicture}[scale=0.4]
    \draw [fill] (1,1) circle [radius=0.07];
    \draw [fill] (1,2) circle [radius=0.07];
    \draw [fill] (1,3) circle [radius=0.07];
    \draw [fill] (1,4) circle [radius=0.07];
    \draw [fill] (1,5) circle [radius=0.07];
    \draw [fill] (2,1) circle [radius=0.07];
    \draw [fill] (2,2) circle [radius=0.07];
    \draw [fill] (2,3) circle [radius=0.07];
    \draw [fill] (2,4) circle [radius=0.07];
    \draw [fill] (2,5) circle [radius=0.07];
    \draw [fill] (3,1) circle [radius=0.07];
    \draw [fill] (3,2) circle [radius=0.07];
    \draw [fill] (3,3) circle [radius=0.07];
    \draw [fill] (3,4) circle [radius=0.07];
    \draw [fill] (3,5) circle [radius=0.07];
    \draw [fill] (4,1) circle [radius=0.07];
    \draw [fill] (4,2) circle [radius=0.07];
    \draw [fill] (4,3) circle [radius=0.07];
    \draw [fill] (4,4) circle [radius=0.07];
    \draw [fill] (4,5) circle [radius=0.07];
    \draw [fill] (5,1) circle [radius=0.07];
    \draw [fill] (5,2) circle [radius=0.07];
    \draw [fill] (5,3) circle [radius=0.07];
    \draw [fill] (5,4) circle [radius=0.07];
    \draw [fill] (5,5) circle [radius=0.07];
    \draw [fill] (6,1) circle [radius=0.07];
    \draw [fill] (6,2) circle [radius=0.07];
    \draw [fill] (6,3) circle [radius=0.07];
    \draw [fill] (6,4) circle [radius=0.07];
    \draw [fill] (6,5) circle [radius=0.07];
    \draw [fill] (7,1) circle [radius=0.07];
    \draw [fill] (7,2) circle [radius=0.07];
    \draw [fill] (7,3) circle [radius=0.07];
    \draw [fill] (7,4) circle [radius=0.07];
    \draw [fill] (7,5) circle [radius=0.07];
    \draw [fill] (8,1) circle [radius=0.07];
    \draw [fill] (8,2) circle [radius=0.07];
    \draw [fill] (8,3) circle [radius=0.07];
    \draw [fill] (8,4) circle [radius=0.07];
    \draw [fill] (8,5) circle [radius=0.07];
    \draw [fill] (9,1) circle [radius=0.07];
    \draw [fill] (9,2) circle [radius=0.07];
    \draw [fill] (9,3) circle [radius=0.07];
    \draw [fill] (9,4) circle [radius=0.07];
    \draw [fill] (9,5) circle [radius=0.07];
    \draw [fill] (10,1) circle [radius=0.07];
    \draw [fill] (10,2) circle [radius=0.07];
    \draw [fill] (10,3) circle [radius=0.07];
    \draw [fill] (10,4) circle [radius=0.07];
    \draw [fill] (10,5) circle [radius=0.07];
    \draw (0.8,2.8) rectangle (1.2,3.2);
    \draw (0.8,3.8) rectangle (1.2,4.2);
    \draw (0.8,4.8) rectangle (1.2,5.2);
    \draw (2.8,1.8) rectangle (3.2,2.2);
    \draw (4.8,1.8) rectangle (5.2,2.2);
    \draw (6.8,1.8) rectangle (7.2,2.2);
    \draw (8.8,1.8) rectangle (9.2,2.2);
    \draw[->] (3,2) -- (3,1);
    \draw[->] (4,3) -- (4,4);
    \draw[->] (5,3) -- (5,1);
    \draw[->] (6,3) -- (6,4);
    \draw[->] (7,3) -- (7,1);
    \draw[->] (8,3) -- (8,4);
    \draw[->] (9,3) -- (9,2);
    \draw[->, densely dashed,blue] (4,3) -- (3,1);
    \draw[->, densely dashed,blue] (6,3) -- (5,1);
    \draw[->, densely dashed,blue] (7,3) -- (6,4);
    \draw[->, densely dashed,blue] (5,3) -- (4,4);
    \draw[->, densely dashed,blue] (8,3) -- (7,1);
    \draw[->, densely dashed,blue] (9,3) -- (8,4);
    \end{tikzpicture} 
    \quad \raisebox{0.7cm}{$\cdots$}
    \\
    &
     \begin{tikzpicture}[scale=0.4]
    \draw [fill] (1,1) circle [radius=0.07];
    \draw [fill] (1,2) circle [radius=0.07];
    \draw [fill] (1,3) circle [radius=0.07];
    \draw [fill] (1,4) circle [radius=0.07];
    \draw [fill] (1,5) circle [radius=0.07];
    \draw [fill] (2,1) circle [radius=0.07];
    \draw [fill] (2,2) circle [radius=0.07];
    \draw [fill] (2,3) circle [radius=0.07];
    \draw [fill] (2,4) circle [radius=0.07];
    \draw [fill] (2,5) circle [radius=0.07];
    \draw [fill] (3,1) circle [radius=0.07];
    \draw [fill] (3,2) circle [radius=0.07];
    \draw [fill] (3,3) circle [radius=0.07];
    \draw [fill] (3,4) circle [radius=0.07];
    \draw [fill] (3,5) circle [radius=0.07];
    \draw [fill] (4,1) circle [radius=0.07];
    \draw [fill] (4,2) circle [radius=0.07];
    \draw [fill] (4,3) circle [radius=0.07];
    \draw [fill] (4,4) circle [radius=0.07];
    \draw [fill] (4,5) circle [radius=0.07];
    \draw [fill] (5,1) circle [radius=0.07];
    \draw [fill] (5,2) circle [radius=0.07];
    \draw [fill] (5,3) circle [radius=0.07];
    \draw [fill] (5,4) circle [radius=0.07];
    \draw [fill] (5,5) circle [radius=0.07];
    \draw [fill] (6,1) circle [radius=0.07];
    \draw [fill] (6,2) circle [radius=0.07];
    \draw [fill] (6,3) circle [radius=0.07];
    \draw [fill] (6,4) circle [radius=0.07];
    \draw [fill] (6,5) circle [radius=0.07];
    \draw [fill] (7,1) circle [radius=0.07];
    \draw [fill] (7,2) circle [radius=0.07];
    \draw [fill] (7,3) circle [radius=0.07];
    \draw [fill] (7,4) circle [radius=0.07];
    \draw [fill] (7,5) circle [radius=0.07];
    \draw [fill] (8,1) circle [radius=0.07];
    \draw [fill] (8,2) circle [radius=0.07];
    \draw [fill] (8,3) circle [radius=0.07];
    \draw [fill] (8,4) circle [radius=0.07];
    \draw [fill] (8,5) circle [radius=0.07];
    \draw (0.8,2.8) rectangle (1.2,3.2);
    \draw (0.8,3.8) rectangle (1.2,4.2);
    \draw (0.8,4.8) rectangle (1.2,5.2);
    \draw (2.8,1.8) rectangle (3.2,2.2);
    \draw (4.8,1.8) rectangle (5.2,2.2);
    \draw (6.8,1.8) rectangle (7.2,2.2);
    \draw[->] (5,2) -- (5,1);
    \draw[->] (6,3) -- (6,4);
    \draw[->] (7,3) -- (7,2);
    \draw[->, densely dashed,blue] (6,3) -- (5,1);
    \draw[->, densely dashed,blue] (7,3) -- (6,4);
    \draw (9, 1) -- (9,5);
    \end{tikzpicture}
    \quad
    \begin{tikzpicture}[scale=0.4]
    \draw [fill] (1,1) circle [radius=0.07];
    \draw [fill] (1,2) circle [radius=0.07];
    \draw [fill] (1,3) circle [radius=0.07];
    \draw [fill] (1,4) circle [radius=0.07];
    \draw [fill] (1,5) circle [radius=0.07];
    \draw [fill] (2,1) circle [radius=0.07];
    \draw [fill] (2,2) circle [radius=0.07];
    \draw [fill] (2,3) circle [radius=0.07];
    \draw [fill] (2,4) circle [radius=0.07];
    \draw [fill] (2,5) circle [radius=0.07];
    \draw [fill] (3,1) circle [radius=0.07];
    \draw [fill] (3,2) circle [radius=0.07];
    \draw [fill] (3,3) circle [radius=0.07];
    \draw [fill] (3,4) circle [radius=0.07];
    \draw [fill] (3,5) circle [radius=0.07];
    \draw [fill] (4,1) circle [radius=0.07];
    \draw [fill] (4,2) circle [radius=0.07];
    \draw [fill] (4,3) circle [radius=0.07];
    \draw [fill] (4,4) circle [radius=0.07];
    \draw [fill] (4,5) circle [radius=0.07];
    \draw [fill] (5,1) circle [radius=0.07];
    \draw [fill] (5,2) circle [radius=0.07];
    \draw [fill] (5,3) circle [radius=0.07];
    \draw [fill] (5,4) circle [radius=0.07];
    \draw [fill] (5,5) circle [radius=0.07];
    \draw [fill] (6,1) circle [radius=0.07];
    \draw [fill] (6,2) circle [radius=0.07];
    \draw [fill] (6,3) circle [radius=0.07];
    \draw [fill] (6,4) circle [radius=0.07];
    \draw [fill] (6,5) circle [radius=0.07];
    \draw [fill] (7,1) circle [radius=0.07];
    \draw [fill] (7,2) circle [radius=0.07];
    \draw [fill] (7,3) circle [radius=0.07];
    \draw [fill] (7,4) circle [radius=0.07];
    \draw [fill] (7,5) circle [radius=0.07];
    \draw [fill] (8,1) circle [radius=0.07];
    \draw [fill] (8,2) circle [radius=0.07];
    \draw [fill] (8,3) circle [radius=0.07];
    \draw [fill] (8,4) circle [radius=0.07];
    \draw [fill] (8,5) circle [radius=0.07];
    \draw [fill] (9,1) circle [radius=0.07];
    \draw [fill] (9,2) circle [radius=0.07];
    \draw [fill] (9,3) circle [radius=0.07];
    \draw [fill] (9,4) circle [radius=0.07];
    \draw [fill] (9,5) circle [radius=0.07];
    \draw [fill] (10,1) circle [radius=0.07];
    \draw [fill] (10,2) circle [radius=0.07];
    \draw [fill] (10,3) circle [radius=0.07];
    \draw [fill] (10,4) circle [radius=0.07];
    \draw [fill] (10,5) circle [radius=0.07];
    \draw (0.8,2.8) rectangle (1.2,3.2);
    \draw (0.8,3.8) rectangle (1.2,4.2);
    \draw (0.8,4.8) rectangle (1.2,5.2);
    \draw (2.8,1.8) rectangle (3.2,2.2);
    \draw (4.8,1.8) rectangle (5.2,2.2);
    \draw (6.8,1.8) rectangle (7.2,2.2);
    \draw (8.8,1.8) rectangle (9.2,2.2);
    \draw[->] (5,2) -- (5,1);
    \draw[->] (6,3) -- (6,4);
    \draw[->] (7,3) -- (7,1);
    \draw[->] (8,3) -- (8,4);
    \draw[->] (9,3) -- (9,2);
    \draw[->, densely dashed,blue] (6,3) -- (5,1);
    \draw[->, densely dashed,blue] (7,3) -- (6,4);
    \draw[->, densely dashed,blue] (8,3) -- (7,1);
    \draw[->, densely dashed,blue] (9,3) -- (8,4);
    \end{tikzpicture}
    \quad \raisebox{0.7cm}{$\cdots$}
    \\ & \hspace{2cm} \cdots \hspace{2cm} \cdots 
  \end{align*}
  \caption{Some of the zigzags in Figure~\ref{fig:right_bimodule} that contribute to the actions in ${}_{\BNAlgH}Z_{\RxH}=\Homology( {}_{\BNAlgH}\BNAlgH_{\BNAlgH} \bt {}^{\BNAlgH}\dual{\DDinf}_{\RxH} )$ }\label{fig:zigzags}
  \end{figure} 

  Combining these zigzags with the one-arrow zigzags (not containing any dashed arrows), we see that the resulting actions in \( {}_{\BNAlgH}Z_{\RxH}  \) coincide with the ones in \( {}_{\BNAlgH}{\bimoduleG}^\RxH \bt {}_{\RxH}\RxH_{\RxH} \) in Figure~\ref{fig:left_bimodule}.
\end{proof}

\begin{example} 
Let us write \( \BNr(\Knot_{\infty})^\RxH \coloneqq \DD(T)^{\BNAlgH} \bt {}_{\BNAlgH}\bimoduleG^\RxH \). For the knot in Figure~\ref{fig:Kinf_intro} we compute: 
\[ 
\BNr(\Lk(T_{-\infty},T_{2,-3}))^\RxH \simeq
\left[
\begin{tikzcd}[column sep=20pt,row sep=10pt,ampersand replacement=\&]  
    \GGdzh{\DotC}{-1}{-12}{-5}
      \arrow{r}{D}
      \arrow{d}{S}
      \&
      \GGdzh{\DotC}{-1}{-10}{-4}
      \arrow{r}{S}
      \&
      \GGdzh{\DotB}{-\frac{3}{2}}{-9}{-3}
      \arrow{r}{D}
      \&
      \GGdzh{\DotB}{-\frac{3}{2}}{-7}{-2}
      \\
      \GGdzh{\DotB}{-\frac{3}{2}}{-11}{-4}
      \arrow{r}{D}
      \&
      \GGdzh{\DotB}{-\frac{3}{2}}{-9}{-3}
      \arrow{r}{SS}
      \&
      \GGdzh{\DotB}{-\frac{3}{2}}{-7}{-2}
      \arrow{r}{D}
      \&
      \GGdzh{\DotB}{-\frac{3}{2}}{-5}{-1}
      \arrow{r}{S}
      \&
      \GGdzh{\DotC}{-2}{-4}{0}
\end{tikzcd}
\right] \bt {}_{\BNAlgH}\bimoduleG^\RxH 
\]
This is equal to the type~D structure with three generators on the right of Figure~\ref{fig:Geom_interpretation}.
Recall that \(\gr(\xben)=h^{-2}\delta^{0}\) and \(\gr(H)=h^{0}\delta^{-1}\). 
From this, we see that \( \Homology \big(\BNr(\Lk(T_{-\infty},T_{2,-3}))^\RxH \bt \RxH \big) \) is equal to the limit of the vector space in Figure~\ref{fig:exa:Pairing:TorusKnots:ArcArc}, confirming Theorem~\ref{thm:lifting_to_D_str}.

Observe that \( \BNr(\Lk(T_{-\infty},T_{2,-3}))^\RxH \) (and in general any \( \BNr(\Knot_\infty)^\RxH \)) can be read off from the curve \( \BNr(T_{2,-3}) \): considering Figure~\ref{fig:Geom_interpretation}, the intersections of the curve with the arc \( c \) are the three generators, and holomorphic discs between them give the differential. The yellow disc corresponds to the differential picking up \( H \). The number of times this disc goes through the \( H \)-chord at the top left corner indicates the exponent of \( H \) in the differential. A similar disc on the other side of \( c \) picks up \( \xben^2 \) because it goes through the \( \xben \)-chord twice.
\end{example}
\begin{figure}
\centering
    \begin{subfigure}{0.45\textwidth}
      \centering
      \includegraphics[scale=0.7]{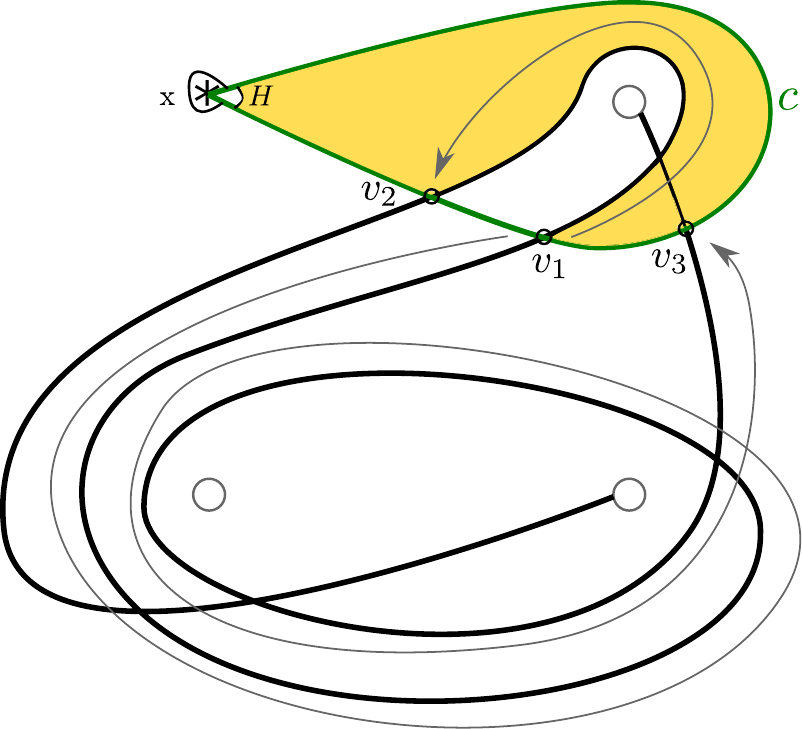}
    \end{subfigure}
    \begin{subfigure}{0.45\textwidth}
    \centering
    \begin{tikzpicture}[scale=.65]
    \draw[lightgray,step=1] (-1.5,1.5) grid (5.5,4.5);
    \draw[line width=1pt,->] (-2,4) -- (6.5,4) node[right] {\( h \)};
    \draw[line width=1pt,->] (5,1) -- (5,5.5) node[above] {\( \delta \)};
     \draw (-0.5,4) node[above] {\scriptsize \( -5 \)};
    \draw (0.5,4) node[above] {\scriptsize \( -4 \)};
    \draw (1.5,4) node[above] {\scriptsize \( -3 \)};
    \draw (2.5,4) node[above] {\scriptsize \( -2 \)};
    \draw (3.5,4) node[above] {\scriptsize \( -1 \)};
    \draw (4.5,4) node[above] {\scriptsize \( 0 \)};
    \draw (5,2.5) node[right] {\scriptsize \( -2 \)};
    \draw (5,3.5) node[right] {\scriptsize \( -1 \)};
    \draw(4.5, 2.5) node {\( v_3 \)};
    \draw(-0.5, 3.5) node {\( v_1 \)};
    \draw(0.5, 3.5) node {\( v_2 \)};
    \draw[->] (-0.25, 3.5) -- (0.25, 3.5) node[midway,below] {\( H \)};
    \draw[->] (-0.5, 3.25) .. controls (-0.5, 2.5) .. (4.15, 2.5) node[midway,below] {\( \xben^2 \)};
    \end{tikzpicture}
    \end{subfigure}
\caption{Geometric interpretation of \( \BNr(\Lk(T_{-\infty},T_{2,-3}))^\RxH = \DD(T_{2,-3})^{\BNAlgH} \bt {}_{\BNAlgH}\bimoduleG^\RxH \)}\label{fig:Geom_interpretation}
\end{figure} 


\newcommand*{\arxivPreprint}[1]{ArXiv preprint \href{http://arxiv.org/abs/#1}{#1}}
\newcommand*{\arxiv}[1]{(ArXiv:\ \href{http://arxiv.org/abs/#1}{#1})}
\bibliographystyle{alpha}
\bibliography{KhCurves}
\end{document}